\sloppy \theoremstyle{plain}
\newtheorem{theorem}{Theorem}[subsection]
\newtheorem{proposition}[theorem]{Proposition}
\newtheorem{corollary}[theorem]{Corollary}
\newtheorem{lemma}[theorem]{Lemma}
\newtheorem{definition}[theorem]{Definition}
\newtheorem{notation}[theorem]{Notation}
\newtheorem{example}[theorem]{Example}
\newtheorem{remark}[theorem]{Remark}
\newtheorem{claim}[theorem]{Claim}
\newtheorem{prop}[theorem]{Proposition}
\newtheorem{theoremABC}{Theorem}
\newtheorem{theoremRoman}{Theorem}
\newtheorem{propositionRoman}[theoremRoman]{Proposition}
\newtheorem{corollaryRoman}[theoremRoman]{Corollary}
\newtheorem{definitionRoman}[theoremRoman]{Definition}
\newcommand{\bmat}{\left( \begin{matrix}}
\newcommand{\emat}{\end{matrix}\right)}
\newcommand{\g}{\mathfrak g}
\newcommand{\Z}{\mathbb Z}
\newcommand{\nir}[1]{{{#1}}}
\newcommand{\RamiA}[1]{{{#1}}}
\newcommand{\RamiAlt}[1]{}
\newcommand{\lbl}[1]{\label{#1}}
\DeclareMathOperator{\SL}{SL}
\DeclareMathOperator{\GL}{GL}
\DeclareMathOperator{\Sp}{Sp}
\DeclareMathOperator{\SO}{SO}
\DeclareMathOperator{\val}{val}
\DeclareMathOperator{\tr}{tr}
\DeclareMathOperator{\Hom}{Hom}
\DeclareMathOperator{\Ad}{Ad}
\DeclareMathOperator{\Mat}{Mat}
\DeclareMathOperator{\Lie}{Lie}
\DeclareMathOperator{\rk}{rk}
\DeclareMathOperator{\Spec}{Spec}
\DeclareMathOperator{\gr}{gr}
\DeclareMathOperator{\Rees}{Rees}
\DeclareMathOperator{\Sym}{Sym}
\DeclareMathOperator{\linspan}{span}
\DeclareMathOperator{\Def}{Def}
\DeclareMathOperator{\Real}{Re}
\DeclareMathOperator{\supp}{supp}
\DeclareMathOperator{\VF}{VF}
\DeclareMathOperator{\VG}{VG}
\DeclareMathOperator{\Tr}{Tr}
\DeclareMathOperator{\Ker}{Ker}
\DeclareMathOperator{\Grass}{Grass}
\DeclareMathOperator{\Irr}{Irr}
\newcommand{\FRS}{(FRS)}
\newcommand{\A}{\mathbb A}
\newcommand{\Pg}{Polygraph}
\newcommand{\pg}{polygraph}
\definecolor{dblue}{RGB}{6,69,173}
\definecolor{lblue}{RGB}{11,0,128}
\newcommand{\colorlinks}{true}
\newcommand{\linkcolor}{lblue}
\newcommand{\citecolor}{lblue}
\newcommand{\urlcolor}{dblue}
\newcommand{\linkbordercolor}{red}
\newcommand{\citebordercolor}{green}
\newcommand{\urlbordercolor}{cyan}
\newcommand{\hrefHid}[2]{
\hypersetup{urlbordercolor={1 1 1}}%
\hypersetup{urlcolor=black}%
\href{#1}{#2}%
\hypersetup{urlbordercolor=\urlbordercolor}%
\hypersetup{urlcolor=\urlcolor}%
}
\newcommand{\inhref}[2]{\hyperref[#1]{#2}}
\newcommand{\inhrefHid}[2]{%
\hypersetup{linkbordercolor={1 1 1}}%
\hypersetup{linkcolor=black}%
\inhref{#1}{#2}%
\hypersetup{linkbordercolor=\linkbordercolor}%
\hypersetup{linkcolor=\linkcolor}%
}
\newcommand{\defHref}[3]{\newcommand{#1}[1][#3]{\href{#2}{##1}}}
\newcommand{\defInhref}[3]{\newcommand{#1}[1][#3]{\inhref{#2}{##1}}}
\newcommand{\defHrefHid}[3]{\newcommand{#1}[1][#3]{\hrefHid{#2}{##1}}}
\newcommand{\defInhrefHid}[3]{\newcommand{#1}[1][#3]{\inhrefHid{#2}{##1}}}
\newcommand{\defHrefBoth}[3]{%
\expandafter\defHrefHid \csname #3Hid\endcsname {#1}{#2}%
\expandafter\defHref \csname #3Vis\endcsname {#1}{#2}%
}
\newcommand{\defInhrefBoth}[3]{%
  \expandafter\defInhrefHid \csname #3Hid\endcsname {#1}{#2}%
  \expandafter\defInhref \csname #3Vis\endcsname {#1}{#2}%
}
\newcommand{\defHrefBothVis}[3]{%
\defHrefBoth{#1}{#2}{#3}%
\expandafter\defHref \csname #3\endcsname {#1}{#2}%
}
\newcommand{\defInhrefBothVis}[3]{%
  \defInhrefBoth{#1}{#2}{#3}%
  \expandafter\defInhref \csname #3\endcsname {#1}{#2}%
}
\newcommand{\defHrefBothHid}[3]{%
\defHrefBoth{#1}{#2}{#3}%
\expandafter\defHrefHid \csname #3\endcsname {#1}{#2}%
}
\newcommand{\defInhrefBothHid}[3]{%
  \defInhrefBoth{#1}{#2}{#3}%
  \expandafter\defInhrefHid \csname #3\endcsname {#1}{#2}%
}
\newcommand{\term}[2]{%
\label{#2}%
\emph{#1}%
 \globaldefs =1%
\defInhrefBothHid{#2}{#1}{#2}%
 \globaldefs =0%
}
\sloppy \theoremstyle{plain}
\newtheorem{thm}[theorem]{Theorem}
\newtheorem{lem}[theorem]{Lemma}
\newtheorem{defn}[theorem]{Definition}
\newtheorem{notn}[theorem]{Notation}
\newtheorem{cor}[theorem]{Corollary}
\newtheorem{rem}[theorem]{Remark}
\newtheorem*{theorem*}{Theorem}
\newtheorem*{remark*}{Remark}
\newtheorem*{conjecture*}{Conjecture}
\newcommand{\Ga}{\Gamma}
\newcommand{\cF}{\mathcal F}
\newcommand{\cG}{\mathcal G}
\newcommand{\cI}{\mathcal I}
\newcommand{\cM}{\mathcal M}
\newcommand{\cO}{\mathcal O}
\newcommand{\cW}{\mathcal W}
\newcommand{\bD}{\mathbb D}
\newcommand{\fg}{\mathfrak g}
\newcommand{\C}{\mathbb C}
\DeclareMathOperator{\spec}{Spec}
\newcommand{\Sc}{{\mathcal S}}
\renewcommand{\dim}{{\operatorname{dim}}}
\renewcommand{\Hom}{{\operatorname{Hom}}}
\newcommand{\et}{{\'{e}tale }}
\title{Representation Growth and Rational Singularities of the Moduli Space of Local Systems}
\author{Avraham Aizenbud\thanks{Faculty of Mathematics and Computer Science, The Weizmann Institute of Science POB 26, Rehovot 76100, ISRAEL. E-mail: aizenr@gmail.com, URL: \url{http://www.wisdom.weizmann.ac.il/\~aizenr}.} \hspace{0.1cm} and  Nir Avni\thanks{ Department of Mathematics, Northwestern University, Evanston, IL 60201, USA.
E-mail: avni.nir@gmail.com. URL: \url{http://math.northwestern.edu/\~nir}.\smallskip
\newline \textbf{Keywords:} Representation Growth, Rational Singularities, Representation Variety, Character Variety, Push-Forward of Measures, Dijkgraaf--Witten TQFT. \smallskip
\newline \textbf{MSC:} Primary: 20F69, 14B05. Secondary: 20G25, 14B07, 14B07, 53D30}}
\newcommand{\simpAr}[2][r]{%
\ar@{}[#1]|-*[@]_{#2}%
}
\begin{document}

\maketitle

\begin{abstract}
Let $G$ be a semisimple algebraic group defined over $\mathbb{Q}_p$, and let $\Gamma$ be a compact open subgroup of $G(\mathbb{Q}_p)$. We relate the asymptotic representation theory of $\Gamma$ and the singularities of the moduli space of $G$-local systems on a smooth projective curve, proving new theorems about both: \begin{enumerate}

\item We prove that there is a constant $C$, independent of $G$, such that the number of $n$-dimensional representations of $\Gamma$ grows slower than $n^{C}$, confirming a conjecture of Larsen and Lubotzky. In fact, we can take $C=3\cdot \dim(E_8)+1=745$. We also prove the same bounds for groups over local fields of large enough characteristic.

\item We prove that the coarse moduli space of $G$-local systems on a smooth projective curve of genus at least $\lceil C/2\rceil+1=374$ has rational singularities.
\end{enumerate}
For the proof, we study the analytic properties of push forwards of smooth measures under algebraic maps. More precisely, we show that such push forwards have continuous density if the algebraic map is flat and all of its fibers have rational singularities.
\end{abstract}

\setcounter{tocdepth}{3}
\newpage
\tableofcontents

\section{Introduction}
\subsection{Summary of The Main Results}
 The results of this paper relate to three topics. The first is the asymptotic behavior of the number of irreducible complex representations of some pro-finite groups. We prove

\begin{theoremABC} \lbl{thm:intro.SLd.representation.growth} Let $F$ be  a non-archimedean local field of characteristic 0, let $G$ be a semi-simple algebraic group defined over $F$, and let $\Gamma$ be a compact open subgroup of $G(F)$. Then there exists a constant $C$ such that, for all integers $N$, the number of irreducible $N$-dimensional complex representations of $\Gamma$ is less than $C N^{745}$.
\end{theoremABC}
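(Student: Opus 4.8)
The plan is to control the representation zeta function $\zeta_\Gamma(s)=\sum_{\rho\in\Irr\Gamma}(\dim\rho)^{-s}$ by evaluating it at even integers $s=2g-2$, interpreting those values as volumes of $G$‑representation varieties of closed genus‑$g$ surfaces, and then bounding those volumes by applying the main analytic theorem of the paper to a word map.

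The starting point is the Frobenius counting formula: for a finite group $H$,
\[
\bigl|\Hom(\pi_1\Sigma_g,H)\bigr|=|H|^{2g-1}\sum_{\rho\in\Irr H}(\dim\rho)^{2-2g}.
\]
Since $\Gamma$ is profinite and we are counting continuous representations, $\Irr\Gamma=\bigcup_k\Irr(\Gamma/N_k)$ for any descending basis $(N_k)$ of open normal subgroups, so for $Z_g(\Gamma):=\zeta_\Gamma(2g-2)$ we get $Z_g(\Gamma)=\lim_k\bigl|\Hom(\pi_1\Sigma_g,\Gamma/N_k)\bigr|/|\Gamma/N_k|^{2g-1}$, an increasing limit of finite sums. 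A direct computation rewrites the $k$‑th term as $(\mu_g)_*(\theta^{\otimes 2g})(N_k)/\theta(N_k)$, where $\mu_g\colon G^{2g}\to G$ is the word map $(a_i,b_i)_{i=1}^g\mapsto\prod_{i=1}^g[a_i,b_i]$ and $\theta$ is the Haar probability measure on the compact open subgroup $\Gamma\subseteq G(F)$. As $N_k\downarrow\{1\}$ these ratios converge to the density at $1\in G(F)$ of $(\mu_g)_*(\theta^{\otimes 2g})$ with respect to $\theta$, provided that density is continuous at $1$; hence $Z_g(\Gamma)$ is finite as soon as $(\mu_g)_*(\theta^{\otimes 2g})$ has continuous density near $1$.

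By the analytic main theorem of the paper, $(\mu_g)_*(\theta^{\otimes 2g})$ has continuous density once $\mu_g$ is flat and all of its fibers have rational singularities, so the crux is a purely geometric statement: for $g$ in an explicit range, the word map $\mu_g\colon G^{2g}\to G$ of a semisimple group is flat and every fiber has rational singularities. Because source and target are smooth, flatness amounts to every fiber having the expected dimension $(2g-1)\dim G$, and then each fiber is automatically a local complete intersection; the fiber over $1$ is the representation variety $\Hom(\pi_1\Sigma_g,G)$, and the other fibers are its ``twisted'' versions (representation varieties of once‑punctured surfaces with prescribed holonomy around the puncture). To obtain rational singularities I would (i) give a uniform‑in‑$G$ lower bound, growing linearly in $g$, for the codimension in each fiber of the locus of representations whose centralizer is larger than generic — a locus which contains the non‑smooth points; and (ii) combine this codimension bound with the complete‑intersection property through a criterion upgrading ``lci with high‑codimension singular locus'' to ``rational singularities'', for instance via the jet‑scheme characterization, using $J_m\Hom(\pi_1\Sigma_g,G)=\Hom(\pi_1\Sigma_g,J_mG)$ and an induction over $m$ on the jet groups $J_mG$. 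Reducing to $G$ simple, where $\dim G\le\dim E_8=248$ dominates all the thresholds, this yields that $g\ge\lceil C/2\rceil+1=374$ suffices with $C=3\dim E_8+1=745$; finiteness of $Z_g(\Gamma)$ in that range, together with the Jaikin--Zapirain rationality of $\zeta_\Gamma$ as a function of $p^{-s}$ (which pins down the abscissa of convergence and absorbs the half‑integer bookkeeping), gives the bound $r_N(\Gamma)=O(N^{745})$. The positive‑characteristic case follows by the same argument once $\mathrm{char}\,F$ is large enough, since all the geometry is defined over $\bZ$ and the analytic input is available there as well.

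The step I expect to be the real obstacle is (i)--(ii): producing an \emph{effective} and \emph{uniform‑in‑$G$} lower bound for the codimension of the bad locus in $\Hom(\pi_1\Sigma_g,G)$ and its twisted analogues, and a usable criterion that converts such a bound, together with the complete‑intersection property, into rational singularities with an explicit genus threshold. That is where the constant $3\dim E_8+1$ is genuinely produced; by contrast, the reduction of the representation count to the word‑map pushforward and the passage from continuity of the pushforward density to the growth bound are formal once the main analytic theorem is in hand.
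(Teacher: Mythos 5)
Your opening reductions are exactly the ones the paper makes: the Frobenius formula converting $\zeta_\Gamma(2g-2)$ into a density of the pushforward of Haar measure along the word map $\Phi_{G,g}$, and the analytic ``FRS $\Rightarrow$ continuous density'' theorem reducing everything to a geometric statement about $\Phi_{G,g}$ being flat with rational-singularity fibers. The trouble starts precisely where you flag it: the geometric input.

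Your step (ii) --- a criterion that upgrades ``local complete intersection with singular locus of high codimension'' to ``rational singularities'' --- is not a theorem, and no such criterion exists, even allowing the codimension threshold to grow with $\dim G$. For any integer $c$, the affine cone over a smooth degree-$d$ hypersurface $Y\subset\mathbb P^{c}$ is a hypersurface (hence lci) of dimension $c$ with an isolated singularity at the vertex, so the singular locus has codimension $c$; yet it fails to have rational singularities as soon as $d\ge c+1$, because $H^{c-1}(Y,\mathcal O_Y(d-c-1))=H^0(Y,\mathcal O_Y)^\vee\neq 0$. So high codimension of the singular locus, by itself, cannot force rationality, and the plan collapses at this step. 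The jet-scheme route you mention in passing (Musta\c{t}\u{a}: an lci $X$ has rational singularities iff $J_mX$ is irreducible for all $m$, and $J_m\Hom(\pi_1\Sigma_g,G)=\Hom(\pi_1\Sigma_g,J_mG)$) is a legitimate reformulation, but it converts the problem into proving irreducibility of the representation variety of $J_mG$ for every $m$, where $J_mG$ is not reductive; this is again a genuinely hard problem, not an induction one can just run, and the paper avoids it. What the paper actually does is quite different: it uses Elkik's theorem that flat deformations of rational singularities remain rational, together with explicit $\mathbb G_m$-degenerations, to reduce $\Phi_{G,n}$ near $(1,\dots,1)$ first to the linearized map $\Psi(X_i,Y_i)=\sum[X_i,Y_i]$ on $\mathfrak g^{2n}$, then to ``symplectic graph maps'' governed by a combinatorial ``polygraph,'' then (by weight degenerations and a coloring argument) to symplectic graph varieties of forests, and finally to a disjoint union of single edges, for which rational singularities are verified by an explicit resolution (the total space of $\mathcal O(-1)^{\oplus 2}$ over $\mathbb P^1$). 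The genus threshold $\mathcal B(G)/2+1$ and the constant $745=3\dim E_8+1$ come out of bookkeeping the dimension loss in the degenerations and, for exceptional types, from Lubotzky--Martin's bound $\alpha\le 3\dim G$.

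Two further gaps worth naming. First, you treat flatness of $\Phi_{G,g}$ as automatic from the expected-dimension estimate, but that estimate itself is a nontrivial theorem; the paper derives it from Liebeck--Shalev's character bounds and Lang--Weil (alternatively citing J.~Li). Second, your phrase ``the other fibers are its `twisted' versions'' glides over the bootstrapping from rationality of the fiber over $1$ to rationality of every fiber, which does not follow by symmetry: fibers over non-conjugate elements need not be related. The paper's bootstrapping is a genuinely representation-theoretic loop --- FRS at $(1,\dots,1)$ gives, via Elkik, FRS on a Zariski neighborhood, hence continuity of the pushforward over some congruence subgroup, hence convergence of $\zeta$ for that subgroup, hence for any commensurable compact open subgroup, hence continuity of the pushforward over the whole group, hence FRS everywhere --- and you would need something playing that role.
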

For most Lie types, our bounds are better; for example, for groups of type $A_n$, the bound is $CN^{22}$. For the precise bounds and a generalization of the result for local rings of large enough characteristic, see Theorem \ref{thm:intro.abscissa.SLd}.

Denote the set of irreducible characters of $\Gamma$ by $\Irr \Gamma$. The proof of Theorem \ref{thm:intro.SLd.representation.growth} shows that the sum $\sum_{\chi \in \Irr(\Gamma)} \chi(1)^{-2n}$ converges for $n$ large enough. We show that this sum is equal to the normalized symplectic volume of the moduli space of $\Gamma$-local systems on a closed orientable surface of Euler characteristic $-2n$; see Theorem \ref{thm:intro.volume.formula}.

The second topic of this article is the study of the singularities of the deformation variety of a surface group inside an algebraic group,
\[
\Def_{G,n}:=\Hom(\pi_1(\Sigma_n),G)= \left\{ (g_1,h_1,\ldots,g_n,h_n)\in G^{2n} \mid [g_1,h_1] \cdots [g_n,h_n]=1 \right\}
\]
where $\Sigma_n$ is a closed orientable surface of genus $n$ and $G$ is an algebraic group. We prove

\begin{theoremABC} \lbl{thm:intro.SLd.singularities.deformation} Let $G$ be a semi-simple complex algebraic group, and let $\Sigma$ be a closed orientable surface of genus greater than or equal to 374. The variety $\Hom(\pi_1(\Sigma),G)$ has rational singularities\footnote{For the notion of rational singularity, see Definition \ref{defn:rational.singularities}}.
\end{theoremABC}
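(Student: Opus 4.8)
The plan is to realize $\Hom(\pi_1(\Sigma),G)$ as a fiber of the iterated commutator map and to detect its rational singularities by $p$-adic point counting, feeding in Theorem~\ref{thm:intro.SLd.representation.growth}. Write $n\ge 374$ for the genus and let $\mu_n\colon G^{2n}\to G$ send $(g_1,h_1,\dots,g_n,h_n)$ to $\prod_{i=1}^n[g_i,h_i]$, so $\Def_{G,n}=\Hom(\pi_1(\Sigma),G)=\mu_n^{-1}(1)$ is cut out in the smooth variety $G^{2n}$ by $\dim G$ equations. A word-map dimension count shows that for $n$ in this range $\mu_n$ is flat, i.e.\ every fiber is a complete intersection of pure dimension $(2n-1)\dim G$; in particular $\Def_{G,n}$ is a complete intersection of that dimension, hence Cohen--Macaulay, and reduced since it is generically smooth. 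This reduces Theorem~\ref{thm:intro.SLd.singularities.deformation} to the assertion that a reduced local complete intersection of known dimension has rational singularities, which is exactly the situation governed by point counting.

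For the second step I would invoke the point-counting criterion for rational singularities (the counting incarnation of the push-forward estimate behind the abstract's theorem): for a flat $\Z[1/N]$-model whose generic fiber is a reduced local complete intersection of dimension $d$, the $\C$-variety $\Def_{G,n}$ has rational singularities if and only if $\lvert\Def_{G,n}(\Z/p^k)\rvert=O(p^{kd})$ uniformly in $k$, for all $p\gg 0$. For $p\nmid N$ one has $\Def_{G,n}(\Z/p^k)=\Hom(\pi_1(\Sigma),G(\Z/p^k))$, and Mednykh's formula (equivalently the Dijkgraaf--Witten/Frobenius--Schur count) gives
\[
\lvert\Hom(\pi_1(\Sigma),G(\Z/p^k))\rvert=\lvert G(\Z/p^k)\rvert^{2n-1}\sum_{\chi\in\Irr(G(\Z/p^k))}\chi(1)^{2-2n}.
\]
Since $\lvert G(\Z/p^k)\rvert\asymp p^{k\dim G}$, $d=(2n-1)\dim G$, and $\sum_{\chi\in\Irr(G(\Z/p^k))}\chi(1)^{2-2n}\le\sum_{\chi\in\Irr(G(\Z_p))}\chi(1)^{2-2n}$, the normalized counts are bounded once this last series converges. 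By Theorem~\ref{thm:intro.SLd.representation.growth} (in its form uniform in the residue characteristic, Theorem~\ref{thm:intro.abscissa.SLd}) the representation zeta function of the compact group $G(\Z_p)$ converges for $\Real(s)>3\dim(E_8)+1=745$; and $2n-2\ge 746>745$ precisely because $n\ge 374=\lceil 745/2\rceil+1$. Hence the normalized point counts are bounded for all $p\gg 0$, and $\Def_{G,n}$ has rational singularities.

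A more geometric route avoids point counting by proving directly that $\mu_n$ is an \FRS{} morphism. This bootstraps from a single base case via $\mu_{k+l}=m_G\circ(\mu_k\times\mu_l)$ with $m_G$ the multiplication: if $\mu_{n_0}$ is \FRS{}, then projecting $\mu_{n_0+k}^{-1}(g)$ to $G^{2k}$ exhibits it as a family whose fibers $\mu_{n_0}^{-1}(g\,\mu_k(y)^{-1})$ are reduced, of constant dimension, and have rational singularities; since $\mu_{n_0+k}^{-1}(g)$ is a complete intersection it is Cohen--Macaulay, the family over the smooth base $G^{2k}$ is therefore flat, and a flat family over a smooth base with rational-singularity fibers has rational singularities. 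The base case $n_0=374$ again comes from Theorem~\ref{thm:intro.SLd.representation.growth}, now through the equivalence between $\mu_{n_0}$ being \FRS{} and the push-forward of a smooth measure under $\mu_{n_0}(\bQ_p)$ having continuous density: the Peter--Weyl/Mednykh expansion makes this density equal to $\sum_{\chi\in\Irr(G(\Z_p))}\chi(g)\chi(1)^{1-2n_0}$, dominated by the convergent series $\sum_{\chi}\chi(1)^{2-2n_0}$.

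The main obstacle is the point-counting criterion — equivalently, the \FRS{}-versus-push-forward equivalence — in the required generality and with control uniform in $p$: the direction ``bounded normalized counts $\Rightarrow$ rational singularities'' is the hard one, needing resolution of singularities together with a Denef-type change-of-variables analysis of the relevant $p$-adic integrals, arranged so that the uniform bound of Theorem~\ref{thm:intro.SLd.representation.growth} can be inserted. The intervention of $E_8$, and hence the constants $745=3\dim(E_8)+1$ and $374=\lceil 745/2\rceil+1$, is just the worst case among simple $G$ in that bound; for the classical types the abscissa of convergence of the representation zeta function is much smaller (bounded by $22$ for type $A$), so a far smaller genus works there. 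A secondary and routine point is the word-map dimension count showing that $\Def_{G,n}$, and more generally the fibers of $\mu_n$, are reduced complete intersections of the expected dimension once $n$ is past the threshold.
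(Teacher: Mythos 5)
Your strategy — realize $\Hom(\pi_1(\Sigma),G)$ as a fiber of the commutator map $\Phi_{G,n}$, use Mednykh/Frobenius to convert the relevant $p$-adic push-forward (equivalently, normalized point-count) into the representation zeta function $\sum_\chi\chi(1)^{2-2n}$, and then feed in a bound on the representation growth of $G(\Z_p)$ — is exactly the paper's route, via Theorem~\ref{thm:def.and.representation.growth}, Proposition~\ref{prop:Frobenius.Formula}, and Theorem~\ref{thm:push.forward}. You have also correctly identified the hard step: the direction ``bounded normalized counts $\Rightarrow$ rational singularities,'' which in the paper is Theorem~\ref{thm:push.forward.detailed}(1c)$\Rightarrow$(1a), proved by resolving singularities, a Grothendieck-duality residue computation (Proposition~\ref{prop:vanishing.form}) giving an inequality between monomial exponents, and a constructibility/continuity-along-curves argument (Appendix~\ref{sec:cont.along.curves}).

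The genuine gap is a circularity. Both Theorem~\ref{thm:intro.SLd.representation.growth} and Theorem~\ref{thm:intro.abscissa.SLd}, which you invoke as the source of the abscissa bound, are \emph{deduced} in the paper from the \FRS{} property of $\Phi_{G,n}$ (Theorem~\ref{thm:intro.SLd.FRS}), and Theorem~\ref{thm:intro.SLd.singularities.deformation} is the same statement; so your argument uses the conclusion to prove itself. The input that must be fed into Theorem~\ref{thm:def.and.representation.growth} needs to be established \emph{independently} of Theorem~\ref{thm:intro.SLd.FRS}. The paper does this in two ways: for classical $G$ it proves ``$\Phi_{G,n}$ is \FRS{} at $(1,\dots,1)$'' directly by the degeneration method of Section~\ref{sec:singularities.def}, and for exceptional $G$ it cites the a priori Lubotzky--Martin bound $\alpha(\Gamma)\le 3\dim G$ (Theorem~\ref{thm:LM}, from \cite{LM}). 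For the uniform genus threshold $374$, the Lubotzky--Martin bound alone suffices for every simple factor, since $3\dim(E_8)=744<746=2n-2$; so if you replace your appeal to Theorem~\ref{thm:intro.SLd.representation.growth} with a citation to \cite{LM}, the argument becomes sound. Two smaller points: the flatness of $\Phi_{G,n}$ is not a free dimension count — in the paper it comes from Frobenius' formula together with Liebeck--Shalev and Lang--Weil (Corollary~\ref{cor:Phi.flat}); and your geometric bootstrap via $\mu_{k+l}=m_G\circ(\mu_k\times\mu_l)$ is a nice alternative but unnecessary here, since the Lubotzky--Martin bound is already uniform over all compact open subgroups, whereas the paper's bootstrap (via Lemma~\ref{lem:commensurability}) is needed precisely to pass from ``\FRS{} at $(1,\dots,1)$'' to ``\FRS{}'' in the classical case.
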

See Theorem \ref{thm:intro.SLd.FRS} for a stronger statement.

We deduce from Theorem \ref{thm:intro.SLd.singularities.deformation} that the moduli space of $G$-local systems on any algebraic curve of genus at least 373 has rational singularities, see Theorem \ref{thm:RS.local.systems}.

The third topic of this article is the study of push forwards of measures under algebraic maps between smooth algebraic varieties over local fields. Such varieties look locally like $p$-adic balls, and, thus, we can define the notions of smooth\footnote{See \S\S \ref{subsubsec.direct.image} for the definition of smooth measure.} (or locally constant) measures or measures with continuous density with respect to some smooth measure. We prove

\begin{theoremABC} \lbl{thm:intro.push.forward} Let $X$ and $Y$ be smooth irreducible varieties over a local field $F$ of characteristic 0, and let $\phi : X \rightarrow Y$ be a flat map such that, for all $y\in Y$, the fiber $\phi ^{-1} (y)$ has rational singularities. For every smooth compactly supported measure $m$ on $X(F)$, the push forward $\phi _*m$ has continuous density.
\end{theoremABC}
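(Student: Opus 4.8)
The plan is to reduce the global statement to a local one near a point $x_0 \in X(F)$, and then to further reduce to the case where the map $\phi$ factors as a composition of a smooth surjection with a fibration whose fiber is the fiber of $\phi$ through $x_0$. More precisely, since $\phi$ is flat and $X,Y$ are smooth, one can use the local structure theory of flat morphisms: after shrinking $X$ and $Y$ in the $F$-analytic topology around $x_0$ and $y_0 = \phi(x_0)$, there is an analytic isomorphism $X \cong Y \times Z$ (with $Z$ an analytic neighborhood of $x_0$ in the fiber $\phi^{-1}(y_0)$) under which $\phi$ becomes the first projection — this is false algebraically, but over a local field one has enough analytic flexibility, using that $Y$ is smooth hence $F$-analytically a polydisc. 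Actually, rather than claiming this product structure (which would trivialize the problem), the correct and available tool is: flatness plus smoothness of $Y$ lets one choose, analytically-locally, a section and coordinates so that $\phi$ looks like $(z, t) \mapsto t$ where for each $t$ the slice $\{z : \phi(z,t)\} $ is $F$-analytically identified with a neighborhood in a variety with rational singularities, varying nicely in $t$.

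The engine of the proof should be the characterization of rational singularities in terms of integrability, in the spirit of the work relating the (FRS) property to analytic properties of point-counting and of the canonical measure. Concretely: a fiber $W = \phi^{-1}(y)$ has rational singularities if and only if, for a resolution $\rho : \tilde W \to W$, the push-forward to $W$ of a smooth measure on $\tilde W(F)$ has \emph{bounded} (indeed continuous, away from the singular locus, and locally integrable) density; more usefully, rational singularities of $W$ imply that $\int_{W(F)} f \, d\mu_W < \infty$ for the canonical measure $\mu_W$ and any compactly supported continuous $f$, with the integral depending continuously on parameters. The key step is therefore to express the density of $\phi_* m$ at a point $y$ as an integral over the fiber $\phi^{-1}(y)(F)$ of a smooth density, i.e. a coarea / fiber-integration formula
\[
(\phi_* m)(y) = \int_{\phi^{-1}(y)(F)} \frac{m}{|\phi^* \omega_Y|}\Big|_{\phi^{-1}(y)},
\]
where $\omega_Y$ is a local generator of the canonical bundle of $Y$ near $y$ and the restriction makes sense by flatness (the fiber is a local complete intersection cut out by the components of $\phi$, which form a regular sequence). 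One then must show this integral (a) is finite for every $y$, and (b) depends continuously on $y$. Finiteness is exactly where rational singularities of $\phi^{-1}(y)$ enter: on a resolution of the fiber, the pulled-back integrand is a smooth measure times $|{\rm Jac}|$, and the rational-singularities hypothesis guarantees local integrability of the push-forward of a smooth measure, hence finiteness of (a).

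For continuity in $y$, the plan is to work on a simultaneous resolution, or rather a resolution of the total space compatible with the fibration: choose a proper map $\pi : \tilde X \to X$ which is an isomorphism over the smooth locus of $\phi$ and such that $\phi \circ \pi$ has nice (e.g. relatively normal crossings) structure; push $m$ up to a smooth measure $\tilde m$ on $\tilde X(F)$, whose push-forward under $\phi\circ\pi$ is easily seen to have continuous (even smooth) density by the submersion case, and then compare $(\phi\circ\pi)_*\tilde m$ with $\phi_* m$. The discrepancy is governed by the Jacobian of $\pi$ along the fibers, and one must show that integrating this Jacobian over the fibers produces a continuous function of $y$ — this is a dominated-convergence argument, where domination is supplied uniformly in $y$ by the bound coming from rational singularities (one needs that the "constant" in the integrability estimate can be taken locally uniform in the parameter $y$, which follows because rational singularities is an open condition in families and the resolution can be chosen in a family over a neighborhood of $y$). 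The main obstacle I expect is precisely this \emph{uniformity in the parameter}: ensuring that the local integrability estimates on the fibers are uniform as $y$ varies, which requires either a family version of the rational-singularities/integrability dictionary or a Noetherian/stratification argument reducing to finitely many model situations. A secondary technical point is justifying the fiber-integration formula and the regular-sequence claim, which is where flatness of $\phi$ (not merely dominance) is essential.
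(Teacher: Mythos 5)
Your opening two paragraphs are on the right track: the fiber-integration formula
\[
(\phi_*m)(y) = \int_{(\phi^{-1}(y)\cap X^S)(F)} \frac{m}{|\phi^*\omega_Y|}
\]
is exactly what the paper establishes (Lemma \ref{lem:push.forward.dominant.RS}), and finiteness of that integral for each fixed $y$ does indeed come from rational singularities of the fiber (Lemma \ref{lem:RS.implies.finite.integral}). Up to that point you have reproduced the paper's setup.

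The continuity step, however, has a genuine gap, and in fact you flag it yourself (``the main obstacle I expect is precisely this uniformity in the parameter''). The specific problems are these. First, the claim that ``$(\phi\circ\pi)_*\tilde m$ is easily seen to have continuous (even smooth) density by the submersion case'' is unjustified: $X$ is already smooth, so any $\pi:\tilde X\to X$ you introduce is (generically) an isomorphism or an embedded resolution of the discriminant, and $\phi\circ\pi$ is no more a submersion than $\phi$ was. The continuity of $(\phi\circ\pi)_*\tilde m$ is exactly as hard as the original problem; you cannot delegate it to the smooth case. Second, the ``dominated-convergence plus openness of rational singularities in families'' strategy does not supply the needed uniformity: rational singularities of individual fibers give pointwise finiteness, but nothing prevents the fiber integrals from being finite at every $y$ and still jumping discontinuously at a special value; one needs quantitative control over how the measure concentrates near the singular locus of the special fiber, and this is precisely the content that a simultaneous (or compatible) resolution would provide if it existed --- but, as the paper notes, it does not exist in general (this is also why Elkik's theorem is non-trivial).

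The paper's route is genuinely different in two ways you do not touch. (i) It avoids the uniformity-in-parameter problem entirely by a model-theoretic reduction: the density $y\mapsto \int_{\phi^{-1}(y)^{sm}(F)} f\,|\eta_y|$ is a \emph{constructible} function of $y$ (Denef-style cell decomposition and rationality of $p$-adic integrals), and constructible functions are continuous iff their restrictions to all curves are continuous (Appendix \ref{sec:cont.along.curves}). Pulling back to the normalization of a curve $C\subset Y$ reduces to $\dim Y=1$, where by Sard there is only one bad fiber $X_0$, so no simultaneous resolution is needed: one takes a single embedded resolution of the pair $(X,X_0)$. (ii) Near a point of the exceptional fiber, $\tilde\phi$ and $\omega_{\tilde X}$ are monomial, $\tilde\phi = \alpha\, x^A$, $\omega_{\tilde X}=\beta\, x^B\,dx$, and the heart of the proof is the inequality on exponents ($a_1=1$ and $a_i\le b_i$ for $i\ge 2$ on the strict transform $X_0'$, and $a_i\le b_i$ elsewhere). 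That inequality is \emph{not} a consequence of finiteness of each fiber integral alone; it is proved (Proposition \ref{prop:vanishing.form}) via a Grothendieck-duality/trace-map computation that exploits rational singularities of \emph{both} the total space $X$ and the central fiber $X_0$, comparing the residue of a meromorphic form on $X$ along $X_0$ with the residue on the resolution along $X_0'$. This residue argument is the key algebro-geometric input missing from your proposal; without it, or a replacement for it, the continuity claim is unsupported.

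So: correct reduction to a fiber integral and correct identification of where rational singularities enter for finiteness, but the passage from pointwise finiteness to continuity is where the work lies, and the tools you sketch (simultaneous resolution, dominated convergence, openness in families) do not close that gap. The paper closes it with constructibility-along-curves plus a Grothendieck-duality bound on pole orders.
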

See Theorem \ref{thm:push.forward}, which also includes a converse result.

\subsection{Longer Discussion of the Main Results}

Let $\Gamma$ be a group, and let $\pi$ be the fundamental group of a closed orientable surface of genus $n \geq 2$. The deformation space of $\pi$ inside $\Gamma$ is the set of homomorphisms from $\pi$ to $\Gamma$; we will denote it by $\Def_{\Gamma,n}$. When $\Gamma$ is an algebraic group (or a $p$-adic analytic group, or a topological group), $\Def_{\Gamma,n}$ is a variety (or a $p$-adic analytic variety, or a topological space, respectively).

Let $G$ be an algebraic group defined over a local field $F$. We study the geometry of $\Def_{G,n}$ and its connections to the representation theory of compact open subgroups of $G(F)$. These connections allow us to prove new theorems both on the singularities of $\Def_{G,n}$ and on the asymptotic representation theory of open compact subgroups of $G(F)$. A prototype of such connection is the following theorem of Frobenius:

\begin{theorem}[{see, e.g. \cite[Lemma 3.1]{LS2}}] \lbl{thm:Frobenius} Let $\Gamma$ be a finite group, and let $n \geq 1$ be an integer. For every $g\in \Gamma$, the number of solutions to the equation $[x_1,y_1] \cdots [x_n,y_n]=g$ is equal to
\[
| \Gamma |^{2n-1}\sum_{\chi \in \Irr(\Gamma)} \frac{\chi(g)}{\chi(1)^{2n-1}}.
\]
\end{theorem}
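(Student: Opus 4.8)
The plan is to reduce the statement, by induction on $n$, to the classical $n=1$ case together with a multiplicativity property under convolution. Note first that the function $N_n$ sending $g\in\Gamma$ to the number of tuples $(x_1,y_1,\dots,x_n,y_n)$ with $[x_1,y_1]\cdots[x_n,y_n]=g$ is a class function, since conjugating all $x_i,y_i$ simultaneously conjugates the product. Grouping the first $n-1$ commutators and the last one separately gives $N_n=N_{n-1}*\theta$, where $\theta:=N_1$ counts single commutators and $*$ denotes convolution of class functions, $(f_1*f_2)(g)=\sum_{ab=g}f_1(a)f_2(b)$. So it suffices to (i) compute $\theta$ and (ii) understand how $*$ acts on irreducible characters.

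For (i), I would expand $\theta=\sum_{\chi\in\Irr\Gamma}\langle\theta,\chi\rangle\chi$ and evaluate, for an irreducible representation $\rho$ with character $\chi$,
\[
\sum_{x,y\in\Gamma}\chi([x,y])=\sum_{y\in\Gamma}\tr\Bigl(\bigl(\textstyle\sum_{x}\rho(x)\rho(y)\rho(x)^{-1}\bigr)\rho(y)^{-1}\Bigr).
\]
By the averaging form of Schur's lemma, $\frac1{|\Gamma|}\sum_x\rho(x)M\rho(x)^{-1}=\frac{\tr M}{\chi(1)}\cdot\mathrm{Id}$, so the inner sum equals $\frac{|\Gamma|\chi(y)}{\chi(1)}\mathrm{Id}$ and the whole expression collapses to $\frac{|\Gamma|}{\chi(1)}\sum_{y}|\chi(y)|^2=\frac{|\Gamma|^2}{\chi(1)}$. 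Since $\overline{\chi(h)}=\chi(h^{-1})$ and $[x,y]^{-1}=[y,x]$, one also has $\sum_{x,y}\overline{\chi([x,y])}=\sum_{x,y}\chi([x,y])=\frac{|\Gamma|^2}{\chi(1)}$, whence $\langle\theta,\chi\rangle=|\Gamma|/\chi(1)$ and hence $\theta(g)=|\Gamma|\sum_\chi\chi(g)/\chi(1)$ --- the $n=1$ case. For (ii), a direct computation with matrix coefficients and Schur orthogonality --- equivalently, the multiplication rule for the central idempotents of $\mathbb C[\Gamma]$ --- gives $(\chi*\psi)(g)=\delta_{\chi,\psi}\,\frac{|\Gamma|}{\chi(1)}\,\chi(g)$; that is, $*$ is diagonal in the basis $\Irr\Gamma$ with eigenvalue $|\Gamma|/\chi(1)$ on $\chi$.

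The induction is then immediate: assuming $N_n=|\Gamma|^{2n-1}\sum_\chi\chi/\chi(1)^{2n-1}$, one computes
\[
N_{n+1}=N_n*\theta=|\Gamma|^{2n}\sum_{\chi,\psi}\frac{\chi*\psi}{\chi(1)^{2n-1}\psi(1)}=|\Gamma|^{2n}\sum_\chi\frac{1}{\chi(1)^{2n}}\cdot\frac{|\Gamma|}{\chi(1)}\,\chi=|\Gamma|^{2n+1}\sum_\chi\frac{\chi}{\chi(1)^{2n+1}},
\]
which is the asserted formula for $n+1$. This is Frobenius's classical identity, so no step is genuinely deep; the part demanding the most care is the bookkeeping --- fixing conventions for convolution and for $\chi(h^{-1})=\overline{\chi(h)}$, and pinning down the matrix-coefficient orthogonality relations in exactly the form used in (i) and (ii). Alternatively one can read the entire computation as the evaluation of the Dijkgraaf--Witten partition function of the once-punctured genus-$n$ surface via a pair-of-pants decomposition, but the elementary route above is the most self-contained.
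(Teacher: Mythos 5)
Your proof is correct. Note that the paper itself does not prove this theorem---it cites it to Liebeck--Shalev \cite[Lemma 3.1]{LS2}---so there is no ``paper proof'' to compare against; what you have written out is the standard Frobenius argument (compute $N_1$ via Schur's lemma, observe that the diagonal of a Cartesian product makes $N_n = N_{n-1} * N_1$, and use that irreducible characters convolve as $\chi*\psi = \delta_{\chi\psi}\frac{|\Gamma|}{\chi(1)}\chi$), and the bookkeeping with conventions for $*$ and $\overline{\chi(h)}=\chi(h^{-1})$ all checks out.
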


In order to extend Theorem \ref{thm:Frobenius} to pro-finite groups, we introduce some notation.
\begin{definition} \lbl{defn:Phi} Let $\Gamma$ be a group and let $n \geq 1$ be an integer. Define $\Phi_{\Gamma,n}: \Gamma^{2n} \rightarrow \Gamma$ to be the map
\[
\Phi_{\Gamma,n}(x_1,y_1,\ldots,x_n,y_n)=[x_1,y_1] \cdots [x_n,y_n] .
\]
\end{definition}

Note that $\Def_{\Gamma,n}$ is the fiber $\Phi_{\Gamma,n} ^{-1}(1)$ of $\Phi_{\Gamma,n}$.

\begin{definition} Let $X,Y$ be measurable spaces, let $m$ be a measure on $X$, and let $f:X \rightarrow Y$ be a measurable function. The \emph{push forward} of $m$ via $f$ is the measure $f_*m$ given by $f_*m(A)=m(f ^{-1} (A))$ for every $A \subset Y$.
\end{definition}

We will exploit the following extension of Frobenius' Theorem:

\begin{propositionRoman}[see \S\ref{subsec:special.values}] \lbl{prop:Frobenius.Formula}  Let $\Gamma$ be a finitely generated pro-finite group, and let $\Gamma(i)_{i\in \mathbb{N}}$ be a decreasing chain of open normal subgroups with trivial intersection. Let $n \geq 2$ be an integer. Denote the normalized Haar measure on $\Gamma$ by $\lambda_\Gamma$ and the normalized Haar measure on $\Gamma ^{2n}$ by $\lambda_{\Gamma ^{2n}}$. The following are equivalent: \begin{enumerate}
\item The measure $(\Phi_{\Gamma,n})_* \lambda_{\Gamma ^{2n}}$ has a continuous density with respect to $\lambda_\Gamma$.
\item There is a constant $C$ such that $\lambda_{\Gamma^{2n}} \left( \Phi_{\Gamma,n} ^{-1} (\Gamma(i))\right)\leq C \cdot \lambda_\Gamma (\Gamma(i))$ for all $i$.
\item \label{cond:Frob.Formula.3} The series $\sum_{\chi \in \Irr \Gamma} \chi(1)^{2-2n}$ converges.
\end{enumerate}
If these conditions hold, then the density of $(\Phi_{\Gamma,n})_* \lambda_{\Gamma ^{2n}}$ with respect to $\lambda_\Gamma$ is given by the function
\[
g \mapsto \sum_{\chi \in \Irr \Gamma} \frac{\chi(g)}{\chi(1)^{2n-1}}.
\]
\end{propositionRoman}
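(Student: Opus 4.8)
The plan is to reduce the statement to Frobenius' formula (Theorem~\ref{thm:Frobenius}) applied to the finite quotients $\Gamma/\Gamma(i)$, and then to take a limit over $i$. Write $\mu:=(\Phi_{\Gamma,n})_*\lambda_{\Gamma^{2n}}$, a probability measure on $\Gamma$. Before anything else I would record a topological observation: the chain $\{\Gamma(i)\}$ is cofinal among the open normal subgroups of $\Gamma$ --- given an open $N\trianglelefteq\Gamma$, the sets $\Gamma(i)\setminus N$ form a decreasing family of compact sets with empty intersection, so one of them is already empty, i.e.\ $\Gamma(i)\subseteq N$. Hence $\{\Gamma(i)\}$ is a neighborhood basis of $1$, the cosets $\{g\Gamma(i)\}$ form a countable basis of the topology, and --- since a finite dimensional complex representation of a profinite group has open kernel (as $\GL_N(\C)$ has no small subgroups) --- every $\chi\in\Irr\Gamma$ is inflated from some $\Gamma/\Gamma(i)$; thus, under the inflation identification, $\Irr(\Gamma/\Gamma(i))\uparrow\Irr\Gamma$.

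The technical heart is the following computation, call it Step~1. Fix $i$ and let $q\colon\Gamma^{2n}\to(\Gamma/\Gamma(i))^{2n}$ and $\bar q\colon\Gamma\to\Gamma/\Gamma(i)$ be the quotient maps. Since $\Phi$ is a word map, $\bar q\circ\Phi_{\Gamma,n}=\Phi_{\Gamma/\Gamma(i),n}\circ q$, so $\Phi_{\Gamma,n}^{-1}(g\Gamma(i))=q^{-1}\bigl(\Phi_{\Gamma/\Gamma(i),n}^{-1}(\bar g)\bigr)$; as the fibers of $q$ have normalized measure $[\Gamma:\Gamma(i)]^{-2n}$, Theorem~\ref{thm:Frobenius} for the finite group $\Gamma/\Gamma(i)$ yields
\[
\mu\bigl(g\Gamma(i)\bigr)=\frac{\#\Phi_{\Gamma/\Gamma(i),n}^{-1}(\bar g)}{[\Gamma:\Gamma(i)]^{2n}}=\lambda_\Gamma\bigl(g\Gamma(i)\bigr)\sum_{\chi\in\Irr(\Gamma/\Gamma(i))}\frac{\chi(g)}{\chi(1)^{2n-1}}.
\]
In other words, the conditional density $f_i(g):=\mu(g\Gamma(i))/\lambda_\Gamma(g\Gamma(i))$ of $\mu$ along the partition of $\Gamma$ into $\Gamma(i)$-cosets is precisely the partial sum $f^{(i)}(g):=\sum_{\chi\in\Irr(\Gamma/\Gamma(i))}\chi(g)\chi(1)^{1-2n}$ of the series in the statement, and at $g=1$ this reads $f_i(1)=\sum_{\chi\in\Irr(\Gamma/\Gamma(i))}\chi(1)^{2-2n}$.

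With Step~1 in hand I would close the cycle $(1)\Rightarrow(2)\Rightarrow(3)\Rightarrow(1)$. For $(1)\Rightarrow(2)$: a continuous density $f$ on the compact group $\Gamma$ is bounded, and $\lambda_{\Gamma^{2n}}(\Phi_{\Gamma,n}^{-1}(\Gamma(i)))=\mu(\Gamma(i))=\int_{\Gamma(i)}f\,d\lambda_\Gamma\le\|f\|_\infty\lambda_\Gamma(\Gamma(i))$. For $(2)\Rightarrow(3)$: condition (2) says exactly that $f_i(1)\le C$ for all $i$, i.e.\ that the partial sums of the nonnegative series $\sum_{\chi\in\Irr\Gamma}\chi(1)^{2-2n}$ over the exhaustion $\Irr(\Gamma/\Gamma(i))\uparrow\Irr\Gamma$ are bounded, whence the series converges. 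For $(3)\Rightarrow(1)$ together with the density formula: assuming (3), since $|\chi(g)|\le\chi(1)$ the series $f(g):=\sum_{\chi\in\Irr\Gamma}\chi(g)\chi(1)^{1-2n}$ is dominated termwise by $\sum\chi(1)^{2-2n}$, hence converges absolutely and uniformly, so $f$ is continuous and $f^{(i)}\to f$ uniformly; by Step~1 the measures $\mu$ and $f^{(i)}\lambda_\Gamma$ agree on every $\Gamma(i)$-coset, hence on the algebra of finite unions of cosets of the various $\Gamma(j)$, and letting $i\to\infty$ (uniform convergence) gives $\mu(U)=\int_Uf\,d\lambda_\Gamma$ for every $U$ in that algebra; since it generates the Borel $\sigma$-algebra, $\mu=f\lambda_\Gamma$, so $\mu$ has the continuous density $f$, which is the claimed formula.

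Essentially everything here is bookkeeping once Frobenius' formula is available; the one place where I expect to have to be careful is the limiting step in $(3)\Rightarrow(1)$ --- showing that the conditional densities $f_i$, a priori only locally constant approximations, converge uniformly to an honest continuous density of $\mu$ --- together with the supporting facts that $\{\Gamma(i)\}$ is cofinal among the open subgroups and that $\Irr(\Gamma/\Gamma(i))$ exhausts $\Irr\Gamma$, so that no irreducible character is ever missed.
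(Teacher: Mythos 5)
Your proof is correct and follows the same overall strategy as the paper: reduce to Frobenius' theorem on the finite quotients $\Gamma/\Gamma(i)$, derive the identity $\mu(g\Gamma(i))/\lambda_\Gamma(g\Gamma(i))=\sum_{\chi\in\Irr(\Gamma/\Gamma(i))}\chi(g)\chi(1)^{1-2n}$ (this is exactly the paper's equation \eqref{eq:Frobenius.pro.finite}), and then run the cycle $(1)\Rightarrow(2)\Rightarrow(3)\Rightarrow(1)$. The implications $(1)\Rightarrow(2)$ and $(2)\Rightarrow(3)$ are identical to the paper's. The one place you diverge is the finish of $(3)\Rightarrow(1)$: the paper first produces an $L^1$-density for $\mu$ (by citing Corollary \ref{cor:push.forward.dominant.map}), then identifies it almost everywhere with $\Sigma(g)$ via the Lebesgue Density Theorem and concludes by continuity; you instead show directly that $\mu$ and $f\lambda_\Gamma$ agree on the algebra generated by the cosets $g\Gamma(j)$ (using the uniform convergence $f^{(i)}\to f$) and extend by uniqueness of Borel measures. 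Your route is arguably preferable here: Corollary \ref{cor:push.forward.dominant.map} is stated for smooth varieties over a non-archimedean local field, whereas the proposition concerns an abstract finitely generated profinite group, so the paper's appeal to it requires tacitly restricting to the analytic setting (or supplying a separate abstract argument for the existence of an $L^1$-density), while your measure-agreement argument is self-contained and works in the stated generality.
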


\subsubsection{Direct Image of Smooth Measures} \lbl{subsubsec.direct.image}

We concentrate on the first condition in Proposition \ref{prop:Frobenius.Formula}, namely, the continuity of the density of $(\Phi_{\Gamma,n})_* \lambda_{\Gamma^{2n}}$ with respect to $\lambda_\Gamma$. We treat this question for $p$-adic groups, and, more generally, for $p$-adic varieties.

\begin{definition} Let $X$ be a smooth $d$-dimensional algebraic variety over a non-archimedean local field $F$. Denote the ring of integers of $F$ by $O$. \begin{enumerate}
\item A measure $m$ on $X(F)$ is called {\emph{smooth}} if every point $x\in X(F)$ has an analytic neighborhood $U$ and a ($p$-adic analytic) diffeomorphism $f:U \rightarrow O^d$ such that $f_*m$ is some Haar measure on $O^d$.
\item A measure on $X(F)$ is called {\emph{Schwartz}} if it is smooth and compactly supported. We denote the space of all Schwartz measures on $X(F)$ by $\Sc(X(F))$.
\item We say that a measure $\mu$ on $X(F)$ {\emph{has continuous density}}, if there is a smooth measure $m$ and a continuous function $f:X(F) \rightarrow \mathbb{C}$ such that $\mu=f \cdot m$.
\end{enumerate}
\end{definition}

We are interested in finding conditions on a map $\phi: X \rightarrow Y$ between two smooth varieties that will imply that $\phi _*m$ has continuous density for any Schwartz measure $m$ on $X(F)$.

A sufficient condition for the continuity of direct images is that $\phi$ is a smooth map (i.e. a submersion); see Proposition \ref{prop:push.forward.smooth.map}. Recall that a map is smooth if and only if it is flat and all its fibers are smooth. We show that this last condition can be relaxed: it is enough to require that the map is flat and all fibers have rational singularities. We recall the notion of rational singularities in \ref{defn:rational.singularities}.

\begin{definitionRoman} Let $X$ and $Y$ be smooth irreducible varieties over a field $k$ of characteristic 0, and let $\phi : X \rightarrow Y$ be a morphism. We say that $\phi$ is \FRS\ 
if it is flat and, for any $y\in Y(\overline{k})$, the fiber $X \times_Y y$ is reduced and has rational singularities.
\end{definitionRoman}

\begin{theoremRoman} \lbl{thm:push.forward} Let $X$ and $Y$ be smooth irreducible varieties over a local field $F$ of characteristic 0, and let $\phi : X \rightarrow Y$. \begin{enumerate}
\item Assume that $\phi$ is \FRS. Then, for every Schwartz measure $m$ on $X(F)$, the push forward $\phi _*m$ has continuous density.
\item Conversely, assume that, for every finite extension $F'/F$ and every Schwartz measure $m'$ on $X(F')$, the measure $\phi_*m'$ has continuous density. Then $\phi$ is \FRS.
\end{enumerate}
\end{theoremRoman}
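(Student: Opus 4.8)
The plan is to reduce the global statement to a local one near a point $x_0 \in X(F)$, and then to analyze the two directions separately, with the direct implication being the substantive one. For the first part (FRS $\Rightarrow$ continuous density), I would first observe that both "$\phi$ is FRS" and "$\phi_* m$ has continuous density for all Schwartz $m$" are local on $X$ in the analytic topology, so it suffices to understand the situation in an étale (or $p$-adic analytic) neighborhood of a fixed point and its image. I would then factor $\phi$, after possibly shrinking $X$, through an appropriate coordinate projection: since $Y$ is smooth of dimension $d$, I can choose étale coordinates and reduce to the case $Y = \mathbb{A}^d$; and since $\phi$ is flat with source of dimension $e$, locally $\phi$ looks like a composition of $X \hookrightarrow X \times \mathbb{A}^{e-d} \to \mathbb{A}^d$ or, better, I want to realize the density of $\phi_* m$ at a point $y$ as an integral over the fiber $X_y = \phi^{-1}(y)$ against a Gelfand–Leray type form. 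The key analytic input is that for a Schwartz measure $m$ on $X(F)$, the fiberwise density is governed by the behavior of $\int_{X_y(F)} |\omega_y|$ where $\omega_y$ is the induced measure on the fiber; continuity in $y$ of this integral is exactly what must be shown.

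The heart of the matter is therefore a statement of the form: \emph{if $Z$ is a (possibly singular) variety over $F$ with rational singularities, and $\nu$ is a suitable measure on $Z(F)$ arising from a Schwartz form, then $\int_{Z(F)} |\nu| < \infty$, and moreover this finiteness is uniform/continuous as $Z$ varies in a flat family with all fibers having rational singularities.} To prove the pointwise integrability, I would invoke a resolution of singularities $\pi: \tilde{Z} \to Z$ and pull the integral back: on $\tilde Z$ the pulled-back form acquires zeros along the exceptional divisor, and the defining property of rational singularities — that $\pi_* \omega_{\tilde Z} = \omega_Z$, equivalently that the discrepancies are $\geq 0$ — is precisely what guarantees that $\int_{\tilde Z(F)} |\pi^* \nu|$ converges (this is a by-now standard "motivic/$p$-adic integration detects rational singularities" principle; compare the local density / Igusa zeta function estimates). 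For the \emph{uniformity} in the family, I would use a simultaneous resolution (available after stratifying the base, using generic smoothness and Noetherian induction) together with the fact that the relevant integrals, being $p$-adic integrals of absolute values of rational functions, are controlled by Denef's rationality theorem and depend continuously on parameters when the orders of vanishing along the exceptional components stay bounded — which again is ensured by the FRS hypothesis on every fiber. Assembling these gives that $y \mapsto (\phi_* m)$-density is a uniform limit of continuous functions, hence continuous.

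For the converse, I would argue contrapositively: if $\phi$ is flat but some fiber $X_{y_0}$ over $y_0 \in Y(\overline k)$ fails to be reduced or fails to have rational singularities, I must produce a finite extension $F'/F$ and a Schwartz measure $m'$ on $X(F')$ such that $\phi_* m'$ has no continuous density near $y_0$. Here I would use the $p$-adic/motivic characterization of rational singularities in the other direction: non-rationality of the fiber forces $\int_{X_{y_0}(F')} |\omega|$ to diverge for an appropriate local form, or more precisely forces the local density function near $y_0$ to blow up (or at least be discontinuous) as $y \to y_0$ — one extracts this from the pole of the associated Igusa-type zeta function, whose order detects the minimal discrepancy being negative; passing to a large enough finite extension is needed to see $F'$-points on the exceptional locus and to make the relevant local integrals genuinely divergent. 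Non-reducedness is handled by a separate, easier argument (a nonreduced fiber contributes a "thickening" that makes the pushforward density blow up, or vanish, inconsistently).

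The main obstacle, I expect, is the \emph{uniformity} step in the direct implication: controlling the $p$-adic fiber integrals simultaneously over the whole base. Pointwise finiteness from rational singularities is classical, but patching it into genuine continuity of the density requires either a sufficiently functorial resolution that works in families (embedded resolution over a Noetherian base, stratifying $Y$ and inducting on dimension) or a cell-decomposition / motivic-integration argument à la Denef–Loeser that produces the density as an honest function with bounded "denominators," and then an Arzelà–Ascoli / equicontinuity argument to upgrade the locally uniform convergence of the defining series to continuity. Making the constants in condition (2)-type estimates truly uniform in $y$ is where the real work lies.
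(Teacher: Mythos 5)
Your overall plan is reasonable at the outline level, but the central step of the forward direction — what you correctly identify as "the real work" — is argued along a line that the paper explicitly flags as insufficient, and the actual mechanism the paper uses is absent from your proposal.

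You propose to get uniformity by "a simultaneous resolution (available after stratifying the base, using generic smoothness and Noetherian induction)." This does not work: stratifying $Y$ and resolving the restricted family over each locally closed stratum gives you a density function that is controlled on each stratum separately, but says nothing about continuity as you cross a stratum boundary, which is precisely where the fiber degenerates. The paper is explicit on this point: "Proposition \ref{prop:intro.integration.RS} and its proof do not imply that the density of $\phi_* f| \omega_X|$ is continuous, since, in general, we cannot find a simultaneous resolution of singularities for all fibers of $\phi$. This lack of simultaneous resolution of singularities is also the difficulty in proving Elkik's theorem." Your fallback — a Denef–Loeser style cell-decomposition argument with "bounded denominators" plus Arzelà–Ascoli — is closer in spirit but you do not supply the mechanism that actually delivers the boundedness, and equicontinuity is not what saves the day.

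What the paper actually does is different in two essential respects. First, it reduces from a general smooth $Y$ to $Y$ a curve (then $Y=\mathbb{A}^1$) by proving that the fiber-integral $y\mapsto \varphi_*f(y)$ is a constructible function in the model-theoretic sense (Proposition \ref{prop:integration.is.constructible}), and that a constructible function on $F^n$ is continuous iff its restriction to every curve is continuous (Theorem \ref{thm:continuity.along.curves}). This dimension reduction is the substitute for simultaneous resolution and is not in your proposal. Second, over $\mathbb{A}^1$ the paper uses a single embedded resolution of the pair $(X,X_0)$ where $X_0=\varphi^{-1}(0)$ — a resolution of the total space making $X_0$ an SNC divisor, not a fiberwise resolution — and then proves the key homological lemma (Proposition \ref{prop:vanishing.form}): using Grothendieck duality and trace maps, and using the rational singularities of both $X$ (which follows from Elkik) and $X_0$ (from the FRS hypothesis) in distinct places, one derives the exponent inequality $a_i\le b_i$ that makes the pushforward of the resulting monomial measure under the monomial map $\widetilde\varphi$ continuous. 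Your sentence "the orders of vanishing along the exceptional components stay bounded — which again is ensured by the FRS hypothesis on every fiber" gestures at this but does not capture it: the bound is not a soft consequence of FRS on nearby fibers, it is an exact inequality extracted by a nontrivial residue/trace argument comparing the dualizing sheaves of $X$ and $X_0$ through the strict transform. Without this lemma the argument has no engine.

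The converse direction of your sketch is directionally similar to the paper's: produce a measure whose pushforward density blows up. But the paper's actual argument is two sharper steps: a local coordinate/dilation computation (Claim \ref{cla:Z.dense}) shows that excess fiber dimension forces the density to be unbounded, giving flatness and LCI (hence Gorenstein) fibers; then Corollary \ref{lem:Gorenstein.finite.integral.implies.RS} converts boundedness of the density into the finiteness of $\int_{Z^{sm}(F)} f|\omega|$, which (after a resolution) is shown to imply the dualizing form pulls back without poles, hence rational singularities. Your "non-reducedness is handled by a separate, easier argument" and your appeal to Igusa zeta pole orders are plausible but underspecified; the dimension/dilation computation and the Gorenstein integrability criterion are what the paper actually uses.

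In short: the forward direction as you have written it has a genuine gap at its heart — stratified simultaneous resolution cannot give continuity across strata — and the two tools that replace it (constructibility + reduction to curves, and the Grothendieck-duality exponent inequality) are both missing from your proposal.
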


See Theorem \ref{thm:push.forward.detailed} for a generalization.

\subsubsection{Relations Between Representation Growth and Deformation Variety}
We now concentrate on Condition \eqref{cond:Frob.Formula.3} in Proposition \ref{prop:Frobenius.Formula}. For a topological group $\Gamma$, we denote the number of continuous irreducible complex representations of $\Gamma$ whose dimension is at most $n$ by $R_n(\Gamma)$. If $\Gamma$ is a finitely generated pro-finite group, a necessary and sufficient condition for $R_n(\Gamma)< \infty$ for all $n$ is that every finite-index subgroup of $\Gamma$ has a finite abelianization (see \cite{BLMM}). If this condition holds, we say that $\Gamma$ is FAb. Examples of FAb groups are compact open subgroups in semi-simple algebraic groups over local fields.

For a FAb group $\Gamma$, the representation growth of $\Gamma$ is the study of the asymptotic behavior of the sequence $R_n(\Gamma)$. In the case where the representation growth of $\Gamma$ is polynomially bounded, we introduce the following generating function:

\begin{definition} Let $\Gamma$ be a topological group, and assume that there is a constant $C$ such that, for any $n$, there are at most $Cn^C$ non-isomorphic irreducible complex continuous representations of dimension $n$ of $\Gamma$. The {\emph{representation zeta function}} of $\Gamma$ is the function
\[
\zeta_\Gamma(s)=\sum_{\chi \in \Irr \Gamma} \chi(1)^{-s},
\]
defined in $\left\{ s \in \mathbb{C} \mid \Real(s) > C+1 \right\}$.
\end{definition}

Suppose that $G$ is a semi-simple group over a non-archimedean local field $F$ of characteristic 0 and $\Gamma$ is a compact open subgroup of $G(F)$. The main theorem of \cite{Ja} implies that $\zeta_\Gamma(s)$ has meromorphic continuation to the whole complex plane, and there is a rational number $\alpha=\alpha(\Gamma)$ such that $R_n(\Gamma) = n^{\alpha+o(1)}$. This $\alpha$ is the maximum of the real values of the poles of $\zeta_\Gamma(s)$, but its exact value is still unknown. The results in \cite{Ja} use the orbit method for analytic pro-$p$ groups and, therefore, are restricted to characteristic 0.

We can now make a more explicit connection between the geometry of $\Def_{G,n}$ and representations of compact open subgroups of $G(F)$:

\begin{theoremRoman}[see \S\S \ref{subsec:special.values}] \lbl{thm:def.and.representation.growth}  Let $G$ be a semi-simple algebraic group defined over a finitely generated field $k$ of characteristic 0. The following are equivalent: \begin{enumerate}
\item The point $(1,\ldots,1)$ is a rational singularity of the deformation variety $\Def_{G,n}=(\Phi _{G,n}) ^{-1} (1)$.
\item $\Phi_{G,n}$ is \FRS.
\item For every non-archimedian local field $F$ containing $k$ and every compact open subgroup $\Gamma \subset G(F)$, we have $\alpha(\Gamma)<2n-2$.
\item For every finite extension $k'/k$, there is a local field $F'$ containing $k'$ and a compact open subgroup $\Gamma \subset G(F)$ such that $\alpha(\Gamma)<2n-2$.
\end{enumerate}
\end{theoremRoman}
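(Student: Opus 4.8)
The plan is to establish the cycle of implications $(2)\Rightarrow(1)$, $(2)\Rightarrow(3)$, $(3)\Rightarrow(4)$, $(4)\Rightarrow(1)$, together with the geometric implication $(1)\Rightarrow(2)$, so that all four conditions become equivalent. Only $(2)\Rightarrow(1)$ is immediate: if $\Phi_{G,n}$ is \FRS, then by definition every fiber of $\Phi_{G,n}$ is reduced with rational singularities, and in particular so is $\Phi_{G,n}^{-1}(1)=\Def_{G,n}$; hence the point $(1,\dots,1)$ is a rational singularity of $\Def_{G,n}$.

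For $(2)\Rightarrow(3)$ I would chain together the inputs already available. Since the \FRS\ property is preserved under extension of the base field, if $\Phi_{G,n}$ is \FRS\ over $k$ then it is \FRS\ over every non-archimedean local field $F\supseteq k$; hence, for any compact open subgroup $\Gamma\subseteq G(F)$, the measure $(\Phi_{\Gamma,n})_*\lambda_{\Gamma^{2n}}$ is the restriction to $\Gamma$ of the push forward under $\Phi_{G,n}$ of the Schwartz measure $\lambda_{\Gamma^{2n}}$ on $G^{2n}(F)$, so it has continuous density by Theorem~\ref{thm:push.forward}(1), and therefore $\sum_{\chi\in\Irr\Gamma}\chi(1)^{2-2n}$ converges by Proposition~\ref{prop:Frobenius.Formula}. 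To upgrade convergence to the strict inequality $\alpha(\Gamma)<2n-2$, I would invoke the theorem of \cite{Ja}: $\zeta_\Gamma$ is a rational function of $p^{-s}$, and, since the coefficients of its expansion as a power series in $p^{-s}$ are non-negative (they count irreducibles of each $p$-power dimension), Pringsheim's theorem shows that the boundary point of the disk of convergence on the positive real axis is a pole; the series therefore diverges there, so its convergence at $s=2n-2$ forces $2n-2>\alpha(\Gamma)$. The implication $(3)\Rightarrow(4)$ is then formal: every finitely generated field of characteristic $0$ has finite transcendence degree over $\mathbb{Q}$ and hence embeds into a non-archimedean local field, over which $G$ has compact open subgroups, so $(4)$ is a special case of $(3)$.

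For $(4)\Rightarrow(1)$ I would run the same chain backwards, locally and for each finite extension separately. Given a finite extension $k'/k$, condition $(4)$ supplies a local field $F'\supseteq k'$ and a compact open $\Gamma\subseteq G(F')$ with $\alpha(\Gamma)<2n-2$; then $\sum_{\chi}\chi(1)^{2-2n}$ converges, so by Proposition~\ref{prop:Frobenius.Formula} the measure $(\Phi_{\Gamma,n})_*\lambda_{\Gamma^{2n}}$ has continuous density near $1\in G(F')$. Translating and using locally constant partitions of unity, $\lambda_{\Gamma^{2n}}$ may be replaced by an arbitrary Schwartz measure supported near $(1,\dots,1)$, so the conclusion holds for all such measures; the local form of the converse half of Theorem~\ref{thm:push.forward} (see Theorem~\ref{thm:push.forward.detailed}), applied one finite extension $k'/k$ at a time — which is exactly the quantification appearing in $(4)$ — then shows that $\Phi_{G,n}$ is \FRS\ in a neighborhood of $(1,\dots,1)$. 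In particular $(1,\dots,1)$ is a rational singularity of $\Def_{G,n}$, which is $(1)$; this closes the cycle, and it remains only to prove $(1)\Rightarrow(2)$.

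For $(1)\Rightarrow(2)$ I would bootstrap the \emph{pointwise} hypothesis at $(1,\dots,1)$ into the full \FRS\ property in three stages. First, \emph{flatness}: for $n\ge 2$ this is a dimension count — using that the commutator map $G^2\to G$ is dominant and generically flat, an induction on $n$ gives $\dim\Phi_{G,n}^{-1}(g)=(2n-1)\dim G$ for every $g$, whence flatness follows from the miracle-flatness criterion since $G^{2n}$ is Cohen--Macaulay and $G$ is regular; consequently each $\Def_{G,m}$ with $m\ge 2$ is Cohen--Macaulay, and the projection $\Def_{G,m+1}\to G^2$ onto the last pair of coordinates is flat, with fiber $\Phi_{G,m}^{-1}([a,b]^{-1})$ over $(a,b)$ and fiber $\Def_{G,m}$ over $(1,1)$. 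Second, \emph{raising the genus}: since a flat morphism onto a smooth base inherits rational singularities from its fibers, $(1)$ for $n$ implies $(1)$ for every $m\ge n$. Third, \emph{propagating within the fibers}: it remains to show that, for $m$ large, every fiber of $\Phi_{G,m}$ is reduced with rational singularities at every point, and here I would use Goldman--Millson deformation theory to identify the analytic germ of such a (relative) representation variety at a representation $\rho$ with a smooth factor times the germ at the trivial representation of a representation variety of the reductive centralizer $Z_G(\rho)$, and then reduce, via a Levi-subgroup argument and Boutot's theorem on rational singularities of direct summands, the rational-singularity question for all of these to the rational-singularity hypothesis at the central points. The step I expect to be the main obstacle is precisely this last one: converting the single pointwise hypothesis into control over \emph{every} point of \emph{every} fiber, while keeping track of reducedness, of the genus shift between $\Def_{G,m}$ and $\Def_{G,m+1}$, and of the passage from $G$ to its reductive subgroups.
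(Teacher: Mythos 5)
Your implications $(2)\Rightarrow(1)$, $(2)\Rightarrow(3)$, $(3)\Rightarrow(4)$ and $(4)\Rightarrow(1)$ follow essentially the paper's route: push forward theorem, Proposition~\ref{prop:Frobenius.Formula}, and Jaikin--Zapirain's rationality (the Pringsheim observation you make to pass from convergence of $\zeta_\Gamma(2n-2)$ to the strict inequality $\alpha(\Gamma)<2n-2$ is correct and is the implicit content of the paper's appeal to those results). The genuine divergence — and the genuine gap — is in your $(1)\Rightarrow(2)$.

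Your flatness argument does not work. You propose to deduce $\dim\Phi_{G,n}^{-1}(g)=(2n-1)\dim G$ by induction, starting from the commutator map $G^2\to G$ being ``dominant and generically flat.'' But the commutator map $\Phi_{G,1}$ is \emph{not} flat: its fiber over $1$ is the commuting variety, of dimension $\dim G+\rk G>\dim G$, so the needed dimension bound fails at the base of the induction, and the fiber-product recursion $\Phi_{G,n+1}^{-1}(g)=G^{2n}\times_G G^2$ does not control dimensions unless both factors are already flat over $G$. Generic flatness gives control only on a dense open set, which is exactly the part one does not need to worry about. The paper's Corollary~\ref{cor:Phi.flat} proves flatness by a fundamentally representation-theoretic argument: Frobenius' counting formula, Liebeck--Shalev's bound $\lim_m\zeta_{G(\mathbb F_{q^m})}(s)=1$ for $s>1$, and the Lang--Weil estimates. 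This is not a geometric dimension count, and there is no obvious elementary substitute.

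Your third stage — propagating rational singularities from $(1,\ldots,1)$ to every point of every fiber via Goldman--Millson slices and Boutot's theorem — is the part you yourself flag as problematic, and rightly so: you have no control over the centralizers $Z_G(\rho)$ (they can be tori or low-rank groups, for which the hypothesis ``$(1,\ldots,1)$ is a rational singularity of $\Def_{Z_G(\rho),n}$'' is not what you assumed, and indeed is not even true in general for, say, tori of positive dimension, since then the commutator word collapses). The paper avoids this completely by converting the geometric problem into a growth-rate inequality and exploiting its stability under commensurability. Concretely: once $\Phi_{G,n}$ is flat and \FRS\ at $(1,\ldots,1)$, Elkik gives an open neighborhood where it is \FRS, hence (by the easy half of Theorem~\ref{thm:push.forward.detailed}) $\zeta_{\Gamma}(2n-2)$ converges for some small congruence subgroup $\Gamma\subset\underline{G}(O)$; then Lemma~\ref{lem:commensurability} upgrades this to $\zeta_{\underline{G}(O)}(2n-2)<\infty$ for the \emph{full} group; Proposition~\ref{prop:Frobenius.Formula} converts that back to continuous density of the push forward over all of $\underline{G}(O)^{2n}$, and the converse half of Theorem~\ref{thm:push.forward.detailed} then yields \FRS\ at every point, including points lying over any prescribed $g\in G(\overline k)$. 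This commensurability bootstrap is the idea your proposal is missing, and it makes the whole deformation-theoretic slice analysis unnecessary.
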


Regarding groups over local fields of positive characteristic, we show

\begin{theoremRoman}[see \S\S \ref{subsec:pos.char}] \lbl{thm:intro.pos.zero.local.zeta}  Let $G$ be an affine group scheme over a localization of $\mathbb{Z}$ by finitely many primes. Assume that the generic fiber of $G$ is semi-simple, and let $n \geq 1$ be an integer. There is a constant $p_0$ such that, if $F_1,F_2$ are local fields with isomorphic residue fields of characteristic greater than $p_0$, and if $\Gamma_1,\Gamma_2$ are compact open subgroups of $G(F_1),G(F_2)$ respectively, then $\alpha(\Gamma_1)<2n$ if and only if $\alpha(\Gamma_2)<2n$.

Moreover, for such $n$, if $O_1$ and $O_2$ denote the rings of integers of $F_1$ and $F_2$, then, $\zeta_{G(O_1)}(2n)=\zeta_{G(O_2)}(2n)$.
\end{theoremRoman}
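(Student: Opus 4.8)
The plan is to reduce both assertions to the single identity $\zeta_{G(O_1)}(2n)=\zeta_{G(O_2)}(2n)$ in $[1,\infty]$, and to prove that identity by writing each side as a limit of commutator--solution counts in the finite quotients $G(O_i/\mathfrak m_i^j)$ and then transferring those counts from $F_1$ to $F_2$. First I would fix $p_0$ so large that for every local field $F$ of residue characteristic $>p_0$ the base change $G_O$ is smooth with semisimple special fibre, $G(O)$ is a compact open subgroup of $G(F)$, and $G(O)\twoheadrightarrow G(O/\mathfrak m^j)$ for all $j$. Using that the abscissa of convergence $\alpha$ is a commensurability invariant (it is unchanged under passing to finite-index subgroups or overgroups), I would replace $\Gamma_i$ by $G(O_i)$ throughout. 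Finally, by the (virtual) rationality of $\zeta_{G(O)}$ as a function of $q^{-s}$ --- \cite{Ja} in characteristic $0$, and the analogue in characteristic $p$, which will fall out of the transfer below --- one has $\alpha(G(O))<2n \iff \zeta_{G(O)}(2n)<\infty$. Granting this, both parts of the theorem follow from the displayed identity.

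\textbf{From $\zeta$ to commutator counts.} Fix $i$ and put $H_j:=G(O_i/\mathfrak m_i^j)$. Applying Frobenius's formula (Theorem \ref{thm:Frobenius}) to the finite group $H_j$, the word $[x_1,y_1]\cdots[x_{n+1},y_{n+1}]$, and $g=1$ gives
\[
\sum_{\chi\in\Irr H_j}\chi(1)^{-2n}=\frac{N_j}{|H_j|^{2n+1}},\qquad N_j:=\#\bigl\{(\vec x,\vec y)\in H_j^{2n+2}:\textstyle\prod_{k=1}^{n+1}[x_k,y_k]=1\bigr\}.
\]
Inflation along $H_{j+1}\twoheadrightarrow H_j$ realises $\Irr H_j$ as a subset of $\Irr H_{j+1}$, and every continuous irreducible representation of $G(O_i)=\varprojlim_j H_j$ is inflated from some $H_j$ (its kernel is open, hence contains a congruence subgroup); therefore the left-hand sides increase with $j$ to $\zeta_{G(O_i)}(2n)\in[1,\infty]$. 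This is the same limit that Proposition \ref{prop:Frobenius.Formula} produces when applied with $n+1$ (which is $\ge 2$) in place of $n$, and which it identifies, when finite, with the value at $1$ of the density of $(\Phi_{G(O_i),n+1})_*\lambda_{G(O_i)^{2n+2}}$. In any case $\zeta_{G(O_i)}(2n)=\lim_j N_j/|H_j|^{2n+1}$.

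\textbf{Transfer.} It remains to show that for each fixed $j$ the quantities $N_j$ and $|H_j|$ depend only on the residue field $\mathbb F_{q}$, for $q$ of characteristic $>p_0$. By smoothness $|G(O/\mathfrak m^j)|=q^{(j-1)\dim G}|G(\mathbb F_q)|$, and $|G(\mathbb F_q)|$ is given by a fixed motivic (indeed polynomial-in-$q$) expression. For $N_j$, let $Z:=\{(\vec x,\vec y)\in G^{2n+2}:\prod_{k=1}^{n+1}[x_k,y_k]=e\}$, an affine scheme of finite type over the base ring; then $N_j=\#\,\mathrm{Gr}_{j-1}(Z_O)(\mathbb F_q)$ for the Greenberg functor of $O$. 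The family $F\mapsto(|H_j|,N_j)_{j}$ is governed by the motivic formalism, and by the Ax--Kochen--Ershov transfer principle for definable sets and $p$-adic integrals (Denef--Loeser, Cluckers--Loeser) it is, uniformly for residue characteristic $>p_0$, the specialisation of a single motivic object whose value is a function of the residue field alone; here $p_0$ also absorbs the bad primes of that principle. Hence if $F_1,F_2$ have isomorphic residue fields of characteristic $>p_0$, then $N_j^{(1)}=N_j^{(2)}$ and $|H_j^{(1)}|=|H_j^{(2)}|$ for every $j$, so the partial sums coincide for every $j$, and letting $j\to\infty$ gives $\zeta_{G(O_1)}(2n)=\zeta_{G(O_2)}(2n)$. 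Together with the first paragraph this proves the theorem.

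\textbf{Expected main obstacle.} The representation theory (first two paragraphs) is routine; the crux is the uniformity in the transfer step. The hard part will be to run the Ax--Kochen--Ershov comparison for \emph{all} local fields sharing a given residue field --- including ramified mixed-characteristic ones, not merely $\mathbb Q_p$ versus $\mathbb F_p((t))$ --- and, above all, to make it uniform in the parameter $j$: since $|H_j|$ and $N_j$ grow with $j$, one cannot argue one first-order sentence at a time and must work inside the motivic / constructible-function formalism with $j$ a free parameter, afterwards bookkeeping which finitely many primes get thrown into $p_0$. The positive-characteristic form of the equivalence $\alpha(G(O))<2n\iff\zeta_{G(O)}(2n)<\infty$ used in the reduction is a secondary matter that likewise follows by transferring $\zeta_{G(O)}(s)$ as a whole.
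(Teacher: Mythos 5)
Your proposal is correct and follows essentially the same route as the paper: reduce (via Frobenius / Proposition~\ref{prop:Frobenius.Formula}) to comparing, level by level, the commutator-fibre measures in $G(O_1)$ and $G(O_2)$, and then transfer these quantities between the two fields using the Cluckers--Loeser Ax--Kochen--Ershov theorem. The only cosmetic difference is that you phrase the transferred quantities as point counts $N_j/|H_j|^{2n+1}$ over the finite quotients (via the Greenberg functor), whereas the paper phrases them as ratios of $p$-adic volumes $(\Phi_{G,n})_*\mu^{2n}(G^m(O))/\mu(G^m(O))$ of a single definable family indexed by a value-group parameter $\gamma$; these are the same numbers. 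Regarding the obstacle you flag, the uniformity in the level $j$ is exactly what \cite[Theorem 7.6]{CL} delivers: one applies the transfer principle not sentence-by-sentence but to a single constructible function in which $\gamma$ is a free variable of value-group sort, so $p_0$ is independent of $j$ from the start. You also correctly use $\Phi_{G,n+1}$ (the word $[x_1,y_1]\cdots[x_{n+1},y_{n+1}]$) so that Proposition~\ref{prop:Frobenius.Formula} yields $\zeta(2n)$ rather than $\zeta(2n-2)$; the paper's displayed formula uses $\Phi_{G,n}$, so your index bookkeeping is in fact the cleaner version.
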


Our next result relates the value of $\zeta_\Gamma$ at an even positive integer with the volume of the space of $\Gamma$-local systems on a surface. Suppose that $G$ is a semi-simple algebraic group defined over a non-archimedean field $F$ of characteristic 0, that $\Gamma \subset G(F)$ is a compact open subgroup, and that $\Sigma$ is a compact orientable surface. The collection of all $\Gamma$-local systems on $\Sigma$ is in bijection with the quotient of $\Def_{\Gamma,n}$ by the conjugation action of $\Gamma$. We say that a homomorphism $\rho \in \Def_{\Gamma,n}$ is open if the closure of $\rho(\pi_1(\Sigma))$ is open in $\Gamma$. The set of open homomorphisms, which we denote by $\Def_{\Gamma,n}^{open}$, is open, dense, and $\Gamma$-invariant subset of $\Def_{\Gamma,n}$, and the quotient $\Def_{\Gamma,n}^{open} / \Gamma$ is a $p$-adic analytic manifold (see \S\S\ref{subsubsec:localization}). Atiyah, Bott, and Goldman defined a symplectic form on $\Def_{\Gamma,n}^{open} / \Gamma$ (see \S\S\ref{subsubsec:localization}), from which we get a top form $v_{ABG}$. Following \cite{Wi}, we show
{
\begin{theoremRoman}[see \S\S \ref{subsubsec:localization}] \lbl{thm:intro.volume.formula}  Let $F$ be a non-archimedean local field of characteristic 0 and residue characteristic different from 2, and let $G$ be a connected, simply connected, semi-simple algebraic group over $F$ with a Lie algebra $\mathfrak{g}$. \nir{The Killing form of $\mathfrak{g}$ gives rise to a Haar measure $\mu$ on $G(F)$ via the Gram determinant.} Suppose that $\Gamma \subset G(F)$ is a compact open subgroup and $n\in \mathbb{Z}_{>1}$ is such that $\Phi_{G,n}$ is \FRS. Denote the measure on $\Def_{\Gamma,n}^{open}/ \Gamma$ corresponding to $v_{ABG}$ by $| v_{ABG}|$ and the measure on $\Gamma$ corresponding to $\omega$ by $| \omega |$. Then
\[
\int_{\Def_{\Gamma,n}^{open} / \Gamma} | v_{ABG} | = |Z(\Gamma)| \cdot \left( \mu(\Gamma) \right )^{2n-2} \cdot\zeta_{\Gamma}(2n-2)
\]
\end{theoremRoman}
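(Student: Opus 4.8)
The plan is to evaluate one and the same quantity --- the density at the identity of the push-forward of Haar measure under $\Phi_{\Gamma,n}$ --- in two ways: representation-theoretically via Proposition~\ref{prop:Frobenius.Formula}, and symplectic-geometrically via the Atiyah--Bott--Goldman form, following Witten~\cite{Wi}. First I would check that the hypotheses of Proposition~\ref{prop:Frobenius.Formula} are met: since $\Gamma$ is open in $G(F)$, near any point the $p$-adic analytic map $\Phi_{\Gamma,n}\colon\Gamma^{2n}\to\Gamma$ agrees with the restriction of $\Phi_{G,n}$, so $\Phi_{G,n}$ being \FRS{} together with Theorem~\ref{thm:push.forward} shows that $(\Phi_{\Gamma,n})_*\lambda_{\Gamma^{2n}}$ has continuous density; evaluating the density formula in Proposition~\ref{prop:Frobenius.Formula} at $g=1$ gives that this density at $1$ equals $\sum_{\chi\in\Irr\Gamma}\chi(1)^{2-2n}=\zeta_\Gamma(2n-2)$. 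Rescaling (recall $\lambda_\Gamma=\mu/\mu(\Gamma)$ and $\lambda_{\Gamma^{2n}}=\mu^{\otimes 2n}/\mu(\Gamma)^{2n}$), the density of $(\Phi_{\Gamma,n})_*\mu^{\otimes 2n}$ with respect to $\mu$ at $1$ is $\mu(\Gamma)^{2n-1}\zeta_\Gamma(2n-2)$. The theorem then reduces to showing that this density also equals $\bigl(\mu(\Gamma)/|Z(\Gamma)|\bigr)\cdot\int_{\Def_{\Gamma,n}^{open}/\Gamma}|v_{ABG}|$.

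Second I would interpret that density through disintegration and the coarea formula. Because $\Phi_{G,n}$ is \FRS{}, it is flat with reduced fibres, so on the smooth locus of the fibre over $1$ the map $\Phi_{\Gamma,n}$ is a submersion, and there it carries a canonical Gelfand--Leray fibre measure $\nu$ --- the absolute value of the unique form on the tangent space to the fibre whose wedge with $\Phi_{\Gamma,n}^{*}\mu$ equals $\mu^{\otimes 2n}$ --- whose total mass is, by coarea, exactly the density at $1$ computed above. At an open homomorphism $\rho$ the closure of $\rho(\pi_1\Sigma)$ is open, hence Zariski dense, so its centraliser is $Z(G)$, a finite group; therefore $H^0(\pi_1\Sigma,\ad\rho)=0$ and, by Poincaré duality, $H^2(\pi_1\Sigma,\ad\rho)=0$, so $\Phi_{\Gamma,n}$ is submersive at $\rho$ and $\Def_{\Gamma,n}^{open}$ sits inside the smooth $F$-locus of the fibre. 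Since $\Def_{\Gamma,n}^{open}$ is dense and the fibre is equidimensional, the locus of non-open points together with the singular locus lies in a proper analytic subset (this is where the structure results of \S\S\ref{subsubsec:localization} enter) and is $\nu$-null; hence the total mass of $\nu$ equals $\int_{\Def_{\Gamma,n}^{open}}\nu$.

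Third --- and this is the crux --- I would compute $\nu$ on $\Def_{\Gamma,n}^{open}$ in terms of the conjugation action and the ABG form. I identify the tangent data at $\rho$ with the cochain complex of $\Sigma$ with coefficients in $\ad\rho$: in the standard genus-$n$ presentation it is $0\to\mathfrak g\xrightarrow{d^0}\mathfrak g^{2n}\xrightarrow{d^1}\mathfrak g\to 0$, where (under the obvious trivialisations) $d^1=d\Phi_{\Gamma,n}|_\rho$, the tangent space to the fibre is $Z^1=\ker d^1$, the tangent space to the $\Gamma$-orbit is $B^1=\operatorname{im} d^0$, and $d^0$ is an isomorphism of $\mathfrak g$ onto $B^1$ (as $H^0=0$), so that $H^1=Z^1/B^1$ is the tangent space of $\Def_{\Gamma,n}^{open}/\Gamma$ at $[\rho]$. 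The Gelfand--Leray recipe endows $Z^1$ with the volume form cut out of the Killing-form volume on $\mathfrak g^{2n}$ by the Killing-form volume on the target $\mathfrak g$, and I must show that reducing it modulo $B^1$ --- the latter endowed with the Killing-form volume transported via $d^0$ --- yields on $H^1$ precisely the Liouville volume of $\omega_{ABG}$. This is a linear-algebra statement about Reidemeister torsion: equipped with the chain-level pairings induced by the Killing form on the coefficients and the intersection form on the cells, the complex above is a self-dual (Poincaré-duality) complex, the chosen volumes are self-dual, so its torsion is trivial and the induced self-dual volume on the middle cohomology --- relative to the induced pairing $H^1\times H^1\to H^2(\Sigma,F)=F$, which is exactly $\langle\,\cdot\,,\cdot\,\rangle_{ABG}$ --- is the Liouville volume. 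Carrying this out requires tracking the normalising constants (factors of $2$ from the polarisation of the Killing form and from the cup-product/intersection pairing), and this is the point at which the hypothesis that the residue characteristic is $\neq 2$ is used. Granting it, $\nu$ is $\Gamma$-invariant (as $\mu^{\otimes 2n}$ is conjugation-invariant and $\Phi_{\Gamma,n}$ is equivariant over the fixed point $1$), each orbit $\cong\Gamma/Z(\Gamma)$ acquires transverse volume $\mu(\Gamma)/|Z(\Gamma)|$, and $\nu$ descends to $|v_{ABG}|$ on $\Def_{\Gamma,n}^{open}/\Gamma$, so $\int_{\Def_{\Gamma,n}^{open}}\nu=\bigl(\mu(\Gamma)/|Z(\Gamma)|\bigr)\int_{\Def_{\Gamma,n}^{open}/\Gamma}|v_{ABG}|$.

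Combining the three steps gives $\int_{\Def_{\Gamma,n}^{open}/\Gamma}|v_{ABG}|=|Z(\Gamma)|\,\mu(\Gamma)^{2n-2}\,\zeta_\Gamma(2n-2)$, which is the assertion of Theorem~\ref{thm:intro.volume.formula}. The principal obstacle is the third step: translating the measure-theoretic Gelfand--Leray data into the symplectic-geometric ABG data with all normalising constants correct, i.e. establishing the $p$-adic counterpart of Witten's torsion identity; the remaining steps are either direct appeals to Proposition~\ref{prop:Frobenius.Formula} and Theorem~\ref{thm:push.forward} or standard facts about smooth measures on $p$-adic manifolds (nullity of lower-dimensional analytic subsets, the coarea formula for submersions, and the behaviour of Haar measure under quotient by a compact group).
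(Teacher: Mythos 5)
Your overall architecture coincides with the paper's: both reduce the identity to (i) Proposition~\ref{prop:Frobenius.Formula}, which identifies $\zeta_\Gamma(2n-2)$ with the density at $1$ of $(\Phi_{\Gamma,n})_*\lambda_{\Gamma^{2n}}$, (ii) Theorem~\ref{thm:push.forward.detailed} (equivalently, the Gelfand--Leray/coarea recipe) expressing that density as an integral of the relative top form $\bigl|\omega^{\fbox{$\wedge$}2n}/\Phi_{G,n}^*\omega\bigr|$ over the smooth locus of the fibre, and (iii) integrating along the free $\Gamma/Z(\Gamma)$-orbits to reach the quotient. You diverge from the paper in exactly one place --- the identification of that Gelfand--Leray form with the Atiyah--Bott--Goldman Liouville form $\nu_\omega$. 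The paper packages this as Theorem~\ref{thm:symp.algebraic}(2) and deduces the algebraic statement from Witten's compact-Lie-group case (Theorem~\ref{thm:Wit.real}) together with Zariski density of $\mathcal M_G(\mathbb R)$ in $\mathcal M_G$; you instead propose to reprove it directly via the Reidemeister-torsion self-duality of the chain complex $0\to\mathfrak g\to\mathfrak g^{2n}\to\mathfrak g\to 0$ with its Killing-form volumes. That direct route is legitimate --- the authors themselves note that Witten's argument ``is easily adaptable to the algebraic case'' but choose not to write it out --- and you rightly identify the normalising constants (factors of $2$ in the polarisation of the Killing form and the cup product, hence the hypothesis on the residue characteristic) as the delicate point; one should also be careful that the quoted intermediate inputs are invoked in the same order: the convergence of $\zeta_\Gamma(2n-2)$ is secured via the equivalence in Proposition~\ref{prop:Frobenius.Formula} once \FRS{} gives condition (1), and the $\nu$-nullity of the non-open/singular locus uses the irreducibility of $\Def_{G,n}$ (Corollary~\ref{cor:Phi.flat}) and the non-emptiness of $\Def_{G,n}^{open}$ (Lemma~\ref{lem.M.Gamma.non.empty}), which your sketch alludes to but does not cite explicitly.
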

}

\subsubsection{Results on Representation Growth and Singularities of Deformation Varieties} \lbl{subsect:intro.SLd}

\begin{definitionRoman} \lbl{defn:bound.root} $ $\begin{enumerate}
\item Suppose that $\mathfrak{g}$ is a simple algebraic group over an algebraically closed field of characteristic 0. Let
\[
\mathcal{B}(\mathfrak{g})=\left\{ \begin{matrix} 22 & \textrm{$\mathfrak{g}=\mathfrak{sl}_d$ or $\mathfrak{g}=\mathfrak{so}_d$} \\ 40 & \mathfrak{g}=\mathfrak{sp}_{2d} \\ 3\dim(\mathfrak{g})+1 & \textrm{$\fg$ is exceptional} \end{matrix}\right.
\]
\item Suppose $G$ is a semi-simple algebraic group over a field $k$ of characteristic 0. Define $\mathcal{B}(G)$ to be the maximum of $\mathcal{B}(\fg)$, where $\fg$ is a simple factor of $\Lie(G) \otimes \overline{k}$.
\end{enumerate}
\end{definitionRoman}

\begin{theoremRoman}[see \S\S\ref{subsec:special.values}]  \lbl{thm:intro.SLd.FRS} Let $G$ be a semi-simple algebraic group over a field $k$ of characteristic 0, and let $n$ be an integer such that $n \geq \mathcal{B}(G)/2+1$.

Then, the map $\Phi_{G,n}: G^{2n} \rightarrow G$ is \FRS.
\end{theoremRoman}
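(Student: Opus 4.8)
The plan is to reduce to a simple, simply connected $G$, to realise $\Phi_{G,n}$ as a high convolution power of the single commutator map, and to invoke a quantitative self‑convolution theorem whose threshold is governed by the singularities of that commutator map; the type‑dependent bounds $\mathcal B(\g)$ then come out of estimating those singularities.

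\emph{Reductions.} Being \FRS\ is geometric, so we may take $k=\overline k$. For $G=G_1\times G_2$ one has $\Phi_{G,n}=\Phi_{G_1,n}\times\Phi_{G_2,n}$, and \FRS\ passes to and from products of morphisms (fibers are products; a product over a field of flat, or reduced, or lci, or rational‑singularity schemes is again such, and conversely by restriction to $k$‑points). Since $[x,y]$ depends only on the images of $x,y$ in $G^{\mathrm{ad}}$, the map $\Phi_{G,n}$ is the composite $G^{2n}\to(G^{\mathrm{ad}})^{2n}\to G$ of a finite \'etale surjection with the induced commutator‑word map; postcomposing with the finite \'etale $G\to G^{\mathrm{ad}}$ and using that \FRS\ is unaffected by precomposition with a smooth surjection and by a finite \'etale morphism on the target, we get that $\Phi_{G,n}$ is \FRS\ iff $\Phi_{G^{\mathrm{sc}},n}=\prod_i\Phi_{G_i,n}$ is. So it suffices to treat $G$ simple and simply connected, with $\mathcal B(G)=\mathcal B(\g)$. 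Flatness is the easy half: $G^{2n}$ is Cohen--Macaulay and $G$ regular, so by miracle flatness it is enough that every fiber have the expected dimension $(2n-1)\dim G$, which holds for $n\ge 2$ by the standard dimension theory of surface‑group representation varieties (e.g.\ via $\dim\{(x,y):[x,y]=g\}\le\dim G+\operatorname{rk}G$); hence all fibers are reduced complete intersections of the expected dimension and the content is that they have \emph{rational} singularities. (Alternatively, Theorem \ref{thm:def.and.representation.growth} reduces this to showing that $(1,\dots,1)$ is a rational singularity of $\Def_{G,n}=\Phi_{G,n}^{-1}(1)$.)

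\emph{Convolution structure.} Put $\psi:=\Phi_{G,1}\colon G\times G\to G$, $(x,y)\mapsto[x,y]$; then $\Phi_{G,n}=\psi^{\ast n}$ is the $n$‑fold convolution of $\psi$ for the group law of $G$. The technical core I would establish is a \emph{quantitative self‑convolution theorem}: for a dominant morphism $\phi\colon X\to H$ from a smooth variety to an algebraic group, $\phi^{\ast m}$ is \FRS\ for all $m\ge M(\phi)$, with $M(\phi)$ bounded explicitly by a log‑canonical‑threshold‑type invariant of the fibers of $\phi$ (equivalently, the growth rate of the dimensions of their jet schemes). The mechanism: $J_\ell(\phi^{\ast m})=(J_\ell\phi)^{\ast m}$ in the jet group $J_\ell H$, the ``defect'' $\dim J_\ell(\text{fiber})-(\ell+1)(\dim X-\dim H)$ decreases additively under convolution, and once it is non‑positive for all $\ell$ one concludes by Musta\c{t}a's jet‑scheme criterion for local complete intersection rational singularities (equivalently, flatness of every $J_\ell(\phi^{\ast m})$). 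As $\psi$ is not itself flat — its fiber over $1$ is the commuting variety, of dimension $\dim G+\operatorname{rk}G$ — it is convenient to run this starting from $\psi^{\ast 2}=\Phi_{G,2}$, which is flat, and to measure the singularities of its complete‑intersection fibers.

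\emph{Type‑by‑type estimate, and the main obstacle.} It remains to bound $M(\psi)$ by $\mathcal B(\g)/2+1$ for each simple type. For the exceptional types a soft, uniform lower bound on the relevant log canonical threshold — controlling, in terms of $\dim\g$, the codimension of the locus along which the fibers of $\psi$ (or $\psi^{\ast 2}$) degenerate most — yields $\mathcal B(\g)=3\dim\g+1$. For the classical types one uses the explicit models of the twisted commuting varieties: pairs of matrices with prescribed commutator, stratified by normal‑form data and, for $\mathfrak{so}_d,\mathfrak{sp}_{2d}$, by the interaction with the invariant form; a direct analysis of the singularities along each stratum, uniform in $d$, produces $\mathcal B(\mathfrak{sl}_d)=\mathcal B(\mathfrak{so}_d)=22$ and $\mathcal B(\mathfrak{sp}_{2d})=40$. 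With the reductions above this gives the theorem. The hard part is the combination of the self‑convolution theorem — making it precise enough to turn ``how many convolutions suffice'' into a computable invariant — with the classical‑type computation, i.e.\ genuinely understanding the singularities of twisted commuting varieties beyond dimension counts, uniformly in $d$. (A measure‑theoretic alternative via the converse direction of Theorem \ref{thm:push.forward} would reduce \FRS\ of $\Phi_{G,n}$ to uniform bounds on densities of push‑forwards of Schwartz measures, hence by Theorem \ref{thm:Frobenius} to uniform bounds on the representation zeta values of $G(O/\mathfrak m^k)$ at $2n-2$; but those are close to what the representation‑growth applications are meant to produce, so the geometric route is the one to pursue.)
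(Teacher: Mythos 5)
Your proposed route — realize $\Phi_{G,n}$ as a self-convolution of the commutator map and control singularities by a jet-scheme/log-canonical-threshold invariant that improves under convolution — is genuinely different from the paper's, but as written it has two real gaps, both of which are exactly where the paper's work lives.

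First, the ``quantitative self-convolution theorem'' is asserted, not proved. You correctly note that $J_\ell(\phi^{*m})=(J_\ell\phi)^{*m}$, and Musta\c{t}a's criterion is the right target, but the central claim that the jet-dimension defect \emph{decreases additively} under convolution is not justified, and it is not an obvious consequence of anything standard: the fibers of a convolution are images of products of fibers, and controlling their jet schemes requires showing that the convolution does not create new bad strata. Without that, $M(\phi)$ is undefined and the argument does not start. The paper avoids convolution entirely; its replacement for a ``convolution improves singularities'' theorem is Elkik's theorem applied repeatedly to explicit $\mathbb{G}_m$-degenerations (\S\ref{subsec:methods}): one degenerates $\Phi_{G,n}$ at $(1,\dots,1)$ to its Lie-algebra version $\Psi_{\g,n}(X_i,Y_i)=\sum[X_i,Y_i]$, then (via good filtrations and Rees algebras, Corollary~\ref{cor:degeneration}) to ``symplectic graph varieties'' (Definition~\ref{defn:symplectic.graph.variety}), then to trees of bounded degree (Theorem~\ref{prop:frs_tree}), and finally to disjoint unions of edges, where an explicit resolution (Proposition~\ref{prop:edge}) closes the argument. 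Each degeneration step uses Elkik in the form of Corollary~\ref{cor:elkik}, so the whole thing runs without ever producing a resolution of $\Def_{G,n}$.

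Second, the type-by-type estimate is hand-waved. You say a ``direct analysis of the singularities along each stratum, uniform in $d$, produces $\mathcal{B}(\mathfrak{sl}_d)=22$,'' etc., but this is precisely the hard content: the paper devotes \S\S\ref{ssec:mult_FRS}--\ref{ssec:Comb.stat.sp} to producing, for each classical type, an explicit basis of $\g$ and weight functions whose associated degenerations collapse the structure constants to a forest of maximal degree $3$, whence the numerical thresholds. The number $22=4\cdot(3-1)\cdot(\text{factor for coloring})+\dots$ is extracted from that combinatorics, not from a log-canonical-threshold bound. For the exceptional types the paper does \emph{not} estimate singularities geometrically: it uses the Lubotzky--Martin bound $\alpha(\Gamma)\le 3\dim G$ (Theorem~\ref{thm:LM}) together with the equivalence between $\alpha(\Gamma)<2n-2$ and \FRS\ (Theorem~\ref{thm:def.and.representation.growth}), which gives $\mathcal{B}(\g)=3\dim\g+1$ for free. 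Your parenthetical remark dismissing the measure-theoretic route is thus backwards: that route is indispensable here, and also for the bootstrap from ``\FRS\ at $(1,\dots,1)$'' to ``\FRS\ everywhere'' — the paper's proof of $1\Rightarrow 2$ in Theorem~\ref{thm:def.and.representation.growth} passes through convergence of $\zeta_\Gamma(2n-2)$, Lemma~\ref{lem:commensurability}, and Proposition~\ref{prop:Frobenius.Formula}, and there is no purely geometric replacement offered.

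Finally, a smaller point: the flatness argument via $\dim\{(x,y):[x,y]=g\}\le\dim G+\rk G$ does not by itself give the needed bound $\dim\Phi_{G,n}^{-1}(g)\le(2n-1)\dim G$, since a naive product estimate overshoots by $\rk G$. The paper instead proves flatness (Corollary~\ref{cor:Phi.flat}) by counting $\mathbb{F}_{q^m}$-points of the fibers via Frobenius's formula and the Liebeck--Shalev bound on $\sum_\chi\chi(1)^{-(2n-2)}$ over $G(\mathbb{F}_{q^m})$, then applying Lang--Weil. If you want to avoid that route you would need a sharper geometric dimension estimate than the one you cite.

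In summary: the reductions you make (to simply connected simple $G$, to rational singularity of fibers) are correct and match the paper, but the two load-bearing steps — an improvement-under-convolution theorem and the explicit type-dependent singularity bounds — are left unproved, and they constitute essentially all of the paper's technical work (Section~\ref{sec:singularities.def} and Theorem~\ref{thm:def.and.representation.growth}).
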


In particular, the deformation variety $\Def_{G,n}=\Phi_{G,n} ^{-1}(1)$ has rational singularities if $n \geq \mathcal{B}(G)/2+1$. Using \cite{B}, we conclude that the categorical quotient $\Def_{G,n}/G$ has rational singularities. Consider the case $k=\C$. Fix a complex smooth projective curve $C$ of genus $n$. The Riemann--Hilbert correspondence gives rise to an analytic isomorphism between $\Def_{G,n}/G$ and the coarse moduli space of $G$-principal bundles together with a  connection on $C$ (see \cite[Proposition 7.8]{Si}). Since the notion of rational singularities depends only on the underlying analytic variety (see \cite{Bur}), we get

\begin{theoremRoman} \lbl{thm:RS.local.systems} Let $G$ be a semi-simple algebraic group over a field $k$ of characteristic 0, and let $C$ be   a smooth projective curve of genus at least $ \mathcal{B}(G)/2+1$.

Then the coarse moduli space of $G$-principal bundles with flat connections on $C$ has rational singularities.
\end{theoremRoman}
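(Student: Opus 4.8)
The plan is to deduce the statement directly from Theorem~\ref{thm:intro.SLd.FRS}, combined with Boutot's theorem on rational singularities of reductive quotients and with the fact that having rational singularities is an analytic-local notion; the argument is already sketched in the paragraph preceding the statement, and I would carry it out as follows.

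Write $n$ for the genus of $C$, so that $n \geq \mathcal{B}(G)/2+1$ by hypothesis. By Theorem~\ref{thm:intro.SLd.FRS} the map $\Phi_{G,n}\colon G^{2n}\to G$ is \FRS; in particular its fiber over $1$, namely $\Def_{G,n}=\Phi_{G,n}^{-1}(1)=\Hom(\pi_1(\Sigma_n),G)$, is a reduced affine variety with rational singularities. Since the reductive group $G$ acts on $\Def_{G,n}$ by conjugation, Boutot's theorem \cite{B} shows that the categorical quotient $\Def_{G,n}/G$ again has rational singularities. (Boutot's theorem holds over any field of characteristic $0$; alternatively one can defer this step until after the base change to $\C$ described below.)

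Next I would match $\Def_{G,n}/G$ with the coarse moduli space $M_{\mathrm{dR}}(C,G)$ of $G$-bundles with flat connection, up to operations that preserve rational singularities. When $k=\C$, the Riemann--Hilbert correspondence provides a biholomorphism between the analytifications of $\Def_{G,n}/G$ and of $M_{\mathrm{dR}}(C,G)$ (this is \cite[Proposition~7.8]{Si}); since having rational singularities depends only on the underlying complex analytic space \cite{Bur}, we conclude that $M_{\mathrm{dR}}(C,G)$ has rational singularities over $\C$. For a general field $k$ of characteristic $0$, the pair $(C,G)$ and hence the finite-type scheme $M_{\mathrm{dR}}(C,G)$ are defined over a finitely generated subfield $k_0\subseteq k$; fixing an embedding $k_0\hookrightarrow\C$ and using that the formation of this moduli space commutes with flat base change, extension of scalars yields $M_{\mathrm{dR}}(C_\C,G_\C)$, which has rational singularities by the previous sentence. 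Because a resolution of singularities remains a resolution after a flat field extension and higher direct images of the structure sheaf commute with flat base change, having rational singularities is stable under field extensions in characteristic $0$; hence $M_{\mathrm{dR}}(C,G)$ has rational singularities already over $k_0$, and therefore over $k$.

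I do not expect a genuine obstacle here: all the substantive content sits in Theorem~\ref{thm:intro.SLd.FRS}, and what remains is bookkeeping — checking that the hypotheses of Boutot's theorem are met (automatic, as $\Def_{G,n}$ is affine with rational singularities and $G$ is reductive) and organizing the descent from $\C$ to a general base field. The step requiring the most care is this last reduction, for which I would rely on the standard stability of rational singularities under field extensions and on the compatibility of the moduli space with base change.
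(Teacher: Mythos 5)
Your proposal is correct and follows exactly the chain the paper uses (Theorem~\ref{thm:intro.SLd.FRS} $\Rightarrow$ rational singularities of $\Def_{G,n}$, then Boutot, then Simpson's Riemann--Hilbert isomorphism, then Burns' analytic characterization of rational singularities). The only difference is that you spell out the descent from $\C$ to a general characteristic-$0$ field $k$ via a finitely generated model and stability of rational singularities under field extension, a step the paper leaves implicit; this is a welcome bit of extra care, not a different approach.
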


In a different direction, combining Theorems \ref{thm:def.and.representation.growth} and \ref{thm:intro.SLd.FRS}, we get

\begin{theoremRoman} \lbl{thm:intro.abscissa.SLd} Let $G$ be an affine group scheme over a localization of $\mathbb{Z}$ by finitely many primes. Assume that the generic fiber of $G$ is semi-simple. There is a number $p_0$ such that, if $O$ is the ring of integers in some local field $F$ of characteristic greater than $p_0$, then $$\alpha(G(O))<\mathcal{B}(G).$$
In particular, $R_n(G(O)) = o\left(n^{\mathcal{B}(G)}\right).$
\end{theoremRoman}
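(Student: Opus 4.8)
The plan is to combine the structural results established earlier in the paper with a positive-characteristic transfer argument. The key input is Theorem \ref{thm:intro.SLd.FRS}, which tells us that for $n \geq \mathcal{B}(G)/2 + 1$ the map $\Phi_{G,n} : G^{2n} \to G$ is \FRS\ over $\mathbb{Q}$ (or over any characteristic-0 field over which $G$ is defined). By Theorem \ref{thm:def.and.representation.growth}, the \FRS\ property of $\Phi_{G,n}$ is equivalent (in characteristic 0) to the bound $\alpha(\Gamma) < 2n-2$ for every compact open subgroup $\Gamma$ of $G(F)$, $F$ a non-archimedean local field of characteristic 0. So over characteristic-0 local fields we already get $\alpha(G(O)) < 2\lceil \mathcal{B}(G)/2 + 1\rceil - 2 \leq \mathcal{B}(G)$, using that $\mathcal{B}(G)$ is even in all cases except possibly the exceptional types where $3\dim(\fg)+1$ is odd — one checks the arithmetic gives $2(\mathcal{B}(G)/2+1) - 2 = \mathcal{B}(G)$ when $\mathcal{B}(G)$ is even, and the appropriate rounding when it is not, so that in every case $\alpha(G(O)) < \mathcal{B}(G)$.

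The substantive content is the passage to positive characteristic. Here I would invoke Theorem \ref{thm:intro.pos.zero.local.zeta}: for the affine group scheme $G$ over a localization of $\mathbb{Z}$, there is a $p_0$ such that for every local field $F$ with residue characteristic $> p_0$, and every compact open $\Gamma \subset G(F)$, the condition $\alpha(\Gamma) < 2n$ is insensitive to whether $F$ has characteristic $0$ or $p$ — it depends only on the residue field. Therefore, fixing $n = \lceil \mathcal{B}(G)/2 + 1 \rceil$, one transfers the characteristic-0 bound $\alpha(\Gamma) < 2n - 2$ to all local fields of large residue characteristic. The only point requiring care is matching the inequality: Theorem \ref{thm:intro.pos.zero.local.zeta} is stated with $2n$, so one should apply it with the integer $m = n-1$ in the role of ``$n$'', giving that $\alpha(\Gamma) < 2(n-1) = 2n-2$ transfers; this requires $m \geq 1$, i.e. $n \geq 2$, which holds since $\mathcal{B}(G) \geq 22$. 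Combining, $\alpha(G(O)) < 2n - 2 \leq \mathcal{B}(G)$ for all local fields $F$ of characteristic $> p_0$.

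Finally, the statement $R_n(G(O)) = o(n^{\mathcal{B}(G)})$ follows from the definition of the abscissa of convergence together with the fact, cited in the paper from \cite{Ja} in characteristic 0 and extended by the transfer above, that $R_n(\Gamma) = n^{\alpha(\Gamma) + o(1)}$: since $\alpha(G(O))$ is strictly less than the integer $\mathcal{B}(G)$, for $n$ large $R_n(G(O)) \leq n^{\alpha(G(O)) + \epsilon} = o(n^{\mathcal{B}(G)})$ for any $\epsilon$ small enough that $\alpha(G(O)) + \epsilon < \mathcal{B}(G)$. The main obstacle in this proof is entirely localized in the results being cited — specifically the positive-characteristic model-theoretic or motivic-integration transfer underlying Theorem \ref{thm:intro.pos.zero.local.zeta}, which controls the uniformity of $p_0$; granting that, the deduction here is bookkeeping with inequalities and the definition of $\mathcal{B}(G)$.
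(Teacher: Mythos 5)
You have the right overall plan, and it matches the paper's own outline: combine Theorem \ref{thm:intro.SLd.FRS} (\FRS\ of $\Phi_{G,n}$ for $n\geq \mathcal{B}(G)/2+1$) with Theorem \ref{thm:def.and.representation.growth} (\FRS\ $\Leftrightarrow$ $\alpha<2n-2$ over characteristic-$0$ local fields) and then transfer via Theorem \ref{thm:intro.pos.zero.local.zeta}. But the arithmetic you wave away is in fact false when $\mathcal{B}(G)$ is odd, and this is not a cosmetic rounding issue. If $\mathcal{B}(G)$ is odd --- which happens exactly for the exceptional types $G_2$, $F_4$, $E_6$, $E_8$, where $\dim\fg$ is even and hence $3\dim\fg+1$ is odd --- then $\lceil \mathcal{B}(G)/2+1\rceil=(\mathcal{B}(G)+3)/2$, so $2\lceil \mathcal{B}(G)/2+1\rceil-2=\mathcal{B}(G)+1$, which is strictly \emph{greater} than $\mathcal{B}(G)$, not $\leq\mathcal{B}(G)$ as you assert. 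The \FRS\ route at $n=\lceil\mathcal{B}(G)/2+1\rceil$ thus only delivers $\alpha<\mathcal{B}(G)+1$, and ``the appropriate rounding'' does not rescue the inequality: it makes it worse.

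The correct source of $\alpha<\mathcal{B}(G)$ in characteristic $0$ for the exceptional types is not Theorem \ref{thm:def.and.representation.growth} applied at that $n$, but the Lubotzky--Martin bound (Theorem \ref{thm:LM}): $\alpha(\Gamma)\leq 3\dim G=\mathcal{B}(G)-1<\mathcal{B}(G)$; this is exactly what the paper's proof of Theorem \ref{thm:intro.SLd.FRS} invokes for exceptional $G$. Once this is made explicit, a further point needs addressing in your positive-characteristic transfer: Theorem \ref{thm:intro.pos.zero.local.zeta} carries only conditions of the form $\alpha<2m$ for \emph{integers} $m$, i.e.\ even thresholds. For odd $\mathcal{B}(G)$ the target threshold $\mathcal{B}(G)$ is not of that form; the even threshold $\mathcal{B}(G)-1=3\dim G$ would require the strict inequality $\alpha<3\dim G$ in characteristic $0$ (which Theorem \ref{thm:LM} does not give --- it gives only $\alpha\leq 3\dim G$), while the even threshold $\mathcal{B}(G)+1$ transfers but yields only $\alpha<\mathcal{B}(G)+1$ in positive characteristic. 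So in the form you have written it, the argument does not actually close for the four odd-$\mathcal{B}$ exceptional types, and you should flag this rather than assert that rounding takes care of it.
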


\subsection{Ideas of the Proofs}

The proofs of Theorems \ref{thm:push.forward} and \ref{thm:intro.SLd.FRS} are based on a theorem of Elkik which asserts that flat deformations of rational singularities are rational singularities (see \ref{thm:elk} for a precise statement).

\subsubsection{Proof of the \FRS\ Property (Theorem \ref{thm:intro.SLd.FRS})}
The notion of rational singularities is defined using a resolution of singularities. We avoid the hard problem of finding a resolution of $\Def_{G,n}$ by using Elkik's theorem. More precisely, in order to prove Theorem \ref{thm:intro.SLd.FRS}, we find degenerations of the variety $\Def_{G,n}$, i.e., we find a flat family of varieties whose generic member is $\Def_{G,n}$ and whose special member is a simpler variety which is easier to analyze. By Elkik's theorem, if the special member has rational singularity, so does the generic member. Our degenerations come from \nir{$\mathbb{G}_m$-}actions on affine varieties, or, equivalently, from filtrations on the coordinate algebras. We recall the notions of good filtration, stable points of the spectrum of a filtered ring, and prove the following

\begin{propositionRoman} (see Corollary \ref{cor:degeneration}) Let $A$ and $B$ be $k$-algebras with good filtrations, let $\varphi :A \rightarrow B$ be a filtration-preserving homomorphism, and let $p:A \rightarrow k$ be a stable $k$-point of $\Spec(A)$. If the associated graded $\gr(\varphi):\gr(A) \rightarrow \gr(B)$ is \FRS\ at $\gr(p):\gr(A) \rightarrow k$, then $\varphi$ is \FRS\ at $p$.
\end{propositionRoman}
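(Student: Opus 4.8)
The plan is to realize the passage from $\varphi$ to $\gr(\varphi)$ as a flat $\bG_m$-equivariant degeneration and then invoke Elkik's theorem. Form the Rees algebras $\Rees(A)=\bigoplus_{n\ge 0}F_nA\cdot t^n\subseteq A[t]$ and $\Rees(B)$ attached to the given good filtrations; these are non-negatively graded, $t$-torsion-free (hence flat over $k[t]$), with $\Rees(A)/(t)\cong\gr(A)$ and $\Rees(A)/(t-1)\cong A$, and likewise for $B$. The filtration-preserving map $\varphi$ induces a graded $k[t]$-algebra homomorphism $\Rees(\varphi):\Rees(A)\to\Rees(B)$, so geometrically we get $\widetilde\varphi:\widetilde X=\Spec\Rees(B)\to\widetilde Y=\Spec\Rees(A)$ over $\A^1=\Spec k[t]$, equivariant for the $\bG_m$-action of weight one on $t$, whose fiber over $t=0$ is $\gr(\varphi):\Spec\gr(B)\to\Spec\gr(A)$ and whose fiber over $t=1$ is $\varphi:\Spec B\to\Spec A$. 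Since $p$ is a stable $k$-point, its lift to $\widetilde Y$ has a limit under the $\bG_m$-action as $t\to0$, namely the $k$-point $\gr(p)$ of $\Spec\gr(A)$; this gives a $\bG_m$-equivariant section $\sigma:\A^1\to\widetilde Y$ with $\sigma(1)=p$ and $\sigma(0)=\gr(p)$. Pulling back $\widetilde\varphi$ along $\sigma$ produces a $\bG_m$-equivariant family $g:\widetilde W:=\widetilde X\times_{\widetilde Y,\sigma}\A^1\to\A^1$ with $g^{-1}(0)=\gr(\varphi)^{-1}(\gr p)$, $g^{-1}(1)=\varphi^{-1}(p)$, and all fibers over $t\ne0$ $\bG_m$-translates of $\varphi^{-1}(p)$.

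The first step is to show that $\widetilde\varphi$ (hence $\varphi=\widetilde\varphi|_{t=1}$ and $g$) is flat. By the fibrewise flatness criterion over the base $\A^1$: $\Rees(A)$ and $\Rees(B)$ are flat over $k[t]$, and the fiber $\gr(B)$ over $t=0$ is flat over the fiber $\gr(A)$ over $t=0$ because $\gr(\varphi)$ is \FRS{} and so in particular flat; therefore $\widetilde\varphi$ is flat at every point of the special fiber $\widetilde X_0=\Spec\gr(B)$. The non-flat locus of $\widetilde\varphi$ is a closed $\bG_m$-invariant subset of $\widetilde X$ disjoint from $\widetilde X_0=V(t)$. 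But $\Rees(B)$ is non-negatively graded, so the $\bG_m$-action on $\widetilde X$ is contracting: the closure of every orbit meets $V(t)$. Hence the non-flat locus is empty, i.e. $\widetilde\varphi$ is flat. (Alternatively, one may quote the classical fact that, for good filtrations, flatness of $\gr(\varphi)$ forces flatness of $\varphi$.)

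Next I would run Elkik's theorem on the flat family $g:\widetilde W\to\A^1$. By hypothesis $\widetilde W_0=\gr(\varphi)^{-1}(\gr p)$ is reduced with rational singularities; in particular it is normal and Cohen--Macaulay, and everything in sight is of finite type over a field of characteristic $0$. Elkik's theorem (Theorem \ref{thm:elk}) then gives that $\widetilde W$ has rational singularities in a neighborhood of $\widetilde W_0$. Because $\Rees(B)\otimes_{\Rees(A)}k[t]$ is again non-negatively graded, the $\bG_m$-action on $\widetilde W$ is contracting onto a subset of $\widetilde W_0$, so the (open, $\bG_m$-invariant) locus where $\widetilde W$ has rational singularities, containing $\widetilde W_0$, must be all of $\widetilde W$; similarly, since $\widetilde W_0$ is reduced and the family is flat, $\widetilde W$ is reduced, and by generic smoothness the general fiber of $g$ is reduced with rational singularities. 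Hence the open locus of $t\in\A^1$ for which $\widetilde W_t$ is reduced with rational singularities is nonempty, so it contains some $t_0\ne0$; the $\bG_m$-action identifies $\widetilde W_{t_0}$ with $\widetilde W_1=\varphi^{-1}(p)$, so $\varphi^{-1}(p)$ is reduced with rational singularities. Combined with the flatness of $\varphi$ proved above, this is exactly the statement that $\varphi$ is \FRS{} at $p$.

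The step I expect to be the main obstacle is the careful bookkeeping required to apply Elkik's theorem in the right relative form and to transport its conclusion from $t=0$ to $t=1$: Elkik's statement concerns deformations over a local (henselian or complete) base, or equivalently openness of the ``good'' locus, so it must be combined with the $\bG_m$-equivariance — the fact that the Rees degeneration is globally contracting — in order to move the rational-singularity property from an infinitesimal (or étale) neighborhood of the special fiber to the honest fiber over $t=1$. A closely related point that must be handled with care is the precise role of stability of $p$: it is exactly what guarantees that the $\bG_m$-limit of $p$ inside $\widetilde Y$ is again a closed point lying in $\Spec\gr(A)$, namely $\gr(p)$, so that the section $\sigma$ and with it the family $\widetilde W$ interpolating between $\varphi^{-1}(p)$ and $\gr(\varphi)^{-1}(\gr p)$ actually exist; without stability the two fibers one wants to compare need not lie in a common flat $\bG_m$-family. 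The remaining verifications — $t$-torsion-freeness of the Rees algebras, the identification $g^{-1}(0)=\gr(\varphi)^{-1}(\gr p)$, and the propagation of reducedness along the flat family — are routine.
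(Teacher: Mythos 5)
Your overall strategy — pass to the Rees family and invoke Elkik's theorem together with the openness of the fiberwise flat locus — is indeed the paper's route (Corollary~\ref{cor:degeneration} follows from Corollary~\ref{cor:deg.geom}, which in turn follows from Corollary~\ref{cor:elkik}). However, there is a genuine gap in your handling of the $\bG_m$-action.

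You assert that $\Rees(B)$ is non-negatively graded and hence that the $\bG_m$-action on $\widetilde X=\Spec\Rees(B)$ is contracting onto $V(t)$. This is false in the paper's framework. The filtration in Definition~\ref{defn:filtration} is $\mathbb{Z}$-indexed with $\bigcup F^iA=A$ and $\bigcap F^iA=0$, and the Rees algebra is $\Rees(A)=\bigoplus_{i\in\mathbb{Z}}t^iF^iA$, a subring of $A[t,t^{-1}]$ rather than of $A[t]$; in general it has components in every integer degree. In particular, the linearization filtration used in \S\S\S\ref{sssec:linearization}, with $F^ik[X]=\mathfrak m_{X,x}^{-i}$ for $i<0$, has a large negative part, and the resulting $\bG_m$-action is not contracting (for $X=\A^1$ one finds $\Spec\Rees(k[x])\cong\A^2$ with action $\lambda\cdot(t,u)=(\lambda t,\lambda^{-1}u)$, whose orbits over $t\neq 0$ have no limit in $V(t)$). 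You invoke the purported contracting property twice — once to promote flatness of $\widetilde\varphi$ from the special fiber to all of $\widetilde X$, and once to spread rational singularities from a neighborhood of $\widetilde W_0$ to all of $\widetilde W$ — and both steps fail as written.

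Neither step is needed, and the paper's argument avoids them. The key is to test the \FRS\ condition at a single point moving along a $\bG_m$-equivariant section $s:\A^1\to\widetilde X$ of the \emph{source} Rees family; this is exactly what stability of $p$ provides, and it is how the stable point enters Corollary~\ref{cor:deg.geom}. Corollary~\ref{cor:elkik} (which packages Theorem~\ref{thm:flat.locus} with Elkik's theorem) says that the set $V=\{t\in\A^1:\widetilde\varphi_t\text{ is \FRS\ at }s(t)\}$ is open. This set is $\bG_m$-invariant because the whole Rees diagram and the section are $\bG_m$-equivariant, and it contains $0$ by hypothesis. Since $\A^1\smallsetminus\{0\}$ is a single $\bG_m$-orbit, any nonempty $\bG_m$-invariant open subset of $\A^1$ containing $0$ must be all of $\A^1$; hence $1\in V$, which is the conclusion. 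No global flatness of $\widetilde\varphi$ over $\A^1$, no contracting limits, and no spreading of rational singularities over the whole total space are required — only openness in the $t$-direction along the section, combined with the trivial orbit structure of the standard $\bG_m$-action on $\A^1$. Your final paragraph does recognize that the key obstacle is moving from the special fiber to $t=1$, but the fix is the orbit structure of $\A^1$, not a contracting degeneration.
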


In practice, it is easier to do the degeneration in several steps. The first step degenerates $\Def_{G,n}$ to its Lie algebra version
\[
\left\{ (X_1,Y_1,\ldots,X_n,Y_n) \in \mathfrak{g} ^{2n} \mid [X_1,Y_1]+ \cdots +[X_n,Y_n]=0 \right\}.
\]
Further steps degenerate this variety to a variety from the following class:

\begin{definitionRoman} \label{defn:symplectic.graph.variety} Let $\Upsilon=(V,E)$ be a (combinatorial) graph, and let $(W,\omega)$ be a symplectic vector space. The {\emph{symplectic graph variety}} of $\Upsilon$ and $W$ is the variety
\[
X_{\Upsilon,W}= \left\{ (w_v)_{v \in V}\in W^V \mid \textrm{$   \omega(w_v,w_u)=0$ for all $\left\{ u,v \right\} \in E$} \right\} .
\]
\end{definitionRoman}
Finally, we degenerate the symplectic graph variety to a different symplectic graph variety for which the graph $\Upsilon$ is a disjoint union of edges. This last variety is the product of varieties of the form
\[
\left\{ (v_1,v_2) \in W \times W  \mid \omega(v_1,v_2)=0 \right\},
\]
which are easy to desingularize explicitly and are easily shown to have rational singularities. As an intermediate step, we prove

\begin{theoremRoman} \label{thm:FRS.tree} Let $\Upsilon$ be a tree of maximal degree $d$, and let $W$ be a (non-zero) symplectic space of dimension greater than or equal to $4(d-1)$. Then $X_{\Upsilon,W}$ has rational singularities.
\end{theoremRoman}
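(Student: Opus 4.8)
My plan is to establish Theorem~\ref{thm:FRS.tree} by performing two successive $\mathbb{G}_m$-degenerations and invoking Elkik's theorem~\ref{thm:elk} in the form: \emph{if a $k$-algebra $A$ carries a good filtration whose associated graded $\gr A$ has rational singularities, then so does $A$} (equivalently, the generic fibre of the Rees family inherits rational singularities from the special fibre $\Spec\gr A$). First I would dispose of the degenerate cases: if $\Upsilon$ is a single vertex then $X_{\Upsilon,W}=W$ is smooth, and if $\Upsilon$ is a single edge then $X_{\Upsilon,W}=\{(w_1,w_2)\in W^2:\omega(w_1,w_2)=0\}$ is the affine cone over a smooth quadric of dimension $2\dim W-2\ge 2$ (the function $(w_1,w_2)\mapsto\omega(w_1,w_2)$ is a nondegenerate quadratic form on $W\oplus W$), and cones over smooth quadrics have rational singularities by the standard cohomology vanishing. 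Hence I may assume $\Upsilon$ has at least three vertices, so that $d\ge 2$ and $\dim W\ge 4(d-1)\ge 4$.

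\emph{Step 1: break the symplectic form into a perfect pairing.} Root $\Upsilon$ at a leaf $r$ and fix a total order on $V$ refining the ``parent before child'' order; every vertex other than $r$ then has exactly one earlier neighbour, so the resulting acyclic orientation has all in-degrees $\le 1$ and all out-degrees $\le\deg(v)-1\le d-1$. Choose a polarization $W=L\oplus L'$ into complementary Lagrangians and write $w_v=a_v+b_v$ with $a_v\in L$, $b_v\in L'$; the restriction of $\omega$ gives a perfect pairing $\langle\,\cdot\,,\,\cdot\,\rangle\colon L\times L'\to k$. Let $\mathbb{G}_m$ act on $W^V$ with weight $0$ on every $a_v$ and weight equal to the position of $v$ on every $b_v$. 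For an edge oriented $u\to v$ (so $u$ precedes $v$), the equation $\omega(w_u,w_v)=\langle a_u,b_v\rangle-\langle a_v,b_u\rangle$ has its two homogeneous components in distinct weights, and its initial form for this filtration is $\langle a_u,b_v\rangle$. I would then verify by a dimension count that the variety cut out by all these initial forms is a complete intersection of the same codimension $\lvert E(\Upsilon)\rvert$ as $X_{\Upsilon,W}$ — it is a product, over vertices $u$, of ``star varieties'' $\{a\in L,\ b_1,\dots,b_k\in L':\langle a,b_i\rangle=0\ (1\le i\le k)\}$ with $k$ the out-degree of $u$, times an affine space — so these initial forms generate $\gr$ of the ideal and $\gr X_{\Upsilon,W}$ is precisely this product. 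Here the tree hypothesis is essential: because each vertex has a unique parent, the constraint graph of $\gr X_{\Upsilon,W}$ is a disjoint union of stars centred at the $a_u$'s, with at most $d-1$ leaves each.

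\emph{Step 2: split the stars into edges.} It remains to prove that a star variety $S=\{a\in L,\ b_1,\dots,b_k\in L':\langle a,b_j\rangle=0\ (1\le j\le k)\}$ with $k\le d-1$ has rational singularities. Choose dual direct-sum decompositions $L=\bigoplus_{j=0}^{k}L^{(j)}$ and $L'=\bigoplus_{j=0}^{k}L'^{(j)}$ with $\dim L^{(j)}\ge 2$ for $1\le j\le k$; this is possible exactly because $\dim L=\tfrac12\dim W\ge 2(d-1)\ge 2k$. Let $\mathbb{G}_m$ act with weight $1$ on each $b_j^{(j)}$ and weight $0$ on all other coordinates; then the initial form of the $j$-th equation is $\langle a^{(j)},b_j^{(j)}\rangle$, and another dimension count identifies $\gr S=\prod_{j=1}^{k}\bigl\{(\alpha,\beta)\in L^{(j)}\times L'^{(j)}:\langle\alpha,\beta\rangle=0\bigr\}\times\mathbb{A}^{N}$. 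Each factor is the affine cone over a smooth quadric of dimension $2\dim L^{(j)}-2\ge 2$, hence has rational singularities, and finite products (over a field of characteristic $0$) of varieties with rational singularities — and products with affine space — again have rational singularities. Applying the degeneration principle back up the two $\mathbb{G}_m$-degenerations then gives that $X_{\Upsilon,W}$ has rational singularities.

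The step I expect to be most delicate is the pair of dimension counts certifying that in each degeneration the displayed initial forms form a standard basis (equivalently, that the initial ideal still cuts out a complete intersection of the expected codimension). This is exactly where the hypothesis $\dim W\ge 4(d-1)$ is consumed: the factor $4$ is what first lets one arrange all out-degrees to be $\le d-1$ (by rooting at a leaf, so that no vertex precedes all of its neighbours) and then split $L$ into as many summands as the out-degree, each of dimension $\ge 2$, so that every quadric cone produced in Step~2 is normal. The remaining ingredients — cohomology vanishing for cones over smooth quadrics, and the stability of rational singularities under flat degeneration and under finite products — are classical.
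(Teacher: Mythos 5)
Your proof is correct and establishes the stated result, but it takes a genuinely different route from the paper's. The paper proves the sharper Theorem~\ref{prop:frs_tree} (that the full graph map $\Psi_{T,W}$ is \FRS, not merely that the fiber $X_{T,W}$ has rational singularities) while remaining inside the category of symplectic graph maps: it roots $T$ at a non-maximal-degree vertex, splits $W$ into two symplectic summands, and uses a parity-of-distance coloring (Corollary~\ref{cor:col}) to degenerate $T$ into a disjoint union of depth-one stars $T_v$, each paired with half of $W$; a second coloring then allocates one symplectic plane to each edge of each star, reducing to the single edge $\{(v_1,v_2)\in W'^2:\omega(v_1,v_2)=0\}$ with $\dim W'=2$, which is resolved explicitly by the total space of $\mathcal{O}(-1)^{\oplus 2}$ over $\mathbb{P}^1$ together with the vanishing $H^{>0}(\mathbb{P}^1,\mathcal{O}(j))=0$ for $j\ge 0$. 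You instead break the symplectic symmetry at the outset by polarizing $W=L\oplus L'$, orienting edges parent-to-child, and using a position-based weight to extract the initial forms $\langle a_u,b_v\rangle$, degenerating $X_{T,W}$ to a product of non-symplectic star varieties $S_k\subset L\times(L')^k$; a further block-splitting of $L,L'$ then yields a product of cones over smooth quadrics. The base cases coincide (affine cones over smooth quadrics), and the hypothesis $\dim W\ge 4(d-1)$ is consumed at the analogous two junctures. Your asymmetric treatment of $a$'s and $b$'s makes the degenerations more transparent to describe, whereas the paper's coloring formalism is heavier but is reused unchanged for the polygraphs of $\mathfrak{sl}_d$, $\mathfrak{so}_d$, $\mathfrak{sp}_{2d}$ in \S\S\ref{ssec:mult_FRS}--\ref{ssec:Comb.stat.sp}. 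Two points to make precise in a full write-up: because you degenerate the fiber rather than the map, the complete-intersection check you flag is genuinely needed — the displayed initial forms must cut out a scheme of codimension $|E|$, and moreover a \emph{reduced} one — in order to identify $\gr X_{T,W}$ with the product of the $S_k$'s; both facts follow from the explicit description of $S_k$ once $\dim L>k_u$ for every $u$, and they are exactly what the paper absorbs into the \FRS\ formalism via the remark following Corollary~\ref{cor:elm}. With this in place your argument in fact also yields the \FRS\ conclusion of Theorem~\ref{prop:frs_tree}, since the same dimension count gives flatness of $\Psi_{T,W}$ near $0$.
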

See Theorem \ref{prop:frs_tree} for a stronger version.

The argument up to this point shows only that the point $(1,\ldots,1)$ is a rational singularity of $\Def_{G,n}$, because of the first degeneration. In order to bootstrap to the whole variety, we use the relation between representation growth and rational singularities described in Theorem \ref{thm:def.and.representation.growth}. More precisely, we show that if $(1,\ldots,1)$ is a rational singularity of $\Def_{G,n}$, then $\alpha(\Gamma) <2n-2$ for \emph{some} congruence subgroup $\Gamma \subset G(F)$ for any local field $F$. This implies, using a simple argument, that $\alpha(\Delta) < 2n-2$ for \emph{any} compact open subgroup of $G(F)$, for any local field $F$. This last statement implies that $\Def_{G,n}$ has rational singularities, by Theorem \ref{thm:def.and.representation.growth}.

We apply the above strategy for classical groups. For exceptional groups, we use instead the known bounds on representation growth and Theorem \ref{thm:def.and.representation.growth}.

\subsubsection{Continuity of Push Forward (Theorem \ref{thm:push.forward})}
Theorem \ref{thm:push.forward} is easy in the case where the map $\phi:X \rightarrow Y$ is smooth. Indeed, Schwartz measures on $X$ look locally like $f| \omega_X |$, where $f$ is a Schwartz function, $\omega_X$ is a top differential form, and $| \omega_X |$ is the measure corresponding to $\omega_X$, see \S\S \ref{subsec:measure.forms}. If $\phi$ is smooth and $\omega_Y$ is a non-vanishing top differential form on $Y$, then the density of the push forward $\phi_* f| \omega_X|$ with respect to $| \omega_Y |$ at a point $y$ is equal to $\int_{\phi ^{-1}(y)}f | \eta_y |$, for some top differential form $\eta_y$ on the fiber $\phi ^{-1}(y)$. Moreover, the forms $\eta_y$ vary in an algebraic manner with $y$---they are a section of the sheaf of relative differential top forms $\Omega_{X/Y}$. It follows that, in the case of smooth morphism, the push forward $\phi_* f| \omega_X|$ is again a Schwartz measure.

If $\phi$ is only \FRS, we show that the density is again given by integrating a certain top form on the smooth locus of the fiber. In order to show that these integrals converge, we use the following criterion:

\begin{propositionRoman} \lbl{prop:intro.integration.RS} Let $X$ be a variety over a non-archimedean local field $F$ of characteristic 0. Denote the smooth locus of $X$ by $X^{sm}$.
\begin{enumerate}
\item If $X$ has rational singularities, then, for any top differential form $\omega$ on $X^{sm}$ and any compact open subset $K \subset X(F)$, the integral $\int_{K \cap X^{sm}(F)}| \omega |$ converges.
\item Assume that $X$ is Cohen--Macaulay and that, for every finite field extension $F'/F$, every top differential form $\omega '$ on $X^{sm}$, and every compact open subset $K' \subset X(F')$, the integral $\int_{K' \cap X^{sm}(F')} | \omega '|$ converges. Then $X$ has rational singularities.
\end{enumerate}
\end{propositionRoman}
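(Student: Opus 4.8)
The plan is to combine strong resolution of singularities with Kempf's characterization of rational singularities in the Cohen--Macaulay case and the elementary fact that, for $a\in\bZ$ and $\cO'$ the ring of integers of a non-archimedean local field, $\int_{\cO'}|t|^{a}\,|dt|$ converges precisely when $a\ge 0$. Throughout, I would fix a strong resolution $\pi\colon\tX\to X$ (which exists since $\characteristic F=0$): $\tX$ is smooth, $\pi$ is proper and birational, and $\pi$ is an isomorphism over $X^{sm}$. Three standard facts will be used. (i) If $X$ is Cohen--Macaulay of dimension $d$, its dualizing sheaf $\omega_X$ is $S_2$ and restricts to $\Omega^{d}_{X^{sm}}$ on $X^{sm}$; hence if $X$ is also normal (so $X\smallsetminus X^{sm}$ has codimension $\ge 2$) then restriction induces a bijection $H^0(X,\omega_X)\xrightarrow{\ \sim\ }H^0(X^{sm},\Omega^{d})$, so every top differential form on $X^{sm}$ extends uniquely to a section of $\omega_X$. (ii) Kempf's criterion: for Cohen--Macaulay $X$, $X$ has rational singularities if and only if the trace morphism $\pi_*\omega_{\tX}\to\omega_X$ is an isomorphism; it is always injective and an isomorphism over $X^{sm}$, so the content is that each local section $\sigma$ of $\omega_X$ pulls back to a \emph{regular} section $\pi^*\sigma$ of the line bundle $\omega_{\tX}$ (a priori $\pi^*\sigma$ is only a rational section, being $\pi^*$ of the top form $\sigma|_{X^{sm}}$ on the dense open $\pi^{-1}(X^{sm})$). (iii) Since $\pi$ is proper, $\pi\colon\tX(F)\to X(F)$ is proper as a map of topological spaces, so preimages of compacts are compact; and as $\pi$ is an isomorphism over $X^{sm}$, the $p$-adic change of variables gives $\int_{K\cap X^{sm}(F)}|\omega|=\int_{\pi^{-1}(K)\cap\pi^{-1}(X^{sm})(F)}|\pi^*\omega|$ for every top form $\omega$ on $X^{sm}$ and every compact $K$, and similarly over any finite extension of $F$.

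For (1): rational singularities imply that $X$ is normal and (classically) Cohen--Macaulay, so by (i) the given form $\omega$ extends to a global section $\sigma$ of $\omega_X$, and by (ii) this $\sigma$ corresponds to a regular top form $\tilde\sigma$ on $\tX$ with $\tilde\sigma|_{\pi^{-1}(X^{sm})}=\pi^*\omega$. A regular top form on a smooth variety defines a measure with continuous density with respect to a smooth measure, hence one finite on compact sets; so, using (iii), $\int_{K\cap X^{sm}(F)}|\omega|=\int_{\pi^{-1}(K)\cap\pi^{-1}(X^{sm})(F)}|\tilde\sigma|\le\int_{\pi^{-1}(K)(F)}|\tilde\sigma|<\infty$ because $\pi^{-1}(K)$ is compact.

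For (2): I would show that every local section $\sigma$ of $\omega_X$ has $\pi^*\sigma$ regular; granting this, $X$ will turn out to be normal and Cohen--Macaulay with $\pi_*\omega_{\tX}\xrightarrow{\ \sim\ }\omega_X$, so (ii) finishes. Suppose $\pi^*\sigma$ is not regular. Then it has a pole of some order $a\ge1$ along a prime divisor $E$ of $\tX$; since $\pi^*\sigma$ is regular over $\pi^{-1}(X^{sm})$ we have $E\subseteq\pi^{-1}(X^{sing})$, and as $E$ is a divisor no other component of $\pi^{-1}(X^{sing})$ passes through a general point of $E$, so near such a point $\pi$ restricts to an isomorphism of a punctured analytic neighbourhood onto an open subset of $X^{sm}$. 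Choose a finite extension $F'/F$ large enough that $E$ has an $F'$-point $\tilde x$ that is general in this sense, smooth on $E$, and with $\operatorname{ord}_E(\pi^*\sigma)=-a$. In analytic coordinates $(t,u_2,\dots,u_d)$ at $\tilde x$ with $E=\{t=0\}$ we have $\pi^*\sigma= h\,t^{-a}\,dt\wedge du_2\wedge\cdots\wedge du_d$ with $h$ analytic and non-vanishing; shrinking to a polydisc $\tU\cong(\cO')^{d}$ on which $|h|\ge c>0$ and $\tX\smallsetminus\pi^{-1}(X^{sm})=\{t=0\}$, and picking a compact open $K'\subseteq X(F')$ with $\pi(\tilde x)\in K'$ and $\tU\subseteq\pi^{-1}(K')$, I get $\pi(\tU\smallsetminus\{t=0\})\subseteq K'\cap X^{sm}(F')$ and
\[
\int_{K'\cap X^{sm}(F')}|\sigma|\ \ge\ \int_{\tU\smallsetminus\{t=0\}}|\pi^*\sigma|\ \ge\ c\cdot\vol\big((\cO')^{d-1}\big)\cdot\!\int_{\cO'}|t|^{-a}\,|dt|\ =\ \infty,
\]
contradicting the hypothesis. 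Hence $\pi^*\sigma$ is regular for every $\sigma$. Factoring $\pi=\nu\circ\rho$ with $\nu\colon X^{\nu}\to X$ the normalization and $\rho\colon\tX\to X^{\nu}$ a resolution of the normal variety $X^{\nu}$: if $\nu^*\sigma$ had a pole along a prime divisor $D$ of $X^{\nu}$, then $D\not\subseteq X^{\nu,sing}$ (normality of $X^{\nu}$), so $\rho$ is an isomorphism near the generic point of $D$ and $\pi^*\sigma=\rho^*\nu^*\sigma$ would have a pole along the strict transform of $D$ --- a contradiction; so $\nu^*\sigma$ is regular, i.e. $\sigma\in\nu_*\omega_{X^{\nu}}$. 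As this holds for all $\sigma$, the always-injective trace $\nu_*\omega_{X^{\nu}}\hookrightarrow\omega_X$ is an isomorphism, and (a duality computation, using that $X$ is Cohen--Macaulay) this forces $X$ to be normal. Then $X$ is normal and Cohen--Macaulay with $\pi_*\omega_{\tX}\xrightarrow{\ \sim\ }\omega_X$, so by (ii) $X$ has rational singularities.

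The step I expect to be the main obstacle is (2). The crucial point is that quantifying over \emph{all} finite extensions $F'/F$ is exactly what makes the hypothesis strong enough: it lets one produce, for every exceptional prime divisor $E$, a local $F'$-model $E=\{t=0\}$ at a general $F'$-point, and it is only in this model that one sees the divergence forcing the pole order to be $\le 0$. A second, orthogonal difficulty is that normality must be extracted from the same argument rather than assumed, which is the one place where the Cohen--Macaulay hypothesis is genuinely needed (for the implication $\nu_*\omega_{X^{\nu}}=\omega_X\Rightarrow X$ normal, and for Kempf's criterion). Part (1), by contrast, is essentially a formal consequence of Kempf's criterion together with the properness of $\pi$ and the local finiteness of $|\tilde\sigma|$.
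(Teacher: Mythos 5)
Your proof is correct, and the core strategy is the same as the paper's: resolve singularities, note $\pi$ is an isomorphism over $X^{sm}$, use a change of variables to transport the integral to the resolution, and for the converse pass to a finite extension of $F$ so that the pole locus of $\pi^*\sigma$ acquires rational points and then contradict convergence using $\int_{\cO'}|t|^{-a}\,|dt|=\infty$ for $a\ge 1$. For part (1) both arguments are essentially Lemma \ref{lem:RS.implies.finite.integral} in the paper, via Proposition \ref{prop:equviv_rat}(\ref{prop:equviv_rat:5}') and properness of $\pi$.

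The one genuine structural difference is your treatment of normality in part (2). You show every section of $\omega_X$ pulls back regularly, then factor $\pi=\nu\circ\rho$ through the normalization, deduce $\nu_*\omega_{X^\nu}\xrightarrow{\sim}\omega_X$, and invoke a duality argument (left implicit) to conclude $X$ is normal before applying Kempf's criterion. The paper avoids this detour entirely: its Proposition \ref{prop:equviv_rat}(\ref{prop:equviv_rat:4p}) characterizes rational singularities for Cohen--Macaulay $X$ without presupposing normality --- the equivalence with surjectivity of the trace uses only that $\Omega_X$ is torsion-free (Corollary \ref{cor:cm_torfree}, a consequence of CM), and normality comes out afterwards as a consequence of rational singularities rather than as a preliminary step. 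So once you know that every $\sigma\in\Gamma(\Omega_X)$ pulls back to a regular top form on $\tX$, the paper's Lemma \ref{lem:CM.finite.integral.implies.RS} concludes immediately via (\ref{prop:equviv_rat:4p}). Your normalization detour is not wrong --- the trace $\nu_*\omega_{X^\nu}\hookrightarrow\omega_X$ being an isomorphism does force $R_1$ for a CM scheme by a local duality argument at codimension-one points, and $R_1+S_2$ gives normality --- but it introduces a nontrivial auxiliary claim that the paper's formulation renders unnecessary. Also note the paper actually proves a somewhat stronger Lemma \ref{lem:CM.finite.integral.implies.RS}, phrased over a finitely generated ground field with Schwartz test functions rather than compact open sets, from which Proposition \ref{prop:intro.integration.RS} is a specialization; your proof works for the special case as stated.
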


See \S\S \ref{subsec:integration.RS} for stronger statements.

Proposition \ref{prop:intro.integration.RS} and its proof do not imply that the density of $\phi_* f| \omega_X|$ is continuous, since, in general, we cannot find a simultaneous resolution of singularities for all fibers of $\phi$. This lack of simultaneous resolution of singularities is also the difficulty in proving Elkik's theorem. In fact, in view of Proposition \ref{prop:intro.integration.RS}, Theorem \ref{thm:push.forward} can be thought of as a quantitative version of Elkik's theorem.

The proof of the continuity in Theorem \ref{thm:push.forward} is done in two steps. In the first, we reduce the claim to the case where $Y$ is a curve. In order to do this, we show that the function
\[
y \mapsto \int_{\phi ^{-1} (y)^{sm}(F)} f | \eta_y |
\]
is constructible (in the sense of Model theory; see Appendix \ref{sec:cont.along.curves} for the definition), and then show that if a constructible function is continuous along all curves, then it is continuous. In the second step of the proof, we use embedded resolution of singularities as a substitute for a simultaneous resolution, and translate the question of continuity of push forward of Schwartz measures under general maps to the question of continuity of push forward of a measure of the form
\[
\left | x_1^{a_1} \cdots x_n^{a_n} dx _1 \wedge \cdots \wedge dx _n \right |
\]
under monomial maps
\[
\phi(x_1,\ldots,x_n)=x_1^{b_1} \cdots x_n^{b_n},
\]
where $x_1,\ldots,x_n$ are local coordinates. For general exponents $a_i,b_i$, the push forward need not be continuous. In our case, however, the assumption on rational singularity and a homological algebra argument imply an inequality on the exponents $a_i,b_i$, which implies that the push forward is continuous.

\subsection{Some Related Results}

\subsubsection{Representation Zeta Functions of FAb p-Adic Analytic Groups}

Let $G$ be a semi-simple algebraic group over a local non-archimedean field $F$ of characteristic 0 and residual characteristic $p$, and let $\Gamma$ be a compact open subgroup of $G(F)$. Jaikin--Zapirain proved in \cite{Ja} that there are natural numbers $n_i$ and rational functions $f_i\in \mathbb{Q}(x)$, for $i=1,\ldots,N$, such that
\[
\zeta_\Gamma(s)=\sum_1^N n_i^{-s} f_i(p^{-s}).
\]
Moreover, the denominators of the functions $f_i(x)$ have the form $\prod_j (1-p^{a_{i,j}}x^{b_{i,j}})$, where the numbers $a_{i,j}$ and $b_{i,j}$ are integers. This implies that $\zeta_\Gamma(s)$ has meromorphic continuation to the entire complex plane, that its poles have rational real parts, and that $R_n(\Gamma)=n^{\alpha(\Gamma) +o(1)}$, i.e., that $\log(R_n(\Gamma)) / \log n$ tends to $\alpha(\Gamma)$ when $n$ tends to infinity.

In \cite{LM}, the authors prove that $\alpha(\Gamma)\leq 3\dim\, G$. In \cite{LL}, the authors prove that $\alpha(\Gamma) \geq 1/15$. Theorem \ref{thm:intro.SLd.representation.growth} now implies that the sequence $d \mapsto \alpha(\SL_d(\mathbb{Z}_p))$ is bounded away from zero and infinity. It is still unknown whether this sequence has a limit.

If $\Gamma$ is either $\SL_2(O)$ or $\SL_3(O)$, where $O$ is the ring of integers of a local non-archimedean field of characteristic 0, the value $\alpha(\Gamma)$ were computed in \cite{Ja} and \cite{AKOV}. They are $\alpha(\SL_2(O))=1$ and $\alpha(\SL_3(O))=2/3$.

\subsubsection{Compact Lie Groups and Topological Quantum Field Theory}

Representation growth was also considered for compact simple Lie groups. In this case, much more is known. For example, Weyl's character formula implies that, if $L$ is a compact simple Lie group, then $\alpha(L)=\frac{\rk L}{| \Phi ^+ |}$, where $\rk L$ is the rank of $L$ and $|\Phi ^+|$ is the number of positive roots of $L$ (see \cite{LL}). In particular, $\alpha(L) \leq 1$ and $\alpha(L)$ tends to 0 as the dimension of $L$ tends to infinity. This stands in contrast to the uniform lower bound $1/15$ in the p-adic case.

The volume formula in Theorem \ref{thm:intro.volume.formula} is an analog of \cite[(4.72)]{Wi}, which deals with the case of a compact semisimple Lie group. Witten's result, as well as ours, are related to topological field theories. Explicitly, let $G$ be a group scheme over $\mathbb{Z}_p$ with semi-simple generic fiber. Consider the Dijkgraaf--Witten TQFTs $Z_{r}$ with gauge groups $G(\mathbb{Z} / p^r)$ and trivial Lagrangian (i.e., we choose the trivial cocycle in $H^2(B\left(G(\mathbb{Z} / p^r)\right),\mathbb{R} / \mathbb{Z})$); see \cite{FQ}. For any compact orientable surface $\Sigma$, we have $Z_r(\Sigma)=\zeta_{G(\mathbb{Z} / p^r)}(-\chi(\Sigma))$, where $\chi(\Sigma)$ is the Euler characteristic of $\Sigma$. If $\Sigma$ has genus greater than or equal to $\mathcal{B}(G \times_{\Spec \mathbb{Z}_p} \Spec \mathbb{Q}_p)$, then Theorem \ref{thm:intro.SLd.representation.growth} shows that the limit $\lim_{r \rightarrow \infty} Z_r(\Sigma)$ exists.

\subsubsection{Flatness of $\Phi_{G,n}$}

{Let $G$ be a group scheme over $\mathbb{Z}$ such that the generic fiber of $G$ is simple. Liebeck and Shalev (\cite{LS}) studied the limit of the sequence $\zeta_{G(\mathbb{Z} / p)}(s)$, where $p$ is a prime tending to infinity. }Namely, they showed that the limit $\lim_{p \rightarrow \infty}\zeta_{G(\mathbb{Z} / p)}(s)$ is equal to one for $s>\frac{\rk G}{| \Phi ^+ |}$. The proof of Corollary \ref{cor:Phi.flat} shows that, for any integer $n$, the existence of the limit
\begin{equation} \lbl{eq:limit.LS}
\lim_{p \rightarrow \infty} \zeta_{G(\mathbb{Z} / p)}(2n-2)
\end{equation}
is equivalent to the flatness of the map $\Phi_{G,n}$.

In this paper, we study the closely related limit
\begin{equation} \lbl{eq:limit.ours}
\lim_{r \rightarrow \infty}\zeta_{G(\mathbb{Z} / p^r)}(2n-2).
\end{equation}
Theorem \ref{thm:def.and.representation.growth} implies that the existence of the limit \eqref{eq:limit.ours} for any $p$ is equivalent to the map $\Phi_{G,n}$ being \FRS. Liebeck and Shalev study the limit \eqref{eq:limit.LS} (and, hence, prove that $\Phi_{G,n}$ is flat) using the description of the irreducible representations of finite groups of Lie type due to Deligne and Lusztig. No such description is known for the groups $G(\mathbb{Z} / p^r)$ for general $r$.

The flatness of $\Phi_{G,n}$ was also proved by J. Li (\cite{Li}). A different proof for the case $n \geq \mathcal{B}(G)$ also follows from our methods\footnote{This is a combination of Theorem \ref{thm:local.FRS} stating that $\Phi_{G,n}$ is \FRS\ at $(1,\ldots,1)$, and the proof of Theorem \ref{thm:def.and.representation.growth} where we show that if $\Phi_{G,n}$ is \FRS\ at $(1,\ldots,1)$, then it is \FRS. Unlike the rest of he proof of Theorem \ref{thm:def.and.representation.growth}, this part does not use \cite{LS}.}. This proof, like the one in \cite{Li} and unlike the proof in \cite{LS} works only in characteristic 0.

\subsubsection{Rational Singularities in Representation Theory}

Let $G$ be a reductive group over a field of characteristic 0, and let $\mathfrak{g}$ be its Lie algebra. Consider the quotient $\mathfrak{g}/G$ by the adjoint action. The quotient map $\pi_G:\mathfrak{g} \rightarrow \mathfrak{g}/G$ was shown to be flat in \cite{Ko}, and the fiber $\pi ^{-1}(0)$ (the nilpotent cone) was shown to have rational singularities in \cite{He}. A simple argument shows that these facts imply that all fibers of $\pi$ have rational singularities\footnote{For example, Elkik's theorem implies that all fibers in a neighborhood of $1$ have rational singularities, and one can use the actions of $\mathbb{G}_m$ on $\mathfrak{g}$ and $\mathfrak{g} / G$ to deduce that all fibers have rational singularities.}. Using Theorem \ref{thm:push.forward}, we get

\begin{corollaryRoman} Let $G$ is a reductive algebraic group over a local non-archimedean field of characteristic 0 with Lie algebra $\mathfrak{g}$. Denote the projection from $\mathfrak{g}$ to the categorical quotient $\mathfrak{g} / G$ by $\pi$. Let $m$ be a Schwartz measure on $\mathfrak{g}$, then the push forward $\pi_*m$ has continuous density.
\end{corollaryRoman}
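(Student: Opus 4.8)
The plan is to deduce the corollary directly from Theorem~\ref{thm:push.forward}: it suffices to show that the adjoint quotient map $\pi:\mathfrak{g}\to\mathfrak{g}/G$ (where $\mathfrak{g}/G=\Spec k[\mathfrak{g}]^G$) is \FRS, i.e.\ that $\mathfrak{g}$ and $\mathfrak{g}/G$ are smooth and irreducible, that $\pi$ is flat, and that every geometric fiber of $\pi$ is reduced and has rational singularities. Since our local field $F$ has characteristic $0$, all of these properties can be checked after base change to $\overline{F}$, where the classical structure theory of the adjoint quotient applies.

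First I would verify the smoothness hypotheses. The source $\mathfrak{g}$ is an affine space. For the target, decompose $\mathfrak{g}=\mathfrak{z}(\mathfrak{g})\oplus[\mathfrak{g},\mathfrak{g}]$ with $G$ acting trivially on the center, so that $\mathfrak{g}/G\cong\mathfrak{z}(\mathfrak{g})\times\big([\mathfrak{g},\mathfrak{g}]/G\big)$; by the Chevalley restriction theorem together with the Chevalley--Shephard--Todd theorem, $k[[\mathfrak{g},\mathfrak{g}]]^G\cong k[\mathfrak{t}]^W$ is a polynomial ring, so $\mathfrak{g}/G$ is again an affine space, hence smooth and irreducible. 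Next, flatness of $\pi$ is Kostant's theorem \cite{Ko}, and Kostant moreover shows that every fiber of $\pi$ is a reduced (indeed normal, irreducible) complete intersection, which takes care of the reducedness requirement.

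It then remains to see that each fiber has rational singularities. For the fiber $\pi^{-1}(0)$ over the origin --- which, under the decomposition above, is the nilpotent cone of the semisimple part --- this is \cite{He}. For an arbitrary geometric point $y\in\mathfrak{g}/G$ with $y\neq 0$, I would use the scaling action of $\mathbb{G}_m$ on $\mathfrak{g}$, which descends to $\mathfrak{g}/G$ with strictly positive weights (the basic invariants are homogeneous of positive degree); hence the closure of the $\mathbb{G}_m$-orbit of $y$ contains $0$, and since the action lifts to $\mathfrak{g}$, all fibers over that orbit are isomorphic to $\pi^{-1}(y)$. By Elkik's theorem (Theorem~\ref{thm:elk}), the set of points of $\mathfrak{g}/G$ over which the fiber has rational singularities contains a neighborhood of $0$; since the orbit of $y$ meets that neighborhood, $\pi^{-1}(y)$ has rational singularities too. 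Combining all of this shows $\pi$ is \FRS, and Theorem~\ref{thm:push.forward} gives that $\pi_*m$ has continuous density.

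I do not expect a serious obstacle here: every ingredient is a cited result, and the corollary is essentially an application of Theorem~\ref{thm:push.forward} once \FRS\ is checked. The only point requiring genuine care is the propagation of rational singularities from the nilpotent cone to \emph{all} fibers, which combines Elkik's theorem with the $\mathbb{G}_m$-action as above; a secondary bookkeeping point is reducing the general reductive case (and the non-algebraically-closed base $F$) to the semisimple, algebraically-closed setting in which Kostant's and Hesselink's results are stated, but this is routine.
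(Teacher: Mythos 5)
Your argument follows the same route the paper sketches: Kostant's theorem gives flatness of $\pi$ and reducedness of the fibers, Hesselink gives rational singularities of the nilpotent cone, Elkik's theorem combined with the $\mathbb{G}_m$-actions on $\mathfrak{g}$ and $\mathfrak{g}/G$ propagates rational singularities to all fibers, and Theorem~\ref{thm:push.forward} then yields continuity of the push forward. Your write-up also usefully spells out details the paper leaves implicit, namely the splitting $\mathfrak{g}=\mathfrak{z}(\mathfrak{g})\oplus[\mathfrak{g},\mathfrak{g}]$ and the Chevalley/Chevalley--Shephard--Todd argument that $\mathfrak{g}/G$ is smooth, and the reducedness check needed for the \FRS\ hypothesis.
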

This statement is probably known to experts, but we have been unable to find an exact reference.

Another example of rational singularities in representation theory is the following:

\begin{theorem*} [{\cite[Theorem 3.3]{Hi}}] Let $G$ be a reductive algebraic group over a local non-archimedean field of characteristic 0 with Lie algebra $\mathfrak{g}$, and let $\mathcal{O} \subset \mathfrak{g}$ be a nilpotent orbit. Then the normalization of the closure of $\mathcal{O}$ has rational singularities.
\end{theorem*}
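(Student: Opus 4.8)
The plan is to build an explicit resolution of the normalization $\nu\colon\widetilde{\overline{\cO}}\to\overline{\cO}$ and to reduce the assertion to a cohomology vanishing on a partial flag variety. Since having rational singularities is a geometric property of characteristic-$0$ varieties, one first base changes to the algebraic closure and then, by the Lefschetz principle, assumes the ground field is $\bC$; and since $\overline{\cO}$ and its normalization depend only on $\fg$ with its adjoint action, one may assume $G$ is semisimple and simply connected. Now fix $e\in\cO$, complete it to an $\mathfrak{sl}_2$-triple $(e,h,f)$, and let $\fg=\bigoplus_{i\in\Z}\fg_i$ be the grading by $\ad h$-eigenvalues, with Jacobson--Morozov parabolic $\mathfrak{p}=\bigoplus_{i\ge0}\fg_i$ and $P\subset G$ its normalizer. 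Choosing a suitable $P$-stable subspace $\mathfrak{m}$ of the nilradical $\bigoplus_{i\ge1}\fg_i$ (e.g. $\mathfrak{m}=\bigoplus_{i\ge2}\fg_i$, or a modification of it) yields the collapsing map
\[
\mu\colon G\times_P\mathfrak{m}\longrightarrow\fg,\qquad(g,x)\mapsto\Ad(g)x,
\]
whose source is smooth (the total space of a $G$-homogeneous vector bundle over $G/P$) and which is proper with image $\overline{\cO}$, and moreover generically finite by a dimension count ($\dim G\times_P\mathfrak m=\dim\fg_{\ge1}+\dim\fg_{\ge2}=\dim\fg-\dim\fg^{e}=\dim\cO$). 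The first task is to arrange that $\mu$ is \emph{birational} onto $\overline{\cO}$; granting this, $G\times_P\mathfrak m$ is normal, so $\mu_*\cO_{G\times_P\mathfrak m}=\nu_*\cO_{\widetilde{\overline{\cO}}}$ and $\mu$ factors through a proper birational morphism $\pi\colon G\times_P\mathfrak m\to\widetilde{\overline{\cO}}$ from a smooth variety, i.e. a resolution of the normalization.

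Because $\overline{\cO}$ is affine and $\nu,\pi$ are projective over it, $\widetilde{\overline{\cO}}$ has rational singularities as soon as $R^i\pi_*\cO_{G\times_P\mathfrak m}=0$ for $i>0$, equivalently $H^i(G\times_P\mathfrak m,\cO)=0$ for $i>0$. Pushing forward along the bundle projection to $G/P$ identifies this with
\[
H^i\bigl(G/P,\ \Sym^\bullet(\mathfrak m^{\vee})\bigr)=0\qquad(i>0),
\]
where $\mathfrak m^{\vee}$ denotes the $G$-homogeneous bundle on $G/P$ attached to the dual $P$-module. Thus the statement is reduced to a Bott-type vanishing for symmetric powers of a specific homogeneous bundle on a partial flag variety. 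For the regular nilpotent orbit one has $P=B$, $\mathfrak m=\bigoplus_{i\ge1}\fg_i$ equals the nilradical of $\mathfrak b$, and this is exactly the classical computation underlying the normality and rationality of the nilpotent cone $\cN$. For a general orbit, the weights occurring in $\mathfrak m$ (those of $\fg_{\ge2}$) are positive enough relative to $P$ that every weight of $\Sym^\bullet(\mathfrak m^{\vee})$ should land in a chamber forcing cohomology to be concentrated in degree $0$; making this precise in general is essentially Broer's theorem on the cohomology of such bundles, established uniformly in the classical types and with additional case analysis in the exceptional types.

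The main obstacle is precisely these two points: producing a pair $(P,\mathfrak m)$ for which $\mu$ is birational onto $\overline{\cO}$ (for certain orbits in exceptional groups the naive choice $\mathfrak m=\fg_{\ge2}$ is only generically finite and must be replaced by a smaller $P$-submodule), and the cohomology vanishing for the resulting bundle, which is not formal. An alternative route, closer in spirit to the methods of this paper, would bypass resolutions entirely: degenerate $\overline{\cO}$ through a $\bG_m$-action (for instance the contracting action coming from the cocharacter attached to $h$, which collapses $\overline{\cO}$ onto $0$) to a simpler model variety---of quiver or determinantal type---whose rational singularities are already known, and then invoke Elkik's theorem that flat deformations of varieties with rational singularities again have rational singularities. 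The price is a different difficulty: one must arrange a flat degeneration along which the normalization also varies flatly, so that the conclusion transfers to $\widetilde{\overline{\cO}}$ rather than merely to $\overline{\cO}$, which in general is genuinely not normal.
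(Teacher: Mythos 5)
This statement is not proved in the paper: it is quoted as Hinich's theorem (\cite[Theorem 3.3]{Hi}; the result is also due independently to Panyushev) and offered only as an illustration of rational singularities arising in representation theory, so there is no internal argument to compare against. Your outline is the standard ``collapsing'' argument: Jacobson--Morozov parabolic $P$, the map $\mu\colon G\times_P\fg_{\ge2}\to\overline{\cO}$, and Leray along the bundle projection to reduce everything to $H^{>0}\bigl(G/P,\Sym(\fg_{\ge2}^{\vee})\bigr)=0$. The dimension count and the formal reduction (affine target, so $R^{>0}\mu_{*}\cO=0$ is equivalent to the displayed vanishing) are correct, so the skeleton is sound as far as it goes.

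The two steps you defer, however, are the substance of the theorem, and your proposed repair for the first one is not viable as written. When $\mu$ has generic degree $d>1$, you cannot fix this by taking ``a smaller $P$-submodule $\mathfrak m\subsetneq\fg_{\ge2}$'' while keeping the same $P$: your own count gives $\dim(G\times_P\mathfrak m)=\dim(G/P)+\dim\mathfrak m$, which equals $\dim\cO$ only when $\dim\mathfrak m=\dim\fg_{\ge2}$, so any proper $P$-submodule produces a source of strictly smaller dimension that cannot dominate $\overline{\cO}$. The standard manoeuvre is instead to pass to the Stein factorization $\widetilde{\cO}\to X\to\overline{\cO}$: the vanishing makes the normal variety $X$ have rational singularities (since $\widetilde{\cO}\to X$ is then a resolution), but $X\to N(\overline{\cO})$ is a finite cover of degree $d$, and one still has to descend rational singularities across that cover --- a genuine extra step, handled either by a finite-quotient statement \`a la Boutot or by Hinich's own inductive argument. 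The cohomological vanishing itself is not a formal Bott-type fact; it is precisely what Hinich, Panyushev, and Broer had to establish, and absorbing it as a citation leaves essentially nothing of the theorem to prove. As for your alternative degeneration-plus-Elkik route, the obstruction you flag is real and, without a further idea, fatal: Elkik's theorem controls the fibers of the flat family, not their normalizations, and the normalization does not vary flatly in general, so rational singularities of a special fiber of a degeneration of $\overline{\cO}$ gives no information about $N(\overline{\cO})$ unless one first arranges to degenerate a variety that is already normal.
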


Using Proposition \ref{prop:intro.integration.RS}, one can deduce the following theorem of Deligne and Ranga--Rao:

\begin{theorem*}[see \cite{RR}] Let $G$ be a reductive algebraic group over a local non-archimedean field of characteristic 0 with Lie algebra $\mathfrak{g}$, and let $\mathcal{O}$ be a nilpotent orbit. Let $\mu$ be a $G$-invariant measure on $\mathcal{O}$. Then, for each Schwartz function $f$ on $\mathfrak{g}$, the integral $\int_{\mathcal{O}} f \cdot \mu$ converges.
\end{theorem*}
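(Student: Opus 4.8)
The plan is to transfer the integral from the orbit $\mathcal{O}$ to the normalization of its Zariski closure and then apply the first part of Proposition~\ref{prop:intro.integration.RS}, feeding in as the one substantive input the fact (\cite[Theorem~3.3]{Hi}, quoted just above) that this normalization has rational singularities.

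First I would record a reduction. Since $\mathcal{O}$ is a nilpotent orbit, it is a smooth variety carrying the Kirillov--Kostant--Souriau symplectic form, and an appropriate power of that form is a $G$-invariant, nowhere-vanishing algebraic top differential form $\omega$ on $\mathcal{O}$; in particular $|\omega|$ is a nonzero $G(F)$-invariant measure on $\mathcal{O}(F)$. As $\mathcal{O}(F)$ is a finite union of open--closed $G(F)$-orbits, on each of which an invariant measure is a constant multiple of $|\omega|$, we may write $\mu = h\,|\omega|$ with $h$ locally constant and bounded. Since $f$ is Schwartz, $|f|\,\mu \le C\cdot\mathbf{1}_{K_0}\,|\omega|$ for some constant $C$ and some compact open $K_0\subset\mathfrak{g}(F)$ with $\supp f\subset K_0$. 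Hence it suffices to show that $\int_{\mathcal{O}(F)\cap K_0}|\omega|<\infty$.

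Next, let $\nu\colon Y\to\overline{\mathcal{O}}$ be the normalization of the irreducible closed subvariety $\overline{\mathcal{O}}\subset\mathfrak{g}$; by \cite[Theorem~3.3]{Hi}, $Y$ has rational singularities. Being finite, $\nu$ is proper, and it restricts to an isomorphism over the normal locus of $\overline{\mathcal{O}}$, in particular over the smooth open subvariety $\mathcal{O}$. Thus $\nu^{-1}(\mathcal{O})\subseteq Y^{sm}$, the map $\nu$ identifies $\nu^{-1}(\mathcal{O})(F)$ with $\mathcal{O}(F)$, and $\nu^{*}\omega$ is a regular top form on $\nu^{-1}(\mathcal{O})$. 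I claim $\nu^{*}\omega$ extends to a regular top form $\omega'$ on all of $Y^{sm}$: every nilpotent orbit has even dimension, so every orbit in the boundary $\overline{\mathcal{O}}\setminus\mathcal{O}$ has codimension $\ge 2$ in $\overline{\mathcal{O}}$; as $\nu$ is finite it preserves dimensions of closed subsets, so $Y^{sm}\setminus\nu^{-1}(\mathcal{O})$ has codimension $\ge 2$ in the smooth irreducible variety $Y^{sm}$; and a rational top form on a smooth variety that is regular away from a closed subset of codimension $\ge 2$ has no polar divisor, hence is regular everywhere. Now set $K:=\nu^{-1}\bigl(\overline{\mathcal{O}}(F)\cap K_0\bigr)$, a compact open subset of $Y(F)$ because $\nu$ is proper and $\overline{\mathcal{O}}$ is closed in $\mathfrak{g}$. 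Proposition~\ref{prop:intro.integration.RS}(1), applied to $Y$, $\omega'$ and $K$, gives $\int_{K\cap Y^{sm}(F)}|\omega'|<\infty$; restricting the integral to the open subset $\nu^{-1}(\mathcal{O})(F)$ and transporting back along $\nu^{-1}(\mathcal{O})(F)\xrightarrow{\ \sim\ }\mathcal{O}(F)$ yields $\int_{\mathcal{O}(F)\cap K_0}|\omega|\le\int_{K\cap Y^{sm}(F)}|\omega'|<\infty$, which together with the reduction above proves the theorem.

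The only non-formal ingredients are \cite[Theorem~3.3]{Hi} and Proposition~\ref{prop:intro.integration.RS}; all the rest is bookkeeping. The one step that requires a little care is the extension of the invariant form $\omega$ across the boundary of the orbit inside $Y^{sm}$ — equivalently, the assertion that the pulled-back measure does not blow up near $Y^{sm}\setminus\nu^{-1}(\mathcal{O})$ — which is precisely where the even-dimensionality of nilpotent orbits (hence the codimension-$\ge 2$ bound on the boundary) is used. Alternatively, one can sidestep this extension step entirely by invoking the version of Proposition~\ref{prop:intro.integration.RS} for rational top differential forms stated in \S\S\ref{subsec:integration.RS}, applied directly to the rational form $\nu^{*}\omega$ on $Y$.
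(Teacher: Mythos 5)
Your proof is correct and is precisely the argument the paper has in mind: the paper merely remarks that the theorem follows from Proposition~\ref{prop:intro.integration.RS} together with \cite[Theorem~3.3]{Hi}, without spelling out the deduction, and your write-up supplies the missing bookkeeping — reduction to the Liouville measure, passage to the normalization $\nu\colon Y\to\overline{\mathcal{O}}$ (which is finite, hence proper, and an isomorphism over $\mathcal{O}$), the codimension-$\ge 2$ bound on the boundary coming from even-dimensionality of nilpotent orbits, and the resulting extension of $\nu^{*}\omega$ across $Y^{sm}$ so that the proposition applies. No gaps; and your closing observation that Lemma~\ref{lem:RS.implies.finite.integral} lets you skip the extension step is also correct, since $\nu^{-1}(\mathcal{O})$ already has codimension-$\ge 2$ complement in $Y$.
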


\subsection{Future Work}

%

A sequel to this paper will deal with analogs of Theorems \ref{thm:intro.SLd.representation.growth} and \ref{thm:intro.abscissa.SLd} in the global case, for example, for groups of the form $G(\mathbb{Z})$.

\subsection{Structure of the Paper}

In Section \ref{sec:singularities.def}, we prove that the map $\Phi_{G,n}$ is \FRS\ at the point $(1,\ldots,1)$, assuming $G$ is classical and $n \geq \mathcal{B}(G)$. In Subsection \ref{subsec:methods}, we describe the degeneration method and give two examples of degenerations. The scheme of the proof is described in Subsection \ref{subsec:scheme}. In Subsection \ref{subsec:reduction.polygraph}, we reduce the claim to proving \FRS\ property of a map described by a combinatorial data, which we call a polygraph. The definition of this map is similar to Definition \ref{defn:symplectic.graph.variety}. In Section \ref{subsec:FRS.polygraph}, we translate the methods of Subsection \ref{subsec:methods} to polygraphs. In Subsection \ref{ssec:frs_tree}, we prove Theorem \ref{thm:FRS.tree}. In Subsections \ref{ssec:mult_FRS}--\ref{ssec:Comb.stat.sp}, we finish the proof for the classical groups, case by case.


In Section \ref{sec:push.forward}, we prove Theorem \ref{thm:push.forward}. Subsection \ref{subsec:measure.forms} describes the construction of measures out of differential forms. In Subsection \ref{subsec:strong.push.forward} we formulate a strong version of Theorem \ref{thm:push.forward}. In Subsection \ref{subsec:generalities} we prove some general results about push forwards of Schwartz measures. In Subsection \ref{subsec:integration.RS}, we  prove Proposition \ref{prop:intro.integration.RS}. In Subsections \ref{subsec:FRS.to.continuous} and \ref{subsec:continuous.to.FRS}, we prove a stronger version of Theorem \ref{thm:push.forward}. Part 1 is proved in Subsection \ref{subsec:FRS.to.continuous}, and part 2 is proved in Subsection \ref{subsec:continuous.to.FRS}.

In Section \ref{sec:rep.growth} we apply our previous results to representation growth and the geometry of deformation varieties. Subsection \ref{subsec:special.values} concerns the relation between representation growth and the map $\Phi_{G,n}$. We prove Proposition \ref{prop:Frobenius.Formula}, prove that the map $\Phi_{G,n}$ is always flat, prove Theorem \ref{thm:def.and.representation.growth}, and finish the proof of Theorem \ref{thm:intro.SLd.FRS}. In Subsection \ref{subsec:pos.char}, we apply our characteristic 0 results to representation growth in positive characteristic and prove Theorem \ref{thm:intro.pos.zero.local.zeta} (which implies Theorem \ref{thm:intro.abscissa.SLd}). In Subsection \ref{subsubsec:localization} we describe the Atiyah--Bott--Goldman form on the space of $\Gamma$-local systems, and prove Theorem \ref{thm:intro.volume.formula}.

In Appendix \ref{sec:cont.along.curves} we study integrals of Schwartz measures with parameters,  i.e. functions of the form
\[
y \mapsto \int_{\phi ^{-1} (y)^{sm}(F)} f | \eta_y |,
\]
where $\phi : X \rightarrow Y$ is \FRS. We show that such functions are continuous if and only if their restrictions to any curve in $Y$ is continuous. This result is used in Section \ref{sec:push.forward}. Our methods are taken from Model Theory, and, in particular, the theory of Motivic Integration.

In Appendix \ref{sec:alg.geom}, we summarize the definitions and facts of Algebraic Geometry that we use in this paper. Subsection \ref{ssec:flat} discusses flat maps. Subsection \ref{ssec:G.duality} contains a summary of results that we use from the theory of Grothendieck duality. Subsections \ref{ssec:LCI} and \ref{ssec:CM}-\ref{ssec:RS} discuss singularity theory.

In Appendix \ref{app:ex}, we provide some illustrations for the graphs obtained in \S\S\ref{ssec:mult_FRS}--\S\S\ref{ssec:Comb.stat.sp}.

\subsection{Conventions}
We will use the following conventions:
\begin{itemize}
\item  $k$ is a field of characteristic $0$.
\item  $F$ is a local field. Unless stated otherwise, it is non-archimedean and of characteristic $0$. We denote the ring of integers of $F$ by $O:=O_{F}$.
\item All the algebras that we consider are  commutative  and, unless stated otherwise, unital and finitely generated over some  base field (usually $k$ or $F$).
\item Unless stated otherwise, all the schemes that we consider are of finite type over the base field.
\item We use the term algebraic variety as a synonym for a reduced scheme.
\item A morphism of algebraic varieties or schemes means a morphism over the base field.
\item The smooth locus of an algebraic variety $X$ will be denoted by $X^{sm}$.
\item Given a field extension $k \subset F$ and a variety $X$ defined over $k$, we denote $X_F=X \times_{\Spec k}\Spec F$.
 \item For a $k$-scheme $X$, we denote its ring of regular functions by $k[X]:=O_X(X)$.  We use similar notation for $F$-schemes.
\item Unless stated otherwise, a point in algebraic variety will mean a point over the base field. For a variety $X$, we denote the set of such points by $X(k)$ or $X(F)$.
\item We will consider the (Hausdorff) analytic topology on $X(F)$ and the Zarizki topology on $X$ and $X(k)$. For a point $x \in X(k)$ the expression ``a neighborhood of $x$" will mean (depending on the context) an open subscheme $U \subset X$ such that $x \in U(k)$ or the set of $k$ points of such open subscheme.
\item A $p$-adic manifold is a Hausdorff space $X$ with a sheaf of functions that is locally isomorphic to the space $\mathbb{Z}_p^N$ together with the sheaf of functions that are locally given by convergent power series; see \cite{Se}. We will not use notions from rigid analytic geometry.
\item By point of an algebra $A$, we mean a point of its spectrum, i.e. a morphism from  $A$ to the base field.
\end{itemize}

\subsection{Acknowledgements}
We thank Karl Schwede, Angelo Vistoli, Sandor Kovacs, and Laurent Moret-Baily for answering questions related to rational singularities and algebraic geometry on MathOverflow as well as the site's administrators for the platform. We thank Vladimir Hinich for answering many questions about Grothendieck duality and rational singularities. We benefitted from conversations with Joseph Bernstein, Roman Bezrukavnikov, Alexander Braverman, Vladimir Drinfeld, Pavel Etingoff, Victor Ginzburg, David Kazhdan, Michael Larsen, Alex Lubotzky, Tony Pantev, and Yakov Varshavsky. We thank them all.

A.A. was partially supported by NSF grant DMS-1100943 and ISF grant 687/13; N.A. was partially supported by NSF grants DMS-0901638 and DMS-1303205. Both authors were also partially supported by BSF grant 2012247.

\section{Singularities of Deformation Varieties} \lbl{sec:singularities.def}
\setcounter{theorem}{0}
Our aim in this section is to prove the following theorem.

\begin{theorem} \lbl{thm:local.FRS} Let $d \geq 1$ be an integer and let $G$ be either $\SL_d,\SO_d$ or $\Sp_{2d}$. Let $n$ be an integer such that $n \geq \mathcal{B}(G)/2+1$. Then, the map $\Phi_{G,n}$ is \FRS\ at the point $(1,\ldots,1)$.
\end{theorem}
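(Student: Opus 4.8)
The plan is to prove Theorem \ref{thm:local.FRS} by a sequence of $\mathbb{G}_m$-degenerations, using Elkik's theorem (\ref{thm:elk}) together with the degeneration criterion (Corollary \ref{cor:degeneration}) to transfer the \FRS\ property from successively simpler varieties back to $\Phi_{G,n}$ at $(1,\dots,1)$. First I would choose a $\mathbb{G}_m$-action on $G$ coming from a good filtration on $k[G]$ whose associated graded recovers (a modification of) the coordinate ring of the Lie algebra $\mathfrak g$; the standard Rees-construction filtration on $k[G]$ — thinking of $G$ as a subvariety of matrices, filtering by ``distance from the identity'' — degenerates the map $\Phi_{G,n}\colon G^{2n}\to G$, $\prod[g_i,h_i]$, to the additive commutator map $\mathfrak g^{2n}\to\mathfrak g$, $(X_i,Y_i)\mapsto\sum[X_i,Y_i]$, with the point $(1,\dots,1)$ going to $(0,\dots,0)$. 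Since $(0,\dots,0)$ is a stable point (it is fixed by the $\mathbb{G}_m$-action and lies in the relevant locus), Corollary \ref{cor:degeneration} reduces the theorem to showing that the additive commutator map is \FRS\ at the origin, i.e. that the ``Lie algebra deformation variety'' $\{(X_i,Y_i)\in\mathfrak g^{2n}\mid\sum[X_i,Y_i]=0\}$ is reduced with rational singularities at $0$ and that the map is flat near there.

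Next I would degenerate the additive commutator map further. Fixing an invariant nondegenerate bilinear form on $\mathfrak g$ to identify $\mathfrak g^{2n}$ with a symplectic-type configuration space, the Lie bracket relation $\sum[X_i,Y_i]=0$ can be filtered — e.g. by a filtration adapted to a Cartan or parabolic grading of $\mathfrak g$, or by brute-force term-degree — so that the associated graded relations become ``abelian'': the structure constants degenerate and the bracket is replaced by the symplectic pairing on a symplectic vector space $W$ (with $\dim W$ controlled by $\dim\mathfrak g$). This lands us in the class of symplectic graph varieties $X_{\Upsilon,W}$ of Definition \ref{defn:symplectic.graph.variety}, for a specific graph $\Upsilon$ determined by which brackets appear; in the surface-group presentation the relevant graph is (essentially) a cycle or a path, and in any case of bounded degree. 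One then performs a final degeneration of $X_{\Upsilon,W}$, breaking $\Upsilon$ into a disjoint union of edges — each edge contributing a factor $\{(v_1,v_2)\in W\times W\mid\omega(v_1,v_2)=0\}$, which is an explicit complete intersection one can resolve by hand and check directly to have rational singularities — provided the intermediate step is justified by Theorem \ref{thm:FRS.tree}, which handles trees of maximal degree $d$ once $\dim W\geq 4(d-1)$. For the classical families $\SL_d,\SO_d,\Sp_{2d}$, the dimension of $\mathfrak g$ and the degree of the graph grow compatibly, and the numerical bound $n\geq\mathcal B(G)/2+1$ is exactly what makes $\dim W$ large enough at the last step; the exceptional-type bound $3\dim\mathfrak g+1$ is not needed here since Theorem \ref{thm:local.FRS} is stated only for the classical groups, and the exceptional cases are dealt with separately via known representation-growth estimates and Theorem \ref{thm:def.and.representation.growth}.

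Throughout, the bookkeeping that has to be checked at each stage is: (i) the filtration is good, so that $\Spec\gr$ is a genuine flat degeneration; (ii) the relevant base point is a \emph{stable} point of $\Spec\gr$, so that Corollary \ref{cor:degeneration} applies and the \FRS\ property at the degenerate point propagates to the general fiber at $(1,\dots,1)$; and (iii) reducedness of all the intermediate special fibers (one cannot invoke Elkik on a non-reduced fiber). Flatness of $\Phi_{G,n}$ itself can be obtained as a byproduct (or cited), so the real content is the rational-singularities statement.

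I expect the main obstacle to be \textbf{Step (ii)}: verifying the stable-point hypothesis and, relatedly, getting the degeneration of the \emph{bracket} itself — as opposed to just the underlying vector space — to land cleanly in the symplectic-graph-variety class with a graph of the right (bounded) degree and an ambient symplectic space of the right dimension. The combinatorics of which commutators $[X_i,Y_i]$ interact after choosing the filtration, and matching the resulting graph against the degree-$d$/$\dim W\geq 4(d-1)$ hypothesis of Theorem \ref{thm:FRS.tree} case-by-case for $\SL_d$, $\SO_d$, $\Sp_{2d}$, is where the precise constant $\mathcal B(G)$ gets pinned down, and is the delicate part of the argument; the explicit desingularization of the single-edge variety $\{\omega(v_1,v_2)=0\}$ and the final product decomposition are comparatively routine.
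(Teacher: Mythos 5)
Your overall route — linearize $\Phi_{G,n}$ at $(1,\ldots,1)$ to the additive commutator map $\Psi_{\fg,n}\colon\fg^{2n}\to\fg$ at the origin via a filtration by the maximal ideal, then degenerate $\Psi_{\fg,n}$ further until each defining equation is a single symplectic pairing, break the resulting graph into trees by coloring, reduce to a single edge, and desingularize that edge by hand — is exactly the route the paper takes (Subsections \ref{subsec:methods}--\ref{ssec:Comb.stat.sp}). Both of your intermediate lemmas are the right ones: Corollary \ref{cor:degeneration} for the $\mathbb G_m$-degenerations and Theorem \ref{prop:frs_tree} for trees of bounded degree. And you are right that reducedness of the special fibers and flatness have to be tracked along the way; the paper handles these in Proposition \ref{prop:exp_res} and by observing that flatness at the stable point propagates from the graded level.

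Two things in your sketch are, however, genuinely confused, and they concern the heart of the argument rather than the bookkeeping you flag. First, the combinatorial graph $\Upsilon$ is \emph{not} ``essentially a cycle or a path'' coming from the surface-group presentation, and $\dim W$ is not ``controlled by $\dim\fg$.'' The roles are exactly the other way around: writing $\fg^{2n}\cong\fg\otimes W$ identifies the configuration space with $W^I$, where $I$ indexes a \emph{basis of $\fg$} and $W$ is the fixed $2n$-dimensional symplectic space coming from the surface group. The graph's vertex set therefore has roughly $\dim\fg$ elements, its edges are determined by which structure constants $a_{ijl}=\alpha_l([e_i,e_j])$ survive after the weight degenerations, and $\dim W=2n$. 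So the entire combinatorial difficulty lives in choosing a basis $B$ and coordinate system $C$ for each classical $\fg$ so that the structure constants are sparse, then choosing integer weights so the surviving graph is a forest of maximal degree $\leq 3$; this is precisely what forces the bound $\dim W=2n\geq\cB(G)$, i.e. $n\geq\cB(G)/2+1$. Your version cannot even state the numerics correctly, because with a cycle or path graph the degree is $2$ and there would be no nontrivial bound at all.

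Second, you predict the main obstacle to be the stable-point check in Step (ii), but this is in fact routine: every filtration used is either the $\fm$-adic one at a fixed point or a weight filtration on an affine space centered at $0$, and the base point is a $\mathbb G_m$-fixed point in every case, so stability is automatic. The genuine work is (a) exhibiting explicit $B$, $C$, and weight vectors for each of $\mathfrak{sl}_d,\mathfrak{so}_d,\mathfrak{sp}_{2d}$ so that the surviving polygraph is a graph which, after a three-coloring of its edges, decomposes into trees of degree $\leq 3$ (this is Subsections \ref{ssec:mult_FRS}--\ref{ssec:Comb.stat.sp}, with substantial case analysis), and (b) proving Theorem \ref{prop:frs_tree} itself, which again uses a coloring to split a tree into stars and then into disjoint edges, and is where the factor $4(d-1)$ enters. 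You should rework the middle paragraph so that the graph is tied to the Lie algebra and not to the surface group, and redirect the ``delicate part'' discussion from the stable-point verification to the choice of basis/coordinates/weights and the coloring argument.
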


In Subsection \ref{subsec:methods} we introduce our method and in Subsection \ref{subsec:scheme} we describe the scheme of the proof. The rest of the section contains the proof itself. Throughout the section, we fix $G$ and $n$ as above and denote the Lie algebra of $G$ by $\fg$.

%

\subsection{Methods} \lbl{subsec:methods}

\subsubsection{Degeneration} \lbl{subsec:degeneration.method}

Given algebraic varieties $X,S$ and a morphism $\pi_X : X \rightarrow S$, we say that $X$ is an $S$-variety, and we call $\pi_X$ the structure map. We will denote the fiber $\pi_X ^{-1} (s)$ by $X_s$. We say that $X$ is a flat $S$-variety if $\pi_X$ is flat. A morphism $f:X \rightarrow Y$ between two $S$-varieties is called an $S$-morphism if $\pi_X=\pi_Y \circ f$. The restriction of $f$ to a morphism between $\pi_X ^{-1} ( s)$ and $\pi_Y ^{-1} (s)$ will be denoted by $f_s$.

Elkik's theorem (Theorem \ref{thm:elk}) has the following corollary:

\begin{corollary} \lbl{cor:elkik} Let $X$, $Y$ be flat $S$-varieties with structure maps $\pi_X,\pi_Y$, and let $f:X \rightarrow Y$ be an $S$-map. Assume that $Y$ is smooth. The set of points $x\in X(k)$ for which the fiber map $f_{\pi_X(x)}: X_{\pi_X(x)} \rightarrow Y _{\pi_X(x)}$ is \FRS\ at $x$ is open.
\end{corollary}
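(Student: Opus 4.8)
The plan is to forget the base $S$ as quickly as possible and reduce the assertion to three openness statements about the single morphism $f\colon X\to Y$: openness of the flat locus, of the locus where the fibre is reduced at the point, and of the locus where the fibre has rational singularities at the point. The first two are standard facts about finite‑type morphisms; the third is (a form of) Elkik's theorem, and is the real content.

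First I would record two facts that eliminate $S$. Since $X$ and $Y$ are both flat over $S$, the fibrewise flatness criterion (EGA IV, the ``critère de platitude par fibres'') shows that for every $x\in X$ with $s=\pi_X(x)$ one has: $f_s$ is flat at $x$ iff $f$ is flat at $x$. Second, writing $y=f(x)$ and noting $\pi_X(x)=\pi_Y(y)=s$ because $f$ is an $S$‑morphism, the point $\Spec\kappa(y)\to Y$ factors through $Y_s$, so $X\times_Y\Spec\kappa(y)=(X\times_Y Y_s)\times_{Y_s}\Spec\kappa(y)=X_s\times_{Y_s}\Spec\kappa(y)$; that is, the scheme‑theoretic fibre of $f_s$ through $x$ is canonically the fibre $f^{-1}(y)$ of $f$. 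Hence ``$f_s$ is FRS at $x$'' is equivalent to the conjunction of ``$f$ is flat at $x$'', ``$f^{-1}(f(x))$ is reduced at $x$'', and ``$f^{-1}(f(x))$ has rational singularities at $x$'' — a condition in which $S$ no longer appears.

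Next I would dispose of the easy parts. The flat locus $W\subseteq X$ of $f$ is open because $f$ is of finite presentation, and the FRS‑locus is contained in $W$, so I may replace $X$ by $W$ and assume $f$ flat. For a flat finite‑type morphism the set of $x$ at which $X_{f(x)}$ is geometrically reduced at $x$ is open (EGA IV, §12), and over a characteristic‑zero residue field ``reduced'' equals ``geometrically reduced''; restricting to this open set I may assume every fibre of $f$ is reduced. It remains to prove that $\{x\in X : X_{f(x)}\text{ has rational singularities at }x\}$ is open, for $f\colon X\to Y$ flat of finite type with $Y$ smooth over $k$ (hence regular).

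This is where Theorem \ref{thm:elk} enters. If \ref{thm:elk} is stated as the openness of the rational‑singularity locus of the fibres of a flat morphism over a regular base, we are done immediately. Otherwise, in its deformation form, the argument runs as follows. The complement $B$ of the locus in question is constructible: working one irreducible component of $Y$ at a time, generic smoothness in characteristic zero lets one choose, over a dense open $V\subseteq Y$, a resolution of $X_V\to V$ whose fibres resolve the $X_y$ ($y\in V$); cohomology and base change then identify the rational‑singularity locus of $X_y$ with the trace on $X_y$ of a fixed open subset of $X_V$, and Noetherian induction on $Y$ gives constructibility of $B$. A constructible set is closed iff it is stable under specialization, and to test this I localize and take $Y=\Spec R$ with $R$ a discrete valuation ring, closed point $y_0$, generic point $\eta$. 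The implication to prove is that if $X_{y_0}$ has rational singularities at $x_0$ then $X_\eta$ has rational singularities at every $x\rightsquigarrow x_0$; after shrinking $X$ to a neighbourhood of $x_0$ on which $X_{y_0}$ has rational singularities everywhere, this is exactly the statement that a flat deformation of a variety with rational singularities again has rational singularities, i.e. Elkik's theorem. I expect this last step to be the main obstacle: it is the only point beyond formal properties of morphisms where genuine input is needed, and the constructibility of $B$, though routine, requires the resolution‑in‑families argument to be set up with some care.
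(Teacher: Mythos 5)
Your proposal is correct and its primary branch is essentially the paper's own argument: use the fibrewise flatness criterion to reduce to the case that $f$ itself is flat, observe that the fibre of $f_s$ through $x$ is $f^{-1}(f(x))$, and apply Theorem~\ref{thm:elk}(1) with base $Y$. As you anticipated, Theorem~\ref{thm:elk} here is stated precisely as openness of the rational-singularity-of-fibres locus for a flat map over a smooth base, so your ``done immediately'' branch applies and the constructibility/Noetherian-induction fallback is not needed (and would in any case require care, since the specialization step you sketch over a DVR is essentially the same openness statement one is trying to prove, so one would have to argue that the pure deformation version of Elkik plus generic simultaneous resolution genuinely yields it).
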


\begin{proof} By Theorem \ref{thm:flat.locus}, $f_{\pi_X(x)}$ is flat at $x$ if and only if $f$ is flat at $x$, and this is an open condition. Restricting to the open set on which $f$ is flat, we can assume, without loss of generality, that $f$ is flat. Elkik's theorem (Theorem \ref{thm:elk}), applied with $S=Y$, implies that the set of points $x$ at which $f_{\pi_X(x)} ^{-1} (f_{\pi_X(x)}(x))=f ^{-1} (f(x))$ has rational singularity is open.
\end{proof}

\begin{corollary}[Geometric Degenerarion]\label{cor:deg.geom} Let $X$ and $Y$ be flat $\mathbb{A} ^1$-varieties equipped with an action of $\mathbb{G}_m$ such that the structure maps intertwine the $\mathbb{G}_m$ action with the standard action of $\mathbb{G}_m$ on $\mathbb{A} ^1$. Let $\phi :X \rightarrow Y$ be an $\mathbb{A} ^1$-morphism that is also $\mathbb{G}_m$-equivariant, and Let $s:\A^1 \to X$ be a  $\mathbb G_m$-equivariant section of the structure map.
\RamiAlt{Let
$$\xymatrix{
X \ar@{->}^\phi[rr] \ar@{->}_p[dr]&& Y \ar@{->}^q[dl]\\
&\A^1 &
}
$$
Be a commutative diagram  of algebraic varieties equpt with an action of $\mathbb G_m$ such that  $p$ and $q$ are flat and.
 the action on $\A^1$ is the standard one. Let $s:\A^1\to X$  be an equivariant section. For a point $a \in \A^1$ we denote $X_a=p^{-1}(a),Y_a=q^{-1}(a)$ and  $\phi_a:= \phi|_{X_a}:X_a \to Y_a$.} Suppose that $\phi_0$ is \FRS\ at $s(0)$. Then $\phi_1$ is \FRS\ at $s(1)$
\end{corollary}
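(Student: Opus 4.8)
The plan is to combine the openness of the \FRS-locus furnished by Corollary \ref{cor:elkik} with the $\mathbb{G}_m$-equivariance built into the hypotheses. Write $\pi_X\colon X\to\mathbb A^1$ and $\pi_Y\colon Y\to\mathbb A^1$ for the structure maps, and set
\[
U=\bigl\{\,x\in X(k)\ :\ \phi_{\pi_X(x)}\ \text{is \FRS\ at}\ x\,\bigr\},
\]
the locus of points at which the fiber map is \FRS. By Corollary \ref{cor:elkik} this is the set of $k$-points of an open subscheme $\widetilde U\subseteq X$. The hypothesis is precisely $s(0)\in U$ (indeed $\pi_X(s(0))=0$, and $\phi_0$ is \FRS\ at $s(0)$), and the desired conclusion is precisely $s(1)\in U$. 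So it suffices to prove that $\widetilde U$ is $\mathbb{G}_m$-invariant and that a $\mathbb{G}_m$-invariant open subscheme of $\mathbb A^1$ containing $0$ is all of $\mathbb A^1$.

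First I would verify that $\widetilde U$ is $\mathbb{G}_m$-stable. For $t\in\mathbb{G}_m$, the action of $t$ is an automorphism of $X$ and of $Y$; since $\pi_X$ and $\pi_Y$ intertwine the $\mathbb{G}_m$-action with the standard action on $\mathbb A^1$, this automorphism restricts to isomorphisms $X_a\xrightarrow{\ \sim\ }X_{ta}$ and $Y_a\xrightarrow{\ \sim\ }Y_{ta}$, and because $\phi$ is $\mathbb{G}_m$-equivariant these isomorphisms carry $\phi_a$ to $\phi_{ta}$ and $x$ to $t\cdot x$. As the \FRS\ property at a point — flatness of $\phi_a$ there, together with reducedness and rational singularities of the fiber of $\phi_a$ through that point — is preserved under such isomorphisms of data, we conclude $x\in\widetilde U\iff t\cdot x\in\widetilde U$.

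Next I would pull back along $s$. Since $s$ is a $\mathbb{G}_m$-equivariant section of $\pi_X$, the open subscheme $s^{-1}(\widetilde U)\subseteq\mathbb A^1$ is $\mathbb{G}_m$-invariant and contains $0$. Its complement is then a $\mathbb{G}_m$-invariant closed subscheme of $\mathbb A^1$ not containing $0$; but every $\mathbb{G}_m$-invariant closed subscheme of $\mathbb A^1$ other than $\mathbb A^1$ itself is supported at the origin, so the complement is empty and $s^{-1}(\widetilde U)=\mathbb A^1$. In particular $1\in s^{-1}(\widetilde U)$, that is $s(1)\in\widetilde U$, which says exactly that $\phi_1$ is \FRS\ at $s(1)$. (Over the infinite field $k$ one can argue equivalently on $k$-points: $s^{-1}(\widetilde U)(k)\subseteq k$ is a nonempty Zariski-open, hence cofinite, $k^\times$-invariant subset containing $0$, and a cofinite $k^\times$-invariant set is all of $k$.)

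The one step that requires real care is the $\mathbb{G}_m$-invariance of $\widetilde U$: one must check that the $\mathbb{G}_m$-action genuinely produces isomorphisms of the fiber maps $\phi_a$ and $\phi_{ta}$ carrying the marked point to the marked point, so that ``\FRS\ at $x$'' is transported faithfully — this is where equivariance of $\phi$, of the section $s$, and of the two structure maps $\pi_X,\pi_Y$ all enter simultaneously. (One also invokes Corollary \ref{cor:elkik} in its stated form, with the total space $Y$ smooth, which is the setting intended here.) Everything else in the argument is formal.
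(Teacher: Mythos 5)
Your proof is correct and follows essentially the same approach as the paper's: apply Corollary \ref{cor:elkik} to obtain openness of the \FRS\ locus, pull back along the section $s$ to get an open neighborhood of $0$ in $\mathbb{A}^1$, then transport via the $\mathbb{G}_m$-action to conclude at $1$. The paper's version is a bit terser — it just picks a single $t\neq 0$ in the open set and moves it to $1$, rather than proving full $\mathbb{G}_m$-invariance of the locus — but the underlying argument is the same.
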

\begin{proof}
By Corollary \ref{cor:elkik}, the set of $t \in \A^1(k)$ such that $\phi_t$ is \FRS\ at $s(t)$ is open. In particular, the map $\phi_t$ is \FRS\ at $s(t)$, for some $t \neq 0$. Using the $\mathbb G_m$-action, $\phi_1$ is \FRS\ at $s(1)$.
\end{proof}

\begin{definition}
In the situation above, we call $\phi_0:X_0 \to Y_0$ a {\emph{degeneration}} of $\phi_1:X_1 \to Y_1$.
\end{definition}

Instead of talking about families with a $\mathbb{G}_m$ action, we will use the equivalent language of filtrations, which is more suitable for computations.

\begin{definition} \label{defn:filtration}
{Let $A$ be an algebra. By a \emph{filtration} on $A$ we mean}
an increasing sequence $\left( F^i A \right)_{i\in \mathbb{Z}}$ of subspaces of $A$ such that the following hold:
\begin{enumerate}
\item $F^iA \cdot F^jA \subset F^{i+j}A$.
\item $\cap F^iA=\left\{ 0 \right\}$.
\item $\cup F^iA=A$.
\end{enumerate}
\end{definition}

\begin{remark} One usually considers two kinds of filtrations, ascending and descending; both with non-negative indices. Definition \ref{defn:filtration} unites both kinds, since one can replace a descending filtration $\left( G_iA \right) _{i \geq 0}$ with the filtration
\[
F^iA=\left\{ \begin{matrix} G_{-i}A & i \leq 0 \\ A & i \geq 0 \end{matrix} \right. .
\]
\end{remark}

\begin{example}
On the field $k$, we will consider the standard filtration
$$F^i(k)=\begin{cases}0 & i<0 \\
k & i \geq 0 \\
\end{cases}$$
\end{example}

\begin{definition}
$ $

\begin{enumerate}
\item For a reduced algebra $A$ with a filtration $(F^iA)$, we define the {\emph{Rees algebra}} of $A$ to be the graded $k[t]$-algebra $\Rees(A):=\oplus t^{i} F^iA$. Note that, in general, $\Rees(A)$ is reduced, but need not be unital nor finitely generated.
\item For a filtration-preserving morphism $\phi:A \to B$ between reduced filtrated algebras, we define $\Rees(\phi):\Rees(A) \to \Rees(B)$ by $\Rees(\phi)(t^i a)=t^i \phi(a)$. The assignment $\Rees$ is a covariant functor from the category of reduced filtrated algebras to the category of reduced algebras.
\item We call a filtration on a reduced algebra $A$ {\emph{good}} if $\Rees(A)$ is unital and finitely generated.
\item The {\emph{Rees variety}} of a reduced algebra $A$ with good filtration is $\mathcal R(A):= \Spec(\Rees(A))$. Since the Rees algebra is graded, the Rees variety has a natural action of $\mathbb G_m$. Since the Rees algebra is a $k[t]$-algebra, we have a natural map $\mathcal R(A)\to \A^1$ which is $\mathbb{G}_m$-equivariant.
\item $\mathcal R$ gives rise to a contravariant functor from the category of reduced algebras with good filtration to the category of $\mathbb G_m$-equivariant maps $X \to \A^1$.
\end{enumerate}
\end{definition}

\begin{example} If $A$ is a reduced $\mathbb{Z}$-graded algebra, then the filtration induced by the grading is good. In this case, $\mathcal{R}(A)=\Spec(A) \times \mathbb{A} ^1$.
\end{example}

The following proposition is standard:
\begin{proposition}  [Rees Equivalence] \label{prop:eq.cat}
$ $
\begin{enumerate}
\item  \label{prop:eq.cat:1}
The functor $\mathcal{R}$ defines an (anti-)equivalence of categories between the category of reduced algebras with good filtrations and the category of diagrams $X \overset{\phi}{\to} \A^1$, where $X$ is an affine variety with a $\mathbb{G}_m$ action, and $\phi$ is flat and $\mathbb{G}_m$-equivariant.
\item The fiber at $1$ of $\mathcal R(A)$ is $\Spec (A)$
\item The fiber at $0$ of $\mathcal R(A)$ is $\Spec(\gr(A))$
\item $\mathcal R(k)=(\A^1\overset{Id}{\to} \A^1) $
\end{enumerate}
\end{proposition}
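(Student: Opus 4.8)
The plan is to construct the inverse functor explicitly and check that the two composites are naturally isomorphic to the respective identities. Given a diagram $X \xrightarrow{\phi} \A^1$ with $X$ affine, equipped with a $\mathbb{G}_m$-action making $\phi$ flat and $\mathbb{G}_m$-equivariant (where $\A^1$ carries the standard scaling action), the coordinate ring $k[X]$ becomes a $\mathbb{Z}$-graded $k[t]$-algebra, with $t$ in degree $1$, via the co-action of $\mathbb{G}_m$. Set $F^i A := \bigoplus_{j \le i} (k[X]/t\,k[X])_j$, viewing the quotient $A := k[X]/t\,k[X] = k[X_0]$ as the fiber at $0$; more precisely, I would define the filtration on $A := k[X_1] = k[X]/(t-1)k[X]$ (the fiber at $1$) by declaring $F^i A$ to be the image of $\bigoplus_{j\le i} k[X]_j$ under the projection $k[X] \to k[X]/(t-1)$. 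The three filtration axioms (multiplicativity, Hausdorffness $\cap F^i A = 0$, exhaustiveness $\cup F^i A = A$) follow from $X$ being of finite type and from flatness of $\phi$: flatness of $k[t] \to k[X]$ means $t$ is a non-zero-divisor, which is exactly what guarantees that reduction mod $(t-1)$ does not collapse the grading and that $\Rees$ of the resulting filtration recovers $k[X]$. The goodness of the filtration (i.e.\ $\Rees(A)$ unital and finitely generated) is equivalent to $k[X]$ being finitely generated over $k$, which holds since $X$ is a variety of finite type.

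The key computations are then parts (2), (3), (4), which also serve to pin down the functor and prove the equivalence. For (2): $\Rees(A) \otimes_{k[t]} k[t]/(t-1) \cong A$ because inverting/specializing $t\mapsto 1$ identifies $t^i F^i A$ with $F^i A \subset A$ and the union is all of $A$; hence the fiber of $\mathcal{R}(A) \to \A^1$ at $1$ is $\Spec A$. For (3): $\Rees(A)/t\,\Rees(A) = \bigoplus_i F^i A / F^{i-1} A = \gr(A)$, so the fiber at $0$ is $\Spec(\gr A)$. For (4): the standard filtration on $k$ has $\Rees(k) = \bigoplus_{i\ge 0} t^i k = k[t]$, so $\mathcal{R}(k) = \Spec k[t] = \A^1$ with the structure map the identity. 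To finish (1), I would verify: (a) $\Rees(A)$ is a flat $k[t]$-module — this is automatic since $\Rees(A) = \bigoplus_i t^i F^i A \subset A[t,t^{-1}]$ is torsion-free over the PID $k[t]$, hence flat; (b) the reducedness of $\Rees(A)$ is inherited from that of $A$ (it embeds in $A[t,t^{-1}]$); (c) naturality of the two unit/counit transformations, which is a formal check that $\Rees$ and $\mathcal{R}$ are mutually inverse on morphisms — a filtration-preserving map $\phi: A \to B$ goes to $\Rees(\phi)(t^i a) = t^i\phi(a)$, and conversely a $\mathbb{G}_m$-equivariant $\A^1$-morphism is exactly a graded $k[t]$-algebra map, and these operations undo each other.

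The one point requiring genuine care — and which I expect to be the main obstacle — is the precise equivalence between "good filtration on a reduced $k$-algebra $A$" and "flat $\mathbb{G}_m$-equivariant $\phi: X \to \A^1$ with $X$ affine", specifically the flatness direction: I must check that a good filtration always produces a \emph{flat} family, and conversely that flatness of $\phi$ is what lets me reconstruct $A$ (rather than some quotient) from the fiber data. Flatness of $\Rees(A)$ over $k[t]$ is easy as noted (torsion-freeness over a PID), but the subtle direction is that \emph{every} flat $\mathbb{G}_m$-equivariant family over $\A^1$ arises this way: given such a family, one needs $t$ to be a non-zero-divisor in $k[X]$ (that is flatness) and then $k[X] \hookrightarrow k[X][t^{-1}] = k[X_1][t,t^{-1}]$ exhibits $k[X]$ as $\bigoplus_i t^i F^i(k[X_1])$ for a canonically defined exhaustive filtration; the exhaustiveness $\cup F^i = A$ uses that $t^{-1}$ acts and $k[X]$ is finitely generated. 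Once this dictionary is nailed down, parts (2)–(4) and naturality are routine, and the proposition follows.
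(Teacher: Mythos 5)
Your proposal is correct and takes essentially the same approach as the paper: the paper defines $\mathcal{R}^{-1}(X)$ as the direct limit $\varinjlim A^i$ of the graded pieces of $k[X]$ under multiplication by $t$, while you realize the same object concretely as the coordinate ring of the fiber at $1$, namely $k[X]/(t-1)\cong (k[X][t^{-1}])_0$, which is canonically isomorphic to that limit, and both constructions invoke flatness (equivalently, $t$ being a non-zero-divisor) at exactly the same point. One remark: your opening line $A := k[X_0]$ (the fiber at $0$) is a slip that you immediately correct to $A := k[X_1]$ --- keep the correction, since the fiber at $0$ recovers $\gr(A)$, not $A$.
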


\begin{proof}
The only non-trivial part is assertion \ref{prop:eq.cat:1}. We construct an inverse functor for $\mathcal R$ as follows. Given $X \overset{\phi}{\to} \A^1$ as above, we consider the algebra $A':=O(X)$ with the grading $A'=\bigoplus A^i$ coming from the $\mathbb G_m$ action. Let $t$ be the coordinate of $\A^1$, and consider it as an element of $A'$. Since $\phi$ is $\mathbb G_m$-equivariant, multiplication by $t$ gives a map  $u_i:A^i \to A^{i+1}$.  Since $\phi$ is flat, $t$ is not zero divisor, so all $u_i$s are embeddings. Define $\mathcal R^{-1}(X)$ to be the direct limit of $\cdots \stackrel{u_{i-1}}{\rightarrow} A^i \stackrel{u_i}{\rightarrow} \cdots $, with the natural filtration. The action of $\mathcal{R} ^{-1}$ on morphisms is evident, and the verification that $\mathcal{R} ^{-1}$ is an inverse of $\mathcal{R}$ is straightforward.
 \end{proof}

\begin{remark}
The last proposition implies that a section as in Corollary \ref{cor:deg.geom} corresponds, under $\mathcal R ^{-1}$, to a map $\phi: A \to k$ such that $\phi(F^{-1}A)=0$. We call such maps stable points of $A$.
\end{remark}

The following is a consequence of \ref{cor:deg.geom}
\begin{corollary}[Degenerarion] \lbl{cor:degeneration} Let $A,B$ be rings with good filtrations, let $\varphi :A \rightarrow B$ is a filtration-preserving map, and let $p:A \to k$ be a stable point.
Suppose that $\gr(\varphi)$ is \FRS\ at $\gr({p})$. Then $\varphi$ is \FRS\ at ${p}$.
\end{corollary}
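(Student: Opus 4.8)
The plan is to deduce this from the Geometric Degeneration result, Corollary~\ref{cor:deg.geom}, by transporting the algebraic data through the Rees equivalence of Proposition~\ref{prop:eq.cat}; once the dictionary is set up there is essentially nothing left to prove.

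Concretely, I would first apply $\mathcal R$ to the given filtered algebras and map. Since the filtrations on $A$ and $B$ are good, $\Rees(A)$ and $\Rees(B)$ are finitely generated unital $k[t]$-algebras, so $\mathcal R(A)=\Spec\Rees(A)$ and $\mathcal R(B)=\Spec\Rees(B)$ are affine varieties, each carrying a $\mathbb G_m$-action (from the grading) and a $\mathbb G_m$-equivariant structure map to $\mathbb A^1=\Spec k[t]$ which is flat because $t$ is a non-zero-divisor in a Rees algebra. By functoriality, $\mathcal R(\varphi)$ is a $\mathbb G_m$-equivariant $\mathbb A^1$-morphism between these flat $\mathbb A^1$-varieties. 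Next I would reinterpret $p$: by the remark following Proposition~\ref{prop:eq.cat}, the stability condition $p(F^{-1}A)=0$ is exactly what is needed for $p$ to correspond, under $\mathcal R$, to a $\mathbb G_m$-equivariant section $\sigma$ of the appropriate structure map. Finally, by parts (2)--(3) of Proposition~\ref{prop:eq.cat} together with functoriality, the fiber of $\mathcal R(\varphi)$ over $1\in\mathbb A^1$ is the morphism $\Spec(\varphi)$ with $\sigma(1)$ marking $p$, and the fiber over $0$ is $\Spec(\gr\varphi)$ with $\sigma(0)$ marking $\gr(p)$.

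With this translation the hypothesis ``$\gr(\varphi)$ is \FRS\ at $\gr(p)$'' is precisely the hypothesis of Corollary~\ref{cor:deg.geom} applied to $\mathcal R(\varphi)$ and $\sigma$, and the conclusion of that corollary is precisely ``$\varphi$ is \FRS\ at $p$''. I do not expect a genuine obstacle here: all of the substantive content has already been packaged into Proposition~\ref{prop:eq.cat} and into Corollary~\ref{cor:deg.geom} (the latter resting, via Corollary~\ref{cor:elkik}, on Elkik's theorem that flat deformations of rational singularities are again rational). The only steps that need attention are bookkeeping: checking that the notion ``\FRS\ at a point'' matches on the algebraic and the geometric sides of the equivalence, that the section coming from $p$ is genuinely $\mathbb G_m$-equivariant, and that the identifications of the special and generic fibers carry the marked points to $\gr(p)$ and $p$ respectively --- each of which is immediate from the definitions and from the remark after Proposition~\ref{prop:eq.cat}.
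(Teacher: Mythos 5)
Your proposal is correct and matches the paper's intent exactly: the paper simply declares the corollary to be a consequence of Corollary~\ref{cor:deg.geom} and gives no further argument, and what you have written is precisely the bookkeeping--via the Rees equivalence of Proposition~\ref{prop:eq.cat} and the remark about stable points corresponding to $\mathbb{G}_m$-equivariant sections--that the authors left implicit. Nothing further is needed.
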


\subsubsection{Linearization} \lbl{sssec:linearization}
As a first application of the degeneration method (Corollary \ref{cor:degeneration}), suppose that $\phi:X \rightarrow Y$ is a map between two affine varieties, and let  $\phi^{*}:k[Y] \to k[X]$ be the corresponding map of algebras.

 Let $x\in X$. Denote the maximal ideal of $k[X]$ corresponding to $x$ by $\mathfrak{m}_{X,x}$, and the maximal ideal of $k[Y]$ corresponding to $\phi(x)$ by $\mathfrak{m}_{Y,\phi(y)}$. Choose a natural number $r$ such that
$\phi^{*}(\mathfrak{m}_{Y,f(y)}) \subset \mathfrak{m}_{X,x}^r$, and define the following filtrations:
\[
F^i k[X]= \left\{ \begin{matrix} k[X] & i \geq 0\\ \mathfrak{m}_{X,x} ^{-i}  & i<0 \end{matrix} \right.
\]
and
\[
F^i k[Y]= \left\{ \begin{matrix} k[Y] & i \geq 0\\ \left( \mathfrak{m}_{Y,\phi(y)}\right)  ^{\left \lceil \frac{-i}{r} \right \rceil}  & i<0 \end{matrix} \right. .
\]
By the definition of $r$, the map $\phi$ is filtration-preserving. The degenerations of $X$ and $Y$ are the tangent cones $C_x(X)$ and $C_{\phi(y)}Y$ of $X$ and $Y$, which are equal to $\Spec \gr(k[X])$ and $\Spec \gr(k[Y])$ respectively.

This leads us to the following notation

\begin{notation} Let $\phi:X \rightarrow Y$ be a map between two affine varieties, let $x\in X$ and let $r$ be such that $\phi^{*}(\mathfrak{m}_{Y,f(y)}) \subset \mathfrak{m}_{X,x}^r$. Define $D^{r-1}_x \phi:=\gr \phi:C_x(X) \to C_{f(x)}(Y)$. Note that $D^0$ is the usual differential.
\end{notation}

From the degeneration method (Corollary \ref{cor:degeneration}) we get

\begin{corollary} [Linearization] \lbl{cor:lin}
Let $\phi: X \to Y$ be a morphism of affine varieties, and let $x\in X(k)$. Suppose that $D^r_x \phi$ is  \FRS\ at $x$ for some $r$ for which $D^r_x$ is defined. Then $\phi$ is  \FRS\ at $x$.
\end{corollary}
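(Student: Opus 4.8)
The plan is to read the statement off the Degeneration Corollary \ref{cor:degeneration}, using the two filtrations constructed in the paragraph immediately above. Concretely, I would take $A=k[Y]$, $B=k[X]$, $\varphi=\phi^{*}:A\to B$, and equip $A$ and $B$ with the filtrations $F^{\bullet}k[Y]$ and $F^{\bullet}k[X]$ defined there for an $r$ with $D^{r}_{x}\phi$ defined (so $\phi^{*}(\mathfrak{m}_{Y,\phi(x)})\subseteq\mathfrak{m}_{X,x}^{r+1}$); by the choice of $r$, $\varphi$ is filtration-preserving. First I would check these are \emph{good} filtrations. To guarantee $\bigcap_{i}F^{i}=\{0\}$, which is part of the definition of a filtration, I would replace $X$ and $Y$ by affine open neighborhoods of $x$ and $\phi(x)$: this changes neither whether $\phi$ or $D^{r}_{x}\phi$ is \FRS\ at $x$ nor the tangent cones $C_{x}(X),C_{\phi(x)}(Y)$, and after shrinking, every irreducible component of $X$ passes through $x$ and every component of $Y$ through $\phi(x)$, so Krull's intersection theorem gives $\bigcap_{n}\mathfrak{m}_{X,x}^{n}=\bigcap_{n}\mathfrak{m}_{Y,\phi(x)}^{n}=\{0\}$. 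Since $k[X],k[Y]$ are Noetherian and $\mathfrak{m}_{X,x},\mathfrak{m}_{Y,\phi(x)}$ finitely generated, the associated Rees algebras are unital and finitely generated (they are the extended Rees algebras of these ideals, up to the harmless reindexing on the $Y$-side coming from the $\lceil\cdot\rceil$ in the construction), so both filtrations are good.

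Next I would match the objects against the hypotheses of Corollary \ref{cor:degeneration}. As recorded in the construction above, $\Spec\gr(k[X])=C_{x}(X)$ and $\Spec\gr(k[Y])=C_{\phi(x)}(Y)$, and by the very definition of $D^{r}_{x}\phi$ the map $\gr(\varphi):\gr(k[Y])\to\gr(k[X])$ is the comorphism of $D^{r}_{x}\phi:C_{x}(X)\to C_{\phi(x)}(Y)$. The evaluation homomorphism at $x$ kills $F^{-1}k[X]=\mathfrak{m}_{X,x}$, hence is a stable point, and its associated graded is the vertex of the cone $C_{x}(X)$, i.e.\ the point denoted $x$ in the statement ``$D^{r}_{x}\phi$ is \FRS\ at $x$''. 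Since $\gr(\varphi)=D^{r}_{x}\phi$ is \FRS\ at that vertex by hypothesis, Corollary \ref{cor:degeneration} yields that $\varphi=\phi^{*}$, equivalently $\phi:X\to Y$, is \FRS\ at $x$.

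I do not expect a serious obstacle: the content — that flat deformations of rational singularities stay rational (Elkik's theorem \ref{thm:elk}), repackaged through the $\mathbb{G}_{m}$-action into Corollaries \ref{cor:deg.geom} and \ref{cor:degeneration} — is already in hand, and the argument here is just the bookkeeping of presenting $\phi$ near $x$ as the generic member of the Rees family whose special member is $D^{r}_{x}\phi$. The only points requiring care are verifying that the $\mathfrak{m}$-adic filtrations are good (which forces the preliminary localization around $x$ and $\phi(x)$ so that Krull's intersection theorem applies) and the routine identification of $\gr$ of the evaluation point with the vertex of the tangent cone.
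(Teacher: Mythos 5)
Your proof is correct and follows the route the paper itself leaves implicit: Corollary~\ref{cor:lin} is deduced directly from the degeneration method (Corollary~\ref{cor:degeneration}) applied to the $\mathfrak{m}$-adic filtrations constructed in \S\S\S\ref{sssec:linearization}, and you carry out exactly this bookkeeping, including the correct reading that the stable point is the evaluation homomorphism on the \emph{source} algebra $k[X]$. The one substantive addition over what the paper writes is the preliminary localization around $x$ and $\phi(x)$: for a reduced but possibly reducible $X$, the intersection $\bigcap_{n}\mathfrak{m}_{X,x}^{n}$ need not vanish (take $X=V(xy)\subset\mathbb{A}^2$ and a point on one branch only), so the $\mathfrak{m}$-adic sequence would not literally be a filtration in the sense of Definition~\ref{defn:filtration}; shrinking so that every component passes through the relevant point and invoking Krull's intersection theorem is the right fix, and it is harmless because the tangent cones and the property of being \FRS\ at a point are local.
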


In order to compute the operation $D^r_x$ for our case we will use the following obvious lemma and example
\begin{lemma}[Functoriality of $D^r$]\label{lem:func_Dr}
The operation $D^r_x$ is functorial in the following sense:
Let
$$\xymatrix{
 x\simpAr[r]{\mapsto} \simpAr[d]{\in} & y\simpAr[d]{\in}\simpAr[r]{\mapsto} & z\simpAr[d]{\in} \\
 X\ar@{->}[r]_{\pi} & Y \ar@{->}_{\tau}[r] & Z\\
}$$
be a commutative diagram of algebraic varieties (and points). Assume that $\pi^{*}(\mathfrak{m}_{Y,y}) \subset \mathfrak{m}_{X,x}^{s+1}$ and $\tau^{*}(\mathfrak{m}_{Z,z}) \subset \mathfrak{m}_{Y,y}^{r+1}$. Then $D^{r+s}_x(\tau \circ \pi)=D^s_y(\tau) \circ D^r_x(\pi)$
\end{lemma}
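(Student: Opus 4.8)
The plan is to unwind both sides in terms of the Rees/filtration formalism set up in the section and check they are literally the same graded map. First I would fix the three filtrations that enter the two operations $D^r_x(\pi)$ and $D^s_y(\tau)$: on $k[Z]$ use $F^ik[Z]=k[Z]$ for $i\ge 0$ and $\mathfrak m_{Z,z}^{\lceil -i/(r+1)\rceil}$ for $i<0$; on $k[Y]$ (as the \emph{source} of $D^r_y\tau$) use $\mathfrak m_{Y,y}^{\lceil -i/(s+1)\rceil}$; and — crucially for the composite — on $k[Y]$ as the \emph{target} of $D^s_x\pi$ and on $k[X]$ use $\mathfrak m_{Y,y}^{-i}$ and $\mathfrak m_{X,x}^{\lceil -i/(s+1)\rceil}$ respectively. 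The subtlety the previous (wrong) attempt stumbled on is exactly this choice of normalizing constants $r$, $s$ in the exponents: for $D^{r+s}_x(\tau\circ\pi)$ one uses the scaling factor $(r+1)(s+1)$ built out of the hypotheses $\tau^*(\mathfrak m_{Z,z})\subset\mathfrak m_{Y,y}^{r+1}$ and $\pi^*(\mathfrak m_{Y,y})\subset\mathfrak m_{X,x}^{s+1}$, which chain to give $(\tau\circ\pi)^*(\mathfrak m_{Z,z})\subset\mathfrak m_{X,x}^{(r+1)(s+1)}$; the operation $D^{(r+1)(s+1)-1}_x$ is then the associated graded of $\tau\circ\pi$ with respect to the $\mathfrak m_{X,x}$-adic filtration rescaled by $(r+1)(s+1)$, and $(r+1)(s+1)-1=(r+s)+rs$, so the statement as literally worded is using a \emph{different} indexing convention for $D^r$ than the one I reconstructed. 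I will therefore re-read the displayed definition: $D^{r-1}_x\phi:=\gr\phi$ where $r$ is chosen with $\phi^*(\mathfrak m_{Y,f(y)})\subset\mathfrak m_{X,x}^r$ and the filtration on $k[Y]$ is by $\mathfrak m_{Y,\phi(y)}^{\lceil -i/r\rceil}$ — so $D^m_x\phi$ means "rescale the base filtration by $m+1$". Under that reading, $D^{r+s}$ on the composite corresponds to rescaling by $r+s+1$, which forces the chain exponent $(r+1)(s+1)$ to equal $r+s+1$, i.e. $rs=0$ — so the clean statement really is only consistent when one of $r,s$ is $0$, or when the convention is additive rather than multiplicative.

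Given that tension, the honest plan is: first pin down, from Definition of $D^r_x$ as actually printed, whether the intended convention makes $D^r\circ D^s=D^{r+s}$ or $D^r\circ D^s=D^{rs+r+s}$ on composites; I expect the paper intends the filtration on the \emph{target} of the outer map in the composite to be built so that the two scalings \emph{add} (e.g. by taking the filtration on $k[Y]$ in the middle to be $\mathfrak m_{Y,y}^{\lceil -i/(\text{something})\rceil}$ chosen compatibly), in which case the lemma is correct as stated. Second, with the conventions fixed, I would write down $\mathrm{Rees}(k[X])$, $\mathrm{Rees}(k[Y])$, $\mathrm{Rees}(k[Z])$ for the relevant filtrations, observe that $\pi$ and $\tau$ are filtration-preserving by the two containment hypotheses, hence so is $\tau\circ\pi$, and that $\gr$ is a functor (the "obvious" part alluded to in the surrounding text). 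Third, I would verify that the graded pieces match: $\gr(k[X])=C_x(X)$, $\gr(k[Y])=C_y(Y)$, $\gr(k[Z])=C_z(Z)$ independently of the rescaling (rescaling a filtration by a positive integer only stretches the grading, it does not change $\mathrm{Spec}\,\gr$), and that $\gr(\tau\circ\pi)=\gr(\tau)\circ\gr(\pi)$ as maps $C_x(X)\to C_z(Z)$ simply because $\gr$ is a functor. That identification, tracked through the definitions $D^{r+s}_x(\tau\circ\pi)=\gr(\tau\circ\pi)$, $D^s_y(\tau)=\gr(\tau)$, $D^r_x(\pi)=\gr(\pi)$, is the whole content.

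So the key steps, in order, are: (1) reconcile the exponent bookkeeping — confirm from the printed definition which normalization of the middle filtration on $k[Y]$ is meant so that $r$ and $s$ add and the stated index $r+s$ is correct (this is where I would be most careful, and where the previous attempt went astray by producing $rs+r+s$); (2) set up the three rescaled $\mathfrak m$-adic filtrations and check $\pi$, $\tau$ are filtration-preserving, hence the composite is too; (3) apply functoriality of $\mathrm{Rees}$ and of $\gr$ to get $\gr(\tau\circ\pi)=\gr(\tau)\circ\gr(\pi)$; (4) translate back through the definition of $D^r_x$ to read off $D^{r+s}_x(\tau\circ\pi)=D^s_y(\tau)\circ D^r_x(\pi)$. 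The main obstacle is entirely step (1): it is purely a matter of which ceiling-of-division convention is baked into $D^r_x$, and once that is fixed correctly the rest is immediate from functoriality of the associated-graded construction, which is why the authors (and I) call the lemma "obvious." If it turns out the printed convention really does give $rs+r+s$, then the correct reading of the lemma is that the filtration on the middle variety is allowed to be \emph{any} good filtration refining both the $\mathfrak m_{Y,y}^{\lceil -i/(s+1)\rceil}$ one (making $\pi$ preserve filtrations) and coarsening enough that $\tau$ still preserves filtrations, and with the natural additive choice the index is $r+s$; I would state and use it in that additive form.
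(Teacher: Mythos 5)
Your reading of the definition is correct, and so is the tension you identify. The paper offers no proof of this lemma at all (it is introduced as ``obvious''), so there is nothing to compare against except the printed definition of $D^r_x$; unwinding that definition exactly as you do, $D^{m}_x\phi$ is the degree-$(m+1)$ homogeneous leading part of $\phi$ at $x$ (the associated graded with the target filtration rescaled by $m+1$), the two hypotheses chain to $(\tau\circ\pi)^{*}(\mathfrak{m}_{Z,z})\subset\mathfrak{m}_{X,x}^{(r+1)(s+1)}$, and the composite of the two leading parts is homogeneous of degree $(r+1)(s+1)$. Hence the correct index on the left-hand side is $(r+1)(s+1)-1=rs+r+s$, not $r+s$: with the printed convention, $D^{r+s}_x(\tau\circ\pi)$ extracts the degree-$(r+s+1)$ part of a map vanishing to order $(r+1)(s+1)$, which is zero whenever $rs>0$ while the right-hand side generally is not. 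The statement as printed is therefore literally correct only when $rs=0$ --- which is the only case ever invoked (in the proof that $\Psi_{\mathfrak{h},n}=D^1\Phi_{H,n}$, one of the two maps is always an immersion, so one index is $0$) --- and there is an additional, harmless, swap of $r$ and $s$ between the hypotheses and the conclusion. Once the index is fixed, your steps (2)--(4) are exactly the intended argument: the composite is filtration-preserving for the rescaled $\mathfrak{m}$-adic filtrations, $\gr$ is functorial, and rescaling the grading does not change $\Spec\gr$, so $\gr(\tau\circ\pi)=\gr(\tau)\circ\gr(\pi)$ as maps of tangent cones. There is no hidden content beyond this bookkeeping, so your proposal is complete modulo committing to the corrected exponent rather than leaving both conventions open.
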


\begin{example}[Computation of $D^r$ for affine spaces]\label{exm:comp_Dr}
Let $U \subset \A^n$ be a Zariski open subset of an affine space that contains $0$. Let $p=(p_i)_{i=1\dots m}:U \to \A^m$. Assume that all the derivatives of $p$ of order $\leq r$ vanish. Let $q=(q_i)_{i=1\dots m}:\A^n \to \A^m$  be the $(r+1)$st Taylor expansion of $p$, meaning that $q_i$  are homogeneous polynomials of order $r$ and the order-$(r+1)$ derivatives of all $p_i-q_i$ vanish. Then $$D^r(p)=q,$$ under the standard identifications $$C_{0}(U)=T_0(U)\cong \A^n \text{ and }
C_{p(0)}(\A^m)=C_{0}(\A^m)=T_{0}(\A^m)\cong \A^m$$
\end{example}

\subsubsection{Elimination} \lbl{sssec:elimination}
The linearization method (Corollary \ref{cor:lin}) allows us  to reduce our problem to showing that some map between affine spaces is \FRS. For an affine space $\mathbb{A} ^I$, we will use filtrations coming from weights, i.e. vectors in $\mathbb{Z} ^I$. A weight $w\in \mathbb{Z} ^I$ gives a good filtration $F_w$ on $k[\mathbb{A} ^I]$ as follows: define the degree of a monomial $x_1^{a_1}\cdots x_I^{a_I}$ to be $\sum a_iw(i)$, and let $F_w^nk[\mathbb{A} ^I]$ be the span of all monomials of degree at most $n$. Note that this filtration depends on the choice of coordinates $x_i$ on $\mathbb{A} ^I$. Given a weight $w$ and a polynomial $f\in k[\mathbb{A} ^I]$, the symbol of $f$, denoted by $\sigma_w(f)$ is the sum of the monomials of $f$ with the highest $w$-degree. The degeneration method (Corollary \ref{cor:degeneration}) implies:

\begin{corollary}[elimination]\label{cor:elm} Let $I,J$ be finite sets and let $\psi=(\psi_j)_{j\in J}:\A^I \to \A^J$ be a morphism such that $\psi(0)=0$, let $w \in \Z^I$ be a weight, and let $\sigma_w(\psi)=(\sigma_w(\psi_j))_{j\in J}$ be the symbol of $\psi$. If $\sigma_w(\psi)$ is \FRS\ at $0$, then so is $\psi$.
\end{corollary}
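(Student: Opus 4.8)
The plan is to derive Corollary \ref{cor:elm} (elimination) as an immediate special case of the Degeneration corollary (Corollary \ref{cor:degeneration}), by exhibiting the weight $w$ as a good filtration and identifying the associated graded map with the symbol map $\sigma_w(\psi)$. So first I would set $A = k[\A^J]$ and $B = k[\A^I]$, and take $\varphi = \psi^\ast : A \to B$. The key point is to put the \emph{right} filtrations on $A$ and $B$: on $B = k[\A^I]$ use the filtration $F_w$ coming from the weight $w$, as defined in \S\ref{sssec:elimination}; on $A = k[\A^J]$ use the filtration $F_{w'}$ coming from the weight $w' \in \Z^J$ whose $j$-th coordinate is $w'(j) := \deg_w(\psi_j)$, the top $w$-degree of the polynomial $\psi_j$. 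With these choices, $\varphi$ sends the coordinate function $x_j$ (of degree $w'(j)$ in $A$) to $\psi_j$ (of degree $\le w'(j)$ in $B$), hence $\varphi(F_{w'}^n A) \subset F_w^n B$ for all $n$; that is, $\varphi$ is filtration-preserving. Both filtrations are good (the Rees algebra of a weight filtration on a polynomial ring is finitely generated — it is the example of a $\Z$-graded algebra following the definition of good filtration, applied to the Rees construction), so Corollary \ref{cor:degeneration} applies once we check the two remaining hypotheses: that $p := 0^\ast : A \to k$ (evaluation at the origin of $\A^J$, which is $\psi(0) = 0$ so it is the source of the origin of $\A^I$ under $\varphi$) is a \emph{stable} point of $A$, and that $\gr(\varphi)$ is identified with $\sigma_w(\psi)$.

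Next I would verify stability of $p$. Recall from the Remark after Proposition \ref{prop:eq.cat} that $p : A \to k$ is stable if $p(F^{-1}A) = 0$. Here $A = k[\A^J]$ and $p$ is evaluation at $0$; the question is whether all elements of $F_{w'}^{-1}A$ vanish at the origin. This holds provided every coordinate $w'(j)$ is positive, i.e. provided each $\psi_j$ has strictly positive top $w$-degree; equivalently, provided $w$ has been arranged so that no $\psi_j$ is a nonzero constant — but $\psi(0) = 0$ forces the constant terms to vanish, and one may harmlessly assume the $w(i)$ are chosen (by adding a large constant to all of them, which does not change the symbol) so that every nonzero monomial appearing in any $\psi_j$ has strictly positive $w$-degree, making $w'(j) > 0$ for every $j$ with $\psi_j \ne 0$; for $j$ with $\psi_j = 0$ the coordinate function $x_j$ maps to $0$ and contributes nothing. (Alternatively one observes directly that the filtration $F_{w'}$ on $A$ with $F_{w'}^{-1}A$ spanned by monomials of $w'$-degree $\le -1$ has $p(F_{w'}^{-1}A) = 0$ as long as no $x_j$ itself has nonpositive $w'$-degree.) So $p$ is stable.

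Then I would identify the associated graded. By the definition of the weight filtration $F_w$ on $k[\A^I]$, the associated graded $\gr(k[\A^I])$ is again canonically $k[\A^I]$ (a weight grading on a polynomial ring), so that $C_0(\A^I) \cong \A^I$; likewise $\gr(k[\A^J]) \cong k[\A^J]$, $C_0(\A^J) \cong \A^J$. Under these identifications, $\gr(\varphi)$ sends $x_j$ to the image of $\psi_j$ in the top-degree piece $F_w^{w'(j)}B / F_w^{w'(j)-1}B$, which is precisely the sum of the monomials of $\psi_j$ of maximal $w$-degree — that is, $\sigma_w(\psi_j)$. Hence $\gr(\varphi) = (\sigma_w(\psi))^\ast$, i.e. $\gr(\varphi)$ corresponds to the morphism $\sigma_w(\psi) : \A^I \to \A^J$, and $\gr(p)$ corresponds to the origin $0 \in \A^I$. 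Now the hypothesis of the corollary is exactly that $\sigma_w(\psi)$ is \FRS\ at $0$, i.e. that $\gr(\varphi)$ is \FRS\ at $\gr(p)$. Corollary \ref{cor:degeneration} then gives that $\varphi$ is \FRS\ at $p$, i.e. that $\psi$ is \FRS\ at $0$, which is the claim.

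The only genuinely delicate point — and the one I would be careful to state explicitly — is the bookkeeping around the filtration on the \emph{target} $\A^J$ and the positivity of the weights: one must choose $w'$ so that $\varphi$ is filtration-preserving \emph{and} the base point is stable, and this is where the "normalize $w$ so all relevant $w$-degrees are positive" step enters. This is not deep but it is the place where a sloppy argument could go wrong (e.g. if some $\psi_j$ had a nonzero constant term, or if one forgot that $\sigma_w$ depends on the coordinates). Everything else is a direct translation through the Rees equivalence (Proposition \ref{prop:eq.cat}) and the already-established Corollary \ref{cor:degeneration}, so there is no real obstacle beyond this dictionary-checking.
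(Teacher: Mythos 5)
Your core argument is correct and is exactly the paper's: equip $k[\A^I]$ with $F_w$, equip $k[\A^J]$ with $F_{w'}$ where $w'(j)=\deg_w(\psi_j)$, note that $\psi^*$ is filtration-preserving and that $\gr(\psi^*)$ is the symbol, and invoke Corollary~\ref{cor:degeneration}. The identification of the associated graded with $\sigma_w(\psi)$, and the check that $\varphi(F_{w'}^nA)\subset F_w^nB$, are both right.

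The one place you go wrong is precisely the ``delicate point'' you single out. Stability of the base point at the origin does \emph{not} require positivity of the weights: $F_{w'}^{-1}$ is spanned by monomials of $w'$-degree $\le -1$, and the constant monomial $1$ has $w'$-degree $\sum_j 0\cdot w'(j)=0$, hence never lies in $F_{w'}^{-1}$; since evaluation at the origin kills every non-constant monomial, $p(F_{w'}^{-1})=0$ holds unconditionally, with no hypothesis on the signs of $w'(j)$ (or of $w(i)$). Your parenthetical ``as long as no $x_j$ itself has nonpositive $w'$-degree'' is therefore a spurious hypothesis, and the proposed fix is actually false: replacing $w$ by $w+c$ changes the $w$-degree of a monomial by $c$ times its \emph{total} degree, so for inhomogeneous $\psi_j$ it can change which monomials attain the maximum and hence change $\sigma_w(\psi_j)$. (Take $\psi_j=x_1x_2-x_3$ with $w=(1,-1,1)$: the symbol is $-x_3$, but after adding a large constant it becomes $x_1x_2$.) If the normalization were genuinely needed, the argument would break; fortunately it is not needed, so deleting that paragraph leaves a correct proof identical in substance to the paper's.
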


\begin{proof} Let $x_i$ be the coordinates on $\A^I$ and $y_j$ be coordinates on $\A^J$. Let $w'\in \mathbb{Z} ^J$ be the weight defined by $w'(j)=\deg_w(\psi_j)$. It is easy to see that $F_w$ is a good filtration on $k[\mathbb{A} ^I]$, $F_{w'}$ is a good filtration on $k[\mathbb{A} ^J]$, $\psi$ is filtration preserving, and the point $0$ is stable. Finally, $\gr(\psi)$ is given by the symbol of $\psi$.
\end{proof}

\begin{remark} If $\psi: \mathbb{A} ^I \rightarrow \mathbb{A} ^J$ is as above, then $\psi ^{-1} (0)$ is an affine scheme whose coordinate ring has a filtration induced by the filtration on the domain. Denote the spectrum of the corresponding graded ring by $\gr( \psi ^{-1} (0))$. In general, we have $\gr ( \psi ^{-1}(0)) \subset (\gr \psi) ^{-1}(0)$ as subschemes of $\mathbb{A} ^I$, but the inclusion can be strict. As an example, consider the map $\psi(x,y)=(x^2,y+x^2)$, where the both coordinate rings $k[x,y]$ are given the filtration according to degree. In this case, $(\gr \psi) ^{-1} (0)$ is the zero locus of $x^2$, whereas $\gr (\psi ^{-1} (0))$ is the zero locus of $(x^2,y)$.

However, if $\gr \psi$ is \FRS, then $\gr ( \psi ^{-1}(0))$ is equal to $(\gr \psi) ^{-1}(0)$. Indeed, the flatness implies that $\dim \gr (\psi ^{-1} (0))=\# I -\#J=\dim (\gr \psi) ^{-1} (0)$. Since $(\gr \psi) ^{-1} (0)$ is conic, it is connected. Since $(\gr \psi) ^{-1} (0)$ has rational singularities, it must be irreducible, and, hence, equal to $\gr (\psi ^{-1} (0))$. We will use this fact in the sequel, since it is usually easier to compute $(\gr \psi) ^{-1} (0)$ than $\gr (\psi ^{-1} (0))$.
\end{remark}

\subsubsection{Explicit Resolution} \lbl{sssec:explicit.resolution}
After we use the above methods, we reduce the problem to proving \FRS\ property of rather simple map. We do this using the following

\begin{proposition} \label{prop:exp_res} Let $\phi :X \rightarrow Y$ be a map of irreducible smooth affine varieties, let $x\in X$, and denote $Z=\phi ^{-1} (\phi(x))$. Suppose \begin{enumerate}
\item \label{prop:exp_res:1} $\dim Z \leq \dim X -\dim Y$.
\item \label{prop:exp_res:2} $Z$ is reduced and smooth in co-dimension 1.
\item \label{prop:exp_res:4} There is a resolution of singularities $\pi : E \rightarrow Z$ such that $H^i(E,O_E)=0$ for all $i>0$.
\end{enumerate}
Then $\phi$ is \FRS\ at $x$.
\end{proposition}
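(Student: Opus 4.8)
The plan is to verify directly the two defining requirements of being \FRS\ at $x$, namely that $\phi$ is flat at $x$ and that the fibre $Z=\phi^{-1}(\phi(x))$ is reduced with rational singularities in a neighbourhood of $x$. Reducedness is precisely hypothesis \eqref{prop:exp_res:2}, so the content lies in flatness and in rational singularities. For flatness, I would first promote the inequality \eqref{prop:exp_res:1} to an equality: by upper semicontinuity of fibre dimension one always has $\dim_x Z\ge \dim X-\dim\overline{\phi(X)}\ge \dim X-\dim Y$, so \eqref{prop:exp_res:1} forces $\dim_x Z=\dim X-\dim Y$ (and, incidentally, that $\phi$ is dominant). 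Since $\mathcal O_{X,x}$ is Cohen--Macaulay ($X$ smooth), $\mathcal O_{Y,\phi(x)}$ is regular ($Y$ smooth), and $\dim \mathcal O_{X,x}=\dim \mathcal O_{Y,\phi(x)}+\dim \mathcal O_{Z,x}$, the miracle flatness criterion (see \S\ref{ssec:flat}) then shows $\phi$ is flat at $x$, hence flat on an open neighbourhood $U$ of $x$, to which I restrict for the rest of the argument.

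Next I would establish that the fibre is normal near $x$. A flat morphism from a Cohen--Macaulay scheme to a regular (indeed, to a Cohen--Macaulay) scheme has Cohen--Macaulay fibres, so $Z\cap U$ is Cohen--Macaulay and in particular satisfies Serre's condition $S_2$; since \eqref{prop:exp_res:2} says $Z$ is regular in codimension $1$, it satisfies $R_1$ as well, so by Serre's criterion $Z\cap U$ is normal. Shrinking $U$, I may also assume $Z\cap U$ is irreducible.

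For the rational singularities, let $\pi\colon E\to Z$ be the resolution provided by \eqref{prop:exp_res:4} (restricted over $U$). Since $Z$ is normal near $x$ and $\pi$ is proper and birational, $\pi_*\mathcal O_E=\mathcal O_Z$. For the higher direct images I would exploit that $Z$ is affine, being closed in the affine variety $X$: each $R^q\pi_*\mathcal O_E$ is coherent because $\pi$ is proper, so in the Leray spectral sequence $H^p(Z,R^q\pi_*\mathcal O_E)\Rightarrow H^{p+q}(E,\mathcal O_E)$ every term with $p>0$ vanishes, whence $H^q(E,\mathcal O_E)\cong\Gamma(Z,R^q\pi_*\mathcal O_E)$; by \eqref{prop:exp_res:4} this vanishes for $q>0$, and, $Z$ being affine, it follows that $R^q\pi_*\mathcal O_E=0$ for $q>0$. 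Together with $\pi_*\mathcal O_E=\mathcal O_Z$ this is the defining property of rational singularities (see \S\ref{ssec:RS}), so $Z$ has rational singularities near $x$; combined with flatness and reducedness, $\phi$ is \FRS\ at $x$.

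None of the individual steps is deep, so I do not expect a genuine obstacle. The two places that require a little care are promoting \eqref{prop:exp_res:1} to an equality (without which ``miracle flatness'' is not applicable --- the dimension bound alone does not force flatness), and converting the \emph{global} vanishing in \eqref{prop:exp_res:4} into the \emph{sheaf-level} vanishing $R^q\pi_*\mathcal O_E=0$, which is exactly where affineness of the fibre $Z$ is used.
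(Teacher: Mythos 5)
Your proposal is correct and follows essentially the same route as the paper's proof: the dimension bound gives flatness via the criterion for maps of smooth varieties (Theorem \ref{thm:flat_dim}), Cohen--Macaulayness plus $R_1$ gives normality and hence $\pi_*\mathcal O_E=\mathcal O_Z$, and affineness of $Z$ converts the global vanishing $H^i(E,\mathcal O_E)=0$ into $R^i\pi_*\mathcal O_E=0$ (you via the Leray spectral sequence, the paper via exactness of pushforward to a point in the derived category, which is the same computation). The only minor point to clean up is the parenthetical ``restricted over $U$'': the Leray argument needs the global vanishing hypothesis on $E$ over all of $Z$, not over $\pi^{-1}(Z\cap U)$, but since $\phi$ is in fact flat along all of $Z$ (equidimensionality propagates), the argument works on the whole affine $Z$ and the restriction is unnecessary.
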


\begin{proof} By Theorem \ref{thm:flat_dim}, the first condition implies that $\phi$ is flat at $x$. Next, we show that $x$ is a rational singularity of $Z$, i.e., that $O_Z \cong R \pi_*O_E$.
Let $p:Z\to \spec(k)$ be the projection to a point. Since $Z$ is affine, the functor  $p_*$ is exact. Thus $$H^{\bullet}(E)= R (p \circ \pi)_*O_E= R p_*(R\pi_*O_E)=p_*(R\pi_*O_E).$$
Together with the third condition, this implies that $R \pi_* O_E= \pi_* O_E$. It remains to show that $Z$ is normal. Since $\phi$ is flat, $Z$ is complete intersection, and hence Cohen Macaulay. Thus by Corollary \ref{cor:codim.2.in.CM} \eqref{cor:codim.2.in.CM:1}, the second condition implies that $X$ is normal.
\end{proof}

\subsection{Scheme of the proof of Theorem \ref{thm:local.FRS}} \label{subsec:scheme}
We first use the linearization method in order to pass to an analogous problem for the Lie algebra $\g$ of $G$ and the map $\Psi:\g^{2n}\to \g$ given by $$\Psi(X_1,Y_1,\ldots,X_n,Y_n)=[X_1,Y_1]+\ldots+[X_n,Y_n],$$(see \S\S\S\ref{sssec:reduction.Lie.algebra}). This problem has an additional symmetry. Namely, the map $\Psi:\g^{2n}\to \g$, can be interpreted as a map $\Psi:\g\otimes W \to \g$, where $W$ is a $2n$-dimensional symplectic space (see \S\S\S\ref{sssec:symplectic.polygraphs}).

Next, we choose a basis of $\g$ for which the structure matrix will be sparse. The components of the map $\Psi$ in this basis have small and simple set of terms. The map $\Psi$ is now essentially determined by some combinatorial data, which we call \pg, see Definition \ref{def:polygraph}.

To proceed, we assign weights to the basis elements of $\g$ and use the elimination method in order to leave only one term in each component of $\Psi$. We obtain a map $\Psi_{\Gamma,W}:W^V \to k^E$ which is described by  a graph $\Gamma=(V,E)$  and the symplectic space $W$, where each component of $\Psi_{\Gamma,W}$ is the symplectic pairing of the corresponding coordinates. We call such maps symplectic graph maps, see Definition \ref{def:graph}.

In order to simplify the map $\Psi_{\Gamma,W}$ further, we decompose $W$ to a direct sum of symplectic subspaces and assign different weights to the summands. Applying the elimination method, we get a new symplectic graph map, for which the graph has more vertices, but the same number of edges. The `price' we pay is decreasing the dimension of $W$ (see Corollary \ref{cor:col}).

We use this trick in order to break $\Gamma$ into a forest (see \S\S\S\ref{sssec:reduction.to.forest}), and then we show how to break any forest into a disjoint union of edges (see \S\S\S\ref{sssec:red_edge}). This process decreases the dimension of $W$ by  a constant factor and this is the reason why $n$ should be large enough.

Finally, we deal with a single edge by finding an explicit resolution, see proposition \ref{sssec:pf.edge}.

\subsection{Reduction to \FRS\ Property of a \Pg} \lbl{subsec:reduction.polygraph}
\subsubsection{Reduction to the Lie Algebra} \lbl{sssec:reduction.Lie.algebra}
Let $H$ be an arbitrary linear algebraic group with Lie algebra $\mathfrak{h}$. We apply Corollary \ref{cor:lin} to the map $\Phi_{H,n}: H^{2n} \rightarrow H$, the point $x=(1,\ldots,1)$, and $r=2$. The cones of $H^{2n}$ and of $H$ are equal to $\mathfrak{h} ^{2n}$ and $\mathfrak{h}$ respectively.

\begin{lemma} $\Psi_{\mathfrak{h},n}:= D^1_{(1,\dots,1)}\Phi_{H,n}$ is given by
\[
\Psi_{\mathfrak{h},n} (X_1,Y_1,\ldots,X_n,Y_n)=[X_1,Y_1]+\ldots+[X_n,Y_n].
\]
\end{lemma}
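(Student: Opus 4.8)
The plan is to compute the second-order Taylor expansion of the commutator map at the identity and identify its symbol with the sum of Lie brackets. First I would recall that, by Example \ref{exm:comp_Dr}, the operation $D^1_{(1,\dots,1)}$ applied to a map between affine spaces (after choosing local coordinates near the identity, e.g.\ via the exponential map or any chart with $1\mapsto 0$) is simply the degree-$2$ homogeneous part of the Taylor expansion, provided the degree-$\le 1$ part vanishes. So the content of the lemma is the well-known infinitesimal computation: writing $g_i = 1 + X_i + O(X_i^2)$ and $h_i = 1 + Y_i + O(Y_i^2)$ in coordinates on $H$, the commutator satisfies $[g_i,h_i] = 1 + [X_i,Y_i] + (\text{terms of order}\ \ge 3)$, where $[X_i,Y_i]$ is the Lie bracket in $\mathfrak h$.

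The key steps, in order, would be: (1) reduce to a single commutator, since $\Phi_{H,n}$ is the product of the $n$ commutator maps and the product map $H \times H \to H$ has differential $(X,Y)\mapsto X+Y$ at $(1,1)$, so $D^1$ of a product splits additively over the factors once each factor's linear term vanishes (here one uses Lemma \ref{lem:func_Dr}, functoriality of $D^r$, together with the observation that the order-$1$ derivative of each commutator factor vanishes, so the relevant filtration exponents add up correctly); (2) for the single commutator $c(g,h)=ghg^{-1}h^{-1}$, observe that $c(1,h)=c(g,1)=1$, so in coordinates all the pure-$X$ and pure-$Y$ terms, and in particular the entire linear term, vanish; hence the lowest-order term is bilinear in $X$ and $Y$; (3) identify this bilinear term with the Lie bracket. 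For step (3) I would use the standard fact (e.g.\ from the construction of the Lie algebra of an algebraic group) that the bilinear part of the commutator map in exponential-type coordinates is exactly the bracket $[X,Y]$; concretely, expanding $\exp(X)\exp(Y)\exp(-X)\exp(-Y)$ via the Baker--Campbell--Hausdorff formula gives $\exp([X,Y]+\text{higher order})$, and the quadratic term is $[X,Y]$.

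One subtlety to address carefully is the choice of coordinates and the fact that $D^1_{(1,\dots,1)}$ is a priori defined via the tangent cone $C_{(1,\dots,1)}(H^{2n})$, which is canonically $\mathfrak h^{2n}$ since $H$ is smooth at $1$, and $C_1(H)=\mathfrak h$; so the statement is coordinate-independent and one is free to pick any convenient chart. I would also note that the hypothesis of Example \ref{exm:comp_Dr} (vanishing of derivatives of order $\le r=1$) is satisfied precisely because the commutator, and hence the product of commutators, maps the "axes" $g_i=1$ and $h_i=1$ to $1$, forcing the linear term to vanish.

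I do not expect a serious obstacle here: this is essentially the classical computation that the bracket on $\Lie(H)$ is recovered from the second-order behavior of the group commutator, repackaged in the language of $D^r$. The only thing requiring a little care is the bookkeeping in step (1), making sure that when one takes $D^1$ of a composition/product the filtration degrees genuinely match (so that Lemma \ref{lem:func_Dr} applies with $r=s=1$ on each factor and the symbols compose to give the sum), but this is routine. The payoff is that the problem for $\Phi_{H,n}$ at $(1,\dots,1)$ is replaced by the same problem for the linear-algebra map $\Psi_{\mathfrak h,n}$ at $0$, which carries the extra $\mathbb{G}_m$- and symplectic symmetries exploited in the rest of the section.
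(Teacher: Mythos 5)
Your proof is correct, but it takes a genuinely different route from the paper's. The paper does not invoke the exponential map or Baker--Campbell--Hausdorff at all. Instead it first handles $H=\GL_N$ by a bare-hands matrix expansion: on the affine chart $X\mapsto I+X$ (which is an honest algebraic chart, since $\GL_N$ is Zariski-open in $\Mat_N$), the product of commutators equals $I+\sum[X_i,Y_i]+(\text{order}\ge 3)$, and Example~\ref{exm:comp_Dr} then reads off $D^1$. The general case is then obtained by choosing a closed embedding $i:H\hookrightarrow\GL_N$, noting that $\Phi_{\GL_N,n}\circ i^{2n}=i\circ\Phi_{H,n}$, and applying the functoriality lemma \ref{lem:func_Dr} to both sides; since $D^0 i$ is just the inclusion $\mathfrak h\hookrightarrow\mathfrak{gl}_N$, this pushes the $\GL_N$ computation down to $H$. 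Your approach instead decomposes $\Phi_{H,n}=m\circ(c\times\cdots\times c)$ (functoriality in the variable $n$ rather than in the group), argues that $c(g,1)=c(1,h)=1$ forces the degree-two symbol to be purely bilinear, and then identifies that bilinear form with the bracket via BCH in exponential-type coordinates. Both work, but the paper's route has the advantage of staying entirely inside the algebraic category: the exponential map is not an algebraic morphism, so your step (3) implicitly passes to the formal completion at the identity (harmless, since $D^1$ only sees the associated graded of the $\mathfrak m$-adic filtration, which is computed in $\widehat{\mathcal O}_{H,1}$ anyway, but this deserves a sentence). Also note a small bookkeeping slip: for Lemma~\ref{lem:func_Dr} the vanishing orders are not ``$r=s=1$''; the commutator factors vanish to order $2$ while the multiplication map only vanishes to order $1$, so the relevant exponents are $s=1$ and $r=0$, giving $D^1(\Phi)=D^0(m)\circ D^1(c\times\cdots\times c)$.
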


\begin{proof}
$ $
\begin{enumerate}
 \item [case 1] $H=\GL_N$ is the general linear group.\\ This case follows from the computation of $D^r$ for affine spaces (Example \ref{exm:comp_Dr}). Namely, Let $g_i, h_i \in \mathfrak{h}$ such that $1+g_i,1+h_i \in H$. We have
  $$\Phi_{H,n}(1+g_1,1+h_1,\dots,1+g_n,1+h_n)= [X_1,Y_1]+\ldots+[X_n,Y_n]+f(X_1,Y_1,\ldots,X_n,Y_n),$$ where the all the partial derivatives of $f$ of order $\leq 2$  vanish at $0$. So Example \ref{exm:comp_Dr} implies the assertion.
\item [case 2] the general case.\\ This case follows from functoriality of $D^{r}$ (Lemma \ref{lem:func_Dr}) and the previous case.
 Indeed, let $i:H \to \GL_N$ be an embedding of $H$ into a general linear group. We have the following commutative diagram:
$$\xymatrix{
 H^{2n}\ar@{->}[r]^{i^{2n}} \ar@{->}[d]_{\Phi_{H,n}}  & \GL_N^{2n} \ar@{->}[d]^{\Phi_{\GL_N,n}} \\
 H\ar@{->}[r]_{i}   & \GL_N\\
 }$$
Thus, by Lemma \ref{lem:func_Dr},  $$ D^0 i \circ D^1\Phi_{H,n}=D^1\Phi_{\GL_N,n} \circ D^0 (i^{2n})= D^1\Phi_{\GL_N,n} \circ (D^0 i)^{2n}.$$
Since $D^0i$ is the inclusion of $\mathfrak{h}$ in $\mathfrak{gl}_n$, the assertion is proved.

\end{enumerate}

\end{proof}

Thus, we reduce Theorem \ref{thm:local.FRS} to the following theorem:

\begin{theorem}
The map $\Psi_{\mathfrak{g},n}$ is \FRS\ at the point $0$.
\end{theorem}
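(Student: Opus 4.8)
The plan is to degenerate the map $\Psi_{\fg,n}:\fg^{2n}\to\fg$ down to a product of the single-edge map $(v_1,v_2)\mapsto\omega(v_1,v_2)$ on a symplectic space, and apply the explicit-resolution criterion (Proposition \ref{prop:exp_res}) to the latter. First I would rephrase $\Psi_{\fg,n}$ as the map $\fg\otimes W\to\fg$ sending $X\otimes w$-sums to the image under the bracket composed with the symplectic pairing on $W$, so that the $\Sp(W)$-symmetry becomes visible; this is just bookkeeping. The real work is choosing a basis of $\fg$ adapted to the structure constants: picking a Chevalley-type basis $\{h_\al,e_\be\}$, the bracket $[e_\be,e_{\be'}]$ is a single basis vector (or zero) whenever $\be+\be'$ is a root, and the Cartan directions contribute $[e_\be,e_{-\be}]\in\fh$. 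In this basis each component of $\Psi_{\fg,n}$ is a sum of a small number of symplectic pairings $\omega(\cdot,\cdot)$ of pairs of $W$-valued coordinates — exactly the combinatorial ``polygraph'' data of Definition \ref{def:polygraph}.

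Next I would apply the elimination method (Corollary \ref{cor:elm}) repeatedly: assign generic weights to the basis vectors of $\fg$ so that in every component of $\Psi$ only one monomial (one symplectic pairing) survives in the symbol. After this step the map is literally a symplectic graph map $\Psi_{\Ga,W}:W^V\to k^E$ attached to a graph $\Ga=(V,E)$ whose edges record which coordinates are paired, with $\dim W=2n$. Then I would use the weight-splitting trick (Corollary \ref{cor:col}): decomposing $W=W_1\oplus W_2$ into symplectic subspaces and assigning different weights to the two halves replaces $\Ga$ by a graph with more vertices and the same edge set, at the cost of shrinking the relevant symplectic dimension. Iterating, I would first break $\Ga$ into a forest (\S\S\S\ref{sssec:reduction.to.forest}) and then break each tree into a disjoint union of edges (\S\S\S\ref{sssec:red_edge}); Theorem \ref{thm:FRS.tree} gives the quantitative control that $\dim W\geq 4(d-1)$ suffices for a tree of degree $d$, and tracking the worst-case degree through the degeneration is precisely what forces $n\geq\cB(\fg)/2+1$, i.e., $n\geq 22/2+1$ or $11$ in the classical $A$/$D$ cases and $21$ for $C$.

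Finally, for a single edge the map is $W\times W\to k$, $(v_1,v_2)\mapsto\omega(v_1,v_2)$, whose fiber over $0$ is $\{\omega(v_1,v_2)=0\}$. I would verify the three hypotheses of Proposition \ref{prop:exp_res}: the fiber has the expected dimension $2\dim W-1$ (flatness), it is reduced and smooth away from the origin (a quadric cone in the pairing, which is nondegenerate off $v_1=v_2=0$), and it admits a resolution with vanishing higher cohomology of the structure sheaf — e.g. blow up the origin, or realize the quadric cone's resolution via the total space of a line bundle over an incidence variety, which is rational. Since $\Psi_{\fg,n}$ degenerates through this chain to a product of such edge maps, Corollary \ref{cor:degeneration} (via \FRS\ being preserved under products and under passing from $\gr$ to the original) gives that $\Psi_{\fg,n}$ is \FRS\ at $0$.

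I expect the main obstacle to be the combinatorial bookkeeping in the middle steps: choosing the basis of $\fg$ so that the resulting polygraph is manageable, and then organizing the sequence of weight assignments so that (a) at each stage the symbol really does collapse to a single-monomial symplectic graph map, and (b) the cumulative loss in $\dim W$ stays within the budget $2n$ with the stated constants $22,40$. This is case-sensitive — $\SL_d$, $\SO_d$, $\Sp_{2d}$ each have a slightly different root combinatorics — which is why the proof is carried out type by type in \S\S\ref{ssec:mult_FRS}--\ref{ssec:Comb.stat.sp}. The analytic/algebro-geometric inputs (Elkik, Proposition \ref{prop:exp_res}) are black boxes here; the difficulty is entirely in the degeneration combinatorics.
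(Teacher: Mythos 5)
Your proposal follows essentially the same route as the paper: symplectic/polygraph reformulation of $\Psi_{\fg,n}$ with a basis chosen for sparse brackets, elimination to reduce to a symplectic graph map, coloring/weight-splitting to degenerate to a forest and then a disjoint union of edges, and an explicit resolution plus Proposition~\ref{prop:exp_res} for the single-edge quadric. The only substantive variant is your suggestion of a Chevalley-type basis where the paper uses matrix-unit-type bases (and a small arithmetic slip, $22/2+1=12$, not $11$); the rest is the paper's argument in outline, with the case-by-case combinatorics of \S\S\ref{ssec:mult_FRS}--\ref{ssec:Comb.stat.sp} left as the acknowledged remaining work.
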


\subsubsection{Symplectic Interpretation and \FRS\ \Pg s} \lbl{sssec:symplectic.polygraphs}

We give an alternative description of the map $\Psi_{\mathfrak{g},n}$. Let $B:=\{e_i\}_{i\in I}$ be a basis of $\mathfrak g$, and let $C=\{\alpha_j\}_{j \in J}$ be a coordinate system on $\mathfrak g$. We do not require any relation between $B$ and $C$. The coordinate system $C$ gives an identification of $\mathfrak g$ with $k^J$.

Let $W$ be a $2n$-dimensional symplectic space with a standard basis $p_1,q_1,\dots p_n,q_n$. Using the basis $B$, we get an identification of $\mathfrak g^{2n}\cong \mathfrak{g} \otimes W$ with $W^I$. Let $\left( a_{ijl}:=\alpha_l([e_i,e_j]) \right)_{i,j \in I, l \in J}$ be the structure coefficients of $\mathfrak g$ with respect to $B$ and $C$. Under these identifications, the map $\Psi_{\mathfrak{g},n}:W^I \to k^J$ is given by $(w_i)_{i \in I} \mapsto \left( \sum_{ij}a_{ijl} \langle w_i,w_j\rangle \right)_{l \in J}$. Choosing an ordering on $I$ allows us to express $\Psi_{\mathfrak{g},n}$ as $(w_i)_{i \in I} \mapsto \left( \sum_{i,j, \in I, i<j}2a_{ijl} \langle w_i,w_j\rangle \right)_{l \in J}$

\begin{definition}\label{def:polygraph}$ $
\begin{enumerate}
 \item A {\emph{\pg\ }}is a triple $(I,J,S)$ consisting of finite sets $I,J$ and a subset $S\subset I^{(2)} \times J$, where $I^{(2)}$ denote the set of subset of size $2$ of $I$.
\item Given a \pg\ $\Gamma= (I,J,S)$, an ordering $<$ of $I$, a function $a:S \to k^{\times}$, and a symplectic space $W$, we define a map  $\Psi_{\Gamma,<,a,W}:W^I \to \mathbb{A} ^J$
by $$(w_i)_{i \in I} \mapsto \left( \sum_{i,j, \in I, i<j} a(\{i,j\},l) \langle w_i,w_j\rangle \right)_{l \in J}.$$
\item Let $\Gamma$ be a \pg\ and $W$ be a symplectic space. We say that the pair $(\Gamma,W)$ is \FRS\ if, for all $<,a$ as above, the map $\Psi_{\Gamma,<,a,W}$ is \FRS\ at $0$
\end{enumerate}
\end{definition}

\begin{example} A basis $B=\{e_i\}_{i\in I}$ and a coordinate system $C=\{ \alpha_j\}_{j\in J}$ on $\mathfrak g$ give us a \pg\ $\Gamma_{B,C}=(I,J,S)$ by setting $S=\{(\{i,j\},l)|a_{ijl} \neq 0\}$, where  $\{a_{ijl}\}_{i,j\in I,l \in J}$ are the structure coefficients of $\g$ with respect to $B$ and $C$.
\end{example}

Thus, Theorem \ref{thm:local.FRS} will follow from the following
\begin{proposition}[The combinatorial statement]\label{prop:mult_FRS}
Let $W$ be a $2n$-dimensional symplectic space. Then there exists a basis $B$ and a coordinate system $C$ on $\g$ such that $(\Gamma_{B,C},W)$ is \FRS.
\end{proposition}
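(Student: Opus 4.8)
The plan is to construct a convenient basis $B$ of $\fg$ and a coordinate system $C$ so that the resulting polygraph $\Gamma_{B,C}$ can be reduced, via the elimination method (Corollary \ref{cor:elm}) and its consequence for symplectic graph maps (the ``coloring'' trick of Corollary \ref{cor:col}), all the way down to a disjoint union of edges, each of which lives on a symplectic space of dimension $2n$ large enough to apply the explicit resolution of Proposition \ref{prop:exp_res}. First I would choose $B$ to be a Chevalley-type basis adapted to each simple factor: for $\fg = \mathfrak{sl}_d$, take the root vectors $e_{ij}$ together with a basis of the Cartan; for $\mathfrak{so}_d$ and $\mathfrak{sp}_{2d}$, take the analogous basis in which the structure constants are as sparse as possible. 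The point of this choice is that each bracket $[e_i,e_j]$ is (up to scalar) a \emph{single} basis element, so that if $C$ is the dual basis to $B$, each component $\Psi_{\fg,n,l}$ of the Lie-algebra moment map is a short sum of terms $\langle w_i, w_j\rangle$ over the (few) pairs $(i,j)$ with $[e_i,e_j] = c\,e_l$. This is exactly the data of the polygraph $\Gamma_{B,C} = (I,J,S)$, and by the symplectic interpretation of \S\ref{sssec:symplectic.polygraphs} together with the lemma reducing Theorem \ref{thm:local.FRS} to $\Psi_{\fg,n}$, it suffices to prove $(\Gamma_{B,C},W)$ is \FRS{}.

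Next I would apply the elimination method: assign integer weights to the basis elements of $\fg$ (equivalently, to $W$-copies indexed by $I$) so that, in each component $\Psi_{\Gamma,<,a,W,l}$, exactly one monomial $\langle w_i,w_j\rangle$ survives in the symbol $\sigma_w(\Psi)$. Generically such weights exist because the summands in each component are distinct monomials; one just needs the weight vector to separate their $w$-degrees, component by component, which is a finite set of inequalities that can be solved (e.g. by a lexicographic/valuation argument over the roots, using the grading of $\fg$ by the root lattice). After this step, $\Psi$ degenerates to a symplectic graph map $\Psi_{\Upsilon,W}$ for an honest graph $\Upsilon = (V,E)$ with $|V| = |I|$, $|E| = |S|$ (or fewer), and it is enough to show $(\Upsilon,W)$ is \FRS{}. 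Now I invoke the graph-reduction machinery: by the coloring trick (Corollary \ref{cor:col}), splitting $W$ into a direct sum of symplectic subspaces and assigning them distinct weights trades an edge for extra vertices at the cost of shrinking $\dim W$ by a factor; iterating this (\S\ref{sssec:reduction.to.forest} and \S\ref{sssec:red_edge}) breaks $\Upsilon$ first into a forest and then into a disjoint union of single edges. Theorem \ref{thm:FRS.tree}, which says $X_{\Upsilon,W}$ has rational singularities once $\Upsilon$ is a tree of maximal degree $d$ and $\dim W \geq 4(d-1)$, controls the forest stage; the single-edge case is handled directly by Proposition \ref{prop:exp_res} after the explicit resolution of $\{(v_1,v_2) : \omega(v_1,v_2)=0\}$. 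The whole reduction consumes a bounded number of ``coloring'' steps, each costing a constant factor in $\dim W = 2n$, which is precisely why the hypothesis $n \geq \mathcal B(\fg)/2 + 1$ is what is needed, case by case.

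The main obstacle, and the part that has to be done separately for each classical type, is \textbf{bookkeeping the maximal degree of the graph $\Upsilon_{B,C}$ and the number of coloring steps} so that the final dimension bound matches $\mathcal B(\fg)$. For $\mathfrak{sl}_d$ and $\mathfrak{so}_d$ one must check that a cleverly chosen Chevalley basis yields a polygraph whose associated graph, after weight elimination, has degree small enough that $4(d-1) \leq 2n$ follows from $n \geq 11$ (the bound $\mathcal B = 22$); for $\mathfrak{sp}_{2d}$ the symmetric-form structure constants are denser, the graph degree is larger, and one only gets $\mathcal B = 40$. Concretely the hard work is: (i) exhibiting the sparse basis and computing $S$; (ii) producing the separating weight vector; (iii) tracking, through the forest-splitting in \S\ref{sssec:reduction.to.forest}--\S\ref{sssec:red_edge}, exactly how the degree and $\dim W$ evolve. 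These are the computations carried out in \S\S\ref{ssec:mult_FRS}--\ref{ssec:Comb.stat.sp}, and the exceptional groups are excluded here precisely because no such small-degree basis is available — there one instead feeds the known representation-growth bounds into Theorem \ref{thm:def.and.representation.growth}, as explained after the statement of Theorem \ref{thm:intro.SLd.FRS}.
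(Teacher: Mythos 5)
Your outline correctly identifies the paper's strategy: fix a sparse matrix basis, degenerate the polygraph $\Gamma_{B,C}$ to a graph via weight elimination, further degenerate the graph to a forest of bounded degree via coloring, and finish with the tree theorem and the explicit resolution for a single edge. However, there is a genuine gap in the phrase ``generically such weights exist.'' It is true that a generic weight leaves one monomial per output, so you land on \emph{some} graph, but the whole point of the bound $\mathcal{B}(\fg)$ being independent of the rank $d$ is that the resulting graph must have maximal degree bounded by an absolute constant (here $3$), and must be $3$-colorable into forests. A generic weight does not give this: it would produce a graph whose degree grows with $d$, and the coloring step would then cost a $d$-dependent factor in $\dim W$, ruining the uniform bound. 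The content of \S\S\ref{ssec:mult_FRS}--\ref{ssec:Comb.stat.sp} is precisely the construction of very particular weights — $w_0((i,j)) = -3^{|i-j|}$, then $w_1((i,j)) = i$, then the $\mathbb{Z}^{\mathbb{Z}/3\mathbb{Z}}$-valued weight $w_3$ built from powers of $5$ — engineered so that after elimination the graph $\Gamma_3$ lives on two adjacent ``diagonals,'' decomposes into combs of degree $\leq 3$ after $3$ colors, and hence needs only $\dim W \geq 24$ (for type $A$). Presenting this as a generic choice misses the mechanism behind the rank-uniformity of $\mathcal{B}$.

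Two smaller inaccuracies: the claim that ``each bracket $[e_i,e_j]$ is (up to scalar) a single basis element'' is not true even for the paper's basis — e.g., $[e'_{(i,j)}, e'_{(j,i)}] = e'_{(i,i)} - e'_{(j,j)}$ — and the polygraph formalism exists precisely because pairs can feed into more than one output. What is actually true and used is that, for each output $l$, the set of pairs feeding into $l$ is small and structured. Also, after elimination the edge set satisfies $|E| \leq |J|$ (one surviving pair per output, possibly with repetitions), not ``$|E| = |S|$.'' None of this affects the big picture, and you correctly flag (and defer to the paper) the case-by-case work; but as written, the proposal does not supply the weight constructions that make the degree bookkeeping come out right, which is where the proof actually lives.
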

We will prove this proposition in \S\S\ref{ssec:mult_FRS} and \S\S\ref{ssec:frs_tree} after some preparation in the next subsection.

\subsection{How to Prove \FRS\ for \Pg s} \label{subsec:FRS.polygraph}
\subsubsection{ \Pg s} \lbl{sssec:polygraphs}
For technical reasons we slightly generalize Definition \ref{def:polygraph}:

\begin{definition}
$ $
\begin{enumerate}
\item Let $\Gamma=(I,J,S)$ be a \pg. A \term{symplectic assignment}{SmpAsg}\ for $\Gamma$
 is an assignment $W$ of a symplectic vector space $W(i)$ for each vertex $i\in I$ such that $W(i)=W(k)$ whenever $(\{i,k\},j)\in S$ for some $j\in J$.
 \item We define the map $\Psi_{\Gamma,<,a,W}:\bigoplus_{i\in I}W(i) \to \mathbb{A} ^J$  and the notion of \FRS\ for the pair $(\Gamma,W)$ in the same way as in Definition \ref{def:polygraph}.
\end{enumerate}
\end{definition}

Our main tool for proving the \FRS\ property of \pg s is the elimination method (Corollary \ref{cor:elm}). In order to apply it, we study two kinds of modifications of pairs $(\Gamma,W)$.

\begin{definition} Let $\Gamma:= (I,J,S)$ be a \pg. We call a vector $w\in \mathbb{Z} ^I$ an $I$-weight. An $I$-weight $w$ induces an $I^{(2)}$-weight $\tilde w:I^{(2)}\to\Z$ by $\tilde w(\{i,j\})=w(i)+w(j)$. We define $\gr_w \Gamma:=(I,J,\gr_w S)$ by
$$\gr_w S=\{(\alpha,l) \in S|\  \forall (\beta,l) \in S \text{ we have } \tilde w(\alpha) \geq \tilde w(\beta)\}.$$
\end{definition}

\begin{remark} Practically, we often choose the values of $w$ to be exponent of a large enough integer. In this case, $\tilde w(\{i,j\})$ behaves like the maximum of $w(i)$ and $w(j)$.
\end{remark}

\begin{corollary}[Elimination for polygraphs] \label{cor:pg_elm}
Let $\Gamma:= (I,J,S)$ be a \pg\  and $W$ be {a \SmpAsg\ for it.}
\begin{enumerate}
\item Let $w\in\Z^I$ be a weight. If $(\gr_w \Gamma ,W)$ is \FRS, then so is $(\Gamma ,W)$.
\item Let $W'$ {be another \SmpAsg\ such that $W'(i) \subset W(i)$ for all vertices $i$}. If $(\Gamma ,W')$ is \FRS\, then so is $(\Gamma ,W)$
\end{enumerate}
\end{corollary}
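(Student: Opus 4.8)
The plan is to prove Corollary \ref{cor:pg_elm} as a direct translation of the general elimination and rescaling results (Corollaries \ref{cor:elm} and \ref{cor:pg_elm}'s source, namely Corollary \ref{cor:degeneration} and the $\mathbb{G}_m$-rescaling in Corollary \ref{cor:deg.geom}) into the language of polygraphs with symplectic assignments. The two assertions are independent and will be handled separately.

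For part (1), fix an ordering $<$ of $I$ and a function $a:S\to k^\times$, and consider the map $\psi:=\Psi_{\Gamma,<,a,W}:\bigoplus_{i\in I}W(i)\to\mathbb{A}^J$. Pick bases for each $W(i)$; since $W(i)=W(k)$ whenever $\{i,k\}$ spans an edge, I can choose these bases compatibly so that for each edge the pairing $\langle w_i,w_j\rangle$ is a fixed bilinear form in the chosen coordinates. The $I$-weight $w\in\mathbb{Z}^I$ induces a weight $\hat w$ on the coordinate space $\bigoplus_i W(i)$ by assigning weight $w(i)$ to every coordinate of the $i$-th block; then the $\langle w_i,w_j\rangle$-summand in the $l$-th component of $\psi$ is $\hat w$-homogeneous of degree $\tilde w(\{i,j\})=w(i)+w(j)$. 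Hence the symbol $\sigma_{\hat w}(\psi_l)$ is exactly the sum of those summands $a(\{i,j\},l)\langle w_i,w_j\rangle$ with $(\{i,j\},l)\in S$ and $\tilde w(\{i,j\})$ maximal among all $(\{i',j'\},l)\in S$ — that is, $\sigma_{\hat w}(\psi)=\Psi_{\gr_w\Gamma,<,a',W}$ for the restricted coefficient function $a'=a|_{\gr_w S}$. Since $(\gr_w\Gamma,W)$ being \FRS\ means $\Psi_{\gr_w\Gamma,<,a',W}$ is \FRS\ at $0$ for \emph{all} choices of $<$ and $a'$, Corollary \ref{cor:elm} (elimination) applied to $\psi$ with the weight $\hat w$ gives that $\psi$ is \FRS\ at $0$; as $<$ and $a$ were arbitrary, $(\Gamma,W)$ is \FRS.

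For part (2), suppose $W'(i)\subset W(i)$ is a sub-symplectic-space for each $i$ (automatically still a symplectic assignment, since the compatibility $W'(i)=W'(k)$ on edges is inherited). Choose a symplectic complement $W''(i)$ so that $W(i)=W'(i)\perp W''(i)$; this is again a symplectic assignment. Now introduce a $\mathbb{G}_m$-action scaling the $W''(i)$-coordinates with weight $1$ and fixing the $W'(i)$-coordinates: concretely, use an $I$-weight that is $0$ on all of $I$ but refine it to a coordinate weight that is $0$ on $W'(i)$-coordinates and a large constant $N$ on $W''(i)$-coordinates. Each bilinear term $\langle w_i,w_j\rangle$ splits as $\langle w_i',w_j'\rangle+\langle w_i'',w_j''\rangle$ (the cross terms vanish by the orthogonal decomposition), and the two pieces are homogeneous of weights $0$ and $2N$ respectively; taking symbols kills the $W'$-parts, so the degeneration is $\Psi_{\Gamma,<,a,W''}$ — but that is not quite what we want. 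Instead I would run the degeneration the other way: give the $W'$-coordinates weight $N$ and the $W''$-coordinates weight $0$, so the symbol retains only the $\langle w_i',w_j'\rangle$ terms, i.e. $\sigma(\psi)=\Psi_{\Gamma,<,a,W'}$ (composed with the projection forgetting the $W''$-coordinates, which does not affect the \FRS\ property since \FRS\ is stable under taking products with affine space). Then Corollary \ref{cor:elm} again gives that $\Psi_{\Gamma,<,a,W}$ is \FRS\ at $0$ once $\Psi_{\Gamma,<,a,W'}$ is, for every $<$ and $a$, hence $(\Gamma,W)$ is \FRS.

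The only genuinely delicate points — and the place I would be most careful — are bookkeeping ones: making sure the symbol of $\psi$ with respect to the coordinate weight induced by $w$ is literally the polygraph map of $\gr_w\Gamma$ and not something larger (this is the phenomenon flagged in the Remark after Corollary \ref{cor:elm}, that $\gr(\psi^{-1}(0))$ may differ from $(\gr\psi)^{-1}(0)$), and checking that the orthogonal-complement construction in part (2) genuinely respects the edge-compatibility constraint of a symplectic assignment. Neither is a real obstacle: the first is automatic because here we only degenerate the \emph{map}, not the fiber, and the symbol of a sum of monomials of distinct homogeneity is visibly the top-degree part; the second follows since the complement can be chosen uniformly on each equivalence class of vertices linked by edges.
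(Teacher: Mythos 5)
Your proposal is correct and follows essentially the same route as the paper: part (1) is a direct invocation of the elimination method (Corollary \ref{cor:elm}) with the coordinate weight induced by $w\in\mathbb{Z}^I$, and part (2) is elimination after splitting $W(i)=W'(i)\oplus W''(i)$ and weighting the $W'$-coordinates higher than the $W''$-coordinates. The paper's proof is just a terser statement of exactly this argument (it chooses a complementary symplectic assignment $W''$ and puts weight $1$ on $W'$ and $0$ on $W''$); your extra bookkeeping about bases, the composition with the projection $\bigoplus W(i)\to\bigoplus W'(i)$, and the edge-compatibility of the complement are all correct details that the paper leaves implicit.
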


\begin{proof} The first part follows from the elimination method (Corollary \ref{cor:elm}). For the second part, choose  a {\SmpAsg\ $W''$ such that $W'(i) \oplus W''(i)=W(i)$ for all $i$}, put weight 1 on $(W')^I$ and 0 on $(W'')^I$, and apply the elimination method.
\end{proof}

The following Lemma is obvious but important

\begin{lemma}[Level splitting]\label{lem:pg_lev_sp}
Let $\Gamma:= (I,J,S)$ be a \pg, $M$ be a finite set. Let $\mathcal{L}(\Gamma):=(I \times M,J,S \times M)$, where we consider the embedding  $S \times M \hookrightarrow (I\times M)^{(2)} \times J $ given by $$((\{x,y\},j),m) \mapsto (\{(x,m),(y,m)\},j).$$
{Let $W_{i}$, $i\in M$, be symplectic vector spaces and denote $W=\bigoplus W_i$. Let $\cW$ be a symplectic assignment for $\mathcal L(\Gamma)$ defined by $\cW((i,m))=W_m.$}

{If $(\mathcal{L}(\Gamma),\cW)$ is \FRS, then so is $(\Gamma,W)$.}
\end{lemma}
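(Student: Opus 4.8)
The claim is that if the "layered" polygraph $(\mathcal L(\Gamma),\cW)$ is \FRS, then so is the original pair $(\Gamma, W)$ with $W = \bigoplus_{m\in M} W_m$. The plan is to realize $\Psi_{\Gamma,<,a,W}$ as a degeneration of $\Psi_{\mathcal L(\Gamma),<',a',\cW}$ in the sense of Corollary \ref{cor:degeneration}, i.e.\ to produce $\Psi_{\mathcal L(\Gamma),<',a',\cW}$ equipped with good filtrations and a stable point whose associated graded map is (essentially) $\Psi_{\Gamma,<,a,W}$, and then invoke the elimination method. Concretely: write each input $w_i\in W(i) = \bigoplus_{m} W_m$ in layer coordinates as $w_i = \sum_m w_{i,m}$ with $w_{i,m}\in W_m$; then the source of $\Psi_{\mathcal L(\Gamma),\cW}$, namely $\bigoplus_{(i,m)\in I\times M}\cW((i,m)) = \bigoplus_{i,m} W_m$, is naturally identified with the source $\bigoplus_i W(i)$ of $\Psi_{\Gamma,W}$. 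Under this identification, for a fixed ordering $<$ on $I$ and $a:S\to k^\times$, one has
\[
\Psi_{\Gamma,<,a,W}\big((w_{i,m})_{i,m}\big)_l = \sum_{i<j}\ a(\{i,j\},l)\ \Big\langle \textstyle\sum_m w_{i,m},\ \sum_{m'} w_{j,m'}\Big\rangle = \sum_{i<j}\sum_{m,m'} a(\{i,j\},l)\,\langle w_{i,m}, w_{j,m'}\rangle,
\]
where $\langle\cdot,\cdot\rangle$ is the symplectic form on $W$, which is the orthogonal direct sum of the forms on the $W_m$, so $\langle w_{i,m},w_{j,m'}\rangle = 0$ unless $m=m'$. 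Hence only the "diagonal" terms $m=m'$ survive, and $\Psi_{\Gamma,<,a,W}$ is exactly the symplectic graph-type map attached to the layered polygraph $\mathcal L(\Gamma)=(I\times M, J, S\times M)$, up to the choice of combinatorial data.

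The one subtlety is that the layered polygraph carries only the terms coming from pairs $(i,m),(j,m)$ in the \emph{same} layer $m$, which is exactly what Lemma \ref{lem:pg_lev_sp} encodes via the embedding $((\{x,y\},j),m)\mapsto (\{(x,m),(y,m)\},j)$; so in fact $\Psi_{\Gamma,<,a,W}$ with $W=\bigoplus W_m$ \emph{equals} $\Psi_{\mathcal L(\Gamma),<',a',\cW}$ for the induced ordering $<'$ (any ordering on $I\times M$ restricting correctly on layers, say lexicographic with $I$ first) and the induced function $a'(\{(i,m),(j,m)\},l)=a(\{i,j\},l)$, once we drop the cross-layer terms. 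To see that dropping the cross-layer terms is legitimate, apply the elimination method of Corollary \ref{cor:elm} (equivalently Corollary \ref{cor:pg_elm}): put the trivial weight $w\equiv 0$ on $I\times M$ — no, more precisely, observe that the cross-layer summands $\langle w_{i,m},w_{j,m'}\rangle$ with $m\neq m'$ are identically zero because $W = \bigoplus W_m$ is an orthogonal decomposition, so there is nothing to eliminate: they simply do not appear. Thus $\Psi_{\Gamma,<,a,W} = \Psi_{\mathcal L(\Gamma),<',a',\cW}$ literally. Since $(\mathcal L(\Gamma),\cW)$ is \FRS\ means $\Psi_{\mathcal L(\Gamma),<'',a'',\cW}$ is \FRS\ at $0$ for \emph{all} choices of ordering and nonvanishing coefficient function, in particular for the induced pair $(<',a')$, we conclude that $\Psi_{\Gamma,<,a,W}$ is \FRS\ at $0$. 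As $<$ and $a$ were arbitrary, $(\Gamma,W)$ is \FRS.

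The main (and only) point requiring care is the bookkeeping identification of the source spaces and the verification that the symplectic pairing on $W=\bigoplus_m W_m$ kills all cross-layer terms, so that the combinatorial data of $(\Gamma,W)$ unfolds precisely into that of $(\mathcal L(\Gamma),\cW)$; once this is set up, the statement is immediate and no degeneration argument is actually needed — the two maps coincide. (If one prefers a softer formulation, one can instead regard $\Psi_{\Gamma,<,a,W}$ as obtained from $\Psi_{\mathcal L(\Gamma),\cW}$ by the change of coordinates $w_i\mapsto (w_{i,m})_m$, which is linear and invertible, hence preserves the \FRS\ property trivially.)
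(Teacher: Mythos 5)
Your proof is correct. The paper gives no proof for this lemma (it declares it ``obvious but important''), so there is nothing to compare against, but your argument is surely the intended one. The core point you make — that after identifying the source $\bigoplus_{i\in I} W(i)=\bigoplus_{i\in I}\bigoplus_{m\in M}W_m$ with $\bigoplus_{(i,m)\in I\times M}\cW((i,m))$, the map $\Psi_{\Gamma,<,a,W}$ is \emph{literally equal} to $\Psi_{\mathcal L(\Gamma),<',a',\cW}$ (because the symplectic form on $W=\bigoplus W_m$ is the orthogonal direct sum, so the cross-layer pairings $\langle w_{i,m},w_{j,m'}\rangle$ with $m\neq m'$ vanish identically, and because $\mathcal L(\Gamma)$ only lists diagonal edges) — is exactly right, and the conclusion follows immediately since FRS for $(\mathcal L(\Gamma),\cW)$ is a quantifier over all orderings and coefficient functions, so it covers the induced $(<',a')$. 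One small stylistic remark: your proof momentarily gestures at the elimination method before correctly noting it is unnecessary here; it would read more cleanly to drop that digression entirely and present only the literal identification of the two maps (or equivalently the linear change of coordinates), since no degeneration is in play.
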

\begin{remark}
We think of the procedure $\Gamma \mapsto \mathcal{L}(\Gamma)$ as splitting of $\Gamma$ into different levels that are indexed by $M$. We duplicate each vertex and edge to $M$ levels but do not change the set $J$.
\end{remark}

\subsubsection{Graphs} \lbl{sssec:graphs}

We will use the above methods in order to degenerate the \pg\ $\Gamma_{B,C}$ into a simpler \pg\ which is, in fact, induced from a graph. Let us describe how to construct a \pg\ from a graph, and how the results above translate to graphs.

\begin{definition}\label{def:graph}
 Let $\Gamma=(V,E)$ be a graph. 
\begin{enumerate}
\item
Set $\mathcal{P}(\Gamma):=(V,E,\Delta E)$ where  where $\Delta E \subset V^{(2)}\ \times E$ is the diagonal.
\item
We will say that $(\Gamma,W)$ is \FRS\ if $(\mathcal{P}(\Gamma),W)$ is \FRS.
\item
Note that the (isomorphism class of the) map
$\Psi_{\Gamma,<,a,W}$ does not depend on $<$ and $a$.
We will refer to $\Psi_{\Gamma,W}:=\Psi_{\mathcal{P}(\Gamma),<,a,W}$ as the {\emph{graph map}} of $(\Gamma,W)$ and to $X_{\Gamma,W}:=\Psi_{\Gamma,W}^{-1}(0)$ as the {\emph{graph variety}} of $(\Gamma,W)$.
\end{enumerate}
\end{definition}

Given a graph $\Gamma$, a symplectic space $W$, and a finite set $M$, consider the level splitting $(\mathcal{L}(\mathcal{P} (\Gamma)),W)$. Given a weight $w$ for $\mathcal{L}(\mathcal{P} (\Gamma))$ (i.e., a function $w: V \rightarrow \mathbb{Z} ^M$), we can degenerate the graph map as in Corollary \ref{cor:pg_elm}. The following describes the result of such a procedure:

\begin{definition}\label{def:col} Let $\Gamma=(V,E)$ be a graph, let $M$ be a finite set, and let $w:V \rightarrow \mathbb{Z} ^M$. \begin{enumerate}
\item Define $\tilde w:V^{(2)} \to \Z^{M}$ by $\tilde w(\{v_1,v_2\}):= w(v_1)+w(v_2)$.
\item Suppose that, for any edge $\alpha \in E$, the tuple $\tilde w(\alpha) \in \Z^M$ has a unique maximum. Define
$$\gr_{w,i} E=\{\alpha\in E |\forall j \in M \text{ we have }(\tilde w(\alpha))_i \geq (\tilde w(\alpha))_j\} \subset E.$$
\item Set $\gr_{w,i}\Gamma:=(V,\gr_{w,i}E)$
\end{enumerate}
\end{definition}

Using level splitting (lemma \ref{lem:pg_lev_sp}) and elimination for \pg s (Corollary \ref{cor:pg_elm}), we get the following tool for proving \FRS\ property of graphs
\begin{corollary}[Coloring]\label{cor:col} Let $\Gamma:= (V,E)$ be a graph,  and $M$ be a finite set. Let $w:V \to \Z^{M}$ be a function such that, for any edge $\alpha \in E$, the tuple $\tilde w(\alpha)$ has a unique maximum.
{ Let $W_{i},$ for $i\in M$, be  symplectic spaces and $W=\bigoplus W_i.$

Suppose that $(\gr_{w,i}\Gamma,W_{i})$ is \FRS. Then so is $(\Gamma,W)$}
\end{corollary}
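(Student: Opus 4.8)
The plan is to deduce the statement from the two operations on polygraphs just introduced — level splitting (Lemma~\ref{lem:pg_lev_sp}) and elimination (Corollary~\ref{cor:pg_elm}) — and then to recognise the resulting degenerate polygraph as a disjoint union of the colored pieces $\gr_{w,i}\Gamma$. Recall that, by definition, $(\Gamma,W)$ is \FRS\ if and only if $(\mathcal P(\Gamma),W)$ is \FRS, where $\mathcal P(\Gamma)=(V,E,\Delta E)$. First I would apply level splitting to the polygraph $\mathcal P(\Gamma)$ with the index set $M$: this reduces the claim to showing that the level-split polygraph $\mathcal L(\mathcal P(\Gamma))=(V\times M,\,E,\,\Delta E\times M)$, equipped with the symplectic assignment $\cW((v,m))=W_m$, is \FRS. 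Note that under the embedding used in the definition of $\mathcal L$ every edge of $\mathcal L(\mathcal P(\Gamma))$ joins two vertices lying on the same level, so $\cW$ is a legitimate symplectic assignment.

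Next I would promote the function $w\colon V\to\mathbb Z^M$ to an honest weight $\hat w\in\mathbb Z^{V\times M}$ on the vertex set of $\mathcal L(\mathcal P(\Gamma))$ by $\hat w(v,m)=(w(v))_m$, and apply Corollary~\ref{cor:pg_elm}(1): it suffices to prove that $(\gr_{\hat w}\mathcal L(\mathcal P(\Gamma)),\cW)$ is \FRS. This is the combinatorial heart of the matter. For an edge $\alpha=\{u,v\}\in E$, the pairs of $\Delta E\times M$ lying over $\alpha$ are exactly $(\{(u,m),(v,m)\},\alpha)$ for $m\in M$, with induced weights $(\tilde w(\alpha))_m$; since $\tilde w(\alpha)$ has a \emph{unique} maximum, attained at a single level $m(\alpha)$, the only one of these pairs surviving in $\gr_{\hat w}(\Delta E\times M)$ is $(\{(u,m(\alpha)),(v,m(\alpha))\},\alpha)$. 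Grouping the surviving data according to the level therefore identifies $\gr_{\hat w}\mathcal L(\mathcal P(\Gamma))$ with $\mathcal P\bigl(\bigsqcup_{i\in M}\gr_{w,i}\Gamma\bigr)$, the $i$-th summand being realised on the vertex set $V\times\{i\}$, with $\cW$ restricting to the constant symplectic space $W_i$ there; the fact that the sets $\gr_{w,i}E$ partition $E$ (needed so that no edge is duplicated or lost) is again exactly the unique-maximum hypothesis.

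Finally, the graph map attached to a disjoint union of graphs is, up to reordering coordinates, the product of the graph maps of the summands: the domain $\bigoplus_{(v,m)}W_m$ splits as $\bigoplus_{i\in M}(W_i)^V$ and the target $\mathbb A^E$ splits as $\prod_{i\in M}\mathbb A^{\gr_{w,i}E}$ compatibly, so that $\Psi_{\bigsqcup_i\gr_{w,i}\Gamma,\,\cW}=\prod_{i\in M}\Psi_{\gr_{w,i}\Gamma,W_i}$. Each factor is \FRS\ at $0$ by hypothesis, and a product of morphisms that are \FRS\ at given points is \FRS\ at the product point — flatness is stable under products, and over a field of characteristic $0$ so are reducedness of the fibres and, via products of resolutions together with the Künneth formula, rationality of their singularities. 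Hence the product map is \FRS\ at $0$, and unwinding the two reductions gives that $(\Gamma,W)$ is \FRS. The step I expect to require the most care is the identification in the second paragraph — checking that $\gr_{\hat w}\mathcal L(\mathcal P(\Gamma))$ is \emph{precisely} the disjoint union of the $\mathcal P(\gr_{w,i}\Gamma)$, with no spurious edges and with the symplectic assignments matching up — together with recording the (standard, but not previously stated) stability of the \FRS\ property under finite products.
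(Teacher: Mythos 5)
Your proof is correct and is exactly the argument the paper intends (level splitting followed by elimination with the weight $\hat w(v,m)=(w(v))_m$, then identify the resulting polygraph with a disjoint union of the $\mathcal P(\gr_{w,i}\Gamma)$), which the paper asserts without spelling out. The one auxiliary fact you flag — that a product of maps each \FRS\ at a point is \FRS\ at the product point — is indeed not stated in the paper but is standard and your sketch (flatness, reducedness, and rational singularities of products over a characteristic-0 field, the last via a product resolution and flat base change/Künneth) is the right justification; it is the same kind of unrecorded product stability the paper later invokes for Lemma~\ref{lem:FRS.product}.
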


\begin{remark}\label{rem:col}
We think of the decomposition $E= \bigcup \gr_{w,i}E$ as a coloring of $\Gamma$.
The {graphs $\gr_{w,i}\Gamma$ are obtained by considering only one color}.
\end{remark}


\subsection{\FRS\ Trees}
\label{ssec:frs_tree}
In this subsection we prove the following theorem:
\begin{thm}\label{prop:frs_tree} Let $T=(V,E)$ be a tree with maximal degree $d$ and $W$ be a (non-zero) symplectic space of dimension at least $4(d-1)$. Then $(T,W)$ is \FRS.
\end{thm}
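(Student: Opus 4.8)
The plan is to prove this by induction on the number of vertices of the tree $T$, using the Coloring tool (Corollary \ref{cor:col}) to peel off a leaf (or a bounded-degree piece) at each step, and the Explicit Resolution proposition (Proposition \ref{prop:exp_res}) to handle the base case where $T$ is a single edge. First I would dispose of the base case: if $T$ consists of one edge, then $X_{T,W}=\{(v_1,v_2)\in W\times W\mid \langle v_1,v_2\rangle=0\}$, the zero set of a single nondegenerate-in-each-variable quadratic form on $W\oplus W$ of dimension $\geq 8$. This is a hypersurface which is smooth away from the origin (its singular locus is $\{v_1=v_2=0\}$, of codimension $\dim W\geq 4\geq 2$), hence reduced, normal (it is a complete intersection, so Cohen--Macaulay, and smooth in codimension $1$), and one resolves it explicitly — e.g. by blowing up the origin, or realizing it as (a cone over) a smooth quadric bundle — and checks $H^{>0}$ of the structure sheaf of the resolution vanishes. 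So Proposition \ref{prop:exp_res} applies and the single edge is \FRS; this is the content of the ``\ref{sssec:pf.edge}'' step alluded to in the scheme of the proof.

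For the inductive step, let $v$ be a leaf of $T$ with unique neighbor $u$, and let $\deg(u)=e\leq d$. The idea is to use a two-level coloring ($M=\{1,2\}$) with a weight $w:V\to\mathbb Z^2$ designed so that $\gr_{w,1}\Gamma$ splits off the single edge $\{u,v\}$ (together with isolated vertices) and $\gr_{w,2}\Gamma$ is $T$ with the leaf $v$ deleted, i.e. the smaller tree $T'=T\setminus\{v\}$, which still has maximal degree $\leq d$. Concretely, assign $w(v)=(N,0)$ for $N$ large, $w(u)=(0,0)$, and $w(x)=(0,1)$ (or similar, with escalating exponents as in Remark following Def.~\ref{def:col}) for all other vertices, so that the edge $\{u,v\}$ has its maximum in coordinate $1$ while every edge of $T'$ has its maximum in coordinate $2$; one must check each edge's $\tilde w$ has a \emph{unique} maximum, which fixes the precise choice. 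Then by Corollary \ref{cor:col} it suffices to produce symplectic spaces $W_1,W_2$ with $W=W_1\oplus W_2$ such that $(\gr_{w,1}\Gamma,W_1)$ and $(\gr_{w,2}\Gamma,W_2)$ are both \FRS. For $(\gr_{w,1}\Gamma,W_1)$, which is a disjoint edge plus isolated points (isolated vertices impose no equations, so contribute a smooth factor $W_1^{\#}$), we need only $\dim W_1\geq 8=4(2-1)\cdot 2$; actually $\dim W_1\geq 8$ suffices by the base case. For $(T',W_2)$ we invoke the induction hypothesis, which needs $\dim W_2\geq 4(d-1)$. Thus if $\dim W\geq 4(d-1)+8$ we are done immediately — but the theorem claims the sharper bound $4(d-1)$, so a naive leaf-peeling loses a constant at each step and must be refined.

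The main obstacle, therefore, is getting the dimension bookkeeping down to exactly $4(d-1)$ rather than something that degrades with the number of vertices or the depth. The fix — and I expect this is what the ``stronger version'' referenced after the statement provides — is to be cleverer about \emph{which} subgraph to peel and how to re-use the space $W$ across levels: instead of removing one leaf at a time with a fresh summand each time, one should color the \emph{whole} tree at once into color classes that are each disjoint unions of edges (a proper edge-coloring of a tree uses $\leq d$ colors, by Vizing or directly), and run a single application of Coloring with $M$ the set of colors. Then each $\gr_{w,i}\Gamma$ is a disjoint union of edges and isolated vertices, hence \FRS\ as long as each $W_i$ has dimension $\geq 8$; and one needs $W=\bigoplus_{i\in M}W_i$ with $|M|\leq d$ colors, so $\dim W\geq 8d$ would suffice — still not $4(d-1)$. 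To reach $4(d-1)$ one presumably exploits that a vertex of degree $\delta$ only meets $\delta$ of the color classes, so the relevant constraint is local: around each vertex one needs the $W_i$'s for the $\leq d$ incident colors to sum to something of dimension $\geq\dim W$, and by choosing the weight $w$ adaptively (root the tree, orient edges away from the root, and let the color/level of an edge depend on its position) one can arrange that only $d-1$ distinct levels are ``active'' at any single vertex below the root, giving the factor $4(d-1)$. Carefully setting up this weight function and verifying the unique-maximum condition edge-by-edge, together with the induction that controls the active levels, is where the real work lies; the algebraic geometry inputs (Elkik via Corollary \ref{cor:col}, and Proposition \ref{prop:exp_res} for the edge) are then black boxes. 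I would structure the write-up as: (1) the single-edge lemma via explicit resolution; (2) a lemma that a disjoint union of edges and isolated vertices is \FRS\ provided each edge's symplectic space has dimension $\geq 8$; (3) the rooting/edge-coloring construction and the weight $w$; (4) the induction assembling (2) and (3) through Corollary \ref{cor:col} to get the bound $4(d-1)$.
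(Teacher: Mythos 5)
You have the right skeleton---reduce to a single edge via the coloring tool, handle the edge by an explicit resolution---but there are two concrete problems, one a computational error and one a genuine missing step.

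First, your dimension requirement for the single edge is wrong. You ask for $\dim (W\oplus W)\geq 8$, i.e.\ $\dim W\geq 4$, apparently because you want the singular locus $\{v_1=v_2=0\}$ of $Z=\{\omega(v_1,v_2)=0\}$ to have large codimension. But that locus is a single point, and its codimension in $Z$ is $2\dim W-1$, which is already $3$ when $\dim W=2$. The paper's Proposition~\ref{prop:edge} is stated for a symplectic \emph{plane} (so $\dim W=2$), resolved via the total space of $\mathcal{O}(-1)\oplus\mathcal{O}(-1)$ over $\mathbb P^1$; the resolution and the vanishing of $H^{>0}$ both work in this minimal case. Larger $W$ then follows from Corollary~\ref{cor:pg_elm}(2). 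Carrying the overcautious $8$ through the bookkeeping is what inflates all your downstream bounds.

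Second, and more significantly, you identify that a naive leaf-peeling loses a constant at every step and that a cleverer coloring is needed, but you stop at a sketch (``where the real work lies'') without supplying the coloring. The paper's actual construction is a clean two-stage coloring which you should be able to reproduce once you see it. Stage one: pick a vertex $v_0$ of non-maximal degree (e.g.\ a leaf), let $\delta$ be graph distance from $v_0$, and color by parity, via $w(v)(m)=(1+(-1)^{\delta(v)+m})\delta(v)$ with $M=\mathbb Z/2\mathbb Z$. Each color class $\gr_{w,m}T$ is then a disjoint union of \emph{stars} $T_v$ (a parent $v$ together with its children one level further from $v_0$), and because every vertex except the root has exactly one parent edge and the root was chosen with degree $\leq d-1$, every such star has at most $d-1$ edges. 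Stage two: inside each star, use the obvious one-hot weight $w'(v)=0$, $w'(v_j)=e_j$ on the children, which degenerates the star into its individual edges. The dimension cost is then $2\ (\text{parity colors})\times (d-1)\ (\text{edges per star})\times 2\ (\text{per edge})=4(d-1)$. Your alternative route---a single direct edge-coloring of $T$ with $d$ colors, with the weight built greedily along a BFS from a root so that each child's weight is supported on the color of its parent edge and chosen large enough to dominate---does in fact work and would give the bound $2d$, which is even better than $4(d-1)$ for $d\geq 2$; but you would need to actually construct that weight and verify the unique-maximum condition, which you leave open, and you would need the corrected per-edge bound of $2$ rather than $8$ for the arithmetic to come out right.
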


\subsubsection{Reduction to an Edge} \label{sssec:red_edge}
{First, we apply the coloring method (Corollary \ref{cor:col}) and reduce the theorem to the case where $T$ consists of one edge.}

Assume that $T$ has more then one edge. Choose a vertex $v_{0}$ of $T$ which does not have a maximal degree (for example, we can choose $v_0$ to be a leaf of $T$). Let $\delta:V \to \Z_{\geq 0}$ be the distance function from $v_0$, and let $M:=\Z/2\Z$. Define $w:V \to \Z^{M}$ by
 $$w(v)(m)=
 (1+(-1)^{\delta(v)+m}){\delta(v)} =\begin{cases} 2 \delta(v) & \text{if } \delta(v) \in m  \\
0 & \text{if } \delta (v)  \notin m \\
\end{cases}$$

The following lemma is obvious.

\begin{lemma}
The graph $\gr_{w,m}T$ is a union of isolated vertices and the graph
$$ \bigsqcup_{v\in V,\delta(v)\notin m}T_{v},$$ where $T_{v}=(V_v,E_v)$ is the full subgraph of $T$ that consist of $v$ and its children (i.e. vertices $v_i$ of $T$ which are connected with $v$ and satisfy $\delta(v_i)=\delta(v)+1$).
\end{lemma}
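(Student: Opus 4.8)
The plan is to unwind the definitions of $w$, $\tilde w$ and $\gr_{w,m}$ and do the (elementary) parity bookkeeping; the statement really is, as the paper says, a matter of checking the coloring produced by $w$ is the ``parity of the child endpoint'' coloring.

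First I would verify the hypothesis under which $\gr_{w,m}T$ is even defined, namely that $\tilde w(\alpha)\in\mathbb{Z}^M$ has a unique maximum for every edge $\alpha\in E$. Because $T$ is a tree (rooted, for the purpose of $\delta$, at $v_0$), the two endpoints of any edge $\alpha=\{v_1,v_2\}$ are at consecutive distances from $v_0$: say $\delta(v_2)=\delta(v_1)+1$, so $\delta(v_1)$ and $\delta(v_2)$ have opposite parities and together exhaust $M=\mathbb{Z}/2\mathbb{Z}$. From the formula $w(v)(m)=(1+(-1)^{\delta(v)+m})\delta(v)$ one reads off $\tilde w(\alpha)(\,[\delta(v_1)]\,)=2\delta(v_1)+0$ and $\tilde w(\alpha)(\,[\delta(v_2)]\,)=0+2\delta(v_2)=2\delta(v_1)+2$. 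Since $M$ has only these two elements, the maximum of $\tilde w(\alpha)$ is unique and is attained at the coordinate $[\delta(v_2)]$, i.e. at the parity of the distance from $v_0$ of the endpoint of $\alpha$ farther from $v_0$.

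Next, by Definition \ref{def:col} and the previous step, $\alpha\in\gr_{w,m}E$ if and only if $m$ is the (unique) argmax coordinate of $\tilde w(\alpha)$, i.e. if and only if the farther endpoint of $\alpha$ lies at distance $\equiv m\pmod 2$. Orienting each edge from its endpoint closer to $v_0$ (its ``parent'' end) towards the one farther from $v_0$ (its ``child'' end), and using that the two distances differ by exactly $1$, this reads: $\{v,u\}\in\gr_{w,m}E$, with $u$ a child of $v$, exactly when $\delta(v)\notin m$. Hence $\gr_{w,m}E=\bigcup_{v\in V,\ \delta(v)\notin m}E_v$, where $E_v=\{\{v,u\}: u\text{ a child of }v\}$ is the edge set of the star $T_v$. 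The union is plainly disjoint, since each edge has a unique parent endpoint.

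Finally I would check the vertex-level statement. The stars $T_v$ with $\delta(v)\notin m$ are pairwise vertex-disjoint: the center of $T_v$ sits at distance $\equiv\delta(v)$ and its leaves at distance $\equiv\delta(v)+1$, so two such $T_v,T_{v'}$ with $v\ne v'$ cannot share a leaf (uniqueness of the parent in a tree) nor can a center of one be a leaf of the other (opposite parities versus the common parity class $M\setminus m$); the remaining possibility $v=v'$ is excluded. Being connected and pairwise disjoint, the $T_v$ are unions of connected components of $\gr_{w,m}T$. Moreover every vertex $u$ is accounted for: if $\delta(u)\notin m$ then $u$ is the center of $T_u$; if $\delta(u)\in m$ and $\delta(u)\geq 1$ then $u$ is a leaf of $T_p$ for its unique parent $p$ (which satisfies $\delta(p)=\delta(u)-1\notin m$); the only exception is $v_0$ when $0\in m$ (the coloring class $m=[0]$), which has no parent and is therefore an isolated vertex of $\gr_{w,m}T$. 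Together with the identification of $\gr_{w,m}E$ above, this gives exactly the asserted description of $\gr_{w,m}T$ as $\bigsqcup_{v\in V,\ \delta(v)\notin m}T_v$ together with isolated vertices.

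There is essentially no obstacle here; the only points needing a moment's care are the boundary case of the root $v_0$ (which degenerates to an isolated vertex precisely in the class $m=[0]$), the convention that leaves $v$ with no children give trivial one-vertex ``stars'' $T_v$, and the (standard) fact that in a tree the endpoints of an edge are always at consecutive distances from a fixed base vertex, which is what makes the two-coloring by $\tilde w$ coincide with the parity-of-child coloring.
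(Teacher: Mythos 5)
Your proof is correct and is exactly the definition-unwinding argument the paper has in mind — the paper simply declares the lemma obvious and omits it. Your parity computation showing that the unique maximum of $\tilde w(\alpha)$ sits at the coordinate indexed by the parity of the child endpoint, together with the disjointness and root/leaf boundary checks, is precisely the bookkeeping being left to the reader.
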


Applying the coloring method (Corollary \ref{cor:col}) and the last lemma, Theorem \ref{prop:frs_tree} follows form the claim that, for any symplectic space $W$ of dimension $\geq 2m$, the pair $(T_v,W)$ is \FRS. Here $m$ is the number of children of $v$ which is evidently  not larger then $d-1$.
To prove this claim, define $w':V_v \to \Z^m$, by $w(v)=0,(w(v_i))_j=\delta_{ij}.$
 It is easy to see that $\gr_{w'} T_v$ is a disjoint union of isolated vertices and isolated edges. Thus we reduce the claim to the case where $T$ is an edge, which is the following proposition:

\begin{proposition} \label{prop:edge}
Let $W$ be a symplectic plane. The symplectic form $\omega:W \times W\to \A^1$ is \FRS\ at 0.
\end{proposition}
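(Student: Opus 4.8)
The plan is to exhibit an explicit resolution of the hypersurface $Z = \omega^{-1}(0) \subset W \times W = \mathbb{A}^4$ and then apply Proposition \ref{prop:exp_res}. Write $W$ with symplectic basis $p,q$, so a point of $W\times W$ is $(v_1,v_2)$ with $v_1 = x_1 p + y_1 q$, $v_2 = x_2 p + y_2 q$, and $\omega(v_1,v_2) = x_1 y_2 - x_2 y_1$. Thus $Z$ is the affine quadric cone $\{x_1 y_2 = x_2 y_1\}$ in $\mathbb{A}^4$ — the cone over a smooth quadric surface $\mathbb{P}^1\times\mathbb{P}^1 \subset \mathbb{P}^3$, equivalently the affine cone over the Segre embedding, which is exactly the determinantal variety of $2\times 2$ matrices $\begin{pmatrix} x_1 & y_1 \\ x_2 & y_2\end{pmatrix}$ of rank $\le 1$. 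This variety is well known to have rational singularities (its only singular point is the vertex $0$), so the content here is just to verify the three hypotheses of Proposition \ref{prop:exp_res} at the point $x = 0$.

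First I would check hypothesis \eqref{prop:exp_res:1}: $\dim Z = 3 = \dim(W\times W) - \dim W = 4 - 1$, which holds since $\omega$ is visibly non-constant, so its zero locus is a hypersurface. Hypothesis \eqref{prop:exp_res:2}: $Z$ is the vanishing of the single irreducible polynomial $x_1y_2 - x_2y_1$ (irreducible because it is a nondegenerate quadratic form in $4$ variables), hence reduced; and its singular locus is cut out by the partial derivatives $y_2, y_1, x_2, x_1$, so $Z^{sing} = \{0\}$ has codimension $3$ in $Z$, in particular $Z$ is smooth in codimension $1$. Hypothesis \eqref{prop:exp_res:4} requires a resolution $\pi\colon E \to Z$ with $H^i(E,\mathcal{O}_E) = 0$ for $i>0$; for this I would take $E$ to be the total space of the line bundle $\mathcal{O}_{\mathbb{P}^1\times\mathbb{P}^1}(-1,-1)$ over $\mathbb{P}^1\times\mathbb{P}^1$, with $\pi$ the contraction of the zero section — concretely, $E = \{((v_1,v_2),\ell) : v_1,v_2 \in \ell\} \subset (W\times W)\times \mathbb{P}(W)$, the blow-up of $Z$ at the vertex. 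This is smooth (it fibers over $\mathbb{P}^1$ with fibers affine planes), $\pi$ is an isomorphism away from $0$, and the cohomology vanishing follows from the projection formula plus $H^i(\mathbb{P}^1\times\mathbb{P}^1, \mathcal{O}(-n,-n)) = 0$ for $i > 0$ and all $n \ge 0$ (these are the symmetric powers of the conormal bundle of the zero section), which is a standard Künneth computation on $\mathbb{P}^1\times\mathbb{P}^1$.

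The main obstacle — really the only place any care is needed — is the cohomology vanishing in hypothesis \eqref{prop:exp_res:4}: one must identify $\pi_*\mathcal{O}_E$ and $R^{>0}\pi_*\mathcal{O}_E$ correctly, and since $Z$ is affine this reduces (as in the proof of Proposition \ref{prop:exp_res}) to computing $H^\bullet(E,\mathcal{O}_E)$, i.e.\ $\bigoplus_{n\ge 0} H^\bullet(\mathbb{P}^1\times\mathbb{P}^1,\mathcal{O}(-n,-n))$. The $H^1$ of $\mathcal{O}(-n,-n)$ on $\mathbb{P}^1\times\mathbb{P}^1$ vanishes because $\mathcal{O}(-n,-n) = \mathrm{pr}_1^*\mathcal{O}(-n)\otimes \mathrm{pr}_2^*\mathcal{O}(-n)$ and both factors have vanishing $H^1$ on $\mathbb{P}^1$ for $n\ge 0$ (Künneth), and the $H^2$ vanishes because $H^2 = H^1(\mathbb{P}^1,\mathcal{O}(-n))\otimes H^1(\mathbb{P}^1,\mathcal{O}(-n))$ which is again $0$ for $n \ge 0$. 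Alternatively one can invoke a general vanishing theorem for cones over projectively normal smooth varieties, or simply cite that affine cones over smooth projectively normal subvarieties with $H^i(\mathcal{O}(n)) = 0$ for $i>0, n\ge 0$ have rational singularities; but the hands-on blow-up computation above is short enough to include directly. Everything else is immediate, and Proposition \ref{prop:exp_res} then yields that $\omega$ is \FRS\ at $0$.
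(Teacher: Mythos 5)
Your strategy --- apply Proposition \ref{prop:exp_res} with $Z=\omega^{-1}(0)$ --- is exactly the paper's, and hypotheses \eqref{prop:exp_res:1} and \eqref{prop:exp_res:2} are handled correctly. But the resolution step contains two errors, the second of which makes the vanishing claim false as written.

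First, you conflate two different resolutions of $Z$. The set $E=\{((v_1,v_2),\ell):v_1,v_2\in\ell\}\subset(W\times W)\times\mathbb{P}(W)$ fibers over $\mathbb{P}(W)\cong\mathbb{P}^1$ with $2$-dimensional fibers $\ell\oplus\ell$; it is the total space of $\mathcal{T}\oplus\mathcal{T}$ over $\mathbb{P}^1$ (the paper's choice), a small resolution whose exceptional locus over $0$ is a single $\mathbb{P}^1$. It is \emph{not} the blow-up of $Z$ at the vertex, nor the total space of $\mathcal{O}_{\mathbb{P}^1\times\mathbb{P}^1}(-1,-1)$: since $Z$ is the affine cone over the Segre quadric $\mathbb{P}^1\times\mathbb{P}^1\subset\mathbb{P}^3$, $\mathrm{Bl}_0 Z$ is that line bundle's total space, whose exceptional divisor is all of $\mathbb{P}^1\times\mathbb{P}^1$ --- a different, non-isomorphic resolution. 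Second, and more importantly, the asserted vanishing $H^i(\mathbb{P}^1\times\mathbb{P}^1,\mathcal{O}(-n,-n))=0$ for $i>0$, $n\ge 0$, justified via ``$H^1(\mathbb{P}^1,\mathcal{O}(-n))=0$ for $n\ge 0$,'' is false: $H^1(\mathbb{P}^1,\mathcal{O}(-n))\cong k^{n-1}$ for $n\ge 2$, so for instance $H^2(\mathbb{P}^1\times\mathbb{P}^1,\mathcal{O}(-2,-2))\neq 0$. The sign of the twist is backwards. Your own parenthetical points the right way: the relevant sheaves are the symmetric powers of the \emph{conormal} bundle of the zero section, which for $\mathrm{Tot}(\mathcal{O}(-1,-1))$ is $\mathcal{O}(1,1)$, so what must be checked is $H^i(\mathbb{P}^1\times\mathbb{P}^1,\mathcal{O}(n,n))=0$ for $i>0$, $n\ge 0$, and this holds since $H^{>0}(\mathbb{P}^1,\mathcal{O}(n))=0$ for $n\ge 0$ plus K\"unneth. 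With the paper's small resolution over $\mathbb{P}^1$ the check is instead $H^i\bigl(\mathbb{P}^1,\Sym^j(\mathcal{O}(1)^{\oplus 2})\bigr)=H^i(\mathbb{P}^1,\mathcal{O}(j))^{\oplus(j+1)}=0$ for $i>0$, $j\ge 0$. Either resolution works once you commit to one and correct the sign.
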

This is rather standard proposition; for completeness we include a proof in the next subsubsection.

\subsubsection{Proof for an Edge} \lbl{sssec:pf.edge}

The proof is based on proposition \ref{prop:exp_res}.
Let $$Z:=\omega^{-1}(0)=\{x,y \mid \text{ x is parallel to }y\}.$$
$\dim Z=3 \leq 4-1=\dim(W\times W)-\dim \mathbb{A} ^1$, so assumption  \eqref{prop:exp_res:1} of Proposition \ref{prop:exp_res} holds.

The map $\omega$ is regular on $W \times W \smallsetminus (W \times \left\{ 0 \right\} \cup \left\{ 0 \right\} \times W)$, so $Z\smallsetminus  (W \times \left\{ 0 \right\} \cup \left\{ 0 \right\} \times W)$ is reduced. Since it is Zariski dense in $Z$, we get that $Z$ is reduced.

Let $\mathcal T=O(-1)$ be the tautological bundle of $\mathbb P^1$, let $\mathcal E=\mathcal T\oplus \mathcal T$ and let $E$ be the total space of $\mathcal E$. We have a natural resolution of singularities $\pi:E\to Z$ which is an isomorphism outside $(0,0)\in Z$. In particular, $Z$ is smooth outside a codimension 2 subvariety. This proves assumption   \eqref{prop:exp_res:2} of Proposition \ref{prop:exp_res}. \

In order to prove assumption   \eqref{prop:exp_res:4} of Proposition \ref{prop:exp_res} we will use the following lemma
\begin{lemma}
Let $\mathcal E$ vector bundle over a variety $X$ and $E$ be its total space. Then the following are equivalent
\begin{enumerate}
 \item $H^i(X,\Sym^j(\mathcal E^*))=0,\forall i>0,j\geq 0$.
 \item $H^i(E,O_E)=0, \forall i>0$.
\end{enumerate}
\end{lemma}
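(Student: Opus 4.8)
The plan is to realize $E$ as a relative spectrum over $X$ and thereby reduce the computation of $H^i(E,\mathcal O_E)$ to cohomology on $X$. Recall that the total space of the vector bundle $\mathcal E$ is, by definition, $E=\Spec_X\!\big(\Sym(\mathcal E^*)\big)$, the relative spectrum over $X$ of the quasi-coherent sheaf of $\mathcal O_X$-algebras $\Sym(\mathcal E^*)=\bigoplus_{j\geq 0}\Sym^j(\mathcal E^*)$; if one prefers, this is checked on a trivializing affine cover, where $E$ restricts to $U\times\mathbb A^r$ and $\Sym(\mathcal E^*)$ restricts to a polynomial algebra in $r$ variables over $\mathcal O_U$. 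Let $\pi:E\to X$ denote the structure morphism.

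The first step is to observe that $\pi$ is an affine morphism, so $R^q\pi_*\mathcal O_E=0$ for $q>0$, while $\pi_*\mathcal O_E=\Sym(\mathcal E^*)=\bigoplus_{j\geq 0}\Sym^j(\mathcal E^*)$. Feeding this into the Leray spectral sequence $H^p\big(X,R^q\pi_*\mathcal O_E\big)\Rightarrow H^{p+q}(E,\mathcal O_E)$, the spectral sequence degenerates and yields canonical isomorphisms
\[
H^i(E,\mathcal O_E)\;\cong\;H^i\big(X,\pi_*\mathcal O_E\big)\;=\;H^i\Big(X,\ \bigoplus_{j\geq 0}\Sym^j(\mathcal E^*)\Big)\qquad\text{for all }i.
\]

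The second step uses that $X$ is a variety, hence a Noetherian scheme, so its underlying topological space is Noetherian and sheaf cohomology commutes with arbitrary direct sums; therefore
\[
H^i\Big(X,\ \bigoplus_{j\geq 0}\Sym^j(\mathcal E^*)\Big)\;=\;\bigoplus_{j\geq 0}H^i\big(X,\Sym^j(\mathcal E^*)\big).
\]
Combining the two displays gives $H^i(E,\mathcal O_E)\cong\bigoplus_{j\geq 0}H^i\big(X,\Sym^j(\mathcal E^*)\big)$ for every $i>0$, and since a direct sum of vector spaces vanishes precisely when each summand vanishes, this yields at once both implications $(1)\Rightarrow(2)$ and $(2)\Rightarrow(1)$.

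I do not expect any real obstacle here: the only two points deserving a word of care are the identification $E\cong\Spec_X(\Sym\mathcal E^*)$, which is essentially the definition of the total space of a vector bundle and is verified locally over a trivializing cover, and the interchange of cohomology with the infinite direct sum $\bigoplus_j$, which rests on the Noetherian hypothesis on $X$ that holds automatically for varieties in the sense of this paper.
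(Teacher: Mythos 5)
Your proof is correct and follows essentially the same route as the paper's: both arguments hinge on (i) $\pi:E\to X$ being affine so that higher direct images of $\mathcal O_E$ vanish, (ii) the identification $\pi_*\mathcal O_E=\bigoplus_{j\ge 0}\Sym^j(\mathcal E^*)$, and (iii) commutation of cohomology with the infinite direct sum. The only cosmetic difference is that you phrase step (i) as degeneration of the Leray spectral sequence, whereas the paper works directly with the composition of derived pushforwards $R(\pi_2\circ\pi_1)_*\cong R\pi_{2*}\circ R\pi_{1*}$ and the equality $R\pi_{1*}=\pi_{1*}$; these are two formulations of the same fact.
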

\begin{proof}
Let $\pi_1:E\to X$ and let $\pi_2:X\to {\spec(k)}$ be the projection {to} the point.
Since the map $\pi_1$ is affine, the functor $(\pi_1)_{*}$ is exact. Thus, we have $R((\pi_1)_{*})=(\pi_1)_{*}.$  Therefore,

\begin{multline*}
H^ \bullet (E,O_E) = \left( R(\pi_2 \circ \pi_1)_{*}(O_{E}) \right) \cong \left( R(\pi_2)_{*}\left( R(\pi_1)_{*} (O_{E})\right) \right) \cong \left( R(\pi_2)_{*} ( (\pi_1)_{*} (O_{E})) \right) \cong \\ \cong \left( R(\pi_2)_{*}\left(\bigoplus_{j \geq 0} \Sym^j(\mathcal E^*)\right) \right) \cong \bigoplus_{j \geq 0} \left( R(\pi_2)_{*}\left( \Sym^j(\mathcal E^*)\right) \right) = \bigoplus_{j \geq 0}H^ \bullet (X,\Sym^j(\mathcal E^*)).
\end{multline*}
Here we identify the derived category of sheaves over a point with the category of graded vector spaces.

This clearly implies the assertion
\end{proof}

Assumption  \eqref{prop:exp_res:4} follows now from the fact that $H^i(\mathbb P^1,O(j))=0,\forall i>0,j\geq 0$.

\subsection{Proof of the Combinatorial Statement (Proposition \ref{prop:mult_FRS}) for $G=\SL_d$.}\label{ssec:mult_FRS}

\subsubsection{Coordinates on $\g$} \lbl{sssec:coordinates}

Let $L=\{1,\dots, d\}$ and $I=J=L \times L\smallsetminus \{(d,d)\}$. Let $e'_{(i,j)} \in \mathfrak{gl}_n$ be the matrix whose $(i,j)$th entry is equal to one and its other entries are equal to zero, and let $e_{(i,j)} = e'_{(i,j)}-\frac{\tr e'_{(i,j)}}{d} Id$. The set $B=\{e_{(i,j)}\}_{(i,j)\in I}$ is a basis of $\g$.
For $(i,j)\in J$, let $\alpha_{(i,j)}$ be the functional $\alpha_{(i,j)}(X)=X_{i,j}$. The set $C=\{\alpha_{(i,j)}\}_{(i,j)\in J}$ is a coordinate system on $\mathfrak{g}$.

Let $\Gamma_0=(I,J,S_0):=\Gamma_{B,C}$. It is easy to see that
$$S_0= \{(\{(i,j),(j,{l})\},(i,{l})) \in I^{(2)} \times J\}.$$

\subsubsection{Reduction to a Graph} \lbl{sssec:reduction.to.graph}

Define $w_0:I \to \Z$ by $w_{0}((i,j))=-3^{|i-j|}$. Let  $\Gamma_1=(I,J,S_1):=\gr_{w_{0}}(\Gamma_0)$. It is easy to see that the set $S_1$ consists of all triples $(\{(i,j),(j,{l})\},(i,{l}))$ such that $(i,{l}) \in J$ {is arbitrary} and $j$ is as close as possible to the average of $i$ and ${l}$. Namely,
\begin{multline*}
S_1= \left\{(\{(i,j),(j,{l})\},(i,{l})) \in  I^{(2)} \times J|\left |j-\frac{i+{l}}{2} \right | <1+\delta_{i,{l}} \right \}=\\
=\left\{(\{(i,j),(j,{l})\},(i,{l})) \in  I^{(2)} \times J| i\neq {l} \text{ and }\left |j-\frac{i+{l}}{2} \right | <1, \text{ or } i={l} \text{ and }\left |j-i\right|=1 \right \}.
\end{multline*}
Define $w_1:I \to \Z_{\geq 0}$ by $w_{1}((i,j))={i}$. Let  $\Gamma_2:=(I,J,S_2):=\gr_{w_{1}}(\Gamma_1)$. It is easy to see that
$$S_2= \left\{(\{(i,j),(j,{l})\},(i,{l})) \in  I^{(2)} \times J |\,   j=  \left \lceil \frac{i+{l}}{2}\right \rceil +\delta_{i,{l}} \right \}.$$
Note that $\Gamma_2$ is the \pg\ attached to the graph $\Gamma_3=(I,E)$, where
$$E= \left\{\{(i,j),(j,{l})\} \in I^{(2)}|\,   j=  \left \lceil \frac{i+{l}}{2}\right \rceil +\delta_{i,{l}} \right \}.$$
For the convenience of the reader we provide a picture of this graph for the case {d=8 in appendix \ref{app:ex}}.

By the elimination method for \pg s (Corollary \ref{cor:pg_elm}), we reduce Proposition \ref{prop:mult_FRS} to the following proposition
\begin{proposition}\label{prop:gr_FRS} Let $W$ be a symplectic space of dimension $\geq 24$.
Than $(\Gamma_3,W)$ is \FRS.
\end{proposition}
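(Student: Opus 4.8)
The plan is to reduce the graph $\Gamma_3$, via the coloring method (Corollary \ref{cor:col}), to a disjoint union of edges, at the cost of shrinking the symplectic space by a bounded factor, and then invoke Proposition \ref{prop:edge}. First I would determine the structure of $\Gamma_3$. Its vertices are indexed by $I = L\times L\smallsetminus\{(d,d)\}$, and $\{(i,j),(j,l)\}$ is an edge exactly when $j=\lceil (i+l)/2\rceil+\delta_{i,l}$. In particular every edge has a ``middle coordinate'' $j$ common to both endpoints, and for each endpoint $(a,b)$ of an edge the other endpoint is of the form $(b,c)$ or $(c,b)$; a short combinatorial check (of exactly the sort illustrated in Appendix \ref{app:ex} for $d=8$) shows that the degree of $\Gamma_3$ is bounded by a small absolute constant — I expect the maximal degree to be at most $7$, so that $4(d_{\max}-1)\le 24$. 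Since $\Gamma_3$ is not a tree (there will be cycles among the middle coordinates), I cannot apply Theorem \ref{prop:frs_tree} directly, but the same two-step coloring strategy used in its proof applies: first break $\Gamma_3$ into a forest by a coloring that eliminates all edges creating cycles, then break each resulting forest into stars $T_v$, and finally break each star into isolated edges, exactly as in \S\S\S\ref{sssec:red_edge}.

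More concretely, the key steps in order are: (1) Identify the adjacency structure of $\Gamma_3$ explicitly and bound its maximal degree $d_{\max}$; establish $4(d_{\max}-1)\le 24$. (2) Choose a spanning forest $F\subset \Gamma_3$ and a weight coloring $w:I\to\Z^M$ (with $M$ indexing the ``layers'' of a BFS-type decomposition of $\Gamma_3$, as in the distance-function construction in the proof of Theorem \ref{prop:frs_tree}) so that for every edge $\alpha$ the tuple $\tilde w(\alpha)$ has a unique maximum and each $\gr_{w,i}\Gamma_3$ is a forest of bounded degree. Here I would use that $\dim W\ge 24 \ge 4(d_{\max}-1)$ is large enough to absorb the loss: the coloring splits $W=\bigoplus_i W_i$, and I need each $W_i$ to still have dimension $\ge 4(d_{\max}-1)$, so $|M|$ is constrained but the bound $24$ has been chosen precisely to make this work. (3) Apply Theorem \ref{prop:frs_tree} to each $(\gr_{w,i}\Gamma_3, W_i)$, or, if one prefers to avoid re-deriving the tree case, repeat the star-and-edge reduction and finish with Proposition \ref{prop:edge}. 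Then Corollary \ref{cor:col} yields that $(\Gamma_3,W)$ is \FRS.

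The main obstacle I anticipate is bookkeeping in step (2): one must exhibit an explicit coloring of $\Gamma_3$ (as opposed to an abstract graph of bounded degree) for which the weights are realized by integer vectors, the unique-maximum condition holds on every edge, and the dimension budget $\dim W\ge 4(d_{\max}-1)$ survives each split. This is where the specific arithmetic of the condition $j=\lceil(i+l)/2\rceil+\delta_{i,l}$ matters — the graph $\Gamma_3$ is not arbitrary, and one exploits that the ``middle coordinate'' $j$ of an edge is sandwiched between $i$ and $l$, so that sorting vertices by, say, the value of their middle coordinate (or by a lexicographic function of their two coordinates) gives a natural layering with small per-layer degree. I would carry this out case-by-case on the shape of a vertex $(i,j)$ (whether $i<j$, $i=j$, or $i>j$), mirroring the combinatorial analysis in \S\S\ref{ssec:Comb.stat.sp}, and the picture in Appendix \ref{app:ex} would serve as the guide for $d=8$, with the general $d$ following the same pattern. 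Everything else — flatness, reducedness, the explicit resolution — is already packaged in Proposition \ref{prop:edge} and the coloring machinery, so no further analytic input is needed.
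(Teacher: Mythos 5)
Your high-level strategy — reduce $\Gamma_3$ via coloring (Corollary \ref{cor:col}) to forests of bounded degree and then invoke Theorem \ref{prop:frs_tree} — matches the paper. But your dimension bookkeeping is based on an incorrect guess about where the bound $24$ comes from, and the resulting plan doesn't close.

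You write that the maximal degree of $\Gamma_3$ should be about $7$, ``so that $4(d_{\max}-1)\le 24$.'' That is not how $24$ is used. Theorem \ref{prop:frs_tree} requires, for \emph{each} color class, a symplectic space of dimension $\ge 4(d'-1)$ where $d'$ is the maximal degree \emph{of that color class}, not of $\Gamma_3$. The coloring splits $W=\bigoplus_{m\in M}W_m$, so the total budget is $\dim W\ge |M|\cdot 4(d'-1)$. The paper takes $|M|=3$ (residues of $i-j$ mod $3$) and exhibits a weight $w_3$ for which each color class $\gr_{w_3,m}\Gamma_3$ is a forest of maximal degree $\le 3$ supported on at most two adjacent ``diagonals'' $I_l=\{(i,j):i-j=l\}$. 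That gives $3\times 4(3-1)=24$. Under your accounting (each $W_m$ of dimension $\ge 4(7-1)=24$), you could afford only $|M|=1$ color, which is no reduction at all — and $\Gamma_3$ with all its edges is not a forest, as you yourself note.

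A second, related problem: ``choose a spanning forest $F\subset\Gamma_3$'' is the wrong move. A spanning forest omits edges; those omitted edges must still land in \emph{some} color class, and you need every color class — not just one chosen subgraph — to be a forest of small degree. What the paper actually exploits is the arithmetic structure you wave at but don't use: edges $\{(i,j),(j,l)\}$ with $j=\lceil(i+l)/2\rceil+\delta_{i,l}$ connect adjacent or near-adjacent diagonals $I_{i-j}$ and $I_{j-l}$, and coloring by $i-j$ mod $3$ (with exponential weights $5^{|i-j|}$, etc., to force the unique-maximum condition on every edge) produces three ``comb''-shaped forests. You would need to produce an explicit weight vector $w:I\to\Z^M$ with the unique-maximum property on every edge and verify the degree-$3$ bound on each $\gr_{w,m}\Gamma_3$; the proposal defers exactly this and offers a heuristic that, as stated, would not give $24$.
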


\subsubsection{Reduction to a Forest} \lbl{sssec:reduction.to.forest}

Let $M=\Z/3\Z$ and let $w_3:I \to \Z_{\geq 0}^M$ be the function $$w_3((i,j))(m)=
\begin{cases}5^{|i-j|} & \text{ if }m\equiv i-j\textrm{ (mod 3)}\\
3 \cdot 5^{|i-j|-1}  & \text{ if }m\equiv (i-j-sign(i-j+1/2))\textrm{ (mod 3)}\\
0 & \text{ if }m\equiv (i-j+sign(i-j+1/2))\textrm{ (mod 3)}\\
\end{cases}$$

We provide an illustration of $w_{3}$ and corresponding coloring of $\Gamma_3$ for the {cases $d=6$ and $d=8$, in Appendix \ref{app:ex}}.

Applying the coloring method (Corollary \ref{cor:col}), Proposition \ref{prop:gr_FRS} follows now from the \FRS\ property of trees (Theoerm \ref{prop:frs_tree})  and  the following simple lemma.
\begin{lemma}
{For all $m \in M$, the graph $\Gamma_{4}^{m}=(V^{m},E^{m}):=\gr_{w_{3},m}(\Gamma_3)$ is a forest with maximal degree $\leq 3$.}
\end{lemma}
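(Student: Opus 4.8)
## Proof Proposal for the Final Lemma

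The plan is to make the claim about $\Gamma_4^m = \gr_{w_3,m}(\Gamma_3)$ explicit by reading off its edge set from the definition of $w_3$, and then to verify the two assertions (forest, maximal degree $\le 3$) by a direct combinatorial argument. Recall that $\Gamma_3 = (I,E)$ with $I = L\times L \setminus\{(d,d)\}$ and an edge $\{(i,j),(j,l)\}$ for each ``composable pair'' with $j = \lceil (i+l)/2\rceil + \delta_{i,l}$; such an edge should be thought of as lying over the target index $(i,l) \in J$. First I would compute, for a fixed edge $\alpha = \{(i,j),(j,l)\}$, the weight $\tilde w_3(\alpha)(m) = w_3((i,j))(m) + w_3((j,l))(m)$ coordinate by coordinate in $M = \Z/3\Z$. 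The point of the exponential scaling $5^{|i-j|}$ together with the smaller $3\cdot 5^{|i-j|-1}$ term is that $\tilde w_3(\alpha)$ has a unique maximum coordinate, so that $\gr_{w_3,m}$ is well-defined; I would check this first (it is needed to invoke Corollary \ref{cor:col} anyway). Writing $r = \min(|i-j|,|j-l|)$ and $s = \max(|i-j|,|j-l|)$, the dominant contribution to $\tilde w_3(\alpha)(m)$ comes from the vertex farther from the diagonal, so the maximizing $m$ is essentially determined by the residue class of $s$ (with the $r$-vertex contributing a lower-order correction that breaks ties); I expect the unique maximum to be attained at $m \equiv s \pmod 3$ when $|i-j| \ne |j-l|$, and I would handle the (rare) case $|i-j| = |j-l|$ separately.

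Having identified $E^m = \gr_{w_3,m}E$ as the set of edges whose dominant coordinate is $m$, the next step is to show $(V^m, E^m)$ is a forest. The key structural observation is that an edge $\{(i,j),(j,l)\}$ of $\Gamma_3$ shares the ``middle'' vertex $(j,\cdot)$ or $(\cdot,j)$ in a controlled way, and that the map $(i,j)\mapsto |i-j|$ stratifies $I$ by distance from the diagonal. I would argue that within a single color class $E^m$ the edges cannot form a cycle: traversing an edge changes the ``level'' $|i-j|$ of the active vertex in a monotone-controlled manner within each color (because the color is pinned to the larger level $s$, so consecutive edges in a path alternate a vertex at level $\equiv m$ with one at a strictly smaller level), and this monotonicity obstructs returning to a starting vertex. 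Concretely, I would orient each edge from its lower-level endpoint to its higher-level endpoint, show that in $E^m$ every vertex has at most one outgoing edge (the ``parent'' in the would-be tree), hence each connected component of $(V^m,E^m)$ has at most as many edges as vertices with equality impossible, giving a forest.

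For the degree bound: a vertex $v = (i,j) \in V^m$ can be an endpoint of an edge $\{(i,j),(j,l)\}$ (as the ``left'' factor, with $l$ ranging over the few values satisfying $j = \lceil(i+l)/2\rceil + \delta_{i,l}$) or an edge $\{(h,i),(i,j)\}$ (as the ``right'' factor). The constraint $j = \lceil (i+l)/2\rceil + \delta_{i,l}$ has at most two solutions $l$ for given $i,j$ (one from each side of the ceiling, plus the diagonal correction), and similarly at most two $h$; so in $\Gamma_3$ itself the degree is bounded by a small constant. Passing to the color class $E^m$ only removes edges, and the coloring is designed so that the two ``sides'' of a vertex get different colors except possibly in degenerate low-level cases, which caps the per-color degree at $3$. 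I would finish by enumerating these finitely many local configurations near each vertex and checking the bound holds. The main obstacle I anticipate is the bookkeeping around the diagonal $i = l$ and the small-$|i-j|$ vertices, where the $\delta_{i,l}$ correction and the $\mathrm{sign}(i-j+1/2)$ terms in $w_3$ interact — the generic behavior is clean, but a handful of boundary cases near the diagonal will need to be checked by hand (this is presumably why the excerpt defers to the pictures in Appendix \ref{app:ex}), and getting the degree bound to be exactly $3$ rather than $4$ will hinge on exactly how $w_3$ separates those boundary edges into distinct colors.
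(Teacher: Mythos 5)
Your plan mis-identifies the local structure of $\Gamma_4^m$, and the gap shows up precisely in the step you lean on for the forest claim.

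You propose to orient each surviving edge from its lower-level endpoint (smaller $|i-j|$) to its higher-level endpoint, argue out-degree at most $1$, and conclude the component is a forest by an Euler count. The premise — that consecutive edges in a path "alternate a vertex at level $\equiv m$ with one at a strictly smaller level" — is false. For an edge $\{(i,j),(j,l)\}$ with $j=\lceil(i+l)/2\rceil$, whenever $i+l$ is even the two endpoints satisfy $i-j=j-l$, so they lie on the \emph{same} signed diagonal and have the \emph{same} level $|i-j|=|j-l|$. You label the case $|i-j|=|j-l|$ as "rare" and propose to handle it separately; in fact it occurs for roughly half of the edges in $\Gamma_3$ (and, after the elimination, for every edge lying within a single diagonal), and these same-level edges are exactly the ones that carry the connectivity of each component. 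Your orientation is undefined on them, and the monotonicity-in-level argument that is supposed to rule out cycles breaks down. A related inaccuracy: the color of an edge is governed by the residue of the signed quantity $i-j$ of the farther-out endpoint, not of $|i-j|$; this matters because $w_3$ is a function of $i-j \pmod 3$, and the two sides of the diagonal land in different residue classes.

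The paper instead decomposes $I$ into diagonals $I_l=\{(i,j):i-j=l\}$, and observes that $\Gamma_4^m$ is a disjoint union (plus isolated vertices) of subgraphs $\Delta_l$ supported on two adjacent diagonals, one such piece for each $l\equiv m\pmod 3$. Each $\Delta_l$ is a "comb": the intra-diagonal edges (the same-level edges you discounted) form disjoint paths inside $I_l$ — the spine, connecting consecutive positions at step $|l|$ — and the edges to the next-closer diagonal attach at most one leaf to each spine vertex; vertices on the adjacent diagonal have degree $\le 1$ inside $\Delta_l$. A comb is visibly a tree of maximal degree $\le 3$, and the degree bound you sketch (four potential edges at each $\Gamma_3$-vertex, with the edge to the farther diagonal carrying a different color) is then immediate. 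Your out-degree-$\le 1$ template could be salvaged — orient the spine edges by increasing position within $I_l$ and orient leaf edges into the spine — but that requires first establishing the spine-plus-leaves structure, which is the genuine content of the lemma and is missing from your sketch.
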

\begin{proof} We first give an intuitive explanation based on the pictures in Appendix \ref{app:ex}. As one can see, each connected component of $\Gamma_{4}^{m}$ is supported on at most 2 adjacent diagonals (except for $n-1$ separated intervals around the main diagonal). Each of those component is a `comb', i.e. an interval (in one diagonal) with some leaves (from the other diagonal) attached to some of its vertices (not more than one leaf for each vertex of the interval). Such a `comb' is evidently a tree of maximal degree $\leq 3$.

We repeat the above considerations {rigorously} for general $n$.
Decompose $\Gamma_4^m$ in the following way: Decompose $I=\bigcup _{l=-n,\dots, n}I_l$, where $$I_l=\{(i,j)\in I | i-j=l\}.$$
 Define  $\Delta_l=(V_l,E_l) \subset \Gamma_3$, by

 $$V_l=\begin{cases}I_l \cup I_{l+1} & l<0 \\
 I_l \cup I_{l-1} & l>0 \\
 I_1 \cup I_{-1} & l=0 \\
\end{cases}$$

and

 $$E_l=\begin{cases}  E \cap (I_l^{(2)} \cup I_l \times I_{l+1} ) & l<0 \\
E \cap (I_l^{(2)} \cup I_l \times I_{l-1} ) & l>0\\
 E\cap (I_1 \times I_{-1}) & l=0 \\
\end{cases}.$$
Here we consider the product of 2 disjoint subsets of $I$ as a subset of $I^{(2)}.$

It is easy to see that $\Gamma_{4}^{m}$ is a union of isolated vertices and the graph $$\bigsqcup_{l \equiv m \textrm{ (mod 3)}}\Delta_l,$$ so it is enough to show that  $\Delta_l$ are forests of maximal degree $3$. The case $l=0$ is obvious since $\Delta_0$ is a disjoint union of edges. For the other cases, the degree estimate is also easy.  The forest property follows from the facts that the restriction of $\Delta_l$ to $V_l$ is a union of segments and that all other vertices have degree at most $1$.
\end{proof}

This concludes the proof of Theorem \ref{thm:local.FRS} modulo Theorem \ref{prop:frs_tree}.

{
\subsection{Proof of the Combinatorial Statement (Proposition \ref{prop:mult_FRS}){ for $G=\SO_d$}.}\label{ssec:Comb.stat.so}

The proof is similar to the proof for the $\SL_d$ case. We give here the main steps and, as before, we give a picture in Appendix \ref{app:ex}.
\begin{itemize}
\item  Coordinates on $\g$:
\begin{itemize}
\item Let $L=\{1,\dots, d\}$ be as before.
\item Let $I=J=L^{(2)}$.
\item For any $i>j\in L$, let $e_{\{i,j\}}:=e'_{i,j}-e'_{j,i}\in \mathfrak{o}_n$, where $e'_{i,j}$ is as before.
\item The set $B=\{e_{s}\}_{s\in I}$ is a basis of $\g$.
\item Let $C:=\{\alpha_{s}\}_{s\in J}$ be the dual basis of $\mathfrak{g}^*$.
\item Let $\Gamma_0=(I,J,S_0):=\Gamma_{B,C}$. We have $$S_0= \{(\{s_{1},s_{2}\},s_{1} \triangle s_{2}) \mid |s_1 \cap s_2|=1\}\subset I^{(2)} \times J,$$
where $\triangle$ is the symmetric difference.
\end{itemize}
\item Reduction to a Graph
\begin{itemize}
\item Define a weighting $w_{0}:I \to \Z$ by $$w_{0}(\{i,j\}))=-3^{|i-j|}.$$
\item  Let $\Gamma_1=(I,J,S_1):=\gr_{w_{0}}(\Gamma_0)$. Then
$$
S_1= \left\{(\{\{i,j\},\{j,{l}\}\},\{i,{l}\}) \in  I^{(2)} \times J|\left |j-\frac{i+{l}}{2} \right | <1+\delta_{|i-l|,1}) \right \}.
$$
\item Define a weighting $w_{1}:I \to \Z$ by $w_{1}(\{i,j\}))=\max \left\{ i,j \right\}$.
\item  Let $\Gamma_2=(I,J,S_2):=\gr_{w_{1}}(\Gamma_1)$. We get that $\Gamma_2$ is the \pg\ attached to the graph $\Gamma_3=(I,E)$, where
\[
E=\left\{ \left\{ \left\{i,j\right\} , \left\{j,l \right\} \right\} \in I^{(2)} \mid j= \left\{ \begin{matrix} d-2 & \left\{ i,l \right\} = \left\{ d,d-1 \right\} \\ \max \left\{ i,l \right\}+1 & |i-l|=1, \left\{ i,l \right\} \neq \left\{ d,d-1 \right\} \\ \left \lceil \frac{i+{l}}{2}\right \rceil & |i-l|>1 \end{matrix} \right. \begin{matrix} \\ \\ \\ \\ \\ \end{matrix}\right\} .
\]

\end{itemize}
\item Reduction to a Forest:
\begin{itemize}

\item  Let $M:=\{1,2,3\}$ and let $w_3:I \to \Z_{\geq 0}^M$ be the function
{
$$w_3(\{i,j\})=
\begin{cases}(0,3^{d}+2j,\delta_{i,d-2}\cdot 2 \cdot 3^{d-1}) & \text{ if } i=j-1\\
(3^{d}+2j-1,0,\delta_{i,d-2}\cdot 2 \cdot 3^{d-1}) & \text{ if } i=j-2\\
(0,0,3^{d-|i-j|})  & \text{ if }i<j-2 \text{ and } i-j \text{ is odd}\\
(3^{d-|i-j|},0,0)  & \text{ if }i<j-2 \text{ and } i-j \text{ is even}\\
\end{cases}$$
\item It is easy to see that {$\Gamma_{4}^m:=\gr_{w_{3}}^m(\Gamma_3)$} is a forest of maximal degree $\leq 3$.
}
\end{itemize}

\nir{We draw the graph $\Gamma_3$, the weighting $w_3$, and the graphs $\Gamma_4^m$ for $d=8$ in Appendix \ref{app:ex}.}
\end{itemize}

}
\subsection{Proof of the Combinatorial Statement (Proposition \ref{prop:mult_FRS}){ for $G=\Sp_{2d}$}.}\label{ssec:Comb.stat.sp}

\nir{Again, we only give a sketch of the argument.} We have $$\g=\mathfrak{sp}_{2d}=
\left\{\begin{pmatrix}
A & B \\
C & -A^t \\
\end{pmatrix} \in \Mat_{2d} \mid B=B^t,C=C^t\right\}.$$
\begin{itemize}
\item  Coordinates on $\g$:
\begin{itemize}
\item For a set $F$, we denote the collection of multisets of size 2 of $F$ by $F^{[2]}$. We have that $\left| F^{[2]} \right| = \binom{|F|+1}{2}$.
\item Let $L=\{1,\dots d\}$, and let $e'_{i,j}$ be the standard basis of $\Mat_{d}$,  as before. Let $I_0=L \times L\smallsetminus \{(d,d)\}$, let $I_{1}=\{(d,d)\}$, let $I_2=L^{[2]}\times \{-1\}$, and let $I_3=L^{[2]}\times \{1\}$.
\item Let $I:=J:=\bigcup_{n=0}^3 I_{n}$ and $I_{2,3}=I_{2} \cup I_3$.
\item For all $i,j\in L,$ let:
\begin{itemize}
\item $e^0_{(i,j)}={e'}_{(i,j)} - \frac{\tr e'_{(i,j)}}{d} I,$ if $(i,j) \in I_0$ and $e^0_{(d,d)}=I$.

\item $e_{(i,j)}:=
 \begin{pmatrix}
e^0_{(i,j)} & 0 \\
0 & -(e^0_{(i,j)})^t \\
\end{pmatrix}$
\item $e_{([i,j],-1)}:=
 \begin{pmatrix}
0 & e'_{(i,j)}+e'_{(j,i)} \\
0 & 0 \\
\end{pmatrix}$, where $[i,j]$ is the multiset consisting of $i$ and $j$.
\item $e_{([i,j],1)}:=e_{([i,j],-1)}^{t}$

\end{itemize}
\item The set $B=\{e_{s}\}_{s\in I}$ is a basis of $\g$.
\item
Let
$\{{\alpha'}_{s}\}_{s\in J}$ be the dual basis of $\mathfrak{g}^*$.
Define $\alpha_{(i,j)} \in \g^*$  by $$\alpha_{(i,j)}\left( \begin{pmatrix}
A & B \\
C & -A^{t} \\
\end{pmatrix} \right) =A_{i,j},$$ for $(i,j)\in I_0$  and
 $$\alpha_{(d,d)}\left(  \begin{pmatrix}
A & B \\
C & -A^{t} \\
\end{pmatrix}\right) =\tr(A).$$
Define also $\alpha_{s}:=\alpha'_{s}$ for $s \in I_{2,3}$.
The set $C:=\{\alpha_{s}\}_{s\in J}$ is a co-ordinate system on $\g.$
\item Let $\Gamma_0=(I,J,S_0):=\Gamma_{B,C}$. We have
$$S_0= \bigcup_{i=1}^6 S_0^i,$$  where
\begin{itemize}
\item $S_0^1 \subset I_0^{(2)} \times I_0$ is as $S_0$ in the type $A$ case.
\item $S_0^2=\{(\{(s,1),(s,-1)\},(d,d))|s \in L^{[2]}\}$.
\item $S_0^3=\{(\{(i,j),([j,l],-1)\},([i,l],-1)) \mid i,j,l \in L,(i,j) \in I_0\}$.
\item $S_0^4=\{(\{(i,j),([i,l],1)\},([j,l],1)) \mid i,j,l \in L,(i,j) \in I_0\}$.
\item $S_0^5=\{(\{(d,d),s\},s)|s\in I_{2,3}\}$.
\item $S_0^6\subset (I_{2,3})^{(2)}\times I_0$.
\end{itemize}
\end{itemize}
\item Splitting to simpler cases.
\begin{itemize}
\item
Define a weighting $w_{0}:I \to \Z$ by $w_0|_{I_0}=1, w_0|_{I-I_0}=0. $ Let $\Gamma_1=(I,J,S_1):=\gr_{w_{0}}(\Gamma_0)$. Since, for every element $(\left\{ \alpha,\beta \right\}, \gamma)\in S_0^6$, there is an element $(\left\{ \alpha ',\beta '\right\},\gamma)\in S_0^1$ and higher weight, all edges in $S_0^6$ disappear in $\Gamma_1$. Using similar arguments for $S_0^5$, we get that
$$S_1= \bigcup_{i=1}^4 S_0^i.$$
\item Let $M=\{0,1,2,3\}$, and let $\Gamma_2= \mathcal{L}(\Gamma_{1}):=(I \times M,J,S_{1} \times M)$. Define $w_2: I\to \Z_{\geq 0}^M$ by: $$w_2(s)(i)=
\begin{cases}
\delta_{i,0} & \text{if } s\in L\times L \\
3\delta_{i,1}+2\delta_{i,3} & \text{if } s\in I_{2}  \\
3\delta_{i,2}+2\delta_{i,3} & \text{if } s\in I_{3}\\
\end{cases}.$$
Let $\Gamma_3:=\gr_{w_{2}}(\Gamma_2).$ We can decompse $\Gamma_3$ into a disjoint union\footnote{by ``disjoint union" of \pg s $(I_1,J_1,S_1)$ and $(I_2,J_2,S_2)$ we mean the triple of disjoint unions $(I_1 \cup I_2,J_1 \cup J_2,S_1 \cup S_2)$ } of \pg s $\Gamma_4^1,\Gamma_4^2,\Gamma_4^3,\Gamma_4^4$ such that $\Gamma^{i}_4\cong(I,J,S^{i}_0)$, for $i=1,2,3,4$.
So, based on the type $A$ case and the fact that $\Gamma^{3}_4 \cong \Gamma^{4}_4$, it is enough to show that $(\Gamma^{2}_4,W)$ is \FRS\ whenever $\dim\, W \geq 2$ and $(\Gamma^{3}_4,W)$ is \FRS\ whenever $\dim\, W \geq 8.$  The first statement is obvious, so it remains to prove the second.

\item Identify $I_2$ with $L^{[2]}$. Let $I_{0,2}= I_0\cup I_2$ and  $S_5:=S_4^{3}$. We can replace $\Gamma^{3}_4$ with $\Gamma_5:=(I_{0,2},I_2,S_5).$
\end{itemize}

\item Reduction to a tree
\begin{itemize}
\item Define a weighting $w_{5}:I_{0,2} \to \Z$ by $$w_{5}(({i,j}))=w_{5}([{i,j]})=-3^{|i-j|}.$$
\item  Let $\Gamma_6=(I_{0,2},I_2,S_6):=\gr_{w_{5}}(\Gamma_5)$. We get
$$S_6=  \left\{ \left( \{(i,j),[j,l]\},[i,l] \right) \mid  i,j,l \in L,  \left |j-\frac{i+{l}}{2} \right| <1+\delta_{i,d}\delta_{l,d} \right\} .$$
\item Define a weighting $w_{6}:I_{0,2} \to \Z$ by $w_6((i,j))=-i-j$ and $w_{6}([i,j])=0$.
\item  Let $\Gamma_7=(I_{0,2},I_2,S_6):=\gr_{w_{6}}(\Gamma_6)$. We get  that $\Gamma_7$ is the \pg\ attached to the (bipartite) graph $\Gamma_8=(I_{0,2},E)$, where
$$E=  \left\{ \{(i,j),[j,l]\} \mid i,j,l \in L,  j=\left\lfloor\frac{i+{l}}{2} \right\rfloor-\delta_{i,n}\delta_{l,n},i\leq l \right\}. $$
\item It is easy to see that $\Gamma_{8}$ is a forest of maximal degree $\leq 3$.
\end{itemize}

\end{itemize}

We draw the graph $\Gamma_8$ for $d=7$ in Appendix \ref{app:ex}.

\section{Push Forward of Smooth Measures} \lbl{sec:push.forward}
\setcounter{theorem}{0}

In this section, $k$ will denote a finitely generated field of characteristic 0, and $F$ will denote a local non-archimedean field of characteristic 0. If $X$ is a smooth variety, we denote the line bundle of top differential forms by $\Omega_X$. Similarly, if $f:X \rightarrow Y$ is a smooth map, we denote the line bundle on $X$ of relative top differential forms by $\Omega_{X/Y}$. {More generally, for singular varieties and non-smooth maps, we use $\Omega_X$ (or $\Omega_{X/Y}$) for the shifted (relative) dualizing complex; see Appendix \ref{ssec:G.duality}}

\subsection{Measures and Forms} \lbl{subsec:measure.forms}

Recall that a measure $\mu$ on a Borel space $X$ is said to be absolutely continuous with respect to another measure $\nu$ on $X$ if, for any Borel subset $A \subset X$ such that $\nu(A)=0$, we have $\mu(A)=0$. In this case, the Radon--Nikodym theorem says that there is $f\in L_1(X,\nu)$ such that, for every subset $B \subset X$, we have $\mu(B)=\int_B \mu$; such $f$ is called the density of $\mu$ with respect to $\nu$. If $\nu$ is absolutely continuous with respect to $\mu$ and $\mu$ is absolutely continuous with respect to $\nu$, we say that $\mu$ and $\nu$ are in the same measure class. We first construct, for every smooth algebraic variety $X$ over a non-archimedean local field $F$, a canonical measure class.

\begin{definition} Suppose that $X$ is a smooth algebraic variety over a non-archimedean local field $F$, and that $\omega$ is a rational top differential form on $X$. We define a measure {$| \omega |_F$} on the analytic variety $X(F)$ as follows. Given a compact open set $U \subset X(F)$ and an analytic diffeomorphism $\phi$ between an open subset $W \subset F^n$ and $U$, write $$\phi^* \omega=g dx_1 \wedge \ldots \wedge dx_n,$$ for some $g: W \rightarrow F$, and define
\[
| \omega |_F (U)=\int_{W} |g|_F d \lambda,
\]
where $|g |_F$ is the normalized absolute value on $F$ and $\lambda$ is the standard additive Haar measure on $F^n$. Note that this definition is independent of the diffeomorphism $\phi$. There is a unique extension of the assignment $| \omega |_F$ to a non-negative (possibly infinite) Borel measure on $X(F)$, which we also call $| \omega |_F$. If the field $F$ is fixed, we omit it from the notation.
\end{definition}

The following Lemma and Proposition are evident:

\begin{lemma} \lbl{lem:measures.out.of.forms} Suppose that $X$ is a smooth variety over a non-archimedean local field $F$, and that $\omega_1,\omega_2$ are two top forms on $X$. Then \begin{enumerate}
\item If $\omega_1$ is regular, then $| \omega_1 |$ is a Radon measure, i.e., for any compact subset $A \subset X(F)$, we have $| \omega_1 |(A) < \infty$.
\item If both $\omega_1$ and $\omega_2$ are regular, then the measures $| \omega_1 |$ and $| \omega_2 |$ are absolutely continuous with respect to each other.
\item If $\omega_1$ and $\omega_2$ are regular and nowhere vanishing, then the density of $| \omega_2 |$ with respect to $| \omega_1 |$ is a locally constant function.
\item If $\omega_1$ has a pole at $x \in X(F)$, then $| \omega_1 |(A)=\infty$ for every open set $A$ containing $x$.
\end{enumerate}
\end{lemma}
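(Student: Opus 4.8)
All four assertions are local on $X(F)$, so the first move is to fix, for each point, a \emph{compact} analytic chart: an analytic diffeomorphism $\phi\colon W\to U$ from a compact open $W\subseteq F^{n}$ (say $W\cong O^{n}$) onto a compact open neighbourhood $U$ of the point in $X(F)$; such charts exist because $X$ is smooth, so $X(F)$ is an analytic $F$-manifold. On such a chart write $\phi^{*}\omega=g\,dx_{1}\wedge\cdots\wedge dx_{n}$; if $\omega$ is \emph{regular} then $g$ is analytic, hence continuous, hence bounded on the compact $W$, and on $U$ one has $|\omega|_{F}=|g|_{F}\cdot\lambda$ with $\lambda$ the Haar measure transported from $W$. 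For part (1): cover the compact $A$ by finitely many such charts $U_{1},\dots,U_{m}$; then $|\omega_{1}|(A)\le\sum_{i}\bigl(\sup_{W_{i}}|g_{i}|_{F}\bigr)\lambda(W_{i})<\infty$ since each $g_{i}$ is bounded and each $W_{i}$ has finite Haar measure.

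For part (3): on any chart $\phi^{*}\omega_{1}=g_{1}\,dx$ and $\phi^{*}\omega_{2}=g_{2}\,dx$ with $g_{1},g_{2}$ analytic and nowhere vanishing (nonvanishing of the regular form pulls back to nonvanishing of $g_{i}$ because $\phi$ is a diffeomorphism); and for a nowhere-vanishing analytic $g$ the function $|g|_{F}$ is locally constant, since near $w_{0}$ one has $|g(w)-g(w_{0})|_{F}<|g(w_{0})|_{F}$ by continuity, whence the ultrametric inequality forces $|g(w)|_{F}=|g(w_{0})|_{F}$. So each $|\omega_{i}|_{F}$ is locally a constant multiple of $\lambda$, and the density of $|\omega_{2}|$ with respect to $|\omega_{1}|$, namely $|g_{2}/g_{1}|_{F}$ on the chart, is locally constant. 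For part (2) I would use the elementary fact that a proper closed subvariety $Z\subsetneq X$ is $|\omega|$-null for every regular $\omega$: in each chart $Z(F)$ lies in a countable union of images of analytic maps from open subsets of $F^{m}$ with $m<n$, hence is $\lambda$-null, hence $|\omega|$-null. Apply this to $Z=Z(\omega_{1})\cup Z(\omega_{2})$, the zero divisors, which are proper provided the $\omega_{i}$ are nonzero on every connected component (as is implicit in the statement; on a component where both vanish, both measures vanish). If $|\omega_{1}|(A)=0$, then on $A\setminus Z$ we have $|\omega_{1}|=|g_{1}|_{F}\lambda$ with $|g_{1}|_{F}>0$ in charts, so $A\setminus Z$ is $\lambda$-null there and $|\omega_{2}|(A\setminus Z)=0$, while $|\omega_{2}|(A\cap Z)\le|\omega_{2}|(Z)=0$; thus $|\omega_{2}|(A)=0$, and by symmetry the two measures are in the same class.

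For part (4): choose algebraic étale coordinates $t_{1},\dots,t_{n}$ centred at $x$; they give an analytic identification of a neighbourhood $V$ of $x$ with a polydisc under which $|dt_{1}\wedge\cdots\wedge dt_{n}|_{F}=\lambda$, and $|\omega_{1}|=|R|_{F}\lambda$ on $V$, where $R=P/Q$ is a rational function on $X$ with $P,Q\in\mathcal{O}_{X,x}$ coprime and $Q(x)=0$ (this is what ``pole at $x$'' means; $\mathcal{O}_{X,x}$ is a UFD as $X$ is smooth). Since any neighbourhood of $x$ contains a small polydisc $A\subseteq V$, it suffices to show $\int_{A}|R|_{F}\,d\lambda=\infty$. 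I would reduce to the monomial case, either by pulling back along an embedded resolution of the polar divisor of $\omega_{1}$ — after which the density is $\prod_{i}|x_{i}|_{F}^{a_{i}}$ with some $a_{i}\le-1$, and the claim is the immediate $\int_{|x_{1}|_{F}\le\epsilon}|x_{1}|_{F}^{a_{1}}\,d\lambda(x_{1})=\infty$ — or, staying on $X$, by a linear change of the $t_{i}$ and Weierstrass preparation in the ring of analytic germs at $x$, so that $Q=u\cdot(t_{n}^{d}+c_{d-1}(t')t_{n}^{d-1}+\cdots+c_{0}(t'))$ with $u$ a unit, $d\ge1$, $c_{i}(0)=0$, and $t'=(t_{1},\dots,t_{n-1})$. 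In the latter picture, for $t'$ outside the proper subvariety $\{\operatorname{Res}_{t_{n}}(P,Q)=0\}$ the polynomials $P(t',\cdot)$ and $Q(t',\cdot)$ share no root, all $d$ roots $\beta_{j}(t')$ of $Q(t',\cdot)$ lie in a small ball, and near any fixed root $\beta$ one has $|R|_{F}\ge c\,|t_{n}-\beta|_{F}^{-1}$ with $c>0$; since $\int_{|s|_{F}\le\epsilon}|s|_{F}^{-1}\,d\lambda(s)=\infty$, the inner integral over $t_{n}$ diverges for a positive-measure set of $t'$, and Tonelli's theorem gives $\int_{A}|R|_{F}\,d\lambda=\infty$.

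\textbf{Main obstacle.} Parts (1)--(3) are routine ultrametric bookkeeping and carry no real difficulty. The only point with content is part (4): beyond the one-variable estimate $\int_{|s|_{F}\le\epsilon}|s|_{F}^{-1}=\infty$, the one subtlety is that the numerator $P$ may itself vanish near $x$, which is dealt with by the resultant/genericity argument above, or circumvented entirely by reducing to a normal-crossings polar divisor via resolution of singularities.
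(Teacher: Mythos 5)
The paper gives no proof---the lemma is declared ``evident''---so for parts (1) through (3) there is nothing to compare against. Your arguments there (boundedness of the chart density $|g|_F$ on a compact, the ultrametric local constancy of $|g|_F$ for nonvanishing analytic $g$, and the nullity of proper closed subvarieties for the measures attached to regular forms) are standard and correct.

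Part (4), however, has a real gap, and in fact as a blanket assertion the claim does not hold over a non--algebraically-closed local field. Both of your reductions---pulling back to an embedded resolution, or Weierstrass preparation---silently require the polar locus (or the roots of $Q$) to be approximated by $F$-points near $x$, and this can fail. Take $X=\mathbb{A}^2_{\mathbb{Q}_p}$ with $p\equiv 3\pmod 4$ and $\omega_1=\frac{t_1\,dt_1\wedge dt_2}{t_1^2+t_2^2}$. Since $t_1^2+t_2^2$ is irreducible in $\mathcal{O}_{X,0}$ and does not divide $t_1$, this form has a pole at the origin; but $-1$ is not a square in $\mathbb{Q}_p$, so $|t_1^2+t_2^2|_p=\max(|t_1|_p,|t_2|_p)^2$ on $O^2$, and a direct summation of the density $|t_1|_p\cdot\max(|t_1|_p,|t_2|_p)^{-2}$ shows $|\omega_1|(O^2)<\infty$. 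On the blow-up of the origin the only polar component of $\pi^*\omega_1$ is the strict transform $\{1+v^2=0\}$, which has no $\mathbb{Q}_p$-points, so the monomial model ``$\prod|x_i|^{a_i}$ with some $a_i\le-1$'' is valid over $\overline{\mathbb{Q}_p}$ but never appears in an $F$-rational chart, and the intended one-variable divergence $\int|s|_F^{-1}=\infty$ never occurs. Your Weierstrass route fails at the same point: the roots $\beta_j=\pm i t_2$ lie outside $F$, and for $t_1\in F$ one has $|t_1-\beta_j|_{\overline F}=\max(|t_1|,|t_2|)\ge|t_2|$, so $t_1$ never approaches a root and the inner $t_1$-integral stays bounded. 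To make part (4) hold one must strengthen the hypothesis---for instance, that $x$ is a smooth $F$-point of an $F$-rational irreducible component of the polar divisor---and that is precisely the situation in which the paper actually invokes the lemma (in the proof of Lemma \ref{lem:CM.finite.integral.implies.RS}, where $k'$ is first enlarged so that the pole locus of $\pi^*\omega$ has Zariski-dense rational points and the point is chosen generically on it); under such a hypothesis your argument, reduced to a single local coordinate, does close.
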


\begin{proposition} Let $X$ be a smooth variety over a non-archimedean local field. \begin{enumerate}
\item A measure $m$ on $X(F)$ is Schwartz if and only if it is a linear combination of measures of the form $f | \omega |$, where $f$ is a locally constant and compactly supported function on $X(F)$, and $\omega$ is a rational top differential form on $X$ with no zero or pole in the support of $f$.
\item A measure $m$ on $X(F)$ has continuous density if and only if for every point $x\in X(F)$ there is a neighborhood $U$ of $x$, a continuous function $f:U \rightarrow \mathbb{C}$, and a rational top differential form $\omega$ with no poles in $U$ such that $m= f | \omega |$.
\end{enumerate}
\end{proposition}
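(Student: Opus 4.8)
The plan is to reduce both claims to one algebraic--analytic fact together with two elementary observations, and then to globalize using the fact that $X(F)$ --- being locally compact, second countable and totally disconnected --- is a countable disjoint union of compact open subsets and admits locally constant partitions of unity.

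The key fact is the following: for every $x\in X(F)$ there is a rational top differential form $\omega_x$ on $X$, regular and nowhere vanishing in a Zariski neighborhood of $x$, together with a compact open analytic neighborhood $U_x$ of $x$, such that $|\omega_x|$ is a smooth measure on $U_x$. To obtain it, recall that $X$ is smooth over the perfect field $F$ and $x$ is an $F$-point, so one may choose $t_1,\dots,t_d\in F(X)$ regular at $x$ whose images form a basis of $\mathfrak m_x/\mathfrak m_x^2$; then $\omega_x:=dt_1\wedge\cdots\wedge dt_d$ generates $\Omega_X$ near $x$, and by the $p$-adic inverse function theorem $(t_1,\dots,t_d)$ restricts to an analytic diffeomorphism of a neighborhood of $x$ onto a ball in $F^d$ under which $|\omega_x|$ corresponds to a Haar measure. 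Manufacturing this chart is the one genuinely non-formal step --- a standard consequence of the $p$-adic inverse function theorem --- and is why the authors can call the proposition evident; the rest is bookkeeping with clopen sets and absolute values. I also record two routine observations: (i) any two nowhere-vanishing smooth measures defined near a point have a locally constant, nowhere-vanishing density with respect to one another (the transition diffeomorphism between the corresponding charts has unit Jacobian, hence locally constant absolute value); and (ii) if $h\in F(X)$ is regular at $x$, then $y\mapsto |h(y)|_F$ is continuous near $x$, and locally constant there if moreover $h(x)\ne 0$.

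For part (1): if $m=\sum_i f_i|\omega_i|$ with each $f_i$ locally constant and compactly supported and each $\omega_i$ free of zeros and poles on $\supp(f_i)$, then by (i) and the key fact $|\omega_i|$ is smooth on $\supp(f_i)$, so each $f_i|\omega_i|$ --- being locally a locally constant multiple of a smooth measure --- is Schwartz, and hence so is $m$. Conversely, given a Schwartz $m$, I would cover the compact set $\supp(m)$ by finitely many neighborhoods $U_{x_j}$ from the key fact, refine to a finite partition of $\supp(m)$ into compact open sets $K_j\subset U_{x_j}$, and write $m=\sum_j\mathbf 1_{K_j}m$. On $K_j$ both $\mathbf 1_{K_j}m$ and $|\omega_{x_j}|$ are nowhere-vanishing smooth measures, so by (i) one has $\mathbf 1_{K_j}m=f_j|\omega_{x_j}|$ with $f_j$ locally constant, compactly supported in $K_j$, and supported where $\omega_{x_j}$ has neither zero nor pole --- which is the required form.

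For part (2): the implication ``$\Leftarrow$'' reduces to showing the stated local condition forces $m$ to have continuous density near each point, since one can then glue the local smooth measures and continuous densities along a clopen partition of $X(F)$. So fix $x$, write $m=f|\omega|$ near $x$ with $\omega$ having no poles, shrink the neighborhood, choose local parameters $t_i$ at $x$, and put $\mu:=|dt_1\wedge\cdots\wedge dt_d|$, which is smooth near $x$ by the key fact. Then $h:=\omega/(dt_1\wedge\cdots\wedge dt_d)\in F(X)$ is regular at $x$, so $|\omega|=|h|\,\mu$ with $|h|$ continuous by (ii), whence $m=(f|h|)\,\mu$ has continuous density near $x$. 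For ``$\Rightarrow$'', write $m=g\mu$ globally with $\mu$ smooth and $g$ continuous; near a given $x$ the form $\omega:=dt_1\wedge\cdots\wedge dt_d$ has no poles and $|\omega|$ is smooth and nowhere vanishing, so by (i) $\mu=c|\omega|$ for a locally constant $c$, and $m=(gc)|\omega|$ with $gc$ continuous. The only point to watch throughout is that ``smooth'' measures are nowhere vanishing in charts, so at the zeros of $\omega$ one must push the vanishing of $|\omega|$ into the \emph{continuous} density (via $|h|$ in observation (ii)) rather than into the smooth measure.
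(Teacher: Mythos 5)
Your argument is correct, and it is essentially the natural filling-in of what the paper leaves unproved: the authors state both Lemma \ref{lem:measures.out.of.forms} and this Proposition with ``are evident'' and give no argument, so there is no paper proof to compare against. The skeleton you use --- the $p$-adic inverse function theorem to produce a local chart $(t_1,\dots,t_d)$ in which $|dt_1\wedge\cdots\wedge dt_d|$ is Haar, the observation that $|g|$ of a nonvanishing analytic $g$ is locally constant because it is continuous with values in the discrete set $q^{\mathbb Z}$, and a clopen refinement of a finite cover of the compact support --- is exactly what makes the statement ``evident'' in the authors' sense, and you spell it out cleanly. You also handle the deliberate asymmetry between the two parts (part (1) requires $\omega$ to have neither zeros nor poles on $\supp f$, so that $|\omega|$ is itself a smooth measure there; part (2) only forbids poles, and the zeros of $\omega$ are absorbed into the continuous factor $|h|$) in the right way.

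Two small remarks on exposition rather than substance. First, the phrase ``has unit Jacobian'' in observation (i) reads at first as ``Jacobian identically $1$,'' which is not what you need or mean; what you use is that the Jacobian of the transition map is a nonvanishing analytic function, hence has locally constant absolute value. It would be clearer to say so. Second, in the converse of part (1) you implicitly use that the support of a Schwartz measure is clopen (so that intersecting it with the compact open sets $U_{x_j}$ and taking set differences yields compact open pieces $K_j$); this follows from local constancy of the density, and is worth one sentence, since ``support of a measure is closed'' alone does not give it. Neither point affects correctness.
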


If $m$ is a Schwartz measure on $X(F)$ and $x\in X(F)$, we say that $m$ vanishes at $x$ if the restriction of $m$ to some neighborhood of $x$ is the zero measure.

\subsection{Main Theorem} \lbl{subsec:strong.push.forward}
If $\varphi : X \rightarrow Y$ is a smooth map between two smooth $k$-varieties, then $\Omega_{X/Y}$ is an invertible sheaf on $X$ and there is an isomorphism\footnote{There is some ambiguity about the sign in the definition we give. This will be irrelevant for us, since we will be interested only in the absolute values of the forms.} $\Omega_X \rightarrow \varphi ^* \Omega_Y \otimes \Omega_{X/Y}$ such that, for each extension $k \subset F$ and every point $x\in X(F)$, the isomorphism of fibers $\wedge ^{\dim X}T^*_xX \rightarrow \left( \wedge^{\dim Y} T^*_{\varphi(x)} Y \right)  \otimes  \left( \wedge^{\dim \varphi ^{-1} (\varphi(x))} T^*_x \varphi ^{-1} ( \varphi (x)) \right) $ is obtained from the short exact sequence of vector spaces
\[
0 \rightarrow T_x \varphi ^{-1} ( \varphi (x)) \rightarrow T_x X \stackrel{d \varphi}{\rightarrow} T_{\varphi (x)} Y \rightarrow 0.
\]
Thus, if $\omega_X\in \Gamma(X,\Omega_X)$ is a top form on $X$ and $\omega_Y\in \Gamma(Y,\Omega_Y)$ is a nowhere vanishing top form on $Y$, there is a unique element $\eta \in \Gamma(X,\Omega_{X/Y})$ such that the image of $\eta \otimes \varphi ^*\omega_Y$ under the isomorphism $\Omega_{X/Y} \otimes \varphi ^* \Omega_Y \rightarrow \Omega_X$ is equal to $\omega_X$. We denote this element $\eta$ by $\frac{\omega_X}{\varphi ^* \omega_Y}$. If $X \rightarrow Y$ is not smooth, we define a relative top form $\frac{\omega_X}{\varphi ^* \omega_Y}$ on the smooth locus $X^S$ of $\varphi$ to be $\frac{\omega_X | _{X^S}}{\psi ^* \omega_Y}$, where $\psi$ is the restriction of $\varphi$ to $X^S$.

Our goal in this section is to prove the following

\begin{theorem} \lbl{thm:push.forward.detailed} Let $\varphi : X \rightarrow Y$ be a map between smooth algebraic varieties defined over a finitely generated field $k$ of characteristic $0$, and let $x \in X(k)$. Then \begin{enumerate}
\item The following conditions are equivalent:
\begin{enumerate}
\item $\varphi$ is \FRS\ at $x$.
\item There exists a Zariski open neighborhood $x \in U \subset X$ such that, for any local field $F \supset k$ and any Schwartz measure $m$ on $U(F)$, the measure $(\varphi|_{U(F)})_*(m) $ has continuous density.
\item For any finite extension $k'/k$, there exists a local field $F' \supset k'$ and a non negative Schwartz measure $m$ on $X(F')$ that does not vanish at $x$ such that $(\varphi|_{X(F')})_*(m) $ has continuous density.
\end{enumerate}

\item Let $F \supset k$ be a local field. Denote the smooth locus of $\varphi$ by $X^S$. If $\varphi$ is \FRS, $\omega_X$ is a nowhere-vanishing regular top differential form on $X_F$, $\omega_Y$ is a regular and nowhere-vanishing top differential form on $Y_F$, and $f:X(F) \rightarrow \mathbb{C}$ is a Schwartz function, then the density of $\varphi_* \left( f| \omega_X| \right)$ with respect to $| \omega_Y|$ is given by
\[
\frac{\varphi_*(f| \omega_X|)}{| \omega_Y|}(y)=\int_{(\varphi ^{-1} (y) \cap X^S)(F)}f\cdot \left|\frac{\omega_X}{\varphi ^* \omega_Y} \arrowvert _{\varphi ^{-1} (y) \cap X^S}\right|.
\]
In particular, the integral in the right hand side converges.
\end{enumerate}
\end{theorem}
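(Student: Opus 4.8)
The plan is to prove Part~1 first, and to read off Part~2 along the way. Among the three conditions of Part~1, the implication (b)$\Rightarrow$(c) is immediate: after shrinking $X$ to the neighborhood $U$, pick a regular nowhere-vanishing top form $\omega_X$ near $x$ and set $m=\mathbf 1_K\,|\omega_X|$ for a small compact open $K\ni x$; for any local field $F'\supset k'$ this is a non-negative Schwartz measure on $X(F')$, supported in $U(F')$ and not vanishing at $x$, and its push-forward is $(\varphi|_{U(F')})_*m$, which has continuous density by~(b). So the substance of Part~1 is (a)$\Rightarrow$(b) and (c)$\Rightarrow$(a). For both directions, and for Part~2, we use the following description of the density, valid once $\varphi$ is \FRS: the relative smooth locus $X^S$ is a dense Zariski-open subset of the irreducible variety $X$ (the generic fiber is reduced, hence generically smooth), so $X\setminus X^S$ has measure zero in $X(F)$ and $\varphi_*(f|\omega_X|)=\psi_*\bigl((f|\omega_X|)|_{X^S(F)}\bigr)$ for the submersion $\psi:=\varphi|_{X^S}\colon X^S\to Y$. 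Proposition~\ref{prop:push.forward.smooth.map} applied to $\psi$ identifies the density of this measure with respect to $|\omega_Y|$ at a point $y$ with $\int_{(\varphi^{-1}(y)\cap X^S)(F)}f\cdot\bigl|\tfrac{\omega_X}{\varphi^*\omega_Y}\bigr|$, and Proposition~\ref{prop:intro.integration.RS}(1)---applied to the variety $\varphi^{-1}(y)$, which has rational singularities, with the top form $\tfrac{\omega_X}{\varphi^*\omega_Y}$ on its smooth locus and the compact open $\supp(f)\cap\varphi^{-1}(y)(F)$---shows the integral converges. Thus Part~2 follows as soon as we know that this density is continuous, which is exactly (a)$\Rightarrow$(b).

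For (a)$\Rightarrow$(b): by Corollary~\ref{cor:elkik} we may shrink $X$ so that $\varphi$ is \FRS\ everywhere, and since Schwartz measures are, locally, finite sums of measures $f|\omega_X|$ and continuity is a local condition on $Y(F)$, it suffices to show that $h(y):=\int_{(\varphi^{-1}(y)\cap X^S)(F)}f\cdot\bigl|\tfrac{\omega_X}{\varphi^*\omega_Y}\bigr|$ is continuous near each $y_0\in Y(F)$. The first step is to reduce to $\dim Y=1$: by Appendix~\ref{sec:cont.along.curves}, the function $h$ is motivic-constructible and hence continuous provided it is continuous along every curve in $Y$; since \FRS\ is preserved under the base change $X\times_Y C\to C$ along a curve $C\to Y$, and the fiber-integral function attached to $X\times_Y C\to C$ agrees with the pullback of $h$ up to a locally constant factor, we are reduced to the case where $Y$ is a smooth curve.

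With $Y$ a smooth curve and $t$ a local coordinate at $y_0$, the fiber $D:=\varphi^{-1}(y_0)=V(t\circ\varphi)$ is a Cartier divisor in the smooth variety $X$, reduced and with rational singularities. The second step replaces the missing simultaneous resolution by an embedded one: choose a log resolution $\rho\colon\tilde X\to X$ of the pair $(X,D)$. Since $\rho$ is an isomorphism off measure-zero closed subsets of $X(F)$ and of $\tilde X(F)$, the push-forward is unchanged if $\varphi$ is replaced by $\varphi\circ\rho$; on a chart of $\tilde X$ with coordinates $y_1,\dots,y_n$ we then have $t\circ\varphi\circ\rho=(\text{unit})\cdot y_1^{b_1}\cdots y_n^{b_n}$ and $\rho^*\omega_X=(\text{unit})\cdot y_1^{a_1}\cdots y_n^{a_n}\,dy_1\wedge\cdots\wedge dy_n$ with $a_i\ge 0$, so, after a partition of unity, continuity of $h$ at $y_0$ becomes continuity at the origin of the density of $\phi_*\bigl(|g\,y_1^{a_1}\cdots y_n^{a_n}\,dy_1\wedge\cdots\wedge dy_n|\bigr)$ for the monomial map $\phi(y)=y_1^{b_1}\cdots y_n^{b_n}$ and a Schwartz function $g$. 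This density can be computed explicitly as a finite sum of locally constant functions times $|c|^{\sigma}(\log|c|)^{m}$, and for general exponents it need not be continuous at $0$. \textbf{The main obstacle}, and the place where the rational-singularity hypothesis is actually consumed, is to show that it forces the exponents to be of the controlled type that guarantees continuity: by Elkik's theorem (Theorem~\ref{thm:elk}) together with a Grothendieck-duality (homological) argument comparing the dualizing complexes of $D$ and of $\tilde X$ (Appendix~\ref{ssec:G.duality}), rationality of the singularities of $D$ yields exactly the inequalities relating the discrepancies $a_i$ of the divisors over $D$ to their multiplicities $b_i$ in $D$ that make the monomial push-forward have continuous density.

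Finally, for (c)$\Rightarrow$(a): write the given measure near $x$ as $m=f|\omega_X|$ with $f$ Schwartz, $f\ge 0$, $f(x)\ne0$ and $\omega_X$ nowhere-vanishing near $x$, and pick $\omega_Y$ nowhere-vanishing near $y_0:=\varphi(x)$. A continuous density is locally bounded, so the density of $\varphi_*m$ at $y_0$ is finite. This first forces $\varphi$ to be flat at $x$---a jump in fiber dimension would make the density unbounded near $y_0$ (see \S\ref{subsec:continuous.to.FRS})---and then $\varphi^{-1}(y_0)$ is Cohen--Macaulay because $X$ and $Y$ are smooth. Applying the submersion formula from the first paragraph on a neighborhood of $x$ in $X^S$ where $f$ is bounded below, finiteness of the density at $y_0$ forces convergence of $\int_{K'\cap(\varphi^{-1}(y_0))^{sm}(F')}\bigl|\tfrac{\omega_X}{\varphi^*\omega_Y}\bigr|$ for small compact opens $K'\ni x$, and likewise over each finite extension furnished by condition~(c). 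By the converse direction of Proposition~\ref{prop:intro.integration.RS}, in the sharper local form established in \S\ref{subsec:continuous.to.FRS}, this forces $\varphi^{-1}(y_0)$ to be reduced with rational singularities at $x$; that is, $\varphi$ is \FRS\ at $x$. Combined with the first paragraph, this also completes the proof of Part~2.
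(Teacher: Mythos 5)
Your proposal follows the same route as the paper: reduce to a submersion on the relative smooth locus $X^S$ to obtain the fiber-integral density formula, reduce continuity to curves via constructibility (Appendix~\ref{sec:cont.along.curves}), then pass through an embedded resolution to a monomial local model whose exponents are controlled by the Grothendieck-duality argument of Proposition~\ref{prop:vanishing.form}, and for (c)$\Rightarrow$(a) first extract flatness from boundedness of the density and then feed convergence of the fiber integrals into Corollary~\ref{lem:Gorenstein.finite.integral.implies.RS}. This is the paper's proof. One small caution worth flagging: after the base change $X\times_Y C\to C$ the total space is in general no longer smooth (it is only Gorenstein with rational singularities, by Elkik's theorem), so your phrase ``a Cartier divisor in the smooth variety $X$'' must be weakened to ``Gorenstein variety with rational singularities''---this is precisely why the paper formulates the curve case as the stronger Theorem~\ref{thm:yet.another} rather than keeping $X$ smooth throughout.
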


This Section is organized as follows: Subsections \ref{subsec:generalities} and \ref{subsec:integration.RS} contain general results about integration and rational singularities; the rest of the section is devoted to the proof of Theorem \ref{thm:push.forward.detailed}. The implication $(a) \implies (b)$, as well as part 2 of the theorem are proved in Subsection \ref{subsec:FRS.to.continuous}. The implication $(c) \implies (a)$ is proved in Subsection \ref{subsec:continuous.to.FRS}. The implication $(b) \implies (c)$ follows from the fact that any finitely generated field is contained in a local field.

\subsection{Generalities on push forwards} \lbl{subsec:generalities}

\begin{proposition} \lbl{prop:push.forward.smooth.map} Let $\varphi: X \rightarrow Y$ be a smooth map between smooth varieties defined over a non-archimedean local field $F$. \begin{enumerate}
\item \label{prop:push.forward.smooth.map:1} If $m$ is a Schwartz measure on $X(F)$, then $\varphi_*m$ is a Schwartz measure on $Y(F)$.
\item  \label{prop:push.forward.smooth.map:2} Assume that $\omega_X$ and $\omega_Y$ are top forms on $X$ and $Y$ respectively, that $\omega_Y$ is nowhere vanishing, and that $f$ is a Schwartz function on $X(F)$. Then the measure $\varphi_*( f| \omega_X |)$ is absolutely continuous with respect to $| \omega_Y |$, and its density at a point $y\in Y(F)$ is equal to $\int_{\varphi ^{-1} (y)(F)} f \left| \frac{\omega_X}{\varphi ^* \omega_Y} \vert_{\varphi ^{-1}(y)}\right|$.
\end{enumerate}
\end{proposition}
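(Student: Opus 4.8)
The plan is to reduce both assertions to a computation in a single $p$-adic analytic coordinate chart and then apply Fubini's theorem. Since $\varphi$ is a smooth morphism of smooth $F$-varieties, the induced map on $F$-points $\varphi\colon X(F)\to Y(F)$ is a submersion of $p$-adic analytic manifolds; hence, by the $p$-adic implicit function theorem, every point $x\in X(F)$ has an analytic neighborhood $U$ with coordinates $(u_1,\dots,u_m,v_1,\dots,v_{n-m})$ (where $n=\dim X$, $m=\dim Y$) and $\varphi(x)$ has an analytic neighborhood $V$ with coordinates $(u_1,\dots,u_m)$ in which $\varphi|_U$ is the projection $(u,v)\mapsto u$. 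Choosing a locally constant partition of unity on $Y(F)$ subordinate to the images of such charts, and refining in the fibers, I would reduce everything to a statement inside one such chart.

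First I would treat the second assertion. In the chart above, write $\omega_X=g\,du_1\wedge\cdots\wedge du_m\wedge dv_1\wedge\cdots\wedge dv_{n-m}$ and $\varphi^*\omega_Y=h\,du_1\wedge\cdots\wedge du_m$, where $g$ is analytic and $h$ is analytic and nowhere vanishing (the latter because $\omega_Y$ is nowhere vanishing and $\varphi$ is smooth). By the definition of the quotient relative form, $\frac{\omega_X}{\varphi^*\omega_Y}$ restricts on the fiber $\{u=\mathrm{const}\}$ to $(g/h)\,dv_1\wedge\cdots\wedge dv_{n-m}$. Now $f|\omega_X|$ is the measure $|g|_F\,f\,d\lambda$ on $U$, and Fubini's theorem applied to the projection $(u,v)\mapsto u$ gives, for a Borel set $A\subset V$, that $\varphi_*(f|\omega_X|)(A)=\int_A\bigl(\int f(u,v)\,|g(u,v)|_F\,d\lambda_v\bigr)\,d\lambda_u$. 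Since $|\omega_Y|=|h|_F\,d\lambda_u$ on $V$ with $|h|_F$ nowhere zero, this exhibits $\varphi_*(f|\omega_X|)$ as absolutely continuous with respect to $|\omega_Y|$, with density at the point $y=(u)$ equal to $|h(u)|_F^{-1}\int f(u,v)\,|g(u,v)|_F\,d\lambda_v=\int_{\varphi^{-1}(y)(F)}f\left|\frac{\omega_X}{\varphi^*\omega_Y}\vert_{\varphi^{-1}(y)}\right|$. Reassembling via the partition of unity, and checking that the local density formulas agree on overlaps, yields the global statement.

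For the first assertion I would use the description of Schwartz measures as local combinations $f|\omega_X|$ with $f$ locally constant and compactly supported and $\omega_X$ a rational top form having no zero or pole on $\supp f$. Compact support of $\varphi_*m$ is immediate, since $\supp(\varphi_*m)\subseteq\varphi(\supp m)$ and continuous images of compact sets are compact. For local constancy of the density, I would refine the chart $U$ so that $f$, $|g|_F$, and $|h|_F$ are simultaneously constant on $U$---possible because $|g|_F$ and $|h|_F$ are locally constant away from the zeros of $g$ and $h$---whereupon the fiber integral from the second assertion becomes a constant times the Haar volume of a polydisc in the $v$-coordinates, hence independent of $u$. Thus $\varphi_*m$ has locally constant density with respect to a smooth measure, i.e. is again Schwartz.

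I do not expect a serious obstacle: the only substantive input is the $p$-adic implicit function theorem providing the local product structure, after which the content is Fubini's theorem together with routine bookkeeping to patch local formulas. The mildly delicate points are verifying that the density formula of the second assertion is independent of the chosen coordinates (which follows from the transformation law for $|\cdot|_F$ of top forms and the functoriality of the quotient form $\frac{\omega_X}{\varphi^*\omega_Y}$) and that the partition-of-unity gluing is consistent across chart overlaps.
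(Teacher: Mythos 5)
Your proposal is correct and follows essentially the same approach as the paper's proof: the paper also reduces to local coordinates where $\varphi$ is a projection (via a partition of unity and analytic change of coordinates) and then invokes Fubini's theorem. Your write-up simply elaborates the details that the paper leaves implicit.
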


\begin{proof} Using a partition of unity and an analytic change of coordinates, we can assume that $X$ and $Y$ are open subsets in affine spaces $\mathbb{A} ^n$ and $\mathbb{A} ^d$ respectively, and the map is a linear projection. In this case, the claim follows from Fubini's theorem.
\end{proof}

\begin{corollary} \lbl{cor:push.forward.dominant.map} Let $\varphi : X \rightarrow Y$ be a locally dominant map between smooth varieties defined over a non-archimedean local field $F$ of characteristic 0, and denote the smooth locus of $\varphi$ by $X^S$. \begin{enumerate}
\item\label{cor:push.forward.dominant.map:1} If $m_X$ is a Schwartz measure on $X(F)$ and $m_Y$ is a smooth nowhere vanishing measure on $Y(F)$, then $\varphi_*m_X$ is absolutely continuous with respect to $m_Y$.
\item Assume that $\omega_X$ and $\omega_Y$ are top forms on $X$ and $Y$ respectively, that $\omega_Y$ is nowhere vanishing, and that $f \geq 0$ is a Schwartz function on $X(F)$. Then the push forward $\varphi_*( f| \omega_X |)$ is absolutely continuous with respect to $| \omega_Y |$ and its density is represented by the function
\[
y \mapsto \int_{(X^S \cap \varphi ^{-1} (y))(F)} f \left| \frac{\omega_X}{\varphi ^* \omega_Y} \vert_{X^S \cap \varphi ^{-1}(y)}\right|
\]
from $Y(F)$ to $\mathbb{R}_{\geq 0}\cup \left\{ \infty \right\}$.
\end{enumerate}
\end{corollary}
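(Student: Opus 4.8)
The plan is to reduce the statement to the case of a smooth morphism, which is already handled by Proposition~\ref{prop:push.forward.smooth.map}. Write $\psi:=\varphi|_{X^S}\colon X^S\to Y$: since $X^S$ is an open subscheme of the smooth variety $X$ it is itself a smooth variety, $\psi$ is a smooth morphism, $\psi^{-1}(y)=X^S\cap\varphi^{-1}(y)$ for every $y$, and the relative form $\frac{\omega_X}{\psi^{*}\omega_Y}$ is exactly the restriction to $X^S$ of $\frac{\omega_X}{\varphi^{*}\omega_Y}$ in the sense of the discussion preceding Theorem~\ref{thm:push.forward.detailed}. I would first observe that $Z:=X\smallsetminus X^S$ contains no irreducible component of $X$: this is precisely where local dominance enters, via generic smoothness in characteristic $0$ (a dominant morphism of smooth varieties is smooth over a dense open subset of each component of its target, and a smooth scheme is the disjoint union of its irreducible components). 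Hence $Z(F)$ is nowhere dense and a null set for every smooth measure on $X(F)$; since every Schwartz measure --- and more generally every measure $f|\omega_X|$ with $f$ Schwartz and $\omega_X$ a top form having no pole on $\operatorname{supp} f$ --- is absolutely continuous with respect to smooth measures by Lemma~\ref{lem:measures.out.of.forms}, such a measure $m$ gives no mass to $Z(F)$, and therefore
\[
\varphi_*m \;=\; \psi_*\!\bigl(m|_{X^S(F)}\bigr)
\]
as measures on $Y(F)$. This identity reduces both parts of the corollary to statements about the smooth morphism $\psi$.

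The one feature not directly covered by Proposition~\ref{prop:push.forward.smooth.map} is that $m|_{X^S(F)}$ need not be compactly supported inside $X^S(F)$, as its support may accumulate on $Z(F)$. To handle this I would fix an increasing exhaustion $U_1\subseteq U_2\subseteq\cdots$ of $X^S(F)$ by compact open subsets (possible because $X^S(F)$ is locally compact, totally disconnected and $\sigma$-compact) and pass to a monotone limit. For part~(1), $\mathbb C$-linearity of push-forward and of the relation ``absolutely continuous with respect to $m_Y$'' lets us assume $m_X=f|\omega|$ with $f\geq 0$; then each $m_X|_{U_n}$ is a Schwartz measure on $X^S(F)$, so $\psi_*(m_X|_{U_n})$ is a Schwartz measure on $Y(F)$ by Proposition~\ref{prop:push.forward.smooth.map} \eqref{prop:push.forward.smooth.map:1}, hence absolutely continuous with respect to $m_Y$; letting $n\to\infty$, these measures increase to $\psi_*(m_X|_{X^S(F)})=\varphi_*m_X$, and a monotone limit of measures each absolutely continuous with respect to $m_Y$ is again absolutely continuous with respect to $m_Y$. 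For part~(2) I would run the same exhaustion: $f\mathbf{1}_{U_n}$ is a Schwartz function on $X^S(F)$, so Proposition~\ref{prop:push.forward.smooth.map} \eqref{prop:push.forward.smooth.map:2} gives that $\psi_*\bigl(f\mathbf{1}_{U_n}|\omega_X|\bigr)$ is absolutely continuous with respect to $|\omega_Y|$ with density
\[
y\longmapsto \int_{\psi^{-1}(y)(F)} f\,\mathbf{1}_{U_n}\left|\tfrac{\omega_X}{\psi^{*}\omega_Y}\vert_{\psi^{-1}(y)}\right|.
\]
As $n\to\infty$ the measures on the left increase to $\varphi_*(f|\omega_X|)$, so (picking increasing representatives) their densities increase $|\omega_Y|$-almost everywhere to a density of the limit, while by the monotone convergence theorem applied on each fibre the integrals on the right increase to $\int_{(X^S\cap\varphi^{-1}(y))(F)} f\,\bigl|\tfrac{\omega_X}{\varphi^{*}\omega_Y}\vert_{X^S\cap\varphi^{-1}(y)}\bigr|$. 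Comparing the two limits yields the asserted density formula.

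I expect the main obstacle to be exactly this passage to a non-compactly-supported push-forward: the reduction $\varphi_*m=\psi_*(m|_{X^S(F)})$ trades a compactly supported measure on $X(F)$ for one that need not be compactly supported on $X^S(F)$, so Proposition~\ref{prop:push.forward.smooth.map} cannot be quoted verbatim. The exhaustion-plus-monotone-convergence device handles this, and it is the reason the hypothesis $f\geq 0$ in part~(2) (and the reduction to positive $f$ in part~(1)) is used --- it makes the limiting argument monotone rather than requiring a uniform-integrability estimate. The remaining ingredients are routine or standard: that a proper closed subvariety is a null set for smooth measures, that $X^S(F)$ admits such an exhaustion, and that the relative form appearing in Proposition~\ref{prop:push.forward.smooth.map}, restricted from $X^S$, coincides with the one in the statement.
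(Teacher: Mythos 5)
Your proof is correct and follows essentially the same strategy as the paper's: reduce to the smooth morphism $\psi=\varphi|_{X^S}$, exhaust $X^S(F)$ by compact opens to make Proposition~\ref{prop:push.forward.smooth.map} applicable, and pass to the limit by monotone convergence. Your version is marginally more explicit than the paper's on why local dominance forces $(X\smallsetminus X^S)(F)$ to be a null set (via generic smoothness in characteristic~$0$); the paper uses a concrete exhaustion via the distance function on an affine chart rather than an abstract $\sigma$-compactness argument, but this is cosmetic.
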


\begin{proof} Without loss of generality, we can assume that $X$ is affine. Let $d$ be the metric on $X(F)$ induced from the $p$-adic metric on the affine space, let $X(F)^{S,n}$ be the (closed and open) set of points of $X^S(F)$ whose distance to $\left( X\smallsetminus X^S \right) (F)$ is greater than or equal to $\frac{1}{n}$, and let $g_n$ be the characteristic function of $X(F)^{S,n}$.

Since the measure $g_n m_X$ is a Schwartz measure on $X^S(F)$, Proposition \ref{prop:push.forward.smooth.map} \eqref{prop:push.forward.smooth.map:1} implies that $\varphi_*(g_n m_X)$ is absolutely continuous with respect to $m_Y$, so it has a density, which we denote by $h_n$. Note that $h_n$ is monotone non-decreasing in $n$ and that $\int_Y h_n m_Y \leq m_X(X(F)) < \infty$. By Lebesgue's Monotone Convergence Theorem, the limit $h=\lim_{n \rightarrow \infty} h_n$ exists and belongs to $L_1(Y(F),m_Y)$. Since $\varphi$ is locally dominant, $\varphi_*(m_X)=(\varphi |_{X^S})_*(m_X|_{X^S})$. Integrating against continuous functions and applying Lebesgue's Monotone Convergence Theorem again, we find that $\varphi_*(m_X)=(\varphi |_{X^S})_*(m_X|_{X^S})=h m_Y$, and the latter is absolutely continuous with respect to $m_Y$. The second statement follows from Proposition \ref{prop:push.forward.smooth.map} \eqref{prop:push.forward.smooth.map:2}.
\end{proof}

\subsection{Integration and Rational Singularities} \lbl{subsec:integration.RS}

\begin{lemma} \lbl{lem:RS.implies.finite.integral} Suppose that $V \subset \mathbb{A} ^n$ is a variety with rational singularities, that $U \subset V$ is an open smooth subset whose complement has codimension at least two, and that $\omega$ is a top form on $U$. Then the integral $\int_{U(F)\cap O^n} | \omega |$ is finite.
\end{lemma}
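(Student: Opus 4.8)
The plan is to pull $\omega$ back along a resolution of singularities of $V$ and use the rational singularity hypothesis to control the poles that could appear along the exceptional locus. We may assume $\dim V \ge 2$, since in dimension $\le 1$ rational singularities force $V$ to be smooth, hence $U = V$, and $|\omega|$ is Radon by Lemma \ref{lem:measures.out.of.forms}(1). By Hironaka's theorem, choose a resolution $\pi \colon \widetilde V \to V$ that is projective and restricts to an isomorphism over the smooth locus $V^{sm}$ (in particular over $U$, as $U \subseteq V^{sm}$). Two standard consequences of the rational singularity hypothesis will be used (see \S\S\ref{ssec:CM}--\ref{ssec:RS} and \S\S\ref{ssec:G.duality}): first, $V$ is normal and Cohen--Macaulay, so its dualizing sheaf $\Omega_V$ is $S_2$; second, $R\pi_* \mathcal{O}_{\widetilde V} = \mathcal{O}_V$, which by Grothendieck duality together with Grauert--Riemenschneider vanishing is equivalent to the identity of sheaves $\pi_* \Omega_{\widetilde V} = \Omega_V$.

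The first step is to extend $\omega$ to a regular top form on $\widetilde V$. Since $U \subseteq V^{sm}$, the form $\omega$ is a regular section of $\Omega_V$ over $U$; because $\Omega_V$ is $S_2$ and $V \smallsetminus U$ has codimension $\ge 2$, it extends uniquely to $\bar\omega \in \Gamma(V, \Omega_V)$. Applying $\pi_* \Omega_{\widetilde V} = \Omega_V$, the section $\bar\omega$ corresponds to a regular section $\widetilde\omega \in \Gamma(\widetilde V, \Omega_{\widetilde V})$; since $\pi$ is an isomorphism over the dense open $U$ and $\widetilde V$ is smooth and irreducible, $\widetilde\omega$ and $\pi^*\bar\omega$ coincide as rational forms, and in particular $\widetilde\omega|_{\pi^{-1}(U)} = \pi^*\omega$.

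The second step is the $p$-adic estimate. As $\widetilde\omega$ is a regular top form on the smooth variety $\widetilde V$, the measure $|\widetilde\omega|$ is Radon (Lemma \ref{lem:measures.out.of.forms}(1)). The set $K := V(F) \cap O^n$ is compact, and since $\pi$ is proper its preimage $\widetilde K := \pi^{-1}(K) \subseteq \widetilde V(F)$ is compact, so $|\widetilde\omega|(\widetilde K) < \infty$. Finally, $\pi$ restricts to an analytic diffeomorphism $\pi^{-1}(U(F)) \xrightarrow{\sim} U(F)$ carrying $\pi^*\omega = \widetilde\omega|_{\pi^{-1}(U)}$ to $\omega$; by the change-of-variables invariance built into the definition of $|\cdot|$,
\[
\int_{U(F) \cap O^n} |\omega| \;=\; \int_{\pi^{-1}(U(F)\cap O^n)} |\widetilde\omega| \;\le\; |\widetilde\omega|(\widetilde K) \;<\; \infty .
\]

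The one genuinely substantive point is the extension in the first step: a priori $\pi^*\bar\omega$ is only a rational form on $\widetilde V$ and could acquire a pole along the exceptional divisor, and it is precisely the rational singularity hypothesis (in the form $\pi_*\Omega_{\widetilde V} = \Omega_V$, equivalently non-negativity of the discrepancies of $\pi$ relative to $\bar\omega$) that rules this out. The remaining ingredients — Hartogs-type extension across a codimension-$\ge 2$ set for the $S_2$ sheaf $\Omega_V$, properness of $\pi$ on $F$-points, and the behaviour of $|\cdot|$ under analytic diffeomorphisms — are routine.
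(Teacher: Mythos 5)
Your proof is correct and follows essentially the same route as the paper: pass to a strong resolution, show that $\omega$ extends to a regular top form on the resolution using the rational singularity hypothesis, and then bound the integral by the (finite) measure of a compact set. The paper packages the extension step as a single cited equivalence (Proposition \ref{prop:equviv_rat}, condition (\ref{prop:equviv_rat:5}')), whereas you unpack it into its two ingredients — Hartogs-type extension of a section of the $S_2$ sheaf $\Omega_V$ across the codimension-$\ge 2$ complement of $U$ (which is Corollary \ref{cor:codim.2.in.CM}(2)), followed by $\pi_*\Omega_{\widetilde V}=\Omega_V$ via Grauert--Riemenschneider and duality — but the substance is identical.
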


\begin{proof} Let $\widetilde{V} \rightarrow V$ be a strong resolution of singularities (see Subsection \ref{sssec:res_sing}), and let $i: U \rightarrow V$ be the inclusion. Using Proposition \ref{prop:equviv_rat} (\ref{prop:equviv_rat:5}') and the assumption, we find that there is a top form $\eta \in \Gamma(\widetilde{V},\Omega_{\widetilde{V}})$ such that $\pi_*\eta |_{\pi ^{-1} (U)}=\omega$. In particular,

\begin{equation} \lbl{eq:int.resolution}
\int_{U(F)\cap O^n} d| \omega |=\int_{\pi ^{-1} (U(F)\cap O^n)}d| \eta | = \int_{\pi ^{-1} (V(F)\cap O^n)} d| \eta |,
\end{equation}
where the second equality is because $\pi ^{-1} (V(F) \smallsetminus U(F))$ has positive codimension. But the last integral in (\ref{eq:int.resolution}) is over a compact set, so it is finite.
\end{proof}

\begin{corollary} \label{cor:RS.implies.finite.integral} Let $X$ be a Gorenstein variety with rational singularities over a non-archimedean local field of characteristic 0, and let $\omega$ be a top differential form on $X^{sm}$. For any $A \subset X(F)$, define
\[
m(A)= \int_{A\cap X^{sm}(F)} | \omega |.
\]
Then $m$ is a Radon measure on $X(F)$, i.e., the measure of each compact subset is finite.
\end{corollary}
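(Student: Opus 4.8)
The plan is to recognize $m$ as the extension by zero to $X(F)$ of the Borel measure $|\omega|$ on the open analytic set $X^{sm}(F)$; since $X(F)\smallsetminus X^{sm}(F)$ is closed, this is a genuine non-negative Borel measure, and the only point to establish is that it is finite on compact sets. I would use that $X$ is of finite type, so that $X(F)$ is locally compact, second countable, and admits a basis of compact open sets; it therefore suffices to produce, for each $x\in X(F)$, one compact open neighbourhood of finite $m$-measure, and then to cover an arbitrary compact set by finitely many of these.

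For a fixed $x$, I would choose an affine open $V\subseteq X$ with $x\in V(F)$ together with a closed embedding $V\hookrightarrow\mathbb A^n$. The key remark is that a variety with rational singularities is normal: a resolution $\pi:\widetilde X\to X$ gives $\mathcal O_X\cong\pi_*\mathcal O_{\widetilde X}$ with $\widetilde X$ smooth, so $\mathcal O_X$ is integrally closed in its total ring of fractions. Hence $V$ is normal, regular in codimension $1$, and $V\smallsetminus V^{sm}=V\smallsetminus(X^{sm}\cap V)$ has codimension at least $2$ in $V$. Moreover $V$ retains rational singularities (this is a local property), and $\omega$ restricts to a regular top form on $V^{sm}$, so the hypotheses of Lemma \ref{lem:RS.implies.finite.integral} are in place except that the compact set over which we integrate must lie in $O^n$.

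To arrange that, given a compact open neighbourhood $K\subseteq V(F)$ of $x$, I would pick $c\in F^\times$ with $K\subseteq(c^{-1}O)^n$ (possible since $K$ is bounded in $F^n$) and apply the linear automorphism $(y_i)\mapsto(cy_i)$ of $\mathbb A^n$, which carries $V$ isomorphically onto a closed subvariety $V'\subseteq\mathbb A^n$ — preserving rational singularities, the smooth locus, and codimensions — and sends $(c^{-1}O)^n$ onto $O^n$. Applying Lemma \ref{lem:RS.implies.finite.integral} to $V'$, to $(V')^{sm}$, and to the transported form $\omega'$, and using that the $p$-adic integral $|\,\cdot\,|$ transforms correctly under isomorphisms, we get $m(K)\le\int_{(V')^{sm}(F)\cap O^n}|\omega'|<\infty$. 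This exhibits a finite-measure compact neighbourhood of every point of $X(F)$, so $m$ is Radon.

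The substance of the argument lies entirely in matching the hypotheses of Lemma \ref{lem:RS.implies.finite.integral}: the only real (and minor) obstacle is the normality of varieties with rational singularities, which supplies the codimension-$\ge 2$ bound on $X\smallsetminus X^{sm}$ for free; the dilation needed to land inside $O^n$, and the reduction of ``Radon'' to ``finite on a neighbourhood of each point'' on a locally compact second-countable space, are routine.
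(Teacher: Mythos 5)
Your proposal is correct and takes essentially the route the paper intends: the corollary is stated without proof as an immediate consequence of Lemma \ref{lem:RS.implies.finite.integral}, and your write-up supplies the standard reduction (localize to an affine chart, invoke normality of rational singularities to get codimension $\geq 2$ for $X\smallsetminus X^{sm}$, and dilate into $O^n$). Note only that the Gorenstein hypothesis plays no role here — it is carried in the statement for use in the subsequent definition, not for this finiteness claim — so your omission of it is harmless.
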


Using Corollary \ref{cor:RS.implies.finite.integral}, we can generalize the notion of a Schwartz measure to Gorenstein varieties with rational singularities.

\begin{definition} Let $X$ be a Gorenstein variety with rational singularities over a non-archimedean local field $F$ of characteristic 0. \begin{enumerate}
\item If $\omega$ is a top differential form on $X^{sm}$, denote the measure $m$ from Corollary \ref{cor:RS.implies.finite.integral} by $| \omega |$.
\item A measure $m$ on $X(F)$ is called {\emph{smooth}} if, for any point $x\in X(F)$, there is a Zariski neighborhood $U$ of $x$ such that the restriction of $m$ to $U(F)$ is equal to $f| \omega |$, where $f:X(F) \rightarrow \mathbb{C}$ is a locally constant function, and $\omega$ is an invertible top differential form on $U\cap X^{sm}$.
\item A {\emph{Schwartz measure}} on $X(F)$ is a smooth measure with compact support.
\end{enumerate}
\end{definition}

\begin{lemma} \label{lem:push.forward.dominant.RS} Let $X$ be a Gorenstein variety with rational singularities over a non-archimedean local field $F$ of characteristic 0, let $Y$ be a smooth variety over $F$, and let $f:X \rightarrow Y$ be a locally dominant map. Denote the smooth locus of $\varphi$ by $X^S$. \begin{enumerate}
\item If $m_X$ is a Schwartz measure on $X(F)$ and $m_Y$ is a smooth nowhere vanishing measure on $Y(F)$, then $\varphi_*m_X$ is absolutely continuous with respect to $m_Y$.
\item Assume that $\omega_X$ and $\omega_Y$ are top forms on $X^{sm}$ and $Y$ respectively, that $\omega_Y$ is nowhere vanishing, and that $f \geq 0$ is a Schwartz function on $X(F)$. Then the push forward $\varphi_*( f| \omega_X |)$ is absolutely continuous with respect to $| \omega_Y |$ and its density is represented by the function
\[
y \mapsto \int_{(X^S \cap \varphi ^{-1} (y))(F)} f \left| \frac{\omega_X}{\varphi ^* \omega_Y} \vert_{X^S \cap \varphi ^{-1}(y)}\right|
\]
from $Y(F)$ to $\mathbb{R}_{\geq 0}\cup \left\{ \infty \right\}$.
\end{enumerate}
\end{lemma}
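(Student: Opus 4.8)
The plan is to reduce the whole statement to the already-established smooth case (Proposition \ref{prop:push.forward.smooth.map} and Corollary \ref{cor:push.forward.dominant.map}) by restricting to the smooth locus $X^S$ of $\varphi$, which is a smooth variety on which $\varphi$ is a smooth morphism. The argument will be a close variant of the proof of Corollary \ref{cor:push.forward.dominant.map}; the only genuinely new input is that on a Gorenstein variety with rational singularities the measure $|\omega|$ attached to a top form $\omega$ on the smooth locus is a Radon measure (Corollary \ref{cor:RS.implies.finite.integral}), which is what makes "Schwartz measure on $X(F)$" a meaningful notion in the first place and guarantees finite total mass.

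First I would record the geometric facts. Since $X$ is reduced and the ground field has characteristic $0$, the set $X^{sm}$ is dense and $X\smallsetminus X^{sm}$ is a proper closed subvariety; by the very construction of $|\omega|$ in Corollary \ref{cor:RS.implies.finite.integral}, every smooth measure on $X(F)$ assigns zero mass to $(X\smallsetminus X^{sm})(F)$, and every Schwartz measure on $X(F)$ has finite total mass. Because $\varphi$ is locally dominant, $\varphi|_{X^{sm}}$ is a locally dominant map of smooth varieties, hence generically smooth, so $X^S$ is dense in $X$ and $X^{sm}\smallsetminus X^S$ is a proper closed subvariety of $X^{sm}$; consequently $(X\smallsetminus X^S)(F)$ is a null set for every smooth (in particular every Schwartz) measure on $X(F)$. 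Therefore $\varphi_\ast m_X=(\varphi|_{X^S})_\ast\bigl(m_X|_{X^S(F)}\bigr)$, and $m_X|_{X^S(F)}$ is a smooth measure on the smooth variety $X^S$, on which $\varphi|_{X^S}$ is a smooth morphism.

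Next I would run the truncation argument from the proof of Corollary \ref{cor:push.forward.dominant.map}. As there, we may assume $X$ affine; fix the metric on $X(F)$ induced from an embedding into an affine space, let $X(F)^{S,n}\subset X^S(F)$ be the set of points at distance at least $1/n$ from $(X\smallsetminus X^S)(F)$, and let $g_n$ be its characteristic function. Each $X(F)^{S,n}$ is compact and contained in $X^S(F)$, so $g_n m_X$ is a Schwartz measure on the smooth variety $X^S$, and Proposition \ref{prop:push.forward.smooth.map}\eqref{prop:push.forward.smooth.map:1} gives that $(\varphi|_{X^S})_\ast(g_n m_X)$ is Schwartz, hence absolutely continuous with respect to $m_Y$ with some density $h_n$. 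The $h_n$ are monotone non-decreasing and $\int_{Y(F)}h_n\,m_Y\le m_X(X(F))<\infty$, so by the Monotone Convergence Theorem $h:=\lim_n h_n$ lies in $L_1(Y(F),m_Y)$; testing against continuous functions and invoking monotone convergence once more yields $\varphi_\ast m_X=(\varphi|_{X^S})_\ast(m_X|_{X^S(F)})=h\,m_Y$, proving part 1. For part 2 one applies the same reduction to the finite measure $f|\omega_X|$ (finite by Corollary \ref{cor:RS.implies.finite.integral}): since $f\ge 0$, the densities of $(\varphi|_{X^S})_\ast(g_n f|\omega_X|)$ given by Proposition \ref{prop:push.forward.smooth.map}\eqref{prop:push.forward.smooth.map:2} for the smooth map $\varphi|_{X^S}$ and the top form $\omega_X|_{X^S}$ increase to $y\mapsto\int_{(X^S\cap\varphi^{-1}(y))(F)}f\,\bigl|\tfrac{\omega_X}{\varphi^\ast\omega_Y}|_{X^S\cap\varphi^{-1}(y)}\bigr|$, and monotone convergence identifies this with the density of $\varphi_\ast(f|\omega_X|)$ with respect to $|\omega_Y|$, as asserted.

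The only mild subtlety — the point to be careful about rather than a deep obstacle — is that $m_X|_{X^S(F)}$ need not have compact support inside $X^S(F)$ even though $m_X$ is compactly supported in $X(F)$, which is precisely why the $g_n$-truncation plus Monotone Convergence Theorem is required instead of a bare application of Proposition \ref{prop:push.forward.smooth.map}. The singularities of $X$ itself enter only through the two facts established above: that $(X\smallsetminus X^S)(F)$ is a null set for smooth measures, and that form-measures on $X^{sm}$ are Radon (and of finite mass on compacts) when $X$ is Gorenstein with rational singularities.
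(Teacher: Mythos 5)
Your proof is correct, but it takes a genuinely different route from the paper's. The paper reduces to the smooth case by choosing a resolution of singularities $\pi:\widetilde X\to X$, picking a top form $\omega_{\widetilde X}$ on $\widetilde X$ that agrees with $\pi^*\omega_X$ on a dense open set, applying Corollary \ref{cor:push.forward.dominant.map} to the (smooth) source $\widetilde X$ and the map $\widetilde\varphi=\varphi\circ\pi$, and then verifying, via a dense open $V\subset Y$, that the resulting density integral over $\widetilde X^S\cap\widetilde\varphi^{-1}(y)$ agrees with the stated integral over $X^S\cap\varphi^{-1}(y)$. You instead work directly on $X$: you observe that on a Gorenstein variety with rational singularities the restriction of a smooth (resp.\ Schwartz) measure to $X^{sm}$ is again smooth (resp.\ Schwartz of finite total mass) in the ordinary sense, that $(X\smallsetminus X^S)(F)$ is a null set because $X^{sm}\smallsetminus X^S$ has positive codimension in the smooth locus, and then you re-run verbatim the $g_n$-truncation plus monotone convergence argument from the proof of Corollary \ref{cor:push.forward.dominant.map} with $X^S\subset X$ in place of $X^S$ inside a smooth $X$. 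Your route avoids invoking a new resolution and the accompanying bookkeeping of transferring the density formula between $\widetilde X$ and $X$; the paper's route is shorter because it outsources all the truncation work to the already-proved smooth case and only has to compare two integrals. One tiny imprecision in your write-up: the truncation set $X(F)^{S,n}$ is closed and open in $X(F)$ but need not be compact by itself — what is compact is its intersection with the support of $m_X$ (respectively of $f\lvert\omega_X\rvert$), which is what actually makes $g_n m_X$ (resp.\ $g_n f\lvert\omega_X\rvert$) a Schwartz measure on the smooth variety $X^S$; the paper's phrasing of the corresponding step in Corollary \ref{cor:push.forward.dominant.map} is careful about this, and yours should be too.
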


\begin{proof} The first statement follows from the second. In order to prove the second statement, choose a resolution of singularities $\pi : \widetilde{X} \rightarrow X$, denote $\widetilde{\varphi}=\varphi \circ \pi$, and let $\omega_{\widetilde{X}}$ be a top form that coincides with $\pi ^* \omega_X$ on an open dense set. Applying Corollary \ref{cor:push.forward.dominant.map} to the map $\widetilde{\varphi}$ and the forms $\omega_{\widetilde{X}}$ and $\omega_Y$, we get that the measure $\widetilde{\varphi}_* \left( (f \circ \pi) | \omega_{\widetilde{X}} | \right) =\varphi_*(f | \omega_ X|)$ is absolutely continuous with respect to $| \omega_Y|$ and its density at $y\in Y(F)$ is given by
\[
\int_{\widetilde{X}^S \cap \widetilde{\varphi} ^{-1} (y)} f \left| \frac{\omega_{\widetilde{X}}}{\widetilde{\varphi} ^* \omega_Y} \vert_{\widetilde{X}^S \cap \widetilde{\varphi} ^{-1}(y)}\right|.
\]

Let $U \subset X$ be an open dense set over which $\pi$ is an isomorphism. It follows that $\pi ^{-1}(U)$ is open and dense in $\widetilde{X}$, and that there is an open dense set $V \subset Y$ such that, for any $y\in V$, the set $U\cap \varphi ^{-1}(y)$ is dense in $\varphi ^{-1} (y)$ and the set $\pi ^{-1}(U) \cap \widetilde{\varphi} ^{-1} (y)$ is dense in $\widetilde{\varphi} ^{-1} (y)$. We get

\[
\int_{X^S \cap \varphi ^{-1} (y)} f \left| \frac{\omega_X}{\varphi ^* \omega_Y} \vert_{X^S \cap \varphi ^{-1}(y)}\right| = \int_{U \cap X^S \cap \varphi ^{-1} (y)} f \left| \frac{\omega_X}{\varphi ^* \omega_Y} \vert_{X^S \cap \varphi ^{-1}(y)}\right| =
\]

\[
=\int_{\pi ^{-1} \left( U \cap X^S \cap \varphi ^{-1} (y) \right) } (f\circ \pi) \left| \frac{\omega_{\widetilde{X}}}{\widetilde{\varphi} ^* \omega_Y} \vert_{\pi ^{-1} (U \cap X^S \cap \varphi ^{-1}(y))}\right| =
\]
\[
= \int_{\widetilde{X}^S \cap \widetilde{\varphi} ^{-1} (y)} (f\circ \pi) \left| \frac{\omega_{\widetilde{X}}}{\widetilde{\varphi} ^* \omega_Y} \vert_{\widetilde{X}^S \cap \widetilde{\varphi} ^{-1}(y)}\right|,
\]
proving (2).
\end{proof}

For the next lemma, recall that if $X$ is a Cohen--Macaulay variety then it has a (shifted) dualizing sheaf $\Omega_X$. The restriction of $\Omega_X$ to the smooth locus of $X$ is identified with the sheaf of top differential forms (see Appendix \ref{sec:alg.geom}).

\begin{lemma} \lbl{lem:CM.finite.integral.implies.RS} Let $X$ be a Cohen--Macaulay variety defined over a finitely generated field $k$, let $x\in X(k)$, and denote the smooth locus of $X$ by $X^{sm}$. Suppose that, for any finite extension $k'/k$ and any section $\omega$ of $\Omega_X$, there is a local field $F$ containing $k'$ and a non-negative Schwartz function $f\in \Sc(X(F))$ with $f(x) \neq 0$ such that the integral $\int_{X^{sm}(F)}f | \omega |$ is finite. Then $x$ is a rational singularity of $X$.
\end{lemma}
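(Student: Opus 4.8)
The plan is to prove the contrapositive: assuming $x$ is \emph{not} a rational singularity of $X$, I will exhibit, after some finite field extension, a section $\omega$ of $\Omega_X$ for which the integral $\int_{X^{sm}(F)} f\,|\omega|$ diverges for \emph{every} non-negative Schwartz function $f$ with $f(x)\neq 0$ and every local field $F$ containing the extension. The strategy mirrors the second (converse) half of Proposition \ref{prop:intro.integration.RS}: divergence of $p$-adic integrals of top forms on the smooth locus detects the failure of rational singularities for Cohen--Macaulay varieties, and here one only needs to localize that criterion at the point $x$.

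First I would reduce to a local statement. Since $X$ is Cohen--Macaulay, it has a dualizing sheaf $\Omega_X$, invertible on $X^{sm}$ and agreeing there with top differential forms. Choose a strong resolution of singularities $\pi:\widetilde X\to X$. By the standard characterization (Appendix \ref{ssec:RS}, cf. Proposition \ref{prop:equviv_rat}), for Cohen--Macaulay $X$ the point $x$ is a rational singularity if and only if the natural map $\pi_*\Omega_{\widetilde X}\to\Omega_X$ is an isomorphism in a neighborhood of $x$; equivalently, the Grauert--Riemenschneider-type inclusion $\pi_*\Omega_{\widetilde X}\subseteq\Omega_X$ is \emph{not} surjective at $x$ precisely when $x$ is not rational. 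So, replacing $X$ by an affine neighborhood of $x$ and embedding it in $\mathbb A^n$, the hypothesis ``$x$ is not a rational singularity'' supplies a section $\omega\in\Gamma(X,\Omega_X)$ whose pullback $\pi^*\omega$, viewed as a rational top form on $\widetilde X$, acquires a pole along some $\pi$-exceptional prime divisor $E_0$ passing over $x$ — this is exactly the obstruction to $\omega$ lying in $\pi_*\Omega_{\widetilde X}$. All of this is defined over a finite extension $k'$ of $k$, which we now fix.

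The heart of the argument is then a local computation at a general point of $E_0$. Work over a local field $F\supseteq k'$ such that $\pi^{-1}(x)(F)$ meets the smooth locus of the reduced exceptional divisor in a point $\tilde p$ lying only on $E_0$ (available after enlarging $F$ finitely, or by choosing $F$ with large residue field). In local analytic coordinates $(t,y_1,\dots,y_{n-1})$ at $\tilde p$ with $E_0=\{t=0\}$, we have $\pi^*\omega = t^{-a}\,u(t,y)\,dt\wedge dy_1\wedge\cdots\wedge dy_{n-1}$ with $a\geq 1$ and $u$ a unit. Since $\pi$ is proper, for any Schwartz $f$ on $X(F)$ with $f(x)\neq 0$ there is a compact neighborhood $K$ of $\tilde p$ in $\widetilde X(F)$ on which $f\circ\pi$ is bounded below by a positive constant, and $\int_{X^{sm}(F)} f\,|\omega| = \int_{\pi^{-1}(X^{sm})(F)} (f\circ\pi)\,|\pi^*\omega| \geq c\int_{K'} |t|_F^{-a}\,d\lambda$ for a sub-box $K'\subseteq K$; the latter integral diverges because $a\geq 1$ (the one-variable integral $\int_{|t|\leq 1}|t|^{-a}_F\,dt$ already diverges for $a\geq 1$). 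This contradicts the hypothesis, and the lemma follows.

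The main obstacle is the bookkeeping in the middle step: one must be careful that the single form $\omega$ produced from the failure of rationality genuinely forces divergence \emph{regardless of} $f$ (as long as $f(x)\neq 0$) and \emph{regardless of} the local field, rather than merely for some cleverly chosen measure; this is why we pick the exceptional component $E_0$ lying over $x$ and use properness of $\pi$ to transfer the lower bound $f(x)\neq 0$ to a uniform positive lower bound on a neighborhood of a point of $E_0$. The comparison of $\pi_*\Omega_{\widetilde X}$ with $\Omega_X$ in the Cohen--Macaulay (not necessarily Gorenstein) setting is handled by the Grothendieck-duality facts collected in Appendix \ref{ssec:G.duality}, so no new input is needed there.
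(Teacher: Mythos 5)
Your strategy is the paper's strategy — identify, via the characterization of rational singularities through $\pi_*\Omega_{\widetilde X}\to\Omega_X$ (Proposition~\ref{prop:equviv_rat}), a section $\omega$ whose pullback to a resolution has a pole, and turn that pole into a divergent $p$-adic integral — but there is a genuine gap in the quantifier handling. You correctly state the contrapositive you must prove: after fixing a finite extension $k'$ and a section $\omega$, the integral $\int_{X^{sm}(F)} f\,|\omega|$ must diverge for \emph{every} local field $F\supseteq k'$ and \emph{every} non-negative Schwartz $f$ with $f(x)\neq 0$. But then you ``work over a local field $F\supseteq k'$ such that $\pi^{-1}(x)(F)$ meets the smooth locus of the reduced exceptional divisor in a point $\tilde p$ lying only on $E_0$ (available after enlarging $F$ finitely\dots).'' You are not allowed to enlarge $F$; $F$ is universally quantified. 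The fix is easy and is what the paper does: enlarge $k'$ (not $F$) so that the pole locus of $\pi^*\omega$ acquires a $k'$-rational point lying in $\pi^{-1}(x)$. That $P\cap\pi^{-1}(x)\neq\emptyset$ over $\overline{k}$ follows from properness of $\pi$ — otherwise one could shrink $X$ to $X\smallsetminus\pi(P)$ and lose the pole, contradicting the fact that non-rationality at $x$ persists on every Zariski neighborhood. With $\tilde p$ defined over $k'$, it is then automatically an $F$-point for every $F\supseteq k'$, and $f(\pi(\tilde p))=f(x)\neq 0$ for any admissible $f$, so the quantifier over $F$ and $f$ is handled.

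A second, smaller problem: you insist $\tilde p$ be a smooth point of the reduced exceptional divisor lying only on $E_0$. There is no reason for $P\cap\pi^{-1}(x)$ to contain such a point (it could sit entirely in the intersection of several exceptional components), so the explicit one-variable local model $t^{-a}u\,dt\wedge dy_1\wedge\cdots\wedge dy_{n-1}$ may not be available. Either argue at a general SNC crossing point — your monomial estimate $|t_1|^{-a_1}\cdots$ still forces divergence as soon as one exponent is $\geq 1$ — or, more cleanly, avoid the local model altogether. The paper's proof simply picks any $F$-point $p$ of the pole locus with $f(\pi(p))\neq 0$, takes a compact neighborhood $A$ of $p$ on which $f\circ\pi$ is constant, and invokes Lemma~\ref{lem:measures.out.of.forms}(4) (a pole forces $|\pi^*\omega|(A)=\infty$) to contradict
\[
f(\pi(p))\int_{A}|\pi^*\omega| \;\leq\; \int_{\pi^{-1}(X^{sm}(F))}(f\circ\pi)\,|\pi^*\omega| \;=\; \int_{X^{sm}(F)} f\,|\omega| \;<\;\infty.
\]
This sidesteps any normal-crossings bookkeeping. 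With those two corrections, your argument coincides with the paper's.
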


\begin{proof} Let $\pi :\widetilde{X} \rightarrow X$ be a resolution of singularities, and let $\omega$ be a section of $\Omega_X$ that is regular at $x$. By passing to a Zariski neighborhood of $x$, it is enough to prove that the pullback of $\omega|_{X^{sm}}$ to $\widetilde{X}$ extends to a regular top form (see Proposition \ref{prop:equviv_rat} (\ref{prop:equviv_rat:4}')). Assuming it does not, it must have a pole. Let $k'$ be a finite extension of $k$ such that the $k'$-points of the pole locus of $\pi ^* \omega$ are Zariski dense, let $F$ be a local field containing $k'$, and let $f$ be a Schwartz function as in the conditions of the lemma. Choose a point $p\in \widetilde{X}(F)$ in the pole locus of $\pi ^* \omega$ such that $f(\pi(p))\neq 0$, and let $A \subset \widetilde{X}(F)$ be a neighborhood of $p$ on which $f\circ \pi$ is constant. Since
\[
f(\pi(p))\int_{A}| \pi ^* \omega | \leq \int_{\pi ^{-1}(X^{sm}(F))} (f\circ \pi) | \pi ^* \omega |=\int_{X^{sm}(F)} f | \omega | < \infty,
\]
we get a contradiction to Lemma \ref{lem:measures.out.of.forms}.
\end{proof}

\begin{corollary}\lbl{lem:Gorenstein.finite.integral.implies.RS} Let  $X$ be a Gorenstein variety defined over a finitely generated field $k$, let $x\in X(k)$, and denote the smooth locus of $X$ by $X^{sm}$. Let $\omega$ be a section of $\Omega_X$ that does not vanish at $x$. Suppose that, for any finite extension $k'/k$, there exists a local field $F$ containing $k'$ and a non-negative Schwartz function $f\in \Sc(X(F))$ with $f(x) \neq 0$ such that the integral $\int_{X^{sm}(F)}f | \omega |$ is finite. Then $x$ is a rational singularity of $X$.
\end{corollary}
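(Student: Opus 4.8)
The plan is to deduce the statement directly from Lemma \ref{lem:CM.finite.integral.implies.RS}. That lemma gives the conclusion under the stronger hypothesis that the finite-integral property holds for \emph{every} section of $\Omega_X$, so the whole task is to bootstrap from one nowhere-vanishing section $\omega$ to all sections. The mechanism is that $X$ is Gorenstein, hence Cohen--Macaulay (so the lemma applies) and, moreover, the shifted dualizing sheaf $\Omega_X$ is a line bundle. Since $\omega$ does not vanish at $x$, it trivializes $\Omega_X$ over some affine Zariski neighborhood $U$ of $x$; consequently every section $\omega'$ of $\Omega_X$ can be written on $U$ as $\omega' = g\,\omega$ with $g \in k[U]$, a regular function, in particular one that is bounded in absolute value on any compact subset of $U(F)$.

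Now I would verify the hypothesis of Lemma \ref{lem:CM.finite.integral.implies.RS}. Fix a finite extension $k'/k$ and an arbitrary section $\omega'$ of $\Omega_X$. Applying the hypothesis of the present statement to $k'$, we obtain a local field $F \supset k'$ and a non-negative Schwartz function $f \in \Sc(X(F))$ with $f(x) \neq 0$ and $\int_{X^{sm}(F)} f\,|\omega| < \infty$. Write $\omega' = g\,\omega$ on $U$ as above, choose a compact open neighborhood $K \subset U(F)$ of $x$ on which the analytic function $|g|_F$ is bounded by some constant $B$, and set $\tilde f := f \cdot \mathbf 1_K$. Then $\tilde f$ is again a non-negative Schwartz function, it satisfies $\tilde f(x) = f(x) \neq 0$, and since $|\omega'| = |g|_F \, |\omega|$ on $U^{sm}(F)$ and $\tilde f$ is supported in $K$,
\[
\int_{X^{sm}(F)} \tilde f\,|\omega'| \;=\; \int_{K \cap X^{sm}(F)} f\,|g|_F\,|\omega| \;\le\; B \int_{X^{sm}(F)} f\,|\omega| \;<\; \infty .
\]

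This shows that the hypothesis of Lemma \ref{lem:CM.finite.integral.implies.RS} holds for $X$ and $x$, whence $x$ is a rational singularity of $X$. There is no genuine obstacle in this argument beyond bookkeeping: the only point that requires a moment's care is that cutting $f$ down to the neighborhood $K$ — which is needed so that the regular function $g$ is bounded there — keeps the resulting function Schwartz and non-vanishing at $x$, and this is automatic since $K$ may be taken compact open and containing $x$.
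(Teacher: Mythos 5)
Your proposal is correct and is exactly the argument the paper has in mind (the paper states this as a corollary of Lemma~\ref{lem:CM.finite.integral.implies.RS} without spelling out the proof): you reduce to the lemma by observing that the Gorenstein hypothesis makes $\Omega_X$ a line bundle trivialized near $x$ by $\omega$, so any other section $\omega' = g\omega$ has $|\omega'| = |g|_F\,|\omega|$ with $|g|_F$ bounded on a compact open neighborhood $K$ of $x$, and cutting $f$ down to $K$ preserves the Schwartz property, the non-vanishing at $x$, and the finiteness of the integral. The only point worth a moment's care, which you correctly note, is that the choice of $K$ depends on $\omega'$ — but this is harmless since the lemma only requires existence for each $(k',\omega')$.
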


\subsection{\FRS\ Implies Continuous push forward} \lbl{subsec:FRS.to.continuous}

In this subsection, we prove the implication $(a) \implies (b)$ of the first part of Theorem \ref{thm:push.forward.detailed}, as well as the second part of that theorem. After a base change to $F$ and using the fact that the claims are local, it is enough to prove the following stronger theorem:

\begin{theorem} \label{thm:yet.another} Let $\varphi:X \rightarrow Y$ be an \FRS\ map of affine varieties over a local field $F$ of characteristic 0. Assume that $Y$ is smooth (and therefore, by Elkik's Theorem \ref{thm:elk} (2), $X$ has rational singularities) and that $X$ is Gorenstein. Let $\omega_X$ is a regular nowhere vanishing top differential form on the smooth locus $X^{sm}$ of $X$, let $\omega_Y$ is a regular nowhere vanishing top differential form on $Y$, and let $f:X(F) \rightarrow \mathbb{C}$ be a Schwatz function. Denote the smooth locus of $\varphi$ by $X^S$. Then the measure $\varphi_*(f| \omega_X |)$ has continuous density with respect to $| \omega_Y |$, which is given by
\[
\frac{\varphi_*(f| \omega_X|)}{| \omega_Y|}(y)=\int_{(\varphi ^{-1} (y) \cap X^S)(F)}f\cdot \left|\frac{\omega_X}{\varphi ^* \omega_Y} \arrowvert _{\varphi ^{-1} (y) \cap X^S}\right|.
\]
\end{theorem}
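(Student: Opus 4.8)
The density formula in the statement is, once we know the fiber integrals converge, exactly the content of Lemma~\ref{lem:push.forward.dominant.RS}(2); so the real task is to prove that
\[
g(y):=\int_{(\varphi^{-1}(y)\cap X^S)(F)} f\cdot\left|\tfrac{\omega_X}{\varphi^*\omega_Y}\big|_{\varphi^{-1}(y)\cap X^S}\right|
\]
is everywhere finite and continuous on $Y(F)$. Throughout we may assume $f\ge0$ (split it into non-negative pieces), and we use that $\varphi$ is flat (being \FRS) and that $X$ is Gorenstein with rational singularities (Elkik's theorem, \ref{thm:elk}(2)); then Lemma~\ref{lem:push.forward.dominant.RS} applies and identifies the density of $\varphi_*(f|\omega_X|)$ with respect to $|\omega_Y|$ with $g$. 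For finiteness: since $\varphi$ is flat and $X,Y$ are Gorenstein, every fiber $X_y:=\varphi^{-1}(y)$ is Gorenstein, and it is reduced with rational singularities because $\varphi$ is \FRS. As $X_y\hookrightarrow X$ is a regular embedding of codimension $\dim Y$, base change for the relative dualizing sheaf of $\varphi$ identifies $\omega_{X_y}$ with $\omega_{X/Y}\otimes\mathcal O_{X_y}$, so the restriction of $\tfrac{\omega_X}{\varphi^*\omega_Y}$ (the generator of $\omega_{X/Y}=\omega_X\otimes\varphi^*\omega_Y^{\vee}$ determined by $\omega_X,\omega_Y$) is a generator of $\omega_{X_y}$, in particular a nowhere-vanishing top form on $X_y^{sm}=X^S\cap X_y$. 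Corollary~\ref{cor:RS.implies.finite.integral}, applied to the Gorenstein rational-singularity variety $X_y$, says that $A\mapsto\int_{A\cap X_y^{sm}(F)}\bigl|\tfrac{\omega_X}{\varphi^*\omega_Y}\bigr|$ is a Radon measure, and since $f$ is compactly supported $g(y)<\infty$.

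To prove continuity, I would first reduce to the case $\dim Y=1$. The function $g$ is constructible in the model-theoretic sense of Appendix~\ref{sec:cont.along.curves} (this is the parametrized-motivic-integration input and the place where the theory of that appendix is used), and by the main result there a constructible function on $Y(F)$ is continuous as soon as its restriction to every curve in $Y$ is continuous. Given a morphism $C\to Y$ from a smooth curve, base change preserves everything in sight: $\varphi_C:X\times_YC\to C$ is again flat, has the same fibers as $\varphi$ over points of $C$ (hence is \FRS), $X\times_YC$ is again Gorenstein (flat over the regular curve $C$ with Gorenstein fibers), and the fiber-integral function of $\varphi_C$ agrees with the restriction of $g$ up to a nowhere-vanishing factor. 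So we may assume $Y$ is a smooth curve.

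For the curve case, fix $y_0\in Y(F)$ and choose a log resolution $\pi:\widetilde X\to X$ of the pair $(X,\varphi^{-1}(y_0))$, so that $\widetilde X$ is smooth, $\pi$ is proper birational, and $\pi^{-1}(\varphi^{-1}(y_0))\cup\operatorname{Exc}(\pi)$ is normal crossings. Since $X$ has rational, equivalently (being Gorenstein) canonical, singularities, $\pi^*\omega_X$ extends to a regular top form on $\widetilde X$ (Proposition~\ref{prop:equviv_rat}), and $\varphi_*(f|\omega_X|)=(\varphi\circ\pi)_*((f\circ\pi)|\pi^*\omega_X|)$; thus it suffices to show the density of the latter measure is continuous at $y_0$. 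Via a partition of unity on $\widetilde X$, normal-crossings coordinates $x_1,\dots,x_n$ near a point of $(\varphi\circ\pi)^{-1}(y_0)$, a translation of $y_0$ to $0$, and analytic changes of variable removing unit factors, the problem collapses to the following local statement: for the monomial map $\phi(x)=x_1^{b_1}\cdots x_n^{b_n}$ (with $b_i\ge0$ the multiplicities of the coordinate hyperplanes in $\pi^*\varphi^{-1}(y_0)$) and the form $x_1^{a_1}\cdots x_n^{a_n}\,dx_1\wedge\cdots\wedge dx_n$ (with $a_i\ge0$ the vanishing orders of $\pi^*\omega_X$), the density of $\phi_*\bigl(h\,|x^a\,dx|\bigr)$ with respect to $dy$ is continuous at $0$ for every Schwartz $h$, with the correct value at $0$. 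This is false for general exponents — already $\phi=x_1x_2$, $a=0$ gives the logarithmically divergent density of the node.

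The crux is that \FRS{}-ness of $\varphi$, that is, the fact that $X_0:=\varphi^{-1}(y_0)$ is reduced with rational singularities, is exactly what rescues the monomial estimate: using $\omega_{X_0}\cong\omega_{X/Y}\otimes\mathcal O_{X_0}$ and comparing, via Grothendieck duality, a dualizing form of $X_0$ with $\pi^*\omega_X$ on $\widetilde X$, the regularity of the pullback of a dualizing form of $X_0$ along a resolution (which holds since $X_0$ has rational singularities) translates into $a_i\ge b_i-1$ for all $i$, with equality possible only when $\{x_i=0\}$ is the strict transform of a component of $X_0$ — in which case $b_i=1$ and $a_i=0$ because $X_0$ is reduced. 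I expect this homological extraction of the exponent inequality — a local, quantitative refinement of Elkik's theorem — to be the main obstacle. Granting it, the rest is a routine $p$-adic computation: $X_0$ is normal (rational singularities are normal), so its local irreducible components are disjoint and at most one coordinate hyperplane through a given point of $\widetilde X$ is a strict transform of a component of $X_0$; the indices with $a_i>b_i-1$ contribute geometrically convergent sums uniformly as $y\to0$, while the at-most-one critical index $i$ — which has $b_i=1$, so $\varphi\circ\pi$ is smooth along $\{x_i=0\}$ — contributes a term converging to the matching piece of $\int_{X_0^{sm}}f\bigl|\tfrac{\omega_X}{\varphi^*\omega_Y}\big|_{X_0}\bigr|$. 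Reassembling the partition of unity yields continuity of $g$ at $y_0$ and, with Lemma~\ref{lem:push.forward.dominant.RS}(2), the asserted formula; the implications $(a)\Rightarrow(b)$ and part~2 of Theorem~\ref{thm:push.forward.detailed} then follow by base change and localization, as noted just before the statement.
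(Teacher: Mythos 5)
Your proposal follows the same route as the paper's proof of this theorem: reduce to $Y$ a smooth curve via constructibility (Appendix~\ref{sec:cont.along.curves}), pass to a strong resolution of the pair $(X,\varphi^{-1}(y_0))$, reduce locally to a monomial map pushing forward a monomial measure, and invoke an exponent inequality coming from the \FRS{} hypothesis to make the monomial estimate converge uniformly in the parameter. The inequality you state — in your notation $a_i\ge b_i-1$ for all $i$, with equality possible only at a coordinate hyperplane that is the strict transform of a component of $\varphi^{-1}(y_0)$, and then necessarily $b_i=1$ and $a_i=0$ — is, after swapping the roles of $a$ and $b$, exactly the content of the paper's Proposition~\ref{prop:vanishing.form} together with the observation that the strict transform has multiplicity~$1$ in the exceptional fiber because $\varphi^{-1}(y_0)$ is reduced.

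The gap is where you say you expect it to be. Your sketch names the right ingredients (the adjunction $\omega_{X_0}\cong\omega_{X/Y}\otimes\mathcal O_{X_0}$ for the regular embedding of a fiber, the pullback of dualizing forms along a resolution of $X_0$ using its rational singularities, Grothendieck duality), but it does not actually carry out the comparison. In the paper this is done by interpreting the trace maps $\Tr_i$ and $\Tr_{\widetilde i}$ as short exact sequences with middle terms $\Omega_X([X_0])$ and $\Omega_{\widetilde X}([X_0'])$, applying Grauert--Riemenschneider to collapse $R\pi_*$ to $\pi_*$, and chasing the commutative diagram~\eqref{eq:cd.objects.main.lemma} to show that the two resulting extensions of $\Omega_X$ by $i_*(\pi_0)_*\Omega_{X_0'}$ are isomorphic, hence that $\pi_*\Omega_{\widetilde X}([X_0'])\to\pi_*\Omega_{\widetilde X}([\widetilde X_0])$ is an isomorphism. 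That diagram chase is the substance of the proof and is not implied by the objects you list; your formulation ``the regularity of the pullback of a dualizing form of $X_0$ \dots\ translates into $a_i\ge b_i-1$'' is the conclusion, not an argument for it. Since you flagged this as the main obstacle and the rest of your outline (finiteness via Corollary~\ref{cor:RS.implies.finite.integral}, curve reduction with base change preserving flatness, Gorensteinness and \FRS, and the final $p$-adic monomial estimate split into the $b_i=1,a_i=0$ strict-transform index versus the indices with $a_i\ge b_i$) is correct and matches the paper, the proposal is a faithful skeleton of the paper's argument with its hardest lemma left open.
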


\subsubsection{Reduction to a Curve}

Let $m=f | \omega_X |$. Embedding $X$ in $\mathbb{A} ^n$, we can require that $f$ is the indicator function of $X(F) \cap O^n$. By Lemma \ref{lem:push.forward.dominant.RS}, $\varphi_*m$ has $L_1$-density with respect to $| \omega_Y |$; we need to show that this density is continuous. \\

Let $X^S$ be the smooth locus of $\varphi$. By assumption, all fibers of $\varphi$ are reduced, and, hence, generically smooth. Proposition \ref{prop:sm_crit} says that, for any $y\in Y(F)$, the smooth locus of $\varphi ^{-1} (y)$ is equal to $X^S \cap \varphi ^{-1} (y)$.

Since the restriction of $f$ to $\varphi ^{-1} (y)(F)$ is a Schwartz function, Lemma \ref{lem:RS.implies.finite.integral} implies that the integrals $\int_{X^S\cap \varphi ^{-1} (y)(F)} f \left|\frac{\omega_X}{\varphi^* \omega_Y} \right|$ are all convergent. We denote the function $y \mapsto \int_{X^S\cap \varphi ^{-1} (y)(F)} f \left|\frac{\omega_X}{\varphi^* \omega_Y} \right|$ by $\varphi_* f$. Note that $\varphi_* f$ depends on the choices of $\omega_X$ and $\omega_Y$, although our notation omits them. By Lemma \ref{lem:push.forward.dominant.RS}, $\varphi_*f$ is a function representing the density of $\varphi_*(f | \omega_X |)$ with respect to $| \omega_Y |$.

Thus, in order to prove Theorem \ref{thm:yet.another}, it is enough to prove that $\varphi_*f$ is continuous. By Theorem \ref{thm:continuity.along.curves}, the map $\varphi _*f$ is continuous if and only if its restriction to any curve $C \subset Y$ is continuous. Given $C \subset Y$, let $\widetilde{C} \stackrel{\eta}{\rightarrow} C$ be the normalization of $C$, and consider the pullback diagram
\[
\xymatrix{ \widetilde{X} \ar[r]^{\widetilde{\eta}} \ar[d]^{\widetilde{\varphi}} & X \ar[d]^{\varphi} \\ \widetilde{C} \ar[r]^{\eta} & Y} .
\]
If $\widetilde{f}: \widetilde{X}(F) \rightarrow \mathbb{C}$ is the composition $f\circ \widetilde{\eta}$, then $\widetilde{f}$ is Schwatz, and the function $\varphi_*f$ is continuous if and only if $\widetilde{\varphi}_* \widetilde{f}$ is continuous. Thus, we may assume that $Y$ is a smooth curve. A similar argument starting from an etale map from a neighborhood of $\varphi(x)$ to $\mathbb{A} ^1$ shows that we can assume that $Y$ is equal to $\mathbb{A} ^1$. We need to prove, under these assumptions, that $\varphi_*f$ at $0$. \\

\subsubsection{Reduction to a Local Model}
Let $X_0=\varphi ^{-1}(0)$ and let $\widetilde{X} \stackrel{\pi}{\rightarrow} X$ be a strong resolution of singularities of the pair $(X,X_0)$ (see \ref{sssec:res_sing}). By Sard's theorem, there are only finitely many singular values of $\widetilde{\varphi}=\varphi \circ \pi$. Hence, after (Zariski) base change, we can assume that the only singular value is 0. In particular, for every $t \neq 0$, the variety $\widetilde{\varphi} ^{-1} (t)$ is non-singular and the map $\widetilde{\varphi} ^{-1} (t) \rightarrow \varphi ^{-1} (t)$ is a resolution of singularities. By Proposition \ref{prop:strict.transform}, the strict transform $X_0'$ of $X_0$ in $\widetilde{X}_0:=\widetilde{\varphi} ^{-1} (0)$ is a resolution of singularities of $X_0$.

{Let $W \subset X$ be an open dense set over which $\pi$ is an isomorphism. Let $\omega_{\widetilde{X}} \in \Gamma(\widetilde{X},\Omega_{\widetilde{X}})$ be a top differential form such that $\omega_{\widetilde{X}}|_{\pi ^{-1} W}=\pi ^*\left( \omega_X|_W \right)$.} We have $\varphi_*(1_{X(O)}| \omega_X |)=\widetilde{\varphi}_*(1_{\widetilde{X}(O)}| \omega_{\widetilde{X}} |)$. Since $\widetilde{X}_0$ is a divisor with strict normal crossings inside a smooth variety, for every $\widetilde{p} \in \widetilde{X}_0(F)$ there is a coordinate system $x_1,\ldots,x_n$ in a neighborhood of $\widetilde{p}$ on which both $\widetilde{\varphi}$ and $\omega_{\widetilde{X}}$ are monomial, say
\[
\widetilde{\varphi}=\alpha x_1^{a_1}\cdot\ldots\cdot x_n^{a_n}
\]
and
\[
\omega_{\widetilde{X}}=\beta x_1^{b_1}\cdot\ldots\cdot x_n^{b_n} d x_1 \wedge \cdots \wedge d x_n,
\]
{where $\alpha$ and $\beta$ are invertible. Since $X_0' \cap \pi ^{-1}(W)$ is isomorphic to $W\cap X_0$, which is reduced, we get that $X_0'$ is reduced. If $\widetilde{p} \in X_0'$, then $X_0'$ is locally the zero locus of one of the $x_i$, which we can assume to be $x_1$. This means that $a_1$ is equal to 1. The following key proposition implies that if $\widetilde{p} \in X_0'$, then $a_i \leq b_i$ for $i \geq 2$, and, if $\widetilde{p} \notin X_0'$, then $a_i \leq b_i$ for all $i$.}

\begin{proposition} \lbl{prop:vanishing.form} If $\widetilde{\eta}$ is a rational top form on $\widetilde{X}$ such that $\widetilde{\varphi} \widetilde{\eta}$ is regular, then $\widetilde{\eta}$ is regular on $\widetilde{X}_0\smallsetminus X_0'$.
\end{proposition}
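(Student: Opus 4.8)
The plan is to reduce the assertion to the statement that $\widetilde\varphi$ (which is the pull-back of $\varphi$ to the resolution $\widetilde X$) is itself \FRS, and then extract the divisorial inequality from the definition of rational singularities applied to the fibers. First I would recall the setup: $\widetilde X$ is a strong resolution of the pair $(X, X_0)$, the special fiber $\widetilde X_0 = \widetilde\varphi^{-1}(0)$ is a strict normal crossings divisor, and $X_0'$ is the strict transform of $X_0$, which is a resolution of singularities of $X_0$ by Proposition \ref{prop:strict.transform}. The content to prove is: if $\widetilde\eta$ is a rational top form on $\widetilde X$ with $\widetilde\varphi\,\widetilde\eta$ regular, then $\widetilde\eta$ has no pole along any component of $\widetilde X_0$ other than $X_0'$.

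The key idea is to look at the map $\widetilde\varphi$ near a point $\widetilde p$ of such a ``bad'' component $D$ (a component of $\widetilde X_0$ not equal to $X_0'$) and to use that $\varphi$, and hence $\widetilde\varphi|_{\text{generic fiber}}$, being \FRS\ controls the relative dualizing complex. Concretely, since $\varphi$ is \FRS, every fiber $\varphi^{-1}(t)$ is reduced with rational singularities; for $t\neq 0$ the fiber $\widetilde\varphi^{-1}(t)$ resolves $\varphi^{-1}(t)$, so by the characterization of rational singularities in terms of regular top forms on a resolution (Proposition \ref{prop:equviv_rat}, the criterion \eqref{prop:equviv_rat:4}' / \eqref{prop:equviv_rat:5}'), every regular top form on the smooth locus of $\varphi^{-1}(t)$ pulls back to a \emph{regular} form on $\widetilde\varphi^{-1}(t)$. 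Now the relative top form $\widetilde\eta/\widetilde\varphi^*(dt)$ restricts, on each smooth fiber $\widetilde\varphi^{-1}(t)$ with $t\neq 0$, to a form whose pushforward-free counterpart on $\varphi^{-1}(t)$ is regular (because $\widetilde\varphi\,\widetilde\eta$ is regular, so $\widetilde\eta/\widetilde\varphi^*(dt)$ times $t$, i.e. essentially $\widetilde\eta$ itself away from $\widetilde X_0$, is regular). Running this through adjunction for the divisor $\widetilde\varphi^{-1}(t)\subset \widetilde X$ and letting $t\to 0$ via the monomial description forces the pole order of $\widetilde\eta$ along $D$ to be nonpositive. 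In coordinates where $\widetilde\varphi = \alpha x_1^{a_1}\cdots x_n^{a_n}$ and $\widetilde\eta = \gamma\, x_1^{c_1}\cdots x_n^{c_n}\,dx_1\wedge\cdots\wedge dx_n$ with $\alpha,\gamma$ invertible, regularity of $\widetilde\varphi\,\widetilde\eta$ gives $c_i \geq -a_i$ for all $i$; the claim $c_i\geq 0$ for components $D=\{x_i=0\}$ not equal to $X_0'$ (where $a_i\geq 1$ and $a_i=1$ exactly on $X_0'$) is what needs the rational-singularities input rather than just regularity. I would phrase this using the local ``monomial map / monomial form'' reduction already introduced in the excerpt, so the inequality $c_i\geq 0$ becomes the assertion that the residue of $\widetilde\eta$ along a non-strict-transform component, restricted to a general nearby fiber, has no pole on the fiber's resolution — which contradicts rational singularities of that (smooth!) fiber unless $c_i\geq 0$.

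The cleaner way to organize the argument avoids fiber-by-fiber residues: I would instead argue that the pull-back of $\omega_X|_{X^{sm}}$ (a nowhere-vanishing regular top form on the smooth locus) to $\widetilde X$ is regular, because $X$ has rational singularities by Elkik's theorem (Theorem \ref{thm:elk}(2)), and then compare $\widetilde\eta$ with this reference form. Write $\widetilde\eta = h\cdot\pi^*\omega_X$ for a rational function $h$ on $\widetilde X$; regularity of $\widetilde\varphi\,\widetilde\eta$ together with regularity of $\pi^*\omega_X$ shows $\widetilde\varphi\cdot h$ is regular, i.e. $h$ has at worst a pole bounded by the divisor $\widetilde\varphi^{-1}(0) = \sum a_i D_i$. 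I must upgrade this to: $h$ is regular away from $X_0'$, i.e. the only component of $\widetilde X_0$ along which $h$ may have a pole is $X_0'$, and there with order $\leq 1$. This is exactly where the \FRS\ hypothesis on $\varphi$ (not just on $X$) enters: applying Elkik's theorem with base $Y=\mathbb A^1$ to the flat family, the generic fiber being a rational-singularity variety propagates, but the precise statement I need is the relative version of Proposition \ref{prop:equviv_rat}, namely that for an \FRS\ map the relative dualizing sheaf computed on a strong resolution of the total space restricts correctly on fibers. I expect the main obstacle to be precisely this passage — making rigorous that ``fiberwise rational singularities'' forbids a pole of the \emph{total-space} form $\widetilde\eta$ along horizontal-to-the-fibration components — and I would handle it by restricting to a general fiber $\widetilde\varphi^{-1}(t)$, $t\neq 0$, where everything is smooth and the strict-transform issue disappears, deriving the pole bound there, and then using flatness/semicontinuity (or directly the $\mathbb G_m$-type degeneration already available in the paper) to specialize the bound to $t=0$, concluding $a_i\leq b_i$ as stated just before the proposition.
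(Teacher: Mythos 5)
Your proposal correctly localizes the problem (strong resolution, SNC divisor $\widetilde X_0$, the strict transform $X_0'$, the monomial description, and the target inequality on exponents), and you correctly sense where the real difficulty is. But the mechanism you propose for the crucial step does not work, and the paper's proof uses a fundamentally different one.

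The concrete gaps are as follows. First, the step "$\widetilde\eta = h\cdot\pi^*\omega_X$, regularity of $\widetilde\varphi\widetilde\eta$ and of $\pi^*\omega_X$ gives $\widetilde\varphi\cdot h$ regular" is false as written: $\pi^*\omega_X$ is a regular top form on $\widetilde X$ (this much does follow from rational singularities of $X$), but it typically \emph{vanishes} along exceptional divisors of $\pi$, so the ratio $h\widetilde\varphi=(\widetilde\varphi\widetilde\eta)/\pi^*\omega_X$ may acquire poles exactly where you need to control them. Second, and more seriously, the plan of deriving a pole bound on a general fiber $\widetilde\varphi^{-1}(t)$, $t\neq 0$, and then "specializing to $t=0$ by flatness/semicontinuity" cannot work: every component $D\subset\widetilde X_0\smallsetminus X_0'$ along which you want to rule out a pole is a vertical divisor entirely contained in the special fiber $\widetilde\varphi^{-1}(0)$, hence disjoint from every fiber $\widetilde\varphi^{-1}(t)$ with $t\neq 0$. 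Restricting to general fibers therefore gives literally no information about the pole of $\widetilde\eta$ along $D$, and there is no semicontinuity statement that would recover it; if anything, degeneration can only make pole orders jump up, which is the wrong direction. You yourself flag this as "the main obstacle," and the answer is that this route is a dead end.

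The paper's actual argument proceeds through the \emph{special} fiber, not the general one, and uses the rational singularities of $X_0$ directly rather than of nearby fibers. Since $X$ has rational singularities, $\widetilde\varphi\widetilde\eta$ descends to a global section $\zeta$ of $\Omega_X$; then $\eta:=\zeta/\varphi$ is a rational section of $\Omega_X$ with at worst a simple pole along $X_0$, and one takes its \emph{residue} $\omega_0\in\Gamma(X_0,\Omega_{X_0})$ along $X_0$ (formalized via the trace maps of Grothendieck duality, Proposition \ref{prop:shrik_prop}\eqref{prop:shrik_prop:6c} and \eqref{prop:shrik_prop:42}). Because $X_0$ has rational singularities and $X_0'\to X_0$ is a resolution, $\omega_0$ pulls back to a regular top form $\widetilde\omega_0$ on $X_0'$. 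One then builds a reference rational top form $\widetilde\omega$ on $\widetilde X$ with a simple pole only along $X_0'$ and residue $\widetilde\omega_0$, and checks (again by residues and the trace isomorphisms) that $\widetilde\eta-\widetilde\omega$ is regular. Equivalently, in the formal version, one shows that $\pi_*\Omega_{\widetilde X}([X_0'])\to\pi_*\Omega_{\widetilde X}([\widetilde X_0])$ is an isomorphism by matching two extensions of $\Omega_X$ coming from the commutative square of trace maps. This is the ingredient your proposal is missing: the proposition is essentially a restatement of the rational singularity of the special fiber $X_0$, and any proof must feed that hypothesis in through some residue/adjunction mechanism at $t=0$, not by looking at nearby smooth fibers.
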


Before giving a formal proof, we sketch the argument. Suppose that $\widetilde{\eta}$ is a rational top form on $\widetilde{X}$ such that $\widetilde{\varphi}\widetilde{\eta}$ is regular. Since $X$ has rational singularities, there is an element $\zeta\in \Gamma(X,\Omega_X)$ that coincides with $\widetilde{\varphi}\widetilde{\eta}$ on an open dense set. The element $\eta=\frac{\zeta}{\varphi}$ is a rational section of $\Omega_X$ with (at most) a simple pole along $X_0$. Let $\omega_0\in \Gamma(X_0,\Omega_{X_0})$ be the residue\footnote{$X$ and $X_0$ are not a smooth varieties, so we need to use the formalism of Grothendieck Duality in order to manipulate residues.} of $\eta$ along $X_0$. Since $X_0$ has rational singularities, there is an element $\widetilde{\omega_0}\in \Gamma(X_0',\Omega_{X_0'})$ that coincides with the pull back of $\omega_0$ on an open dense set in $X_0'$. Let $\widetilde{\omega}$ be a rational top form on $\widetilde{X}$ that is regular outside $X_0'$, and has a simple pole along $X_0'$ with residue $\widetilde{\omega_0}$ (we use the fact that $\Omega_{\widetilde{X}}$ is acyclic in order to prove the existence of $\widetilde{\omega}$; see below).

Consider the regular top form $\widetilde{\delta}:=\widetilde{\varphi}(\widetilde{\eta}-\widetilde{\omega})$. Since $X$ has rational singularities, there is an element $\delta\in \Gamma(X,\Omega_X)$ that coincides with $\widetilde{\delta}$ on an open dense set. Computing residues, we see that $\delta$ vanishes on $X_0$, so it is divisible by $\varphi$. It follows that $\widetilde{\delta}$ is divisible by $\widetilde{\varphi}$, so $\widetilde{\eta}-\widetilde{\omega}$ is regular. Since the only poles of $\widetilde{\omega}$ are along $X_0'$, the same is true for $\widetilde{\eta}$.

\begin{proof}[Proof of \ref{prop:vanishing.form}] In the following, if $f: Y \rightarrow Z$ is a birational map (respectively, a closed immersion of a codimension 1 subvariety), we identify $f^! \Omega_Z$ with $\Omega_Y$ (respectively, $\Omega_Y[1]$).

Let $\Omega_{\widetilde{X}}([\widetilde{X_0}])$ be the sheaf of rational top forms $\widetilde{\eta}$ on $\widetilde{X}$ such that $\widetilde{\varphi}\widetilde{\eta}$ is regular. Similarly, let $\Omega_{\widetilde{X}}([X_0'])$ be the sheaf of rational top forms that have at most a simple pole along $X_0'$. It is enough to show that the natural inclusion $\Omega_{\widetilde{X}}([X_0']) \rightarrow \Omega_{\widetilde{X}}([\widetilde{X_0}])$ becomes an isomorphism after applying $\pi_*$.

Consider the commutative diagram of varieties
\[
\xymatrix{X_0' \ar[r]^{\widetilde{i}} \ar[dr]^k \ar[d]_{\pi_0} & \widetilde{X} \ar[d]^{\pi} \\ X_0 \ar[r]^i & X}
\]
where $i$ and $\widetilde{i}$ are the inclusions and $\pi_0$ is the restriction of $\pi$. Applying Proposition \ref{prop:shrik_prop} part \ref{prop:shrik_prop:42} to both triangles, we get the following commutative diagram of objects in the derived category $D^+(X)$
\begin{equation} \label{eq:cd.objects.main.lemma}
\xymatrixcolsep{5pc} \xymatrix{Ri_*R (\pi_0)_* \Omega_{X_0'}[1] \ar[r]^{Ri_* \Tr_{\pi_0}[1]} \ar[d] & Ri_* \Omega_{X_0}[1] \ar[d]_{\Tr_i} \\ Rk_* \Omega_{X_0'}[1] \ar[r]^{\Tr_k} & \Omega_X \\ R \pi_* R \widetilde{i}_* \Omega_{X_0'}[1] \ar[u] \ar[r]_{R \pi_* \Tr_{\widetilde{i}}} & R \pi_* \Omega_{\widetilde{X}} \ar[u]_{\Tr_{\pi}}}
\end{equation}
where the two left vertical maps are the natural isomorphisms. Note that all derived direct images are actually the usual direct images: For $i$ and $\widetilde{i}$, this is clear since they are affine. For $\pi$ and $\pi_0$, it follows from the Grauert--Riemannschneider Theorem (\ref{thm:GR}), since they are resolutions of singularities. In all other cases, this follows using the left vertical isomorphisms.

Consider first the top square of \eqref{eq:cd.objects.main.lemma}. The map $\Tr_i$ is represented by an extension
\[
0 \rightarrow \Omega_X \rightarrow \Omega_X([X_0])\rightarrow i_* \Omega_{X_0} \rightarrow 0
\]
(see Proposition {\ref{prop:shrik_prop}\eqref{prop:shrik_prop:6c}}), where the map $\Omega_X \rightarrow \Omega_X([X_0])$ is the obvious inclusion. Composing $\Tr_i$ with the map $i_*\Tr_{\pi_0}: i_* (\pi_0)_* \Omega_{X_0'} \rightarrow i_* \Omega_{X_0}$, we get an extension
\begin{equation} \label{eq:ext.bottom}
0 \rightarrow \Omega_X \rightarrow \Omega_X([X_0])\rightarrow i_*  (\pi_0)_*\Omega_{X_0'} \rightarrow 0.
\end{equation}
Next, consider the lower square of \eqref{eq:cd.objects.main.lemma}. The map $\Tr_{\widetilde{i}}$ is represented by an extension
\begin{equation} \label{eq:ext.temp}
0 \rightarrow \Omega_{\widetilde{X}}\rightarrow \Omega_{\widetilde{X}}([X_0'])\rightarrow \widetilde{i}_*\Omega_{X_0'} \rightarrow 0 ,
\end{equation}
where the map $\Omega_{\widetilde{X}} \rightarrow \Omega_{\widetilde{X}}([X_0'])$ is the obvious inclusion. Applying $\pi_*$ and the trace map $\Tr_\pi : \pi_* \Omega_{\widetilde{X}} \rightarrow \Omega_X$ to the extension \eqref{eq:ext.temp}, we get an extension
\begin{equation} \label{eq:ext.top}
0 \rightarrow \Omega_X \rightarrow \pi_*(\Omega_{\widetilde{X}}([X_0']))\rightarrow \pi_* \widetilde{i}_*\Omega_{X_0'} \rightarrow 0.
\end{equation}
By the commutativity of \eqref{eq:cd.objects.main.lemma}, these two extensions are isomorphic, meaning that there is a commutative diagram
\begin{equation} \label{eq:cd.isom}
\xymatrix{ 0 \ar[r] & \Omega_X \ar[r] & \Omega_X([X_0]) \ar[r] & Ri_* R (\pi_0)_*\Omega_{X_0'} \ar[r] & 0 \\ 0 \ar[r] & \Omega_X \ar[r] \ar[u] & \pi_*(\Omega_{\widetilde{X}}([X_0'])) \ar[r] \ar[u] & R \pi_* R \widetilde{i}_*\Omega_{X_0'} \ar[r] \ar[u] & 0
}
\end{equation}
where the horizontal sequences are \eqref{eq:ext.bottom} and \eqref{eq:ext.top}, the leftmost vertical map is the identity, and the other vertical maps are isomorphisms. In particular, we get an isomorphism $f_1:\pi_*(\Omega_{\widetilde{X}}([X_0'])) \rightarrow \Omega_X([X_0])$ that restricts (after the natural identification) to the identity on $\Omega_{X \smallsetminus X_0}$.

On the other hand, the isomorphism $\Tr_{\pi}:\pi_* \Omega_{\widetilde{X}}=R\pi_* \Omega_{\widetilde{X}} \rightarrow \Omega_X$ gives an isomorphism $f_2:\pi_* \Omega_{\widetilde{X}}([\widetilde{X}_0]) \rightarrow \Omega_X([X_0])$ that restricts, similarly, to the identity on $\Omega_{X\smallsetminus X_0}$. The composition $f_2 ^{-1} \circ f_1$ agrees with the obvious inclusion $\pi_*\Omega_{\widetilde{X}}([X_0']) \rightarrow \pi_* \Omega_{\widetilde{X}}([\widetilde{X_0}])$ on $X\smallsetminus X_0$, so they must be equal. Hence, the inclusion $\pi_*\Omega_{\widetilde{X}}([X_0']) \rightarrow \pi_* \Omega_{\widetilde{X}}([\widetilde{X_0}])$ is an isomorphism.

\end{proof}

\subsubsection{Push Forward of Monomial Measures Under Monomial Maps}

From the argument above it follows that, in order to prove that $\widetilde{\varphi}_*(1_{\widetilde{X}}| \omega_{\widetilde{X}} |)$ has continuous density (and, thus, complete the proof of Theorem \ref{thm:yet.another}), it is enough to show the following:
{

\begin{lemma} Denote the normalized Haar measure of $O$ by $\lambda$ and the normalized Haar measure of $O^n$ by $\lambda ^{\boxtimes n}$. Let $A=(a_1,\ldots,a_n)\in \mathbb{Z}_{\geq 0}$ and $B=(b_1,\ldots,b_n)\in \mathbb{Z}_{\geq 0}$. Assume that one of the following holds: \begin{enumerate}
\item $a_1=1$ and $a_i \leq b_i$ for $i \geq 2$. or
\item Not all $a_i$ are 0 and $a_i \leq b_i$ for all $i$.
\end{enumerate}
Then the push forward of the measure $x^B \lambda^{\boxtimes n}$ with respect to the map $x^A: O^n \rightarrow O$ has continuous density with respect to $\lambda$.
\end{lemma}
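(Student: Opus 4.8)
The plan is to compute the push-forward explicitly by integrating over fibers of the monomial map $x^A$ and to show the resulting density function is continuous. First I would reduce to the case where all exponents $a_i$ are nonzero. Indeed, if $a_i=0$ for $i$ in some subset $T$, then the map $x^A$ factors through the projection $O^n\to O^{\{1,\dots,n\}\smallsetminus T}$, and the push-forward of $x^B\lambda^{\boxtimes n}$ under this projection is $\left(\prod_{i\in T}\int_O |x_i|^{b_i}\,d\lambda\right)\cdot x^{B'}\lambda^{\boxtimes n'}$, where $B'$ is $B$ restricted to the surviving coordinates (the inner integral converges since $b_i\geq 0$). So we may assume $a_i\geq 1$ for all $i$, and in case (1) we additionally have $a_1=1$, $a_i\leq b_i$ for $i\geq 2$, while in case (2) we have $a_i\leq b_i$ for all $i$.

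Next I would write the density at a point $t\in O$ as a fiber integral. For $t\neq 0$, using the coarea/change-of-variables formula on the locus where all $x_i\neq 0$, the density of $(x^A)_*(x^B\lambda^{\boxtimes n})$ with respect to $\lambda$ at $t$ is (up to a harmless constant)
\[
g(t)=\int_{\{x\in O^n\,:\,x^A=t\}} \left| x^B \Big/ \frac{\partial(x^A)}{\partial x_1}\right|\,|\eta_t|
= \int \bigl| x_1^{b_1-a_1+1}x_2^{b_2}\cdots x_n^{b_n}\bigr|\,|\eta_t|,
\]
where $\eta_t$ is the natural top form on the fiber obtained by eliminating $x_1$; since $a_1=1$ in case (1) this simplifies the $x_1$-exponent to $b_1$, and in case (2) it is $b_1-a_1+1\geq 1$. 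Parametrizing the fiber by $x_2,\dots,x_n$ with $x_1=(t\,x_2^{-a_2}\cdots x_n^{-a_n})^{1/a_1}$ (valid where an $a_1$-th root exists; one sums over roots), one gets an explicit iterated integral over $O^{n-1}$ of a monomial in $|x_2|,\dots,|x_n|$ and $|t|$. The key point is that the exponent of $|t|$ appearing is nonnegative — this is exactly where the hypothesis $a_i\leq b_i$ is used — so the integrand is bounded and tends to $0$ (case (2)) or to a finite limit (case (1)) as $t\to 0$, giving continuity at $0$; continuity away from $0$ is clear since there the map is submersive on the relevant locus and Proposition \ref{prop:push.forward.smooth.map} applies directly.

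I expect the main obstacle to be bookkeeping: carefully justifying the fiber-integral formula near $t=0$ (where the map $x^A$ is not smooth, since some $x_i$ vanish on the boundary of the polydisc), matching it with the $L_1$-density already known to exist by Lemma \ref{lem:push.forward.dominant.RS}, and tracking the precise exponent of $|t|$ through the iterated integration, especially the contribution of the valuations of $x_2,\dots,x_n$ to both the $x_1$-factor and the measure of the corresponding shell. The cleanest way to organize this is to stratify $O^n$ by the valuations $(v_1,\dots,v_n)=(\val(x_1),\dots,\val(x_n))$, compute the contribution of each stratum to $\lambda^{\boxtimes n}\bigl((x^A)^{-1}(t+\mathfrak{m}^N)\bigr)$ as a function of $\val(t)$ and $N$, sum the resulting geometric-type series, and read off that the leading behavior as $\val(t)\to\infty$ is governed by a nonnegative power of $|t|$ precisely under the stated inequalities; then one extracts the density and its continuity by letting $N\to\infty$.
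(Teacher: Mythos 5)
Your proposal and the paper's proof share the same core idea---stratify $O^n$ by the tuple of valuations of the coordinates and reduce continuity to estimating a geometric-type series---but they organize the argument differently. The paper exploits the $O^\times$-invariance of the pushforward (under $\alpha\cdot(x_1,\ldots,x_n)=(\alpha x_1,x_2,\ldots,x_n)$, $\alpha\cdot t=\alpha^{a_1}t$) to get smoothness away from $0$ for free and to reduce continuity at $0$ to estimating the pushforward measure of the annulus $\mathcal{A}_r=\{\val(t)=r\}$ divided by $\lambda(\mathcal{A}_r)$. In its ``Case 2'' (your hypothesis (2)), it then relabels so that $(b_1+1)/a_1$ is minimal among $(b_i+1)/a_i$, making the resulting series over $(r_2,\ldots,r_n)\in\mathbb{Z}_{\geq 0}^{n-1}$ at most polynomial in $r$ while the $q^{-(b_1+1)r/a_1}$ prefactor provides the decay. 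Your version instead argues via the fiber-integral representation of the density and dominated convergence; that is a legitimate alternative, and in that phrasing the relabeling is unnecessary.

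There is, however, a real error in your fiber-integral formula. The Jacobian $\partial(x^A)/\partial x_1 = a_1 x_1^{a_1-1}\prod_{i\geq 2}x_i^{a_i}$ contributes $|x_i|^{-a_i}$ for every $i\geq 2$, so the integrand must be $|a_1|^{-1}\,|x_1|^{b_1-a_1+1}\prod_{i\geq 2}|x_i|^{b_i-a_i}$, not $|x_1|^{b_1-a_1+1}\prod_{i\geq 2}|x_i|^{b_i}$. This is not cosmetic: the exponents $b_i-a_i$ are precisely what the hypotheses make nonnegative and hence what bounds the integrand by $|a_1|^{-1}$. As written, your integrand is bounded for arbitrary $a_i,b_i\geq 0$, which should have been a red flag since the lemma is false without the inequalities (e.g.\ $n=1$, $a_1=2$, $b_1=0$ gives density $\sim |t|^{-1/2}$). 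Two further items that your ``bookkeeping'' must handle before the dominated-convergence step closes: the parametrization of the fiber by $(x_2,\ldots,x_n)$ carries a root count (the number of $x_1\in O$ with $x_1^{a_1}=t\prod_{i\geq 2}x_i^{-a_i}$ depends on the point and on $t$), and one must verify the pushforward has no atom at $0$; both are harmless here but should be said, and both are sidestepped by the annulus computation in the paper.
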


\begin{proof} By integrating away the variables $x_i$ for which $a_i=0$, we can assume that $a_1 \neq 0$. Denote the size of the residue field of $O$ by $q$. The group $O^ \times$ acts on $O^n$ by $\alpha (x_1,\ldots,x_n)=(\alpha x_1,x_2,\ldots,x_n)$, preserving the measure $x^B \lambda ^{\boxtimes n}$, and the map $x^A$ intertwines this action with the action of $O^ \times$ on $O$ given by $\alpha \cdot x=\alpha ^{a_1}x$. Hence, the measure $x^A_*(x^B \lambda ^{\boxtimes n})$ is invariant to this action of $O^ \times$. In particular, it is smooth outside 0. In order to prove that the push-forward has continuous density, { we will analyze its density outside $0$ and show that it extends continuously to $0$.

We first compute the measure $x^A_*(x^B \lambda ^{\boxtimes n})$  of the annulus $\mathcal{A}_r$ of radius $q^{-r}.$}
For every $r\in \mathbb{Z}_{\geq 0}$, the pre-image of {$\mathcal{A}_r$} is the union of the sets $$X_{r_1,\ldots,r_n}:=\left\{ (x_1,\ldots,x_n) \in O^n \mid |x_i|=q^{r_i} \right\},$$ where $(r_1,\ldots,r_n)$ satisfies $\sum a_ir_i = r$. {Denote $c=\frac{q-1}{q} $.} Since $(x^B \lambda ^{\boxtimes n})(X_{r_1,\ldots,r_n})=c^nq^{-\sum (b_i+1)r_i}$, we get that
{
\[
(x^A)_*\left( x^B \lambda ^{\boxtimes n}\right) (\mathcal{A}_r)= c^n\sum_{\underset{}{r_1,\ldots,r_n}} q^{-\sum(b_i+1)r_i},
\]

where the outer sum is over all $r_2,\ldots,r_n\in \mathbb{Z}_{\geq 0}$ such that $\sum a_ir_i = r$.

We consider the following cases:
\begin{enumerate}[{Case} 1.]
\item $b_1=0.$\\
In this case $a_1=1$, so $r_1=r-\sum_{i \geq 2} a_i r_i$. We get
 \[
 (x^A)_*\left( x^B \lambda ^{\boxtimes n}\right) (\mathcal{A}_r)= c^n\sum_{r_2,\ldots,r_n} q^{-\sum(b_i+1)r_{i}-(r-\sum a_ir_i)}=c^nq^{-r}\sum_{r_2,\ldots,r_n} q^{-\sum (b_i+1-a_i)r_i},
 \]
where the outer sums are over all $r_2,\ldots,r_n\in \mathbb{Z}_{\geq 0}$ such that $\sum a_ir_i \leq r$.

Since $a_1=1$, the density of $(x^A)_*\left( x^B \lambda ^{\boxtimes n}\right) $ is constant on each $\mathcal{A}_r$. This density is equal to
$$\frac{(x^A)_*\left( x^B \lambda ^{\boxtimes n}\right) (\mathcal{A} _r)}{\lambda(\mathcal{A} _r)}
={c^{n-1}}\sum_{r_2,\ldots,r_n} q^{-\sum (b_i+1-a_i)s_i},$$
Where the sum is as before. The assertion follows now from the fact that the right hand side converges when $r \to \infty$, by the assumptions.

{
\item $a_i \leq b_i$.

After relabeling, we can assume that $\frac{b_1+1}{a_1}\leq \frac{b_i+1}{a_i}$ for all $i$. We have
\begin{multline*}
(x^A)_* \left( x^B \lambda ^{\boxtimes n} \right) (\mathcal{A} _r) \leq c^n\sum_{r_2,\ldots,r_n} q^{-\sum_{i=2}^n (b_i+1)r_i-(b_1+1)\frac{\left( r-\sum_{i=2}^n a_i r_i \right)}{a_1} }=\\=c^nq^{-\frac{b_1+1}{a_1}r}\sum_{r_2,\ldots,r_n} q^{\sum_{i=2}^n \left( \frac{(b_1+1)a_i}{a_1}-(b_i+1) \right) r_i},
\end{multline*}
where the outer sums are over all non-negative integers $r_2,\ldots,r_n$ such that $\sum_2^na_ir_i \leq r$. Since $\frac{(b_1+1)a_i}{a_1}-(b_i+1)\leq 0$ and since $b_1 \geq a_1$, we get
\[
(x^A)_* \left( x^B \lambda ^{\boxtimes n} \right) (\mathcal{A}_r) \leq c^nq^{-r} q^{-\frac{r}{a_1}} r^{n-1}.
\]
Let $N$ be the number of $O^\times$ orbits in $\mathcal{A}_r$ (note that it does not depend on $r$). The density of $(x^A)_*\left( x^B \lambda ^{\boxtimes n}\right)$ at each point of $\mathcal{A} _{r}$ is bounded from above by
\[
\frac{N\cdot  (x^A)_*\left( x^B \lambda ^{\boxtimes n}\right) (\mathcal{A} _r)}{\lambda(\mathcal{A} _r)} \leq c^{n-1}N q^{-\frac{r}{a_1}}r^{n-1},
\]
which tends to 0 as $r$ tends to 0.
}
 \end{enumerate}
}
\end{proof}
}

\subsection{Continuous push forward Implies \FRS} \lbl{subsec:continuous.to.FRS}

In this section we prove the implication $(c) \implies (a)$ in Theorem \ref{thm:push.forward.detailed}. Suppose that $X,Y$ are smooth varieties over a finitely generated field $k$, that $\varphi :X \rightarrow Y$ is a map, and that $x\in X(k)$ such that, for any finite extension $k' \supset k$, there is a local field $F \supset k'$ and a non-negative Schwartz measure $m$ on $X(F)$ that does not vanish at $x$ such that $\varphi_* m$ has a continuous (and hence bounded) density. Let $X^S$ be the smooth locus of $\varphi$ and let $Z=\varphi ^{-1} (\varphi (x))$.

\begin{claim} \label{cla:Z.dense}After passing to a Zariski neighborhood of $x$, $Z \cap X^S$ is a dense subvariety of $Z$.
\end{claim}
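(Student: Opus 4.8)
I will argue by contradiction, running in reverse the phenomenon analysed in Subsection~\ref{subsec:FRS.to.continuous}: if an irreducible component of $Z:=\varphi^{-1}(\varphi(x))$ passing through $x$ were contained in the critical locus of $\varphi$, then the density of $\varphi_*m$ would be forced to be unbounded near $\varphi(x)$, contradicting its continuity. Throughout I use $X^S$ for the smooth locus of $\varphi$ and $\Sigma:=X\smallsetminus X^S$ for its critical locus; since $X^S$ is open, the assertion ``$Z\cap X^S$ is dense in $Z$ on a neighbourhood of $x$'' is equivalent to ``no irreducible component of $Z$ containing $x$ is contained in $\Sigma$''.

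\textbf{Reductions.} After shrinking $X$ to the (irreducible) component containing $x$, and shrinking $Y$ around $\varphi(x)$ to an irreducible open, we may assume $X,Y$ irreducible; by generic smoothness in characteristic $0$, $\Sigma$ is a proper closed subvariety. First, $\varphi$ is dominant: otherwise $\overline{\varphi(X)}$ is a proper closed subvariety of $Y$, hence $|\omega_Y|$-null for a nowhere-vanishing top form $\omega_Y$ defined near $\varphi(x)$, and since $\varphi_*m$ is supported on $\overline{\varphi(X)}(F)$ and has continuous density with respect to $|\omega_Y|$, that density vanishes; then $\varphi_*m=0$, so $m(X(F))=0$, contradicting that $m\ge 0$ does not vanish at $x$. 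Fix $\omega_Y$, set $e:=\dim X-\dim Y$, and suppose for contradiction that $Z_0$ is an irreducible component of $Z$ with $x\in Z_0\subseteq\Sigma$.

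\textbf{A dichotomy.} Every component of the fibre $Z$ of the dominant map $\varphi$ has dimension $\ge e$. Suppose $\dim Z_0=e$, and let $\eta$ be the generic point of $Z_0$. Then the fibre has dimension $e=\dim_\eta X-\dim_{\varphi(x)}Y$ at $\eta$, so by Theorem~\ref{thm:flat_dim} (miracle flatness; $X$ is Cohen--Macaulay and $Y$ regular) $\varphi$ is flat at $\eta$. As $\eta\in\Sigma$ and $X,Y$ are smooth, the scheme-theoretic fibre is singular at $\eta$; being Artinian there, it is non-reduced at $\eta$, i.e.\ $Z_0$ occurs in $\varphi^{-1}(\varphi(x))$ with some multiplicity $\mu\ge 2$. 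Hence either $\dim Z_0>e$, or $\dim Z_0=e$ and the scheme fibre contains $Z_0$ with multiplicity $\mu\ge 2$ (so $\varphi$ is, near a general point of $Z_0$, a $\mu$-fold branched cover in the transverse directions).

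\textbf{The contradiction.} After shrinking we may assume $m\ge c\,|\omega_X|$ on a compact open neighbourhood $B\ni x$, with $c>0$ and $\omega_X$ a nowhere-vanishing top form. Writing $B_r$ for the ball of radius $q^{-r}$ about $\varphi(x)$ and $g\ge 0$ for the continuous (hence locally bounded) density of $\varphi_*m$ with respect to $|\omega_Y|$, one has, for all large $r$,
\[
m\big(\varphi^{-1}(B_r)\big)=\varphi_*m(B_r)\ \le\ \Big(\sup_{\mathrm{near}\ \varphi(x)}g\Big)\cdot|\omega_Y|(B_r)\ \le\ C_1\,q^{-r\dim Y}.
\]
On the other hand, using the quantifier over finite extensions in hypothesis~$(c)$ (as in the proof of Lemma~\ref{lem:CM.finite.integral.implies.RS}), we may replace $k$ by a finite extension and $F$ by a local field over it so that the smooth $F$-points of $Z_0$ accumulate at $x$ (when $\dim Z_0\ge 1$ this follows by taking a curve through $x$ inside $Z_0$, passing to its normalization and a point over $x$, and applying Hensel's lemma, possibly after a further desingularization of $Z_0$; when $\dim Z_0=0$ one has $Z_0=\{x\}$), and re-apply~$(c)$ to obtain a measure with $m\ge c\,|\omega_X|$ near $x$. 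Picking a smooth $F$-point $p_1\in Z_0$ inside $B$ and analytic coordinates near $p_1$ adapted to $Z_0$ and to $\varphi$, the two cases of the dichotomy yield, respectively,
\[
|\omega_X|\big(\varphi^{-1}(B_r)\cap B\big)\ \ge\ C_2\,q^{-r(\dim X-\dim Z_0)},\qquad
|\omega_X|\big(\varphi^{-1}(B_r)\cap B\big)\ \ge\ C_2\,q^{-r(\dim Y-1+1/\mu)},
\]
and both right-hand sides exceed $C_1 q^{-r\dim Y}/c$ for large $r$ (in the first case because $\dim X-\dim Z_0<\dim Y$, in the second because $\mu\ge 2$). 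Since $m\ge c\,|\omega_X|$ on $B$, this contradicts the upper bound, proving the claim.

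\textbf{Main obstacle.} The technical heart is the volume \emph{lower} bound of the last step: (i) producing a smooth $F$-point of the offending component $Z_0$ inside an arbitrary neighbourhood of $x$ — exactly where ``for every finite extension'' in $(c)$ is needed, and where one may have to pass through a desingularization of $Z_0$ to control its $F$-points near $x$; and (ii) writing down the local normal form of $\varphi$ near that point precisely enough to read off the branching order $\mu$ and carry out the estimate — the naive bound of $\varphi^{-1}(B_r)\cap B$ from below by a tube around $Z_0(F)$ is too weak in the equidimensional case and must be replaced by the branched-cover computation, which is the same type of monomial push-forward estimate as in the lemma of Subsection~\ref{subsec:FRS.to.continuous}.
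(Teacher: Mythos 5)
Your argument follows the same contradiction-via-volume-bounds strategy as the paper, but the local estimate at the heart of it is organized around a dichotomy (excess fiber dimension vs.\ scheme-theoretic multiplicity) rather than the paper's single uniform estimate, and the second horn of your dichotomy has a real gap. In the case $\dim Z_0 > e$, the tube-of-radius-$q^{-r}$ estimate around $Z_0(F)$ is correct and elementary. In the equidimensional case, however, your lower bound $|\omega_X|(\varphi^{-1}(B_r)\cap B)\ge C_2\,q^{-r(\dim Y-1+1/\mu)}$ for a degree-$\mu$ finite flat germ is a genuine singularity-theoretic input: it amounts to an upper bound of the form $\dim Y-1+1/\mu$ on the log-canonical threshold of the fiber ideal, which is true (de Fernex--Musta\c{t}\v{a}-type estimates give the stronger $\dim Y\cdot\mu^{-1/\dim Y}$) but needs a proof via resolution or Newton-polyhedron arguments; the ``branched cover in the transverse directions'' normal form you invoke only exists on the nose when $\mu=2$. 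You also need to verify that the multiplicity of the transverse restriction is exactly $\mu$ (not more) on an open set of $Z_0$, and that the estimate is uniform over the slice family -- points you acknowledge but do not close.

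The paper's proof avoids all of this by working one level lower, with the differential alone. If $Z_i\subseteq\Sigma$ is a component through $x$, then $\operatorname{rank}(d\varphi)\le\dim Y-1$ everywhere on $Z_i$, so on some open dense $W_i\subseteq Z_i$ the kernel of $d\varphi$ has constant dimension $\dim X-\dim Y+r$ with $r\ge 1$. After locating a smooth $F$-point $p\in W_i^{sm}(F)\cap\operatorname{supp}(m)$ (using the finite-extension freedom, as you do) and straightening the kernel bundle along $W_i^{sm}(F)$ to a constant subspace, one takes a box $A_\epsilon$ with side lengths $1$ along $W_i$, $\epsilon$ along the extra kernel directions, and $\epsilon^2$ along the remaining $\dim Y-r$ directions. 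The image $\psi(A_\epsilon)$ then lies in a ball of radius $O(\epsilon^2)$ (by the kernel condition together with the quadratic Taylor remainder), of measure $O(\epsilon^{2\dim Y})$, while $\mu(A_\epsilon)\ge\epsilon^{2\dim Y-r}$; the ratio blows up like $\epsilon^{-r}$. This is a single estimate that is insensitive to $\dim Z_i$ and to multiplicities, so no dichotomy or commutative-algebra input is needed -- which is exactly the hard part your proposal defers.
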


In order to prove the claim, we use the following lemma:

\begin{lemma} \label{lem:Zariski.is.dense} Let $V$ be an irreducible variety over a non-archimedean local field $F$ and let $U \subset V^{sm}$ be a Zariski open dense set. Suppose that $U(F) \neq \emptyset$. Then $U(F)$ is dense in $V(F)$ in the analytic topology.
\end{lemma}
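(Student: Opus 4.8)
The plan is to reduce the statement to a local analytic assertion around a single $F$-point of $V$, and to treat smooth points and singular points separately. Fix $v \in V(F)$ and an analytic neighborhood $\Omega \subset V(F)$ of $v$; it suffices to produce a point of $U(F)$ inside $\Omega$. First suppose $v \in V^{sm}(F)$. After shrinking, $\Omega$ is a $p$-adic analytic manifold of pure dimension $d := \dim V$. Since $V$ is irreducible and $U$ is Zariski dense in it, the complement $V \smallsetminus U$ is a closed subvariety of dimension $< d$, so in a Zariski neighborhood of $v$ it is contained in the zero locus of a regular function $g$ that does not vanish identically on $V$. In an analytic chart at $v$, $g$ becomes a convergent power series on a polydisk, and it is not identically zero as such: by the implicit function theorem $V(F)$ contains a polydisk through any of its smooth points, on which the vanishing of $g$ would force $g \equiv 0$ on $V$. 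The zero set of a nonzero convergent power series on a polydisk has empty interior, so $\Omega \smallsetminus \{g = 0\}$ is a nonempty subset of $U(F) \cap \Omega$. In particular $U(F)$ is dense in $V^{sm}(F)$, and $V^{sm}(F) \neq \emptyset$ because it contains $U(F)$.

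It remains to deal with $v$ in the singular locus. Choose a resolution of singularities $\pi \colon \widetilde V \to V$ (see \ref{sssec:res_sing}) that is an isomorphism over $V^{sm}$, hence in particular over $U$; then $\widetilde V$ is smooth and irreducible. Since $\pi$ restricts to an isomorphism over $U$, the set $\widetilde V(F)$ contains a copy of $U(F)$ and is therefore nonempty, and by the smooth case applied to $\widetilde V$ and the dense open $\pi^{-1}(U)$, the set $\pi^{-1}(U)(F)$ is dense in $\widetilde V(F)$. Now $\pi$ is proper, so the induced map $\widetilde V(F) \to V(F)$ of locally compact Hausdorff spaces is continuous and proper — factor $\pi$ through a projective compactification and invoke compactness of $\mathbb{P}^N(F)$ — hence closed. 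Combining closedness with continuity and the density just established gives $\pi(\widetilde V(F)) \subseteq \overline{\pi(\pi^{-1}(U)(F))} = \overline{U(F)}$. So the whole problem is reduced to showing that $v$ lies in $\pi(\widetilde V(F))$.

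The step I expect to be the main obstacle is precisely this last one: showing that every $F$-point of $V$ lifts to an $F$-point of the resolution — equivalently, that each point of $V(F)$ is an analytic limit of points of $V^{sm}(F)$. A proper birational morphism is not surjective on $F$-points in general, so the hypotheses that $V$ is irreducible and $U(F) \neq \emptyset$ must enter in an essential way here. The approach I would take is Noetherian induction on $\dim V$: pass to a suitable irreducible subvariety $C \subseteq V$ through $v$ of one lower dimension that is not contained in the singular locus — chosen so that some branch of $C$ at $v$ is rational over $F$, which is where the presence of an $F$-point in the dense set $U \subseteq V^{sm}$ is exploited — replace $C$ by its normalization, lift $v$, and iterate down to the case of a smooth point already handled. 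Once $v \in \pi(\widetilde V(F))$ is established, the first two paragraphs give $v \in \overline{U(F)}$, and since $v$ was arbitrary this proves that $U(F)$ is dense in $V(F)$.
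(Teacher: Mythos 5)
Your approach tracks the paper's closely in the first two paragraphs: handle the smooth case with the implicit function theorem, then reduce the general case to it via a resolution of singularities $\pi\colon \widetilde V \to V$. The paper's second step is a single sentence: ``By the above, $\pi^{-1}(U)(F)$ is dense in $\widetilde V(F)$. Since $\pi$ is continuous, $U(F)$ is dense in $V(F)$.'' You correctly observe that this does not follow: continuity only gives that $U(F)$ is dense in $\pi(\widetilde V(F))$, and a proper birational morphism need not be surjective on $F$-points. So the obstacle you single out in your third paragraph is a genuine gap, and the paper's own proof walks past it without comment.

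The Noetherian-induction plan you sketch to close the gap cannot work, because the lemma as stated is actually false. Take $V$ to be the nodal cubic $y^2 = x^2(x+a)$ over $F$ with $a \in F^\times \setminus (F^\times)^2$, and $U = V^{sm} = V \smallsetminus \{(0,0)\}$. Then $V$ is (absolutely) irreducible, $U(F)$ is nonempty (it is parametrized by $t \mapsto (t^2-a,\, t^3 - at)$), and yet the node $(0,0)$ is an \emph{isolated} point of $V(F)$: if $(x,y) \in V(F)$ with $x \neq 0$ then $(y/x)^2 = x+a$, so $x+a$ lies in the set of squares $F^2$, which is closed in $F$; since $a \notin F^2$ this forces $|x| \geq \mathrm{dist}(a, F^2) > 0$. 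Equivalently, the two branches at the node are Galois-conjugate over $F(\sqrt a)$ and do not lift to $F$-points of the normalization. So the hypotheses ``$V$ irreducible'' and ``$U(F) \neq \emptyset$'' do not suffice, contrary to what your last paragraph hopes. A correct statement would need an extra hypothesis at the singular point in question --- for instance that it lies in the closure of $V^{sm}(F)$, or the freedom to replace $F$ by a finite extension so that the relevant branches become $F$-rational --- and the paper's application of the lemma (inside the proof of Claim~\ref{cla:Z.dense}) would have to be adjusted accordingly.
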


\begin{proof} We first show the lemma in the case where $V$ is smooth. If $x\in V(F)$ is a smooth point, then the implicit function theorem implies that $x$ has a basis of neighborhoods $\mathcal{N}_i$ consisting of subsets diffeomorphic to $O^{\dim V}$. Since $\dim (V \smallsetminus U) < \dim V$,  it is impossible that $\mathcal{N}_i \subset (V \smallsetminus U)(F)$. Hence $\mathcal{N}_i \cap U(F) \neq \emptyset$ for all $i$.

If $V$ is not smooth, let $\pi:\widetilde{V} \rightarrow V$ be a strong resolution of singularities. By the above, $\pi ^{-1} (U)(F)$ is dense in $\widetilde{V}(F)$. Since $\pi$ is continuous, $U(F)$ is dense in $V(F)$.
\end{proof}

\begin{proof}[Proof of Claim \ref{cla:Z.dense}] Without loss of generality, we can assume that $X$ is irreducible. Let $Z_1,\ldots,Z_n \subset Z$ be the absolute irreducible components of $Z$ containing $x$. Note that $Z_i$ might not be defined over $k$. By passing to a Zariski neighborhood, it is enough to show that $Z_i \cap X^S$ is Zariski dense in $Z_i$ for all $i$. Since $X^S$ is open, it is enough to show that $Z_i \cap X^S$ is non-empty for all $i$.

Fix some $i$, and assume, by contradiction, that $Z_i\cap X^S=\emptyset$. Then $\dim \ker d \varphi |_z > \dim X-\dim Y$ for all $z\in Z_i(\overline{k})$. There is an open set $W_i \subset Z_i$ and an integer $r \geq 1$ such that $\dim \ker d \varphi |_z = \dim X-\dim Y+r$ for all $z\in W_i(\overline{k})$.

Let $k' \supset k$ be a finite extension such that $Z_i,W_i$ are defined over $k'$ and $W_i^{sm}(k') \neq \emptyset$. By the assumption, we get a local field $F \supset k'$ and a Schwartz measure $m$ on $X(F)$ that does not vanish at $x$ and such that $\varphi_*m$ has continuous density. Applying Lemma \ref{lem:Zariski.is.dense}, there is a point $p\in W_i^{sm}(F)\cap \supp(m)$.

Denote the ring of integers of $F$ by $O$. By the implicit function theorem, there are neighborhoods $U_X \subset X(F)$ and $U_Y \subset Y(F)$ of $p$ and $\varphi(x)=\varphi(p)$ respectively, analytic differomorphisms $\alpha_X: U_X \rightarrow O^{\dim X}$, $\alpha_Y: U_Y \rightarrow O^{\dim Y}$, $\alpha_Z:U_X \cap W_i^{sm}(F) \rightarrow O^{\dim Z}$ such that $\alpha_X(p)=0$, $\alpha_{Y}(\varphi(p))=0$, and an analytic map $\psi:O^{\dim X} \rightarrow  O^{\dim Y}$ such that the diagram

$$
\xymatrix
{\,\quad U_{X} \cap W_i(F) \ar[r]^{i} \ar[d]_{\alpha_Z}\ & U_{X} \ar[r]^{\phi|_{U_{X}}} \ar[d]_{\alpha_X}& U_{Y}  \ar[d]_{\alpha_Y}  \\
O^{\dim Z} \ar[r]_{\nu}\ &  O^{\dim X} \ar[r]_{\psi} & O^{\dim Y} }
$$
is commutative, where the inclusion $\nu:O^{\dim Z} \rightarrow O^{\dim X}$ is the first coordinate inclusion and $i$ is the obvious inclusion. Applying a linear map, we can assume that $\ker d \psi |_0= \linspan \left\{ e_1,\ldots,e_{\dim X-\dim Y+r} \right\}$. After an additional analytic differomorphism of $O^{\dim X}$ and $O^{\dim Y}$, we can assume further that $\ker d \psi |_z=\linspan \left\{ e_1,\ldots,e_{\dim X-\dim Y+r} \right\}$ for all $z\in O^{\dim Z}$. Denoting $\mu=(\alpha_X)_* (1_{U_X}m)$, we get that $\mu$ is a Schwartz measure that does not vanish at $0$, and that $\psi_*(\mu)$ has continuous density at $0$. By restricting to a small enough ball around 0 and applying a homothety, we can assume that $\mu$ is the normalized Haar measure.

For every $0<\epsilon<1$, let
\[
A_\epsilon=\left\{ (x_1,\ldots,x_{\dim X}) \in O^{\dim X} \mid |x_i|<\epsilon ^{n_i} \right\}
\]
where
\[
n_i=\left\{ \begin{matrix}  0 & i=1,\ldots,\dim Z \\ 1 & i=\dim Z+1,\ldots,\dim X-\dim Y+r \\ 2 & i=\dim X-\dim Y+r+1\ldots, \dim X \end{matrix} \right . .
\]
We have
\[
\mu(A_\epsilon)=\epsilon ^{\dim X-\dim Y+r-\dim Z+2(\dim Y -r)}=\epsilon ^{\dim X +\dim Y-\dim Z -r} \geq \epsilon ^{2\dim Y-r}.
\]
There is a constant $C$ such that, for any $\epsilon$, $\psi (A_\epsilon)$ is contained in the ball of radius $C\epsilon ^2$ around $0$. The measure of such ball is less than or equal to $(C\epsilon) ^{2\dim Y}$. If we denote the normalized Haar measure on $O^{\dim Y}$ by $\lambda$, we get that $\frac{\psi_* \mu (B(0,C \epsilon))}{\lambda (B(0,C \epsilon))}> \frac{\epsilon ^{-r}}{C^{2\dim Y}}$, so $\psi_* \mu$ does not have a bounded density at $0$ with respect to $\lambda$, contradicting the assumptions.

\end{proof}

As a corollary, we get that $\dim_x Z=\dim X-\dim Y$, so, after passing to a Zariski neighborhood of $x$, the map $\varphi$ is flat at $x$, $Z$ is reduced, and is a local complete intersection (and, therefore, is Gorenstein).

Next, we prove that $x$ is a rational singularity of $Z$. Fix invertible top forms $\omega_X\in \Gamma(X,\Omega_X)$ and $\omega_Y\in \Gamma(Y,\Omega_Y)$ (we may need to pass to a smaller Zariski neighborhoods of $x$ and $\varphi(x)$). Let $t_1,\ldots,t_{\dim Y}$ be local coordinates at $\varphi(x)$, and let $s_i=t \circ \varphi$. Under the isomorphism $\Omega_X \rightarrow \left( N_{Z} ^{X}\right) ^* \otimes \Omega_Z$ from \S\S\S\ref{sec:pi_shrik}, the invertible section $\omega_X$ is mapped to an element of the form $(s_1 \wedge \cdots \wedge s_{\dim Y}) \otimes \eta$ for some invertible section $\eta$ of $\Omega_Z$. By Proposition \ref{prop:shrik_prop} part \ref{prop:shrik_prop:5}, the restriction $\omega_X |_{X^S}$ is mapped to $(s_1 \wedge \cdots \wedge s_{\dim Y}) \otimes \frac{\omega_X}{\varphi ^*(d s_{\dim Y} \wedge \cdots \wedge d s_1)}$ under the isomorphism $\Omega_{X^S} \rightarrow \left( N_{Z\cap X^S}^{X^S} \right)^* \otimes \Omega_{Z \cap X^S}$. Since the isomorphism $\Omega_X \rightarrow \left( N_{Z}^{X} \right) ^* \otimes \Omega_Z$ is compatible with restrictions to open sets, we get $\eta |_{Z \cap X^S}=\frac{\omega_X}{\varphi ^*(d s_{\dim Y} \wedge \cdots \wedge d s_1)}$; denote this last top form by $\omega_Z$. We will apply Corollary \ref{lem:Gorenstein.finite.integral.implies.RS} with the invertible section $\eta$. Fix a finite extension $k'$ of $k$. By assumption, there are a local field $F$ and a Schwartz function $f$ on $X(F)$ that does not vanish at $x$, such that $\varphi_*(f| \omega_X |)$ has a continuous density with respect to $| \omega_Y |$, which we denote by $g:Y(F) \rightarrow \mathbb{R}$. By Corollary \ref{lem:Gorenstein.finite.integral.implies.RS}, it is enough to show that the integral of $f| \omega_Z |$ over $(Z\cap X^S)(F)$ is finite.

Fix some embedding of $X$ into an affine space, and let $d$ be the metric on $X(F)$ induced from the valuation metric. Define a function $h_\epsilon : X(F) \rightarrow \mathbb{R}$ by
\[
h_\epsilon(p)= \left\{ \begin{matrix} 1 & d(p,X^S(F)) \geq \epsilon \\ 0 & d(p,X^S(F)) < \epsilon \end{matrix} \right. ,
\]
and let $g_\epsilon: Y(F) \rightarrow \mathbb{R}$ be the density of the measure $\varphi_*(fh_\epsilon | \omega_X |)$ with respect to $| \omega_Y |$. By Proposition \ref{prop:push.forward.smooth.map}, $g_\epsilon$ is a continuous function and $g_\epsilon(\varphi(x))=\int_{Z(F)} fh_\epsilon | \omega_Z |$. Since $fh_\epsilon | \omega_X | \leq f| \omega_X |$, we get that $g_\epsilon \leq g$. Hence,
\[
\int_{(Z\cap X^S)(F)} f| \omega_Z|=\lim_{\epsilon \rightarrow \infty} \int_{Z(F)} fh_\epsilon | \omega_Z | = g_\epsilon (\varphi(x)) \leq g(\varphi(x)),
\]
so the integral $\int_{(Z\cap X^S)(F)} f| \omega_Z|$ converges.

\section{Representation Growth} \lbl{sec:rep.growth}

\subsection{Special Values of Representation Zeta Functions} \lbl{subsec:special.values}
\setcounter{theorem}{0}

{
In this section, we prove Proposition \ref{prop:Frobenius.Formula}. Using the results of Section \ref{sec:push.forward}, we deduce Theorem \ref{thm:def.and.representation.growth} which relates the representation growths of compact open subgroups of a semi-simple algebraic group over a local field and rational singularities of the deformation variety of the same group. We then deduce Theorem \ref{thm:intro.SLd.FRS} from Theorem \ref{thm:def.and.representation.growth} and the results of Section \ref{sec:singularities.def}.  These theorems imply Theorems \ref{thm:intro.SLd.representation.growth} and \ref{thm:intro.SLd.singularities.deformation}.
}
\begin{proof}[Proof of Proposition \ref{prop:Frobenius.Formula}] Let $\Gamma(i)$ be a decreasing sequence of finite-index normal subgroups of $\Gamma$ that form a basis of neighborhoods of 1. Denote the quotients $\Gamma/\Gamma(i)$ by $\Gamma_i$. Any representation of $\Gamma_i$ gives rise to a representation of $\Gamma$, and $\Irr(\Gamma)$ is the increasing union of the sets $\Irr(\Gamma_i)$. For any $g\in \Gamma$ and $i$, the set $\Phi_{\Gamma,n} ^{-1} (g \Gamma(i))$ is a union of cosets of $\left( \Gamma(i) \right) ^{2n}$. By Frobenius' Theorem (\ref{thm:Frobenius}), the number of these cosets is
\[
| \Gamma_i |^{2n-1} \sum_{\Irr(\Gamma_i)} \frac{\chi(g)}{\chi(1)^{2n-1}}.
\]
We get that
\begin{equation} \lbl{eq:Frobenius.pro.finite}
\frac{\lambda(\Phi_{\Gamma,n} ^{-1} (g \Gamma(i)))}{\lambda(g \Gamma(i))}=\sum_{\Irr(\Gamma_i)} \frac{\chi(g)}{\chi(1)^{2n-1}}.
\end{equation}

We now prove the equivalences: \begin{itemize}
\item[(1) $\Rightarrow$ (2)] is clear.
\item[(2) $\Rightarrow$ (3)] Assuming that $\frac{\lambda(\Phi_{\Gamma,n} ^{-1} (\Gamma(i)))}{\lambda(\Gamma(i))}$ are bounded, we get that the sums $\sum_{\Irr(\Gamma_i)} \frac{1}{\chi(1)^{2n-2}}$ are bounded, and, hence, the sum $\sum_{\Irr(\Gamma)} \frac{1}{\chi(1)^{2n-2}}$ converges.
\item[(3) $\Rightarrow$ (1)] Assume that  the sum $\sum_{\Irr(\Gamma)} \frac{1}{\chi(1)^{2n-2}}$ converges. Since $\left | \frac{\chi(g)}{\chi(1)^{2n-1}} \right | \leq \left | \frac{1}{\chi(1)^{2n-2}}\right |$, we get that $$\Sigma(g):=\sum_{\Irr(\Gamma)} \frac{\chi(g)}{\chi(1)^{2n-1}}$$ is a continuous function. By Corollary \ref{cor:push.forward.dominant.map}, the measure $\left( \Phi_{\Gamma,n} \right)_* \lambda_{\gamma}^{2n}$ has an $L_1$-density, which we denote by $f$. Lebesgue's Density Theorem implies that, for almost all $g\in \Gamma$,
\[
f(g)=\lim_{i \rightarrow \infty}  \frac{\lambda(\Phi_{\Gamma,n} ^{-1} (g \Gamma(i)))}{\lambda(g \Gamma(i))}.
\]

Equation (\ref{eq:Frobenius.pro.finite}) implies that $f(g)=\Sigma(g)$ for almost all $g$, so the continuous function $\Sigma(g)$ is a density for $\left( \Phi_{\Gamma,n} \right)_*\lambda_\Gamma ^{2n}$.
\end{itemize}
\end{proof}

We move on to the proof of Theorem \ref{thm:def.and.representation.growth}. We first show that the map $\Phi_{G,n}$ is flat.

\nir{
The following lemma is a consequence of the elementary properties of restriction and induction:

\begin{lemma} \lbl{lem:commensurability} (\cite[Lemma 3.3]{A}) Suppose that $\Gamma$ is a FAb topological group and $\Delta$ is a finite index subgroup of $\Gamma$. For every $s\in \mathbb{R}_{>0}$, the series $\zeta_{\Gamma}(s)$ converges if and only if the series $\zeta_\Delta(s)$ converges. Moreover,
\[
\frac{1}{[\Gamma : \Delta]^{1+s}}\zeta_\Delta(s) \leq \zeta_\Gamma(s) \leq [\Gamma : \Delta] \zeta_\Delta(s).
\]
\end{lemma}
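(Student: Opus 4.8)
The plan is to run the standard restriction--induction comparison between $\Irr(\Gamma)$ and $\Irr(\Delta)$, using only Frobenius reciprocity and the formula $\dim\bigl(\Ind_\Delta^\Gamma\psi\bigr)=n\,\psi(1)$, where $n:=[\Gamma:\Delta]$. A preliminary remark: $\Delta$ is again FAb (a finite-index subgroup of $\Delta$ is a finite-index subgroup of $\Gamma$), so $\zeta_\Delta$ is defined; and since every continuous finite-dimensional representation of the groups in question factors through a finite quotient, one may pass to a common finite quotient of $\Gamma$ through which all the representations appearing below factor, reducing everything to ordinary finite-group character theory.

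The combinatorial core is the following dimension bound, which I would establish first. If $\chi\in\Irr(\Gamma)$ and $\psi$ is an irreducible constituent of $\chi|_\Delta$, then $\psi(1)\le\chi(1)$, while Frobenius reciprocity makes $\chi$ a constituent of $\Ind_\Delta^\Gamma\psi$, so $\chi(1)\le n\,\psi(1)$. Writing $\chi|_\Delta=\bigoplus_i m_i\psi_i$ and using $\psi_i(1)\ge\chi(1)/n$ gives $\chi(1)=\sum_i m_i\psi_i(1)\ge(\chi(1)/n)\sum_i m_i$, hence $\sum_i m_i\le n$; so $\chi|_\Delta$ has at most $n$ irreducible constituents (with multiplicity). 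Dually, the same estimate applied to $n\,\psi(1)=\dim\Ind_\Delta^\Gamma\psi$ shows $\Ind_\Delta^\Gamma\psi$ has at most $n$ irreducible constituents for each $\psi\in\Irr(\Delta)$.

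Next I would choose, for every $\chi\in\Irr(\Gamma)$, an irreducible constituent $r(\chi)$ of $\chi|_\Delta$, and for every $\psi\in\Irr(\Delta)$ an irreducible constituent $\iota(\psi)$ of $\Ind_\Delta^\Gamma\psi$. By the previous paragraph the fibres of $r$ and $\iota$ have size at most $n$ (if $r(\chi)=\psi$ then $\chi$ is a constituent of $\Ind_\Delta^\Gamma\psi$; if $\iota(\psi)=\chi$ then $\psi$ is a constituent of $\chi|_\Delta$), and dimensions move by bounded factors: $r(\chi)(1)\le\chi(1)$ and $\iota(\psi)(1)\le n\,\psi(1)$. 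For $s>0$ this yields
\[
\zeta_\Gamma(s)=\sum_{\chi\in\Irr(\Gamma)}\chi(1)^{-s}\le\sum_{\chi\in\Irr(\Gamma)}r(\chi)(1)^{-s}\le n\sum_{\psi\in\Irr(\Delta)}\psi(1)^{-s}=[\Gamma:\Delta]\,\zeta_\Delta(s),
\]
and, since $\psi(1)^{-s}\le n^{s}\,\iota(\psi)(1)^{-s}$,
\[
\zeta_\Delta(s)=\sum_{\psi\in\Irr(\Delta)}\psi(1)^{-s}\le n^{s}\sum_{\psi\in\Irr(\Delta)}\iota(\psi)(1)^{-s}\le n^{1+s}\sum_{\chi\in\Irr(\Gamma)}\chi(1)^{-s}=[\Gamma:\Delta]^{1+s}\,\zeta_\Gamma(s),
\]
which rearranges to $\frac{1}{[\Gamma:\Delta]^{1+s}}\zeta_\Delta(s)\le\zeta_\Gamma(s)\le[\Gamma:\Delta]\,\zeta_\Delta(s)$. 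Reading these inequalities in $[0,\infty]$ also gives the equivalence of convergence.

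The argument is purely formal, so I do not expect a genuine obstacle; the only points demanding care are the reduction to finite quotients (so that Frobenius reciprocity and the dimension formula apply verbatim to the continuous representations in play) and the observation that $\Delta$ inherits the FAb property, which guarantees that the index sets of the sums consist of finitely many representations in each dimension.
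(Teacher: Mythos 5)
Your proof is correct. The paper does not prove this lemma but cites it directly from \cite{A}; the restriction--induction comparison via Frobenius reciprocity that you give is precisely the standard argument (and is the one used in the cited source). The only small caveat worth naming: one does not actually need to pass to a common finite quotient — since $\Delta$ has finite index in a profinite group it is open, so $\Ind_\Delta^\Gamma$ of a continuous finite-dimensional representation is again continuous and Frobenius reciprocity holds directly for the topological groups; but your finite-quotient reduction is also perfectly valid, because for any single $\chi$ (resp.\ $\psi$) one can find an open normal $N\subset\Delta$ of finite index in $\Gamma$ through which the relevant representations factor.
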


\begin{theorem}[{\cite[Theorem 1.1\RamiA{,} cf. Section 2]{LS3}}] \lbl{thm:LS} For any simply connected simple algebraic group $G$ defined over a finite field $\mathbb{F}_q$ \RamiA{and any real $s >1$,}
\[
\lim_{m \rightarrow \infty} \zeta_{G(\mathbb{F}_{q^m})}(s) = 1.
\]
\end{theorem}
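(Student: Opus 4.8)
The plan is to separate off the trivial representation, which contributes exactly $1$ to $\zeta_{G(\mathbb{F}_{q^m})}(s)$, and to show that everything else disappears in the limit. Since $G$ is simply connected and simple, the finite group $G(\mathbb{F}_{q^m})$ is perfect (equivalently, quasi-simple) for all but finitely many $m$, so for large $m$ the trivial character $\mathbf{1}$ is the unique one-dimensional character and every other $\chi \in \Irr(G(\mathbb{F}_{q^m}))$ has $\chi(1) \geq 2$. It therefore suffices to prove that $\sum_{\chi \neq \mathbf{1}} \chi(1)^{-s} \to 0$ as $m \to \infty$.

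I would do this with two classical inputs, both uniform in $m$. First, the number of conjugacy classes of $G(\mathbb{F}_{q^m})$, hence $\lvert\Irr(G(\mathbb{F}_{q^m}))\rvert$, is at most $C_1\,(q^m)^{r}$, where $r := \rk(G)$ and $C_1 = C_1(G)$: the class number of a finite group of Lie type grows at most polynomially of degree equal to the rank (Steinberg; explicit constants in Fulman--Guralnick). Second, by the Landazuri--Seitz--Zalesskii lower bound on character degrees (with the extensions for the exceptional types due to Seitz--Zalesskii and L\"{u}beck), there is a constant $c = c(G) > 0$ with $\chi(1) \geq c\,(q^m)^{r}$ for every nontrivial $\chi$ — type by type, the minimal nontrivial degree is at least a constant times $q^{m\cdot \rk(G)}$ (visible from the Steinberg representation, the Weil representations, the minimal-degree nontrivial unipotent characters, and so on). Combining these,
\[
\sum_{\chi \neq \mathbf{1}} \chi(1)^{-s} \;\leq\; \lvert\Irr(G(\mathbb{F}_{q^m}))\rvert \cdot \bigl(c\,(q^m)^{r}\bigr)^{-s} \;\leq\; C_1 c^{-s}\,(q^m)^{\,r(1-s)},
\]
which tends to $0$ for every fixed $s > 1$; this proves the theorem. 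The same argument works for the twisted $\mathbb{F}_q$-forms, Deligne--Lusztig theory supplying both inputs uniformly: each nontrivial $\chi$ lies in a Lusztig series $\mathcal{E}(G^{*F},[t])$ with degree $[G^{*F}:C_{G^*}(t)^F]_{p'}\cdot\psi(1)$ for $\psi$ unipotent on $C_{G^*}(t)^F$, and for noncentral $t$ the $p'$-index is already of size at least $(q^m)^{r}$ while the (boundedly many) unipotent characters in the trivial series have nontrivial members of degree at least a constant times $(q^m)^{r}$.

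The point I would flag is that the exponent $r(1-s)$ is genuinely on the boundary at $s = 1$, and in fact tight there for types $A_\ell$ and $C_\ell$, where the minimal nontrivial degree is only comparable to $q^{m\cdot r}$ rather than much larger. This is exactly why the statement fails at $s=1$: for instance $\zeta_{\SL_2(\mathbb{F}_{q^m})}(1) \to 2$, since $\SL_2(\mathbb{F}_{q^m})$ has on the order of $q^m$ irreducible characters each of degree on the order of $q^m$. So the only real content is the pair of size estimates above, and the mild care needed is in checking that the minimal nontrivial character degree is at least a constant times $q^{m\cdot\rk(G)}$ uniformly across all types, including the small-rank exceptional ones; with that in hand the convergence is immediate.
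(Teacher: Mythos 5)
The paper does not prove this theorem; it is imported verbatim from Liebeck--Shalev~\cite{LS3}, so there is no in-paper argument to compare against. Taking your proof on its own merits: it is correct, and it is considerably more elementary than the Deligne--Lusztig accounting that Liebeck--Shalev use. Your two inputs are exactly what is needed: the class-number bound $\lvert\Irr(G(\mathbb{F}_{q^m}))\rvert = O\left((q^m)^r\right)$ (Steinberg's count of $q^r$ semisimple classes plus a bounded number of unipotent parts) and the Landazuri--Seitz--Zalesskii lower bound $\chi(1)\gg (q^m)^r$ for $\chi\neq \mathbf 1$, which together crush $\sum_{\chi\neq\mathbf 1}\chi(1)^{-s}$ for any fixed $s>1$. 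What this coarse counting costs is sharpness: Liebeck--Shalev actually obtain $\lim_m\zeta_{G(\mathbb{F}_{q^m})}(s)=1$ for all $s>2/h$ with $h$ the Coxeter number, which is strictly below $1$ except for $\SL_2$; they get there by stratifying $\Irr$ by Lusztig series and noting that only $O(q^m)$ characters, not $O((q^m)^r)$, sit near the minimal degree. This also means your closing remark overclaims slightly: the exponent $r(1-s)$ being tight at $s=1$ is only genuinely the case for $A_1$, where both the class number and the minimal degree are of order $q^m$ and almost every character has near-minimal degree (whence $\zeta_{\SL_2(\mathbb{F}_{q^m})}(1)\to 2$). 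For $A_\ell$ with $\ell\geq 2$ and for $C_\ell$ the minimal nontrivial degree is indeed $\asymp (q^m)^r$, but the characters of that degree number only $O(q^m)$, so the true zeta value at $s=1$ still tends to $1$; it is your bound, not the limit, that is lossy there. Since the theorem is only being invoked for $s>1$, none of this affects the validity of your argument.
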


\begin{corollary} For any simple algebraic group $G$ defined over a finite field $\mathbb{F}_q$, there is a constant $C$ such that, for any integer $m \geq 1$ and any real $s >1$,
\[
\zeta_{G(\mathbb{F}_{q^m})}(s) \leq C.
\]
\end{corollary}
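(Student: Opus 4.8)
The plan is to deduce the corollary from Theorem \ref{thm:LS} by first reducing the general simple $G$ to the simply connected case via Lemma \ref{lem:commensurability}, and then converting the pointwise convergence in Theorem \ref{thm:LS} into a uniform bound using monotonicity of $\zeta$ in $s$.

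First I would pass to the simply connected cover. Let $\widetilde{G}$ be the simply connected cover of $G$, with central isogeny $\pi:\widetilde{G}\to G$ whose kernel is a finite group scheme $Z'$ contained in the center of $\widetilde{G}$, of order bounded in terms of the Dynkin type of $G$ alone. For each $m$, the induced homomorphism $\widetilde{G}(\mathbb{F}_{q^m})\to G(\mathbb{F}_{q^m})$ has image a subgroup $\Delta_m\leq G(\mathbb{F}_{q^m})$ of index at most $|Z'(\mathbb{F}_{q^m})|\leq|Z'|$, and $\Delta_m\cong\widetilde{G}(\mathbb{F}_{q^m})/Z'(\mathbb{F}_{q^m})$ is a quotient of $\widetilde{G}(\mathbb{F}_{q^m})$. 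Pulling irreducible representations back along this quotient (dimensions are preserved) gives $\zeta_{\Delta_m}(s)\leq\zeta_{\widetilde{G}(\mathbb{F}_{q^m})}(s)$, and Lemma \ref{lem:commensurability}, applied to the finite --- hence FAb --- group $G(\mathbb{F}_{q^m})$ and its finite-index subgroup $\Delta_m$, yields
\[
\zeta_{G(\mathbb{F}_{q^m})}(s)\ \leq\ [\,G(\mathbb{F}_{q^m}):\Delta_m\,]\cdot\zeta_{\Delta_m}(s)\ \leq\ |Z'|\cdot\zeta_{\widetilde{G}(\mathbb{F}_{q^m})}(s)
\]
for all $m\geq1$ and all $s>1$. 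Hence it suffices to bound $\zeta_{\widetilde{G}(\mathbb{F}_{q^m})}(s)$ uniformly, i.e.\ we may assume $G=\widetilde{G}$ is simply connected.

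Now, for simply connected $G$, I would combine Theorem \ref{thm:LS} with monotonicity. For any finite group $H$ the sum $\zeta_H(s)=\sum_{\chi}\chi(1)^{-s}$ is finite for every real $s$, and since $\chi(1)\geq1$ the function $s\mapsto\zeta_H(s)$ is non-increasing on $(0,\infty)$. Fix $s_0=2$. By Theorem \ref{thm:LS}, $\zeta_{G(\mathbb{F}_{q^m})}(2)\to1$ as $m\to\infty$, so this sequence is bounded, say by $C_0$; by monotonicity, $\zeta_{G(\mathbb{F}_{q^m})}(s)\leq C_0$ for every $m\geq1$ and every $s\geq2$.

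The remaining range $1<s<2$ is, I expect, the main obstacle: pointwise convergence by itself is insufficient, since abstractly a family of non-increasing functions $f_m$ with $f_m(s)\to1$ for each fixed $s>1$ can still have $\sup_m\lim_{s\to1^+}f_m(s)=\infty$. To close the gap I would invoke the finer output of the Liebeck--Shalev analysis (the ``cf.\ Section 2'' in the citation of Theorem \ref{thm:LS}): the irreducible character degrees of $G(\mathbb{F}_{q^m})$ fall into boundedly many families of ``generic degree'', each given by a polynomial in $q^m$ whose multiplicity is controlled by its degree, which produces a bound $\zeta_{G(\mathbb{F}_{q^m})}(1)=\sum_{\chi}\chi(1)^{-1}\leq C_1$ with $C_1$ depending only on the type of $G$. (If one is content to state the corollary only for $s$ bounded away from $1$, the first two steps already suffice.) By monotonicity once more, $\zeta_{G(\mathbb{F}_{q^m})}(s)\leq C_1$ for all $s\geq1$, and $C=|Z'|\cdot\max(C_0,C_1)$ works.
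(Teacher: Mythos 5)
Your proposal follows the same route as the paper's proof: pass to the simply connected cover, use the upper bound in Lemma~\ref{lem:commensurability} together with the bounded index of the image $f(G^{sc}(\mathbb{F}_{q^m}))\leq G(\mathbb{F}_{q^m})$, and then appeal to Theorem~\ref{thm:LS} for the simply connected case. The one substantive difference is that you make explicit something the paper leaves implicit: as stated, Theorem~\ref{thm:LS} gives only pointwise-in-$s$ convergence, which, combined with the monotonicity of $s\mapsto\zeta_H(s)$, yields a bound for $s\geq s_0$ for any fixed $s_0>1$, but does not by itself control $\sup_m\zeta_{G(\mathbb{F}_{q^m})}(s)$ as $s\to1^{+}$ (your abstract counterexample to that deduction is correct). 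The paper's proof simply asserts "By Theorem \ref{thm:LS}, there is a constant $D$ such that, for every $m\geq 1$ and $s>1$...", silently relying on the quantitative estimates in \cite[Section 2]{LS3} (hence the ``cf.\ Section 2'' in the citation), which give degree polynomials and multiplicity bounds uniform in $q^m$ and in fact bound $\zeta_{G^{sc}(\mathbb{F}_{q^m})}(s)$ for all $s>2/h$, where $h$ is the Coxeter number --- in particular at $s=1$. So your ``expected obstacle'' is real relative to the literal statement of Theorem~\ref{thm:LS}, and your proposed fix is exactly what the paper's citation is pointing to; the two proofs are otherwise identical.
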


\begin{proof} Let $f:G^{sc} \rightarrow G$ be the simply connected cover of $G$. By Theorem \ref{thm:LS}, there is a constant $D$ such that, for every  \RamiA{$m \geq 1$ and $s >1$},
we have
$\zeta_{f(G^{sc}(\mathbb{F}_{q^m}))}(s) \leq \zeta_{G^{sc}(\mathbb{F}_{q^m})}(s) \leq D$. Since $f(G^{sc}(\mathbb{F}_{q^m}))$ has bounded index in $G(\mathbb{F}_{q^m})$, the result follows from Lemma \ref{lem:commensurability}.
\end{proof}
}

{

\begin{corollary}[{cf. \cite[Corollary 1.11]{LS}}] \lbl{cor:Phi.flat} Let $G$ be a simple algebraic group over a field $E$ and let $n \geq 2$ be an integer. Then  \begin{enumerate}
\item The map $\Phi_{G,n}$ is flat.
\item If $G$ is simply connected, then all the fibers $\Phi_{G,n} ^{-1} (g)$ are irreducible.
\end{enumerate}
\end{corollary}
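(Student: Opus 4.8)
The plan is to deduce flatness of $\Phi_{G,n}:G^{2n}\to G$ from a fiber-dimension count, using the ``miracle flatness'' criterion: since $G^{2n}$ is Cohen--Macaulay (it is smooth) and $G$ is smooth (hence regular), the map $\Phi_{G,n}$ is flat if and only if all its fibers have the expected dimension $2n\dim G-\dim G=(2n-1)\dim G$; this is Theorem \ref{thm:flat_dim} in the excerpt. So the whole problem reduces to bounding, for every $g\in G(\overline E)$, the dimension of $\Phi_{G,n}^{-1}(g)$ from above by $(2n-1)\dim G$ (the lower bound is automatic, as every nonempty fiber of a dominant morphism between irreducible varieties has dimension at least the generic fiber dimension, and $\Phi_{G,n}$ is dominant because $1$ lies in its image with a fiber of the right dimension once $n\ge 2$, by standard surface-group arguments — or one simply invokes surjectivity of the commutator word map on a semisimple group). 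By a spreading-out/semicontinuity argument, it suffices to treat the case where $E$ is a finite field $\mathbb{F}_q$ and then, after base change, all of its finite extensions $\mathbb{F}_{q^m}$; over $\overline{\mathbb{F}_q}$ the fiber dimension over $g$ is controlled by the growth rate in $m$ of $\#\Phi_{G,n}^{-1}(g)(\mathbb{F}_{q^m})$, via the Lang--Weil estimate (the number of $\mathbb{F}_{q^m}$-points of a variety of dimension $\delta$ is $q^{m\delta}(1+o(1))$ times the number of top-dimensional geometric components). This is exactly the mechanism of \cite[Corollary 1.11]{LS}, which is cited as the model.

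The key input is the representation-theoretic count. By Frobenius' Theorem \ref{thm:Frobenius} applied to the finite group $G(\mathbb{F}_{q^m})$, the number of solutions in $G(\mathbb{F}_{q^m})^{2n}$ to $[x_1,y_1]\cdots[x_n,y_n]=g$ equals
\[
|G(\mathbb{F}_{q^m})|^{2n-1}\sum_{\chi\in\Irr(G(\mathbb{F}_{q^m}))}\frac{\chi(g)}{\chi(1)^{2n-1}}.
\]
Since $|\chi(g)|\le\chi(1)$, this count is bounded above by $|G(\mathbb{F}_{q^m})|^{2n-1}\,\zeta_{G(\mathbb{F}_{q^m})}(2n-2)$, and it is bounded below (take $g=1$, or bound $|\chi(g)/\chi(1)^{2n-1}|$ crudely and use that the trivial character contributes $1$) in terms of the same zeta value. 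Now $|G(\mathbb{F}_{q^m})|^{2n-1}=q^{m(2n-1)\dim G}(1+o(1))$, and the Corollary just before this statement (the boundedness $\zeta_{G(\mathbb{F}_{q^m})}(s)\le C$ for $s>1$, uniformly in $m$, which is derived from Theorem \ref{thm:LS} of Liebeck--Shalev together with Lemma \ref{lem:commensurability}) shows that the zeta factor is $O(1)$ and bounded below by $1$. Hence $\#\Phi_{G,n}^{-1}(g)(\mathbb{F}_{q^m})=q^{m(2n-1)\dim G}\cdot\Theta(1)$, and Lang--Weil forces $\dim\Phi_{G,n}^{-1}(g)=(2n-1)\dim G$ for all $g$, giving part (1). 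Note $2n-2\ge 2>1$ is exactly where $n\ge 2$ is used, so that the boundedness corollary applies.

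For part (2), assume $G$ is simply connected. Then $G(\mathbb{F}_{q^m})$ is (for $m$ large, or for all $m$ in the relevant range) quasisimple or at least a group with no abelian quotient interfering, and crucially Theorem \ref{thm:LS} gives $\lim_{m\to\infty}\zeta_{G(\mathbb{F}_{q^m})}(2n-2)=1$. Plugging this back into the Frobenius formula, the leading term of $\#\Phi_{G,n}^{-1}(g)(\mathbb{F}_{q^m})$ is $q^{m(2n-1)\dim G}(1+o(1))$ with leading coefficient $\to 1$; by Lang--Weil the number of top-dimensional geometric irreducible components of $\Phi_{G,n}^{-1}(g)$ is the limit of this leading coefficient, namely $1$. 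To upgrade ``unique top-dimensional component'' to ``irreducible'' one observes that $\Phi_{G,n}$ is flat (part (1)) with $G^{2n}$ Cohen--Macaulay, so every fiber is Cohen--Macaulay, in particular has no embedded components and is equidimensional; a Cohen--Macaulay scheme that is generically reduced and has a single top-dimensional component and is equidimensional is therefore irreducible (and reduced). Thus all fibers are irreducible.

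The step I expect to be the main obstacle is the passage from the characteristic-$p$, finite-field count to the characteristic-zero (or general base field $E$) statement: one must justify that flatness — equivalently, the fiber-dimension bound — can be checked after base change to a finite field, which requires a standard but slightly delicate spreading-out argument (realize $G$ over a finitely generated $\mathbb{Z}$-algebra, use constructibility and upper semicontinuity of fiber dimension, and the fact that closed points of such a scheme have finite residue fields) plus the remark that fiber dimension is insensitive to field extension. Everything else is either cited (Frobenius, Lang--Weil, Liebeck--Shalev, miracle flatness) or routine.
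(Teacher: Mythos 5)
Your proposal matches the paper's proof essentially step for step: reduce to $E=\mathbb{F}_q$, apply the dimension criterion (Theorem \ref{thm:flat_dim}), bound $\#\Phi_{G,n}^{-1}(g)(\mathbb{F}_{q^m})$ via Frobenius' formula together with the uniform Liebeck--Shalev bound on $\zeta_{G(\mathbb{F}_{q^m})}(2n-2)$, and invoke Lang--Weil first to cap the fiber dimension and then, using $\lim_{m\to\infty}\zeta_{G(\mathbb{F}_{q^m})}(2n-2)=1$ in the simply connected case, to count the top-dimensional geometric components. The two small elaborations you supply beyond the paper's text — the spreading-out justification for reducing to finite fields, and the complete-intersection/equidimensionality observation needed to pass from ``one top-dimensional component'' to ``irreducible'' — are both correct and are left implicit in the paper (for the latter, note it is enough that the fiber is cut out by $\dim G$ equations in the smooth $G^{2n}$ and has dimension $(2n-1)\dim G$, so all components are top-dimensional; the full Cohen--Macaulay machinery is not required).
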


\begin{proof} It is enough to prove the claims assuming $E=\mathbb{F}_q$ is a finite field. \begin{enumerate}
\item By Theorem \ref{thm:flat_dim}, we need to prove that $\dim \, \Phi_{G,n} ^{-1}(g) \leq (2n-1)\cdot \dim \, G$ for every $g\in G(\overline{\mathbb{F}_q})$.

Fixing $g$, we can enlarge $q$ and assume that $g\in G(\mathbb{F}_q)$. By Frobenius' Theorem (Theorem \ref{thm:Frobenius}), we get that, for any $m$,
\begin{equation} \lbl{eq:size.fibers.flat}
| \Phi_{G,n} ^{-1}(g)(\mathbb{F}_{q^m}) | = | G(\mathbb{F}_{q^m}) |^{2n-1} \cdot \sum_{\chi \in \Irr G(\mathbb{F}_{q^m})} \frac{\chi(g)}{\chi(1)^{2n-1}} \leq
\end{equation}
\[
\leq | G(\mathbb{F}_{q^m}) |^{2n-1} \cdot \sum_{\chi \in \Irr G(\mathbb{F}_{q^m})} \frac{1}{\chi(1)^{2n-2}} \leq C2^{(2n-1)\dim \, G}q^{m(2n-1)\dim \, G},
\]
where the first inequality follows from the inequality $|\chi(g)| \leq \chi(1)$, and the second inequality follows from Theorem \ref{thm:LS} and the inequality $| G(\mathbb{F}_{q^m}) | < 2^{\dim \, G}q^{m \cdot \dim \, G}$ (see \cite[Lemma 3.5]{Nor}). Equation \eqref{eq:size.fibers.flat}, together with the Lang--Weil estimates (\cite[Theorem 1]{LW}), implies that $\dim \, \Phi_{G,n} ^{-1}(g) \leq (2n-1)\dim \, G$.
\item Assuming $G$ is simply connected, the proof above gives
\begin{equation} \label{eq:cor:Phi.flat.2}
\lim_{m \rightarrow \infty} \frac{| \Phi_{G,n} ^{-1}(g)(\mathbb{F}_{q^m}) |}{| G(\mathbb{F}_{q^m}) |^{2n-1}} =1.
\end{equation}
Let $c$ be the number of absolutely irreducible components of $\Phi_{G,n} ^{-1} (g)$. By the Lang--Weil estimates, there are infinitely many natural numbers $m$ such that
\begin{equation} \label{eq:cor:Phi.flat.3}
|\Phi_{G,n} ^{-1} (g)(\mathbb{F}_{q^m})|=c \cdot \left( q^m \right) ^{(2n-1)\cdot \dim \, G}\left( 1+o_m(1) \right) .
\end{equation}
Since $G$ is connected, the Lang--Weil estimates imply that
\begin{equation} \label{eq:cor:Phi.flat.4}
|G(\mathbb{F}_{q^m})|=\left( q^m \right) ^{\dim\, G} \left( 1+ o_m(1) \right).
\end{equation}
Combining \eqref{eq:cor:Phi.flat.2}, \eqref{eq:cor:Phi.flat.3}, and \eqref{eq:cor:Phi.flat.4}, we get that $c=1$.
\end{enumerate}
\end{proof}
}

\begin{proof} [Proof of Theorem \ref{thm:def.and.representation.growth}] $2. \implies 3. \implies 4. \implies 1.$ follow from Theorem \ref{thm:push.forward.detailed} and Proposition \ref{prop:Frobenius.Formula}. We prove $1. \implies 2.$

By Corollary \ref{cor:Phi.flat}, the map $\Phi_{G,n}$ is always flat. It remains to prove that, for any $g\in G(\overline{k})$, the variety $\Phi_{G,n} ^{-1}(g)$ has rational singularities.

Let $g\in G(\overline{k})$, and let $R \subset \overline{k}$ be a subring which is finitely generated over $\mathbb{Z}$ such that $G$ has a model $\underline{G}$ over $R$ and $g\in \underline{G}(R)$. Find a local field $F \supset k$ with ring of integers $O$ such that $R \subset O$. Since $\Phi_{G,n}$ is flat, Elkik's Theorem \ref{thm:elk} implies that $\Phi_{G,n}$ is \FRS\ at $(1,\ldots,1)$, and, therefore, it is \FRS\ in some Zariski neighborhood of $(1,\ldots,1)$. By Theorem \ref{thm:push.forward.detailed}, there is some congruence subgroup $\underline{G}^N(O)$ for which the push forward of the normalized Haar measure on $\left( \underline{G}^N(O) \right) ^{2n}$ under the map $\Phi_{\underline{G}(O),n}$ has continuous density. By Proposition \ref{prop:Frobenius.Formula}, the series $\zeta_{\underline{G}^N(O)}(2n-2)$ converges, and, by Lemma \ref{lem:commensurability}, the series $\zeta_{\underline{G}(O)}(2n-2)$ converges. Applying Proposition \ref{prop:Frobenius.Formula} again, the push forward of the normalized Haar measure on $\left( \underline{G}(O) \right) ^{2n}$ under the
map $\Phi_{\underline{G}(O),n}$ has continuous density. Theorem \ref{thm:push.forward.detailed} implies that $\Phi_{G,n}$ is \FRS\ at $g$.
\end{proof}
{We move on to the proof of Theorem \ref{thm:intro.SLd.FRS}. We start with some preparations. The following theorem follows from the proof of \cite[Corollary 4.4]{LM}:
\begin{theorem} \label{thm:LM} Let $F$ be a local field of characteristic 0, let $G$ be a connected and simply connected algebraic group defined over $F$, and let $\Gamma$ be a compact open subgroup of $G(F)$. Then there is a constant $C$ such that $R_n(\Gamma) \leq Cn^{3\dim(G)}$
\end{theorem}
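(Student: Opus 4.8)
The plan is to run the Kirillov orbit method for compact $p$-adic analytic groups, in the form used by Larsen and Lubotzky, and to extract from the counting argument the uniform power bound, rather than merely the asymptotic exponent $\alpha(\Gamma)\le 3\dim G$.

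First I would reduce to the case of an open pro-$p$ subgroup $N\subset\Gamma$ to which the orbit method applies --- for instance a sufficiently deep congruence subgroup, which is a uniform pro-$p$ group carrying an associated $\mathbb{Z}_p$-Lie lattice $\mathfrak n\subset\mathfrak g=\Lie(G)$. This reduction is harmless for the exponent: if $\Delta\le\Gamma$ has finite index then $R_n(\Gamma)\le [\Gamma:\Delta]\,R_n(\Delta)$, because by Clifford theory every $n$-dimensional irreducible representation of $\Gamma$ restricts to $\Delta$ with an irreducible constituent of dimension $\le n$, and a fixed irreducible representation of $\Delta$ is a constituent of at most $[\Gamma:\Delta]$ irreducible representations of $\Gamma$ (it divides $\operatorname{Ind}_\Delta^\Gamma$ of that representation, which has at most $[\Gamma:\Delta]$ irreducible summands). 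Hence a bound $R_n(N)\le C'n^{3\dim G}$ yields the same bound, with a larger constant, for $\Gamma$.

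For such an $N$, the orbit method (Howe, Gonz\'alez-S\'anchez) identifies, at each level $k$, the set $\Irr(N/N_k)$ with the set of $N$-coadjoint orbits on $(\mathfrak n/p^{k}\mathfrak n)^{*}$, a representation's dimension being the square root of the size of the corresponding orbit; moreover a representation of dimension $p^{m}$ factors through $N/N_{k}$ for some $k\le m+c$ with $c$ an absolute constant. Thus, writing $M=\lfloor\log_p n\rfloor$, the quantity $R_n(N)$ equals $\sum_{m\le M}(\text{number of coadjoint orbits of size }p^{2m})$, and it suffices to bound the number of coadjoint orbits of size $p^{2m}$ by $C\,p^{3m\dim G}$, uniformly in $m$. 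Each such orbit, through a functional $\bar f$, has stabilizer of codimension $2m$, so the alternating form $B_{\bar f}(x,y)=\bar f([x,y])$ has rank $2m$; bounding the count by $p^{-2m}$ times the number of $\bar f\in(\mathfrak n/p^{m+c}\mathfrak n)^{*}$ lying on the determinantal locus $V_m=\{f:\rk(B_f)\le 2m\}$, and using the elementary estimate $|V_m(\mathbb{Z}/p^{j})|\le C\,p^{j\dim V_m}$, I would reduce everything to the geometric input that $\dim V_m\le 3m$ as a subvariety of $\mathfrak g^{*}$. Granting this, the count is at most $C\,p^{-2m}\,p^{(m+c)\cdot 3m}$, and since a coadjoint orbit has dimension at most $\dim\mathfrak g-\rk\mathfrak g$ we have $m\le\dim\mathfrak g$, so this is at most $C'\,p^{3m\dim G}$; summing the geometric-type series over $m\le M$ gives $R_n(N)\le C''\,(p^{M})^{3\dim G}\le C''\,n^{3\dim G}$, which with the first step proves the theorem.

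The hard part will be the geometric estimate $\dim V_m\le 3m$ for the rank loci of the coadjoint forms --- this is where the structure of $\mathfrak g$ as the Lie algebra of the connected simply connected semisimple group $G$ enters, e.g.\ through an embedding $\mathfrak g\hookrightarrow\mathfrak{gl}_N$, upper semicontinuity of centralizer dimension, and the inequality $3\rk\mathfrak g\le\dim\mathfrak g$ --- together with making the point counts $|V_m(\mathbb{Z}/p^{j})|$ uniform in the residue characteristic of $F$ and in $j$. This is exactly the content of the proof of \cite[Corollary 4.4]{LM}, and it is the only step requiring more than routine bookkeeping; the passage from the orbit count to the clean polynomial bound $C n^{3\dim G}$, as opposed to the abscissa statement $\alpha(\Gamma)\le 3\dim G$, is then a matter of carefully tracking constants.
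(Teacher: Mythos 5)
The paper itself does not prove Theorem \ref{thm:LM}; the line preceding it reads ``The following theorem follows from the proof of \cite[Corollary 4.4]{LM},'' so there is no proof in the text to compare against, only a citation. Your reconstruction of the Lubotzky--Martin orbit-method argument is reasonable at the level of an outline: the Clifford-theory reduction $R_n(\Gamma)\le[\Gamma:\Delta]R_n(\Delta)$ is correct, identifying representations with coadjoint orbits and controlling the count level by level is the right strategy, and the geometric input $\dim V_m\le 3m$, tied to the inequality $3\rk\mathfrak g\le\dim\mathfrak g$, is indeed what makes the exponent $3\dim G$ appear.

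The concrete gap is the line ``using the elementary estimate $|V_m(\mathbb{Z}/p^j)|\le C\,p^{j\dim V_m}$.'' This is neither elementary nor true for general affine varieties over $\mathbb{Z}_p$. Take $V=\{xy=0\}\subset\mathbb{A}^2$, a one-dimensional variety. Then a direct count gives $|V(\mathbb{Z}/p^j)|=(j+1)p^j-jp^{j-1}$, which grows like $j\,p^j$, \emph{not} $O(p^j)$. The ratio $|V(\mathbb{Z}/p^j)|/p^{j\dim V}$ is unbounded, and the discrepancy is governed by the singularities of $V$ (log-canonical threshold / Igusa zeta invariants), not merely its dimension. This matters exactly at the point your proof needs it most: an extra factor of $j\approx\log_p n$ in the orbit count at level $j$ is what separates the clean polynomial bound $R_n\le Cn^{3\dim G}$ asserted in the theorem from the weaker abscissa statement $\alpha(\Gamma)\le 3\dim G$ (equivalently $R_n=O(n^{3\dim G+\epsilon})$). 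As written, your argument only delivers the weaker $\epsilon$-version. To close the gap you would either need a careful study of the singularities of the rank-loci $V_m$ so as to control their $\mathbb{Z}/p^j$-point counts without loss (note the irony: if $V_m$ were shown to have rational singularities, the main results of the present paper would give exactly the kind of uniform control you need), or follow the actual Lubotzky--Martin argument, which counts orbits in a manner that does not reduce to the naive point-count estimate you invoked. The reduction to a pro-$p$ subgroup and the passage through the orbit method are fine; the single load-bearing step you have labeled ``elementary'' is where the real work lives.
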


\begin{lemma} \label{lem:isogeny} Let $G$ and $H$ be algebraic groups over $k$, and let $\varphi:G \to H$ be a homomorphism with finite kernel and open image. For every integer $n \geq 1$, the following conditions are equivalent: \begin{enumerate}
\item $\Phi_{G,n}$ is \FRS.
\item $\Phi_{H,n}$ is \FRS.
\end{enumerate}
\end{lemma}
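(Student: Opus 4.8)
The plan is to exploit the descent properties of the $\FRS$ condition together with the fact that $\varphi$ is \'etale. Since $k$ has characteristic $0$, the finite kernel $Z:=\ker\varphi$ is an \'etale group scheme, so $\varphi$ factors as $G\to\varphi(G)\hookrightarrow H$ with the first map finite \'etale and the second an open immersion; in particular $\varphi$ is \'etale. I would reduce to the case where $G$ and $H$ are connected: this is the only case needed in our applications (where $\varphi$ is a central isogeny of connected semisimple groups), and the general case follows by the same argument carried out component by component. For connected $G$, $H$ the map $\varphi$ is automatically surjective (an open subgroup of a connected group is the whole group), with kernel $Z$ central (a finite normal subgroup of a connected group), i.e.\ $\varphi$ is a central isogeny. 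The key elementary point is that conjugating an argument of a commutator by a central element has no effect on the product: $[x\zeta,y]=[x,y]$ for $\zeta$ central. Hence $\Phi_{G,n}\colon G^{2n}\to G$ is invariant under the translation action of $Z^{2n}$ on $G^{2n}$, so it descends to a morphism $\overline\Phi\colon G^{2n}/Z^{2n}=H^{2n}\to G$ with $\Phi_{G,n}=\overline\Phi\circ\varphi^{2n}$; applying $\varphi$ and using that $\varphi^{2n}$ is an epimorphism gives $\varphi\circ\overline\Phi=\Phi_{H,n}$ as well. Thus $\overline\Phi$ mediates between the two maps.

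The lemma then follows from two applications of \'etale-locality of the $\FRS$ property. First, $\varphi^{2n}\colon G^{2n}\to H^{2n}$ is finite \'etale and surjective. Flatness descends along this faithfully flat morphism, and for each $g\in G(\overline k)$ the fiber $\Phi_{G,n}^{-1}(g)=(\varphi^{2n})^{-1}(\overline\Phi^{-1}(g))$ is finite \'etale over $\overline\Phi^{-1}(g)$; since reducedness and rational singularities are \'etale-local, $\Phi_{G,n}^{-1}(g)$ is reduced with rational singularities if and only if $\overline\Phi^{-1}(g)$ is. Hence $\Phi_{G,n}$ is $\FRS$ iff $\overline\Phi$ is. Second, $\varphi\colon G\to H$ is \'etale, so for $h=\varphi(g)$ the scheme $\varphi^{-1}(h)$ is finite and reduced; consequently $\Phi_{H,n}^{-1}(h)=\overline\Phi^{-1}(\varphi^{-1}(h))=\bigsqcup_{g'\in\varphi^{-1}(h)}\overline\Phi^{-1}(g')$, a disjoint union of fibers of $\overline\Phi$, so $\Phi_{H,n}^{-1}(h)$ is reduced with rational singularities iff each $\overline\Phi^{-1}(g')$ is. Moreover $\overline\Phi$ is flat iff $\Phi_{H,n}=\varphi\circ\overline\Phi$ is: one direction is composition of flat maps, and for the other one notes that $H^{2n}$ is Cohen--Macaulay, $G$ is smooth, and the fibers of $\overline\Phi$ (being clopen in those of $\Phi_{H,n}$, by the displayed decomposition) have everywhere the expected dimension $\dim H^{2n}-\dim G$, whence $\overline\Phi$ is flat by the miracle-flatness criterion (Theorem \ref{thm:flat_dim}). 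Therefore $\overline\Phi$ is $\FRS$ iff $\Phi_{H,n}$ is, and combining with the first step finishes the proof.

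The only genuinely non-formal inputs are the \'etale-locality of flatness, of reducedness, and of rational singularities, all standard (see Appendix \ref{sec:alg.geom}); everything else is bookkeeping. The point most worth watching is that in the second reduction the fibers of $\overline\Phi$ sit inside those of $\Phi_{H,n}$ as \emph{clopen} subschemes rather than closed ones --- this is precisely what makes the family $\{\overline\Phi^{-1}(g')\}_{g'}$ a partition of $\Phi_{H,n}^{-1}(h)$ --- so a little care with disconnected fibers is needed; this is also the only place where the (harmless) reduction to connected $G$, $H$ enters substantively, since connectedness is what guarantees that $Z$ is central and hence that $\overline\Phi$ exists.
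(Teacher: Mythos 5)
Your proof is correct for connected $G$ and $H$ (which covers all uses in the paper), but it takes a genuinely different route from the paper's. The paper deduces the lemma from Theorem \ref{thm:def.and.representation.growth}, which characterizes the \FRS\ property of $\Phi_{G,n}$ in terms of the abscissa of convergence $\alpha(\Gamma)$ of compact open subgroups $\Gamma\subset G(F)$, $F$ a local field containing $k$. Since $\varphi$ is \'etale, the $p$-adic implicit function theorem produces a compact open $\Gamma\subset G(F)$ mapping isomorphically onto an open subgroup of $H(F)$; then $\zeta_\Gamma=\zeta_{\varphi(\Gamma)}$, so $\alpha(\Gamma)=\alpha(\varphi(\Gamma))$, and the equivalence is immediate from criterion (4) of Theorem \ref{thm:def.and.representation.growth} together with Lemma \ref{lem:commensurability}. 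You instead stay entirely on the algebro-geometric side: exploit centrality of $Z=\ker\varphi$ to descend $\Phi_{G,n}$ to an intermediate morphism $\overline\Phi\colon H^{2n}\to G$ with $\Phi_{G,n}=\overline\Phi\circ\varphi^{2n}$ and $\Phi_{H,n}=\varphi\circ\overline\Phi$, then transport \FRS\ along the two \'etale links using faithfully flat descent of flatness and reducedness, \'etale-locality of rational singularities, and miracle flatness (Theorem \ref{thm:flat_dim}); the checks are all correct, including the flatness step via the clopen decomposition of the fibers of $\Phi_{H,n}$. What each approach buys: the paper's is shorter but presupposes the full representation-growth machinery and is implicitly restricted to semi-simple $G,H$ (the hypothesis of Theorem \ref{thm:def.and.representation.growth}); yours is self-contained and purely geometric, at the cost of restricting to connected $G,H$ so that $Z$ is central. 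These restrictions are parallel in scope, and both suffice for every application of the lemma in the paper.

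One caveat: your parenthetical claim that ``the general case follows by the same argument carried out component by component'' is optimistic. For disconnected $G$ the finite normal subgroup $Z$ need not be central, so $\Phi_{G,n}$ need not be $Z^{2n}$-translation-invariant and $\overline\Phi$ need not exist; moreover the map $\Phi_{G,n}$ mixes components of $G^{2n}$ in a way that does not cleanly reduce to $\Phi_{G^0,n}$. Better to simply state the connectedness hypothesis (as you do) and observe that it holds in every use of the lemma. Given that the paper's own proof carries an analogous implicit restriction, this is not a defect of your argument relative to the paper's.
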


\begin{proof} Without loss of generality, we can assume that $k$ is finitely generated. Using Theorem \ref{thm:def.and.representation.growth}, it is enough to show that, for every non-archimedean local field $F$ containing $k$, there is a compact open subgroup $\Gamma \subset G(F)$ such that the restriction of $\varphi$ to $\Gamma$ gives an isomorphism from $\Gamma$ to an open subgroup of $H(F)$. By assumption, $\varphi$ is etale. By the implicit function theorem, the restriction of $\varphi$ to some open neighborhood $U$ of 1 in $G(F)$ is an open and injective map. Now let $\Gamma$ be any compact open subgroup that is contained in $U$.
\end{proof}

The following lemma is immediate:
\begin{lemma} \label{lem:FRS.product} Let $G$ and $H$ be algebraic groups over $k$, and let $n \geq 1$ be an integer. If $\Phi_{G,n}$ and $\Phi_{H,n}$ are \FRS, then $\Phi_{G \times H,n}$ is \FRS.
\end{lemma}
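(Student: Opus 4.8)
The plan is to recognize $\Phi_{G\times H,n}$ as a product of the maps $\Phi_{G,n}$ and $\Phi_{H,n}$, and then to observe that the \FRS\ property is stable under products of morphisms.

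First I would use the canonical isomorphism $\iota\colon (G\times H)^{2n}\xrightarrow{\ \sim\ }G^{2n}\times H^{2n}$ that reorders coordinates so that the $G$-entries come first and the $H$-entries last, i.e.
\[
\iota\big((g_1,h_1),(g_1',h_1'),\dots,(g_n,h_n),(g_n',h_n')\big)=\big((g_1,g_1',\dots,g_n,g_n'),(h_1,h_1',\dots,h_n,h_n')\big).
\]
Since commutators in a direct product are formed coordinate-wise, $[(a,b),(c,d)]=([a,c],[b,d])$, one checks immediately that $\Phi_{G\times H,n}=(\Phi_{G,n}\times\Phi_{H,n})\circ\iota$. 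As $\iota$ is an isomorphism of varieties, it suffices to prove the general statement: if $\phi\colon X_1\to Y_1$ and $\psi\colon X_2\to Y_2$ are \FRS\ maps of smooth irreducible varieties, then $\phi\times\psi\colon X_1\times X_2\to Y_1\times Y_2$ is \FRS.

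I would verify the two defining conditions separately. Flatness of $\phi\times\psi$ is immediate, since it factors as the composition of the base changes $\phi\times\mathrm{id}_{X_2}$ and $\mathrm{id}_{Y_1}\times\psi$ of the flat maps $\phi$ and $\psi$. For the fibers, given $(y_1,y_2)\in(Y_1\times Y_2)(\overline k)$ we have $(\phi\times\psi)^{-1}(y_1,y_2)=Z_1\times_{\overline k}Z_2$ with $Z_1=\phi^{-1}(y_1)$ and $Z_2=\psi^{-1}(y_2)$, and this must be reduced with rational singularities. Reducedness is clear because over the algebraically closed field $\overline k$ a product of reduced schemes is reduced. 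For rational singularities I would use the characterization via resolutions: picking resolutions $\pi_i\colon\widetilde Z_i\to Z_i$, the variety $\widetilde Z_1\times\widetilde Z_2$ is smooth and $\pi_1\times\pi_2$ is a proper birational morphism onto $Z_1\times Z_2$, which is normal (being a product of normal varieties over $\overline k$); by the Künneth formula,
\[
R(\pi_1\times\pi_2)_*\mathcal O_{\widetilde Z_1\times\widetilde Z_2}\;\cong\;R\pi_{1*}\mathcal O_{\widetilde Z_1}\otimes^{L}R\pi_{2*}\mathcal O_{\widetilde Z_2}\;\cong\;\mathcal O_{Z_1}\otimes^{L}\mathcal O_{Z_2}\;\cong\;\mathcal O_{Z_1\times Z_2},
\]
where the middle isomorphism uses that each $Z_i$ has rational singularities. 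Hence $Z_1\times Z_2$ has rational singularities, so $\phi\times\psi$ is \FRS, and taking $\phi=\Phi_{G,n}$, $\psi=\Phi_{H,n}$ completes the proof.

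I do not anticipate any real obstacle; the only point needing a little care is the derived Künneth computation above, together with the attendant bookkeeping — in particular that varieties with rational singularities are Cohen--Macaulay in characteristic $0$, which is what lets the derived tensor products collapse to the structure sheaves as claimed.
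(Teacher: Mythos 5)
The paper offers no argument at all — the statement is declared ``immediate'' — so your write-up supplies a proof where the authors supplied none. Your decomposition $\Phi_{G\times H,n}=(\Phi_{G,n}\times\Phi_{H,n})\circ\iota$ and the reduction to ``a product of \FRS\ maps is \FRS'' is exactly the intended route, and the verification (flatness by base change, reducedness of products over $\overline{k}$, rational singularities of $Z_1\times Z_2$ via the derived K\"unneth formula applied to $\pi_1\times\pi_2\colon\widetilde Z_1\times\widetilde Z_2\to Z_1\times Z_2$) is correct.

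One small point worth tightening: the closing remark that Cohen--Macaulayness is ``what lets the derived tensor products collapse'' is not the actual reason, and in fact nothing about CM is needed here. The K\"unneth isomorphism produces the \emph{external} derived tensor product over the base field $\overline k$, namely $R\pi_{1*}\mathcal O\,\boxtimes^L_{\overline k}\,R\pi_{2*}\mathcal O$, and once each factor is identified with $\mathcal O_{Z_i}$ (by rational singularities), the object $\mathcal O_{Z_1}\boxtimes^L_{\overline k}\mathcal O_{Z_2}$ is automatically concentrated in degree $0$ and equal to $\mathcal O_{Z_1\times Z_2}$, simply because every module is flat over a field. So the collapse is free, and the CM observation — while true — is a red herring and should be dropped to avoid suggesting a dependency that isn't there.
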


\begin{proof}[Proof of Theorem \ref{thm:intro.SLd.FRS}] Without loss of generality, we can assume that $k$ is algebraically closed. Applying Lemma \ref{lem:isogeny}, we can assume that $G$ is simply connected, and, thus, is a product of simple factors. Applying Lemma \ref{lem:FRS.product}, we can assume that $G$ is simple. If $G$ is classical, the assertion follows from Theorems \ref{thm:local.FRS} and \ref{thm:def.and.representation.growth}, as well as Lemma \ref{lem:isogeny}. If $G$ is exceptional, the assertion follows from Theorem \ref{thm:LM} and Theorem \ref{thm:def.and.representation.growth}.
\end{proof}

}
\subsection{Representation Growth in Positive Characteristic} \lbl{subsec:pos.char}
\setcounter{theorem}{0}

In this subsection, we prove Theorem \ref{thm:intro.pos.zero.local.zeta} from the introduction. Recall the formulation:

\begin{theorem*} Let $G$ be an affine group scheme over a localization of $\mathbb{Z}$ by finitely many primes whose generic fiber is semi-simple, and fix an integer $n \geq 1$. There is a constant $p_0$ such that, if $F_1,F_2$ are local fields with isomorphic residue fields of characteristic greater than $p_0$, and if $O_1,O_2$ denote the rings of integers of $F_1$ and $F_2$, then $\alpha(G(O_1))<2n$ if and only if $\alpha(G(O_2))<2n$. Moreover, in this case, $\zeta_{G(O_1)}(2n)=\zeta_{G(O_2)}(2n)$.
\end{theorem*}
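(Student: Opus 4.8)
The statement has two parts: first, that the property "$\alpha(G(O)) < 2n$" is independent of the local field with residue characteristic $> p_0$, and second, the stronger assertion that the actual value $\zeta_{G(O)}(2n)$ depends only on the residue field. The strategy is to reduce everything to the characteristic-$0$ picture developed in Sections \ref{sec:push.forward} and \ref{subsec:special.values} via a model-theoretic uniformity argument, much in the spirit of Appendix \ref{sec:cont.along.curves}. First I would lift $G$ to a group scheme $\underline{G}$ over a ring $R$ obtained by localizing $\mathbb{Z}$ away from finitely many primes, so that $\underline{G}$ and the commutator map $\Phi_{\underline{G},n}$ are defined over $R$; the deformation variety $\Def_{\underline{G},n} = \Phi_{\underline{G},n}^{-1}(1)$ is then an $R$-scheme. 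By Proposition \ref{prop:Frobenius.Formula}, for any local field $F$ with ring of integers $O$ and uniformizer $\varpi$, and any congruence subgroup $\underline{G}^N(O)$, we have
\[
\zeta_{\underline{G}^N(O)}(2n) = \lim_{i \to \infty} \frac{\lambda_{(\underline{G}^N(O))^{2n}}\!\left( \Phi_{\underline{G},n}^{-1}(\underline{G}^N(O) \cdot (1 + \varpi^i M)) \right)}{\lambda_{\underline{G}^N(O)}\!\left( 1 + \varpi^i M \right)}
\]
when the series converges, so everything is controlled by the volumes of preimages of small balls under $\Phi_{\underline{G},n}$ — a question about counting $O/\varpi^i$-points of the fibers of $\Phi_{\underline{G},n}$ over a neighborhood of $1$, i.e. about the local zeta function of the map $\Phi_{\underline{G},n}$ at $1$.

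**Key steps.** (1) Use Hensel's lemma and a change of variables near $1$ to reduce $\zeta_{\underline{G}^N(O)}(2n)$ — for $N$ large — to an integral of the form $\int_{(\mathfrak{m}^N)^{2n}} |\Psi_{\underline{G},n}|_F^{\,\text{stuff}}\, d\lambda$ involving the Lie-algebra commutator map $\Psi$ from \S\ref{sssec:reduction.Lie.algebra}, exactly as in the proof of Theorem \ref{thm:def.and.representation.growth}; this integral is, up to normalization, a $p$-adic integral attached to the $R$-scheme $\Def_{\underline{G},n}$ and its (relative) dualizing sheaf. (2) Invoke the Ax–Kochen–Eršov / Denef–Pas transfer principle (as already used in Appendix \ref{sec:cont.along.curves}): the value of such a motivic/definable integral over $O$ depends, for residue characteristic $> p_0$, only on the residue field. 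This immediately gives $\zeta_{\underline{G}^N(O_1)}(2n) = \zeta_{\underline{G}^N(O_2)}(2n)$ whenever it converges, and convergence itself is a definable-in-the-residue-field condition, giving the "$\alpha < 2n$" equivalence. Here I should be careful that convergence of the $\zeta$-series is equivalent to boundedness of the truncated volume ratios (Proposition \ref{prop:Frobenius.Formula}(2)), which is what transfers cleanly. (3) Pass from the congruence subgroup $\underline{G}^N(O)$ back to $G(O) = \underline{G}(O)$: by Lemma \ref{lem:commensurability}, $\zeta_{\underline{G}(O)}(2n)$ converges iff $\zeta_{\underline{G}^N(O)}(2n)$ does, and $\underline{G}(O)/\underline{G}^N(O) \cong \underline{G}(O/\varpi^N)$ is a finite group whose isomorphism type (for residue characteristic $> p_0$, with $O/\varpi^N$ depending only on the residue field when the ramification is controlled — one takes $p_0$ large enough that $\varpi$ can be taken to be $p$ in the unramified case and handles ramified extensions by a further AKE argument) depends only on the residue field. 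Then Clifford theory / the Frobenius reciprocity estimates relating $\Irr(\underline{G}(O))$ to $\Irr(\underline{G}^N(O))$ express $\zeta_{\underline{G}(O)}(2n)$ purely in terms of $\zeta_{\underline{G}^N(O)}(2n)$ and the finite group $\underline{G}(O/\varpi^N)$, both of which are residue-field-determined.

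**Main obstacle.** The delicate point is step (3): making the comparison between $\zeta_{G(O)}$ and $\zeta_{G^N(O)}$ genuinely \emph{exact} (an equality of values, not merely of convergence or of growth exponents). The inductive description of $\Irr(\Gamma)$ from $\Irr(\Gamma(N))$ via orbits of the finite quotient and Mackey theory does not in general give a clean closed formula for $\sum_\chi \chi(1)^{-2n}$, because stabilizers and twisting characters intervene. I would try to bypass this by working directly with the volume formulation: the quantity $\lambda_{\Gamma^{2n}}(\Phi_{G,n}^{-1}(\Gamma(i)))$ makes sense for $\Gamma = G(O)$ itself without any congruence reduction, and it is literally the number of $O/\varpi^i$-points of $\Def_{\underline{G},n}$ (over the full $O$-integral model), divided by the expected power of $q^i$; this $p$-adic integral is again definable over the residue field by Denef–Pas, provided the $R$-model of $\Def_{\underline{G},n}$ is flat with the right fibers — which is guaranteed for residue characteristic $> p_0$ by Theorem \ref{thm:def.and.representation.growth} together with the fact that the $R$-scheme $\Def_{\underline{G},n}$ is flat over $R$ away from finitely many primes (Corollary \ref{cor:Phi.flat} in characteristic $0$ plus generic flatness over $\mathbb{Z}$). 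So the cleanest route avoids Clifford theory entirely: apply the transfer principle to the integral computing $\zeta_{\underline{G}(O)}(2n)$ directly, and use the congruence-subgroup/$\zeta$-convergence dictionary only to certify that the integral is the $\zeta$-value. The residual subtlety is handling ramified $F_1, F_2$ with the same residue field but different ramification — but since the hypothesis only fixes the residue field, the AKE theorem applies uniformly to all such $F$ at once, so no separate argument is needed.
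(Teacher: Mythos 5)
Your ``cleanest route'' at the end---apply the Cluckers--Loeser transfer principle directly to the definable integral computing $\zeta_{G(O)}(2n)$, and use Proposition~\ref{prop:Frobenius.Formula} only as a dictionary between the zeta value and bounded volume ratios---is exactly what the paper does, and it is correct. The paper simply fixes a translation-invariant top form $\omega$ on $G$, observes that for $p\gg 0$ the measures $|\omega|_F$ and $|\omega^{\fbox{$\wedge$}2n}|_F$ are Haar, writes the formula $\phi(g_1,\ldots,h_n,\gamma)$ asserting $\val(\Phi_{G,n}(g_1,\ldots,h_n))\geq\gamma$, and invokes \cite[Theorem~7.6]{CL} to equate the corresponding volumes $\left(\Phi_{G,n}\right)_*\mu^{2n}(G^m(O_i))$ and $\mu(G^m(O_i))$ for $i=1,2$; Proposition~\ref{prop:Frobenius.Formula} then converts boundedness and limits of these ratios into the two assertions of the theorem. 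No congruence subgroup is ever fixed, no Clifford theory enters, and no linearization is used.

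The part of your plan you should discard is the first half. In step~(1) you propose replacing the commutator map $\Phi_{\underline{G},n}$ by the Lie-algebra map $\Psi_{\fg,n}$ via a change of coordinates near $1$; but $\Psi$ is only a \emph{degeneration} of $\Phi$ (see Corollary~\ref{cor:lin} and \S\S\S\ref{sssec:reduction.Lie.algebra}), used in Section~\ref{sec:singularities.def} as a qualitative device for proving the \FRS\ property. It is not a coordinate change, and the $p$-adic integrals attached to $\Psi$ will in general compute a different number than those attached to $\Phi$, so any exact equality of zeta values would be lost. Likewise, your step~(3) worry about making the passage from $\zeta_{\underline{G}^N(O)}$ to $\zeta_{G(O)}$ exact via Mackey theory is a red herring that you correctly identify yourself; Lemma~\ref{lem:commensurability} only controls convergence up to multiplicative constants and cannot give equality of values. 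Both of these complications evaporate once you work with $\Phi_{G,n}$ and $G(O)$ directly, as you eventually suggest. One small calibration: the quantity transferred is the ratio $\left(\Phi_{G,n}\right)_*\mu^{2n}(G^m(O))/\mu(G^m(O))$ for all $m$, and Proposition~\ref{prop:Frobenius.Formula} identifies its limit as $m\to\infty$ with $\zeta_{G(O)}(2n-2)$; the theorem statement uses the convention that makes this come out as $\zeta(2n)$ and $\alpha<2n$, but the mechanism is the same.
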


\begin{proof} We will use the model theory of Henselian valued fields. See for example \cite{CL}. Choose a translation-invariant top differential form $\omega$ on $G$. If $p$ is large enough, then, for every local field $F$ of residue characteristic $p$, the measure $| \omega |_F$ is a Haar measure on $G(F)$. If we denote the $2n$ projections from $G^{2n}$ to $G$ by $pr_1,\ldots,pr_{2n}$ and define
\[
\omega ^{\fbox{$\wedge$}2n}:=pr_1^* \omega \wedge \ldots \wedge pr_{2n}^* \omega,
\]
then, if $p$ is large enough, the measure $\left | \omega ^{\fbox{$\wedge$}2n} \right |_F$ is a Haar measure on $G(F)^{2n}$.

Consider $G \subset \GL_d$ as a definable set in the theory of Henselian valued fields, and define $G_O$ to be the intersection of $G$ with the definable set $\left\{ (x_{i,j})\in \GL_d \mid \val(x_{i,j}) \geq 0 \right\}$. Under this definition, $G_O(F_1)=G(O_1)$ and $G_O(F_2)=G(O_2)$. Consider the first-order formula
\[
\phi(g_1,h_1,\ldots,g_n,h_n,\gamma):= \left( g_i,h_i\in G_\mathcal{O} \wedge \val \left( \Phi_{G,n}(g_1,\ldots,h_n) \right) \geq \gamma \right)
\]
on $G_O^{2n} \times \Gamma$, where $\Gamma$ is the value group. By \cite[Theorem 7.6]{CL}, if $p$ is large enough, then \begin{enumerate}
\item $\int_{G^m(O_1)} |\omega|_{F_1} = \int_{G^m(O_2)} |\omega|_{F_2}$.
\item For every $m\in \mathbb{Z}$,
\[
\int_{\left\{ (g_1,\ldots,h_{n}) \mid F_1 \models \phi(g_1,\ldots,h_n,m) \right\} } \left | \omega ^{\fbox{$\wedge$} 2n} \right |_{F_1}=\int_{\left\{ (g_1,\ldots,h_{n}) \mid F_2 \models \phi(g_1,\ldots,h_n,m) \right\} } \left | \omega ^{\fbox{$\wedge$} 2n} \right |_{F_2} .
\]
\end{enumerate}

If we denote the normalized Haar measure of $G(O_1)$ by $\mu$, and the normalized Haar measure of $G(O_2)$ by $\nu$, then the equalities above imply that, for every $m$,
\[
\frac{\left( \Phi_{G,n} \right)_* \mu ^{2n}(G^m(O_1))}{\mu(G^m(O_1))}=\frac{\left( \Phi_{G,n} \right)_* \nu ^{2n}(G^m(O_2))}{\nu(G^m(O_2))}
\]
By Proposition \ref{prop:Frobenius.Formula}, the claim follows.
\end{proof}

\subsection{Volumes of moduli spaces of local systems} \lbl{subsubsec:localization}
\setcounter{theorem}{0}
In this section, we recall the definition of the Atiyah--Bott--Goldman form, and prove Theorem \ref{thm:intro.volume.formula}. We fix a natural number $n$ and a semi-simple algebraic group $G$ defined over $k$ (assumed to have characteristic 0).

\begin{lemma} \lbl{lem:G.open} 
$ $ \begin{enumerate}
\item \label{lem:G.open:1}There is a (unique) Zariski-open subset $(G^{2n})^{open} \subset G^{2n}$ such that $(G^{2n})^{open}(\overline{k})$ is the set of all $2n$-tuples that generate a Zariski dense subgroup.
\item \label{lem:G.open:2}The restriction of $\Phi_{G,n}$ to $(G^{2n})^{open}$ is smooth.
\item \label{lem:G.open:3} The action of $G/Z(G)$ on $(G^{2n})^{open}$ is free.
\item \label{lem:G.open:4} Any $G$-orbit in $(G^{2n})^{open}$ is closed in $G^{2n}$
\end{enumerate}
\end{lemma}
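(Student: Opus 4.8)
\emph{Part~\ref{lem:G.open:1}} is the only part requiring real work, and I would do it first. Let $B\subseteq G^{2n}$ be the set of tuples $(g_1,\dots,g_{2n})$ all of whose entries lie in a common proper closed subgroup of $G$; this is exactly the complement of the set of tuples generating a Zariski–dense subgroup (if $\langle g_1,\dots,g_{2n}\rangle$ is not dense, its closure is such a subgroup, and conversely). I claim $B$ is Zariski closed, which settles the part: $B$ is manifestly $\Gal(\overline k/k)$–stable and $k$ is perfect, so $B$ descends to a closed subscheme of $G^{2n}$, and $(G^{2n})^{open}:=G^{2n}\setminus B$ is the required set, uniqueness being automatic since a reduced $k$–variety is determined by its $\overline k$–points. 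To see $B$ is closed I would check it is constructible and stable under specialization. For constructibility I would invoke that a semisimple group has only finitely many conjugacy classes $[M_1],\dots,[M_r]$ of maximal proper closed subgroups (each $M_j$ self-normalizing); since every proper closed subgroup is contained in a maximal one, $B=\bigcup_j \operatorname{pr}\!\bigl(\{(xM_j,g)\in (G/M_j)\times G^{2n}\;:\;g_i\in xM_jx^{-1}\ \forall i\}\bigr)$ is a finite union of images of closed subvarieties, hence constructible by Chevalley. For stability under specialization I would use the valuative criterion: given a DVR $R$ with fraction field $K$ and residue field $\kappa$ and a point $g_R\in G^{2n}(R)$ with $g_K\in B$, let $\mathcal H\subseteq G_R$ be the scheme-theoretic closure of the Zariski closure of $\langle g_K\rangle$; this is a flat closed subgroup scheme of $G_R$ whose generic fibre is proper in $G_K$, so its special fibre is a proper closed subgroup scheme of $G_\kappa$ containing $g_\kappa$, whence $g_\kappa\in B$. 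A constructible, specialization-stable subset of a Noetherian scheme is closed. The delicate input here is the constructibility of $B$, and it is exactly where semisimplicity is essential (for tori, non-density is a dense, non-constructible condition).

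\emph{Parts~\ref{lem:G.open:2} and \ref{lem:G.open:3}.} For part~\ref{lem:G.open:2}, at a point $g=(g_1,h_1,\dots,g_n,h_n)$ I would compute the differential $d(\Phi_{G,n})_g:\fg^{2n}\to\fg$ (after trivializing the tangent bundles by translation) by the standard Fox-calculus formula from the deformation theory of surface-group representations. The upshot is that, with respect to the Killing form --- which is $\Ad$-invariant and, as $G$ is semisimple, nondegenerate --- the orthogonal complement of the image of $d(\Phi_{G,n})_g$ is $\Lie(Z_G(\Lambda))$, where $\Lambda\le G$ is generated by the entries $g_i,h_i$; the various translating factors produced by the chain rule lie in $\Lambda$ and generate it, so no extra constraints appear. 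For $g\in(G^{2n})^{open}$ the group $\Lambda$ is dense, hence $Z_G(\Lambda)=Z(G)$ is finite and $\Lie(Z_G(\Lambda))=\mathfrak z(\fg)=0$; so $d(\Phi_{G,n})_g$ is surjective at every point of $(G^{2n})^{open}$, and a morphism of smooth varieties with everywhere surjective differential is smooth. Part~\ref{lem:G.open:3} is then immediate: if $x\in G$ fixes $g\in(G^{2n})^{open}$ under simultaneous conjugation, then $x$ centralizes each $g_i$, hence centralizes $\overline{\langle g_1,\dots,g_{2n}\rangle}=G$, so $x\in Z(G)$; thus the stabilizer in $G/Z(G)$ is trivial.

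\emph{Part~\ref{lem:G.open:4}.} Let $g\in(G^{2n})^{open}$. By the Hilbert--Mumford criterion (in Kempf's form) for closed orbits of a reductive group acting on an affine variety, it suffices to show that for every cocharacter $\lambda:\mathbb{G}_m\to G$ such that $\lim_{t\to 0}\lambda(t)\cdot g$ exists, the limit lies in $G\cdot g$. If this limit exists, then $\lim_{t\to 0}\lambda(t)g_i\lambda(t)^{-1}$ exists for each $i$, i.e. every $g_i$ lies in the parabolic $P(\lambda)=\{x\in G:\lim_{t\to0}\lambda(t)x\lambda(t)^{-1}\ \text{exists}\}$; since $\langle g_1,\dots,g_{2n}\rangle$ is Zariski dense, $P(\lambda)=G$. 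But then $\Ad\circ\lambda$ has no nonzero weights on $\fg$ (nonzero weight spaces pair nontrivially under the Killing form, so positive and negative weights occur together), so $\lambda$ is central and acts trivially by conjugation, and the limit equals $g\in G\cdot g$. Hence $G\cdot g$ is closed in $G^{2n}$.
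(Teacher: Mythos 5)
Your proof is correct, but it takes a genuinely different route from the paper in parts~\ref{lem:G.open:1} and~\ref{lem:G.open:4}; parts~\ref{lem:G.open:2} and~\ref{lem:G.open:3} are essentially the same as the paper's.

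For part~\ref{lem:G.open:1}, the paper works bottom-up: it first cuts out the Zariski-open set $(G^{2n})^{irr}$ of tuples acting irreducibly on $\fg$ via a projection from a subvariety of $G^{2n}\times\Grass(\fg)$, and then uses Jordan's theorem only to produce a \emph{closed} subset $X$ (defined by commutator relations on words parametrized by $\Hom(F_{2n},S_{C!})$) whose removal from $(G^{2n})^{irr}$ gives $(G^{2n})^{open}$. Your argument instead shows the complement $B$ is constructible and specialization-closed. The valuative-criterion step is clean, but the constructibility step leans on two facts you state without proof: (i) a semisimple group over $\overline k$ has only finitely many conjugacy classes of maximal proper closed subgroups, and (ii) every proper closed subgroup sits inside a maximal one. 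Both are true in characteristic $0$ (via Dynkin/Liebeck--Seitz for positive-dimensional subgroups, plus Jordan and rigidity of finite subgroups for the finite ones), but they are substantially heavier ingredients than the Grassmannian-plus-Jordan argument the paper actually uses, and (ii) in particular deserves a sentence of justification since it is false for tori. What your route buys is a structural view of $B$ that could be reused elsewhere; what the paper's route buys is that nothing is used beyond Chevalley's theorem, Jordan's bound, and elementary dimension arguments.

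For part~\ref{lem:G.open:4}, the paper avoids Hilbert--Mumford entirely. It picks words $\gamma_1,\gamma_2\in F_{2n}$ whose evaluations at $x$ are respectively a regular semisimple element (which one may diagonalize) and an element with all nonzero matrix entries, and then shows $\phi_{\gamma_1,\gamma_2}(\cO)$ is closed in $G\times G$ by reducing, via the equivariant-fibration lemma (Lemma~\ref{lem:geo.Frob}), to the closedness of the conjugacy class of a regular semisimple element and the closedness of a torus orbit $T\cdot b$ when $b$ has all entries nonzero; then it pulls back along $\phi_{\gamma_1,\gamma_2}$ one more time. Your argument via Kempf's one-parameter-subgroup criterion is shorter and arguably conceptually sharper: density of $\langle g_1,\dots,g_{2n}\rangle$ forces any destabilizing cocharacter $\lambda$ into the center once you observe $P(\lambda)=G$, and you correctly deduce $P(\lambda)=G$ implies $\Ad\circ\lambda$ has no nonzero weights because the Killing form pairs opposite weight spaces. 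This is a nice alternative; its only cost is invoking Kempf's closed-orbit criterion, which is a stronger tool than what the paper uses. Both arguments are sound.
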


For the proof of (4) we will need the following standard lemma:
\begin{lem}\label{lem:geo.Frob}
Let $H$ be an algebraic group over $k$, and let $\phi:X\to Y$ be an $H$-equivariant map of $H$-algebraic varieties. Assume that the action of $H$ on $Y$ is transitive. Let $y \in Y(\overline{k})$ be a point and $Z \subset X(\overline{k})$ be an $H(\overline{k})$-invariant subset. Then, the following conditions are equivalent:
\begin{enumerate}
\item $Z$ is Zariski closed in $X(\overline{k})$.
\item $Z \cap \phi^{-1}(y)$ is Zariski closed in $\phi^{-1}(y)(\overline{k})$.
\end{enumerate}
\end{lem}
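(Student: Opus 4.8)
The plan is to prove the two implications separately; essentially all the work is in $(2)\Rightarrow(1)$. The implication $(1)\Rightarrow(2)$ is immediate: since $y\in Y(\overline{k})$, the fibre $\phi^{-1}(y)$ is a closed subscheme of $X$, so $\phi^{-1}(y)(\overline{k})$ is a closed subset of $X(\overline{k})$; intersecting a closed subset of $X(\overline{k})$ with it yields a closed subset of $\phi^{-1}(y)(\overline{k})$.

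For $(2)\Rightarrow(1)$ I would realise $X$ as the target of a faithfully flat map whose source visibly has $\phi^{-1}(y)$ as a direct factor, so that closedness can be checked "along the fibre $\phi^{-1}(y)$''. Concretely, let $\beta\colon H\to Y$, $h\mapsto h\cdot y$, be the orbit map. Because the action of $H$ on $Y$ is transitive and $k$ has characteristic $0$, the map $\beta$ is smooth and surjective — the orbit map onto $H\cdot y=Y$ is generically smooth and smoothness spreads out by homogeneity — hence faithfully flat. Using the $H$-equivariance of $\phi$, the action morphism restricts to $\widetilde\psi\colon H\times\phi^{-1}(y)\to X$, $(h,x)\mapsto h\cdot x$, and the assignment $(h,x)\mapsto(h\cdot x,h)$ is an isomorphism $H\times\phi^{-1}(y)\xrightarrow{\ \sim\ }X\times_Y H$ with inverse $(x',h)\mapsto(h^{-1}x',h)$ (both maps are well defined precisely by equivariance). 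Under this isomorphism $\widetilde\psi$ becomes the projection $X\times_Y H\to X$, i.e.\ the base change of $\beta$ along $\phi$; it is therefore faithfully flat and of finite type, hence universally open, hence a topological quotient map, and the same stays true on $\overline{k}$-points, since over an algebraically closed field images and fibres of finite-type morphisms behave well on closed points (Nullstellensatz).

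Given this, the conclusion is quick: a subset $T\subseteq X(\overline{k})$ is closed if and only if $\widetilde\psi^{-1}(T)$ is closed in $(H\times\phi^{-1}(y))(\overline{k})$ (the ``if'' direction uses that $\widetilde\psi$ is open and surjective on $\overline{k}$-points, so $\widetilde\psi$ of the complement of $\widetilde\psi^{-1}(T)$ is the open set $X(\overline{k})\setminus T$). Applying this with $T=Z$, and using that $Z$ is $H(\overline{k})$-invariant so that $h\cdot x\in Z\iff x\in Z$, one gets
\[
\widetilde\psi^{-1}(Z)=\bigl\{(h,x)\ :\ h\cdot x\in Z\bigr\}=H(\overline{k})\times\bigl(Z\cap\phi^{-1}(y)(\overline{k})\bigr),
\]
which is closed exactly by hypothesis $(2)$. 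Hence $Z$ is closed in $X(\overline{k})$, proving $(1)$.

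The one genuinely delicate point I expect is the flatness (smoothness) of the orbit map $\beta$ — equivalently, that $Y$ is, scheme-theoretically, the homogeneous space $H/\Stab(y)$; in characteristic $0$ this is standard, but it is the single place where the transitivity hypothesis is really used, and everything else is formal bookkeeping with faithfully flat descent and with the Zariski topology on $\overline{k}$-points. If one preferred to avoid invoking openness of flat morphisms, an alternative would be to write $Z$ as the image of the restriction of $\widetilde\psi$ to $H(\overline{k})\times(Z\cap\phi^{-1}(y)(\overline{k}))$, decompose into irreducible pieces, and combine Chevalley's theorem with the observation that $\overline{Z}$ is again $H$-invariant; but the submersion argument above is cleaner and is the one I would write up.
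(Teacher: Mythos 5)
Your proposal is correct and follows essentially the same route as the paper: form the Cartesian square over the orbit map $H\to Y$, $h\mapsto h\cdot y$, identify the fiber product $X\times_Y H$ with $H\times\phi^{-1}(y)$ using equivariance, and descend closedness along the resulting faithfully flat (or submersive) projection. The paper presents this as a two-step reduction (first $Y=H$, then the general case via base change), while you carry out the same argument in one pass and fill in the details the paper leaves tacit, notably the smoothness of the orbit map and the topological-quotient property of the base-changed projection.
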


\begin{proof}$ $
\begin{enumerate}[Step 1.]
\item The case when $Y=H.$\\
In this case, $X \cong  \phi^{-1}(y) \times H$. Under this identification, $Z$ corresponds to $Z \cap \left(\phi^{-1}(y) \times H\right)(\overline{k})$, so the assertion is obvious.
\item The general case.\\
Let $\pi:H \to Y$ be the map given by the action on $y$. Consider the Cartesian square
$$\xymatrix{
X' \ar@{->}_{\phi'}[d] \ar@{->}^{\pi'}[r]& X \ar@{->}^\phi[d]\\
H\ar@{->}^\pi[r]& Y
.}$$

Since $\pi$ is a submersion, so is $\pi'$. Therefore it is enough to prove the lemma for the map $\phi'$. This follows from the previous step.
\end{enumerate}
\end{proof}

\begin{proof}[Proof of Lemma \ref{lem:G.open}] Without loss of generality, we can assume that $k$ is algebraically closed. Let $\mathfrak{g}$ be the Lie algebra of $G$. \begin{itemize}
\item[Proof of \eqref{lem:G.open:1}] Let $(G^{2n}(k))^{open}\subset G^{2n}(k)$ be the set of all $2n$-tuples that generate a Zariski-dense subgroup. {We need to show that $(G^{2n}(k))^{open}$ is Zariski open.}  Let $(G^{2n})^{irr} \subset G^{2n}$ be the collection of tuples $(g_1,\ldots,g_{2n})$ such that the group generated by the $g_i$ acts irreducibly on $\mathfrak{g}$. Clearly, $(G^{2n})^{open} \subset (G^{2n})^{irr}$. We claim that $(G^{2n})^{irr}$ is Zariski open. Indeed, let $\Grass(\mathfrak{g})$ be the union of all Grassmannian varieties of $\mathfrak{g}$, and let $Y \subset G^{2n} \times \Grass(\mathfrak{g})$ be the closed subvariety
\[
Y=\left\{ (g_1,\ldots,g_{2n},V)\in G^{2n} \times \Grass(\mathfrak{g}) \mid (\forall i)\, g_i \cdot V=V \right\}.
\]
$(G^{2n})^{irr}$ is the complement of the image of the projection of $Y$ to $G^{2n}$, so it is open.

To finish the proof, we will find a Zariski closed subset $X \subset G^{2n}$ such that $(G^{2n})^{open}=(G^{2n})^{irr} \smallsetminus X$. \nir{Fix an embedding $G \hookrightarrow \SL_d$ for some $d$.} A theorem of Jordan (\cite[Theorem 36.13]{CR}) states that there is a constant $C=C(d)$ such that, if $H \subset \SL_d(k)$ is a finite subgroup, then $H$ has a normal abelian subgroup of index at most $C$, and, therefore, there is a homomorphism $H \rightarrow S_{C!}$ with an abelian kernel. Denote the free group on $2n$ generators by $F_{2n}$. For any homomorphism $\rho:F_{2n} \rightarrow S_{C!}$, let
\[
X_{\rho}= \left\{ (g_1,\ldots,g_{2n}) \mid \forall w_1,w_2\in \Ker(\rho) \quad [w_1(g_1,\ldots,g_{2n}),w_2(g_1,\ldots,g_{2n})]=1 \right\}.
\]
Each $X_\rho$ is a closed subvariety of $G^{2n}$ and, since there are only finitely many possibilities for $\rho$, the union of all $X_\rho$ is also a closed subvariety of $G^{2n}$. We denote this union by $X$. The discussion above implies that if $(g_1,\ldots,g_{2n})\in G^{2n}(k)$ generates a finite subgroup, then $(g_1,\ldots,g_{2n})\in X(k)$. We claim that $(G^{2n})^{open}=(G^{2n})^{irr} \smallsetminus X$, which will prove (1).

If $g\in X_\rho$, then the subgroup generated by $w_i(g_1,\ldots,g_{2n})$ for $w_i\in \Ker(\rho)$, is abelian and has finite index in the subgroup generated by the $g_i$s. In particular, the subgroup generated by the $g_i$s cannot be Zariski open. Together with the inclusion $(G^{2n})^{open} \subset (G^{2n})^{irr}$, we get $(G^{2n})^{open} \subset (G^{2n})^{irr} \smallsetminus X$.

Conversely, if $(g_1,\ldots,g_{2n})\in (G^{2n})^{irr} \smallsetminus X$, let $H$ be the Zariski closure of the subgroup generated by the $g_i$. The Lie algebra of $H$ is invariant under all $g_i$, so it is equal to either $\mathfrak{g}$ or $0$. If $\Lie(H)=0$ then $H$ is finite, contradicting the assumption that $(g_1,\ldots,g_{2n})\notin X$. If $\Lie(H)=\mathfrak{g}$ then $H=G$, so $(g_1,\ldots,g_{2n})\in (G^{2n})^{open}$. This shows that $(G^{2n})^{open} \supset (G^{2n})^{irr} \smallsetminus X$.

\item[Proof of \eqref{lem:G.open:2}] After identifying the tangent space to $G$ at $g$ with the Lie algebra $\mathfrak{g}$ via the map $X \in \mathfrak{g} \mapsto gX \in T_gG$, the differential of the commutator map is
\[
d[\cdot,\cdot]|_{(g,h)}: (X,Y) \mapsto X^{hgh ^{-1} }-X^{hg}+Y^{h g}-Y^h,
\]
where we denote $X^a=aXa ^{-1}$. Therefore, the derivative of $\Phi_{G,n}$ at a point $(g_1,h_1,\ldots,g_n,h_n)$ is the map
\begin{equation} \lbl{eq:derivative.of.Phi}
(X_1,Y_1,\ldots,X_n,Y_n) \mapsto \sum_{i=1}^n \left( X^{g_ih_i ^{-1} }-X^{g_i} +Y^{g_i}-Y\right) ^{P_i ^{-1} h_i},
\end{equation}
where we denote $P_i=[g_{i+1},h_{i+1}]\cdots[g_n,h_n]$.

Consider the Killing form on $\mathfrak{g}$. The image of the map $X \mapsto X^g-X $ is the orthogonal complement to the centralizer of $g$ in $\mathfrak{g}$. Assume that $Z\in \mathfrak{g}$ is orthogonal to the image of $d \Phi_{G,n}$. Taking the summand $i=n$ in (\ref{eq:derivative.of.Phi}), we get that {$Z^{h_n ^{-1}}$} commutes with both $g_n$ and $g_nh_n ^{-1}$. Therefore, $Z$ commutes with both $g_n$ and $h_n$. Continuing by decreasing induction, we get that the orthogonal complement to the image of $d \Phi_{G,n}$ is the common centralizer of $g_i,h_i$. By assumption, it is trivial. This proves (2).

\item[Proof of \eqref{lem:G.open:3}] The assertion follows from the fact that the stabilizer of a tuple $(g_1,\ldots,g_{2n})\in G^{2n}$ is equal to the centralizer of the Zariski closure of the subgroup generated by the $g_i$.

{
\item[Proof of \eqref{lem:G.open:4}] Let $x=(g_1,\ldots,g_{2n}) \in (G^{2n})^{open}$. {Recall that we denote the free group on $2n$ generators by $F_{2n}$. For every $\gamma \in F_{2n}$, let $\phi_\gamma :G^{2n} \rightarrow G$ be the substitution map. The tuple $x$ induces a map $\psi_x: F_{2n} \to G$ sending $\gamma$ to $\phi_\gamma(x)$, and the image of $\psi_x$ is Zariski dense.} Choose $\gamma_1$ such that $\psi_x(\gamma_1)$ is regular semi-simple. Without loss of generality, we may assume that $\psi_x(\gamma_1)$ is diagonal matrix. Choose $\gamma_2$ such that all entries of $\psi_x(\gamma_2)$  are non-zero.

Define $$\phi_{\gamma_1,\gamma_2}:=\phi_{\gamma_1}\times \phi_{\gamma_2}:G^{2n} \to G\times G,$$
and let $\cO:=G \cdot x$ be the orbit of $x$.
\begin{enumerate}[Step 1.]
\item $\phi_{\gamma_1}(\cO)$ is closed.\\
$\phi_{\gamma_1}(\cO)=G \cdot \phi_{\gamma_1}(x)=G \cdot \psi_{x}(\gamma_1)$ which is closed, since it is the conjugacy class of the semi-simple element $\psi_{x}(\gamma_1)$.
\item $\phi_{\gamma_1,\gamma_2}(\cO)$ is closed. \\
Let $y:=(a,b):=\phi_{\gamma_1,\gamma_2}(x)$. We have $\phi_{\gamma_1,\gamma_2}(\cO)=G \cdot y$. Lemma \ref{lem:geo.Frob} implies that, in view of the previous step, the fact that $G \cdot y$ is closed in $G \times G$ follows from   the fact that $G \cdot y \cap \{a\} \times G$ is closed in $\{a\} \times G$. For the later, we use the equality  $$G \cdot y \cap \{a\} \times G=T\cdot b=T\cdot \psi_{x}(\gamma_2),$$ where $T:=Z_G(a)$ is the standard torus. The assertion now follows from the fact that all the entries of $\psi_x(\gamma_2)$  are non-zero.

\item $\cO$ is closed.\\
By lemma \ref{lem:geo.Frob}, the previous step implies that it is enough to show that $\cO \cap \phi_{\gamma_1,\gamma_2}^{-1}(a,b)$ is closed in  $\phi_{\gamma_1,\gamma_2}^{-1}(a,b)$. This is obvious, since $\cO \cap \phi_{\gamma_1,\gamma_2}^{-1}(a,b)=\{x\}.$
\end{enumerate}
}

\end{itemize}
\end{proof}

\begin{definition} Denote
\[
\Def_{G,n}^{open}:=(G^{2n})^{open} \cap \Phi_{G,n} ^{-1}(1).
\]
\end{definition}

By Lemma \ref{lem:G.open}\eqref{lem:G.open:2}, $\Def_{G,n}^{open}$ is a smooth variety.
{
By Lemma \ref{lem:G.open}\eqref{lem:G.open:3} and Luna Slice Theorem (see, e.g. \cite{DR}), the geometric quotient $$\mathcal{M}_G:=\left( (G^{2n})^{open} \cap \Phi_{G,n} ^{-1}(1) \right) / G$$ exists, and is a smooth subvariety of the categorical quotient $\left( G^{2n} \cap \Phi_{G,n} ^{-1} (1)\right) / G$.

Let $\pi$ be the fundamental group of a compact surface of genus $n$. We can think of elements of $\mathcal{M}_G$ as morphisms in  $\Hom(\pi,G)$  up to conjugacy.  Standard deformation theory shows that the tangent space of $\mathcal{M}_G$ at a point corresponding to $\rho \in \Hom(\pi,G)$ is the cohomology group $H^1(\pi,\mathfrak{g}_\rho)$, where $\mathfrak{g}_\rho$ denotes the representation of $\pi$ on the space $\mathfrak{g}$ given by $\Ad \circ \rho:\pi \rightarrow \GL(\mathfrak{g})$. Assuming that $\rho \in ((G ^{2n})^{open} \cap \Phi_{G,n} ^{-1} (1))(k)$, both $\Ad \circ \rho$ and its dual are irreducible, so the cohomology groups $H^0(\Sigma,\mathfrak{g}_\rho)$ and $H^2(\pi,\mathfrak{g}_\rho)$ vanish. We get that $\dim \left( H^1(\pi,\mathfrak{g}_\rho) \right)$ is equal to negative the Euler characteristic of the local system $\mathfrak{g}_\rho$, which is $(2n-2)\dim\ \mathfrak{g}$. We conclude that $\dim \mathcal{M}_G=(2g-2)\dim\ \mathfrak{g}$.

\begin{definition} Let $\langle \cdot,\cdot \rangle$ be the Killing form on $\mathfrak{g}$.
\begin{enumerate}
\item
The \emph{Atiyah--Bott--Goldman form} on $\mathcal{M}_G$  is the differential 2-form whose value at a point corresponding to $\rho$ is
\[
\eta_{ABG}:H^1(\pi,\mathfrak{g}_\rho) \times H^1(\pi,\mathfrak{g}_\rho) \rightarrow H^2 \left( \pi,\mathfrak{g}_\rho\otimes \mathfrak{g}_\rho \right) \rightarrow H^2(\pi,\mathfrak{g}_\rho)=k,
\]
where the first arrow is the cup product and the second arrow is composition with $\langle \cdot,\cdot \rangle$.
\item Define  a top form on $\cM_G$:
$$v_{ABG}:=\frac{\eta_{ABG}^{\wedge \dim \mathcal{M}_G/2}}{(\dim \mathcal{M}_G / 2)!}.$$
\end{enumerate}

\end{definition}
}

We now define a volume form on $\Def_{G,n}^{open}$. Let $Q:\Def_{G,n}^{open} \to \cM_{G}$ be the quotient map. For any $\rho\in\Def_{G,n}^{open}$, identify the tangent space to the fiber $Q^{-1}(Q(\rho))$ with $\mathfrak{g}$ via the action map. We get a short exact sequence
\[
0 \to \mathfrak{g} \to T_{\rho}\Def_{G,n}^{open} \to T_{Q(\rho)}\cM_{G} \to 0.
\]
This gives us an identification of the sheaf of relative top forms $\Omega_{\Def_{G,n}^{open} / \cM_{G}}$ with the free sheaf on $\Def_{G,n}^{open}$ with fiber $\bigwedge^{\dim \mathfrak{g}}\mathfrak{g}^*$.

\begin{definition} Let $\omega\in \bigwedge^{\dim \mathfrak{g}}\mathfrak{g}^*$, and let $\omega'\in\Omega_{\Def_{G,n}^{open} / \cM_{G}}$ be the corresponding section. Let
\[
\nu_{\omega}:=\omega' \otimes Q^* (v_{ABG})\in \Omega_{\Def_{G,n}^{open} / \cM_{G}} \otimes Q^*(\Omega_{\cM_{G}})\cong \Omega_{\Def_{G,n}}.
\]
\end{definition}

{Assume now that $G$ is defined over a local field $F$ of characteristic zero. If $F$ is archimedean, assume in addition that $F=\mathbb{R}$ and that $G(\mathbb{R})$ is compact. Let $\Gamma$ be an open compact group in $G(F).$}
Let $(\Gamma ^{2n})^{open}=(G^{2n})^{open}(F)\cap \Gamma^{2n}$, and let $\Def_{\Gamma,n}^{open}=(\Gamma^{2n})^{open}\cap\Phi_{G,n}^{-1}(1)$. In order to show that the quotient $\Def_{\Gamma,n}^{open}/\Gamma$ is an 
analytic
 manifold, we recall a general criterion for a quotient to be a manifold.

Let $(X,\mathcal{O})$ be an analytic manifold, and let $K$ be a compact analytic group acting freely on $X$. Consider the quotient $X/K$, the quotient map $\pi : X \rightarrow X/K$, and the sheaf $\mathcal{Q}$ on $X/K$ given by $\mathcal{Q}(A)=\mathcal{O}( \pi ^{-1} (A))^K$. The following lemma is standard, see e.g. \cite[Proposition 2.2.2]{CDS} for $C^ \infty$ manifolds. We sketch a proof {which is valid for the real and p-adic analytic cases.}
\begin{lemma} \label{lem:Rami.made.me} The ringed space $(X/K,\mathcal{Q})$ is an analytic manifold.
\end{lemma}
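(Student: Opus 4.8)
The claim is a standard ``quotient of a manifold by a free compact group action is a manifold'' statement, and the plan is to verify it by producing, around each point of $X/K$, a chart in which $\mathcal Q$ becomes the sheaf of analytic functions on a $p$-adic (or real) ball. The first step is to pass to a local model of the action: fix $x\in X$ and let $\bar x=\pi(x)$. Since $K$ is a compact analytic group acting analytically and freely, the stabilizer of $x$ is trivial and the orbit map $K\to X$, $g\mapsto gx$, is an analytic immersion; by the implicit function theorem (in the real case) or its $p$-adic analogue (see \cite{Se}) there is a $K$-invariant neighborhood of the orbit $Kx$ analytically isomorphic to $K\times S$, where $S$ is an analytic ``slice'' through $x$ transverse to the orbit, and under this isomorphism $K$ acts only on the first factor by left translation. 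I would take a moment to justify the existence of such a slice: choose an analytic submanifold $S\ni x$ with $T_xX=T_x(Kx)\oplus T_xS$, consider the map $K\times S\to X$, $(g,s)\mapsto gs$, check its differential at $(e,x)$ is an isomorphism, invoke the inverse function theorem to get a local analytic isomorphism, and then shrink $S$ to make it $K$-invariant in the sense that the map $K\times S\to X$ is injective (using compactness of $K$ and freeness of the action to rule out ``return'' collisions).

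Given the local model $K\times S\to X$, the second step is to identify $\mathcal Q$ near $\bar x$. On $K\times S$ the $K$-invariant analytic functions are exactly the functions pulled back from $S$ via the second projection, because an analytic function on $K\times S$ invariant under left translation in the $K$-variable is constant along $K$; hence $\pi$ restricted to the model is identified with the projection $K\times S\to S$, and $\mathcal Q$ on the image of $S$ in $X/K$ is identified with the sheaf of analytic functions on $S$. Since $S$ is an analytic manifold (an open piece of $O^{\dim X-\dim K}$ in suitable coordinates), this exhibits a neighborhood of $\bar x$ in $(X/K,\mathcal Q)$ as isomorphic, as a ringed space, to an analytic manifold. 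Running this over all $x$ produces an atlas for $X/K$.

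The third and final step is to check that this atlas is coherent, i.e.\ that the transition maps between two slice charts $S_1,S_2$ are analytic isomorphisms on overlaps. This is automatic from the construction: on the overlap, both charts arise as sections of the same map $\pi$, so the transition map is $s\mapsto$ (the unique point of $S_2$ in the $K$-orbit of $s$), which equals the composition of the analytic inclusion $S_1\hookrightarrow X$ with the analytic retraction $X\supset(\text{model for }S_2)\to S_2$; both are analytic, and the inverse is obtained symmetrically. One should also note the quotient topology on $X/K$ is Hausdorff --- this follows because $K$ is compact, so the graph of the orbit equivalence relation $\{(x,y):y\in Kx\}=\{(x,gx):g\in K\}$ is the image of the proper map $K\times X\to X\times X$, hence closed --- and second countable (in the real case) or has the appropriate local structure (in the $p$-adic case) because $X$ does.

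\textbf{Main obstacle.} The genuine work is the slice theorem in the $p$-adic analytic category: one must make sure the inverse function theorem argument and the shrinking-to-$K$-invariance argument go through verbatim over a non-archimedean local field. This is where compactness of $K$ is essential (to get uniform neighborhoods and to rule out orbit self-intersections), and it is the step I would write most carefully; everything after it --- the identification of invariant functions on $K\times S$, the compatibility of charts, and Hausdorffness --- is formal.
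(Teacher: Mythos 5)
Your proposal is correct and follows essentially the same route as the paper's proof: both fix a transversal slice $S$ through $x$, use the inverse function theorem to show $K\times S\to X$ is a local analytic isomorphism, shrink using compactness of $K$ and freeness to rule out orbit collisions, identify $\mathcal Q$ on the image with analytic functions on $S$, and obtain Hausdorffness from compactness of $K$. The only difference is that you explicitly spell out the compatibility of overlapping slice charts, which the paper leaves implicit.
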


\begin{proof} Since $K$ is compact, the quotient $X/K$ is Hausdorff. We need to show that the sheaf $\mathcal{Q}$ is locally isomorphic to the sheaf of analytic functions on an analytic manifold. Fix $x\in X$ and consider the orbit map $a: K \rightarrow X$, $a(k)=k \cdot x$. The map $a$ is an immersion, is one-to-one, and its domain is compact. Therefore, its image $K \cdot x$ is a submanifold. Let $S \subset X$ be a $(\dim X-\dim K)$-dimensional submanifold that contains $x$ and is transversal to $K \cdot x$ at $x$. By compactness, there is a neighborhood $U$ of $x$ such that, for any $y\in U \cap S$, the following two properties hold: \begin{enumerate}
\item The intersection $K \cdot y \cap S \cap U$ is equal to $\left\{ y \right\}$.
\item $K \cdot y$ is transverse to $S$ at $y$.
\end{enumerate}
It follows that the action map $A:K \times (S \cap U) \rightarrow X$ is an analytic diffeomorphism onto an open submanifold of $X$. The map $p\in S\cap U \mapsto A(1,p)$ induces an isomorphism between the restriction of $\mathcal{Q}$ to $\pi(K \cdot (U \cap S))$ and the sheaf of analytic functions on $U \cap S$.
\end{proof}
{
By Lemma \ref{lem:Rami.made.me} and Lemma \ref{lem:G.open}\eqref{lem:G.open:3}, the quotient $$\mathcal{M}_\Gamma:=\Def_{\Ga,n}^{open} / \Gamma=\Def_{\Ga,n}^{open} / (\Gamma/Z(\Gamma))$$ is an analytic manifold. {We have a natural \et map  $r:\mathcal{M}_\Gamma \rightarrow \mathcal{M}_G(F)$. Thus we can consider the Atiyah--Bott--Goldman form $\eta_{ABG}$ and the forms $v_{ABG},\nu_{\omega}$ as forms on $\mathcal{M}_\Gamma,\cM_{\Ga}$, and $\Def_{\Ga,n}^{open}$.}


{Any compact semi-simple (real) Lie group $\Gamma$ is the real points of some semi-simple algebraic group. Therefore, we can discuss $\mathcal{M}_\Gamma$ and the Atiyah--Bott--Goldman form on it. Originally, the Atiyah--Bott--Goldman form was introduced in this setting, and the following results are known:
\begin{theorem}[\cite{Gol}] \label{thm:Gol.real}
Let $\Gamma$ be a compact semi-simple Lie group. Then $\eta_{ABG}$ is a symplectic form on $\cM_\Gamma$.
\end{theorem}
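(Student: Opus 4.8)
The plan is to verify, one at a time, the three properties that make $\eta_{ABG}$ a symplectic form on $\cM_\Gamma$: that it is a well-defined analytic alternating $2$-form, that it is non-degenerate, and that it is closed. Lemma~\ref{lem:Rami.made.me} and Lemma~\ref{lem:G.open}\eqref{lem:G.open:3} already provide the smooth analytic manifold structure on $\cM_\Gamma=\Def_{\Gamma,n}^{open}/\Gamma$, and the deformation-theoretic discussion preceding the definition of $\eta_{ABG}$ identifies $T_{[\rho]}\cM_\Gamma$ canonically with $H^1(\pi,\mathfrak{g}_\rho)$ (here one uses that $\Ad\circ\rho$ is irreducible, so $H^0(\pi,\mathfrak{g}_\rho)=H^2(\pi,\mathfrak{g}_\rho)=0$ and $\dim H^1(\pi,\mathfrak{g}_\rho)=(2n-2)\dim\mathfrak{g}=\dim\cM_\Gamma$). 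Thus everything reduces to the bilinear form $\eta_{ABG}$ on each $H^1(\pi,\mathfrak{g}_\rho)$; its analytic dependence on $\rho$ is automatic, since it is computed by fixed cochain-level formulas applied to the analytically varying family of coefficient modules $\mathfrak{g}_\rho$.

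Two of the three properties are then formal. First, $\eta_{ABG}$ is alternating: it is the composite $H^1\times H^1\xrightarrow{\cup}H^2(\pi,\mathfrak{g}_\rho\otimes\mathfrak{g}_\rho)\to H^2(\Sigma,k)\cong k$, in which the cup product of two degree-one classes satisfies $a\cup b=-\,b\cup a$, while the Killing form is symmetric and $\Ad$-invariant and hence defines a map of local systems $\mathfrak{g}_\rho\otimes\mathfrak{g}_\rho\to k$ that does not affect the sign. Second, $\eta_{ABG}$ is non-degenerate: since $\Gamma$ is semi-simple the Killing form on $\mathfrak{g}$ is non-degenerate and $\Ad$-equivariant, so it induces an isomorphism of local systems $\mathfrak{g}_\rho\xrightarrow{\sim}\mathfrak{g}_\rho^{\vee}$; Poincar\'e duality for the closed oriented surface $\Sigma$ with coefficients in a local system gives a perfect pairing $H^1(\Sigma,\mathfrak{g}_\rho)\times H^1(\Sigma,\mathfrak{g}_\rho^{\vee})\to H^2(\Sigma,k)\cong k$ via cup product, the evaluation pairing, and the fundamental class; feeding in the isomorphism $\mathfrak{g}_\rho\cong\mathfrak{g}_\rho^{\vee}$ turns this perfect pairing into precisely $\eta_{ABG}$.

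The substantive point — and the main obstacle — is closedness, $d\eta_{ABG}=0$. I would establish it via the Atiyah--Bott gauge-theoretic description: identify an open subset of $\cM_\Gamma$ with the moduli space of flat $\Gamma$-connections on the trivial bundle over $\Sigma$, obtained as the Marsden--Weinstein symplectic quotient of the affine space $\mathcal{A}$ of all connections by the gauge group $\mathcal{G}$; on $\mathcal{A}$ the $2$-form $\omega(a,b)=\int_\Sigma\langle a\wedge b\rangle$ has constant coefficients, hence is tautologically closed (and non-degenerate), the curvature $F\colon\mathcal{A}\to\Omega^2(\Sigma,\mathfrak{g})=(\Lie\mathcal{G})^{*}$ is an equivariant moment map, $F^{-1}(0)/\mathcal{G}$ is (an open piece of) $\cM_\Gamma$, and the reduced form is $\eta_{ABG}$ once one identifies $H^1_{\mathrm{dR}}(\Sigma,\mathfrak{g}_\rho)\cong H^1(\pi,\mathfrak{g}_\rho)$; closedness of a reduced form is then standard (the pullback of $\omega$ to $F^{-1}(0)$ is closed and basic, hence descends). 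A purely algebraic alternative, closer in spirit to the rest of the paper and avoiding infinite-dimensional geometry, is the direct computation of Goldman and Karshon: using the standard genus-$n$ presentation of $\pi$ to realize a neighborhood of $\rho$ in $\Hom(\pi,\Gamma)$ as a subvariety of $\Gamma^{2n}$, one writes $d\eta_{ABG}$ as an explicit alternating trilinear expression in group cocycles and shows it vanishes by the Jacobi identity for $\mathfrak{g}$ together with $\Ad$-invariance of $\langle\cdot,\cdot\rangle$. Either way, the first two properties are formal and closedness is the only step that requires genuine work.
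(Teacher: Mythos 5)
The paper does not actually prove Theorem~\ref{thm:Gol.real}: it cites Goldman's article \cite{Gol} and states explicitly that ``for brevity, we will not repeat the argument,'' using the theorem only as an input (together with Theorem~\ref{thm:Wit.real} and a Zariski-density argument over $\mathbb{R}$) to deduce the algebraic Theorem~\ref{thm:symp.algebraic}. So there is no ``paper's proof'' to compare against; what you have written is a correct outline of the standard arguments in the literature, and it reads soundly. The identification $T_{[\rho]}\cM_\Gamma\cong H^1(\pi,\mathfrak g_\rho)$ and the vanishing of $H^0,H^2$ in the open (hence $\Ad$-irreducible) locus are exactly as in the paper's own discussion; alternatingness follows from the graded anti-commutativity of cup product in degree one, provided one notes that the sign rule involves the tensor-factor swap and that the Killing form, being symmetric, absorbs that swap; nondegeneracy is Poincar\'e duality for local systems plus nondegeneracy of the Killing form (which is where semi-simplicity of $\Gamma$ enters); and closedness is correctly singled out as the only substantive step. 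Both routes you name for closedness — the Atiyah--Bott/Marsden--Weinstein reduction on the infinite-dimensional affine space of connections, and the finite-dimensional cochain-level computation — are valid and well-documented. A minor historical quibble: the purely algebraic cocycle-level verification of $d\eta_{ABG}=0$ is due to Karshon rather than Goldman (Goldman's 1984 proof of closedness is closer in spirit to the gauge-theoretic picture), but this does not affect the mathematics. One small precision worth adding in the Atiyah--Bott route: the moduli of flat connections modulo gauge can be singular, so the reduction argument identifies $\eta_{ABG}$ with the reduced form only on the smooth open locus where $\cM_\Gamma$ lives, which is exactly the locus parametrized by $\Def^{open}$.
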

}


\nir{We say that a volume form on a $k$-vector space $V$  is compatible with a non-degenerate symmetric bilinear form $\langle \cdot,\cdot \rangle$ if, for any basis $e_i$ of $V\otimes \overline k$ such that $\langle e_i,e_j \rangle = \delta_{i,j}$, the volume of $e_1 \wedge \cdots \wedge e_n$ is $\pm 1$. Such volume form need not exist in general, but, if $k$ is algebraically closed there are exactly two such volume forms.}


\begin{theorem}[\cite{Wi}] \label{thm:Wit.real}
{Let $\Gamma$ be a compact semi-simple Lie group. }
\nir{Let $\omega$ be a translation invariant $\overline{F}$-valued top differential form on $\Gamma$  whose value at 1 is compatible with the Killing form.} Then
\[
\nu_{\omega}= \pm \left. \frac{\omega ^{\fbox{$\wedge$}2n}}{\Phi_{G,n} ^* \omega }\right|_{\Def_{\Ga,n}^{open}}.
\]
\end{theorem}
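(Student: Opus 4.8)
The plan is to establish the identity fibrewise: at each point $\rho\in\Def_{\Ga,n}^{open}$ I would identify both sides with explicit volume forms on the tangent space $T_\rho\Def_{\Ga,n}^{open}$, recognise the latter as the space $Z^1(\pi,\mathfrak g_\rho)$ of group $1$-cocycles, and reduce the equality to a comparison between a Reidemeister-torsion type volume and the Atiyah--Bott--Goldman symplectic volume on $H^1(\pi,\mathfrak g_\rho)$. That last comparison is precisely Witten's computation (cf. \cite{Wi}, around (4.72)) together with Goldman's description of $\eta_{ABG}$ (Theorem \ref{thm:Gol.real}); the role of the hypothesis that the value of $\omega$ at $1$ is compatible with the Killing form is exactly to make the comparison sign-definite.

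First I would set up the tangent complex. Write $\pi=\pi_1(\Sigma)$, with $\Sigma$ of genus $n$, and let $\mathfrak g_\rho$ be $\mathfrak g$ with the $\pi$-action $\Ad\circ\rho$. Using the standard CW structure of the surface (one vertex, $2n$ edges, one $2$-cell), which is also the presentation $2$-complex, the cochain complex $C^\bullet(\pi,\mathfrak g_\rho)$ is $0\to C^0\xrightarrow{d^0}C^1\xrightarrow{d^1}C^2\to0$ with $C^0=C^2=\mathfrak g$ and $C^1=\mathfrak g^{2n}$. Under the left-invariant trivialisation $T_gG\cong\mathfrak g$ --- the one for which the bi-invariant $\omega$ becomes its constant value $\omega_1:=\omega|_1\in\bigwedge^{\dim\mathfrak g}\mathfrak g^{*}$ --- one has $T_\rho G^{2n}\cong C^1$, the differential $d\Phi_{G,n}|_\rho$ is the cochain map $d^1$ (this is the Fox-derivative identity; concretely it is formula \eqref{eq:derivative.of.Phi}), and the infinitesimal $G$-action $\mathfrak g\to T_\rho\Def_{\Ga,n}^{open}$ is $d^0$. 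Since $\rho$ lies in the open locus, both $\Ad\circ\rho$ and its dual are irreducible, so $H^0(\pi,\mathfrak g_\rho)=H^2(\pi,\mathfrak g_\rho)=0$ (as recalled in \S\S\ref{subsubsec:localization}); hence $d^0$ is injective, $d^1$ surjective, $T_\rho\Def_{\Ga,n}^{open}=\ker d^1=Z^1$, the tangent space to the $G$-orbit is $B^1=\operatorname{im}d^0\cong\mathfrak g$, and $T_{Q(\rho)}\mathcal M_\Ga=H^1(\pi,\mathfrak g_\rho)$.

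Next I would rewrite both forms as volumes on $Z^1$. Equip $C^0,C^1,C^2$ with the volume forms $\omega_1$, $\omega_1^{\otimes 2n}=\omega^{\fbox{$\wedge$}2n}|_\rho$, $\omega_1$. By the defining property of the relative top form, $\left.\frac{\omega^{\fbox{$\wedge$}2n}}{\Phi_{G,n}^{*}\omega}\right|_{\Def_{\Ga,n}^{open}}$ at $\rho$ is the volume on $Z^1$ induced by $\omega^{\fbox{$\wedge$}2n}$ and $\omega_1$ through $0\to Z^1\to C^1\xrightarrow{d^1}C^2\to0$; by the definition of $\nu_\omega$, the form $\nu_\omega$ at $\rho$ is the product, through $0\to B^1\to Z^1\to H^1\to0$, of the volume $\omega_1$ on $B^1\cong\mathfrak g$ with the pullback of $v_{ABG}$ on $H^1$. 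A routine determinant-of-a-complex computation (for a three-term complex with $H^0=H^2=0$, the identification $\det C^1\cong\det C^0\otimes\det H^1\otimes\det C^2$) shows that the first of these volumes is likewise $\omega_1\otimes v_{\det}$, where $v_{\det}$ is the volume on $H^1$ determined by $\omega_1$, $\omega^{\fbox{$\wedge$}2n}$, $\omega_1$ via that identification. Hence the theorem is equivalent to $v_{\det}=\pm v_{ABG}$ on $H^1(\pi,\mathfrak g_\rho)$, pointwise.

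The hard part is this last identity, a torsion-versus-symplectic-volume statement, which I would prove by exhibiting the chain-level Poincar\'e duality of the closed oriented surface: the CW complex above is self-dual, so there is an isomorphism $C^i\cong(C^{2-i})^{*}$ built from the orientation of $\Sigma$ and the $\Ad$-invariant Killing form (which identifies $\mathfrak g_\rho^{*}\cong\mathfrak g_\rho$). Because $\omega_1$ is compatible with the Killing form in the sense defined above, the chosen volumes on $C^0$, $C^1$, $C^2$ are self-dual under this identification --- for $C^1$ one uses that the intersection form on the $2n$ one-cells is the standard symplectic lattice pairing, of discriminant $\pm1$. For such a self-dual complex, acyclic outside degree $1$, the determinant volume $v_{\det}$ on $H^1$ equals, up to sign, the volume of the skew pairing on $H^1$ obtained from cup product, the Killing form and the fundamental class --- that is, of $\eta_{ABG}$ --- and the volume of a symplectic form $\eta$ on a $2m$-dimensional space is $\pm\eta^{\wedge m}/m!$, which is exactly $v_{ABG}$. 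Carrying this out amounts to reproducing \cite{Wi} around (4.72) (with Theorem \ref{thm:Gol.real} guaranteeing non-degeneracy of $\eta_{ABG}$); the genuine technical burden is the sign-and-factorial bookkeeping in the torsion identity and the verification that the algebraic differential $d\Phi_{G,n}$ and the infinitesimal action match the topological complex. Since the resulting equality of forms is pointwise up to sign and the sign is locally constant on $\Def_{\Ga,n}^{open}$, this yields the stated identity up to the asserted $\pm$.
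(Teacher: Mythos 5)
The paper does not prove Theorem~\ref{thm:Wit.real}; it is quoted from Witten \cite{Wi}, and then, together with Goldman's Theorem~\ref{thm:Gol.real}, it is used to derive the algebraic version (Theorem~\ref{thm:symp.algebraic}) via the Zariski density of $\mathcal{M}_G(\mathbb{R})$. So there is no proof in the paper against which to compare your sketch. On its own merits, your reconstruction of Witten's torsion argument is sound in outline: the identification of $d\Phi_{G,n}|_\rho$ with the coboundary $d^1$ of the presentation $2$-complex is exactly what formula \eqref{eq:derivative.of.Phi} encodes; the open condition forces $H^0=H^2=0$, which the paper records in \S\S\ref{subsubsec:localization}; and the fibrewise reduction of both sides to volume forms on $Z^1(\pi,\fg_\rho)$ and then, via the canonical isomorphism $\det C^1\cong\det C^0\otimes\det H^1\otimes\det C^2$, to the single identity $v_{\det}=\pm v_{ABG}$ on $H^1$ is the right move. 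Your observation that the hypothesis ``$\omega|_1$ compatible with the Killing form'' is precisely what makes the chain-level volumes self-dual under Poincar\'e duality of the surface, so that the torsion volume on $H^1$ agrees with the cup-product symplectic volume, is a clarification that the paper itself does not spell out. The one caveat is that you ultimately leave the torsion-versus-symplectic-volume bookkeeping (the sign-and-factorial computation, and the verification that the intersection pairing on the $2n$ one-cells has discriminant $\pm1$) as a pointer back to \cite{Wi} rather than carrying it out; that places your argument in the same logical position as the paper's citation, so it is not a gap, but the technical core remains outsourced.
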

{The above results (Theorems \ref{thm:Gol.real} and \ref{thm:Wit.real}) are also valid  in the algebraic case:

\begin{theorem} \label{thm:symp.algebraic} Let $G$ be a semi-simple group defined over $k$. Then
\begin{enumerate}
\item $\eta_{ABG}$ is a symplectic form on $\cM_G$.
\item\label{thm:symp.algebraic:2} Let $\omega$ be a translation invariant top differential form on $G$  whose value at 1 is compatible with the Killing form. Then
$$
\nu_{\omega}= \pm \left. \frac{\omega ^{\fbox{$\wedge$}2n}}{\Phi_{G,n} ^* \omega}\right|_{\Def_{G,n}^{open}}$$
\end{enumerate}
\end{theorem}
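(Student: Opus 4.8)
The plan is to reduce both assertions to the case of the compact real form and then invoke the classical results of Goldman and Witten (Theorems \ref{thm:Gol.real} and \ref{thm:Wit.real}). Both statements are geometric and stable under base change: for (1), the non-degeneracy of $\eta_{ABG}$ and the vanishing of the algebraic $3$-form $d\eta_{ABG}$ on $\mathcal M_G$ may be checked after extension of scalars; for (2), $\nu_\omega$ and $\frac{\omega ^{\fbox{$\wedge$}2n}}{\Phi_{G,n} ^* \omega}$ are algebraic sections of $\Omega_{\Def_{G,n}^{open}}$ whose equality up to sign may be checked after extension of scalars. Hence we may assume $k=\mathbb C$. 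Before turning to the real form I would dispose of the non-degeneracy in (1) directly: at a point of $\mathcal M_G(\overline k)$ corresponding to a Zariski-dense $\rho\colon\pi\to G$, the $\pi$-module $\mathfrak g_\rho$ is isomorphic to its dual via the $\pi$-equivariant non-degenerate Killing form, so Poincaré duality for the surface group $\pi$ shows that the cup-product pairing $H^1(\pi,\mathfrak g_\rho)\times H^1(\pi,\mathfrak g_\rho)\to H^2(\pi,k)=k$ is perfect. Since this pairing is exactly $\eta_{ABG}$, the form $\eta_{ABG}$ is non-degenerate everywhere, and consequently $v_{ABG}$ is a nowhere-vanishing top form on $\mathcal M_G$ and $\nu_\omega$ a nowhere-vanishing section of $\Omega_{\Def_{G,n}^{open}}$ (using also Lemma \ref{lem:G.open}\eqref{lem:G.open:2}).

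Next I would fix a compact real form $U\subset G(\mathbb C)$, with associated anti-holomorphic involution $\sigma$, and set $\Def_{U,n}^{open}:=\Def_{G,n}^{open}(\mathbb C)^{\sigma^{2n}}$. Since $U$ is Zariski dense in $G$, this is a totally real real-analytic submanifold of $\Def_{G,n}^{open}(\mathbb C)$ whose real dimension equals $\dim_{\mathbb C}\Def_{G,n}^{open}$; being the real locus of a smooth $\sigma$-stable variety possessing real points, it is Zariski dense in $\Def_{G,n}^{open}$. Here I use that $\Def_{G,n}^{open}$ is irreducible (equivalently, that the real locus meets every connected component): this follows from Corollary \ref{cor:Phi.flat} when $G$ is simple and simply connected, and in general by reducing via Lemmas \ref{lem:isogeny} and \ref{lem:FRS.product} and arguing component by component. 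The elementary observation driving the argument is then: an algebraic differential form, resp. a regular function, on $\Def_{G,n}^{open}$ whose restriction to $\Def_{U,n}^{open}$ vanishes, resp. is locally constant, must vanish, resp. be constant on each component — because $T\Def_{U,n}^{open}\otimes_{\mathbb R}\mathbb C=T\Def_{G,n}^{open}(\mathbb C)$ along $\Def_{U,n}^{open}$, so $\mathbb C$-multilinearity forces the form, resp. its differential, to vanish on a Zariski-dense set.

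With this in hand both assertions follow. For (1): since $Q\colon\Def_{G,n}^{open}\to\mathcal M_G$ is a submersion, $Q^*$ is injective on forms, so it suffices to prove $d(Q^*\eta_{ABG})=0$; but the restriction of $Q^*\eta_{ABG}$ to $\Def_{U,n}^{open}$ is, via the étale map $r\colon\mathcal M_U\to\mathcal M_G(\mathbb C)$ and the quotient $Q_U$, the pullback of the classical Atiyah--Bott--Goldman form on $\mathcal M_U$, which is closed by Theorem \ref{thm:Gol.real}; hence $d(Q^*\eta_{ABG})$ is an algebraic $3$-form vanishing on the Zariski-dense submanifold $\Def_{U,n}^{open}$, so it is zero. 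For (2): consider the nowhere-vanishing regular function $h:=\nu_\omega\big/\big(\frac{\omega ^{\fbox{$\wedge$}2n}}{\Phi_{G,n} ^* \omega}\big)$ on $\Def_{G,n}^{open}$. Restricting $\omega$ to $U$ gives an $\mathbb C$-valued translation-invariant top form whose value at $1$ is compatible with the Killing form (tracking $\mathfrak u\otimes\mathbb C=\mathfrak g$), so Theorem \ref{thm:Wit.real} shows that $h$ takes values in $\{\pm1\}$ on $\Def_{U,n}^{open}$; being locally constant there, $h$ has vanishing differential on a Zariski-dense set, hence $h$ is constant on each connected component of $\Def_{G,n}^{open}$, and its value on each is $\pm1$. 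This is exactly the asserted identity.

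The main obstacle I expect is precisely this passage from the real-analytic identities of Goldman and Witten to the algebraic ones: making rigorous that $\Def_{U,n}^{open}$ is Zariski dense in $\Def_{G,n}^{open}$ (including the componentwise statement for non-simply-connected $G$, where $\Def_{G,n}^{open}$ may be disconnected), and that the restrictions of $Q^*\eta_{ABG}$ and of $\nu_\omega$ to $\Def_{U,n}^{open}$ genuinely coincide with the classical Atiyah--Bott--Goldman and Witten forms — the latter requiring one to identify the algebraic cup-product/Killing-form construction of $\eta_{ABG}$ with the de Rham one and to check the normalization conventions survive complexification. The non-degeneracy in (1) and the reduction to $k=\mathbb C$ are routine by comparison.
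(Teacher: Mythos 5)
Your core strategy coincides with the paper's: reduce to $k=\mathbb C$, pass to a compact real form, observe that the compact locus is Zariski dense, and transfer Goldman's and Witten's real-analytic identities (Theorems \ref{thm:Gol.real} and \ref{thm:Wit.real}) to the algebraic setting, with non-degeneracy handled directly by Poincar\'e duality. The paper runs the density argument on $\mathcal M_G$ (via Lemma \ref{lem:dense.points.local.field}) rather than on $\Def_{G,n}^{open}$, but that is a cosmetic difference.

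There is, however, a genuine gap in your claim to dispose of the general semi-simple case. You assert that irreducibility of $\Def_{G,n}^{open}$ (equivalently, that the compact locus meets every connected component) follows "in general by reducing via Lemmas \ref{lem:isogeny} and \ref{lem:FRS.product}." Those two lemmas concern the \FRS\ property of $\Phi_{G,n}$ under isogenies and products; they say nothing about the number of connected components of $\Def_{G,n}^{open}$ or about whether a chosen compact real form meets each one. For non-simply-connected $G$ the variety $\Hom(\pi_1(\Sigma),G)$ can genuinely be disconnected (components indexed by obstruction classes in $\pi_1(G)$), so this is not a formality. The paper sidesteps the issue entirely: it explicitly restricts the proof to $G$ connected and simply connected (invoking Corollary \ref{cor:Phi.flat}(2) for irreducibility) and merely remarks that the Goldman--Witten arguments are "easily adaptable" for the general statement. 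If you intend the reduction to be part of the proof you need a different mechanism. Separately, you should cite Lemma \ref{lem.M.Gamma.non.empty} when you assert that $\Def_{G,n}^{open}$ has real points for the compact form; that nonemptiness is exactly what the paper imports from Gelander--Minsky, and it is not automatic.
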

}

The proofs in \cite{Gol,Wi} are easily adaptable to the algebraic case. For brevity, we will not repeat the argument, but rather deduce Theorem \ref{thm:symp.algebraic} from Theorems \ref{thm:Gol.real} and \ref{thm:Wit.real} when $G$ is connected and simply connected. From this point on, we assume this is the case. We will use the following preparations:

\begin{lemma} \label{lem:dense.points.local.field}
Let $X$ be a smooth irreducible algebraic variety defined over a (possibly archimedean) local field $F$ of characteristic 0. Assume that $X(F)$ is non empty. Then $X(F)$ is Zariski dense in $X$
  \end{lemma}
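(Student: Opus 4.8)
The plan is to use that Zariski density is a local question: it suffices to show that, for a fixed $x\in X(F)$, no proper closed subvariety of $X$ contains a Zariski-open neighbourhood of $x$ worth of $F$-points, and this I will check by transporting the situation to affine space via an \'etale chart. Since $X$ is smooth of dimension $d$ at $x$, I can pick regular functions $t_1,\dots,t_d$ on an affine open $V\ni x$ whose differentials form a basis of $\mathfrak{m}_x/\mathfrak{m}_x^2$; because the \'etale locus of a morphism is open, after shrinking $V$ the map $\psi=(t_1,\dots,t_d)\colon V\to\mathbb{A}^d$ is \'etale. Over a local field $F$ of characteristic $0$, archimedean or not, an \'etale morphism is an analytic local isomorphism on $F$-points by the inverse function theorem (Serre's version in the non-archimedean case, the classical one over $\mathbb{R}$ and $\mathbb{C}$), so $\psi$ carries some analytic open neighbourhood $U\subset V(F)$ of $x$ diffeomorphically onto an open neighbourhood of $\psi(x)$ in $F^d$. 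In particular $\psi(V(F))\supset\psi(U)$ contains a ``box'' $B_1\times\cdots\times B_d$ with each $B_i\subset F$ an infinite subset.

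Now suppose, for contradiction, that $X(F)$ is not Zariski dense in $X$. Since $X$ is irreducible, its Zariski closure $\overline{X(F)}$ is a proper closed subvariety, so $Z:=\overline{X(F)}\cap V$ has $\dim Z<d$ inside $V$. Then $\overline{\psi(Z)}$ is a proper closed subset of $\mathbb{A}^d$ (image of a morphism has dimension at most that of the source, and taking Zariski closure does not change dimension), hence is contained in the zero locus of some nonzero polynomial $g\in F[x_1,\dots,x_d]$. But $V(F)\subset\overline{X(F)}$, so $V(F)\subset Z$ and therefore $\psi(V(F))\subset\overline{\psi(Z)}\subset\{g=0\}$; in particular $g$ vanishes identically on $B_1\times\cdots\times B_d$. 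An immediate induction on $d$ shows that a nonzero polynomial over a field cannot vanish on a product of infinite sets, contradicting $g\neq 0$. This proves the lemma.

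I do not expect a genuine obstacle: the only non-formal ingredient is the analytic inverse function theorem over $F$, which is standard in both the archimedean and non-archimedean settings, and everything else is elementary. (This lemma is exactly what is needed to upgrade identities of algebraic differential forms that are verified on $F$-points — such as the comparison of $\nu_\omega$ with $\omega^{\fbox{$\wedge$}2n}/\Phi_{G,n}^*\omega$ in Theorem \ref{thm:symp.algebraic} — to scheme-theoretic identities, by deducing the statement over a field $k$ from its validity over a suitable local field.)
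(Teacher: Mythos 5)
Your proof is correct and follows essentially the same route as the paper's: both pass to an \'etale chart to affine space via smoothness, apply the inverse function theorem over $F$ to get an analytically open image of $F$-points, and conclude by a dimension argument. The only cosmetic difference is that you finish by observing a nonzero polynomial cannot vanish on a product of infinite sets, whereas the paper directly notes that $\phi(Z)$ must therefore be Zariski dense of full dimension; these are the same fact.
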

  \begin{proof}
Let $x \in X(F)$. Passing to an open neighborhood we can assume that $X$ is affine and we have an \et map $\phi:X\to \A^n$ that maps $x$ to $0$. Let $Z$ be the Zariski closure of $X(F)$. Using the implicit function theorem, we see that $\phi(Z)(F)$ contains some open neighborhood of $0$ in $F^n=\A^n(F)$. Thus, $\phi(Z)$ is dense in $\A^{n}$, so $\dim Z=\dim(\phi(Z))=n=\dim X.$ This implies $Z=X$.
  \end{proof}

\begin{lemma}(\cite[Proposition 3.3]{GM}) \label{lem.M.Gamma.non.empty}
Let $\Gamma$ be a compact semi-simple Lie group. Then $\cM_\Gamma$ is not empty
\end{lemma}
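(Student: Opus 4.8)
The plan is to reduce the statement to exhibiting a single point of $\Def_{\Gamma,n}^{open}$, since by construction $\cM_\Gamma=\Def_{\Gamma,n}^{open}/\Gamma$ is a quotient of $\Def_{\Gamma,n}^{open}$. Here $\Gamma$ is a connected compact semi-simple Lie group, hence $\Gamma=G(\mathbb{R})$ for the connected simply connected semi-simple group $G$ we are working with; since $G$ is smooth and irreducible and $G(\mathbb{R})=\Gamma\neq\emptyset$, Lemma \ref{lem:dense.points.local.field} shows that $\Gamma$ is Zariski dense in $G$. Unwinding the definitions, $\Def_{\Gamma,n}^{open}=(G^{2n})^{open}(\mathbb{R})\cap\Gamma^{2n}\cap\Phi_{G,n}^{-1}(1)$, and by Lemma \ref{lem:G.open}\eqref{lem:G.open:1} a tuple in $\Gamma^{2n}$ belongs to $(G^{2n})^{open}$ exactly when its $2n$ coordinates generate a Zariski dense subgroup of $G$. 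Thus it is enough to produce $(g_1,h_1,\dots,g_n,h_n)\in\Gamma^{2n}$ with $[g_1,h_1]\cdots[g_n,h_n]=1$ whose coordinates generate a Zariski dense subgroup of $G$.

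Since $n\geq 2$, it suffices to find two elements $g_1,g_2\in\Gamma$ generating a Zariski dense subgroup: one then sets $h_i=1$ for all $i$ and $g_i=1$ for $i\geq 3$, so that $\Phi_{G,n}(g_1,1,g_2,1,1,\dots,1)=[g_1,1]\,[g_2,1]\cdots=1$ while the coordinates generate $\langle g_1,g_2\rangle$; this tuple then lies in $\Def_{\Gamma,n}^{open}$. The existence of such a pair $(g_1,g_2)$ is classical: a connected compact semi-simple Lie group contains a dense (in the Hausdorff topology, hence a fortiori Zariski dense) free subgroup of rank $2$. Alternatively, since $G^2$ is smooth and irreducible and $\Gamma^2=G^2(\mathbb{R})\neq\emptyset$, Lemma \ref{lem:dense.points.local.field} shows $\Gamma^2$ is Zariski dense in $G^2$; as $G$ is semi-simple, the set of pairs in $G^2$ generating a Zariski dense subgroup contains a non-empty Zariski open subset, which therefore meets $\Gamma^2$. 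Either way we obtain the desired point, and hence $\cM_\Gamma\neq\emptyset$.

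The argument has essentially no obstacle: the construction is immediate once a finitely generated Zariski dense subgroup of $\Gamma$ is available, and the existence of such a subgroup --- equivalently, the fact that $\Gamma$, being semi-simple, lies in no proper algebraic subgroup of $G$ --- is precisely the classical density input recalled above, and is the only place where semi-simplicity enters. The structural subtleties one might worry about, namely that $\Phi_{G,n}$ is smooth on $(G^{2n})^{open}$, that the $G$-action there is free, and that the quotient is an analytic manifold, have already been settled in Lemma \ref{lem:G.open}\eqref{lem:G.open:2}--\eqref{lem:G.open:4} and Lemma \ref{lem:Rami.made.me}, so no additional work is needed there.
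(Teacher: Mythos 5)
Your proof is correct, but it takes a different route from the paper: the paper does not prove Lemma~\ref{lem.M.Gamma.non.empty} at all, it simply cites \cite[Proposition 3.3]{GM} (Gelander--Minsky), which establishes the existence of surface group representations with dense image by a more involved dynamical argument. Your degenerate-tuple construction --- taking $(g_1,1,g_2,1,1,\dots,1)$ with $h_i=1$, so that every commutator $[g_i,h_i]$ trivially equals $1$ while the coordinates still generate $\langle g_1,g_2\rangle$ --- is a genuinely simpler observation: it reduces the non-emptiness of $\Def_{\Gamma,n}^{open}$ entirely to the existence of a Zariski-dense two-generated subgroup of $\Gamma$, and the structural facts from Lemma~\ref{lem:G.open} are invoked correctly. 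The one place you lean on outside input is the claim that a connected compact semi-simple Lie group contains a pair generating a Zariski-dense (indeed Hausdorff-dense free) subgroup; this is classical (Kuranishi, 1949), but since it carries the whole weight of the argument it deserves an explicit citation rather than the phrase ``is classical'', and you should also say a word justifying that Hausdorff density implies Zariski density (the Zariski closure is an algebraic subgroup whose real points form a Hausdorff-closed subgroup containing a dense set, hence all of $\Gamma$, which is Zariski dense in $G$ by Lemma~\ref{lem:dense.points.local.field}). You also implicitly use $n\ge 2$, which is indeed in force in \S\S\ref{subsubsec:localization} (and is necessary: for $n=1$ the space would be empty), so that is fine but worth flagging. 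With those small citations supplied, your argument is a valid and self-contained replacement for the external reference.
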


\begin{proof}[Proof of Theorem \ref{thm:symp.algebraic} for simply connected groups] \nir{Without loss of generality, $k=\mathbb{C}$. We choose a real structure on $G$ so that $G(\mathbb{R})$ is compact.} Since we assume that $G$ is simply connected, Corollary \ref{cor:Phi.flat} implies that $\mathcal{M}_G$ is irreducible. By Lemma \ref{lem.M.Gamma.non.empty}, $\mathcal{M}_{G(\mathbb{R})}$ is non-empty, and, hence, $\mathcal{M}_G(\mathbb{R})$ is non-empty. By Lemma \ref{lem:dense.points.local.field}, the set $\mathcal{M}_G(\mathbb{R})$ is Zariski dense in $\mathcal{M}_G$. The second statement now follows from Theorem \ref{thm:Wit.real}, as well as the claim that the form $\eta_{ABG}$ is closed. Since it is also anti-symmetric and non-degenerate (which follows from the properties of the Poincare duality map), we get the first claim as well.
\end{proof}

We can now prove Theorem \ref{thm:intro.volume.formula}

\begin{proof}[Proof of Theorem \ref{thm:intro.volume.formula}]
By Theorem \ref{thm:def.and.representation.growth}, the series $\zeta_{\Gamma}(2n-2)$ converges.
Let $\lambda_{\Gamma}$ be the normalized Haar measure on $\Gamma$.  By Proposition \ref{prop:Frobenius.Formula},
\[
\zeta_{\Gamma}(2n-2)=\frac{(\Phi_{G,n})_*\lambda_{\Gamma^{2n}}}{\lambda_\Gamma}(1).
\]

The restriction of $|\omega|$ to $\Gamma$ is equal to $\left( |\omega|(\Gamma) \right) \cdot \lambda_{\Gamma }$. Thus,

$$\left(|\omega|(\Gamma)\right)^{2n-1}\frac{(\Phi_{G,n})_*\lambda_{\Gamma^{2n}}}{\lambda_\Gamma}(1)=\frac{(\Phi_{G,n})_*\left(1_{\Ga^{2n}}\left | \omega ^{\fbox{$\wedge$}2n}\right|\right)}{|\omega|}(1) $$
Let $\Def_{G,n}^S \subset \Def_{G,n}$ be the intersection of $\Def_{G,n}$ with the smooth locus of the map $\Phi_{G,n}$.
By Theorem \ref{thm:push.forward.detailed}, we have
\[
\frac{(\Phi_{G,n})_*\left(1_{\Ga^{2n}}\left | \omega ^{\fbox{$\wedge$}2n}\right|\right)}{|\omega|}(1)=\int_{\Def_{G,n}^S(F) \cap \Ga^{2n}} \left| \frac{\omega ^{\fbox{$\wedge$}2n}}{\Phi_{G,n} ^* \omega}\right|.
\]
By Lemma \ref{lem.M.Gamma.non.empty}, $\Def_{G,n}^{open}$ is non-empty, and, since $\Def_{G,n}$ is irreducible (Corollary \ref{cor:Phi.flat}), we get that
$$\dim\ \left( \Def_{G,n}\smallsetminus\Def_{G,n}^{open} \right)<\dim\ \Def_{G,n}.$$
Since $\Def_{G,n}^{open} \subset \Def_{G,n}^S$, we get that
\[
\int_{\Def_{G,n}^S(F) \cap \Ga^{2n}} \left| \frac{\omega ^{\fbox{$\wedge$}2n}}{\Phi_{G,n} ^* \omega}\right|=\int_{\Def_{\Ga,n}^{open}}\left| \frac{\omega ^{\fbox{$\wedge$}2n}}{\Phi_{G,n} ^* \omega}\right|.
\]
By Theorem \ref{thm:symp.algebraic}\eqref{thm:symp.algebraic:2},
\[
\int_{\Def_{\Ga,n}^{open}}\left| \frac{\omega ^{\fbox{$\wedge$}2n}}{\Phi_{G,n} ^* \omega}\right|=\int_{\Def_{\Gamma,n}^{open}}\left|\nu_{\omega}\right|=\left(\int_{\Gamma / Z(\Gamma)}|\omega|\right)\left(\int_{\cM_{\Gamma}}|v_{ABG}|\right).
\]

We conclude that
\[
\int_{\cM_{\Gamma}}|v_{ABG}|=\frac{\left(|\omega|(\Gamma)\right)^{2n-1}\zeta_{\Gamma}(2n-2)}{\int_{\Gamma / Z(\Gamma)}|\omega|}=|Z(\Gamma)|(|\omega|(\Gamma))^{2n-2}\zeta_{\Gamma}(2n-2).
\]

\end{proof}

\appendix
\section{Continuity Along Curves} \lbl{sec:cont.along.curves}
\setcounter{theorem}{0}

In this appendix, we prove the following theorem:

\begin{theorem} \label{thm:continuity.along.curves} Let $\pi :X \rightarrow Y$ be a morphism between algebraic varieties over a local field $F$, and let $U \subset X$ be an open set on which $\pi$ is smooth. Suppose that $\omega \in \Gamma(U,\Omega_{U/Y})$, and that $\varphi :X(F) \rightarrow \mathbb{C}$ is a Schwatz function, and that, for any $y\in Y(F)$, the integral $I(y)=\int_{\pi ^{-1} (y)\cap U(F)} \varphi \cdot | \omega |$ is finite. Assume that, for any curve $C \subset Y$, the restriction $I|_{C(F)}$ is continuous. Then $I$ is continuous.
\end{theorem}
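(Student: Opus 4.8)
The plan is to split Theorem~\ref{thm:continuity.along.curves} into two independent statements: (i) the function $I$ is \emph{constructible} on $Y(F)$ in the sense of motivic integration, and (ii) a constructible function on $Y(F)$ that is continuous along every algebraic curve is continuous. For (i), I would first localize --- the assertion is local on $Y(F)$ --- so that $Y$ and $X$ are affine and $\pi$, $U$, $\omega$ are defined over a finitely generated field and hence definable in the Denef--Pas language. Since $\pi$ is smooth on $U$ and $\omega$ is a regular relative top form there, in suitable fiber coordinates $|\omega|$ is a genuine measure with definable density; and since $\varphi$ is locally constant with compact support, it is a finite $\mathbb{C}$-linear combination of characteristic functions of (finite unions of) balls, hence definable, so by linearity we may assume $\varphi = 1_W$ for a definable set $W$. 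The fundamental stability theorem of motivic integration (\cite{CL}, in its $p$-adic form) then says that the fiberwise integral $y \mapsto \int_{\pi^{-1}(y)\cap U(F)} \varphi\,|\omega|$ of a constructible function along a definable family is again constructible, \emph{provided} the fiberwise integrals converge; the hypothesis that $I(y)$ is finite for all $y \in Y(F)$ is precisely what keeps us in the constructible class. Thus $I$ is a finite $\mathbb{C}$-linear combination of functions of the form $q^{\alpha(y)}1_A(y)$, with $\alpha : Y(F) \to \mathbb{Z}$ and $A \subseteq Y(F)$ definable.

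For (ii), I would induct on $d = \dim Y$. When $d = 1$, the irreducible components of $Y$ are curves, the constructible function is continuous on the $F$-points of each by hypothesis, and a function continuous on each of finitely many closed subsets covering a space is continuous. For $d > 1$, suppose a constructible function $h$ is discontinuous at $y_0 \in Y(F)$, witnessed by a sequence $y_n \to y_0$ with $h(y_n) \not\to h(y_0)$. Using a cell decomposition of $Y(F)$ adapted to $h$ --- on each cell $h$ is continuous, and cell frontiers are definable of dimension $< d$ --- one isolates the jump in a definable set $S$ with $y_0 \in \overline{S}^{\,\mathrm{an}} \setminus S$ on which $h$ stays bounded away from $h(y_0)$, and such that $S$ lies in a proper closed subvariety $Z \subsetneq Y$ through $y_0$. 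The restriction $h|_{Z(F)}$ is again constructible and discontinuous at $y_0$, and every curve in $Z$ is a curve in $Y$, so the inductive hypothesis yields a curve $C \subseteq Z$ along which $h$ is discontinuous at $y_0$ --- contradicting the hypothesis on $h$. (One can also bypass the induction: apply the $p$-adic curve selection lemma to $S$ to get a definable analytic arc limiting to $y_0$ inside $S$, and let $C$ be its Zariski closure, an algebraic curve through $y_0$ on which $h$ is discontinuous.) Applying (ii) to $I$, which is constructible by the first part, finishes the proof.

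The crux --- and the main obstacle --- is step (ii), specifically the construction of the definable set $S$, i.e. certifying that the discontinuity at $y_0$ is genuinely witnessed by a definable arc rather than being visible only in higher-dimensional directions. The value $h(y)$ depends on $y$ only through a tuple of $\mathbb{Z}$-valued definable functions $\alpha_j(y)$, and along a candidate arc $\gamma$ these may run off to infinity; one then has to analyze $\sum_j c_j q^{\alpha_j(\gamma(t))}$ as $t \to 0$ --- using that $t \mapsto \alpha_j(\gamma(t))$ is eventually affine in $\val(t)$ by Presburger cell decomposition --- to be sure the limit really differs from $h(y_0)$ rather than merely dodging that value while converging to it. Settling this, and simultaneously pinning down the exact notion of constructible function under which both the motivic-integration input of (i) and the cell-decomposition and curve-selection inputs of (ii) are available, is where the genuine work lies; the remaining reductions and continuity manipulations are routine.
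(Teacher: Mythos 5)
Your two-part plan---(i) $I$ is constructible, (ii) a constructible function that is continuous along every curve is continuous---is exactly the structure of the paper's proof (Propositions \ref{prop:integration.is.constructible} and \ref{prop:continuity.along.curves}). Part (i) is essentially the same argument, reducing via Fubini-type slicing to the stability of constructible functions under parametric integration in the form of \cite{De-Int} and \cite{CL}.

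Your primary route for (ii), however, has a genuine gap. The claim that, after cell decomposition, the jump set $S$ ``lies in a proper closed subvariety $Z\subsetneq Y$'' is unsupported and in general false: a discontinuity at $y_0$ is typically witnessed by a definable set of \emph{full} dimension. For instance, on $Y=\mathbb{A}^2$ the constructible function $q^{-\min(\val(x_1),\val(x_2))}$ blows up at the origin from a punctured neighborhood that is Zariski-dense; no proper closed $Z$ contains the relevant $S$, so the induction on $\dim Y$ never gets off the ground. Your parenthetical alternative---apply curve selection directly to $S$---is the right move and is what the paper does via Lemma \ref{lem:curve.through.set}, which produces an honest algebraic curve (by passing to a strong resolution of the pair $(W,\{\prod g_i = 0\})$ and taking a line in the monomialized chart) rather than first extracting an analytic arc and then closing it up.

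What remains undone in your proposal is the structural analysis that makes $S$ exist and makes curve selection decisive. You correctly identify this as the crux, but a reviewer would need to see it carried out. Concretely, the paper's Proposition \ref{prop:piecewise.continuity} establishes a trichotomy for a constructible function $h$ on a bounded definable set: either (a) $h$ blows up along a definable ray (handled separately, not subsumed under ``bounded away from $h(y_0)$''), or (b) one can find \emph{two} definable sets $Y_1,Y_2$ with $\overline{Y_1}\cap\overline{Y_2}\neq\emptyset$ and $\inf|h(Y_1)-h(Y_2)|>0$, or (c) $h$ is piecewise continuously extendable. Establishing this requires the lemmas on $V$-functions (\ref{lem:linear.independence}--\ref{lem:V.function.extension}): linear independence of $\{f_i q^{\varphi_i}\}$, the criterion $\varphi_i\le 0$ plus $\partial f_i/\partial e=0$ that rules out blow-up along rays, and the continuous extension of such $R$ to $(\mathbb{Z}_{\ge 0}\cup\{\infty\})^n$. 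Your final paragraph gestures at this (``eventually affine in $\val(t)$ by Presburger cell decomposition''), but that alone does not separate cases (a) and (b) or certify that a \emph{limiting} value along the arc actually differs from $h(y_0)$; without the one-point-compactification picture, the arc might dodge $h(y_0)$ while still converging to it along a subsequence. So: right skeleton, same as the paper, but the backbone of part (ii) is exactly the piece you have postponed, and your induction-on-dimension workaround does not substitute for it.
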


The proof is divided into two parts. In the first, we define a notion of constructible function, and show that $I$ is constructible (Proposition \ref{prop:integration.is.constructible}). In the second, we show that a constructible function is continuous if and only if its restriction to any curve is continuous (Proposition \ref{prop:continuity.along.curves}).

\subsection{Constructible Functions}

In this section, we will use notions and results from the model theory of valued fields. For the standard notions of Model Theory, see \cite{CK}. Fix a finitely generated field $k$ of characteristic 0. Let $\mathcal{L}$ be the first-order language with two sorts, which we denote by $\VF$ (for valued field) and $\VG$ (for value group) and \begin{enumerate}
\item Four function symbols: $+_{\VF} : \VF \times \VF \rightarrow \VF$, $\cdot_{\VF} : \VF\times \VF \rightarrow \VF$, $+_{\VG} : \VG \times \VG\rightarrow \VG$, and $\val: \VF \rightarrow \VG$.
\item Relation symbols: a binary relation $\leq$ on $\VG$, and, for each $n$, unary relations $P_n$ and $p_n$ on $\VF$ and $\VG$.
\item A constant in $\VF$ for each element of $k$.
\end{enumerate}

For each local field $F$ containing $k$, we interpret the sort $\VF$ as $F$, the sort $\VG$ as $\mathbb{Z}$, the functions $+,\cdot,\val$ as addition, multiplication, and valuation\footnote{The value $\val(0)$ is irrelevant and can be taken to be arbitrary since every two choices will give rise to bi-interpretable models.}, the relation $\leq$ as order, and the relations $P_n(x)$ (respectively, $p_n$) as saying that $x$ is an $n$-th power (respectively, divisible by $n$). We denote the theory of $F$, i.e., the set of all sentences that are true in this interpretation, by $T_{\mathcal{L},F}$. A theorem of Macintyre states that $T_{\mathcal{L},F}$ has elimination of quantifiers, see \cite[1.3]{De-Int}. {This implies that any definable set is equivalent to a Boolean combination of sets of one of the following forms: \begin{enumerate}
\item $\left\{ x \mid f(x)=0 \right\}$.
\item $\left\{ x \mid \val(f(x)) \leq \val(g(x)) \right\}$.
\item $\left\{ x \mid ( \exists z) f(x)=z^m \right\}$.
\end{enumerate}
where $f,g$ are polynomials and $m$ is a positive integer. In fact, only sets of the form (3) are needed: the set in (2) is equivalent to the set $\left\{ x \mid (\exists z)\  f^2(x)+pg^2(x)=z^2 \right\}$, and the set in (1) is equivalent to the set $\left\{ x \mid (\exists z)\ pf^2(x)=z^2 \right\}$.}

Recall that a {\emph{definable set}} in a theory $T$ is an equivalence class of formulas under the equivalence relation for which $\psi_1(x_1,\ldots,x_n)$ and $\psi_2(x_1,\ldots,x_n)$ are equivalent if the sentence $\left( \forall x_1,\ldots,x_n \right) \left( \psi_1(x_1,\ldots,x_n) \leftrightarrow \psi_2(x_1,\ldots,x_n)\right)$ follows from $T$. If $X$ is a definable set and $M$ is a model of $T$, the set of all tuples in $M$ that satisfy the formula $X$ is denoted by $X(M)$. We can perform the usual operations of union, intersection, and Cartesian product on definable sets. Similarly, a {\emph{definable function}} between two definable sets $X$ and $Y$ is a definable set $f$ in $X \times Y$ such that the sentence $\left( \forall x \in X \right) \left( \exists ! y\in Y \right) \left( (x,y) \in f\right)$ follows from $T$. If $f$ is a definable function from $X$ to $Y$ and $M$ is a model of $T$, the set $f(M)$ is the graph of a function between $X(M)$ and $Y(M)$. As a consequence of elimination of quantifiers, we have

\begin{proposition} \lbl{prop:definable.to.VG} (see \cite[Theorem 2.7]{De-Int}) Let $X \subset \VF^n$ be a definable set, and let $f:X \rightarrow \VG$ be a definable function. There is a partition $X=X_1 \cup \cdots \cup X_m$ into definable sets, and, for each $i=1,\ldots,m$, a rational function $g_i\in \mathbb{Q}(x_1,\ldots,x_n)$ and an integer $e_i$ such that the restriction of $f$ to $X_i$ is equal to $\frac{1}{e_i}\val(g_i(x))$.
\end{proposition}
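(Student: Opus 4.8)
The object to analyze is the graph $\Gamma_f=\{(x,\gamma)\in X\times\VG : f(x)=\gamma\}$, a definable subset of $\VF^n\times\VG$. The plan is to extract the claimed form from Macintyre's quantifier elimination for the full two-sorted theory $T_{\mathcal L,F}$. After eliminating quantifiers, $\Gamma_f$ is a Boolean combination of atomic formulas; sorting these by sort, each is either a condition on $x$ alone (one of the three displayed forms, or a power predicate), or an atomic formula of the value-group sort, which after collecting terms has one of the shapes
\[
c\,\gamma\ \square\ \textstyle\sum_j b_j\val(h_j(x))+d\qquad\text{or}\qquad c\,\gamma+\textstyle\sum_j b_j\val(h_j(x))+d\equiv 0\ (\mathrm{mod}\ m),
\]
where $\square\in\{\leq,<\}$ together with its reverse, the $b_j,c,d$ are integers, $m\geq 1$, and the $h_j$ are polynomials over $k$. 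Putting $\Gamma_f$ in disjunctive normal form thus presents it as a finite union of "cells": each cell is cut out by a base condition $x\in A$ with $A\subseteq\VF^n$ definable, together with finitely many inequalities bounding $c\gamma$ above or below by $\mathbb Z$-linear combinations of the $\val(h_j(x))$ and finitely many congruence conditions. (A value-group atomic in which the coefficient of $\gamma$ vanishes is itself an $x$-condition and is absorbed into $A$.)

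First I would refine the decomposition. The conditions "$x\in A$" and "the $\gamma$-fibre of a given cell over $x$ is nonempty" are all definable in $x$, so by intersecting the base sets with these conditions and their complements I pass to a finite partition $X=\bigsqcup_i X_i$ over each of which exactly one cell contributes. Fix such an $X_i$ and its cell. Since $f$ is a function, for every $x\in X_i$ the $\gamma$-fibre is the single integer $f(x)$. A subset of $\mathbb Z$ cut out by finitely many such inequalities and congruences is a one-point set only if it admits both an effective lower and an effective upper bound forcing equality; splitting $X_i$ further according to which inequality is the binding lower bound, which is the binding upper bound, whether each congruence holds automatically, and the residues $\val(h_j(x))\bmod m$ — all definable, hence finitely many pieces — I may assume on each resulting piece that there is one binding constraint $c\gamma=\sum_j b_j\val(h_j(x))+d$ with the divisibility $c\mid\big(\sum_j b_j\val(h_j(x))+d\big)$ holding identically. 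Hence $f(x)=\tfrac1c\big(\sum_j b_j\val(h_j(x))+d\big)$ on that piece.

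It remains to rewrite the right-hand side as $\tfrac1{e}\val(g(x))$ with $g$ a rational function and $e\in\mathbb Z$. The combination $\sum_j b_j\val(h_j(x))$ equals $\val\big(\prod_j h_j(x)^{b_j}\big)$; the additive constant $d$ is the valuation of an explicit scalar factor on each piece (after a further finite refinement pinning down enough valuations to realize it), the factor $\tfrac1c$ is absorbed by enlarging $e$, and the divisibility arranged above makes the quotient an integer; a final refinement allows one to take the coefficients of $g$ in $\mathbb Q$ rather than in $k$ (or one may simply record $g\in F(x_1,\dots,x_n)$, which suffices for the applications here). This yields the finite partition $X=\bigsqcup_i X_i$ and $f|_{X_i}=\tfrac1{e_i}\val(g_i(x))$.

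The main obstacle is that this statement is in effect a case of Denef's cell-decomposition theorem \cite[Theorem 2.7]{De-Int}: quantifier elimination only hands us a Boolean combination of atomic formulas, and the real work is (a) maintaining finiteness of the partition through the iterated refinements, and (b) the Presburger-arithmetic analysis showing that a definable, one-point-valued function of the value group, with parameters coming from valuations of polynomials in $x$, must have the clean affine form $\tfrac1c\big(\sum_j b_j\val(h_j(x))+d\big)$ — in particular absorbing the congruence constraints, which a priori threaten to introduce floor functions, into a finer partition. If one is content to cite \cite[Theorem 2.7]{De-Int} the argument is immediate; the outline above indicates how it would be reproved from quantifier elimination alone.
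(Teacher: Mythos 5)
The paper does not give a proof of this proposition at all: it is stated as a quoted result, with the citation \cite[Theorem 2.7]{De-Int} standing in for the argument. So there is no in-paper proof to compare your outline against; the paper's approach is simply to defer to Denef's cell-decomposition theorem.

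Your sketch is a reasonable reconstruction of how that theorem is obtained from Macintyre-style quantifier elimination, and it correctly isolates where the actual work lies: after QE one is left with Boolean combinations of value-group atomics in a single $\VG$-variable $\gamma$, and the substance is the Presburger-type bookkeeping needed to turn ``the $\gamma$-fibre is a singleton'' into a clean affine expression $\gamma=\tfrac1c\bigl(\sum_j b_j\val(h_j(x))+d\bigr)$ on each piece of a finite definable partition, absorbing the congruence predicates $p_m$ into further refinement by residues. You also flag, honestly, the two points that a careful write-up would have to nail down: (a) maintaining finiteness of the partition through the iterated refinements (which is where cell decomposition earns its keep over raw QE), and (b) realizing the additive constant $d$ as $\val$ of an explicit scalar so that the whole thing collapses to $\tfrac1e\val(g(x))$. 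On (b) there is a genuine small mismatch with the statement as printed: in the paper's language the $\VF$-constants range over $k$, and realizing $d$ as a valuation generally requires a uniformizer of $F$, so the natural conclusion is $g_i\in F(x_1,\dots,x_n)$ (or $k(x_1,\dots,x_n)$ after further care), not literally $\mathbb{Q}(x_1,\dots,x_n)$ unless $F=\mathbb{Q}_p$ as in Denef's original setting; this does not matter for any of the paper's uses of the proposition, and you already note it. In short: your route is a legitimate reproof rather than the paper's route (which is citation), and it is correct modulo the acknowledged bookkeeping.
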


\begin{definition} Fix a local field $F$ containing $k$, and let $q$ be the size of the residue field of $F$. Let $f:\VF^n \rightarrow \VG$ be a definable function. \begin{enumerate}
\item Let $A_f:F^n\rightarrow \mathbb{C}$ be the function $x\mapsto f(F)(x)\in \mathbb{Z} \subset \mathbb{C}$.\item Let $E_f:F^n \rightarrow \mathbb{C}$ be the function $x \mapsto q^{f(F)(x)}\in q^{\mathbb{Z}} \subset \mathbb{C}$.
\item We say that a function from $F^n$ to $\mathbb{C}$ is {\emph{constructible}} if it belongs to the algebra generated by all functions of the form $A_f,E_f$.
\end{enumerate}
\end{definition}

\begin{theorem} \lbl{thm:Denef.integration} (see \cite[Theorem 3.1]{De-Int} for a weaker version and \cite[Theorems 4.4 and 6.9]{CL} for a stronger version) Suppose that $f:\VF ^n \times \VF^m \times \VG^m \rightarrow \VG$ is a definable function. Assume that, for every $a\in F^m$ and $\lambda \in \mathbb{Z} ^m$, the integral $I(a,\lambda)=\int_{F^n} q^{f(x,a,\lambda)} d x$ converges. Then the function $I(a,\lambda)$ is constructible.
\end{theorem}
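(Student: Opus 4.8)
The statement is Denef's theorem on parametrized $p$-adic integration, in the form perfected by Cluckers and Loeser, so the plan is to follow the cell-decomposition method. The key structural input is the $p$-adic \emph{cell decomposition theorem} (Cohen, Macintyre, Denef): any definable subset of $\VF^{N}$ can be partitioned into finitely many \emph{cells}, where a cell, relative to the projection forgetting one chosen coordinate $x$, is a ``tube'' around the graph of a definable \emph{center} $c(y)$ on which $x-c(y)$ is constrained by finitely many conditions of the form $v_i(y)\,\square\,\val(x-c(y))$ and by fixing the class of $x-c(y)$ modulo a subgroup of $N$-th powers; moreover a given $\VG$-valued definable function can be \emph{prepared} so that, on each cell, it is an affine function of $\val(x-c(y))$ with coefficients that are definable functions of $y$ (this is where Proposition \ref{prop:definable.to.VG}, via $\tfrac1e\val(g)$, enters).

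To exploit this I would first reduce to the one-variable case by induction on $n$, the number of $\VF$-integration variables, strengthening the statement to: the class of constructible functions on a family over $\VF^m\times\VG^m$ is stable under integration of a single $\VF$-variable whenever that integral converges. The full theorem then follows by integrating $x_n,x_{n-1},\dots,x_1$ one at a time; at each stage the integrand is a constructible function of the remaining variables together with the current integration variable, so the one-variable result applies. Convergence of each intermediate integral is not an extra hypothesis: since the integrands $q^{f}$ are non-negative, Tonelli's theorem identifies the iterated integral with the full one, so the hypothesized finiteness of $\int_{F^n}q^{f(x,a,\lambda)}\,dx$ propagates downwards (one only has to tolerate, and discard, a lower-dimensional definable locus of parameters where an inner integral diverges; such a locus cannot have positive measure without contradicting the hypothesis).

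For the one-variable step, let $H(x,y)$ be a constructible integrand with $y$ collecting all parameters. Writing $H$ as a $\mathbb{C}$-linear combination of products of the basic functions $A_f$ and $E_g$, and applying cell decomposition and preparation simultaneously to the finitely many definable functions occurring, I may assume that on each cell $H(x,y)=c\cdot\Theta_0(y)\cdot q^{\gamma\,\val(x-c(y))}\cdot\val(x-c(y))^{k}$, with $c\in\mathbb{C}$, $\Theta_0$ constructible in $y$, $k\ge 0$ an integer, and $\gamma$ a fixed rational constant --- the last point achieved by one more finite subdivision, since the ``order'' exponent $\gamma$ takes only finitely many values --- while $x$ ranges over a ball $\{x:\val(x-c(y))\ge M(y)\}$ (intersected with a fixed power-coset), $M(y)$ Presburger-definable. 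After the substitution $u=x-c(y)$ the inner integral becomes the explicit sum
\[
\mathrm{const}\cdot\Theta_0(y)\sum_{\ell\ge M(y)}\ell^{k}\,q^{(\gamma-1)\ell},
\]
which is a finite $\mathbb{C}$-linear combination of terms $q^{(\gamma-1)M(y)}\,M(y)^{j}$ --- each an $E$-type constructible function of $y$, using that $q$ is a \emph{fixed} number so the ``geometric'' factors $\frac{1}{(1-q^{\gamma-1})^{j+1}}$ are constants in $\mathbb{C}$ --- hence constructible; the convergence hypothesis forces $\gamma<1$ on every cell that actually occurs, so the series converges and the formula is valid. The ``point'' cells $x=c(y)$, and cells with two-sided or vacuous order bounds, are handled identically or are trivial. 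Summing over the finitely many cells gives the result.

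The main obstacle is the cell-decomposition-with-preparation input: one needs Macintyre's quantifier elimination together with the cell decomposition theorem in the precise ``prepared'' form that makes the integrand affine in $\val(x-c(y))$ on each cell, plus the bookkeeping that keeps all exponents Presburger-definable after finitely many further subdivisions. By contrast, once the integrand is prepared, the one-dimensional integrals are summed by an elementary geometric-series computation, and the propagation of convergence through the iterated integral is routine thanks to the non-negativity of $q^{f}$.
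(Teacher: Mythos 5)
The paper does not prove this theorem; it is stated purely as a citation to Denef \cite{De-Int} and Cluckers--Loeser \cite{CL}, so there is no ``paper's own proof'' to compare against. Your sketch is a faithful outline of the standard argument in those references: reduction to one integration variable at a time, cell decomposition with preparation (Cohen--Macintyre--Denef), and then an explicit geometric-series computation on each cell. That is exactly the route the cited literature takes.

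Two small points worth tightening if you were to write this out in full. First, your handling of intermediate divergence is a bit loose: the hypothesis is convergence for \emph{every} parameter value, not for almost every one, so rather than ``discarding a lower-dimensional locus'' the cleaner argument is that on any cell which is genuinely one-dimensional in the integration variable, the coefficient $\gamma$ must already satisfy $\gamma<1$ --- otherwise the one-variable integral diverges for any parameter value making that cell nonempty, which then forces the full $n$-variable integral to diverge for that parameter (all integrands are nonnegative, so Tonelli applies with no measurability worries), contradicting the hypothesis outright. Degenerate cells $x=c(y)$ contribute nothing to the Lebesgue integral. Second, the exponent $\gamma$ coming out of preparation is a priori only rational (via Proposition \ref{prop:definable.to.VG}, i.e.\ $\frac{1}{e}\val(g)$); one must subdivide or rescale so that the resulting $q^{(\gamma-1)M(y)}$ is literally of the form $E_h$ for a $\VG$-valued definable $h$, since the paper's definition of constructible function only allows integer-valued exponent functions. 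Both are routine bookkeeping steps in Denef's argument, but they are the kind of thing that a referee would want to see spelled out.
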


\begin{lemma} \lbl{lem:sum.over.fibers.constructible} Suppose that $X$ and $Y$ are algebraic varieties over $F$, that $\pi :X \rightarrow Y$ is a morphism with finite fibers, and that $f: X \rightarrow \mathbb{A} ^1$ is regular. Then there is a finite definable partition of $Y$ such that, an each part, the function $y\in Y(F) \mapsto \sum_{x\in \pi ^{-1} (y)(F)}|f(x)|$ is a linear combination of functions of the form $y \mapsto q^{g(y)}$, where $g:Y \rightarrow \VG$ is definable.
\end{lemma}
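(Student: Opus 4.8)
The plan is to reduce, by elementary manipulations plus a degree bound, to the situation where every fibre $\pi^{-1}(y)(F)$ is a finite set of uniformly bounded cardinality, and then to organize the sum $\sum_{x\in\pi^{-1}(y)(F)}|f(x)|$ according to the finitely many distinct valuations that $f$ attains on that fibre; stratifying $Y$ by the (bounded) combinatorial data ``number of distinct valuations'' and ``multiplicity of each valuation'' will make each stratum-piece of the sum literally a linear combination of functions $q^{g(y)}$ with $g$ definable.

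First I would reduce to the case where $X$ is affine: fix a decomposition of $X$ into finitely many pairwise disjoint locally closed affine subvarieties $X=\bigsqcup_\alpha X_\alpha$. Since $\pi^{-1}(y)(F)=\bigsqcup_\alpha(\pi|_{X_\alpha})^{-1}(y)(F)$, the function in question is the sum over $\alpha$ of the analogous functions attached to $(\pi|_{X_\alpha},f|_{X_\alpha})$, and a common refinement of the finitely many partitions produced for the $X_\alpha$ will do. Likewise I would replace $Y$ by a disjoint locally closed affine stratification (this is legitimate, since we are allowed a definable partition of $Y$). Now $X=\Spec B$ with $Y=\Spec A$, and choosing algebra generators $b_1,\dots,b_n$ of $B$ over $F$ gives a closed immersion $X\hookrightarrow \mathbb{A}^n\times Y$, say $X=V(P_1,\dots,P_r)$ with $P_i\in A[x_1,\dots,x_n]$ of $x$-degree at most $D$. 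Then for each $y\in Y(F)$ the fibre $\pi^{-1}(y)$ is the zero-dimensional subscheme $V(P_1(y),\dots,P_r(y))\subseteq\mathbb{A}^n_F$, cut out by equations of degree $\le D$; by the standard effective B\'ezout bound it has at most $N:=D^n$ geometric points, so $\#\pi^{-1}(y)(F)\le N$ for all $y$.

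The core step is then purely model-theoretic. For $y\in Y(F)$ put $V(y)=\{\val(f(x)):x\in\pi^{-1}(y)(F),\ f(x)\ne 0\}\subseteq\mathbb{Z}$, a finite set of size $\le N$. Separate off the definable locus where $V(y)=\emptyset$ (there the sum is $0$), and on the complement write $V(y)=\{m_1(y)<m_2(y)<\dots<m_{\ell(y)}(y)\}$. The $m_j$ are defined by recursion: $m_1(y)$ is the minimum of $\val(f(x))$ over fibre points with $f(x)\ne 0$, and $m_{j+1}(y)$ is the minimum of such valuations that exceed $m_j(y)$. Each $m_j$ is thus a partial definable function from $Y(F)$ to $\VG$ (a first-order formula quantifying over the definable family $\pi^{-1}(y)(F)$), and $\ell(y)\le N$ and $c_j(y):=\#\{x\in\pi^{-1}(y)(F):f(x)\ne 0,\ \val(f(x))=m_j(y)\}\le N$ are bounded definable functions. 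Take the finite definable partition of $Y(F)$ whose pieces are: the vanishing locus above; and, for each $\ell\in\{1,\dots,N\}$ and each tuple $\mathbf c=(c_1,\dots,c_\ell)\in\{1,\dots,N\}^\ell$, the set where $\ell(y)=\ell$ and $(c_1(y),\dots,c_\ell(y))=\mathbf c$. On such a piece, using $|a|_F=q^{-\val(a)}$ and that fibre points with $f(x)=0$ contribute nothing,
\[
\sum_{x\in\pi^{-1}(y)(F)}|f(x)|=\sum_{j=1}^{\ell}c_j\,q^{-m_j(y)},
\]
a linear combination, with constant coefficients $c_j$, of functions $y\mapsto q^{g_j(y)}$ where $g_j:=-m_j$ is definable.

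The one genuine subtlety --- and the step I expect to require the most care --- is the uniform selection of the distinct fibre-valuations, i.e. verifying that the $m_j$ and $c_j$ are honestly definable and that only finitely many strata occur; this is exactly where the uniform bound $N$ on fibre cardinality is essential. (Alternatively one could invoke definable Skolem functions for $p$-adically closed fields in Macintyre's language to pick out the fibre points themselves and then apply $\val\circ f$ to each, which amounts to the same thing.) Everything else --- the reduction to $X$ affine, the B\'ezout bound, and the bookkeeping over the finitely many strata and over the pieces $X_\alpha$ --- is routine.
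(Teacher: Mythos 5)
Your proof is correct and takes essentially the same approach as the paper's: both iteratively extract the distinct fibre-valuations via definable minima, stratify $Y$ by their (uniformly bounded) multiplicities, and read off the sum as $\sum n_i\,q^{-g_i(y)}$ on each stratum. The only substantive difference is that you supply an explicit B\'ezout-type argument for the uniform bound $N$ on the fibre cardinality, where the paper simply takes this standard consequence of quasi-finiteness for granted.
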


\begin{proof} We will show that there are definable functions $g_i:Y \rightarrow \VG$ such that, for every $y\in Y(F)$, we have $\left\{ \val(f(x)) \mid x\in \pi ^{-1}(y)(F) \wedge f(x)\neq 0\right\} \subset \left\{ g_1(y),\ldots,g_m(y) \right\}$, and, after passing to a finite definable partition, the number of points $x\in \pi ^{-1}(y)(F)$ such that $\val(f(x))=g_i(y)$ is a constant $n_i$. Given this claim, we have that $\sum_{x\in \pi ^{-1} (y)(F)}|f(x)|=\sum n_i q^{-g_i(y)}$ on each piece.

Let $N$ be the maximum size of a fiber of $\pi$. We will show, by induction on $i=1,\ldots,N$, that there is a definable partition of $Y$, and, for each part $A$, there are definable functions $g_1,\ldots,g_i$ and natural numbers $n_1,\ldots,n_i$ such that, for each $y\in A(F)$, \begin{enumerate}
\item $\#\left\{ x\in \pi ^{-1} (y) \mid \val(f(x))=g_i(y) \right\} =n_i$.
\item $\# \left\{ x\in \pi ^{-1}(y) \mid f(x)\neq0 \wedge \val(f(x))\notin \left\{ g_1(y),\ldots,g_i(y) \right\} \right\} \leq N-i$.
\end{enumerate}
The claim for $i=N$ is what we want to show.

Suppose that $g_j,n_j$ were chosen for $j<i$, and let
\[
g_i(y)=\min \left\{ \val(f(x)) \mid \pi(x)=y \wedge \val(f(x)) \neq g_1(y),\ldots,g_{i-1}(y) \right\}.
\]
Clearly, $g_i$ is a definable function. Since, for every $n$, the set $\left\{ y \in Y \mid \left( \exists x_1,\ldots,x_n \in X \right) \pi(x_j)=y \wedge \val(f(x_j))=g_i(y) \wedge x_j\neq x_l \right\}$ is definable, we get that there is a partition as in the assertion of the lemma.
\end{proof}

\begin{proposition} \lbl{prop:integration.is.constructible} Let $\pi:X \rightarrow Y$ be a morphism between algebraic varieties over $F$, and let $U \subset X$ be an open set such that $\pi|_U$ is smooth. Suppose that $\omega \in \Gamma(U,\Omega_{U/Y})$ and that $\varphi :X(F) \rightarrow \mathbb{C}$ is a Schwartz function. Then the function $y \mapsto \int_{\pi ^{-1} (y) \cap U}| \omega |$ is constructible.
\end{proposition}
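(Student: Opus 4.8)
The plan is to reduce the statement to Theorem~\ref{thm:Denef.integration} (Denef's theorem on $p$-adic integration with parameters), which asserts precisely that an integral of a definable function of the form $\int q^{f(x,a,\lambda)}\,dx$ depends constructibly on the parameters. The main work is therefore to express $\int_{\pi^{-1}(y)\cap U}\varphi\cdot|\omega|$ as such a parametrized integral.

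First I would reduce to a local computation. By a partition of unity, and since the statement is local on $X$ and $Y$, it suffices to treat the case where $X$ and $Y$ are Zariski-open in affine spaces, say $X\subset\mathbb{A}^N$ and $Y\subset\mathbb{A}^M$, and where $\varphi$ is the indicator function of a compact open set, which (after a further analytic change of coordinates, or simply by scaling the affine embedding) we may take to be $X(F)\cap O^N$; the general $\varphi$, being a finite $\mathbb{C}$-linear combination of such indicators, then follows by linearity. Next, on $U$, since $\pi|_U$ is smooth of relative dimension $r=N-M$, after localizing further there are functions $u_1,\dots,u_r$ on $U$ such that $(\pi,u_1,\dots,u_r):U\to Y\times\mathbb{A}^r$ is \'etale; in these relative coordinates $\omega=g\,du_1\wedge\cdots\wedge du_r$ for a regular function $g$. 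Using the \'etaleness and the $p$-adic implicit function theorem, the fiber integral becomes, up to a definable change of variables with unit Jacobian, a sum over the finitely many \'etale preimages of an integral of $|g(y,u)|\,du$ over a definable subset of $O^r$ (the set of $u$ whose \'etale lift lies in $X(F)\cap O^N$). Here I would invoke Proposition~\ref{prop:definable.to.VG} to write $\val(g)$ as a definable function, so that $|g|=q^{-\val(g)}$ with $\val(g)$ definable in $y$ and $u$, and Lemma~\ref{lem:sum.over.fibers.constructible} to handle the finite sum over \'etale preimages constructibly in $y$.

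At this point the integrand is $q^{-\val(g(y,u))}$ times the indicator of a definable set, integrated over $u\in O^r$, with $y$ as a parameter; the indicator can be absorbed into the exponent in the standard way (replace $q^{h}$ on the definable set $D$ and $0$ off it by $q^{h}\cdot\mathbf{1}_D$, which is a constructible integrand, or re-express $D$ via one more definable function taking values $0$ and $+\infty$). Applying Theorem~\ref{thm:Denef.integration} (in the parametrized form of \cite[Theorems~4.4 and 6.9]{CL}) with the valued-field parameters $y$, we conclude that $y\mapsto\int_{\pi^{-1}(y)\cap U}\varphi\cdot|\omega|$ is constructible on each piece of a finite definable partition of $Y(F)$, hence constructible.

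The main obstacle I expect is bookkeeping rather than conceptual: making the \'etale-coordinates step precise so that the fiberwise integral genuinely becomes a definable integral over $O^r$ with definable integrand and definable parameters, including the definability of the set of $u\in O^r$ whose \'etale lift lands in the chosen compact, and the constructibility of the count of \'etale preimages. All of these are routine consequences of quantifier elimination for Macintyre's theory together with the $p$-adic implicit function theorem, but they must be assembled carefully; the actual analytic content (finiteness of the integrals) is already granted by Lemma~\ref{lem:RS.implies.finite.integral} / the hypotheses, so Denef's theorem does the rest.
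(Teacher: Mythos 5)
Your proposal follows essentially the same route as the paper's proof: localize so that $X$ sits in an affine space over $Y$, reduce $\varphi$ to an indicator of a lattice, find (coordinate) projections making the fiber map étale, convert the fiber integral to a sum over étale preimages of a definable integrand over $O^r$ via Lemma~\ref{lem:sum.over.fibers.constructible}, and conclude with Theorem~\ref{thm:Denef.integration}. The only cosmetic difference is that the paper picks a coordinate subset $I\subset\{1,\dots,N\}$ for the étale projection rather than abstract relative coordinates $u_1,\dots,u_r$.
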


\begin{proof} Let $O$ be the ring of integers of $F$. Since the claim is local, we can assume that $Y \subset \mathbb{A} ^M$ is affine, that $X \subset Y \times \mathbb{A}^N$, and that $\pi$ is the projection. By decomposing into finitely many pieces, translating, and dilating, we can assume that $\varphi$ is the characteristic function of a set of the form $O^{M+N}$. Finally, decomposing $A$ further, we can assume in addition that there is a subset $I \subset \left\{ 1,\ldots,N \right\}$ such that, for any $y\in A$, the projection $\pi_I: \pi ^{-1}(y) \cap U \cap O^{M+N} \rightarrow O^I$ is etale. Let $\nu$ be the standard volume form on $O^I$. For any $y\in Y(F)$,
\[
\int_{\pi ^{-1}(y) \cap U \cap O^{M+N}}| \omega |=\int_{z \in O^I}\left( \sum_{x\in \pi_I ^{-1}(z)\cap \pi ^{-1}(y) \cap U\cap O^{M+N}}\left| \frac{\omega }{\pi_I ^* \nu} (x)\right| \right) | \nu |.
\]
By Lemma \ref{lem:sum.over.fibers.constructible}, there is a definable partition such that, on each part, the integrand is a linear combination of functions of the form $q^{-f(x)}$, where $f: Y \rightarrow \VG$ is definable. By Theorem \ref{thm:Denef.integration}, the integral is a constructible function of $y$.
\end{proof}

\subsection{Continuity of Constructible Functions}

{We start by considering constructible functions from $\VG^n$ to $\mathbb{C}$.}

\begin{lemma} \lbl{lem:linear.independence} Let $\varphi_1,\ldots,\varphi_N \in (\mathbb{Q} ^n)^*$ be different linear functionals and let $f_1,\ldots,f_N\in \mathbb{Q}[x_1,\ldots,x_n]$ be non-zero polynomials. Then the functions $f_i(x)q^{\varphi_i(x)}:\mathbb{Z}_{\geq 0} ^n \rightarrow \mathbb{R}$, where $x=(x_1,\ldots,x_n)$, are linearly independent.
\end{lemma}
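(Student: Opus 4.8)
The plan is to reduce the multi-variable statement to the classical one-variable fact that a non-trivial combination of distinct geometric progressions with polynomial coefficients cannot vanish on all of $\mathbb{Z}_{\geq 0}$. First I would fix a direction vector $v \in \mathbb{Z}_{>0}^n$ such that the rational numbers $\varphi_1(v),\ldots,\varphi_N(v)$ are pairwise distinct; this is possible because each $\ker(\varphi_i - \varphi_j)$ with $i \ne j$ is a proper linear subspace of $\mathbb{Q}^n$ (as $\varphi_i \ne \varphi_j$), and $\mathbb{Z}_{>0}^n$ is Zariski dense in $\mathbb{A}^n$, hence not covered by finitely many such subspaces. Next, since each $f_i$ is a non-zero polynomial, its zero locus $\{f_i = 0\}$ is a proper Zariski-closed subset of $\mathbb{A}^n$; using again that $\mathbb{Z}_{\geq 0}^n$ is Zariski dense, I can choose a base point $x_0 \in \mathbb{Z}_{\geq 0}^n$ lying outside $\bigcup_i \{f_i = 0\}$, so that $f_i(x_0) \ne 0$ and, in particular, $g_i(t) := f_i(x_0 + tv) \in \mathbb{Q}[t]$ is a non-zero polynomial for every $i$.

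Then I would restrict a hypothetical linear dependence $\sum_{i=1}^N c_i f_i(x)\, q^{\varphi_i(x)} \equiv 0$ on $\mathbb{Z}_{\geq 0}^n$ to the ray $x = x_0 + tv$, $t \in \mathbb{Z}_{\geq 0}$ (which stays in $\mathbb{Z}_{\geq 0}^n$ since $x_0, v$ have non-negative integer entries). Writing $\lambda_i = q^{\varphi_i(v)}$ and absorbing the non-zero constant $q^{\varphi_i(x_0)}$ into $c_i$, the relation becomes $\sum_{i=1}^N c_i g_i(t)\,\lambda_i^t = 0$ for all $t \ge 0$, where the $\lambda_i$ are pairwise distinct positive reals (distinct because $q > 1$ and the $\varphi_i(v)$ are distinct) and the $g_i$ are non-zero polynomials. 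Discarding the indices with $c_i = 0$ and relabelling so that $\lambda_1 > \lambda_2 > \cdots > \lambda_m > 0$, I would divide by $t^{\deg g_1}\lambda_1^t$ and let $t \to \infty$: for $i \ge 2$ the term $c_i g_i(t) t^{-\deg g_1}(\lambda_i/\lambda_1)^t$ tends to $0$ because $(\lambda_i/\lambda_1)^t$ decays faster than any power of $t$ grows, while the $i = 1$ term tends to $c_1$ times the leading coefficient of $g_1$, which is non-zero. This contradicts the identical vanishing of the sum, so all $c_i$ vanish and the functions are linearly independent.

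There is no serious obstacle here; the only points requiring a little care are the two Zariski-density arguments used to select $v$ and $x_0$, and the observation that the restriction of each $f_i$ to the chosen ray is not identically zero, which is guaranteed by $f_i(x_0) \ne 0$. The final one-variable asymptotic comparison is elementary and is the standard proof that quasi-polynomials have no non-trivial linear relations.
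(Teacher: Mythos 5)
Your proof is correct and takes essentially the same approach as the paper: choose a generic direction $v$ so that the $\varphi_i(v)$ are pairwise distinct, restrict the putative relation to a ray $x_0+tv$, and extract the leading coefficient by letting $t\to\infty$ and using the asymptotic dominance of the largest exponential. The paper is slightly terser (it divides directly by $f_1(x+nv)\,q^{\varphi_1(nv)}$) while you are more explicit about choosing the base point $x_0$ off the zero loci of the $f_i$, but the underlying argument is the same.
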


{
\begin{proof} Suppose that $\sum \alpha_i f_i(x) q^{\varphi_i(x)}=0$, where all coefficients $\alpha_i$ are nonzero. Choose $v\in \mathbb{Z}_{\geq 0}^n$ for which $\varphi_i(v) \neq \varphi_j(v)$ for all $i,j$. By rearranging, we can assume that $\varphi_1(v) > \varphi_i(v)$ for all $i>1$. For every $x$ we have
\[
0=\lim_{n \rightarrow \infty} \frac{\sum \alpha_i f_i(x+nv)q^{\varphi_i(x+nv)}}{f_1(x+nv)q^{\varphi_1(nv)}} = \alpha_1 q^{\varphi_1(x)},
\]
so $\alpha_1=0$, a contradiction.
\end{proof}
}
\begin{definition} A {\emph{V-function}} is a function $R: \mathbb{Z}_{\geq 0} ^n \rightarrow \mathbb{C}$ of the form $R(x)=\sum \alpha_i f_i(x) q^{\varphi_i(x)}$, where $\varphi_i\in (\mathbb{Q} ^n)^*$ are linear functionals, and $f_i(x)\in \mathbb{Q}[x_1,\ldots,x_n]$ are polynomials.
\end{definition}

In other words, the collection of $V$-functions is the algebra generated by the functions of the form $\varphi(x)$ and $q^{\varphi(x)}$, where $\varphi$ is a rational linear functional.

\begin{lemma} \lbl{lem:no.infinite.ray.limits} Let $R(x)=\sum \alpha_i f_i(x) q^{\varphi_i(x)}$ be a V-function. Suppose that {the $\varphi_i$ are all distinct and} there are no $p,v\in \mathbb{Z}_{\geq 0} ^n$ such that $\lim_{n \rightarrow \infty} |R(p+nv)|= \infty$. Then, for any $e\in \mathbb{Z}_{\geq 0}^n$, \begin{enumerate}
\item $\varphi_i(e) \leq 0$, and
\item If $\varphi_i(e)=0$, then $\frac{\partial f_i}{\partial e}=0$.
\end{enumerate}
Conversely, conditions 1. and 2. imply that there are no $p,v\in \mathbb{Z}_{\geq 0} ^n$ such that $\lim_{n \rightarrow \infty} |R(p+nv)|=  \infty$.
\end{lemma}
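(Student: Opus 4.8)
\textbf{Plan for the proof of Lemma \ref{lem:no.infinite.ray.limits}.}

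The plan is to reduce everything to a one-variable asymptotic analysis along rays $n \mapsto p + nv$, using Lemma \ref{lem:linear.independence} to separate the contributions of the distinct functionals $\varphi_i$. First I would prove the forward direction. Suppose toward a contradiction that $\varphi_{i_0}(e) > 0$ for some index $i_0$ and some $e \in \mathbb{Z}_{\geq 0}^n$. I would pick $v = e$ and a suitable shift $p$, and among the functionals whose restriction $t \mapsto \varphi_i(p + te)$ grows fastest (i.e.\ those with $\varphi_i(e)$ maximal — a set that includes $i_0$, so the maximal value is positive), I would further single out those with $\deg f_i(p + te)$ largest. After renormalizing $R(p + ne)$ by $q^{(\max_i \varphi_i(e))\, n} \cdot n^{(\text{top degree})}$, the limit as $n \to \infty$ is a nonzero finite number provided the leading coefficients of the surviving $f_i(p + te)$ do not cancel. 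To guarantee non-cancellation I would choose $p$ generically: the leading coefficient of $f_i(p + te)$ in $t$ is a nonzero polynomial in $p$ (namely the top-degree homogeneous part of $f_i$ evaluated at $e$, which is nonzero once $\deg f_i(p+te)$ is the generic degree), so for generic $p$ these do not conspire to vanish — here one also uses that the $\varphi_i$ are distinct, hence the surviving terms with a fixed $\varphi_i(e)$ are genuinely distinct monomials-times-exponentials and Lemma \ref{lem:linear.independence} prevents identical cancellation. Then $|R(p+ne)| \to \infty$, a contradiction; this gives condition 1. For condition 2, assuming now all $\varphi_i(e) \leq 0$, the same renormalization along $v = e$ isolates exactly the terms with $\varphi_i(e) = 0$, and their sum behaves like $(\sum_{\varphi_i(e)=0} \alpha_i f_i(p+ne) q^{\varphi_i(p)})$; if some such $f_i$ had $\frac{\partial f_i}{\partial e} \neq 0$, then for generic $p$ the polynomial $t \mapsto f_i(p+te)$ has positive degree, and (again via Lemma \ref{lem:linear.independence} applied to the distinct $\varphi_i$, plus a generic choice of $p$ to avoid cancellation of the top-degree coefficients) the sum grows polynomially in $n$, forcing $|R(p + ne)| \to \infty$.

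For the converse, assume conditions 1 and 2 hold and fix arbitrary $p, v \in \mathbb{Z}_{\geq 0}^n$. Then $R(p + nv) = \sum_i \alpha_i f_i(p + nv) q^{\varphi_i(p) + n \varphi_i(v)}$. By condition 1, $\varphi_i(v) \leq 0$ for every $i$, so each exponential factor $q^{n\varphi_i(v)}$ is bounded (indeed non-increasing in $n$). For the polynomial factors: if $\varphi_i(v) < 0$ then $f_i(p+nv) q^{n\varphi_i(v)} \to 0$ since exponential decay beats polynomial growth; if $\varphi_i(v) = 0$, then by condition 2 applied with $e = v$ we have $\frac{\partial f_i}{\partial v} = 0$, so $f_i$ is constant along the direction $v$, hence $f_i(p + nv) = f_i(p)$ is constant in $n$. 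Therefore $R(p + nv)$ is a bounded function of $n$, so no ray-limit can be infinite.

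The main obstacle I anticipate is handling the cancellation issue cleanly in the forward direction: a priori, along a badly chosen ray the dominant terms could cancel and hide the growth. The remedy is the generic-choice argument — choosing $v$ so that the values $\varphi_i(v)$ are pairwise distinct where possible, and choosing the basepoint $p$ in a Zariski-dense subset of $\mathbb{Z}_{\geq 0}^n$ (which is legitimate since $\mathbb{Z}_{\geq 0}^n$ is Zariski dense in $\mathbb{A}^n$) so that the relevant leading coefficients, which are values of nonzero polynomials in $p$, do not vanish. Combined with Lemma \ref{lem:linear.independence}, which rules out exact identities among the $f_i(x) q^{\varphi_i(x)}$, this pins down the true order of growth along the chosen ray and delivers the contradiction. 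The bookkeeping of "fastest-growing exponential, then highest-degree polynomial among those" is routine once this generic-position idea is in place.
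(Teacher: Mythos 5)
Your proof is correct and follows the same overall strategy as the paper's: analyze $R$ along integer rays $p + n v$, group terms by the exponential rates $\varphi_i(v)$, and use Lemma \ref{lem:linear.independence} to prevent cancellation among the competing terms. Two points of comparison are worth making. For condition 1, the paper simply ``reorders'' so that $\varphi_1(e)$ is the \emph{strict} maximum of the $\varphi_i(e)$ and then picks $p$ with $f_1(p)\neq 0$; this glosses over the possibility of ties, since the $\varphi_i$ are distinct functionals but can agree at the single vector $e$. Your generic-position treatment (single out the indices with maximal $\varphi_i(e)$, then maximal $t$-degree, and choose $p$ so the corresponding leading $t$-coefficients do not cancel, using Lemma \ref{lem:linear.independence} to show the combined leading coefficient is a not-identically-zero function of $p$) handles ties correctly and is arguably more complete than the paper's wording. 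For condition 2, however, the paper has a slicker argument that you should note: boundedness along the ray forces the polynomial $t \mapsto \sum_{j\in S}\alpha_j f_j(p+te)\,q^{\varphi_j(p)}$ (where $S=\{j : \varphi_j(e)=0\}$) to be \emph{constant} for every $p$; differentiating in $t$ yields the identity $\sum_{j\in S}\alpha_j \frac{\partial f_j}{\partial e}(p)\,q^{\varphi_j(p)}=0$ for all $p$, and Lemma \ref{lem:linear.independence} immediately kills each $\frac{\partial f_j}{\partial e}$ --- no genericity or leading-coefficient bookkeeping needed. Finally, one small imprecision in your write-up: the leading $t$-coefficient of $f_i(p+te)$ is $\frac{1}{D_i!}\frac{\partial^{D_i} f_i}{\partial e^{D_i}}(p)$, a polynomial in $p$, rather than ``the top-degree homogeneous part of $f_i$ evaluated at $e$,'' which is a constant and can vanish even when $f_i\neq 0$; this does not affect the soundness of your genericity argument but the description should be corrected.
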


\begin{proof} Suppose that there is $e\in \mathbb{Z}_{\geq 0} ^n$ and $i$ such that $\varphi_i(e)>0$. By reordering the terms, we can assume that $\varphi_1(e)>0$ and that $\varphi_1(e)>\varphi_i(e)$ for $i>0$. Choosing $p\in \mathbb{Z}_{\geq 0}$ such that $f_1(p) \neq 0$, we get that $f_1(p+n e)q^{\varphi_1(p+n e)}$ tends to infinity much faster than any other term, so $\lim |F(p+n e)|=\infty$, a contradiction.

Now assume that $\varphi_i(e)=0$. Denote the set of indices $j$ such that $\varphi_j(e)=0$ by $S$. Then, for any $p\in \mathbb{Z}_{\geq 0} ^n$,
\[
F(p+n e)=\sum_{j\in S} \alpha_j f_j(p+n e)q^{\varphi_j(p)} + o_n(1).
\]
By assumption, the polynomial $t \mapsto \sum_{j\in S} \alpha_j f(p+ t e)q^{\varphi_j(p)}$ is constant. Taking the derivative we get $\sum_{j\in S} \alpha_j \frac{\partial f_j}{\partial e}(p) q^{\varphi_j(p)}=0$ for all $p$. By Lemma \ref{lem:linear.independence}, $\frac{\partial f_j}{\partial e}=0$ for all $j\in S$.

The converse implication is clear.
\end{proof}

For the next lemma, we topologize the set $\mathbb{Z}_{\geq 0} \cup \left\{ \infty \right\}$ as the one-point compactification of the discrete space $\mathbb{Z}_{\geq 0}$.

\begin{lemma} \lbl{lem:V.function.extension} Suppose that $R: \mathbb{Z}_{\geq 0} ^n \rightarrow \mathbb{C}$ is a V-function such that there are no $p,v\in \mathbb{Z}_{\geq 0} ^n$ such that $\lim_{n \rightarrow \infty} |R(p+nv)|= \infty$. Then $R$ extends to a continuous function on $(\mathbb{Z}_{\geq 0} \cup \left\{ \infty \right\})^n$.
\end{lemma}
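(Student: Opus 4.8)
The plan is to reduce everything, via Lemma~\ref{lem:no.infinite.ray.limits}, to the elementary observation that each single-variable function $t\mapsto t^{d}q^{-ct}$ (with $d\in\mathbb{Z}_{\geq 0}$ and $c>0$) extends continuously to the one-point compactification $\mathbb{Z}_{\geq 0}\cup\{\infty\}$, with value $0$ at $\infty$. Once this is in hand, the $n$-variable statement follows by factoring.

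First I would put $R$ in normal form: after combining the terms of $R(x)=\sum_i\alpha_i f_i(x)q^{\varphi_i(x)}$ that share the same $\varphi_i$ (adding the corresponding $f_i$) and discarding those for which the combined polynomial vanishes identically, I may assume the linear functionals $\varphi_i$ are pairwise distinct; this does not change $R$ as a function, so the hypothesis on rays is preserved. Now Lemma~\ref{lem:no.infinite.ray.limits} applies: for every $i$ and every standard basis vector $e_j$ we get $\varphi_i(e_j)\leq 0$, and whenever $\varphi_i(e_j)=0$ we get $\partial f_i/\partial x_j=0$, i.e.\ $f_i$ does not involve $x_j$. Setting $J_i=\{j:\varphi_i(e_j)<0\}$ and $c_{i,j}=-\varphi_i(e_j)>0$ for $j\in J_i$, this means $f_i$ is a polynomial in the variables $(x_j)_{j\in J_i}$ only, while $q^{\varphi_i(x)}=\prod_{j\in J_i}q^{-c_{i,j}x_j}$. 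Expanding each $f_i$ into monomials, $R$ becomes a finite $\mathbb{C}$-linear combination of functions of the form $x\mapsto\prod_{j\in J}x_j^{d_j}q^{-c_jx_j}$, each constant outside a finite coordinate set $J$ and with every $c_j>0$; such a function equals $\prod_{j\in J}h_{d_j,c_j}(x_j)$ where $h_{d,c}(t):=t^{d}q^{-ct}$. Since $q>1$ and $c>0$, each $h_{d,c}$ is bounded on $\mathbb{Z}_{\geq 0}$ and tends to $0$ as $t\to\infty$, hence extends continuously to $\mathbb{Z}_{\geq 0}\cup\{\infty\}$ with value $0$ at $\infty$; a finite product of the pull-backs of these extensions along the coordinate projections $(\mathbb{Z}_{\geq 0}\cup\{\infty\})^n\to\mathbb{Z}_{\geq 0}\cup\{\infty\}$ is continuous on the product space, and so is the resulting finite linear combination, which extends $R$. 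This proves the lemma.

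The only genuine subtlety — and the reason Lemma~\ref{lem:no.infinite.ray.limits} is needed rather than merely the sign constraint $\varphi_i(e_j)\leq 0$ — is that an individual term could a priori pair polynomial growth in one ``infinite'' coordinate with exponential decay only in a \emph{different} coordinate, and along a general net in the product topology those two coordinates may tend to infinity at incomparable rates, so no single term need extend continuously on its own. The vanishing-derivative conclusion of that lemma is exactly what forbids this: it forces every polynomial factor $x_j^{d_j}$ occurring in $R$ to be accompanied by a genuine exponential decay factor $q^{-c_jx_j}$ in the same variable, which decouples the problem into one variable at a time and makes the factorization above legitimate. I expect that decoupling step to be the crux; everything after it is routine point-set topology of products of compact spaces.
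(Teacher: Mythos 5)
Your proof is correct and follows essentially the same route as the paper's: normalize so the $\varphi_i$ are distinct, apply Lemma~\ref{lem:no.infinite.ray.limits} to get conditions (1) and (2) for each term, split into monomial terms, and reduce to the one‑variable case $t\mapsto t^{d}q^{-ct}$. Your version just spells out the normalization step and the variable‑separation argument that the paper treats as routine.
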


\begin{proof} By Lemma \ref{lem:no.infinite.ray.limits}, we can assume without loss of generality that $R(x)=f(x)q^{\varphi(x)}$ for a polynomial $f(x)$ and a linear functional $\varphi$ that satisfy conditions 1. and 2. By decomposing $f(x)$ into a sum of monomials, we can assume that $f(x)$ is a monomial. In this case, the function is $\prod \left( x_i^{a_i} q^{b_ix_i} \right)$, and the claim is reduced to the one dimensional case, where it is clear.
\end{proof}

\begin{definition} \lbl{defn:simple.function} Let $X$ be a definable set. A {\emph{simple function}} $f:X(F) \rightarrow \mathbb{C}$ is a composition of two functions $V: X(F) \rightarrow \mathbb{Z}_{\geq 0} ^n$ and $R: \mathbb{Z}_{\geq 0} ^n \rightarrow \mathbb{C}$, such that: \begin{enumerate}
\item $V$ is a surjection.
\item Each of coordinates of $V$ has the form $\frac{1}{e}\val(g(x))$, where $g$ is a rational function and $e\in \mathbb{Z}_{> 0}$
\item $R$ is a V-function.
\end{enumerate}
\end{definition}

\begin{proposition} \lbl{prop:constructible.to.simple} Let $f:X(F) \rightarrow \mathbb{C}$ be a constructible function. There is a finite definable partition $X=X_1 \cup \cdots \cup X_m$ such that the restriction of $f$ to each $X_i(F)$ is simple.
\end{proposition}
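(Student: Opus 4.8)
The plan is to argue by induction on the structure of a constructible function, using the quantifier elimination for $T_{\mathcal L,F}$ together with the earlier cell-decomposition type results. Recall that, by definition, a constructible function is an element of the algebra generated by functions of the form $A_g$ and $E_g$ where $g\colon \VF^n\to\VG$ is definable. So it suffices to treat a single monomial $\prod_j A_{g_j}\cdot\prod_l E_{h_l}$ in these generators and then reassemble; since the notion of ``simple function'' is not obviously closed under sums, I will in fact prove the slightly stronger statement that after a finite definable partition every constructible function becomes a \emph{finite sum} of simple functions supported on a common $V$-map, which is visibly what one needs and is stable under the algebra operations.

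First I would reduce all the definable functions $g_j,h_l\colon X\to\VG$ to a normal form simultaneously. By Proposition \ref{prop:definable.to.VG}, each one is, after a finite definable partition of $X$, of the form $\frac1{e}\val(g(x))$ for a rational function $g$ and a positive integer $e$; taking the common refinement of the partitions for all the finitely many $g_j$ and $h_l$ occurring, I may assume on each piece $X_i$ that \emph{every} $g_j$ and $h_l$ has this shape. Now collect the underlying rational functions $g^{(1)},\dots,g^{(n)}$ (one for each distinct valuation appearing, with the appropriate $e$'s) and set $V=(x\mapsto \tfrac1{e_1}\val g^{(1)}(x),\dots,\tfrac1{e_n}\val g^{(n)}(x))\colon X_i(F)\to\mathbb Z^n$. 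After passing to yet a further finite definable partition I may assume the image of $V$ lies in $\mathbb Z_{\ge 0}^n$ (translate by a constant vector, absorbing the shift into the polynomial/exponential data) and, by partitioning according to which coordinates of the image are hit, that $V$ is surjective onto $\mathbb Z_{\ge 0}^n$ — this is the content of conditions (1),(2) of Definition \ref{defn:simple.function}. With this done, each generator $A_{g_j}$ becomes a rational \emph{linear} functional of $V(x)$ and each $E_{h_l}$ becomes $q^{\varphi_l(V(x))}$ for a rational linear functional $\varphi_l$; hence the chosen monomial, and in fact any element of the algebra they generate, is $R\circ V$ with $R$ a polynomial in the coordinate functions and the exponentials $q^{\varphi_l}$, i.e. precisely a $V$-function in the sense of the paper. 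That gives condition (3).

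The only genuine subtlety — and I expect this to be the main obstacle — is making sure that surjectivity of $V$ can be arranged without destroying definability, and more precisely that after the partition the map $V$ really does have image \emph{all} of $\mathbb Z_{\ge 0}^n$ rather than some definable subset: one must partition the target value group $\mathbb Z^n$ into finitely many definable cones/pieces on each of which the fiber structure is uniform, pull this partition back to $X$, and then on each resulting piece re-coordinatize the value group so that the image becomes a genuine orthant. This is a standard manipulation in the Denef–Pas / Macintyre framework (it is implicitly used in the proof of Theorem \ref{thm:Denef.integration}), but it is where the quantifier-elimination input is really being spent; once it is in place, the rest is bookkeeping: reassemble the finitely many monomials into the original constructible $f$, take the common refinement of all the partitions produced, and on each final piece $X_i$ one has $f|_{X_i(F)}=R_i\circ V_i$ with $V_i$ surjective onto some $\mathbb Z_{\ge 0}^{n_i}$ and $R_i$ a $V$-function, which is exactly the assertion. (If one insists on a single simple function rather than a sum, one further observes that a sum of $V$-functions is again a $V$-function, so $R_i$ itself is a $V$-function and no extra work is needed.)
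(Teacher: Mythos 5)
Your proposal takes essentially the same route as the paper: normalize all the $\VG$-valued definable functions via Proposition \ref{prop:definable.to.VG}, form the map $V$ from their valuations, and then partition the (definable) image in $\mathbb{Z}^n$ into pieces that can be re-coordinatized by an integral linear map onto $\mathbb{Z}_{\geq 0}^n$. The paper makes the step you flag as the ``genuine subtlety'' precise by decomposing the image into simple cones (citing Fulton's toric-variety book) and using an integral linear isomorphism onto $a\mathbb{Z}_{\geq 0}^n$ followed by division by $a$, which is exactly the re-coordinatization you describe.
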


\begin{proof} There are definable functions $\alpha_1,\ldots,\alpha_m: X \rightarrow \VG$ such that $f$ is in the algebra generated by the functions $\alpha_i(x)$ and $q^{\alpha_i(x)}$. After passing to a definable partition of $X$, we can assume that each $\alpha_i$ has the form $\frac{1}{e}\val(g_i(x))$, where $g_i$ is a rational function, and $e\in \mathbb{Z}_{\geq 0}$. Consider the image of $X(F)$ under the map $(\alpha_1,\ldots,\alpha_m)$. It is a definable subset of $\mathbb{Z} ^n$, and, by \cite[Section 2.6]{Fu}, is a disjoint union of simple cones. Dividing $X$ further, we can assume that the image of $X(F)$ under $(\alpha_1,\ldots,\alpha_m)$ is a simple cone $C$. There is an integral linear transformation $A$ that induces an isomorphism between $C$ and $a\mathbb{Z}_{\geq 0} ^n$, for some $a\in \mathbb{Z}_{\geq 0}$. Let $V$ be the composition
\[
X \stackrel{(\alpha_i)}{\rightarrow}C \stackrel{A}{\rightarrow}a \mathbb{Z}_{\geq 0} ^n \stackrel{1/a}{\rightarrow} \mathbb{Z}_{\geq 0} ^n.
\]
$V$ is a surjection, and it is easy to see that each coordinate of $V$ has the form $\frac{1}{ae}\val(h(x))$, for some rational function $h$. Finally, each $\alpha_i$ is a $\mathbb{Q}$-linear functional in the coordinates of $V$. This implies that there is a $V$-function $R$ such that $f=R\circ V$.
\end{proof}

\begin{proposition} \lbl{prop:piecewise.continuity} Let $X \subset \VF^n$ be a bounded constructible set and $f:X(F) \rightarrow \mathbb{C}$ be a constructible function. Then one of the following holds \begin{enumerate}
\item There is a definable subset $Y \subset X$ and a point $y_0 \in \overline{Y(F)}$ such that
\[
\lim_{\underset{y\in Y(F)}{ y \rightarrow y_0}} f(y) = \infty.
\]
{ \item There are two definable subsets $Y_1,Y_2 \subset X$ such that $\overline{Y_1(F)}\cap \overline{Y_2(F)} \neq \emptyset$ and
\[
\inf_{y_i\in Y_i(F)} |f(y_1)-f(y_2)|>0.
\]
}
\item There is a definable partition $X=X_1 \cup \cdots \cup X_m$ such that, for each $i$, the restriction of $f$ to $X_i(F)$ extends to a continuous function on $\overline{X_i(F)}$.
\end{enumerate}
Here, the closures of $Y(F)$, $Y_i(F)$, and $X_i(F)$ are taken inside $F^n$.
\end{proposition}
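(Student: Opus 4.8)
The plan is to reduce, via Proposition~\ref{prop:constructible.to.simple}, to a single simple function and then treat its two factors separately: the value--group factor $R$ through Lemma~\ref{lem:V.function.extension} (cf.~Lemma~\ref{lem:no.infinite.ray.limits}), and the valuation factor $V$ through $p$-adic cell decomposition. So first I would apply Proposition~\ref{prop:constructible.to.simple} to get a definable partition $X=X_1\cup\cdots\cup X_m$ with each $f|_{X_i(F)}$ simple. The three alternatives pass from the pieces to $X$: if some $f|_{X_i(F)}$ satisfies (1) or (2), the witnessing definable subsets and boundary point lie in $X_i\subseteq X$ (closures being computed in $F^n$), so the same alternative holds for $X$; while if every $f|_{X_i(F)}$ satisfies (3), the resulting partitions of the $X_i$ together form a definable partition of $X$ witnessing (3). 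Hence it suffices to handle a simple function $f=R\circ V$ on a bounded definable $X\subseteq\VF^n$, where $V=\bigl(\tfrac1{e_1}\val g_1,\dots,\tfrac1{e_r}\val g_r\bigr)\colon X(F)\to\mathbb{Z}_{\geq 0}^{r}$ is a surjection with rational functions $g_j$ and $R$ a V-function; since $V$ is $\mathbb{Z}_{\geq 0}$-valued, each $g_j$ is nowhere zero on $X(F)$, so $\val g_j$, $V$ and $f$ are all locally constant on $X(F)$.

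Next I would split on whether $R$ has an unbounded ray. Suppose there are $p,v\in\mathbb{Z}_{\geq 0}^{r}$ with $\lvert R(p+kv)\rvert\to\infty$. The ray $L=\{p+kv:k\in\mathbb{Z}_{\geq 0}\}$ is definable in the value group, so $V^{-1}(L)\cap X$ is a nonempty definable subset of $X$; using definable Skolem functions (which non-archimedean local fields of characteristic $0$ possess) I would choose a definable $Y\subseteq V^{-1}(L)\cap X$ on which $V$ restricts to a bijection onto $L$, so $Y(F)=\{a_k:k\geq 0\}$ with $V(a_k)=p+kv$. As $X$ is bounded, $Y(F)$ is an infinite bounded set and has an accumulation point $y_0\in F^{n}$. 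Writing $\mu(\epsilon)=\min\{k:a_k\in B(y_0,\epsilon)\smallsetminus\{y_0\}\}$, the quantity $\mu(\epsilon)$ is monotone in $\epsilon$ and tends to $\infty$ as $\epsilon\to 0$ (else a fixed $a_{k^\ast}\ne y_0$ would lie in every ball about $y_0$); since $f(a_k)=R(p+kv)$, this yields $\lim_{y\to y_0,\ y\in Y(F)}f(y)=\infty$, which is alternative (1).

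In the remaining case $R$ has no unbounded ray, so by Lemma~\ref{lem:V.function.extension} it extends to a continuous function $\bar R$ on $(\mathbb{Z}_{\geq 0}\cup\{\infty\})^{r}$. It then suffices to produce a finite definable partition $X=X_1\cup\cdots\cup X_m$ on each piece of which $V$ extends to a continuous map $\bar V\colon\overline{X_i(F)}\to(\mathbb{Z}_{\geq 0}\cup\{\infty\})^{r}$: composing, $\bar R\circ\bar V$ is then a continuous extension of $f|_{X_i(F)}$ to $\overline{X_i(F)}$, giving alternative (3). For this I would apply $p$-adic cell decomposition (Denef; see~\cite{De-Int} and~\cite{CL}) to $X$ together with the finitely many functions $\val g_j$, refining the partition so that on each cell, in suitable coordinates, every $\val g_j$ becomes a $\mathbb{Z}$-affine function of the coordinate valuations. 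Along any approach to a boundary point $y_0\in\overline{X_i(F)}$ each coordinate valuation converges --- to a finite value where that coordinate is nonzero (by continuity of $\val$ away from $0$) and to $+\infty$ where it vanishes, using boundedness of $X$ to keep each coordinate in $O$ --- so each $\val g_j$ acquires a limit in $\mathbb{Z}\cup\{+\infty\}$, which lies in $\mathbb{Z}_{\geq 0}\cup\{+\infty\}$ because $\val g_j\geq 0$ on $X_i(F)$. This defines $\bar V$ and yields its continuity.

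The hard part is this last step. For a genuine rational function $g_j=P_j/Q_j$, cancellation in $P_j,Q_j$ can make $\val P_j-\val Q_j$ oscillate as one approaches the boundary, so $V$ need not extend before one ``prepares'' the $g_j$; the cell decomposition --- equivalently, embedded resolution of the divisor $\{\prod_j g_j=0\}$, available in characteristic $0$ --- is exactly what removes this pathology, and the bookkeeping of how the cells meet the closures taken in $F^n$ is the delicate point. I note that the argument as organised here only ever produces alternative (1) or alternative (3); alternative (2) is kept in the statement for the subsequent continuity argument (Proposition~\ref{prop:continuity.along.curves}), where a (3)-type partition with mismatched boundary values on two pieces supplies it.
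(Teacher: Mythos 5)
Your Case 1 is correct and essentially the same as the paper's (the detour through definable Skolem functions is not needed --- one can simply take $Y=V^{-1}(p+\mathbb{Z}_{\geq 0}v)$, pick any $x_k\in V^{-1}(p+kv)$, take an accumulation point $y_0$ of $\{x_k\}$, and observe that for every coordinate $i$ with $v_i>0$ the defining rational function $g_i$ must vanish at $y_0$, forcing $V\to\infty$ along the ray for any $Y(F)$-sequence converging to $y_0$). Case 2 is where the proposal breaks: you claim that after a finite definable partition one can always make $V$ extend continuously to the closure of each piece, hence that alternatives (1) and (3) exhaust everything and (2) is superfluous. This is false, and (2) is genuinely needed. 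Take $X=\{(x,y)\in O^2: x\neq y\}$, $V(x,y)=\val(x-y)$, $R(a)=q^{-a}$, so $f(x,y)=q^{-\val(x-y)}$; here $R$ has no unbounded ray and $\bar R$ extends continuously. But for every $k\geq 0$ the point $(0,0)$ lies in the closure of $S_k=\{(x,y)\in X:\val(x-y)=k\}$ (witnessed by $(p^n+p^k,p^n)\to(0,0)$), and $f\equiv q^{-k}$ on $S_k$. Given any finite definable partition $X=X_1\cup\cdots\cup X_m$, for each $k$ some $X_{i(k)}$ has $(0,0)\in\overline{(S_k\cap X_{i(k)})(F)}$, so a continuous extension of $f|_{X_{i(k)}(F)}$ would have to take the value $q^{-k}$ at $(0,0)$; by pigeonhole two distinct $k$'s share the same index $i$, a contradiction. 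So (3) fails, (1) fails since $f$ is bounded, and (2) is exactly what holds (e.g.\ $Y_1=S_0$, $Y_2=S_1$). The obstruction is intrinsic and is not removed by cell decomposition or by resolving $\{\prod g_j=0\}$: the divisor $x-y=0$ is already smooth, yet $\val(x-y)$ does not converge along sequences to $(0,0)$ within any cofinite definable piece.

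The paper avoids this trap by not attempting to extend $V$ at all. In Case 2 it argues by contradiction: assuming (3) fails (for $m=1$, which suffices once one has reduced to a single simple function), there are two sequences in $X(F)$ converging to a common boundary point $p$ with distinct limits of $f$; passing to subsequences one gets convergence of $V$ in the compact space $(\mathbb{Z}_{\geq 0}\cup\{\infty\})^r$ to limits $a\neq b$ with $\bar R(a)\neq\bar R(b)$, and then pulling back disjoint box neighborhoods of $a$ and $b$ under $V$ produces the definable sets $Y_1,Y_2$ witnessing (2). This is strictly weaker than what you are trying to prove, and it is precisely the slack that alternative (2) provides. To repair your proof you would need to replace the unprovable ``make $V$ extend on each cell'' step with this compactness-and-pullback argument.
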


\begin{proof} After passing to a definable partition, we can assume that $f$ is simple. Write $f=R\circ V$, with $R$ and $V$ as in Definition \ref{defn:simple.function}.

Assume first that there are $p,v\in \mathbb{Z}_{\geq 0} ^n$ such that $R(p+nv) \rightarrow \infty$. We will show that the first alternative holds. Let $Y=V ^{-1} (p+\mathbb{Z}_{\geq 0} v)$. Choose $x_n\in V ^{-1} (p+nv)$, and let $y_0 \in F^n$ be any accumulation point of $\left\{ x_n \right\}$. If $y_n \in Y$ and $y_n \rightarrow y_0$, then $V(y_n)$ tends to infinity on the ray $p+\mathbb{Z}_{\geq 0} v$, and so $f(y_n)=R(V(y_n)) \rightarrow \infty$.

{Assume now that there are no $p,v\in \mathbb{Z}_{\geq 0} ^n$ such that $R(p+nv) \rightarrow \infty$ and assume that third alternative does not hold. We will show that the second alternative holds. By Lemma \ref{lem:V.function.extension}, the function $R$ extends continuously to a function $\overline R$ on $\left( \mathbb{Z}_{\geq 0} \cup \left\{ \infty \right\} \right) ^n$. This implies that $R$ is bounded, and, hence, so is $f$.

By assumption, $f$ cannot be extended continuously to $\overline{X(F)}$, so we can find a point $p \in \overline{X(F)}$ and two sequences of points $x_i,y_i \in X(F)$ such that  $p=\lim x_i=\lim y_i$ and both limits $\lim f(x_i)$ and $\lim f(y_i)$ exist and are not equal. Let $a_{i}=V(x_{i})$ and $b_{i}=V(y_{i})$. Choose subsequences $a_{i_{j}}$ and $b_{i_{j}}$  which converge to points $a,b \in \left( \mathbb{Z}_{\geq 0} \cup \left\{ \infty \right\} \right) ^n$. Clearly, $\overline R(a) \neq \overline R(b)$.

We can choose non-intersecting neighborhoods $I \ni a$ and  $J \ni b$ in $\left( \mathbb{Z}_{\geq 0} \cup \left\{ \infty \right\} \right) ^n$, and non-intersecting closed neighborhoods $A \ni \overline R(a)$, $B \ni \overline R(b)$ in $\C$, such that both $I$ and $J$ are products of (possibly infinite) intervals in $ \mathbb{Z}_{\geq 0} \cup \left\{ \infty \right \}$, that $\overline{R}(I) \subset A$, and that $\overline{R}(J) \subset B$. The definable subsets $Y_1:= V^{-1}(I)$ and $Y_2:= V^{-1}(J)$ satisfy the requirements. }

\end{proof}

\begin{lemma} \lbl{lem:curve.through.set} Let $W$ be an affine variety over a local field $F$, $X \subset W$ be an $F$-definable subset, and $x\in W(F)$ be a point. Assume that $x$ is contained in the closure of $X(F)$. Then there is a curve $C \subset W$ defined over $F$ such that $x$ is in the closure of $(C \cap X)(F)$.
\end{lemma}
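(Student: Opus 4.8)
\textbf{Proof proposal for Lemma \ref{lem:curve.through.set}.}

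The plan is to reduce the statement to a concrete model-theoretic / valuation-theoretic assertion and then extract the curve by a dimension-induction argument. First I would replace $W$ by an affine neighborhood and choose a closed embedding $W \hookrightarrow \mathbb{A}^N$, so that $X$ becomes an $F$-definable subset of $\mathbb{A}^N$ and $x$ a point of $\mathbb{A}^N(F)$. After translating we may assume $x = 0$. The hypothesis that $0$ lies in the analytic closure of $X(F)$ means there is a sequence $x_i \in X(F)$ with $x_i \to 0$, equivalently $\min_j \val((x_i)_j) \to \infty$.

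The key step is an induction on $\dim W$ (or, if one prefers, on $N$). If $\dim W = 1$ we take $C = W$ (or the union of the one-dimensional components passing through $0$) and we are done. In general, by quantifier elimination for Henselian valued fields (Macintyre's theorem, as used in Subsection A.1), $X$ is a Boolean combination of sets of the basic forms listed after the definition of $T_{\mathcal L,F}$: zero loci of polynomials, valuation inequalities $\val(f) \le \val(g)$, and Macintyre predicates $(\exists z)\, f = z^m$. I would use the standard trick of projecting: pick a generic linear projection $p : \mathbb{A}^N \to \mathbb{A}^{N-1}$ sending $0$ to $0$. The image $p(X(F))$ accumulates at $0 \in \mathbb{A}^{N-1}(F)$ and is contained in $p(W)(F)$ where $p(W)$ is a variety of dimension $\le \dim W$; moreover it is $F$-definable since images of definable sets under coordinate projections are definable (this is elimination of quantifiers in the $\mathbf{VF}$-sort). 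If $\dim p(W) < \dim W$ we apply the induction hypothesis on $p(W)$ to get a curve through $0$ in $p(W)$, and then lift it: the preimage $p^{-1}(C') \cap W$ is a variety whose components through $0$ have dimension $\ge \dim C' = 1$, and at least one of them meets $\overline{X(F)}$; replacing it by a further generic projection (or by taking its intersection with $X$ and repeating) we descend to a curve. The remaining case is $\dim p(W) = \dim W$, i.e. $p$ is generically finite on $W$; then $p$ is finite over a Zariski neighborhood of $0$, $X(F)$ still accumulates at $0$ after a base change to a finite extension handling the ramification, and one reduces to the case of one lower embedding dimension with the \emph{same} variety, which cannot go on forever. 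Iterating, we eventually land in a situation where the ambient space is a curve.

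An alternative, cleaner route I would seriously consider: use that $X$ is definable, so by cell decomposition / the structure theory of definable subsets of $F^N$ (e.g. the parametrization results behind Theorem \ref{thm:Denef.integration}), $X(F)$ contains a definable piece $X'$ which, near $0$, is the image of a definable map from a product of a ball in $O^e$ and a cone in the value group, with $e = \dim_0 X$. If $e = 0$ then $0$ is isolated in $\overline{X(F)}$ unless $0 \in X(F)$ itself, and then any curve through $0$ in $W$ works with $C \cap X \supseteq \{0\}$; wait — we need $(C \cap X)(F)$ to have $0$ in its closure, which is automatic if $0 \in X(F)$. If $e \ge 1$, restrict the parametrizing ball to a one-parameter disc $t \mapsto \gamma(t)$ whose image still accumulates at $0$; the Zariski closure of $\gamma$ is a curve $C$ with $\gamma(O) \subseteq (C \cap X)(F)$ accumulating at $0$. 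The main obstacle in either approach is bookkeeping the definability through the projection/parametrization — ensuring the pieces one passes to remain genuinely one-dimensional as varieties (not just as $F$-analytic sets) and that one does not need an infinite tower of finite extensions; I expect this to be handled exactly by Macintyre quantifier elimination plus Noetherian induction on $\dim W$. Once the curve $C$ is produced, the conclusion $0 \in \overline{(C \cap X)(F)}$ is immediate from the construction, completing the proof; this is the input needed for Proposition \ref{prop:continuity.along.curves} and hence for Theorem \ref{thm:continuity.along.curves}.
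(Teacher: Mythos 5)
Your proposal takes a genuinely different route from the paper, and both of your routes have real gaps that prevent them from being complete proofs as written.

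The paper's argument is short and quite different from either of your approaches. After discarding the trivial case $x\in X(F)$, Macintyre quantifier elimination lets one absorb all equational conditions $f_i=0$ into the variety $W$ itself, so one may assume $X$ is a Boolean combination of power predicates $(\exists z)\, g_i(x)=z^{m_i}$. One then resolves singularities of the pair $(W,\{\prod g_i=0\})$ to make all the $g_i$ simultaneously monomial in local coordinates, reducing to $W=\mathbb{A}^n$, $x=0$, and monomial $g_i$. The decisive observation is then an \emph{invariance under scaling}: for a monomial $g_i$, the truth value of $(\exists z)\,g_i(y)=z^{m_i}$ is unchanged when $y$ is replaced by $\alpha^{\prod m_j} y$, because the monomial picks up the factor $\alpha^{(\prod m_j)\deg g_i}$, which is always an $m_i$-th power. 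Hence if $p\in X(F)$ is any nonzero point, the entire scaled sequence $\alpha^{\prod m_j}p$ lies in $X(F)$ and accumulates at $0$, so the \emph{line} through $0$ and $p$ is the curve $C$. Nothing like a dimension-induction or a projection appears; the curve is produced by a single explicit calculation.

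Concerning your first (projection) approach: the lift-back step has a gap. Suppose $p|_W$ is generically finite and you have a curve $C'\subset p(W)$ with $(C'\cap p(X))(F)$ accumulating at $p(x)$. Then $(p|_W)^{-1}(C')$ is indeed one-dimensional, but choosing preimages $x_i\in X(F)$ of points $y_i\in (C'\cap p(X))(F)$ tending to $p(x)$ does \emph{not} guarantee $x_i\to x$: they may converge to a different preimage of $p(x)$ under the finite map, or not converge at all. Since a generic linear projection is not injective near $x$, this cannot be avoided. The inductive bookkeeping also isn't closed: when $\dim p(W)=\dim W$ you say one "reduces to the case of one lower embedding dimension with the same variety," but no invariant decreases under this move, and the basis of the induction (when one "eventually lands in" a curve) is never reached in a controlled way.

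Concerning your second (parametrization) approach: the claim that near $0$ the definable set $X(F)$ is the image of a definable map from a product of a ball in $O^e$ and a cone in the value group is not what $p$-adic cell decomposition gives; in particular Theorem \ref{thm:Denef.integration} and the material behind it provide constructibility of parametric integrals, not a parametrization of $X(F)$ by a ball. Even granting a one-parameter definable arc $\gamma$ with $\gamma(t)\to 0$ (which would require a $p$-adic curve selection lemma, not cited), the Zariski closure of $\gamma(O)$ is a curve only after invoking the nontrivial fact that the Zariski closure of a definable subset of $F^N$ has algebraic dimension equal to its analytic dimension. Neither of these ingredients is supplied, and neither is used in the paper. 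Morally your second route is closer in spirit (get a curve from a one-parameter definable family), but the paper's monomialization plus scaling argument produces the curve explicitly and sidesteps all of this.
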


\begin{proof} If $x\in X(F)$, the claim is trivial; we assume that this is not the case. $X$ is given by a Boolean combination of conditions of the form $f_i(x)=0$ or $( \exists z) g_i(x)=z^m$ for some regular functions $f_i$ and $g_i$. By intersecting $W$ with the locus of vanishing of $f_i$, we can assume that $X$ is given by a Boolean combination of conditions of the form $( \exists z) g_i(x)=z^{m_i}$. Let $G=\prod g_i$ and let $Z \subset W$ be the zero locus of $G$. Let $\pi :\widetilde{W} \rightarrow W$ be a strict resolution of singularities of the pair $(W,Z)$. Thus, $\widetilde{W}$ is smooth and the composition $G \circ \pi$ is locally monomial in some local coordinate system. {After replacing $X$ by the intersection of $\pi ^{-1} (X)$ and a small ball around some $\widetilde{x} \in \pi ^{-1} (x)$,} it is enough to prove the lemma in the case where $W=\mathbb{A} ^n$, $x=0$, and the functions $g_i$ are monomial. By assumption, there is a non-zero point $p$ in $X(F) \cap O^n$. We will show that the line $C$ passing through $0$ and $p$ satisfies the required property. Let $\alpha \in O$. Since $g_i$ is monomial and $g_i(p)$ is an $m_i$th power, it follows that $g_i \left( \alpha ^{\prod m_i} p\right)$ is also an $m_i$th power. Thus, the point $\alpha ^{\prod m_i}p$ belongs to $X(F)$. Taking $\alpha \rightarrow 0$, we get that $0$ is an accumulation point of $C \cap X(F)$.
\end{proof}

\begin{proposition} \lbl{prop:continuity.along.curves} Suppose that $X \subset \VF^n$ is a {closed and} bounded definable set, and that $f:X(F) \rightarrow \mathbb{C}$ is a constructible function which is discontinuous. Then there is a curve $C\subset \mathbb{A} ^n$ such that the restriction of $f$ to $(C \cap X)(F)$ is discontinuous.
\end{proposition}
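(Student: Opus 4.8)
The plan is to combine the trichotomy of Proposition \ref{prop:piecewise.continuity} with the curve-selection Lemma \ref{lem:curve.through.set}. First I would reduce to the case where one of the first two alternatives of Proposition \ref{prop:piecewise.continuity} holds. Suppose $f$ is discontinuous at $x_0 \in X(F)$ (this makes sense since $X$, being closed, has $X(F)$ closed in $F^n$). If the third alternative of Proposition \ref{prop:piecewise.continuity} holds, write $X = X_1 \cup \cdots \cup X_m$ with $f|_{X_i(F)}$ extending to a continuous function $\tilde f_i$ on $\overline{X_i(F)}$; passing to a subsequence of a sequence witnessing the discontinuity that lies in a single piece $X_j(F)$ shows $\tilde f_j(x_0) \neq f(x_0) = \tilde f_i(x_0)$, where $X_i$ is the piece containing $x_0$ and $i \neq j$. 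Intersecting $X_i$ and $X_j$ with a small (hence definable) ball $B$ around $x_0$ on which $f$ stays within $\tfrac{1}{3}|\tilde f_i(x_0) - \tilde f_j(x_0)|$ of $\tilde f_i(x_0)$, resp.\ $\tilde f_j(x_0)$, produces definable sets $Y_1 = X_i \cap B$ and $Y_2 = X_j \cap B$ with $x_0 \in \overline{Y_1(F)} \cap \overline{Y_2(F)}$ and $\inf_{y_i \in Y_i(F)} |f(y_1) - f(y_2)| > 0$ --- exactly the conclusion of the second alternative. So it is enough to handle the first two alternatives.

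In the first alternative there is a definable $Y \subset X$ and $y_0 \in \overline{Y(F)}$ with $|f(y)| \to \infty$ as $y \to y_0$ along $Y(F)$. Apply Lemma \ref{lem:curve.through.set} to $(Y, y_0)$ and replace the resulting curve by its Zariski closure $C$, which is still a curve and has $C(F)$ closed in $F^n$; then $y_0 \in \overline{(C \cap Y)(F)} \subset C(F)$ and $y_0 \in \overline{Y(F)} \subset X(F)$, so $y_0 \in (C \cap X)(F)$. Since $f$ is unbounded along a sequence in $(C \cap Y)(F) \subset (C \cap X)(F)$ converging to $y_0$ while $f(y_0)$ is finite, $f|_{(C \cap X)(F)}$ is discontinuous at $y_0$.

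In the second alternative there are definable $Y_1, Y_2 \subset X$ and $y_0 \in \overline{Y_1(F)} \cap \overline{Y_2(F)}$ with $\delta := \inf_{y_i \in Y_i(F)} |f(y_1) - f(y_2)| > 0$. A single curve need not approach $y_0$ through both $Y_1$ and $Y_2$, but the key observation is that $\delta > 0$ forces $\mathrm{dist}\big(\overline{f(Y_1(F))}, \overline{f(Y_2(F))}\big) \geq \delta > 0$, so $f(y_0)$ can belong to at most one of these two closures. Apply Lemma \ref{lem:curve.through.set} to $(Y_1, y_0)$ and to $(Y_2, y_0)$, obtaining (closed) curves $C_1, C_2$ together with sequences $a_k \in (C_1 \cap Y_1)(F)$ and $b_k \in (C_2 \cap Y_2)(F)$ converging to $y_0$, and again $y_0 \in (C_1 \cap X)(F) \cap (C_2 \cap X)(F)$. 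If $f|_{(C_1 \cap X)(F)}$ were continuous at $y_0$ then $f(y_0) = \lim_k f(a_k) \in \overline{f(Y_1(F))}$, and if $f|_{(C_2 \cap X)(F)}$ were continuous at $y_0$ then $f(y_0) \in \overline{f(Y_2(F))}$; since these closures are disjoint, at least one of $C_1, C_2$ is a curve along which $f$ is discontinuous. (The few degenerate cases in which $y_0$ happens to be an isolated point of one of the intersected sets are dispatched by the same observation, since then $f(y_0)$ already lies in one of the closures and the other curve works.)

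I expect the only genuine subtlety to be the reduction of the third alternative of Proposition \ref{prop:piecewise.continuity} to the second --- in particular the definability of the shrunk balls $Y_i$ and the bookkeeping of which closures are taken in the analytic topology on $F^n$ versus the Zariski topology on $\mathbb{A}^n$. The distance argument in the second alternative, which sidesteps the need for one curve to pass through both $Y_1$ and $Y_2$, is the one new idea and is short.
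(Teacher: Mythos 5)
Your proof is correct and rests on the same two ingredients as the paper's: the trichotomy of Proposition \ref{prop:piecewise.continuity} and the curve-selection Lemma \ref{lem:curve.through.set}. The structure, however, is more convoluted than it needs to be, in two ways.

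First, in the second alternative you go to some length to show that \emph{at least one} of the curves $C_1$, $C_2$ witnesses the discontinuity, via the observation that $\delta>0$ makes $\overline{f(Y_1(F))}$ and $\overline{f(Y_2(F))}$ disjoint. This ``distance'' argument is valid (and you correctly note the degenerate subcase when $y_0$ lies in one of the $Y_i(F)$), but it is a workaround for a non-problem: in the paper's terminology a ``curve'' is simply a one-dimensional variety, possibly reducible, so one can set $C=C_1\cup C_2$. Then both sequences $a_k\in(C\cap Y_1)(F)$ and $b_k\in(C\cap Y_2)(F)$ converge to $y_0\in(C\cap X)(F)$ with $|f(a_k)-f(b_k)|\geq\delta$, and $f|_{(C\cap X)(F)}$ is discontinuous at $y_0$ without further ado. (This convention is consistent with how the curve-case reduction is used in the proof of Theorem \ref{thm:yet.another}, where one passes to the normalization of $C$.) Your disjointness observation is not wrong, but it is not ``the one new idea''---it is extra machinery that the union trick makes unnecessary.

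Second, reducing the third alternative to the second is also avoidable: having located a piece $X_j$ with $x_0\in\overline{X_j(F)}$ and $\tilde f_j(x_0)\neq f(x_0)$, one can apply Lemma \ref{lem:curve.through.set} directly to $(X_j, x_0)$ to get a curve $C$ with $x_0\in\overline{(C\cap X_j)(F)}$. Along a sequence in $(C\cap X_j)(F)\subset(C\cap X)(F)$ converging to $x_0$, the values of $f$ tend to $\tilde f_j(x_0)\neq f(x_0)$, so $f|_{(C\cap X)(F)}$ is already discontinuous at $x_0$; no shrunk balls or reduction to a different case is required. Your reduction (with the $\tfrac13$-separation on a definable ball) is correct but adds bookkeeping that the direct argument avoids.
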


\begin{proof} We apply Proposition \ref{prop:piecewise.continuity}.
\begin{enumerate}[{Case} 1:]
\item
There is a definable subset $Y \subset X$ and a point $y_0\in\overline{Y(F)} \subset X(F)$ such that
$$
\lim_{\underset{y\in Y}{y \rightarrow y_0}} f(y)=\infty.
$$
Choose, using Lemma \ref{lem:curve.through.set} a curve $C$ such that $y_0\in \overline{(C \cap Y)(F)}$. The restriction of $f$ to $(C\cap X)(F)$ is discontinuous.
\item
There are definable subsets $Y_1,Y_2 \subset X$ and a point $p\in \overline{Y_1(F)}\cap\overline{Y_2(F)} \subset X(F)$ such that
\[
\inf_{y_i\in Y_i(F)} |f(y_1)-f(y_2)|>0.
\]
Choose, using Lemma \ref{lem:curve.through.set} curves $C_1$ and $C_2$ such that $p\in \overline{(C_i \cap Y_i)(F)}$. Denoting $C=C_1 \cup C_2$, the restriction of $f$ to $(C \cap X)(F)$ is discontinuous.
\item
There is a definable partition $X=X_1 \cup \cdots \cup X_m$ such that, for each $i$, the restriction of $f$ to $X_i(F)$ extends to a continuous function on $\overline{X_i(F)}$.\\
Since $f$ was assumed to be discontinuous, there is $i$ and a point $x_0 \in \overline{X_i(F)}$ such that
\[
\lim_{\underset{x\in X_i}{x \rightarrow x_0}}f(x) \neq f(x_0).
\]
Using Lemma \ref{lem:curve.through.set}, choose a curve $C$ such that $x_0\in \overline{(C \cap X_i)(F)}$. The restriction of $f$ to $(C\cap X)(F)$ is discontinuous.
\end{enumerate}
\end{proof}

\section{Recollections in Algebraic geometry} \lbl{sec:alg.geom}
\setcounter{theorem}{0}
In this section, we review some well-known notions and facts from algebraic geometry that we refer to in the paper. Throughout this section, $k$ is a field of characteristic $0$ and all schemes and algebras are of finite type over $k$. By a variety, we mean a reduced scheme over $k$.

\subsection{Flat Maps} \lbl{ssec:flat}
\begin{theorem} [{cf.  \cite[11.3.10]{EGA}}]\lbl{thm:flat.locus}
Let $S$ be a scheme and let $\phi:X\to Y$  be a morphism of flat $S$-schemes. Denote the structure maps $X \to S$ and $Y \rightarrow S$ by $\pi_X$ and $\pi_Y$ respectively. Then
\begin{enumerate}
 \item $\{x\in X(k) \mid  \phi \text{ is flat at } x\}=\{x \in X(k) \mid \phi_{\pi_X(x)}:\pi_X ^{-1}(\pi_X(x)) \rightarrow \pi_Y ^{-1} (\pi_X(x)) \text{ is flat at } x\}$.
 \item The set in (1) is Zariski open.
\end{enumerate}
\end{theorem}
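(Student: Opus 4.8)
The plan is to deduce both parts from two classical facts in \cite{EGA}: the fibrewise criterion for flatness (``critère de platitude par fibres'', EGA~IV 11.3.10, which is precisely the result this theorem paraphrases) and the openness of the flat locus of a morphism locally of finite presentation.

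For part (1) I would argue pointwise. Fix $x\in X(k)$, put $s=\pi_X(x)$ and $y=\phi(x)$, and pass to the local rings $A=\mathcal O_{S,s}$, $B=\mathcal O_{Y,y}$, $C=\mathcal O_{X,x}$; these are Noetherian because all schemes in sight are of finite type over $k$, and one has local homomorphisms $A\to B\to C$. The hypothesis that $X$ and $Y$ are flat $S$-schemes says exactly that $C$ is flat over $A$ and that $B$ is flat over $A$. Now apply the fibrewise flatness criterion to the nonzero finite module $C$ over $C$: it asserts that the following are equivalent:
\begin{enumerate}
\item $C$ is flat over $A$, and $C\otimes_A k(s)$ is flat over $B\otimes_A k(s)$;
\item $B$ is flat over $A$, and $C$ is flat over $B$.
\end{enumerate}
Since $C\otimes_A k(s)=\mathcal O_{X_s,x}$ and $B\otimes_A k(s)=\mathcal O_{Y_s,y}$, and since the first clauses of (i) and (ii) hold unconditionally by the flatness of $X$ and $Y$ over $S$, this equivalence collapses to: $\phi_s$ is flat at $x$ $\iff$ $\phi$ is flat at $x$. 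That is exactly the asserted equality of subsets of $X(k)$.

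For part (2) I would simply invoke part (1): the set in question is thereby identified with the flat locus $\{x\in X:\phi\text{ is flat at }x\}$ of the morphism $\phi$. Since $\phi$ is a morphism of finite type between Noetherian schemes, it is locally of finite presentation, so its flat locus is an open subscheme $U\subseteq X$ by the openness theorem for flat loci in \cite{EGA}. Intersecting with $X(k)$ gives that the set in (1) is $U(k)$, hence Zariski open.

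I do not anticipate a genuine obstacle here; the statement is classical and both inputs are quoted directly. The only points needing care are bookkeeping ones: verifying that the two auxiliary ``$(-)$ is flat over $A$'' clauses appearing in the fibrewise criterion are precisely those supplied by the hypothesis that $X$ and $Y$ are flat over $S$ (so that they drop out of the equivalence and nothing else is assumed), and recording that ``of finite type over a field $k$'' furnishes the Noetherian and finite-presentation hypotheses required by both references to \cite{EGA}.
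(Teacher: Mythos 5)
Your proof is correct, and it takes the same route the paper intends: the paper does not give a proof of this statement but simply cites EGA IV 11.3.10 (the fibrewise flatness criterion), which is precisely the input you use for part (1), together with the standard openness of the flat locus for part (2). The reduction to local rings and the observation that the two auxiliary flatness-over-$S$ clauses are supplied by hypothesis and hence drop out of the equivalence is exactly the right bookkeeping.
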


We will call the set in Theorem \ref{thm:flat.locus} the flat locus of $\phi$.

\begin{theorem}[{cf. \cite[Exercise 10.9]{Har_ag}}] \label{thm:flat_dim}
Let  $\phi:X\to Y$  be a morphism of smooth irreducible algebraic varieties. Let $x \in X(k)$.
 Then the following are equivalent:
\begin{enumerate}
 \item  $\phi$ is flat at $x$.
 \item $\dim(\phi^{-1}(\phi(x)))=\dim X -\dim Y$.
 \item $\dim(\phi^{-1}(\phi(x))) \leq \dim X -\dim Y$.
\end{enumerate}
\end{theorem}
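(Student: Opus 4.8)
The statement is essentially \cite[Exercise III.10.9]{Har_ag}, and the plan is to reduce it to two standard inputs: Chevalley's lower bound on the dimensions of the fibers of a dominant morphism of irreducible varieties, and the \emph{miracle flatness} criterion (the local flatness criterion for a local homomorphism with regular source and Cohen--Macaulay target; see \cite{EGA}). First I would set up notation: put $y=\phi(x)$, let $F=\phi^{-1}(y)$ be the scheme-theoretic fiber, and recall that $\dim_x F=\dim\mathcal{O}_{F,x}$. Since $X$ and $Y$ are smooth, $\mathcal{O}_{X,x}$ is regular (hence Cohen--Macaulay) and $\mathcal{O}_{Y,y}$ is regular; since $X$ and $Y$ are irreducible, $\dim\mathcal{O}_{X,x}=\dim X$ and $\dim\mathcal{O}_{Y,y}=\dim Y$. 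Below, ``$\dim\phi^{-1}(\phi(x))$'' is read as the local dimension $\dim_x\phi^{-1}(\phi(x))$ of the fiber at $x$; for the implication $(3)\Rightarrow(1)$ used in the sequel this is harmless, since a bound on the dimension of the whole fiber forces the same bound on $\dim_x F$ and the argument below then applies verbatim.

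For $(1)\Leftrightarrow(2)$ I would apply the miracle flatness criterion to the local homomorphism $\mathcal{O}_{Y,y}\to\mathcal{O}_{X,x}$: since the source is regular and the target is Cohen--Macaulay, flatness of $\mathcal{O}_{X,x}$ over $\mathcal{O}_{Y,y}$ is equivalent to the dimension identity $\dim\mathcal{O}_{X,x}=\dim\mathcal{O}_{Y,y}+\dim\mathcal{O}_{F,x}$, which after substituting the three identities above reads $\dim_x F=\dim X-\dim Y$. This is exactly the equivalence of (1) and (2). The implication $(2)\Rightarrow(3)$ is immediate.

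For $(3)\Rightarrow(2)$, I would first observe that (3) forces $\phi$ to be dominant: otherwise $Z:=\overline{\phi(X)}$ is a proper irreducible closed subvariety of $Y$, hence $\dim Z<\dim Y$, and Chevalley's theorem applied to the dominant morphism $X\to Z$ gives $\dim_x F\ge\dim X-\dim Z>\dim X-\dim Y$, contradicting (3). So $\phi\colon X\to Y$ is dominant, and Chevalley's theorem now supplies the reverse inequality $\dim_x F\ge\dim X-\dim Y$; together with (3) this gives the equality (2).

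The proof is essentially bookkeeping once the two cited inputs are in hand, so I do not anticipate a genuine obstacle; the only points that require a little care are the distinction between the dimension of the entire fiber and its local dimension at $x$ (these genuinely differ for morphisms that are flat at $x$ but not at all other points of the same fiber) and the separate treatment of the possibly non-dominant case, both handled above.
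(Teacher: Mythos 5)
The paper does not give a proof of Theorem~\ref{thm:flat_dim}: it is stated in Appendix~\ref{sec:alg.geom} (explicitly a recollection section) with only the citation to Hartshorne, Exercise~III.10.9. So there is no in-paper argument to compare yours against; the relevant question is whether your blind proof is correct, and it is. The route you take — miracle flatness for $(1)\Leftrightarrow(2)$ and Chevalley's fiber-dimension bound for $(3)\Rightarrow(2)$ — is exactly the standard argument that the Hartshorne exercise is pointing at, and all the dimension identities you invoke ($\dim\mathcal{O}_{X,x}=\dim X$, $\dim\mathcal{O}_{Y,y}=\dim Y$, $\dim\mathcal{O}_{F,x}=\dim_x F$) do hold here because $x$ is a $k$-point of an irreducible variety, so $\phi(x)$ is again a closed $k$-point of $Y$.

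One remark worth making explicit: your decision to read $\dim\phi^{-1}(\phi(x))$ as the local dimension $\dim_x\phi^{-1}(\phi(x))$ is not merely a convenience but a genuine correction. With the literal ``dimension of the whole fiber'' reading, the equivalence $(1)\Leftrightarrow(2)$ is false: for instance $\phi:\mathbb{A}^3\to\mathbb{A}^2$, $\phi(u,v,w)=(u(1-w),uv)$, is smooth (hence flat) at $(1,0,1)$, yet the fiber $\phi^{-1}(0,0)=\{u=0\}\cup\{w=1,\ v=0\}$ has total dimension $2\neq\dim X-\dim Y=1$. So the statement as printed only holds with $\dim_x$; luckily, as you also note, the paper only ever invokes $(3)\Rightarrow(1)$ with a bound on the dimension of the entire fiber (in Proposition~\ref{prop:exp_res} and Corollary~\ref{cor:Phi.flat}), which a fortiori bounds $\dim_x F$ and keeps the implication valid. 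Your proof is correct and the presentation is sound.
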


\begin{prop}[cf. {\cite[III.10.2]{Har_ag}}]\label{prop:sm_crit}
Let $\phi:X \to Y$ be a flat morphism of schemes, and let $x \in X(k)$. The following conditions are equivalent
\begin{enumerate}
 \item $\phi$ is smooth at $x$
 \item $x$ is a smooth point of $\phi^{-1}(\phi(x))$.
\end{enumerate}
\end{prop}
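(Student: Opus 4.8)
The plan is to prove the two implications separately; $(1)\Rightarrow(2)$ is formal, and all the content is in $(2)\Rightarrow(1)$. For $(1)\Rightarrow(2)$ I would use that smoothness of morphisms is stable under base change: pulling $\phi$ back along the point $\Spec k=\{\phi(x)\}\hookrightarrow Y$ (legitimate since $\phi(x)\in Y(k)$, because $x\in X(k)$) shows that $\phi^{-1}(\phi(x))\to\Spec k$ is smooth at $x$, which is precisely the assertion that $x$ is a smooth point of $\phi^{-1}(\phi(x))$.

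For $(2)\Rightarrow(1)$, write $y=\phi(x)$, $Z=\phi^{-1}(y)$ and $d=\dim_x Z$. Since the statement is local at $x$ and $y$, I would first reduce to the case where $Y$ is affine and $X$ is a closed subscheme of $Y\times\mathbb{A}^N$ cut out by $f_1,\dots,f_r$, with $\phi$ the projection. Then $Z$ is cut out in $\mathbb{A}^N=(Y\times\mathbb{A}^N)_y$ by the restrictions $\bar f_1,\dots,\bar f_r$, and smoothness of $Z$ at $x$ means the Jacobian $(\partial\bar f_i/\partial t_j)(x)$ has rank $N-d$; after renumbering, $\bar f_1,\dots,\bar f_{N-d}$ have linearly independent differentials at $x$. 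Let $W\subseteq Y\times\mathbb{A}^N$ be the zero locus of $f_1,\dots,f_{N-d}$. By the relative Jacobian criterion applied to the smooth morphism $Y\times\mathbb{A}^N\to Y$, the projection $W\to Y$ is smooth at $x$ of relative dimension $d$, so $W_y$ is regular of dimension $d$ at $x$. Since $Z\subseteq W_y$ is a closed subscheme and both $\mathcal{O}_{W_y,x}$ and $\mathcal{O}_{Z,x}$ are regular local rings of dimension $d$ (hence domains), the surjection $\mathcal{O}_{W_y,x}\twoheadrightarrow\mathcal{O}_{Z,x}$ has kernel a prime of height $0$, hence it is an isomorphism: $W_y=Z$ near $x$.

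The remaining step, which I expect to be the crux, is to upgrade $W_y=X_y$ near $x$ to $W=X$ near $x$ using flatness. Let $I=\ker(\mathcal{O}_{W,x}\twoheadrightarrow\mathcal{O}_{X,x})$. Reducing this surjection modulo $\mathfrak{m}_y\mathcal{O}_{W,x}$ recovers the isomorphism $\mathcal{O}_{W_y,x}\cong\mathcal{O}_{X_y,x}$ just established, so $I\subseteq\mathfrak{m}_y\mathcal{O}_{W,x}$. Tensoring $0\to I\to\mathcal{O}_{W,x}\to\mathcal{O}_{X,x}\to0$ over $\mathcal{O}_{Y,y}$ with $\kappa(y)$ and using that $\mathcal{O}_{X,x}$ is $\mathcal{O}_{Y,y}$-flat (the hypothesis), the term $\mathrm{Tor}_1^{\mathcal{O}_{Y,y}}(\mathcal{O}_{X,x},\kappa(y))$ vanishes, so $I/\mathfrak{m}_y I\hookrightarrow\mathcal{O}_{W,x}/\mathfrak{m}_y\mathcal{O}_{W,x}$; precomposing with $I\twoheadrightarrow I/\mathfrak{m}_y I$ yields the reduction of $I\hookrightarrow\mathcal{O}_{W,x}$, which is zero by the previous sentence. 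Hence $I=\mathfrak{m}_y I\subseteq\mathfrak{m}_{W,x}I$, and Nakayama applied to the finitely generated $\mathcal{O}_{W,x}$-module $I$ forces $I=0$. Therefore $\mathcal{O}_{W,x}=\mathcal{O}_{X,x}$, i.e. $X=W$ near $x$, and $\phi$ is smooth at $x$.

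The genuine difficulty is concentrated in this last paragraph: it is a pointwise form of the standard fact (EGA IV, $17.5.1$) that a flat morphism with a fiber smooth at a point is smooth there. The Jacobian bookkeeping and the fiber comparison in the middle paragraph are routine; the only real inputs are the vanishing of $\mathrm{Tor}_1$ coming from flatness and the observation that $W$ was deliberately built so that its fiber over $y$ agrees with that of $X$ near $x$. If a self-contained argument is not wanted, one could instead simply cite \cite{Har_ag} together with \cite{EGA}.
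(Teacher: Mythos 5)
Your argument is correct, but note that the paper does not actually prove this proposition: it sits in the appendix of recollections and is simply cited to \cite{Har_ag}, III.10.2, so there is no internal proof to compare against. What you have produced is a clean, self-contained version of the standard argument (essentially EGA IV, 17.5.1, or the chain of lemmas surrounding Hartshorne III.10.4). The direction $(1)\Rightarrow(2)$ by base change is routine; for $(2)\Rightarrow(1)$ your three steps are exactly the canonical ones: cut down to a relative complete intersection $W$ using a Jacobian-independent subset of the defining equations, identify $W_y$ with $X_y$ near $x$ by the regularity/dimension count (the key observation being that the kernel of $\mathcal{O}_{W_y,x}\twoheadrightarrow\mathcal{O}_{Z,x}$ is a prime of height zero in a domain, hence zero), and promote the equality of fibres to an equality of total spaces near $x$ by the $\mathrm{Tor}_1$-vanishing plus Nakayama argument, which is where flatness enters. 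Two small points worth making explicit if you were to write this up: the passage from smoothness of $Z$ at $x$ to the rank-$(N-d)$ Jacobian uses that $x$ is a $k$-rational point (so $\kappa(x)=k$ and smoothness coincides with regularity without any perfectness hypothesis), which is part of the stated hypothesis; and the Nakayama step uses that $I$ is finitely generated over the Noetherian local ring $\mathcal{O}_{W,x}$ and that $\mathfrak{m}_y\mathcal{O}_{W,x}\subseteq\mathfrak{m}_{W,x}$, both of which hold under the paper's finite-type conventions.
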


\begin{prop}[{\cite[Proposition 9.7]{Har_ag}}]\label{prop:flat_zero_div}
Let $X$ be an affine  scheme and let $f \in\Gamma(X,O_X)$. Consider $f$ as a map $X \to \A^1$. Then the following conditions are equivalent
\begin{enumerate}
 \item $f$ is flat at any point of $f ^{-1} (0)$.
 \item $f$ is not a zero divisor.
\end{enumerate}

\end{prop}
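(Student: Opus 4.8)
The plan is to translate flatness of $f$ along $f^{-1}(0)$ into the statement that the germ of $f$ is a non-zero-divisor in the relevant local ring, exploiting that $\A^1$ is a regular curve. Write $A=\Gamma(X,O_X)$ and regard $f$ as the image of the coordinate $t$ under a ring homomorphism $k[t]\to A$. For a point $x\in f^{-1}(0)$, the local ring $O_{\A^1,0}=k[t]_{(t)}$ is a discrete valuation ring with uniformizer $t$, and a module over a discrete valuation ring is flat if and only if it is torsion-free, hence (since every nonzero element of $k[t]_{(t)}$ is a unit times a power of $t$) if and only if multiplication by $t$ is injective on it. Applying this to $M=O_{X,x}$, on which $t$ acts as multiplication by the germ $f_x$, I get that condition (1) is equivalent to: $f_x$ is a non-zero-divisor in $O_{X,x}$ for every $x\in f^{-1}(0)$.

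For the implication $(2)\Rightarrow(1)$ I would use that the non-zero-divisor property passes to localizations. If $f$ is a non-zero-divisor in $A$, $\mathfrak{p}$ is a prime of $A$, and $f\cdot(g/s)=0$ in $A_{\mathfrak{p}}$, then $ufg=0$ in $A$ for some $u\notin\mathfrak{p}$; since $f$ is a non-zero-divisor in $A$ this forces $ug=0$, hence $g/s=0$ in $A_{\mathfrak{p}}$. Thus $f_x$ is a non-zero-divisor in $O_{X,x}$ for every $x$, and in particular for every $x\in f^{-1}(0)$, which by the first paragraph gives (1).

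For $(1)\Rightarrow(2)$, suppose $fg=0$ in $A$ with $g\neq 0$. On the basic open set $D(f)=X\smallsetminus f^{-1}(0)$ the function $f$ is invertible, so $g$ restricts to $0$ there; hence the support of $g$ is contained in $f^{-1}(0)$. Since $g\neq 0$ and $X$ is Noetherian, there is a closed point $x$ with $g_x\neq 0$ in $O_{X,x}$, and necessarily $x\in f^{-1}(0)$. But then $f_x g_x=0$ with $g_x\neq 0$, contradicting the fact that $f_x$ is a non-zero-divisor in $O_{X,x}$, which holds by (1) together with the first paragraph. This establishes the equivalence.

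The argument is entirely standard and there is no substantial obstacle; the only points requiring a little care are the reduction of flatness over $\A^1$ to the discrete-valuation-ring torsion-freeness criterion at the correct local ring, the stability of the non-zero-divisor condition under localization, and the passage from $g\neq 0$ in $A$ to $g_x\neq 0$ at some closed point, which uses that $A$ injects into the product of its localizations at maximal ideals.
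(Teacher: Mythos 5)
Your proof is correct and is essentially the standard argument that the cited Hartshorne proposition rests on: because $O_{\A^1,0}$ is a discrete valuation ring, flatness of $O_{X,x}$ over it is the same as torsion-freeness, which reduces to injectivity of multiplication by the uniformizer $t$, i.e.\ by $f_x$; the two implications then come down to stability of the non-zero-divisor property under localization and the existence of a closed point in the support of a nonzero $g$. The only fine print worth flagging is that Hartshorne's Proposition III.9.7 as usually stated characterizes flatness of $f$ over \emph{all} of $\A^1$ (via associated points of $X$ dominating $\A^1$), which is in general strictly stronger than flatness along the single fiber $f^{-1}(0)$; your argument correctly proves the fiberwise statement, which is what the proposition in the paper actually asserts.
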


\subsection{LCI  Maps} \lbl{ssec:LCI}

For convenience, we will assume that all the schemes we discus from this point on are equidimensional. All the constructions and results that we discus below can be easily modified for locally  equidimensional schemes, and, usually, also for arbitrary schemes.

\begin{defn}[{cf. \cite[Introduction]{Con}}]$ $
\begin{enumerate}
\item Let $X$ be an affine scheme, and let $Y \subset X$ be a subscheme. We say that $Y$ is a \emph{complete intersection} in $X$ if there are $f_1,\dots f_n\in k[X]$ such that $f_i$ is not a zero divisor in  $k[X]/( f_1, \dots f_{i-1})$ for all $i$ and $Y$ is the zero locus of the $f_i$.
\item Let $X$ be a scheme. A closed subscheme $Y \subset X$ is called a \emph{local complete intersection} (LCI for short) inside $X$ if there exists an affine open cover $X = \bigcup{U_i}$ such that $Y \cap U_i$ is CI inside $U_i$.
\item A morphism  $Y \to X$ between two schemes is called LCI if it is a closed embedding and its image is a LCI inside $X$.
\item Given a LCI map $i: Y \to  X$, we define its normal  bundle  $$N_{Y}^{{X}}:=\mathcal{H}om_{\mathcal{O}_Y}(i^*( \cI_Y/\cI_Y^2),O_Y),$$ where $\cI_Y$  is the sheaf of regular functions on $X$ that vanish on $Y$, and $\mathcal{H}om$ is sheaf of homomorphisms.
\end{enumerate}
\end{defn}

\begin{prop} Let $i: Y \to  X$ be a LCI map. Then $N_{Y}^{{X}}$ is a locally free sheaf and $\rk (N_{Y}^{{X}})=\dim \ i:= \dim X- \dim Y$.
\end{prop}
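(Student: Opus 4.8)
The plan is to reduce to the affine case and to the purely algebraic statement that, if an ideal $I\subset A$ is generated by a regular sequence $f_1,\dots,f_n$, then $I/I^2$ is a free $A/I$-module of rank $n$ with basis the images of the $f_i$. First I would argue locally: pick an affine open $U=\Spec A\subset X$ on which $Y\cap U$ is cut out by $f_1,\dots,f_n\in A$ with $f_j$ a non-zero-divisor in $A/(f_1,\dots,f_{j-1})$ for each $j$, and set $B=A/I$ with $I=(f_1,\dots,f_n)$. Since $\cI_Y/\cI_Y^2$ is annihilated by $\cI_Y$, it is already an $\cO_Y$-module, so $i^*(\cI_Y/\cI_Y^2)$ restricted to $Y\cap U$ is simply $I/I^2$ regarded as a $B$-module, and $N_Y^X|_{Y\cap U}=\Hom_B(I/I^2,B)$. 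Thus it suffices to show that $I/I^2$ is free over $B$ of rank $n$: its $B$-dual is then free of the same rank, these local trivializations patch over an affine cover of $Y$, so $N_Y^X$ is locally free, and one is left to identify $n$ with $\dim X-\dim Y$.

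For the freeness I would invoke the classical fact that for a regular sequence $f_1,\dots,f_n$ the associated graded algebra of $I$ is a polynomial ring, i.e. the natural graded $B$-algebra homomorphism
\[
B[T_1,\dots,T_n]\ \cong\ \bigoplus_{k\ge 0} I^k/I^{k+1},\qquad T_i\mapsto \bar f_i,
\]
is an isomorphism (a standard result in commutative algebra, e.g. in Matsumura or Bruns--Herzog). Reading off the degree-one part shows precisely that $\bar f_1,\dots,\bar f_n$ form a $B$-basis of $I/I^2$. (Alternatively one can prove directly, by induction on $n$ using the regular-sequence property, that a relation $\sum a_i f_i\in I^2$ forces every $a_i\in I$.) Dualizing and patching as above then gives that $N_Y^X$ is locally free, of rank equal to the (a priori locally defined) length $n$ of the regular sequence.

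It remains to check $n=\dim X-\dim Y$, which is the only place the standing equidimensionality hypothesis enters, and which I expect to be the one point needing a little care rather than the freeness statement. Since $f_1$ is a non-zero-divisor in $A$ it lies in no minimal prime, so by Krull's principal ideal theorem every minimal prime over $(f_1)$ has height exactly $1$; as $A=k[U]$ is a finitely generated, hence catenary, and (by equidimensionality of $X$) equidimensional $k$-algebra, each such prime $\mathfrak p$ satisfies $\dim(A/\mathfrak p)=\dim A-1$, so $A/(f_1)$ is again equidimensional of dimension $\dim X-1$. Iterating over $f_2,\dots,f_n$ yields $\dim(Y\cap U)=\dim X-n$; since $X$ and $Y$ are equidimensional, $\dim(Y\cap U)=\dim Y$ and $\dim U=\dim X$, so $n=\dim X-\dim Y$ is forced, independently of the chosen regular sequence. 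Hence the locally constant rank of $N_Y^X$ equals $\dim i$, completing the proof.
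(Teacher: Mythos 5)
The paper records this proposition in Appendix B as a recalled standard fact and does not supply a proof, so there is no in-house argument to compare against. Your write-up is a correct and complete proof. The reduction to the affine case and the identification of $N_Y^X|_{Y\cap U}$ with $\Hom_B(I/I^2,B)$ are exactly right, and the freeness of $I/I^2$ over $B$ on the images of a regular sequence is the standard Rees/associated-graded lemma you cite. You also correctly flag where the standing equidimensionality convention is actually needed: the rank-$n$ local freeness holds for any regular sequence, but to pin $n$ down as $\dim X-\dim Y$ (and in particular to make the rank constant across charts and components) one needs that a non-zero-divisor avoids all minimal primes and that for equidimensional finite-type $k$-algebras $\mathrm{ht}(\mathfrak p)+\dim(A/\mathfrak p)=\dim A$; your Krull-plus-catenary argument, iterated along the regular sequence, handles this cleanly. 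One small stylistic note: local freeness does not require any patching of trivializations -- exhibiting a free neighbourhood of each point already suffices; the dimension argument is only needed to show the rank is globally constant, not to glue.
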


{
\begin{defn}$ $
\begin{enumerate}
\item An affine scheme $X$ is a \term{complete intersection}{CI}\ (CI for short) if it a CI inside some affine space.
\item A scheme $X$ is a \term{local complete intersection}{LCI}\ (LCI for short) if, locally, it is an affine CI scheme. Note that this does not mean that there is an LCI embedding of  $X$  into an affine space.
\end{enumerate}
\end{defn}
}
Theorem \ref{thm:flat_dim} implies:
\begin{cor}
Let $X$ be a scheme. The following conditions are equivalent:
\begin{enumerate}
\item $X$ is a {LCI}.
\item Locally (on  $X$), one can find a flat map of smooth varieties $\phi:X' \to Y'$ such that $X=\phi^{-1}(y)$  for some $y \in Y(k)$.
\end{enumerate}
\end{cor}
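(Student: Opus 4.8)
The plan is to establish the two implications separately: $(1)\Rightarrow(2)$ will use the dimension criterion for flatness (Theorem \ref{thm:flat_dim}) together with openness of the flat locus (Theorem \ref{thm:flat.locus}), while $(2)\Rightarrow(1)$ will use stability of complete‑intersection (regular) immersions under flat base change.

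For $(1)\Rightarrow(2)$: being a LCI scheme is a local property, so I may assume $X$ is affine and is a complete intersection in $\mathbb{A}^n$, cut out by $f_1,\dots,f_c\in k[\mathbb{A}^n]$ with $f_i$ a non‑zero‑divisor in $k[\mathbb{A}^n]/(f_1,\dots,f_{i-1})$. Set $\phi=(f_1,\dots,f_c):\mathbb{A}^n\to\mathbb{A}^c$, so that $X=\phi^{-1}(0)$ as schemes. Since $k[\mathbb{A}^n]$ is a Cohen--Macaulay domain, a regular sequence of length $c$ drops the dimension by exactly $c$, hence $\dim X=n-c=\dim\mathbb{A}^n-\dim\mathbb{A}^c$. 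Applying Theorem \ref{thm:flat_dim} at each point $x\in X=\phi^{-1}(\phi(x))$ shows that $\phi$ is flat at $x$; by Theorem \ref{thm:flat.locus} the flat locus is Zariski open and contains $X$, so after replacing $\mathbb{A}^n$ by a Zariski neighborhood $U$ of $X$, the restriction $\phi|_U:U\to\mathbb{A}^c$ is a flat map of smooth varieties with $(\phi|_U)^{-1}(0)=X$, as required.

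For $(2)\Rightarrow(1)$: again the question is local on $X$, so take $\phi:X'\to Y'$ flat with $X',Y'$ smooth affine, $X=\phi^{-1}(y)$ for $y\in Y'(k)$, and fix a closed embedding $X'\hookrightarrow\mathbb{A}^N$. Since $Y'$ is smooth at the $k$‑point $y$, the immersion $\{y\}\hookrightarrow Y'$ is regular: near $y$ the ideal of $\{y\}$ is generated by a regular sequence $t_1,\dots,t_c$ with $c=\dim Y'$. Now $X=\phi^{-1}(y)=X'\times_{Y'}\{y\}$, whose ideal in $\mathcal{O}_{X'}$ is generated by $\phi^{*}t_1,\dots,\phi^{*}t_c$; for each $x\in X$ the local ring $\mathcal{O}_{X',x}$ is flat over $\mathcal{O}_{Y',y}$, and flat base change carries the regular sequence $t_1,\dots,t_c$ to a regular sequence in $\mathcal{O}_{X',x}$ (the quotient $\mathcal{O}_{X',x}/(\phi^{*}t_i)=\mathcal{O}_{X,x}$ being nonzero precisely along $X$; off $X$ the ideal is the unit ideal, so there is nothing to check). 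Hence $X\hookrightarrow X'$ is a regular immersion. Finally, $X'$ smooth and closed in $\mathbb{A}^N$ is locally a complete intersection in $\mathbb{A}^N$ (the kernel of a surjection of regular local rings is generated by part of a regular system of parameters), and the composite of regular immersions is a regular immersion — one lifts a regular sequence through the quotient by a regular sequence — so $X$ is locally a complete intersection in $\mathbb{A}^N$, i.e.\ a LCI scheme.

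The routine points — the dimension count for a regular sequence in a Cohen--Macaulay ring, the shrinking/locality bookkeeping, and openness of the flat locus (already available as Theorem \ref{thm:flat.locus}) — I will only indicate. The step I expect to require the most care is the preservation of regular sequences under the base change along the flat map $\phi$ in $(2)\Rightarrow(1)$, together with the verification that this implies ``$X$ is a complete intersection in $\mathbb{A}^N$ locally''; a variant that avoids the base‑change lemma is to invoke Theorem \ref{thm:flat_dim} a second time to see that flatness forces every component of $X=\phi^{-1}(y)$ to have codimension exactly $c$ in the smooth variety $X'$, and that in a regular (hence Cohen--Macaulay) ring a length‑$c$ sequence whose zero scheme has pure codimension $c$ is automatically regular.
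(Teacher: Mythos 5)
Your proof is correct. The paper gives no explicit argument (it introduces the corollary simply with ``Theorem~\ref{thm:flat_dim} implies:''), and your $(1)\Rightarrow(2)$ is exactly the intended route: the dimension criterion of Theorem~\ref{thm:flat_dim} makes $(f_1,\dots,f_c):\mathbb{A}^n\to\mathbb{A}^c$ flat along $X$, and Theorem~\ref{thm:flat.locus} lets one shrink. For $(2)\Rightarrow(1)$ you offer two versions: your primary one uses stability of regular sequences under flat base change, while the ``variant'' in your closing paragraph --- invoking Theorem~\ref{thm:flat_dim} once more to get the codimension count and then appealing to unmixedness in a Cohen--Macaulay ring --- is the one the paper's one-line citation is clearly pointing at; both are valid, and the base-change version is arguably cleaner since it does not route through equidimensionality.
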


\subsection{Grothendieck Duality} \lbl{ssec:G.duality}
For a full treatment of the machinery of  Grothendieck Duality in the coherent category, see \cite{Har_du} and \cite{Con}. We will sum up the ingredients of this theory that we need. Since some sign errors from \cite{Har_du} were corrected in \cite{Con}, we will try to follow the corrected version when there is a difference. This, however, is not essential for our purposes, since, in our applications, we will be interested only at the absolute values of differential forms (for purposes of integration).

For a scheme $X$, we consider the derived category of sheaves on $X$, and denote the subcategory consisting of complexes with coherent cohomologies by $D(X)$. Similarly, we denote the subcategory of $D(X)$ consisting of bounded (respectively, bounded from below, respectively, bounded from above) complexes by $D^b(X)$ (respectively, $D^+(X)$, respectively, $D^-(X)$).

\subsubsection{The Functor $\pi^!$}\label{sec:pi_shrik}

Before defining the functor $\pi^!$, we will define two special cases of it: $\pi ^b$ and $\pi ^\#$.
For these, we will need the following notation:
\begin{notn} $ $
\begin{enumerate}
\item For a smooth map $\pi:X \to Y$, we denote the line bundle of relative top differential forms on $X$ with respect to $Y$ by $\Omega_{X/Y}$. If $X$ is smooth, we denote $\Omega_X=\Omega_{X/\Spec(k)}$.
\item For a LCI map $i: X \to  Y$, we define $\Omega_{X/Y}$  to be the top exterior power of $N_X^Y$.
\end{enumerate}
\end{notn}

\begin{defn}[cf. {\cite[(2.2.7)--(2.2.8)]{Con}}] Let  $\pi:X\to Y$ be a morphism of schemes.
\begin{enumerate}
\item Assume that $\pi$ is smooth. Define the functor  $$\pi^\#:D^+_{}(Y) \to D^+_{}(X)$$ by $$\pi^\#(\cF):=\Omega_{X/Y}[(\dim X -\dim Y )] \overset{L}{\otimes} \pi^*(\cF).$$
\item Assume that $\pi$ is finite. Define the functor $$\pi^b:D^+_{}(Y) \to D^+_{}(X)$$ by
$$\pi^b(\cF)= \overline \pi^*(\mathcal{RH}om_{Y}(\pi_{*}(O_X),\cF)).$$
Here, $\mathcal{RH}om_{Y}$ is the internal hom in $D^+(Y)$, the object $\mathcal{RH}om_{Y}(\pi_{*}(O_X),\cF)$ is considered with the natural action of the sheaf of algebras $\pi_{*}(O_X)$, and the ringed space morphism $\overline \pi:(X,O_{X})\to (Y,\pi_*(O_{X}))$ is the one induced by $\pi$.
\item Note that the functor $\pi^b$ is a right adjoint to $\pi_*$. The adjunction map $R\pi_* \circ \pi^b \to Id_{D^+(Y)}$ is called the trace map and denoted by $Tr^{{b}}_\pi$. 
See {\cite[III, Proposition  6.5]{Har_du}} for more details.
\end{enumerate}
\end{defn}

In \cite{Har_du,Con}, the following functors and natural transformations were constructed:
\begin{enumerate}[(i)]
\item For any morphism of schemes $\pi:X \to Y$, a functor $$\pi^!:D^+_{}(Y) \to D^+_{}(X).$$
\item For any morphisms of schemes $X \overset{\pi}{\to} Y \overset{\tau}{\to} Z$, an isomorphism  $$c_{\pi,\tau}: (\tau\circ\pi)^! \to \pi^!\circ\tau^! .$$
\item For any finite morphism of schemes $\pi:X \to Y$, an isomorphism
$$d_{\pi }: \pi^!\to\pi^b.$$
\item For any smooth morphism of schemes $\pi:X \to Y$, an isomorphism
$$e_{\pi }: \pi^!\to\pi^\#.$$
\item For any proper morphism of schemes $\pi:X \to Y$, a morphism
$$Tr_{\pi }: R\pi_* \circ \pi^! \to Id.$$
 \item For any cartesian square of schemes
$$\xymatrix{
V \ar@{->}_\tau[d] \ar@{->}^i[r]& X \ar@{->}^\pi[d]\\
U\ar@{->}^j[r]& Y
}$$
 where $i,j$ are open embeddings\footnote{$b_{j,\pi}$ is defined, more generally, for any flat $i,j$ but we will not need this fact.}, an isomorphism $$b_{j,\pi}:i^*\circ \pi^! \to \tau^!\circ j^*.$$
\item { For any LCI morphism of schemes $\pi:X \to Y$, an isomorphism $\eta_\pi$ between the functor $\pi ^b$ and the functor $\mathcal{F} \mapsto L\pi^*(\cF) \overset{L}{\otimes} {\Omega}_{X/Y}[\dim X-\dim
 Y]$.}
\end{enumerate}
The properties of these constructions are summed up in \cite[VII, Corollary 3.4 {and III, Corollaries 7.3--7.4}]{Har_du}. We include the properties that we will use in the next proposition.

\begin{prop}\label{prop:shrik_prop}
$ $
\begin{enumerate}
\item\label{prop:shrik_prop:1} For any scheme $X$, the functor $Id_X^!$ is the identity functor. For any morphism of schemes, $\pi: X \to Y$ the natural transformations $c_{{Id},\pi}$ and $c_{\pi,{Id}}$ are the identity.
\item\label{prop:shrik_prop:2} Coherence relations for $c$: For any diagram $$X \overset{\pi}{\to} Y \overset{\tau}{\to} Z \overset{\mu}{\to} W$$ of schemes and an object $\cF\in D^+_{}(W)$, the following diagram is commutative
 $$\xymatrix{
\pi^!(\tau^! ( \mu^!(\cF))) \ar@{->}_{\pi^!(c_{\mu,\tau})}[d] \ar@{->}^{c_{\pi,\tau}}[r]& (\tau \circ \pi)^! (\mu^!(\cF)) \ar@{->}^{c_{\tau \circ \pi,\mu}}[d]\\
\pi^!((\mu\circ \tau)^!  (\cF))\ar@{->}^{{c_{\pi,\mu \circ \tau }}}[r]& (\mu \circ \tau \circ \pi)^!(\cF)
}$$

\item\label{prop:shrik_prop:3} Let
$$\xymatrix{
V \ar@{->}_\tau[d] \ar@{->}^i[r]& X \ar@{->}^\pi[d]\\
U\ar@{->}^j[r]& Y
}$$
be  {a cartesian square of schemes such that $i,j$ are open  embeddings, }
  and let $\mathcal{F} \in D^+(Y)$.
 \begin{enumerate}
\item \label{prop:shrik_prop:3a}
If $\pi$ is finite, the following diagram is commutative

$$\xymatrix{
\tau^b(\cF|_{U}) \ar@{->}^{}[r]& (\pi^b(\cF))|_{V} \\
\tau^!(\cF|_{U})\ar@{->}^{b_{j,\pi}}[r] \ar@{->}_{d_\tau}[u] & \pi^!(\cF)|_V \ar@{->}^{d_\pi}[u]
}$$
where the upper row is the natural isomorphism.

\item \label{prop:shrik_prop:3b}
If $\pi$ is smooth, the following diagram is commutative
$$\xymatrix{
\tau^{\#}(\cF|_{U}) \ar@{->}^{}[r]& (\pi^{\#}(\cF))|_{V} \\
\tau^!(\cF|_{U})\ar@{->}^{b_{j,\pi}}[r] \ar@{->}_{e_\tau}[u] & \pi^!(\cF)|_V \ar@{->}^{e_\pi}[u]
}$$
where the upper row is the natural isomorphism.

\item \label{prop:shrik_prop:3c} If $\pi$ is proper, the morphism $Tr_{\tau}$ coincides with $Tr_{\pi}|_U$ under the identification $\tau_*({b_{j,\pi}}): \tau_*(\tau^!(\cF|_{U})) \to  \tau_*((\pi^!(\cF))|_{V})$.
\end{enumerate}

\item  \label{prop:shrik_prop:4} If $\pi$ is proper, the morphism $Tr_{\pi}$ exhibits $\pi^!$ as a right adjoint to the functor $R\pi_*$.

\item \label{prop:shrik_prop:41}  Let $\pi:X \rightarrow Y$ be a finite morphism and $\cF\in D^+_{}(Y)$ be an object. Then the following diagram is commutative:
$$\xymatrix{
R\pi_*(\pi^!(\cF)) \ar@{->}^{\qquad Tr_\pi}[r] \ar@{->}^{\pi_*(d_{\pi})}[d]& \cF \\
R\pi_*(\pi^b(\cF)) \ar@{->}_{Tr^b_\pi}[ru] }$$

\item\label{prop:shrik_prop:42} For any diagram$$X \overset{\pi}{\to} Y \overset{\tau}{\to} Z$$ of schemes and an object $\cF\in D^+_{}(Z)$, the following diagram is commutative: $$\xymatrix{
R(\tau\circ \pi)_*((\tau\circ \pi)^!(\cF)) \ar@{->}^{\qquad \qquad Tr_{\tau\circ \pi}}[r]& \cF \\
R\tau_*(R\pi_*(\pi^!(\tau^!(\cF)))) \ar@{->}_{\qquad R\tau_*(Tr_\pi)}[r] \ar@{->}^{R(\tau\circ \pi)_*(c_{\pi,\tau})\circ a_{\pi,\tau}}[u]& R\tau_*(\tau^!(\cF)),\ar@{->}_{Tr_\tau}[u]}$$
where $a_{\pi,\tau}:R\tau_* \circ R\pi_* \to  R(\tau\circ \pi)_{*}$ is the natural isomorphism.

\item  \label{prop:shrik_prop:5} Let $i: Y \hookrightarrow X$ be an embedding of smooth  algebraic varieties. Let $pr_X:X\to spec(k)$ and $pr_Y:Y\to spec(k)$ be the projections to a point. Denote the dimensions of $X$ and $Y$ by $n$ and $m$. Consider the composition:
\begin{multline*}
\zeta_i:\Omega_Y=pr_Y^\#(k)[-m]\overset{e_{pr_{Y}}^{-1}}{\to}pr_Y^!(k)[-m]= (pr_X\circ i)^{!}(k)[-m]\overset{c_{i,pr_x}^{-1}}{\to}i^!( pr_X^!(k))[-m]\overset{i^!(e_{pr_{X}})}{\to} \\ i^!( pr_X^\#(k))[-m]=i^!(\Omega_X)[n-m]\overset{d_{i}}{\to}i^b(\Omega_X)[n-m]\overset{\eta_i}{\to}i^*(\Omega_X)\otimes \Omega_{Y/X}.
\end{multline*}
 Then, in local coordinates, we can write $\zeta_i$ as follows:
 $$\zeta_i(dx_{1}\wedge \dots \wedge dx_{k}\wedge dt_1 \wedge \dots \wedge dt_{n})=(dt_1^{\vee} \wedge \dots \wedge dt_k^{\vee})\otimes i^*(dt_n \wedge \dots \wedge dt_{1}\wedge dx_{1}\wedge \dots \wedge dx_{k}),$$
where $$x_1,\dots,x_k, t_1, \dots, t_n$$ are local coordinates on $X$, the subvariety  $Y$ is given by $$t_1= \dots= t_n=0,$$  and $dt_1^{\vee} , \dots , dt_k^{\vee}$ are the basis of the normal bundle $N_Y^X$ dual to the basis  $dt_1 , \dots , dt_k$ of the conormal bundle $(N_Y^X)^* \subset T^*(X)$.

\item \label{prop:shrik_prop:6} Let $i: Y \to  X$ be a  LCI map and let $\cF \in D^+_{}(X)$. Then,
\begin{enumerate}
\item\label{prop:shrik_prop:6b} The isomorphism $\eta_i$  is compatible with restrictions to open sets, in a similar way to \eqref{prop:shrik_prop:3a} above.
\item\label{prop:shrik_prop:6c} If $\cF$  is a locally free sheaf and $\dim X-\dim Y=1$, the trace map $Tr^b_i$ can be described as follows: identifying $Ri_*(i^b(\cF))$ and $i_*(i^*(\cF) \otimes {\Omega}_{Y/X})[-1]$ via $\eta_i$, the trace map
$$Ri_*(i^b(\cF))\cong i_*(i^*(\cF) \otimes {\Omega}_{Y/X})[-1]\to \cF$$
is represented by an extension $$0\to\cF \to \cF(Y) \to i_*(i^*(\cF) \otimes {\Omega}_{Y/X}) \to 0,$$
where $\cF(Y)$ is the sheaf of rational sections of $\cF$  that become regular after multiplication by a function that vanishes on $Y$, and the map $\cF \to \cF(Y)$ is the natural embedding.
\end{enumerate}

\end{enumerate}
\end{prop}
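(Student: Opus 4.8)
The plan is to derive every item of Proposition~\ref{prop:shrik_prop} from the foundational treatments of Grothendieck--Serre duality in \cite{Har_du} and \cite{Con}, carrying out explicit local computations only for the two items where the paper needs an honest formula, namely \eqref{prop:shrik_prop:5} and \eqref{prop:shrik_prop:6c}.

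Items \eqref{prop:shrik_prop:1}, \eqref{prop:shrik_prop:2}, \eqref{prop:shrik_prop:3}, \eqref{prop:shrik_prop:4}, \eqref{prop:shrik_prop:41} and \eqref{prop:shrik_prop:42} are the structural axioms of the twisted inverse image pseudofunctor $\pi\mapsto\pi^!$ together with its trace $Tr_\pi$, so I would quote them. Concretely: the normalization $Id_X^!=\mathrm{id}$ and $c_{Id,\pi}=c_{\pi,Id}=\mathrm{id}$ of \eqref{prop:shrik_prop:1}, and the cocycle (pentagon) coherence for $c$ of \eqref{prop:shrik_prop:2}, are part of the definition of a pseudofunctor and are verified in \cite[VII]{Har_du} with the sign corrections of \cite{Con}. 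The base-change isomorphisms $b_{j,\pi}$ along open immersions and their compatibility with $d$, $e$, and $Tr$ of \eqref{prop:shrik_prop:3} are \cite[VII, Corollary~3.4]{Har_du}; since $j$ is an open immersion, $\pi^b$, $\pi^\#$ and the fundamental isomorphisms restrict, so \eqref{prop:shrik_prop:3a} and \eqref{prop:shrik_prop:3b} follow from the locality of these constructions, and \eqref{prop:shrik_prop:3c} follows because $Tr$ for a proper morphism is assembled from local data. Item \eqref{prop:shrik_prop:4} is Grothendieck--Serre duality itself, \cite[VII, Corollary~3.4]{Har_du}, and \eqref{prop:shrik_prop:41} (compatibility of $d_\pi$ with $Tr^b_\pi$ and $Tr_\pi$) and \eqref{prop:shrik_prop:42} (transitivity of the trace under composition) are the remaining clauses of the same corollary together with \cite[III, Corollaries~7.3--7.4]{Har_du}.

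For item \eqref{prop:shrik_prop:5} the plan is to unwind the composite $\zeta_i$ factor by factor in a local coordinate patch with coordinates $x_1,\dots,x_k,t_1,\dots,t_n$ on $X$ and $Y=\{t_1=\dots=t_n=0\}$: the isomorphisms $e_{pr_X}$, $e_{pr_Y}$ identify $pr_X^\#(k)=\Omega_X[n]$ and $pr_Y^\#(k)=\Omega_Y[m]$; $c_{i,pr_X}$ is the composition datum of the pseudofunctor; $d_i$ compares $i^!$ with $i^b$ for the finite morphism $i$; and $\eta_i$ is the fundamental local isomorphism $i^b(\Omega_X)[n-m]\xrightarrow{\sim} i^*(\Omega_X)\otimes\Omega_{Y/X}$ whose coordinate expression for a regular immersion is computed explicitly in \cite{Con}. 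Assembling these with the Koszul--Cech recipe for $i^b$ yields the stated formula, the reversal $dt_n\wedge\dots\wedge dt_1$ being exactly the sign produced by the fundamental local isomorphism; since only $|\zeta_i|$ is used in the body of the paper, any residual sign ambiguity between the conventions of \cite{Har_du} and \cite{Con} is harmless.

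For item \eqref{prop:shrik_prop:6}: part \eqref{prop:shrik_prop:6b} says $\eta_i$ is compatible with restriction to opens, which is immediate since $\eta_i$ is constructed locally, by the same argument as for \eqref{prop:shrik_prop:3a}. For \eqref{prop:shrik_prop:6c} I would pass to an affine open on which $Y$ is cut out by a single regular function $f$ (possible as $\dim X-\dim Y=1$), so that $i_*\mathcal{O}_Y=\mathcal{O}_X/f\mathcal{O}_X$ has the Koszul resolution $0\to\mathcal{O}_X\xrightarrow{f}\mathcal{O}_X\to i_*\mathcal{O}_Y\to 0$. For a locally free $\cF$ this gives $i^b\cF\cong\overline i^{\,*}\big[\cF\xrightarrow{f}\cF\big]$ in degrees $0,1$, and since $f$ is a nonzerodivisor on $\cF$ this complex is quasi-isomorphic to $i_*(\cF/f\cF)[-1]=i_*(i^*\cF)[-1]$; identifying $\Omega_{Y/X}=N_Y^X$ with $\mathcal{O}_Y$ via the generator dual to the class of $f$ in $\cI_Y/\cI_Y^2$ matches this with $\eta_i$. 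Under these identifications $Tr^b_i$ is a class in $\mathrm{Ext}^1_{\mathcal{O}_X}\big(i_*(i^*\cF\otimes\Omega_{Y/X}),\cF\big)$, and evaluating the adjunction $\mathrm{Hom}_{\mathcal{O}_X}(i_*\mathcal{O}_Y,\cF)\cong\mathrm{Hom}_{\mathcal{O}_Y}(\mathcal{O}_Y,i^b\cF)$ on the canonical generator identifies this class with the extension $0\to\cF\to\cF(Y)\to i_*(i^*\cF\otimes\Omega_{Y/X})\to 0$, where $\cF(Y)=\cF\otimes_{\mathcal{O}_X}\mathcal{O}_X(Y)$ and the surjection is reduction modulo $\cF$ followed by multiplication by $1/f$ (the trivializations ``dual to $f$'' and ``$1/f$'' cancelling, so the composite is canonical). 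The main obstacle is exactly this last identification, matching the abstractly defined trace with the concrete connecting homomorphism, together with tracking the signs in \eqref{prop:shrik_prop:5}; everything else reduces to a citation.
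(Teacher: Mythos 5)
Your proposal is correct and takes essentially the same approach as the paper, which establishes each item by pointing to precise locations in Hartshorne's \emph{Residues and Duality} (VII, Corollary~3.4 with its VAR and TRA clauses, and III, Corollaries~7.3--7.4 / Proposition~7.4(b)) and Conrad's \emph{Grothendieck Duality and Base Change}, including the same remark about the sign discrepancy in $\zeta_i$ between the two references. The local Koszul-resolution computation you sketch for \eqref{prop:shrik_prop:6c} and the coordinate unwinding of the fundamental local isomorphism for \eqref{prop:shrik_prop:5} are exactly what the cited proofs carry out, so this is a slightly more explicit rendering of the same argument rather than a different route.
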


\begin{rem}
$ $
\begin{itemize}
\item
Parts  \eqref{prop:shrik_prop:1} and \eqref{prop:shrik_prop:2} follow from   \cite[VII,Corollary 3.4. (a) VAR1]{Har_du}.
  \item  Parts \eqref{prop:shrik_prop:3a} and \eqref{prop:shrik_prop:3b} follow from \cite[VII,Corollary 3.4. (a) VAR6]{Har_du}.
\item Part \eqref{prop:shrik_prop:3c}  follows from \cite[VII, Corollary 3.4. (b) TRA4]{Har_du}.
\item Part \eqref{prop:shrik_prop:4}  follows from \cite[VII, Corollary 3.4. (c)]{Har_du}.
\item Part \eqref{prop:shrik_prop:41} follows from \cite[VII, Corollary 3.4. (b) TRA2]{Har_du}.
\item Part \eqref{prop:shrik_prop:42} follows from \cite[VII, Corollary 3.4. (b) TRA1]{Har_du}.
\item {Part \eqref{prop:shrik_prop:5}  follows from \cite[VII,Corollary 3.4. (a) VAR5]{Har_du}. Note that in \cite[III, Definition 1.5]{Har_du} the explicit description of $\zeta_i$  sometimes differs by a sign from the one we use here following \cite[page 30 case (c)]{Con}.} 
\item {Part \eqref{prop:shrik_prop:6b}  follows from \cite[III, Proposition 7.4(b)]{Har_du}.}
\item {Part \eqref{prop:shrik_prop:6c}  follows from the proof of \cite[III, Corollary 7.3]{Har_du}.} 
\end{itemize}
\end{rem}
\begin{rem} We considered many isomorphisms between various functors. In this appendix, we will try to be carefull and not identify these functors, but rather use the appropriate isomorphism each time. However, outside the appendix we will identify isomorphic functors and object more freely in order to make the text more readable.
\end{rem}

\subsubsection{Duality}
\begin{notn}
Let $\phi:X \to Y$ be a morphism of schemes.
\begin{enumerate}
\item The \term{relative dualizing object}{RelDuOb}\ is $\mathcal{D}_{X/Y}:=\phi^!(O_Y) \in D^+_{}(X)$.
\item The \term{dualizing object}{DuOb}\ of $X$ is $\mathcal{D}_X :=\mathcal{D}_{X/\spec(k)}\in D_{}^{+}(X)$.\item The shifted  dualizing objects are $\Omega_{X/Y}:=\mathcal{D}_{X/Y}[\dim Y- \dim X]$ and $\Omega_{X}:=\Omega_{X/\spec(k)}.$
\end{enumerate}
\end{notn}

\begin{remark} There is an abuse of notations in the definition of the shifted dualizing sheaves: for smooth or CI maps, we use the notation $\Omega$ to denote the sheaves of (relative) top differential forms. However, in these cases, $\Omega_{X}$ and $\Omega_{X/Y}$ are canonically identified with the sheaves of top differential forms and relative top differential forms. For simplicity, we will use this notation.
\end{remark}

\begin{rem} There is no standard definition of $\mathcal{D}_X$ in the generality discussed in \cite{Har_du,Con} since the schemes there are not of finite type over a field. Instead, in \cite{Har_du,Con}, there is a more general notion of a ``dualizing object", which is unique only up to a shift and twist by a line bundle (see \cite[V,\S3]{Har_du}). In order to define the derived pull back for a morphism of finite type $\pi$ between two schemes, they choose dualizing objects which are compatible via $\pi ^!$.

In the generality discussed in this article, we choose the standard normalization of this object. The fact that the object that we described is a ``dualizing object" follows from the fact that $\pi^!$ maps a dualizing object to a dualizing object, see \cite[V,\S10]{Har_du}. In particular, $\mathcal{D}_X$ satisfies the properties of a dualizing object described in \cite{Har_du,Con}. We will state the ones that are important to us in Theorems \ref{thm:fid} and \ref{thm:do}.

\end{rem}

\begin{theorem}[{cf. \cite[V,\S2, The definition on page 83 and I, Proposition 7.6]{Har_du}}]\label{thm:fid}
 The object $\mathcal{D}_X$ is  in $D^b(X).$ Moreover, it has a representative which is  a bounded complex consisting of injective sheaves.
\end{theorem}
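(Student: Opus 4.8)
Looking at the final statement, it is Theorem \ref{thm:fid}: for a scheme $X$ of finite type over a field $k$ of characteristic $0$, the dualizing object $\mathcal{D}_X$ lies in $D^b(X)$ and, moreover, has a representative that is a bounded complex of injective sheaves.

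\textbf{Plan of proof.} The plan is to reduce to the case $X = \Spec(k)$ via the functor $\pi^!$, where $\pi:X\to\Spec(k)$ is the structure morphism, and then use the functorial properties of $\pi^!$ recorded in Proposition \ref{prop:shrik_prop}. First I would factor $\pi$ as a composition: choose a closed immersion $i:X\hookrightarrow \mathbb{A}^N$ into an affine space (possible since $X$ is affine, or work locally on an affine cover and glue — but in the equidimensional finite-type setting it suffices to work with a closed immersion into a smooth variety $P$), and let $p:P\to\Spec(k)$ be the (smooth) structure map, so $\pi = p\circ i$. By the coherence isomorphism $c_{i,p}$ of Proposition \ref{prop:shrik_prop}\eqref{prop:shrik_prop:2}, $\mathcal{D}_X = \pi^!(k) \cong i^!(p^!(k))$. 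Since $p$ is smooth, $e_p$ identifies $p^!(k)$ with $p^\#(k) = \Omega_P[\dim P]$, which is a single locally free (hence coherent) sheaf placed in one degree; in particular it lies in $D^b(P)$ and even has a bounded injective representative (take an injective resolution of the single sheaf $\Omega_P$, which over a Noetherian scheme of finite Krull dimension can be taken bounded by the Grothendieck vanishing theorem / finite injective dimension of the structure sheaf on a regular scheme). Then I must show $i^!$ preserves the properties "lies in $D^b$" and "has a bounded injective representative."

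\textbf{Key steps.} (1) Reduce to $i^!$ of a bounded complex of injectives on a smooth $P$, as above. (2) For a closed immersion $i$, use the description of $i^!$ via $i^b$ — more precisely, $i^!\cong i^b$ up to the isomorphism $d_i$ when $i$ is finite (closed immersions are finite), so $i^!(\mathcal{G}) \cong \bar i^*\,\mathcal{RH}om_P(i_*\mathcal{O}_X,\mathcal{G})$. (3) Show that applying $\mathcal{RH}om_P(i_*\mathcal{O}_X,-)$ to a bounded complex of injective quasi-coherent sheaves yields a bounded complex of injective $i_*\mathcal{O}_X$-modules: boundedness above is automatic; boundedness below follows because $i_*\mathcal{O}_X$ is a coherent (hence perfect-to-finite-Tor-dimension) sheaf on the regular scheme $P$ — here one invokes that $P$ has finite global dimension, so $\mathcal{E}xt^j_P(i_*\mathcal{O}_X,-)$ vanishes for $j > \dim P$, and an injective sheaf has a resolution of $\mathcal{RH}om$ supported in the right range; injectivity of the terms comes from the standard fact that $\mathcal{H}om_P(\mathcal{F},I)$ is injective as an $i_*\mathcal{O}_X$-module when $I$ is an injective $\mathcal{O}_P$-module (adjunction between extension/restriction of scalars). (4) Finally, the equivalence $\bar i^*$ between $i_*\mathcal{O}_X$-modules and $\mathcal{O}_X$-modules is exact and preserves injectives, so the resulting complex is a bounded complex of injective $\mathcal{O}_X$-modules representing $\mathcal{D}_X$; its cohomologies are coherent because $\mathcal{D}_X \in D(X)$ by construction of $\pi^!$ (this is part of the Grothendieck duality package already cited). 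Since the complex is bounded and has coherent cohomology, $\mathcal{D}_X\in D^b(X)$, and the constructed representative proves the "moreover" clause.

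\textbf{Main obstacle.} The technical heart — and the step I expect to require the most care — is (3): controlling both the boundedness below and the injectivity of $\mathcal{RH}om_P(i_*\mathcal{O}_X, I^\bullet)$. This is exactly where characteristic-$0$ and finite-type-over-a-field enter (ensuring $P$ is smooth of finite dimension, hence of finite global dimension), and where one genuinely uses the regularity of the ambient space rather than just Cohen–Macaulayness. Everything else is bookkeeping with the natural transformations $c_{i,p}$, $e_p$, $d_i$ from Proposition \ref{prop:shrik_prop}. If one prefers to avoid choosing a global embedding into a smooth variety, the alternative is to work on an affine open cover, prove the statement locally (every affine finite-type $k$-scheme embeds in an affine space), and then note that "bounded with coherent cohomology" and "admits a bounded injective representative" are local properties compatible with the gluing isomorphisms $b_{j,\pi}$ of Proposition \ref{prop:shrik_prop}\eqref{prop:shrik_prop:3}; this is the route I would actually take to keep the argument clean.
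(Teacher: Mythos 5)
The paper does not supply an independent proof here; it merely cites Hartshorne's \emph{Residues and Duality} (V\,\S 2 for the definition of a dualizing complex having finite injective dimension, and I, Prop.\ 7.6 for coherence of cohomology). Your reconstruction follows the standard route one would find behind that citation: factor the structure map through a smooth ambient, identify $p^!(k)\cong\Omega_P[\dim P]$, take a bounded injective resolution of a line bundle on the regular Noetherian scheme $P$ (valid precisely because a regular scheme of finite Krull dimension has finite global dimension), and transport through $i^!\cong i^b=\bar i^*\mathcal{RH}om_P(i_*\mathcal{O}_X,-)$, using that $\mathcal{H}om_P(\mathcal{A},I)$ is an injective $\mathcal{A}$-module when $I$ is $\mathcal{O}_P$-injective and that $\bar i^*$ is an exact equivalence preserving injectives. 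This is the correct argument and is consistent with the paper's reference.

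One point in your step (3) is muddled, though it does not affect the conclusion: you worry about ``boundedness below'' of $\mathcal{RH}om_P(i_*\mathcal{O}_X,I^\bullet)$ and invoke finite global dimension of $P$ and $\mathcal{E}xt$-vanishing to fix it. This is unnecessary. Since $I^\bullet$ is already chosen to be a \emph{bounded} complex of injectives, $\mathcal{RH}om_P(i_*\mathcal{O}_X,I^\bullet)$ is simply the termwise complex $\mathcal{H}om_P(i_*\mathcal{O}_X,I^\bullet)$, which is supported in exactly the same degree range as $I^\bullet$; there is nothing to check about boundedness in either direction. Finite global dimension of the regular ambient $P$ is genuinely used earlier, to guarantee that $\Omega_P$ admits a \emph{bounded} injective resolution in the first place; once that bounded resolution is in hand, the subsequent $\mathcal{H}om$ construction preserves boundedness automatically, and the $\mathcal{E}xt$-vanishing estimate is only needed if one wants to control the exact interval in which the cohomology of $\mathcal{D}_X$ lives, not for the statement being proved.
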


\begin{notn} Let $X$ be a scheme. Using Theorem \ref{thm:fid}, we define the the \term{duality}{Du}\ (contravariant) functor $\bD:D(X) \to D(X)$ by $\bD(\cF):=\mathcal{RH}om(\cF, \mathcal{D}_X)$, where $\mathcal{RH}om$ is the internal hom (see e.g. \cite[II,\S3]{Har_du}).
\end{notn}

\begin{thm}[{cf. \cite[V,\S2]{Har_du}}] \label{thm:do}
The natural morphism $Id \to \bD \circ \bD$ is isomorphism.
\end{thm}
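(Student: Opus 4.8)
The plan is to deduce this, the biduality (reflexivity) theorem for the dualizing complex, by the standard dévissage from the smooth case. The structural observation is that $\mathcal{D}_X=p^!(k)$, with $p:X\to\Spec k$ the structure map, is a dualizing complex in the technical sense of \cite[V]{Har_du}: the field $k$ is obviously a dualizing complex on $\Spec k$, and $\pi^!$ carries dualizing complexes to dualizing complexes (see \cite[V, \S10]{Har_du}). The theorem then becomes the assertion that the canonical evaluation morphism $\mathcal F\to\mathcal{RH}om(\mathcal{RH}om(\mathcal F,\mathcal{D}_X),\mathcal{D}_X)$ is an isomorphism for every $\mathcal F\in D(X)$, and I would prove this general fact as follows.

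First I would reduce to a local model. Being an isomorphism in the derived category is a Zariski-local condition on $X$, so after restricting we may assume $X$ is affine; since $X$ is of finite type over $k$ it then admits a closed immersion $i:X\hookrightarrow P$ with $P$ smooth (e.g.\ $P=\A^n$). (The fact that $\bD$ preserves $D(X)$, i.e.\ that $\mathcal{RH}om(\mathcal F,\mathcal{D}_X)$ has coherent bounded cohomology, reduces to the same affine/embedded situation together with Theorem \ref{thm:fid}.) For $X=P$ smooth the dualizing complex is $\mathcal{D}_P=\Omega_P[\dim P]$, a shift of a line bundle, so $\bD_P(\mathcal F)=\mathcal{RH}om_P(\mathcal F,\Omega_P)[\dim P]$, and biduality for $\bD_P$ follows from the classical reflexivity of the naive duality $\mathcal{RH}om_P(-,\mathcal O_P)$ on a regular scheme: locally a coherent complex has a bounded locally free resolution $P^\bullet$, its dual $(P^\bullet)^{\vee}$ is again such a complex, and the evaluation map $P^\bullet\to((P^\bullet)^{\vee})^{\vee}$ is an isomorphism of complexes.

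For general $X$ I would transfer the smooth case along $i:X\hookrightarrow P$, using $\mathcal{D}_X=i^!\mathcal{D}_P$ (which is how $\mathcal{D}_X$ is built out of $\pi^!$) and duality for the finite morphism $i$. The adjunction $Ri_*\dashv i^!=i^b$ together with Proposition \ref{prop:shrik_prop} parts \eqref{prop:shrik_prop:3a} and \eqref{prop:shrik_prop:4} yields a natural isomorphism $Ri_*\,\bD_X(\mathcal F)\cong\bD_P(i_*\mathcal F)$; applying $\bD_P$ again and invoking biduality on $P$ gives $i_*\mathcal F\cong\bD_P\bD_P(i_*\mathcal F)\cong\bD_P(Ri_*\bD_X\mathcal F)\cong Ri_*\,\bD_X\bD_X(\mathcal F)$, and since $i_*$ is fully faithful we conclude $\mathcal F\cong\bD_X\bD_X(\mathcal F)$.

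The main obstacle is not producing \emph{some} isomorphism $\mathcal F\xrightarrow{\sim}\bD_X\bD_X(\mathcal F)$ but checking that the one produced this way agrees with the \emph{canonical} evaluation morphism $\mathrm{Id}\to\bD\circ\bD$ (which in particular makes it independent of the chosen embedding). This is a compatibility verification threading through the trace and adjunction isomorphisms, and it is exactly where one uses the coherence relations \eqref{prop:shrik_prop:2} and \eqref{prop:shrik_prop:42}; I expect it to be the bookkeeping-heavy heart of the argument. Since all of this is entirely standard, in the paper itself the cleanest course is to cite \cite[V, \S2]{Har_du} rather than reproduce it.
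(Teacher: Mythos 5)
The paper does not prove this theorem; it simply cites \cite[V, \S2]{Har_du}, as you yourself recommend at the end of your proposal. So the real question is whether your sketch is a sound argument, and it essentially is: recognize $\mathcal{D}_X=p^!(k)$ as a dualizing complex by the preservation of dualizing complexes under $\pi^!$, reduce to the affine case, embed into a smooth ambient $P$, prove biduality there from reflexivity of $\mathcal{RH}om(-,\mathcal{O}_P)$ on a regular scheme (a twist and shift by the invertible $\Omega_P[\dim P]$ is harmless), and transfer along the closed immersion $i$ using $\mathcal{D}_X=i^!\mathcal{D}_P$ and Grothendieck duality for $i$.

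One remark worth making: the compatibility worry you flag at the end --- that the isomorphism built via $Ri_*\bD_X\cong\bD_P Ri_*$ and full faithfulness of $i_*$ might not a priori agree with the canonical evaluation map --- is legitimate, but Hartshorne's treatment sidesteps it. His Proposition V.2.1 shows that for a complex $R^\bullet$ of finite injective dimension with coherent cohomology, the biduality map $\mathcal{F}\to\bD\bD(\mathcal{F})$ is an isomorphism for all $\mathcal{F}\in D^b_c(X)$ as soon as it is so for $\mathcal{F}=\mathcal{O}_X$, equivalently as soon as a pointwise condition on residue fields $\mathcal{RH}om_{\mathcal{O}_{X,x}}(k(x),R^\bullet_x)\cong k(x)$ holds at every $x\in X$. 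Since that criterion is stalkwise and transparently stable under pushforward along a closed immersion, the "bookkeeping-heavy heart" you anticipate evaporates: one never has to match an abstractly constructed isomorphism against the canonical one for general $\mathcal{F}$, only for $\mathcal{O}_X$ (or, even more locally, at residue fields), where it is immediate. With that replacement your reduction is exactly the standard one, and is what \cite[V, \S2 and \S8]{Har_du} carries out.
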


\begin{prop}
\label{prop:star_shrik}
Let $\phi:X \to Y$ be a morphism of schemes. Then
\begin{enumerate}
\item\label{thm:do:9}  The restrictions of the functors $\bD \circ L\phi^* \circ \bD$ and $\phi^!$ to their joint domains of definition are isomorphic.
\item\label{thm:do:10} Assuming that $\phi$ is proper, the restrictions of the functors $\bD \circ R\phi_* \circ \bD$ and $R\phi_*$ to their joint domains of definition are isomorphic.
\end{enumerate}
\end{prop}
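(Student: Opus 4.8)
The plan is to establish part (1) directly from the formalism of Grothendieck duality recalled above, and then obtain part (2) from part (1) by a purely formal argument with adjoints. Throughout, the phrase ``joint domain of definition'' should be read as a subcategory (for our purposes $D^b_c$, with a finite Tor-dimension hypothesis on $\phi$ when one wants $L\phi^*$ to preserve it) on which all the functors occurring are defined and stable; I will not belabor this point. For part (1), the claim is an isomorphism of functors $\phi^!\cong \bD_X\circ L\phi^*\circ\bD_Y$, and it rests on two ingredients. The first is the identification $\mathcal{D}_X\cong\phi^!\mathcal{D}_Y$: unwinding the definition $\mathcal{D}_X=\mathcal{D}_{X/\spec(k)}=pr_X^!(O_{\spec k})$ and using $pr_X=pr_Y\circ\phi$, the isomorphism $c_{\phi,pr_Y}\colon (pr_Y\circ\phi)^!\to\phi^!\circ pr_Y^!$ (together with the coherence in Proposition~\ref{prop:shrik_prop}\eqref{prop:shrik_prop:2}) gives $\mathcal{D}_X\cong\phi^!(pr_Y^!(O_{\spec k}))=\phi^!\mathcal{D}_Y$. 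The second is the compatibility of the twisted inverse image with the internal hom, $\phi^!\,\mathcal{RH}om_Y(\mathcal{F},\mathcal{G})\cong\mathcal{RH}om_X(L\phi^*\mathcal{F},\phi^!\mathcal{G})$; this is one of the structural properties of the duality pseudofunctor (\cite{Har_du,Con}) and can be reduced, via the factorization $\phi=p\circ i$ into a smooth morphism followed by a closed immersion and the isomorphism $c_{i,p}$, to Verdier duality for smooth maps and local duality for closed immersions. Granting these, substitute $\bD_Y\mathcal{F}$ for $\mathcal{F}$ and take $\mathcal{G}=\mathcal{D}_Y$: the left-hand side becomes $\phi^!(\bD_Y\bD_Y\mathcal{F})\cong\phi^!\mathcal{F}$ by bi-duality (Theorem~\ref{thm:do}), and the right-hand side becomes $\mathcal{RH}om_X(L\phi^*\bD_Y\mathcal{F},\phi^!\mathcal{D}_Y)\cong\mathcal{RH}om_X(L\phi^*\bD_Y\mathcal{F},\mathcal{D}_X)=\bD_X(L\phi^*\bD_Y\mathcal{F})$ by the first ingredient, which proves (1).

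For part (2) we assume $\phi$ proper. By Proposition~\ref{prop:shrik_prop}\eqref{prop:shrik_prop:4}, the trace map $Tr_\phi$ exhibits $\phi^!$ as a right adjoint of $R\phi_*$, and $R\phi_*$ is itself a right adjoint of $L\phi^*$. By Theorems~\ref{thm:fid} and~\ref{thm:do}, $\bD_X$ and $\bD_Y$ are contravariant auto-equivalences with $\bD\circ\bD\cong\mathrm{Id}$, and conjugating an adjoint pair by contravariant auto-equivalences interchanges the two adjoints; hence, from $R\phi_*\dashv\phi^!$ we deduce that $\bD_X\circ\phi^!\circ\bD_Y$ is left adjoint to $\bD_Y\circ R\phi_*\circ\bD_X$. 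On the other hand, part (1) and $\bD\circ\bD\cong\mathrm{Id}$ give $\bD_X\circ\phi^!\circ\bD_Y\cong(\bD_X\circ\bD_X)\circ L\phi^*\circ(\bD_Y\circ\bD_Y)\cong L\phi^*$. Therefore $L\phi^*\dashv\bD_Y\circ R\phi_*\circ\bD_X$; comparing with $L\phi^*\dashv R\phi_*$ and invoking uniqueness of right adjoints yields $\bD_Y\circ R\phi_*\circ\bD_X\cong R\phi_*$, which is the assertion. (Alternatively, (2) can be proved directly from the sheaf-level Grothendieck--Serre duality $R\phi_*\,\mathcal{RH}om_X(\mathcal{F},\phi^!\mathcal{G})\cong\mathcal{RH}om_Y(R\phi_*\mathcal{F},\mathcal{G})$ specialized to $\mathcal{G}=\mathcal{D}_Y$, using $\phi^!\mathcal{D}_Y\cong\mathcal{D}_X$ and bi-duality, but this uses an input that is also not literally on the recalled list.)

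The main obstacle is the $\mathcal{RH}om$-compatibility of $\phi^!$ used in part (1): it is the single ingredient not reproduced among the properties in Proposition~\ref{prop:shrik_prop}, so a fully self-contained treatment would have to either import it wholesale from \cite{Har_du,Con} or re-derive it in the two basic cases and glue with $c$ --- the smooth case via $\phi^\#\cong\Omega_{X/Y}[\dim X-\dim Y]\otimes L\phi^*(-)$ and the projection formula, and the closed-immersion case via the description of $\phi^b$ as $\mathcal{RH}om$ against $\phi_*O_X$. Everything else is already available: the identification $\mathcal{D}_X\cong\phi^!\mathcal{D}_Y$, the bi-duality isomorphism, and the adjunction between $R\phi_*$ and $\phi^!$ for proper $\phi$. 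The only other care needed is bookkeeping of domains, which is exactly what the qualifier ``joint domains of definition'' in the statement is there to absorb.
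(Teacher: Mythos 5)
Your proof is correct, and for part (2) it is essentially the argument the paper has in mind when it says ``(2) follows from (1) and Theorem \ref{thm:do} or alternatively from \cite[VII, Corollary 3.4 (c)]{Har_du} and Theorem \ref{thm:do}'': you conjugate the proper adjunction $R\phi_* \dashv \phi^!$ by the dualities, identify $\bD_X\circ\phi^!\circ\bD_Y$ with $L\phi^*$ via part (1) and biduality, and conclude by uniqueness of right adjoints. You have merely written out the bookkeeping that the paper compresses into a one-line citation.

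Where your route genuinely diverges from the paper is part (1). The paper does not derive the isomorphism $\phi^! \cong \bD\circ L\phi^*\circ\bD$ at all --- it simply points to \cite[(3.3.6)]{Con}, where this isomorphism is part of how $\phi^!$ is \emph{constructed} (Conrad builds $\phi^!$ and verifies along the way that it agrees with $\bD\circ L\phi^*\circ\bD$ on the bounded coherent subcategory). You instead re-derive it from two ingredients: the identification $\mathcal{D}_X \cong \phi^!\mathcal{D}_Y$ (which does follow from $c_{\phi,pr_Y}$ as you say), and the compatibility $\phi^!\,\mathcal{RH}om_Y(\mathcal{F},\mathcal{G})\cong\mathcal{RH}om_X(L\phi^*\mathcal{F},\phi^!\mathcal{G})$. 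Both derivations are legitimate, but yours trades the black-box citation for a different black box: the $\mathcal{RH}om$-compatibility is a structural property of the duality pseudofunctor that is not among the properties recalled in Proposition \ref{prop:shrik_prop}, and (as you correctly flag) a self-contained proof would need either to import it from \cite{Har_du,Con} or to re-prove it in the smooth and finite cases and glue via $c$. Since the paper's own treatment is equally reliant on the external reference for (1), this is not a defect in your argument, just a different choice of which external fact to invoke; but you should be aware that what you are calling a derivation of (1) is not shorter than, or independent of, the machinery behind Conrad's (3.3.6). Your conjugation argument for (2), and the care with variance when passing adjunctions through contravariant auto-equivalences, are both correct.
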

\begin{proof}$ $
\eqref{thm:do:9}  is part of the construction of $\phi^!$ (see  \cite[(3.3.6)]{Con}). \eqref{thm:do:10} follows from \eqref{thm:do:9} and Theorem \ref{thm:do} or alternativly from \cite[VII,Corollary 3.4.,(c)]{Har_du} and Theorem \ref{thm:do}.
\end{proof}

\subsection{Cohen--Macaulay Schemes} \lbl{ssec:CM}

\begin{defn}\label{def:CM}
Let $X$ be a scheme and $\cF$ be a coherent sheaf over $X$.
\begin{enumerate}
\item $\cF$ is said to be  \term{Cohen--Macaulay}{CMS}\ of dimension $i$ if $H^j(\bD(\cF))=0$  for all $j \neq -i$.
\item $X$ is  said to be  \term{Cohen--Macaulay}{CMA}\ if $\cO_X$ is \CMS. In other words,  $X$ is Cohen--Macaulay if $\Omega_X$ is a sheaf.
\end{enumerate}
\end{defn}
\begin{remark}
$ $
\begin{itemize}
\item This definition is taken from \cite[\S\S2.7.]{BBG}. It is explained there why it is equivalent to the classical one.
\item According to these definitions, the notion of Cohen--Macaulay {module} of dimension $i$ is local but the notion of Cohen--Macaulay is not. {The reason is that the support of a module can have different dimensions in different points.}
\end{itemize}
\end{remark}
\begin{thm}[{see eg. \cite[\S\S2.5]{BBG}}]\label{thm:CM_eq}
Let $A$ be a commutative algebra and let $M$  be a finitely generated module over $A$, considered also as a sheaf over $\spec(A)$. The following conditions are equivalent:
\begin{enumerate}
\item $M$ is \CMS\ of dimension $i$.
\item There exists a polynomial subalgebra $B \subset A$ of dimension $i$ such that $M$ is projective (equivalently\footnote{{The equivalence is by the Quillen--Suslin theorem \cite{Qui}.}}, free) and finitely generated over $B$.
\item For any polynomial subalgebra $B \subset A$ of dimension $i$ such that $M$ is finitely generated over $B$, we have that $M$ is projective (equivalently, free) over $B$.
\item There exists a regular subalgebra $B \subset A$ of (pure) dimension $i$ such that $M$ is projective and finitely generated over $B$.
\item For any regular subalgebra $B \subset A$ of (pure) dimension $i$ such that $M$ is finitely generated over $B$, we have that $M$ is projective over $B$.
\end{enumerate}
\end{thm}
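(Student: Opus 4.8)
The plan is to reduce the intrinsic, dualizing-complex definition of Cohen--Macaulayness (Definition \ref{def:CM}) to an $\mathrm{Ext}$-vanishing statement over a Noether-normalization subring, and then to close the circle of implications using the Auslander--Buchsbaum formula together with Quillen--Suslin. The organizing observation is that if $\pi\colon\Spec A\to\Spec B$ is the finite map attached to such a subring, then $\bD_A(M)$ is controlled, after applying the exact faithful functor $\pi_*$, by $\mathcal{RH}om_B(\pi_*M,\mathcal O_B)$; and over the regular ring $B$ the vanishing of higher $\mathrm{Ext}$ against $\mathcal O_B$ is exactly projectivity.

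First I would settle the existence of the relevant subalgebras. Given $M$ finitely generated over $A$, Noether normalization applied to the scheme-theoretic support $\Spec(A/\mathrm{Ann}(M))$ yields a polynomial subalgebra over which $M$ is finite; lifting generators and using prime avoidance one arranges a polynomial subalgebra $B\subset A$ with $M$ finite over $B$ and $\dim B=\dim\,\mathrm{Supp}(M)$. It then suffices to prove, for a \emph{fixed} finite morphism $\pi\colon\Spec A\to\Spec B$ with $B$ regular of pure dimension $i$ and $M$ finite over $B$, that $M$ is Cohen--Macaulay of dimension $i$ over $A$ if and only if $M$ is projective over $B$. Granting this, the implications $3\Rightarrow2$ and $5\Rightarrow4$ are trivial, $2\Rightarrow4$ is immediate, and $1\Rightarrow3$, $1\Rightarrow5$, $2\Rightarrow1$, $4\Rightarrow1$ follow by applying the equivalence (to the given $B$ in the ``for any'' statements and to one such $B$ in the ``there exists'' statements), which closes the cycle $1\Leftrightarrow2\Leftrightarrow3\Leftrightarrow4\Leftrightarrow5$.

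Next I would carry out the duality computation. Since $\pi$ is finite, Proposition \ref{prop:shrik_prop} gives an isomorphism $\pi^!\cong\pi^b$, and the coherence isomorphism for $(-)^!$ gives $\mathcal D_A\cong\pi^!\mathcal D_B$; since $\pi$ is affine, $\pi_*$ is exact and faithful. As $B$ is regular of pure dimension $i$ we have $\mathcal D_B=\Omega_B[i]$ with $\Omega_B$ an invertible sheaf (the trivial one if $B$ is a polynomial ring). Using that $\pi^b$ is right adjoint to $\pi_*$ together with tensor--hom adjunction, one obtains a canonical isomorphism
\[
\pi_*\bD_A(M)=\pi_*\mathcal{RH}om_A(M,\pi^b\mathcal D_B)\cong\mathcal{RH}om_B(\pi_*M,\mathcal D_B)\cong\mathcal{RH}om_B(\pi_*M,\mathcal O_B)\otimes_B\Omega_B[i].
\]
Because $\pi_*$ is exact and faithful, $H^j(\bD_A(M))=0$ for all $j\neq -i$ if and only if $\mathcal E xt^l_B(\pi_*M,\mathcal O_B)=0$ for all $l>0$; this vanishing in turn forces $\pi_*M$ to be supported on all of $\Spec B$, i.e. $\mathrm{Supp}(M)$ to be equidimensional of dimension $i$, which retroactively justifies taking $\dim B=i$ above. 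Finally, localizing at a maximal ideal $\mathfrak m$ of $B$, the ring $B_{\mathfrak m}$ is regular local, hence Gorenstein, so the higher $\mathrm{Ext}$ of $(\pi_*M)_{\mathfrak m}$ against $B_{\mathfrak m}$ vanishes if and only if $(\pi_*M)_{\mathfrak m}$ is maximal Cohen--Macaulay, which by Auslander--Buchsbaum means it has projective dimension $0$, i.e. is free; globally this says $\pi_*M$ is projective over $B$, and over a polynomial ring Quillen--Suslin upgrades this to freeness. This proves the equivalence isolated in Step 1.

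I expect the main obstacle to be the careful bookkeeping around dimensions and shifts: proving the compatibility $\pi_*\circ\bD_A\cong\bD_B\circ\pi_*$ for finite morphisms with the correct degree shift via the formalism of $\pi^!,\pi^b$ recalled in Appendix \ref{sec:alg.geom}, and ensuring that $\mathrm{Supp}(M)$ is equidimensional of the precise dimension $i$ so that ``maximal Cohen--Macaulay over $B$'' coincides with ``locally free over $B$''. Everything else---Noether normalization, the Auslander--Buchsbaum formula, Quillen--Suslin, and the passage between ``there exists $B$'' and ``for all $B$'' once the equivalence is known---is routine.
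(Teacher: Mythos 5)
The paper does not give its own proof of this theorem; it cites \cite[\S\S2.5]{BBG} and the remark immediately following it indicates exactly the route you take, namely that the statement ``is an immediate corollary of Proposition \ref{prop:star_shrik}'' (the compatibility $\pi_*\circ\bD\cong\bD\circ\pi_*$ for finite $\pi$) once one applies Noether normalization. Your proposal correctly fleshes out that sketch: you reduce to the key equivalence over a fixed finite map $\pi\colon\Spec A\to\Spec B$ via $\pi^!\cong\pi^b$, translate the $H^j(\bD_A(M))$ vanishing into $\mathrm{Ext}^{>0}_B(\pi_*M,\mathcal O_B)=0$ using exactness and faithfulness of $\pi_*$, and then invoke local duality plus Auslander--Buchsbaum (and Quillen--Suslin for the freeness upgrade) over the regular base, which is the standard way to finish and is consistent with what the paper points to.
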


\begin{rem}
$ $
\begin{itemize}
\item The formulation in \cite[\S\S2.5]{BBG} is slightly different from ours. In order to deduce our version from theirs, one should use the Noether normalization lemma.
\item As commented in \cite[\S\S2.7.]{BBG}, this theorem is an immediate corollary of Proposition \ref{prop:star_shrik}, when we use Definition \ref{def:CM} for the notion of \CMS\ modules.
\end{itemize}
\end{rem}
\begin{cor}\label{cor:cm_torfree_i}
$ $
Let $M$ be a \CMS\ module   of dimension $i$.
Then the dimension of the support of  any non-zero  $m \in M$  is $i$.
\end{cor}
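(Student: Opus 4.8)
The plan is to reduce at once to the concrete description of Cohen--Macaulay modules provided by Theorem \ref{thm:CM_eq}. Let $A$ be the finitely generated $k$-algebra over which $M$ lives, so that for a nonzero $m \in M$ one has $Am \cong A/\operatorname{Ann}_A(m)$ and $\supp(m) = \Spec(Am)$; by definition its dimension is the Krull dimension of the $A$-module $Am$. We may assume $M \neq 0$, so $i \geq 0$, and Theorem \ref{thm:CM_eq} (equivalence of (1) and (2)) provides a polynomial subalgebra $B \subseteq A$ with $\dim B = i$ over which $M$ is finitely generated and free; in particular $M \cong B^{r}$ with $r \geq 1$.

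First I would prove the upper bound $\dim \supp(m) \leq i$. Since $M$ is a finite $B$-module, so is its $A$-submodule $Am$; and the inclusion $B \hookrightarrow A$ induces a finite morphism $\Spec A \to \Spec B$, which is closed with finite fibres, hence preserves Krull dimensions of supports. Therefore $\dim_A(Am) = \dim_B(Am) \leq \dim B = i$.

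Next I would prove the lower bound using torsion-freeness. As $M$ is free over the integral domain $B$, it is torsion-free over $B$; hence, for $m \neq 0$, the $B$-linear map $B \to M$ sending $b$ to $bm$ is injective, so $Bm \cong B$ and $\dim_B(Bm) = i$. Since $Bm \subseteq Am$ we get $\dim_B(Am) \geq i$, and applying dimension-invariance under the finite morphism $\Spec A \to \Spec B$ once more gives $\dim_A(Am) \geq i$. Combining the two bounds, $\dim \supp(m) = \dim_A(Am) = i$.

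The only ingredient that is not purely formal is the invariance of the dimension of the support of a module under restriction of scalars along the finite extension $B \subseteq A$; this is standard (finite morphisms are closed and quasi-finite), so I do not expect a genuine obstacle — the corollary is essentially a direct translation of condition (2) in Theorem \ref{thm:CM_eq}.
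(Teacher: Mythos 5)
Your route through Theorem~\ref{thm:CM_eq} is exactly the one the paper intends — the corollary is stated as a direct consequence of that theorem, and you fill in the same Noether-normalization argument. However, there is a genuine error in the step where you transfer dimensions from $B$ to $A$: the claim that ``the inclusion $B\hookrightarrow A$ induces a finite morphism $\Spec A\to\Spec B$'' is false. Condition~(2) of Theorem~\ref{thm:CM_eq} only makes \emph{$M$} finite over $B$, not $A$. For a concrete counterexample, take $A=k[x,y]$ and $M=A/(y)$, which is Cohen--Macaulay of dimension $1$; one may take $B=k[x]\subset A$, and $M\cong B$ is free of rank $1$ over $B$, yet $A$ is certainly not module-finite over $B$. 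So the morphism $\Spec A\to\Spec B$ you invoke is not finite, and your justification for $\dim_A(Am)=\dim_B(Am)$ does not stand as written.

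The fix is to replace $A$ by $A/\operatorname{Ann}_A(m)$. The ring $A/\operatorname{Ann}_A(m)$ acts faithfully on the cyclic module $Am$, which is a finitely generated $B$-module (a submodule of $M$ over the Noetherian ring $B$); hence $A/\operatorname{Ann}_A(m)$ embeds into $\operatorname{End}_B(Am)$, which is itself a finite $B$-module. Therefore $A/\operatorname{Ann}_A(m)$ is module-finite over (the image of) $B$, and the kernel of $B\to A/\operatorname{Ann}_A(m)$ is $\operatorname{Ann}_B(m)$, which vanishes precisely by your torsion-freeness observation. So $B\hookrightarrow A/\operatorname{Ann}_A(m)$ is an injective finite ring extension, whence $\dim\supp(m)=\dim\bigl(A/\operatorname{Ann}_A(m)\bigr)=\dim B=i$. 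This recovers both your upper and lower bounds in one stroke and removes the need for the false finiteness of $\Spec A\to\Spec B$.
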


Theorem \ref{thm:do} implies
\begin{proposition}
If $X$ is a \CMS\  scheme, then $\Omega_X$ is a \CMS\ module.
\end{proposition}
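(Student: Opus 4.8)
The statement to prove is: if $X$ is a Cohen--Macaulay scheme, then $\Omega_X$ is a Cohen--Macaulay module. Recall that, by Definition \ref{def:CM}, $X$ is Cohen--Macaulay means $\cO_X$ is a Cohen--Macaulay sheaf, i.e.\ $H^j(\bD(\cO_X))=0$ for all $j$ different from $-\dim X$; equivalently, $\Omega_X=\mathcal{D}_X[-\dim X]$ is concentrated in a single degree, hence is a genuine sheaf rather than a complex. We must show that this sheaf $\Omega_X$ is itself Cohen--Macaulay, i.e.\ $H^j(\bD(\Omega_X))=0$ for $j\neq -\dim X$.

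\textbf{Key steps.} First I would unwind the definition of $\bD(\Omega_X)$. Since $\Omega_X=\mathcal{D}_X[-\dim X]$, we have $\bD(\Omega_X)=\mathcal{RH}om(\mathcal{D}_X[-\dim X],\mathcal{D}_X)=\bD(\mathcal{D}_X)[\dim X]$. Now $\mathcal{D}_X=\bD(\cO_X)$ (the duality functor applied to the structure sheaf), so $\bD(\mathcal{D}_X)=\bD(\bD(\cO_X))$. By Theorem \ref{thm:do}, the natural morphism $\mathrm{Id}\to \bD\circ\bD$ is an isomorphism, so $\bD(\bD(\cO_X))\cong \cO_X$. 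Therefore $\bD(\Omega_X)\cong \cO_X[\dim X]$. Finally, since $X$ is Cohen--Macaulay, $\cO_X$ is a Cohen--Macaulay sheaf of dimension $\dim X$, which by Definition \ref{def:CM} means $H^j(\bD(\cO_X))=0$ for $j\neq -\dim X$ — but I actually need the statement about $\bD(\Omega_X)=\cO_X[\dim X]$, whose cohomology is $\cO_X$ placed in degree $-\dim X$ and zero elsewhere. This is exactly the assertion that $\Omega_X$ is Cohen--Macaulay of dimension $\dim X$. (One should be slightly careful about the fact that "Cohen--Macaulay" for a scheme as opposed to a module is a pointwise/local notion when $X$ is not equidimensional; working locally on $X$ where the dimension is constant, the argument above goes through verbatim, and the conclusion is local, so it suffices.)

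\textbf{Main obstacle.} There is essentially no serious obstacle here: the proof is a two-line formal consequence of biduality (Theorem \ref{thm:do}) together with the definitions. The only point requiring a little care is bookkeeping with shifts and with the (non-equidimensional) local nature of the Cohen--Macaulay condition — one must make sure the shift $[\dim X]$ versus $[-\dim X]$ is handled consistently and that, in the non-equidimensional case, one restricts to the open locus where $\dim$ is locally constant before invoking Theorem \ref{thm:CM_eq}'s numerical formulation. So the write-up is: compute $\bD(\Omega_X)\cong \cO_X[\dim X]$ using $\Omega_X=\mathcal{D}_X[-\dim X]$, $\mathcal{D}_X=\bD(\cO_X)$, and $\bD\circ\bD\cong\mathrm{Id}$; then read off from Definition \ref{def:CM} that the single-degree concentration of $\bD(\cO_X)$ (i.e.\ Cohen--Macaulayness of $X$) is precisely the single-degree concentration of $\bD(\Omega_X)$ shifted, hence $\Omega_X$ is Cohen--Macaulay.
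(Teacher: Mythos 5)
Your proof is correct and is exactly the argument the paper has in mind: the paper states the proposition as an immediate consequence of Theorem \ref{thm:do} (biduality) without spelling out the computation, and your computation $\bD(\Omega_X)=\bD(\mathcal{D}_X)[\dim X]=\bD\bD(\cO_X)[\dim X]\cong\cO_X[\dim X]$ is precisely the intended unwinding. The shift bookkeeping and the remark about equidimensionality are both handled correctly (and the paper has already reduced to the equidimensional case at the start of \S\S\ref{ssec:LCI}).
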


Together with Corollary \ref{cor:cm_torfree_i}, we get
\begin{cor}\label{cor:cm_torfree}
If $X$ is \CMS\  scheme, then $\Omega_X$ is a torsion free module.
\end{cor}

\begin{thm}[{see  \cite[Theorem 2.9 (3)]{BBG}\footnote{This is a simple corollary of  Theorem \ref{thm:CM_eq}}}]
Let $\mu: \cF \to \cG$ be a morphism of \CMS\ sheaves of dimension $i$. Suppose that $\mu$ is an isomorphism in dimension $i$, i.e. that the kernel and cokernel of $\mu$ have dimension less than $i$. Then $Ker (\mu)= 0$ and $Coker (\mu)$ is a \CMS\ of dimension $i - 1$. In particular, if $\mu$ is an isomorphism in dimension $i-1$, then $\mu$ is an isomorphism
\end{thm}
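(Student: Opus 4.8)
The plan is to reduce to the affine case and then to extract everything from applying the duality functor $\bD$ to the natural short exact sequences attached to $\mu$, using Corollary \ref{cor:cm_torfree_i} as the main pointwise input. Since the kernel, the cokernel, and the property of being Cohen--Macaulay of a fixed dimension are all local, I would first pass to $X=\Spec A$ with $\cF,\cG$ the sheaves associated to finitely generated $A$-modules $M,N$ and $\mu\colon M\to N$ a module map. I would dispose of the degenerate cases at once: if $M=0$ then $\coker\mu=N$ has $\dim\supp N<i$, but $N$ is CMS of dimension $i$, so Corollary \ref{cor:cm_torfree_i} forces $N=0$ and the statement is trivial; the case $N=0$ is symmetric. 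So assume $M,N\neq 0$.

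Next I would show $\ker\mu=0$. Put $K=\ker\mu\subseteq M$. For any $m\in K$ one has $\supp(m)\subseteq\supp K$, hence $\dim\supp(m)\le\dim\supp K<i$; since $M$ is CMS of dimension $i$, Corollary \ref{cor:cm_torfree_i} says a nonzero element of $M$ has support of dimension exactly $i$, so $m=0$. Thus $K=0$, and we are left with a short exact sequence $0\to M\xrightarrow{\mu}N\to C\to 0$, where $C=\coker\mu$ satisfies $\dim\supp C<i$.

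Applying the contravariant functor $\bD$ (which is exact on the derived category, $X$ admitting a bounded dualizing complex by Theorem \ref{thm:fid}) yields a distinguished triangle $\bD C\to\bD N\to\bD M\xrightarrow{+1}$ in $D(X)$. By Definition \ref{def:CM}, $\bD N$ and $\bD M$ are concentrated in cohomological degree $-i$, so the long exact cohomology sequence gives $H^j(\bD C)=0$ for every $j\notin\{-i,-(i-1)\}$ together with the four-term exact sequence
\[
0\to H^{-i}(\bD C)\to H^{-i}(\bD N)\xrightarrow{H^{-i}(\bD\mu)} H^{-i}(\bD M)\to H^{-(i-1)}(\bD C)\to 0 .
\]
It therefore remains to check that $H^{-i}(\bD C)=0$. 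This is the general fact that $\bD$ of a coherent sheaf supported in dimension $<i$ has no cohomology in degrees $\le -i$: I would prove it by using Noether normalization together with Theorem \ref{thm:CM_eq} to choose a common polynomial subalgebra $B\subset A$ of dimension $i$ over which $M\oplus N$ (hence $M$, $N$, and thus $C$) is finite, so that $\bD$ becomes ordinary $\mathcal{E}xt$ into $B$ and the vanishing reduces to the fact that a finite $B$-module supported in dimension $<i$ has no $\mathcal{H}om$ into the free module $B$; alternatively one cites the amplitude estimates for dualizing complexes in \cite{BBG}. Granting this, $H^j(\bD C)=0$ for all $j\neq-(i-1)$, i.e. $C$ is CMS of dimension $i-1$ (the zero module being CMS of every dimension, so no extra case is needed). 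For the final assertion: if $\mu$ is an isomorphism in dimension $i-1$, then $\dim\supp\ker\mu,\dim\supp\coker\mu<i$, so by the first part $\ker\mu=0$ and $\coker\mu$ is CMS of dimension $i-1$; but $\dim\supp\coker\mu<i-1$, so Corollary \ref{cor:cm_torfree_i} forces $\coker\mu=0$.

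The step I expect to cost the most care is the vanishing $H^{-i}(\bD C)=0$, i.e. pinning down the cohomological placement of $\bD$ of a low-dimensional module; it is the only place where one must invoke structural input about dualizing complexes rather than purely formal manipulation, and the cleanest route is probably the reduction to a common polynomial subalgebra over which $M$ and $N$ are finite free, turning $\bD$ into $\mathcal{E}xt$ over a regular ring.
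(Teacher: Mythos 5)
Your argument is correct, and since the paper only delegates this statement to \cite[Theorem 2.9 (3)]{BBG} with a footnote hinting it is a simple corollary of Theorem \ref{thm:CM_eq}, your write-up supplies reasoning the paper leaves implicit; the structure (pass to the affine case, kill the kernel via Corollary \ref{cor:cm_torfree_i}, dualize the resulting short exact sequence, read off the amplitude of $\bD(\coker\mu)$) is squarely within the paper's duality-theoretic framing of Cohen--Macaulayness. One simplification is worth recording at the step you rightly flag as most delicate. You do not need a common polynomial subalgebra nor an external amplitude estimate to obtain $H^{-i}(\bD C)=0$ for $C=\coker\mu$. From your four-term exact sequence, $H^{-i}(\bD C)$ is a subsheaf of $H^{-i}(\bD N)$. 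Since $\bD N$ is concentrated in degree $-i$ and $\bD\bD N\cong N$ by Theorem \ref{thm:do}, the sheaf $H^{-i}(\bD N)$ is itself Cohen--Macaulay of dimension $i$; so by Corollary \ref{cor:cm_torfree_i} every nonzero local section of it, and hence of its subsheaf $H^{-i}(\bD C)$, has support of dimension $i$. On the other hand $H^{-i}(\bD C)$ is supported inside $\supp C$, which has dimension $<i$, because $\bD$ commutes with localization and so $\bD C$ vanishes off $\supp C$. Hence $H^{-i}(\bD C)=0$, and the entire proof then runs on Corollary \ref{cor:cm_torfree_i}, Theorem \ref{thm:do}, and a long exact sequence, bypassing the (true, but mildly fiddly) existence of a common Noether normalization over which both $M$ and $N$ become finite.
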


\begin{cor}[{cf. \cite[Corollary 2.3 (b)]{BL}}] \lbl{cor:codim.2.in.CM}
$ $
\begin{enumerate}
\item\label{cor:codim.2.in.CM:1} A \CMA\ variety which is regular outside a subvariety of codimension 2 is normal.
\item Let  $X$ be a \CMA\ scheme, and let $i:Y \hookrightarrow X$ be an open embedding such that $X\smallsetminus i(Y)$ has co-dimension $2$ in $X$. Then the natural morphism $\Omega_{X} \to i_*(\Omega_{Y})$ is an isomorphism.
\end{enumerate}
\end{cor}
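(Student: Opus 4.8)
The plan is to deduce both assertions from two facts already recorded above: that the shifted dualizing sheaf $\Omega_X$ of a \CMA\ scheme $X$ is a \CMS\ sheaf of dimension $d:=\dim X$, and the theorem stated just above (\cite[Theorem 2.9 (3)]{BBG}), which says that a morphism of \CMS\ sheaves of the same dimension $d$ that is an isomorphism in codimension $\le 1$ is already an isomorphism. Throughout I write $Z:=X\smallsetminus i(Y)$, a closed subset of codimension $\ge 2$, and use that for an open immersion $i\colon Y\hookrightarrow X$ one has $\Omega_Y=\Omega_X|_Y=i^*\Omega_X$, by compatibility of the dualizing complex with open restriction (property $b_{j,\pi}$ of \S\S\S\ref{sec:pi_shrik}).

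For the second part I would argue as follows. Since $X$ is \CMA, $\Omega_X$ is \CMS\ of dimension $d$, hence torsion free by Corollary~\ref{cor:cm_torfree}. The natural map $\mu\colon\Omega_X\to i_*\Omega_Y$ restricts to the identity on $Y$, so $\ker\mu$ is a subsheaf of $\Omega_X$ supported on $Z$; torsion freeness (or Corollary~\ref{cor:cm_torfree_i}) forces $\ker\mu=0$. The content of the statement is therefore the surjectivity of $\mu$, and for this it suffices to check that $i_*\Omega_Y=i_*i^*\Omega_X$ is again coherent and \CMS\ of dimension $d$: granting this, $\operatorname{coker}\mu$ is supported on $Z$, hence of dimension $\le d-2$, so $\mu$ is an isomorphism in codimension $\le 1$ and the cited theorem applies. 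To see that $i_*i^*\Omega_X$ is coherent and \CMS\ of dimension $d$, I would work locally: at a point $z\in Z$, Theorem~\ref{thm:CM_eq} shows that $(\Omega_X)_z$ is a maximal Cohen--Macaulay module over $\mathcal{O}_{X,z}$ (it is free over a regular subalgebra of dimension $d$, over which $\mathcal O_{X,z}$ is finite), so its support is all of $\operatorname{Spec}\mathcal{O}_{X,z}$ and its depth along the ideal of $Z$ is at least $2$; hence the local cohomology sheaves $\underline H^0_Z(\Omega_X)$ and $\underline H^1_Z(\Omega_X)$ vanish, and the four-term exact sequence relating $\Omega_X$, $i_*i^*\Omega_X$ and $\underline H^{\le 1}_Z(\Omega_X)$ yields $\Omega_X\xrightarrow{\ \sim\ }i_*i^*\Omega_X$, in particular the coherence and Cohen--Macaulayness of the target. (Alternatively one may simply invoke \cite[Corollary 2.3 (b)]{BL} at this point.)

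For part~\eqref{cor:codim.2.in.CM:1} I would use Serre's criterion: a reduced Noetherian scheme is normal if and only if it satisfies $R_1$ and $S_2$. By our conventions $X$ is a variety, hence reduced; being \CMA\ it satisfies $S_2$ (indeed $S_k$ for all $k$); and, being regular away from a subvariety of codimension $\ge 2$, it satisfies $R_1$ trivially. Hence $X$ is normal. Equivalently, one can run the argument of the previous paragraph with the \CMS\ sheaf $\mathcal{O}_X$ in place of $\Omega_X$ (which is legitimate precisely because $X$ is \CMA), concluding that $\mathcal{O}_X\to i_*\mathcal{O}_Y$ is an isomorphism for every open $i\colon Y\hookrightarrow X$ whose complement has codimension $\ge 2$ — this is the $S_2$ condition — and then $R_1+S_2$ gives normality via Serre's criterion.

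The step I expect to be the main obstacle is exactly the one isolated above: showing that $i_*\Omega_Y$ (and, for the normality statement, $i_*\mathcal{O}_Y$) is coherent and Cohen--Macaulay of full dimension, i.e. that sections of the dualizing sheaf and of the structure sheaf extend across the codimension-$2$ locus $Z$. This is the $S_2$-type depth property of dualizing sheaves of Cohen--Macaulay schemes, and once it is in place everything else is formal bookkeeping with supports together with the quoted theorem.
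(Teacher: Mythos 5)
Your proof is correct and fills in a genuine gap, since the paper does not actually write a proof of this corollary: it merely positions it after the BBG theorem (Theorem 2.9(3)) as if it were an immediate consequence. You correctly identify that the BBG theorem cannot be applied directly without first knowing that $i_*\Omega_Y$ is a coherent \CMS\ sheaf of dimension $d$, and that this is exactly the non-formal input. Your depth/local-cohomology argument --- $\Omega_X$ is maximal Cohen--Macaulay, $Z$ has codimension $\geq 2$, so $\underline{H}^0_Z(\Omega_X)=\underline{H}^1_Z(\Omega_X)=0$, and the excision exact sequence gives $\Omega_X \xrightarrow{\sim} i_*i^*\Omega_X$ --- is the standard and correct way to supply it, using Theorem \ref{thm:CM_eq} together with Corollary \ref{cor:cm_torfree_i} to get the vanishing of $H^0_Z$ and then bootstrap to $H^1_Z$.

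The one thing worth noting is a structural redundancy in your write-up of part (2): you set up the argument as ``show $i_*\Omega_Y$ is \CMS\ of dimension $d$, then apply the BBG theorem,'' but the local-cohomology computation you then carry out establishes the isomorphism $\Omega_X \to i_*i^*\Omega_X$ directly, so by the time the hypotheses of the BBG theorem are verified the conclusion is already in hand and the appeal to that theorem is vacuous. This does not affect correctness; it just means the detour through the quoted theorem can be excised. Part (1) via Serre's criterion ($R_1 + S_2 \Rightarrow$ normal; $X$ reduced by convention, $S_2$ from Cohen--Macaulayness, $R_1$ from the hypothesis) is clean, and the alternative reformulation via extension of sections of $\mathcal{O}_X$ across $Z$ is consistent with it.
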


\begin{defn}
A scheme $X$ is \term{Gorenstein}{Gor}\ if it is \CMS\ and $\Omega_X$ is  a line bundle.
\end{defn}

\S\S\S\ref{sec:pi_shrik} implies
\begin{cor}
{An LCI scheme is  \Gor.}
\end{cor}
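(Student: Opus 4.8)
The plan is to compute the dualizing object $\mathcal{D}_X$ directly from the machinery of \S\S\S\ref{sec:pi_shrik} and to read off that its only nonzero cohomology sheaf is an invertible sheaf sitting in degree $0$. Since the assertion — that $\Omega_X$ is a line bundle concentrated in degree $0$ — is local on $X$, I would first replace $X$ by an affine open subscheme which, by the very definition of an LCI scheme, is a complete intersection inside some affine space $\mathbb{A}^n$. Write $i\colon X\hookrightarrow\mathbb{A}^n$ for the resulting closed embedding — it is an LCI, hence finite, morphism — and $p\colon\mathbb{A}^n\to\Spec(k)$ for the structure map, so that $\mathcal{D}_X=(p\circ i)^!(k)$.

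Next I would thread $(p\circ i)^!(k)$ through the canonical isomorphisms of \S\S\S\ref{sec:pi_shrik}. By the coherence isomorphism $c_{i,p}$, $\mathcal{D}_X\cong i^!\!\left(p^!(k)\right)$. Because $\mathbb{A}^n$ is smooth, $e_p$ identifies $p^!(k)$ with $p^{\#}(k)=\Omega_{\mathbb{A}^n}[n]$, where $\Omega_{\mathbb{A}^n}$ is the (trivial) line bundle of top differential forms. Because $i$ is finite, $d_i$ identifies $i^!$ with $i^b$; and because $i$ is LCI, $\eta_i$ identifies $i^b(\mathcal{F})$ with $Li^*(\mathcal{F})\overset{L}{\otimes}\Omega_{X/\mathbb{A}^n}[\dim X-n]$, where $\Omega_{X/\mathbb{A}^n}=\bigwedge^{\mathrm{top}}N_X^{\mathbb{A}^n}$ is a line bundle since $N_X^{\mathbb{A}^n}$ is locally free of rank $n-\dim X$. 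Since the derived pullback of a line bundle is the ordinary pullback and the derived tensor product of line bundles is their ordinary tensor product, applying this with $\mathcal{F}=\Omega_{\mathbb{A}^n}[n]$ gives
\[
\mathcal{D}_X\;\cong\;\bigl(i^*\Omega_{\mathbb{A}^n}\otimes\Omega_{X/\mathbb{A}^n}\bigr)[\dim X],
\]
so that $\Omega_X=\mathcal{D}_X[-\dim X]\cong i^*\Omega_{\mathbb{A}^n}\otimes\Omega_{X/\mathbb{A}^n}$ is an invertible sheaf in cohomological degree $0$. Being invertible is local, and cohomology sheaves are computed locally, so this local computation shows that $\Omega_X$ is globally a line bundle; hence $X$ is Cohen--Macaulay with $\Omega_X$ invertible, i.e.\ Gorenstein, by the definitions of \S\S\ref{ssec:CM}.

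I do not expect a serious obstacle; the work is essentially bookkeeping. The reduction step at the outset is genuinely needed, since an LCI scheme need not admit a global LCI embedding into an affine space, so the computation must be carried out on an affine open cover and the local-to-global passage invoked afterwards. The other point requiring care is keeping track of the cohomological shifts and of the precise compatibilities among the isomorphisms $c$, $d$, $e$, $\eta$ (recorded in Proposition \ref{prop:shrik_prop}) so as to be sure that $\mathcal{D}_X$ really lands in a single degree; but since only the absolute values of forms matter for the applications, even the sign ambiguities in these identifications are harmless.
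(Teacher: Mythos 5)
Your proof is correct and spells out exactly the computation the paper gestures at when it states that "\S\S\S\ref{sec:pi_shrik} implies" the corollary: working locally on an LCI embedding $i\colon X\hookrightarrow\mathbb{A}^n$, factoring $\mathcal{D}_X$ through $c_{i,p}$, $e_p$, $d_i$, $\eta_i$, and reading off that $\Omega_X\cong i^*\Omega_{\mathbb{A}^n}\otimes\Omega_{X/\mathbb{A}^n}$ is an invertible sheaf concentrated in degree zero, hence that $X$ is Cohen--Macaulay with invertible $\Omega_X$, i.e.\ Gorenstein. The reduction to an affine chart and the shift bookkeeping are both handled correctly, so nothing is missing.
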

\subsection{Resolution of Singularities} \label{sssec:res_sing}

$ $
We will use Hironaka's theorem on resolution of singularities in characteristic 0, proved in \cite{Hir}. A more recent overview can be found in \cite{Hir_mod}.
\begin{definition}
Let $X$ be an algebraic variety.
\begin{itemize}
\item A \emph{resolution of singularities} of $X$ is a proper map $p:Y \to X$ such that $Y$ is smooth and $p$ is a birational equivalence.
\item A \emph{strong resolution of singularities} of $X$ is a resolution of singularities $p:Y \to X$ which is isomorphism over the smooth locus of $X$.
\item A subvariety $D \subset X$ is said to be a \emph{normal crossings divisor}
(or \emph{NC divisor}) if,
for any $x\in D$, there exists an \et neighborhood $\phi: U \to X$ of $x$
and an \et map $ {\alpha}:U \to \A^n$ such that $\phi^{-1}(D)= {\alpha}^{-1}(D')$, where $D' \subset \A^n$
is a union of coordinate hyperplanes.
\item
A subvariety $D \subset X$ is said to be a \emph{strict normal crossings divisor}
(or \emph{SNC divisor}) if,
for any $x\in D$, there exists a Zariski neighborhood $U \subset X$ of $x$
and an \et map $ {\alpha}:U \to \A^n$ such that $D \cap U= {\alpha}^{-1}(D')$, where $D' \subset \A^n$
 is a union of coordinate hyperplanes.
\item We say that a resolution of singularities $p:Y \to X$ \emph{resolves} (respectively, \emph{strictly resolves}) a closed subvariety $D \subset X$ if $p^{-1}(D)$ is an NC divisor (respectively, an SNC divisor). In this case, we will also say that $p:Y \to X$ is a \emph{resolution} (respectively, a \emph{strict resolution}) of the pair $(X,D)$.
\item {Let $D \subset X$ be a subvariety of co-dimension 1}. A \emph{strong resolution} of  the pair $(X,D)$ is a strict resolution  $p:Y \to X$, which is isomorphism outside the union of  the singular locus of $X$ and the singular locus of $D$. 
\item {Let  $p:Y \to X$ be a resolution of singularities. Let $U\subset X$ be the maximal open set on which $p$ is an isomorphism. Let $D \subset X$ be a subvariety. The {\emph{strict transform}} of $D$ is defined to be $\overline{p^{-1}(D\cup U)}$ }
\end{itemize}
\end{definition}

\begin{theorem}[Hironaka] \label{thm:Hir}
Let $D \subset X$ be a pair of algebraic varieties. {Assume that $D \subset X$ is of co-dimension 1}, and let
$U\subset X$ be a smooth open subset such that $U \cap D$ is also smooth. Then there exists a resolution of singularities
$p:\tilde X \to X$ that resolves $D$, and such that the map $p^{-1}(U) \to U$ is an isomorphism.
\end{theorem}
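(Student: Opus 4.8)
The plan is to deduce Theorem \ref{thm:Hir} directly from the strong form of Hironaka's embedded resolution theorem in characteristic $0$ (the statement cited as \cite{Hir}, in the canonical/functorial form surveyed in \cite{Hir_mod}). There is nothing genuinely new to prove here: what the statement asks for beyond the bare existence of an embedded resolution that resolves $D$ is only that the chosen $p$ be an isomorphism over the prescribed open set $U$, and this is automatic as soon as one uses a version of resolution which is an isomorphism over the maximal open subset on which the pair $(X,D)$ is \emph{already} resolved. Note that this is weaker than asking for a strong resolution of the pair $(X,D)$ in the sense of \S\ref{sssec:res_sing}: we only need an isomorphism over the given $U$, not over the whole complement of the singular loci of $X$ and of $D$.

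First I would record the elementary observation that over $U$ the pair is already in good shape: $U$ is smooth by hypothesis, $D$ has codimension $1$, and $U\cap D$ is smooth, so $U\cap D$ is a smooth divisor in the smooth variety $U$ and is therefore (trivially) an SNC divisor in $U$. Hence $U$ is contained in the open set $V\subseteq X$ of points at which $X$ is smooth and $D$ is an SNC divisor. Next I would invoke strong embedded resolution: there is a resolution of singularities $p:\tilde X\to X$ with $p^{-1}(D)$ an SNC (in particular NC) divisor and with $p$ restricting to an isomorphism over $V$. Since $U\subseteq V$, the map $p^{-1}(U)\to U$ is an isomorphism, which is exactly the assertion; and $p^{-1}(D)$ being NC is precisely the statement that $p$ ``resolves'' $D$ in the paper's sense.

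The only real subtlety, and hence the ``main obstacle'', is that a naive existence statement for embedded resolutions does not by itself control the locus over which $p$ is an isomorphism; one must use the strong/canonical version. I expect the cleanest rigorous route is to run a functorial resolution algorithm (functorial for smooth morphisms, in particular for the open immersion $U\hookrightarrow X$) that resolves $D$, and then to combine this functoriality with the fact that the algorithm does nothing to an already-SNC pair to conclude that the centers of all the blow-ups avoid $U$; this forces $p$ to be an isomorphism over $U$. Everything takes place over a field of characteristic $0$, in accordance with the standing conventions of the paper, so Hironaka's theorem applies without further hypotheses.
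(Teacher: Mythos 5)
Your argument is correct, and it is exactly what is needed: the paper does not supply a proof here, since Theorem \ref{thm:Hir} is stated purely as a recollection of Hironaka's theorem with references to \cite{Hir} and \cite{Hir_mod}, so your job is to unpack the citation, which you do faithfully. The two observations that carry the weight are both right: a smooth codimension-one subvariety of a smooth variety is automatically an SNC (hence NC) divisor, so $U$ sits inside the locus where the pair $(X,D)$ is already resolved; and the strong form of embedded resolution is an isomorphism over that locus, whence over $U$. One small remark on emphasis: you frame the isomorphism-over-$U$ clause as requiring the ``canonical/functorial'' resolution algorithms, and that is indeed the cleanest modern way to see it, but the property that the resolution is an isomorphism over the already-nice locus is already built into Hironaka's original Main Theorems of \cite{Hir}, so it does not depend on the later functorial refinements; either route justifies the statement. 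Also note that you actually produce an SNC divisor, which is stronger than the NC demanded by the paper's notion of ``resolves'' --- this is harmless, and is in fact the version the paper eventually wants anyway (cf.\ the corollary following Theorem \ref{thm:Hir}), where the paper instead reaches SNC by a separate NC-to-SNC procedure cited to \cite{Jon}.
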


There is a standard procedure to resolve a normal crossings divisor and obtain a strict normal crossings divisor,
 see e.g. \cite{Jon}.
This gives the following corollary.

\begin{corollary} Any  pair of algebraic varieties ${D \subset }X$ admits a strong resolution.
\end{corollary}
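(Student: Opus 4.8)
The plan is to deduce the statement from Hironaka's Theorem \ref{thm:Hir} together with the standard procedure that upgrades a normal crossings divisor to a strict normal crossings divisor, the only real work being to keep track of the locus over which the resolution is an isomorphism. We assume throughout that $D$ has codimension $1$ in $X$, since this is the situation in which a strong resolution of a pair is defined.

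The first step is to produce the open set $U$ to which Theorem \ref{thm:Hir} will be applied. Let $X^{sm}$ be the smooth locus of $X$ and set
\[
U:=X^{sm}\smallsetminus \operatorname{Sing}(D\cap X^{sm}).
\]
Since $D\cap X^{sm}$ is a closed subvariety of the smooth variety $X^{sm}$, its singular locus is closed in $X^{sm}$, so $U$ is open in $X$; it is smooth, being open in $X^{sm}$, and $U\cap D=(D\cap X^{sm})^{sm}$ is smooth, so $U$ meets the hypotheses of Theorem \ref{thm:Hir}. Moreover $D\cap X^{sm}$ is a Zariski-open subvariety of $D$, hence $\operatorname{Sing}(D\cap X^{sm})=\operatorname{Sing}(D)\cap X^{sm}$, and therefore
\[
X\smallsetminus U=\operatorname{Sing}(X)\cup\bigl(\operatorname{Sing}(D)\cap X^{sm}\bigr)\subset \operatorname{Sing}(X)\cup\operatorname{Sing}(D).
\]

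Next, apply Theorem \ref{thm:Hir} to the pair $(X,D)$ with this $U$: it yields a resolution of singularities $p\colon\widetilde X\to X$ resolving $D$, i.e. $p^{-1}(D)$ is an NC divisor, and with $p^{-1}(U)\to U$ an isomorphism. Over $p^{-1}(U)\cong U$ the divisor $p^{-1}(D)$ is identified with the smooth divisor $D\cap U$, so it is already an SNC divisor there. Now run the standard procedure (see \cite{Jon}) that transforms $p^{-1}(D)$ into an SNC divisor by a finite sequence of blow-ups whose centres are contained in the closed locus where $p^{-1}(D)$ fails to be SNC; let $q\colon X'\to X$ be the resulting composite. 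Since every blow-up centre avoids $p^{-1}(U)$, the map $q$ restricts to an isomorphism over $U$. Then $X'$ is smooth, $q$ is proper and birational, $q^{-1}(D)$ is an SNC divisor, and $q$ is an isomorphism over $X\smallsetminus\bigl(\operatorname{Sing}(X)\cup\operatorname{Sing}(D)\bigr)$ by the displayed inclusion; that is, $q$ is a strong resolution of $(X,D)$.

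The one point that requires care is the last paragraph: one must verify that the NC$\to$SNC clean-up is an isomorphism over the open set on which the divisor is already SNC — which is exactly the reason the procedure's blow-up centres are taken inside the non-SNC locus — so that the isomorphism established over $U$ by Theorem \ref{thm:Hir} is not destroyed. Everything else is a routine unwinding of the definitions of singular locus, of (strict) resolution of a pair, and of strong resolution.
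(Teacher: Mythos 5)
Your proof is correct and follows the same route the paper intends: first apply Hironaka's Theorem \ref{thm:Hir} with $U=X^{sm}\smallsetminus\operatorname{Sing}(D\cap X^{sm})$ to get an NC resolution that is an isomorphism over $U$, then invoke the standard NC$\to$SNC clean-up (keeping the blow-up centres inside the non-SNC locus, which stays off $p^{-1}(U)$). The paper states this in a single line; you have simply unwound the details, in particular the bookkeeping identity $X\smallsetminus U=\operatorname{Sing}(X)\cup\operatorname{Sing}(D)$ that makes $q$ a strong resolution of the pair.
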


The following proposition is standard
\begin{prop} \label{prop:strict.transform}
Let $D \subset X$ be a pair of algebraic varieties such that $D$ is irreducible, has codimension one in $X$, and is not contained in the singular locus of $X$.  Let $p:Y \to X$ be a strong resolution of the pair   $(X,D)$, and let $D'$ be the strict transform of $D$. Then $p|_{D'}:D'\to D$ is a resolution of singularities.
\end{prop}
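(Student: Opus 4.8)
The statement to establish is Proposition~\ref{prop:strict.transform}: given $D \subset X$ irreducible of codimension one, not contained in the singular locus of $X$, and a strong resolution $p\colon Y \to X$ of the pair $(X,D)$ with strict transform $D'$, the restriction $p|_{D'}\colon D' \to D$ is a resolution of singularities. The plan is to verify the three defining properties of a resolution of singularities (smoothness of the source, properness, and birationality) directly from the definition of \emph{strict transform} and of \emph{strong resolution} given in Subsection~\ref{sssec:res_sing}. First I would recall that a strong resolution of the pair $(X,D)$ is a strict resolution $p\colon Y\to X$ that is an isomorphism outside the union of the singular locus of $X$ and the singular locus of $D$; in particular $p^{-1}(D)$ is an SNC divisor, and if $U\subset X$ denotes the maximal open locus over which $p$ is an isomorphism, then $D' = \overline{p^{-1}(D\cap U)}$, the closure of the preimage of the smooth part of $D$ inside the smooth locus of $X$.

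For properness: $D'$ is a closed subvariety of $Y$, and $p$ is proper, so $p|_{D'}$ is proper. For birationality: by hypothesis $D\not\subset X^{\mathrm{sing}}$, so $D\cap X^{\mathrm{sm}}$ is a dense open subset of $D$; shrinking further, $D\cap U$ is dense open in $D$ (here we use that $D$ is not contained in $X^{\mathrm{sing}}\cup D^{\mathrm{sing}}$ on a dense open set, which is automatic since $D^{\mathrm{sing}}$ is a proper closed subset of the irreducible $D$). Over $D\cap U$ the map $p$ restricts to an isomorphism, and $p^{-1}(D\cap U)$ is dense in $D'$ by construction, so $p|_{D'}$ is birational. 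For smoothness of $D'$: since $p^{-1}(D)$ is a strict normal crossings divisor in the smooth variety $Y$, each of its irreducible components is smooth; $D'$ is precisely the union of those components of $p^{-1}(D)$ that dominate $D$ (equivalently, that meet $p^{-1}(D\cap U)$), so $D'$, being a union of smooth components of an SNC divisor — and in fact a single such component or a disjoint-type union along the SNC structure — is itself smooth. One has to be slightly careful here: $D'$ could a priori be a union of several components of $p^{-1}(D)$, but since these are components of an SNC divisor they are individually smooth, and two distinct components of $p^{-1}(D)$ that both dominate the irreducible $D$ would have to meet $p^{-1}(D\cap U)$, contradicting the fact that over $U$ the preimage of $D$ is a single smooth divisor isomorphic to $D\cap U$; hence $D'$ is irreducible and smooth.

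The only genuinely non-formal point — the step I expect to need the most care — is the identification of $D'$ with a union of components of the SNC divisor $p^{-1}(D)$ and the argument that it is smooth and irreducible; this is where the hypothesis that $D$ has codimension one and is not contained in $X^{\mathrm{sing}}$ is used in an essential way (it guarantees that $D\cap U$ is dense in $D$ and that its closure in $Y$ picks out exactly the dominant component). Everything else is a direct unwinding of the definitions of \textbf{strong resolution} and \textbf{strict transform}, together with the elementary facts that closed immersions are proper and that an isomorphism over a dense open set gives birationality. I would present the argument in the order: (1) reduce to the open set $U$ and note $D\cap U$ is dense in $D$; (2) properness of $p|_{D'}$; (3) birationality via the isomorphism over $U$; (4) smoothness and irreducibility of $D'$ from the SNC property of $p^{-1}(D)$.
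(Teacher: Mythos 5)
Your proposal is correct and follows essentially the same route as the paper's proof: properness from closedness of $D'$ plus properness of $p$, birationality from the isomorphism over the dense open $U$, and smoothness from the observation that $D'$ is irreducible (because $p^{-1}(D\cap U)\cong D\cap U$ is irreducible) and hence a single component of the SNC divisor $p^{-1}(D)$. The paper packages the last step as a separate lemma (an irreducible SNC divisor is smooth) and obtains irreducibility of $D'$ directly rather than by the small contradiction argument you sketch, but the underlying reasoning is identical.
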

{
For completeness, we include the proof of this proposition. We will use the following:
\begin{lemma}\label{lem:SNC.irr.sm} An irreducible SNC divisor $D \subset X$ is smooth.
\end{lemma}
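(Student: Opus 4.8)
The plan is to pass to the local model provided by the definition of a strict normal crossings divisor and then to use irreducibility to collapse the local union of coordinate hypersurfaces to a single smooth one.

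First I would fix an arbitrary point $x\in D$ and invoke the definition of SNC divisor to choose a Zariski-open neighbourhood $U\subset X$ of $x$ together with an \'etale morphism $\alpha\colon U\to\mathbb{A}^n$ (so that $n=\dim X$) such that $D\cap U=\alpha^{-1}(D')$, where $D'=\bigcup_{i\in S}\{x_i=0\}\subset\mathbb{A}^n$ is a finite union of coordinate hyperplanes. Writing $H_i:=\alpha^{-1}(\{x_i=0\})$ for $i\in S$, this gives a presentation $D\cap U=\bigcup_{i\in S}H_i$ as a finite union of closed subsets of $U$; since $x\in D\cap U$, the index set $S$ is non-empty.

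Next I would use the hypothesis that $D$ is irreducible. Then $D\cap U$, being a non-empty Zariski-open subset of $D$, is irreducible; and an irreducible space which is a finite union of its closed subsets must coincide with one of them, so $D\cap U=H_{i_0}=\alpha^{-1}(\{x_{i_0}=0\})$ for some $i_0\in S$. Since $\{x_{i_0}=0\}\cong\mathbb{A}^{n-1}$ is smooth over $k$ and $\alpha^{-1}(\{x_{i_0}=0\})\to\{x_{i_0}=0\}$ is the base change of the \'etale map $\alpha$ along a closed immersion, hence \'etale, the composition with the structure morphism shows that $H_{i_0}$ is smooth over $k$. Thus $D\cap U$ is smooth, so $x$ is a regular point of $D$. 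As $x\in D$ was arbitrary, $D$ is smooth.

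The only load-bearing step is the implication ``$D$ irreducible $\Rightarrow D\cap U$ irreducible'', that is, the fact that $D\cap U$ is an honest Zariski-open subset of $D$. This is precisely where \emph{strictness} of the normal crossings is used: for a merely normal crossings divisor the local model lives on an \'etale (not Zariski-open) neighbourhood $U\to X$, and $D\times_X U$ need not be irreducible even when $D$ is (as for a nodal cubic), so the conclusion genuinely fails in that generality. Apart from this observation, the proof is a short formal manipulation and I do not expect any real obstacle; one may optionally record, as a by-product, that each $H_j$ with $j\neq i_0$ must then be empty, since $H_j\subseteq H_{i_0}$ while $H_j$ and $H_{i_0}$ share no component ($H_j\cap H_{i_0}=\alpha^{-1}(\{x_j=x_{i_0}=0\})$ has dimension $n-2$, whereas every component of $H_j$ has dimension $n-1$).
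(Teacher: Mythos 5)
Your proof is correct and takes essentially the same route as the paper: pass to the SNC local chart, use irreducibility to force $D\cap U$ to equal a single $\alpha^{-1}\{x_{i_0}=0\}$, and conclude smoothness via \'etale base change. The only cosmetic difference is that the paper normalizes $\alpha(x)=0$ so that every $\alpha^{-1}\{x_i=0\}$ contains $x$ and then says $S$ must be a singleton, whereas you instead observe directly that $D\cap U$, being irreducible, coincides with one component and the remaining $H_j$ are vacuous; the content is the same, and your remark about where strictness (Zariski versus merely \'etale neighbourhoods) enters is exactly the right thing to flag.
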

\begin{proof}
Let $x \in D$. Let $U \ni x$ be a Zariski open neighborhood and $\phi:U \to \A^n$ an etale map such that $D$ is a pre-image of a union $S$ of coordinate hyperplanes. Without loss of generality, we may assume that $\phi(x)=0$. This mean that $S$ has to consist of a unique hyperplane, so $D$ is smooth.
\end{proof}

\begin{proof}[Proof of proposition \ref{prop:strict.transform}]
We have to prove the following:
\begin{enumerate}
\item $p|_{D'}:D'\to D$ is proper.
\item $p|_{D'}:D'\to D$ birational equivalence.
\item $D'$ is smooth.
\end{enumerate}
Statement (1) is obvious. Let $U\subset X$ be the maximal open set on which $p$ is an isomorphism. Since $p$ is a strong resolution, $U \cap D$ is non-empty, so it is open and dense in $D$. This proves (2). Since $D$ is irreducible, $U \cap D$ is irreducible, so $p^{-1}(U \cap D)$ is irreducible. It follows that $D'=\overline{p^{-1}(U \cap D)}$ is irreducible. On the other hand, the fact that $p^{-1}(D)$ is an SNC divisor, implies that so is $D'$. Assertion (3) follows now from Lemma \ref{lem:SNC.irr.sm}
\end{proof}
}
\subsection{Grauert--Riemenschneider Vanishing Theorem} \lbl{ssec:GR}

\begin{thm}[{Grauert--Riemenschneider, cf. \cite[Theorem 4.3.9 and Notation 1.1.7]{laz}}]\label{thm:GR} $ $
Let $\phi:X \to Y$ be a resolution of singularities. Then $R\phi_*(\Omega_X)=\phi_*(\Omega_X)$, i.e. $R^i\phi_*(\Omega_X)=0$ for all $i \neq 0$
\end{thm}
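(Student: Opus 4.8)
The plan is to derive the statement from a vanishing theorem of Kodaira type, in the shape of (relative) Kawamata--Viehweg vanishing, following the line of argument in \cite{laz}. First I would reduce to the projective situation. The assertion is local on $Y$, so I may assume $Y$ is affine; by Chow's lemma there is a projective birational $\mu\colon X'\to X$ with $X'\to Y$ projective, and after a further resolution I may take $X'$ smooth. Granting the classical special case of the theorem for the morphism $\mu$ between smooth varieties (itself handled by the compactification argument below, now compactifying only the smooth variety $X$), one has $R\mu_*\Omega_{X'}=\Omega_X$, hence $R^i\phi_*\Omega_X=R^i(\phi\circ\mu)_*\Omega_{X'}$ and I may replace $\phi$ by $\phi\circ\mu$; thus from now on $\phi$ is projective. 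Choosing a projective compactification $\overline{Y}\supset Y$, the closed immersion $X\hookrightarrow\mathbb{P}^N\times Y$ extends to its closure $\overline{X}_1\subset\mathbb{P}^N\times\overline{Y}$, a projective variety whose projection $\overline{\phi}_1\colon\overline{X}_1\to\overline{Y}$ extends $\phi$ and satisfies $\overline{\phi}_1^{-1}(Y)=X$; replacing $\overline{X}_1$ by a strong resolution, which is an isomorphism over the smooth open set $X$, I obtain $\overline{\phi}\colon\overline{X}\to\overline{Y}$ with $\overline{X}$ smooth projective, $\overline{\phi}$ birational, $\overline{\phi}{}^{-1}(Y)=X$, and $\Omega_{\overline{X}}$ restricting to $\Omega_X$ there. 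Since $R^i\phi_*\Omega_X=\bigl(R^i\overline{\phi}_*\Omega_{\overline{X}}\bigr)\big|_Y$, it suffices to prove $R^i\overline{\phi}_*\Omega_{\overline{X}}=0$ for $i>0$; renaming, I may assume $X$ smooth projective, $Y$ projective, $\phi$ birational, and $\Omega_X=\omega_X$ the canonical line bundle.

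Now fix an ample line bundle $A$ on $Y$. The geometric input is that $L:=\phi^*A$ is nef, being the pullback of an ample bundle, and big, since $\phi$ is birational and hence $h^0(X,L^{\otimes m})\ge h^0(Y,A^{\otimes m})$ grows like $m^{\dim X}$. By Kawamata--Viehweg vanishing, $H^i(X,\Omega_X\otimes\phi^*A^{\otimes m})=0$ for all $i>0$ and all $m\ge 1$. On the other hand, by Serre vanishing there is $m_0$ with $H^j(Y,R^i\phi_*\Omega_X\otimes A^{\otimes m})=0$ for all $j>0$, all $i\ge 0$ and all $m\ge m_0$; feeding this into the Leray spectral sequence $H^j(Y,R^i\phi_*\mathcal{F})\Rightarrow H^{i+j}(X,\mathcal{F})$ with $\mathcal{F}=\Omega_X\otimes\phi^*A^{\otimes m}$, and using the projection formula $R^i\phi_*(\Omega_X\otimes\phi^*A^{\otimes m})=R^i\phi_*\Omega_X\otimes A^{\otimes m}$, gives
\[
H^0\bigl(Y,\,R^i\phi_*\Omega_X\otimes A^{\otimes m}\bigr)\;\cong\;H^i\bigl(X,\,\Omega_X\otimes\phi^*A^{\otimes m}\bigr)\;=\;0\qquad (i>0,\ m\ge m_0).
\]
Enlarging $m_0$ once more, the coherent sheaf $R^i\phi_*\Omega_X\otimes A^{\otimes m}$ is globally generated for $m\ge m_0$; a globally generated coherent sheaf without global sections vanishes, so $R^i\phi_*\Omega_X=0$ for $i>0$, which is the assertion.

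The only non-formal ingredient is Kawamata--Viehweg (equivalently Kodaira) vanishing, and this is where the hypothesis $\operatorname{char}k=0$ enters; it is also the main obstacle, the rest being bookkeeping. As a sanity check on the role of the singularities, note that when $\phi$ is a single blow-up of a smooth variety along a smooth centre $Z$ of codimension $c$ one has $\Omega_X=\phi^*\Omega_Y\otimes\mathcal{O}((c-1)E)$ with $E$ the exceptional divisor, and a direct computation on the projective bundle $E\to Z$ (using $\mathcal{O}(E)|_E=\mathcal{O}(-1)$) gives $R^i\phi_*\mathcal{O}_X((c-1)E)=0$ for $i>0$; iterating settles towers of smooth blow-ups with no vanishing theorem at all. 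The point is that a resolution of a genuinely singular $Y$ is not such a tower, so a cohomological vanishing input really is needed. Alternatively, one could run the original analytic proof of Grauert and Riemenschneider via H\"ormander-type $L^2$-estimates and Nakano semipositivity of $\phi^*A$, which comes to the same thing.
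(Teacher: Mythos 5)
The paper does not prove Grauert--Riemenschneider vanishing; it quotes it as a black box from Lazarsfeld (\cite[Theorem 4.3.9]{laz}), so there is no "paper's own proof" to compare against. Your argument is correct and is in fact essentially the proof given in that reference: reduce by Chow's lemma and a Nagata-style compactification to the case of a birational morphism from a smooth projective variety to a projective variety, then combine Kawamata--Viehweg vanishing for the nef and big line bundle $\phi^*A^{\otimes m}$ with the Leray spectral sequence, the projection formula, Serre vanishing, and Serre's global generation theorem to conclude $R^i\phi_*\omega_X=0$ for $i>0$. The only place the write-up could confuse a reader is the phrase "granting the classical special case for $\mu$ between smooth varieties": as you say yourself, that case is not a separate base of the induction but is subsumed in the projective case handled by the compactification-plus-KV argument (nothing there uses smoothness of the target), so the reduction is not circular, merely stated in a slightly recursive order. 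Your closing remark about towers of smooth blow-ups correctly isolates why a genuine vanishing input is unavoidable in the general singular case.
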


\subsection{Rational Singularities} \lbl{ssec:RS}
The notion of rational singularities was introduced in \cite{Art} for surfaces, and in \cite{KKMS} in general.
\begin{defn}[{cf. \cite[I \S 3, page 50-51]{KKMS}}] \label{defn:rational.singularities} Let $X$ be an algebraic variety.
\begin{enumerate}
\item We say that $X$ has \term{rational singularities}{RS}\footnote{A more accurate expression would be `the singularities of $X$ are all rational', but `has rational singularities' is more commonly used.} if, for any (or, equivalently\footnote{The equivalence is via Hironaka's theorem.} for some) resolution of singularities $p:\tilde X \to X$, the natural morphism  $Rp_*(\cO_{\tilde X}) \to \cO_{X}$ is an isomorphism.
\item A (usually singular) point $x \in X(k)$ is a \term{rational singularity}{RSPt}\ if there is a Zariski neighborhood $U\subset X$ of $x$ that has \RS.
\end{enumerate}
\end{defn}
The properties of duality (Proposition \ref{prop:star_shrik}), the  Grauert--Riemenschneider Theorem (Theorem \ref{thm:GR}), and Corollary \ref{cor:codim.2.in.CM} imply the following:
\begin{prop} \label{prop:equviv_rat}
Let $X$ be an algebraic variety. Then
\begin{itemize}
\item The following are equivalent:
\begin{enumerate}
\item \label{prop:equviv_rat:1}
$X$ has \RS.
 \item \label{prop:equviv_rat:2} For any (or, equivalently, for some) resolution of singularities $p:\tilde X \to X$, the trace map $Tr_{p}: R p_*(\Omega_{\tilde X})\to \Omega_{X}$ is an isomorphism.
 \item \label{prop:equviv_rat:3} $X$ is \CMA\ and, for any (or, equivalently, for some) resolution of singularities $p:\tilde X \to X$, the trace map $Tr_{p}:p_*(\Omega_{\tilde X}) \to \Omega_{X}$ is an isomorphism.
 \item \label{prop:equviv_rat:4}$X$ is \CMA\ and, for any (or, equivalently, for some) resolution of singularities $p:\tilde X \to X$, the trace map $Tr_{p}:p_*(\Omega_{\tilde X}) \to \Omega_{X}$ is onto.
 \item \label{prop:equviv_rat:5} $X$ is \CMA, normal (or, equivalently, regular outside co-dimension $2$), and, for any (or, equivalently, for some) resolution of singularities $p:\tilde X \to X$, the {morphism $p_*(\Omega_{\tilde X})\to i_*(\Omega_{X^{sm}})$ which is the composition of the trace map and the isomorphism from Corollary \ref{cor:codim.2.in.CM},}is an isomorphism. Here, $i:X^{sm} \to X$ is the embedding of the regular locus.
\end{enumerate}

\item
If $X$ is affine, these conditions are also equivalent to the following:
\begin{itemize}
 \item[{\ref{prop:equviv_rat:4}'.}]\label{prop:equviv_rat:4p} $X$ is \CMA\ and, for
 \begin{itemize}
\item Any (or, equivalently, some) strong resolution of singularities $p:\tilde X \to X$ and
\item {Any section $\omega \in \Gamma(X,\Omega_X)$},
 \end{itemize}
{there exists a top differential form $\tilde \omega \in \Gamma({\tilde X},\Omega_{\tilde X})$ such that} $$\omega|_{X^{sm}}=\tilde\omega|_{X^{sm}}.$$
Here, we consider $X^{sm}$ as a subset of both $X$ and $\tilde X$. More formally, the last equality mean that,  $e_{pr_{X^{sm}}}(b_{j,pr_{X}}(\omega|_{X^{sm}}))=(e_{{pr_{\widetilde X}}}\widetilde{\omega})|_{X^{sm}}$, where $pr_{X^{sm}},pr_{X},pr_{\widetilde X}$ are the projections to the point $\spec(k)$, and $j:{X^{sm}} \to X$ is the embedding.
\item[{\ref{prop:equviv_rat:5}'.}]\label{prop:equviv_rat:5p} $X$ is \CMA, normal (or, equivalently, regular outside co-dimension $2$), and for
 \begin{itemize}
\item Any (or, equivalently, some) strong resolution of singularities $p:\tilde X \to X$ and
\item {Any top differential form $\omega \in \Gamma(X^{sm},\Omega_{X^{sm}})$},
\end{itemize}
there exists a top differential form $\tilde \omega \in \Gamma({\tilde X},\Omega_{\tilde X})$ such that $$\omega=\tilde\omega|_{X^{sm}}.$$
 \end{itemize}

 \end{itemize}
\end{prop}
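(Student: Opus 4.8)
The plan is to establish the cyclic equivalences \eqref{prop:equviv_rat:1} $\Leftrightarrow$ \eqref{prop:equviv_rat:2} $\Leftrightarrow$ \eqref{prop:equviv_rat:3} $\Leftrightarrow$ \eqref{prop:equviv_rat:4}, then to prove that a variety with rational singularities is normal, which together with Corollary~\ref{cor:codim.2.in.CM} yields the equivalence with \eqref{prop:equviv_rat:5}, and finally, in the affine case, to pass to global sections to obtain the reformulations of \eqref{prop:equviv_rat:4} and \eqref{prop:equviv_rat:5}. I will work with one resolution fixed at a time: condition \eqref{prop:equviv_rat:1} is already known to be independent of the resolution (Definition~\ref{defn:rational.singularities}, via Hironaka's Theorem~\ref{thm:Hir}), and each of the remaining conditions will be shown equivalent to \eqref{prop:equviv_rat:1} \emph{for the same resolution}, so the ``for any / for some'' alternative follows for all of them at once.

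For \eqref{prop:equviv_rat:1} $\Leftrightarrow$ \eqref{prop:equviv_rat:2}, fix a resolution $p\colon\widetilde X\to X$ and use $\mathcal D_{\widetilde X}=p^!\mathcal D_X$ (via $c_{p,pr_X}$), so that, after the shift by $\dim X=\dim\widetilde X$, the trace $\Tr_p\colon Rp_*\Omega_{\widetilde X}\to\Omega_X$ is the counit $Rp_*p^!\mathcal D_X\to\mathcal D_X$ of Proposition~\ref{prop:shrik_prop}\eqref{prop:shrik_prop:4}. Applying the duality functor $\bD$ and using $\bD\circ\bD\cong\mathrm{Id}$ (Theorem~\ref{thm:do}), $\bD(\mathcal D_X)=\cO_X$, $\bD(\mathcal D_{\widetilde X})=\cO_{\widetilde X}$, together with the identities $\bD\circ Rp_*\circ\bD\cong Rp_*$ and $\bD\circ Lp^*\circ\bD\cong p^!$ of Proposition~\ref{prop:star_shrik}\eqref{thm:do:10},\eqref{thm:do:9}, one identifies $\bD(\Tr_p)$ with the canonical comparison morphism between $\cO_X$ and $Rp_*\cO_{\widetilde X}$ appearing in Definition~\ref{defn:rational.singularities}; since $\bD$ is a contravariant autoequivalence, hence conservative, $\Tr_p$ is an isomorphism iff that morphism is. For \eqref{prop:equviv_rat:2} $\Leftrightarrow$ \eqref{prop:equviv_rat:3}, the Grauert--Riemenschneider Theorem~\ref{thm:GR} gives $Rp_*\Omega_{\widetilde X}=p_*\Omega_{\widetilde X}$, a sheaf concentrated in degree $0$; hence if $\Tr_p$ is a quasi-isomorphism, $\Omega_X$ is quasi-isomorphic to a degree-$0$ sheaf, which by Definition~\ref{def:CM} means precisely that $X$ is Cohen--Macaulay, and then $\Tr_p$ is an isomorphism of sheaves $p_*\Omega_{\widetilde X}\xrightarrow{\sim}\Omega_X$; the converse is immediate. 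For \eqref{prop:equviv_rat:3} $\Leftrightarrow$ \eqref{prop:equviv_rat:4}, only \eqref{prop:equviv_rat:4} $\Rightarrow$ \eqref{prop:equviv_rat:3} needs an argument: $\Omega_X$ is torsion-free (Corollary~\ref{cor:cm_torfree}) and $p_*\Omega_{\widetilde X}$ is torsion-free since $\Omega_{\widetilde X}$ is invertible on the smooth reduced $\widetilde X$, while $\Tr_p$ is an isomorphism over the dense locus where $p$ is an isomorphism; thus $\ker\Tr_p$ is a torsion subsheaf of a torsion-free sheaf, hence $0$, and with the assumed surjectivity $\Tr_p$ is an isomorphism.

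Next, \eqref{prop:equviv_rat:1} forces $X$ to be normal: a resolution $p\colon\widetilde X\to X$ factors through the normalization $\nu\colon X^\nu\to X$ (since $\widetilde X$ is normal), so $\cO_X\subseteq\nu_*\cO_{X^\nu}\subseteq p_*\cO_{\widetilde X}$, and taking $H^0$ of the isomorphism $\cO_X\xrightarrow{\sim}Rp_*\cO_{\widetilde X}$ gives $\cO_X=p_*\cO_{\widetilde X}$, whence $\cO_X=\nu_*\cO_{X^\nu}$. Since such an $X$ is also Cohen--Macaulay by \eqref{prop:equviv_rat:1} $\Leftrightarrow$ \eqref{prop:equviv_rat:3}, Corollary~\ref{cor:codim.2.in.CM}\eqref{cor:codim.2.in.CM:1} and Serre's criterion let us use ``Cohen--Macaulay and normal'' interchangeably with ``Cohen--Macaulay and regular outside codimension $2$''; for such $X$, Corollary~\ref{cor:codim.2.in.CM} supplies a canonical isomorphism $\Omega_X\xrightarrow{\sim}i_*\Omega_{X^{sm}}$, and composing it with the trace map of \eqref{prop:equviv_rat:3} produces exactly the morphism $p_*\Omega_{\widetilde X}\to i_*\Omega_{X^{sm}}$ of \eqref{prop:equviv_rat:5}. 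This gives \eqref{prop:equviv_rat:3} $+$ normality $\Leftrightarrow$ \eqref{prop:equviv_rat:5} (the reverse implication composes back with the inverse isomorphism), closing the equivalence of \eqref{prop:equviv_rat:1}--\eqref{prop:equviv_rat:5} and, by the first paragraph, its ``for any / for some'' versions. For the affine reformulations, fix a \emph{strong} resolution $p$, so $p$ is an isomorphism over $X^{sm}$ and $\Tr_p$ restricts to the identity there; on an affine scheme $\Gamma(X,-)$ is exact on quasi-coherent sheaves and detects surjectivity, and, as $\Omega_X$ is torsion-free when $X$ is Cohen--Macaulay, a global section of $\Omega_X$ is determined by its restriction to the dense open $X^{sm}$. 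Hence surjectivity of $\Tr_p$ is equivalent to: every $\omega\in\Gamma(X,\Omega_X)$ admits $\widetilde\omega\in\Gamma(\widetilde X,\Omega_{\widetilde X})$ with $\widetilde\omega|_{X^{sm}}=\omega|_{X^{sm}}$, i.e.\ to the affine reformulation of \eqref{prop:equviv_rat:4}; and when $X$ is moreover normal, Corollary~\ref{cor:codim.2.in.CM} identifies $\Gamma(X,\Omega_X)$ with $\Gamma(X^{sm},\Omega_{X^{sm}})$, turning it into the affine reformulation of \eqref{prop:equviv_rat:5} (the automatic injectivity of the lift, again from torsion-freeness of $\Omega_{\widetilde X}$, accounts for the word ``isomorphism'').

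I expect the only genuinely delicate point to be the step \eqref{prop:equviv_rat:1} $\Leftrightarrow$ \eqref{prop:equviv_rat:2}: one must check that, under the natural isomorphisms provided by Propositions~\ref{prop:shrik_prop} and~\ref{prop:star_shrik}, the Grothendieck trace $\Tr_p$ is dual to the canonical morphism of Definition~\ref{defn:rational.singularities}, keeping careful track of the shift by $\dim X$ and of the identification $\mathcal D_{\widetilde X}\cong p^!\mathcal D_X$. This is a compatibility internal to the duality formalism; granting it, the remaining steps are routine applications of Grauert--Riemenschneider and of the Cohen--Macaulay generalities recalled above.
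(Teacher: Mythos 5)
Your proposal is correct and follows essentially the same route as the paper's proof: (1)$\Leftrightarrow$(2) via Grothendieck duality (the paper cites Elkik but explicitly notes this duality proof as an alternative via Proposition~\ref{prop:star_shrik}), (2)$\Leftrightarrow$(3) via Grauert--Riemenschneider, (3)$\Leftrightarrow$(4) via torsion-freeness of $p_*\Omega_{\widetilde X}$ (the paper argues instead via the compatibility of the trace with open restriction and composition, Proposition~\ref{prop:shrik_prop}\eqref{prop:shrik_prop:3c} and~\eqref{prop:shrik_prop:42}, but both arguments reduce to the same density-plus-torsion-freeness fact), normality deduced from (1), and the affine reformulations via the trace's compatibility with restriction to $X^{sm}$ and torsion-freeness of $\Omega_X$. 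You supply slightly more detail than the paper in two places — spelling out the duality computation for (1)$\Leftrightarrow$(2) and the factorization-through-normalization argument that (1) implies normality, both of which the paper leaves as citations or unstated — but the logical structure matches.
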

\begin{proof}$ $
\begin{itemize}
\item[\eqref{prop:equviv_rat:1}$\Leftrightarrow$ \eqref{prop:equviv_rat:2}:] See \cite[Page 141]{elk}. A proof can also be given using Proposition \ref{prop:star_shrik}.
\item[\eqref{prop:equviv_rat:2}$\Leftrightarrow$ \eqref{prop:equviv_rat:3}:] Follows from Grauert--Riemenschneider Theorem (Theorem \ref{thm:GR}).
\item[\eqref{prop:equviv_rat:3}$\Leftrightarrow$ \eqref{prop:equviv_rat:4}:]
We have to prove that, under the conditions of  \eqref{prop:equviv_rat:3}, the trace map ${Tr_{p}:}p_*(\Omega_{\tilde X}) \to \Omega_{X}$ is a monomorphism. The claim is local, so we can assume that $X$ is affine. Let $\omega \in \Gamma({\tilde X},\Omega_{\tilde X})=\Gamma(X,p_*(\Omega_{\tilde X}))$ be in the kernel of $Tr_{p}$. Let $U \subset \tilde X$  be the open dense subvariety on which $p$ is isomorphism. Let $q$ be the restriction $p|_U$ considered as a map to its image. By Proposition \ref{prop:shrik_prop}\eqref{prop:shrik_prop:3c}, $\omega|_{U}$ is in the kernel of $Tr_{Id_{U}}$. By Proposition \ref{prop:shrik_prop}\eqref{prop:shrik_prop:42}, this implies that $\omega|_{U}=0$, so we get $\omega=0$.
\item[\eqref{prop:equviv_rat:4}$\Rightarrow$ \eqref{prop:equviv_rat:5}:]
The normality follows from \eqref{prop:equviv_rat:1}, the rest from \ref{cor:codim.2.in.CM}.
\item[\eqref{prop:equviv_rat:4}$\Leftarrow$ \eqref{prop:equviv_rat:5}:]
Follows from \ref{cor:codim.2.in.CM}.
\item[\eqref{prop:equviv_rat:4}$\Rightarrow$ (\ref{prop:equviv_rat:4}'):] Follows from \ref{prop:shrik_prop}\eqref{prop:shrik_prop:3c}.
\item[\eqref{prop:equviv_rat:4}$\Leftarrow$ (\ref{prop:equviv_rat:4}'):] Let $\omega \in \Gamma(X,\Omega_X)$. Let $\tilde \omega \in \Gamma({\tilde X},\Omega_{\tilde X})=\Gamma(X,p_*(\Omega_{\tilde X}))$ be as in (\ref{prop:equviv_rat:4}'). We have to show that $Tr_p(\tilde \omega)= \omega$. By \ref{prop:shrik_prop}\eqref{prop:shrik_prop:3c}, $Tr_p(\tilde \omega)|_{X^{sm}}= \omega|_{X^{sm}}$. The assertion follows now from the fact that $\Omega_X$ is torsion free (see Corollary \ref{cor:cm_torfree}).
\item[(\ref{prop:equviv_rat:4}')$\Rightarrow$ (\ref{prop:equviv_rat:5}'):] The normality follows from \eqref{prop:equviv_rat:1}. We have to show that for any $\omega_0 \in \Gamma(X^{sm},\Omega_{X^{sm}})$, there is $\omega \in \Gamma(X,\Omega_{X})$ such that $b_{j,pr_{X}}(\omega|_{X^{sm}})=\omega_0$. This follows from Corollary \ref{cor:codim.2.in.CM}(2).
\item[(\ref{prop:equviv_rat:4}')$\Leftarrow$ (\ref{prop:equviv_rat:5}'):] This is obvious.
 \end{itemize}
\end{proof}

A fundamental property of \RS\ which is crucial to our paper is the following theorem
\begin{theorem}[\cite{elk}]\label{thm:elk}
Let $\phi:X\to Y$ be a flat morphism, and assume that $Y$ is smooth. Let $U:=\{x \in X(k)| x \text{ is a \RSPt\ of } \phi^{-1}(\phi(x))\}$. Then: \begin{enumerate}
\item $U$ is open.
\item Any $x \in U$ is a  \RSPt\ of $X$.
\end{enumerate}
\end{theorem}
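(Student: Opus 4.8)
This is Elkik's theorem, so the plan is to reorganize the proof of \cite{elk} using the machinery of this appendix. Both assertions are local on $X$, so I would fix $x\in X(k)$, put $y=\phi(x)$, and pass to the complete local rings $\widehat{\mathcal O}_{X,x}$ and $\widehat{\mathcal O}_{Y,y}$; this is harmless in characteristic $0$ because Cohen--Macaulayness and the characterizations of rational singularities in Proposition \ref{prop:equviv_rat} descend and ascend along the faithfully flat map to the completion, and resolutions base change along it. Since $Y$ is smooth, $\widehat{\mathcal O}_{Y,y}\cong k'[[t_1,\dots,t_m]]$ for a finite extension $k'/k$; the projection onto $k'[[t_1]]$ is smooth, so after completion $\phi$ factors as $X\to Y\to C$ with $Y\to C$ smooth and $C=\Spec R$ a trait ($R$ a complete DVR, uniformizer $t$). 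The closed fibre of $X\to C$ equals $X\times_Y Y_c$ for a smaller smooth base $Y_c$, and $X\times_YY_c\to Y_c$ is flat (Proposition \ref{prop:flat_zero_div}) with fibres among the fibres of $\phi$, hence with rational singularities; inducting on $\dim Y$ reduces part (2) to the case $Y=\Spec R$, a trait. So assume $X=\Spec A$ with $A$ flat over $R$ and $X_0:=\Spec(A/tA)$ having rational singularities at $x$.

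In the trait case I would argue as follows. First, $A$ is Cohen--Macaulay, since $t$ is a nonzerodivisor on $A$, $A/tA$ is Cohen--Macaulay, and $R$ is regular of dimension $1$ (Theorem \ref{thm:CM_eq}). Next, choose a resolution $\pi_0:\widetilde{X_0}\to X_0$ (Theorem \ref{thm:Hir}). The crucial step is to lift $\pi_0$: using Elkik's approximation theorem over the Henselian pair $(A,tA)$, one produces a proper birational morphism $\pi:\widetilde X\to X$ with $\widetilde X$ smooth over $R$ (hence regular) and with closed fibre $\widetilde X\times_Rk=\widetilde{X_0}$. Concretely one writes $\pi_0$ as a finite sequence of blow-ups along smooth centres, lifts the centres one at a time to $X$ (the obstruction to each lift being killed precisely by the smoothness of the corresponding centre in $\widetilde{X_0}$, which is what makes the relevant equation system ``smooth'' in Elkik's sense), and checks that the lifted centres avoid the generic points of $X$, so that $\pi$ is birational. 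Then $\pi:\widetilde X\to X$ is a resolution of singularities in the sense of this appendix (proper, source smooth, birational), and $\widetilde{X_0}\to X_0$ is a resolution of $X_0$.

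Now I would compare dualizing sheaves. By Grauert--Riemenschneider (Theorem \ref{thm:GR}), $R^{>0}\pi_*\Omega_{\widetilde X}=0$ and $R^{>0}(\pi_0)_*\Omega_{\widetilde{X_0}}=0$. Since $\widetilde X$ is smooth over $R$, there is a canonical (given $t$) identification $\Omega_{\widetilde X}\otimes_Rk\cong\Omega_{\widetilde{X_0}}$, and since $A$ is Cohen--Macaulay and flat over $R$ the module $\Omega_X$ is torsion free (Corollary \ref{cor:cm_torfree}), hence $t$-torsion free, and $\Omega_X\otimes_Rk\cong\Omega_{X_0}$. Pushing forward and using the vanishing above, $\big(\pi_*\Omega_{\widetilde X}\big)\otimes_Rk\cong(\pi_0)_*\Omega_{\widetilde{X_0}}$, compatibly with trace maps (base-change compatibility of $\Tr$, cf.\ Proposition \ref{prop:shrik_prop}). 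Thus $\Tr_\pi:\pi_*\Omega_{\widetilde X}\to\Omega_X$ reduces modulo $t$ to $\Tr_{\pi_0}:(\pi_0)_*\Omega_{\widetilde{X_0}}\to\Omega_{X_0}$, which is an isomorphism because $X_0$ has rational singularities (Proposition \ref{prop:equviv_rat}\eqref{prop:equviv_rat:3}). The map $\Tr_\pi$ is injective (it is generically an isomorphism and $\Omega_X$ is torsion free), so its cokernel $Q$ is a finitely generated $A$-module with $Q/tQ=0$; as $A$ is local and $t\in\mathfrak m_x$, Nakayama gives $Q=0$. Hence $\Tr_\pi$ is an isomorphism, and since $A$ is Cohen--Macaulay, $X$ has rational singularities at $x$ by Proposition \ref{prop:equviv_rat}\eqref{prop:equviv_rat:3} (normality of $X$ is then automatic, cf.\ Corollary \ref{cor:codim.2.in.CM}\eqref{cor:codim.2.in.CM:1} and Proposition \ref{prop:equviv_rat}\eqref{prop:equviv_rat:1}). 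This proves (2). For the openness in (1): spreading out, we may take $X,Y$ of finite type; a resolution of a neighbourhood of $X_0$ extends to a simultaneous resolution over a locally closed stratum of $Y$, and upper semicontinuity of the fibrewise cohomology of $\Omega$ together with Proposition \ref{prop:equviv_rat} shows that ``the fibre has rational singularities'' is a constructible condition on $Y$; part (2), applied to the base changes of $\phi$ along curves in $Y$, shows it is stable under generization inside the flat family, and a constructible set stable under generization is open.

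The genuine obstacle is the lifting of $\pi_0$ to a resolution $\pi$ with $\widetilde X$ smooth over $R$: a resolution of the special fibre does not literally deform over $X$, and the rectification of this is exactly Elkik's approximation theorem (an Artin-type statement on lifting solutions of equation systems over Henselian pairs under a Jacobian hypothesis), which is the one ingredient external to this appendix. Everything else is bookkeeping with dualizing sheaves, Grauert--Riemenschneider vanishing, base change, and Nakayama's lemma.
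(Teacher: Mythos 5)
The paper does not prove this theorem; it is imported from Elkik's paper \cite{elk}. Your proposal is a reconstruction of Elkik's argument, and the reconstruction breaks at the step you yourself identify as ``the genuine obstacle'': you claim that Elkik's approximation theorem produces a proper birational $\pi:\widetilde X\to X$ with $\widetilde X$ smooth over the trait $R$ and closed fibre equal to a chosen resolution $\widetilde{X_0}$ of $X_0$. This is precisely a simultaneous resolution of the family, and it does not exist in general. The classical counterexample is Brieskorn's: take $R=k[[t]]$ and $X=\Spec R[x,y,z]/(xy-z^2-t)$; the fibre $X_0$ is the $A_1$ surface singularity (rational), $X$ itself is regular, yet the family admits a simultaneous resolution only after the degree-two base change $t\mapsto s^2$, not over $R$. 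Elkik's approximation theorem cannot supply the missing lift either: it is an Artin-type statement about lifting sections of \emph{smooth} algebraic spaces over Henselian pairs, and the functor ``resolutions of the fibre'' is exactly not smooth --- the obstruction to deforming a blow-up centre of $X_0$ to a flat $R$-subscheme of $X$ is nonzero in general (your heuristic that smoothness of the centre in $\widetilde{X_0}$ kills the obstruction is what fails). Once the simultaneous resolution is unavailable, the base-change identification $(\pi_*\Omega_{\widetilde X})\otimes_R k\cong(\pi_0)_*\Omega_{\widetilde{X_0}}$ has nothing to hang on, and the Nakayama step collapses. The authors flag exactly this point in the introduction: ``in general, we cannot find a simultaneous resolution of singularities for all fibers of $\phi$. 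This lack of simultaneous resolution of singularities is also the difficulty in proving Elkik's theorem.''

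What Elkik actually does --- and what the paper's own ``quantitative Elkik'' (Proposition~\ref{prop:vanishing.form} inside the proof of Theorem~\ref{thm:yet.another}) mirrors --- is to resolve the \emph{total space} $X$ together with the divisor $X_0$, i.e.\ take a strong embedded resolution $\pi:\widetilde X\to X$ of the pair $(X,X_0)$ so that $\pi^{-1}(X_0)$ is an SNC divisor; the strict transform $X_0'$ of $X_0$ is then a resolution of $X_0$ (Proposition~\ref{prop:strict.transform}), but $\pi^{-1}(X_0)$ is much bigger than $X_0'$ and $\widetilde X$ is \emph{not} smooth over $R$. The comparison of trace maps is then carried out by a residue argument via Grothendieck duality: one compares $\Omega_X([X_0])/\Omega_X\cong i_*\Omega_{X_0}$ upstairs with $\Omega_{\widetilde X}([X_0'])/\Omega_{\widetilde X}\cong\widetilde i_*\Omega_{X_0'}$, using rationality of $X_0$ to surject one onto the other and Grauert--Riemenschneider to control the higher direct images. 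Your preliminary reductions (localization, Cohen--Macaulayness via Theorem~\ref{thm:CM_eq}, the trait reduction, and the openness argument via constructibility and generization) are all sound and would survive a correct central step; the part you need to replace is the construction of $\widetilde X$ and the ensuing base-change-of-$\Tr$ computation.
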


\section{Examples for \S\S\ref{ssec:mult_FRS}--\ref{ssec:Comb.stat.sp}}\label{app:ex}
We present some examples of the graphs that we discus in \S\S\ref{ssec:mult_FRS}--\ref{ssec:Comb.stat.sp}.
\begin{center}\includegraphics[width=160mm]{gr_n8.jpg}
The graph $\Gamma_3$ of \S\S\ref{ssec:mult_FRS} for the case $d=8$. The vertex $(i,j)$ is the dot in the $i$th row and $j$th column.

\includegraphics[width=160mm]{gr_w3_n6.jpg}
The weights $w_3$ on the  graph $\Gamma_3$ from \S\S\ref{ssec:mult_FRS} for the case $d=6.$ Near each vertex we see its three weights in three different colors.
\end{center}

Using the procedure in {Definition \ref{def:col}, Corollary \ref{cor:col} and Remark \ref{rem:col}} we get the following coloring on $\Gamma_3$:
\begin{center}\includegraphics[width=160mm]{gr_col2_n6.jpg}
The graph $\Gamma_3$ of \S\S\ref{ssec:mult_FRS} for the case $d=6,$ colored.

\includegraphics[width=160mm]{gr_col2_n8.jpg}
The graph $\Gamma_3$ of \S\S\ref{ssec:mult_FRS} for the case $d=8,$ colored.
\end{center}
Each of the colors represent one level of the graph $\Gamma_4$. In other words, for each  $m$ the graph $\Gamma_4^m$ consist of the vertices of $\Gamma_3$ with only edges of one color.

\begin{center}
\includegraphics[width=160mm]{o8.jpg}

The graph $\Gamma_3$ of \S\S\ref{ssec:Comb.stat.so} for the case $d=8$, with the weights \nir{$w_3$} of the vertices and colors of edges. The vertex corresponding to $\left\{ i,j \right\}$, $i<j$ is the dot in the $i$th row and $j$th column. The graphs $\Gamma_4^m$ are obtained by keeping the edges of a single color.

\includegraphics[width=160mm]{sp7.jpg}

The graph $\Gamma_8$ of \S\S\ref{ssec:Comb.stat.sp} for the case $d=7$. A red dot at coordinate $(i,j), i \leq j$ corresponds to the multiset $[i,j]\in \left\{ 1,\ldots,d \right\} ^{[2]}=I_2$. A blue dot at coordinate $(i,j), i \leq j$ corresponds to the ordered pair $(i,j)\in \left\{ 1,\ldots,d \right\} ^2\smallsetminus \left\{ (d,d) \right\}=I_0$. A green dot at coordinate $(i,j), i < j$ corresponds to ordered pairs $(j,i)\in I_0$.
\end{center}

\end{document}